\documentclass{amsbook}
\pagestyle{plain}



 \textwidth = 30.0pc
 \textheight = 49.5pc

\usepackage{amsmath}
\usepackage{amsfonts}
\usepackage{amssymb}
\usepackage{amscd}
\usepackage{amsthm}
\theoremstyle{plain}


\newtheorem{theorem}{Theorem}[chapter]
\theoremstyle{definition}
\newtheorem{defn}[theorem]{Definition}
\newtheorem{lem}[theorem]{Lemma}
\newtheorem{sublem}[theorem]{Sublemma}
\newtheorem{prop}[theorem]{Proposition}
\newtheorem{cor}[theorem]{Corollary}
\newtheorem{notation}[theorem]{Notation}

\newtheorem{rem}[theorem]{Remark}

\newtheorem{fact}[theorem]{Fact}
\newtheorem{thm}[theorem]{Theorem}
\newtheorem{problem}{Problem}
\newtheorem{examp}{Example}

\newcommand{\rr}{\mathbb{R}}

\newcommand{\ee}{\varepsilon}
\newcommand{\nn}{\mathbb{N}}
\newcommand{\ttt}{\mathcal{T}}

\newcommand{\ff}{\mathcal{F}}

\newcommand{\g}{\mathcal{G}}

\pagestyle{myheadings}
\markboth{HIGHER ORDER SPREADING MODELS}{S. A. ARGYROS, V.
KANELLOPOULOS AND K. TYROS}
\theoremstyle{plain}

\begin{document}
\frontmatter
\title[Higher Order Spreading Models in Banach Space Theory]{Higher Order Spreading Models in Banach Space Theory}
\author{Spiros A. Argyros}
\author{Vassilis Kanellopoulos}
\author{Konstantinos Tyros}
\address{National Technical University of Athens, Faculty of Applied Sciences, Department of Mathematics, Zografou Campus, 157 80, Athens,Greece} 
\email{sargyros@math.ntua.gr} 
\address{National Technical University of Athens, Faculty of Applied Sciences, Department of Mathematics, Zografou Campus, 157 80, Athens,Greece}
\email{bkanel@math.ntua.gr}
\address{Department of Mathematics, University of Toronto, Toronto, Canada M5S 2E4}
\email{ktyros@math.toronto.edu}

\maketitle

\tableofcontents

\begin{abstract} We extend
the classical Brunel-Sucheston definition of the spreading model
by introducing the   $\ff$-sequences $(x_s)_{s\in\ff}$ in a Banach
space and the plegma families in $\ff$ where $\ff$ is a regular
thin family. The new concept yields a transfinite increasing
hierarchy of classes of spreading sequences. We explore the
corresponding theory and we present examples establishing this
hierarchy and illustrating the limitation of the theory.
\end{abstract}

\footnotetext[1]{2010 \textit{Mathematics Subject Classification}:
Primary 46B03, 46B06, 46B25, 46B45, Secondary 05D10}

\footnotetext[2]{\keywords{Spreading models, Ramsey theory, thin
families}}

\footnotetext[3]{This research is partially supported by NTUA
Programme PEBE 2009.}

\footnotetext[4]{This work is part of the PhD Thesis of the third
named author}

 \mainmatter

\chapter*{Introduction}
Spreading models have been invented by A. Brunel and L. Sucheston
in the middle of 70's (c.f. \cite{BS}) and since then they have a
constant presence in the evolution of Banach space theory. Since
the goal of the present monograph is to extend and study that
notion, we  begin  by recalling the basics of their definition and
some of their consequences.

\section{The Brunel-Sucheston spreading models}
A spreading model of a Banach space $X$ is a spreading
sequence\footnote{A sequence $(e_n)_{n}$ in a seminormed space
$(E,\|\cdot\|_*)$ is called spreading if for every $n\in\nn$,
$k_1<\ldots<k_n$ in $\nn$ and $a_1,\ldots,a_n\in\rr$ we have that
$\|\sum_{j=1}^na_j e_j\|_*=\|\sum_{j=1}^n a_j e_{k_j}\|_*$}
$(e_n)_{n}$ in a seminormed space $(E,\|\cdot\|_*)$ connected to
$X$ through a \emph{Schreier almost isometry} which we next
describe. A sequence $(x_n)_{n}$ in a Banach space $(X,\|\cdot\|)$
is \emph{Schreier almost isometric} to a sequence $(e_n)_{n}$ in a
seminormed space $(E,\|\cdot\|_*)$, if there exists a null
sequence $(\delta_n)_{n}$ of positive reals such that for every
$F=\{n_1<\ldots<n_k\}$ with $|F|\leq n_1$ and every
$(a_i)_{i=1}^k\in[-1,1]^k$, we have that
 \[\Bigg|\Big\|\sum_{i=1}^ka_ix_{n_i}\Big\| -\Big\|\sum_{i=1}^ka_ie_i\Big\|_*\Bigg|<\delta_{n_1}\]
It is easy to see that any sequence $(e_n)_{n}$ as above is a
spreading one. A sequence $(e_n)_{n}$ is a spreading model of the
space $X$ if there exists a sequence $(x_n)_{n}$ in $X$ which is
Schreier almost isometric to $(e_n)_{n}$. In this case we say that
$(x_n)_{n}$ generates $(e_n)_{n}$ as a spreading model. By
applying Ramsey theorem (c.f. \cite{R}), Brunel and Sucheston
proved that every bounded sequence $(x_n)_{n}$ in a Banach space
$X$ has a subsequence $(x_{k_n})_{n}$ generating a spreading
model.

The spreading sequences possess regular structure. For instance if
$(e_n)_{n}$ is weakly null, then it is $1$-unconditional.
Furthermore if $(e_n)_{n}$ is unconditional then  either it is
equivalent to the standard basis of $\ell^1$ or it is norm
Ces\`aro summable to zero. The importance of the spreading models
arise from the fact that they connect in an asymptotic manner the
structure of an arbitrary Banach space $X$ to the corresponding
one of spaces generated by spreading sequences. For example every
Banach space admits unconditional sequences as a spreading model
and moreover every seminormalized weakly null sequence in a Banach
space $X$ contains a subsequence which either generates $\ell^1$
as a spreading model or it is norm Ces\`aro summable to zero. We
should also add that recent discoveries (c.f. \cite{G1},
\cite{GM}) have shown that similar regular structure is not
expected inside a generic Banach space.

It is clear from the definition of the spreading model that it
describes a kind of finite representability\footnote{A Banach
space $Y$ is finitely representable in $X$ if for every finite
dimensional subspace $F$ of $Y$ and every $\ee>0$ there exists
$T:F\to Y$ bounded linear injection such that
$\|T\|\cdot\|T^{-1}\|<1+\ee$.} of the space generated by the
sequence $(e_n)_{n}$ into the space $(X,\|\cdot\|)$. However there
exists a significant difference between the two concepts. Indeed
in the frame of the finite representability there are two
classical achievements: Dvoretsky's theorem (c.f. \cite{D})
asserting that $\ell^2$ is finitely representable in every Banach
space $X$ and also Krivine's theorem (c.f. \cite{Kr}) asserting
that for every linearly independent sequence $(x_n)_{n}$ in $X$
there exists a $1\leq p\leq \infty$ such that $\ell^p$ is block
finitely representable in $X$. On the other hand E. Odell and Th.
Schlumprecht (c.f. \cite{O-S}) have shown  there exists a
reflexive space $X$ admitting no $\ell^p$ as a spreading model .
Thus the spreading models of a space lie strictly between the
finitely representable spaces in $X$ and the spaces that are
isomorphic to a subspace of $X$.

The spreading models associated to a Banach space $X$ can be
considered as a cloud of Banach spaces, including many members
with regular structure, surrounding the space $X$ and offering
information concerning the local structure of $X$ in an asymptotic
manner. Our aim is to enlarge that cloud and to fill in the gap
between spreading models and the spaces which are finitely
representable in $X$. More precisely we extend the
Brunel-Sucheston concept of a spreading model and we show that
under the new definition the spreading models associated to a
Banach space $X$ form a whole hierarchy of classes of spaces
indexed by the countable ordinals. The first class of this
hierarchy is the classical spreading models.

\section{The extended notion of the spreading models} The new definition heavily depends
on the following two ingredients. The first one is the
$\ff$-\emph{sequences} $(x_s)_{s\in\ff}$ with $\ff$ a
\emph{regular thin} family of finite subsets of $\nn$. The
$\ff$-sequences will replace the usual sequences. The second one
is the \emph{plegma} families of finite subsets of $\nn$. This is
a new concept introduced here which is very crucial for our
approach due to their Ramsey properties. Let us also note that the
plegma families are invisible in the classical definition of
spreading models. Next we shall describe in detail the
aforementioned concepts and the definition of the spreading
models. In the sequel for an infinite subset $M$ of $\nn$ by
$[M]^{<\infty}$ (resp. $[M]^{\infty}$) we denote the set of all
finite (resp. infinite) subsets of $M$.

Thin families introduced by Nash-Williams in \cite{N-W} and were
studied by P. Pudlak and V. Rodl in \cite{Pd-R}. A detailed
presentation can be found in the second part of \cite{AT}. Here we
consider a special class of thin families called \emph{regular
thin} families (see Definition \ref{defn regular thin}). An
important feature of a thin family $\ff$ is the order of $\ff$,
denoted as $o(\ff)$ and which is defined to be the height of the
tree $\widehat{\ff}=\{t\in[\nn]^{<\infty}:\exists s\in\ff\text{
with }t\sqsubseteq s\}$ associated to the family $\ff$ (see
\cite{K}, \cite{Pd-R}). Typical examples of regular thin families
are the families of $k$-subsets of $\nn$, $\ff_k=[\nn]^k$ with
$o(\ff_k)=k$, the maximal elements of the Schreier family,
$\ff_\omega=\{s\subset\nn:\min s=|s|\}$ with
$o(\ff_\omega)=\omega$ and a generic one is the family
$\ff_{\omega^\xi}$ of the maximal elements of the $\xi$-Schreier
family $\mathcal{S}_\xi$ (c.f. \cite{AA}), with
$o(\ff_{\omega^\xi})=\omega^\xi$. We shall consider
$\ff$-sequences $(x_s)_{s\in\ff}$, in some set, as well as
$\ff$-\emph{subsequences} $(x_s)_{s\in\ff\upharpoonright L}$,
where $L\in[\nn]^\infty$ and $\ff\upharpoonright
L=\{s\in\ff:s\subset L\}$.

The plegma families are some special finite sequences of subsets
of $\nn$ (see Definition \ref{defn plegma}). Roughly speaking a
plegma family is a family $(s_1,\ldots,s_l)$ of pairwise disjoint
finite subsets of $\nn$ satisfying the following property. The
first elements of $(s_i)_{i=1}^l$ are in increasing order and they
lie before their second elements which are also in increasing
order and so on. A plegma family does not necessarily include sets
of equal size. For each $l\in\nn$, let $\text{\emph{Plm}}_l(\ff)$
be the set of all plegma families $(s_1,\ldots,s_l)$ with each
$s_i\in\ff$. The plegma families satisfy the following Ramsey
property which is fundamental for this work.
\begin{thm}\label{thm01}
Let $M$ be an infinite subset of $\nn$, $l\in\nn$  and $\ff$ be a
regular thin family. Then for every finite coloring of
$\text{\emph{Plm}}_l(\ff\upharpoonright M)$ there exists
$L\in[M]^\infty$
 such that $\text{\emph{Plm}}_l(\ff\upharpoonright L)$ is monochromatic.
\end{thm}
As in the case of the classical spreading models, an iterated use
of the above theorem,  yields that for every bounded
$\ff$-sequence $(x_s)_{s\in\ff}$ in a Banach space $X$  there
exist an infinite subset $M$ of $\nn$ and a seminorm $\|\cdot\|_*$
on $c_{00}(\nn)$ under which the natural Hamel basis $(e_n)_n$ is
a spreading sequence such that the following is satisfied: For
every $l\in\nn$, $a_1,\ldots,a_l\in\rr$ and every sequence
$((s_i^n)_{i=1}^l)_{n}$ in $\text{\emph{Plm}}_l(\ff\upharpoonright
M)$ with $\min s_1^n\to\infty$, we have
\[\Big\|\sum_{i=1}^l a_ie_i\Big\|_*=\lim_{n\to\infty}\Big\|\sum_{i=1}^l a_ix_{s^n_i}\Big\|\]
 The sequence $(e_n)_n$  will be  called an $\ff$-\emph{spreading model}
 of $X$ which is \emph{generated by} the $\ff$-subsequence
 $(x_s)_{s\in\ff\upharpoonright M}$.

 Let us point out that the $\ff$-sequences with $o(\ff)=1$ coincide
 with the usual sequences and also the corresponding plegma
 families are the finite subsets of $\nn$. Thus the above definition when
 $o(\ff)=1$ recovers
 the classical Brunel-Sucheston spreading models.

There is evidence supporting that the above definition of the
spreading models is the appropriate extension of the classical
one. The first one is that an $\ff$-spreading model $(e_n)_{n}$
depends \emph{only} on the order of $\ff$. More precisely the
following holds.
\begin{prop}\label{02}
  Let $X$ be a Banach space and $\ff,\g$ be regular thin families. If
$o(\ff)=o(\g)$ then
  $(e_n)_{n}$ is an $\ff$-spreading model of $X$ if and only if  $(e_n)_{n}$
  is a $\g$-spreading model of $X$. More generally, if $o(\ff)\leq o(\g)$ and  $(e_n)_{n}$ is an
$\ff$-spreading model of $X$ then $(e_n)_{n}$
   is a  $\g$-spreading model of $X$.
\end{prop}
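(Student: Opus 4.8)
The plan is to reduce the whole statement to the second, more general assertion: if $o(\ff)\leq o(\g)$ then every $\ff$-spreading model of $X$ is also a $\g$-spreading model of $X$. Granting this, the equivalence in the case $o(\ff)=o(\g)$ follows at once by applying it twice, once to the pair $(\ff,\g)$ and once to the pair $(\g,\ff)$.

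So suppose $o(\ff)\leq o(\g)$ and let $(e_n)_n$ be an $\ff$-spreading model of $X$, generated by an $\ff$-subsequence $(x_s)_{s\in\ff\upharpoonright M}$ with $M\in[\nn]^\infty$. First I would invoke the comparison principle for regular thin families: since $o(\ff)\leq o(\g)$, there is $L\in[M]^\infty$ such that every $t\in\g\upharpoonright L$ has a (necessarily unique, since $\ff$ is thin) initial segment $t^*\sqsubseteq t$ belonging to $\ff$; because $t^*\subseteq t\subseteq L\subseteq M$ we in fact have $t^*\in\ff\upharpoonright M$. This gives a well-defined map $t\mapsto t^*$ from $\g\upharpoonright L$ into $\ff\upharpoonright M$. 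I would then define the $\g$-subsequence $(y_t)_{t\in\g\upharpoonright L}$ by $y_t:=x_{t^*}$ and claim that it generates $(e_n)_n$ as a $\g$-spreading model.

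To verify the claim against the definition, I would fix $l\in\nn$, scalars $a_1,\dots,a_l\in\rr$, and a sequence $\big((t^n_i)_{i=1}^l\big)_n$ in $\mathrm{Plm}_l(\g\upharpoonright L)$ with $\min t^n_1\to\infty$, and establish two points. First, $(t^n_1)^*$ is a nonempty initial segment of $t^n_1$, so $\min (t^n_1)^*=\min t^n_1\to\infty$. Second, for every $n$ the family $\big((t^n_i)^*\big)_{i=1}^l$ again belongs to $\mathrm{Plm}_l(\ff\upharpoonright M)$: passing from the members of a plegma family to nonempty initial segments of them preserves pairwise disjointness, and it preserves the plegma condition because the first entries are unchanged — hence still in increasing order and still lying before the (unchanged) second entries, and so on along the columns. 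With these two points in hand, the defining property of the $\ff$-spreading model $(e_n)_n$, applied to the plegma sequence $\big((t^n_i)^*\big)_{i=1}^l$, yields
\[
\Big\|\sum_{i=1}^l a_i y_{t^n_i}\Big\| \;=\; \Big\|\sum_{i=1}^l a_i x_{(t^n_i)^*}\Big\| \;\longrightarrow\; \Big\|\sum_{i=1}^l a_i e_i\Big\|_* \qquad (n\to\infty).
\]
Since $l$, the scalars and the plegma sequence were arbitrary, this shows that $(e_n)_n$ is a $\g$-spreading model of $X$.

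The crux of the argument — and the only place where the assumption on the orders enters — is the comparison principle used to choose $L$: whenever $o(\ff)\leq o(\g)$, one can pass to an infinite set $L$ on which every element of $\g$ extends an element of $\ff$. I expect this to be the main obstacle; it is a structural fact about regular thin families, to be read off from the Cantor--Bendixson rank analysis of the trees $\widehat{\ff}$ and $\widehat{\g}$ together with the plegma Ramsey property stated above (or quoted directly from the development of regular thin families in the preceding sections, cf.\ \cite{K}, \cite{Pd-R}). Once it is in place, the rest — transporting the $\ff$-subsequence along $t\mapsto t^*$ and checking the compatibility of $t\mapsto t^*$ and of $t\mapsto\min t$ with plegma families — is routine.
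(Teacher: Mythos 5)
Your reduction to the second assertion and the transport of the $\ff$-subsequence along a map $t\mapsto t^*$ is exactly the right shape of argument, and your observations that initial segments of a plegma tuple again form a plegma tuple and that $\min t^*=\min t$ are correct. The gap is the ``comparison principle'' itself: it is \emph{false} as you state it. What is true (Nash-Williams/Galvin--Prikry plus Gasparis' dichotomy, Proposition \ref{corollary by Gasparis} of the paper) is that if $o(\ff)<o(\g)$ \emph{strictly}, then some $L$ satisfies $\ff\upharpoonright L\sqsubset\g\upharpoonright L$. In the equal-order case, which is precisely what you need for the ``if and only if'', there need not exist any $L$ on which every element of $\g$ end-extends an element of $\ff$. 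Concretely, take $\g=\{s:|s|=\min s\}$ (the maximal Schreier sets) and $\ff=\{s:|s|=\min s+1\}$; both are regular thin of order $\omega$, yet for every infinite $L$ and every $t\in\g\upharpoonright L$ no initial segment $u\sqsubseteq t$ can lie in $\ff$, since that would force $|u|=\min u+1=\min t+1>|t|$. So one of the two applications of your principle needed for the equivalence simply has no witness $L$, and your proof of the main case collapses at its crux.

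The paper's proof repairs exactly this point by inserting a shift. Theorem \ref{embending a family F in G by a shifting where o(F)<o(G)} (proved by transfinite induction on the order, using the spreading property) yields $L_0\in[\nn]^\infty$ with $\widehat{\ff}(L_0)\subseteq\widehat{\g}$; then, after passing to $N\in[L_0(M)]^\infty$ on which both $\ff(L_0)$ and $\g$ are very large, every $t\in\g\upharpoonright N$ has a unique initial segment of the form $L_0(s)$ with $s\in\ff\upharpoonright M$, and one sets $\varphi(t)=s$ and $x'_t=x_{\varphi(t)}$. Note that $\varphi(t)$ is \emph{not} an initial segment of $t$ but its pull-back under $L_0$, so two further (easy, but not free) checks replace your ``routine'' step: plegma tuples are preserved because $L_0^{-1}$ is order-preserving on initial segments of plegma tuples (this is Corollary \ref{plegma preserving maps}), and the quantitative condition $s_1(1)\geq M(l)$ in Definition \ref{Definition of spreading model} survives because $t_1(1)\geq N(l)$ with $N\subseteq L_0(M)$ forces $\varphi(t_1)(1)\geq M(l)$. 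If you add the shifted embedding (or quote it) and redo your verification for $t\mapsto L_0^{-1}(\text{initial segment of }t\text{ in }\ff(L_0))$ rather than for initial segments in $\ff$ itself, your argument becomes the paper's proof of Proposition \ref{propxi}.
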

The above allow us to classify the spreading models of a Banach
space $X$ in a transfinite hierarchy as follows.
\begin{defn}
  Let $X$ be a Banach space and $1\leq\xi<\omega_1$
  be a countable ordinal. We will say that $(e_n)_{n}$ is a $\xi$-spreading model  of $X$ if there
  exists a regular thin family $\ff$ with $o(\ff)=\xi$ such that
  $(e_n)_{n}$ is an $\ff$-spreading model of $X$. The set of all $\xi$-spreading models of $X$ will be denoted by
  $\mathcal{SM}_\xi(X)$.
\end{defn}
Notice that Proposition \ref{02} yields that the above defined
transfinite hierarchy of spreading models is an increasing one
i.e. for every Banach space $X$ and $1\leq\zeta<\xi<\omega_1$ we
have that $\mathcal{SM}_\zeta(X)\subseteq \mathcal{SM}_\xi(X)$. It
is an open problem whether this hierarchy is stabilized, i.e. does
for every separable Banach space $X$ exist a countable ordinal
$\xi$ such that for every $\zeta>\xi$, $\mathcal{SM}_\zeta(X)=
\mathcal{SM}_\xi(X)$?

This problem is open for certain classical spaces like
$L^p(\lambda)$, $1\leq p<\infty$. In the case of $\ell^p$, $1\leq
p<\infty$, the classes $\mathcal{SM}_\xi(\ell^p)$ are stabilized
for $\xi=1$. The case of $c_0$ is more interesting. Every
spreading model of order one of $c_0$ generates $c_0$, while every
Schauder basic spreading sequence is equivalent to a spreading
model of  order two  of $c_0$. This result indicates that the
higher order spreading models of a Banach space could be
considerably different from the classical ones. It remains open if
the class $\mathcal{SM}_\xi(c_0)$ is stabilized at some countable
ordinal $\xi$.

Let us point out that the $\xi$-spreading models of $X$ have a
weaker asymptotic relation to the space $X$ as $\xi$ increases to
$\omega_1$. A natural question arising from the above discussion
is whether the $\xi$-spreading models of $X$, $\xi<\omega_1$,
could recapture Krivine's theorem. As we will see this is not
always true.
\section{Other approaches}\label{intro other appraches section}
There are two other concepts in the literature sharing similar
features with the aforementioned extended definition. The first
one is due to L. Halbeisen and E. Odell appeared in \cite{H-O} and
concerns the so called asymptotic models which are associated to
$[\nn]^2$-bounded sequences in a Banach space. The asymptotic
models are not necessarily spreading sequences.

The second one is explicitly stated in \cite{O-S} although was
known to the experts of the Banach space theory. This concerns
what we call $k$-\emph{iterated} spreading models, that are
inductively  defined as follows.   First we need some notation
from \cite{O-S}. Let $X,E$ be Banach spaces. We write $X\to E$ if
$E$ has a Schauder basis which is a spreading model of some
seminormalized Schauder basic sequence in $X$ and
$X\stackrel{k}{\to}E$ if $X\to E_1\to\ldots\to E_{k-1}\to E$ for
some finite sequence $E_1,\ldots, E_{k-1}$. Note that for every
$k\in\nn$ if $X\stackrel{k}{\to}E$ then $E$ has a spreading
Schauder basis. A spreading Schauder basic sequence $(e_n)_{n}$ is
said to be a $k$-\textit{iterated} spreading model of a Banach space
$X$ if setting $E=\overline{<(e_n)_{n}>}$ then
$X\stackrel{k}{\to}E$. If in each inductive step we consider block
sequences instead of Schauder basic ones, we define in a similar
manner the \emph{block} $k$-iterated spreading models ($X\substack{k
\\\longrightarrow\\\text{bl}} E$). It is easy to see that the $k$-iterated spreading models define a countable hierarchy and a
problem posed by E. Odell and Th. Schlumprecht in \cite{O-S} is
whether there exists a Banach space $X$ such that no
$k$-iterated spreading model contains some $\ell^p$, $1\leq p<\infty$, or
$c_0$.

It is interesting that the class of $k$-spreading models includes
the most of the $k$-iterated ones as the following
describes.
\begin{prop} Let $X$ be a Banach space and $k\in\nn$. Then every
block $k$-iterated spreading model is also a $k$-spreading model.
Moreover if for every $1\leq l<k$, all $l$-iterated spreading models
are reflexive then all $k$-iterated spreading models are
$k$-spreading models.
\end{prop}
This proposition enable us to answer the aforementioned problem by
showing the following more general result. There exists a
reflexive Banach space $X$ such that no $\ell^p$, $1\leq
p<\infty$, or $c_0$ is embedded into $E=\overline{<(e_n)_{n}>}$,
where $(e_n)_{n}$ is a spreading model of any order of $X$.

Let us point out that the $k$-iterated spreading models of $c_0$
generate $c_0$. On the other hand, as we have mentioned before,
the class of spreading models of order two  of $c_0$ includes a
large variety of spreading sequences. Therefore the
$2$-iterated spreading models of $c_0$ is a proper subclass of
$\mathcal{SM}_2(c_2)$. Furthermore for every $k\geq2$, we shall
provide  an example of a reflexive space such that the
$l$-spreading models properly include the $l$-iterated spreading
models, for all $l\geq k$.

\section{An overview of the results}
The present work is organized into 14 chapters naturally divided
in two parts. The first part (Chapters 1-10) includes the
definition of the higher order spreading models and their theory.
In the second one we provide examples establishing the hierarchy
and also illustrating the boundaries of the theory. Next we
briefly present the content of the first part.
\subsection*{Chapter 1}
It is devoted to the regular thin and plegma families which as we
have mentioned possess a dominant role in our approach. The main
results of this chapter is the  aforementioned Theorem \ref{thm01}
concerning the Ramsey properties of the plegma families and the
following two theorems which further explain their behavior.

For a family $\ff$ of finite subsets of $\nn$, by
$\text{\emph{Plm}}(\ff)$ we denote  the set of all plegma families
in $\ff$, i.e. $\text{\emph{Plm}}(\ff)
=\cup_{l=1}^\infty\text{\emph{Plm}}_l(\ff)$.
\begin{thm} \label{04} Let $\ff,\g$ be regular thin families. If  $o(\ff)\leq o(\g)$
then for every $M\in[\nn]^\infty$ there exist $N\in[\nn]^\infty$
and a  map $\varphi:\g\upharpoonright N\to\ff\upharpoonright M$
such that for every $(s_i)_{i=1}^l\in
\text{\emph{Plm}}(\g\upharpoonright N)$, we have that
$(\varphi(s_i))_{i=1}^l\in \text{\emph{Plm}}(\ff\upharpoonright
M)$.
\end{thm}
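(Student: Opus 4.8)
The plan is to take $\varphi$ to be the map that sends each $t\in\g\upharpoonright N$ to its initial segment lying in $\ff$, once a suitable $N\subseteq M$ has been found which guarantees that such an initial segment always exists. Note first that since $\ff$ is thin, no element of $\ff$ is a proper initial segment of another, so every finite set $t$ has \emph{at most one} $s\in\ff$ with $s\sqsubseteq t$; moreover if such an $s$ exists and $t\subseteq N\subseteq M$, then $s\subseteq M$, i.e.\ $s\in\ff\upharpoonright M$. Hence the statement reduces to two tasks: (a) producing $N\in[M]^\infty$ such that \emph{every} $t\in\g\upharpoonright N$ has an initial segment in $\ff$, and (b) checking that the resulting $\varphi$ sends plegma families to plegma families.

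For (a) I would colour each $t\in\g\upharpoonright M$ according to whether or not it has an initial segment in $\ff$, and apply the Ramsey property of regular thin families (the case $l=1$ of the Ramsey proposition stated above, equivalently Nash--Williams \cite{N-W}) to obtain $N\in[M]^\infty$ on which $\g\upharpoonright N$ is monochromatic. The heart of the matter is to exclude the colour ``no initial segment in $\ff$''. If that colour occurred, then — using the largeness (barrier) property built into the notion of a regular thin family, cf.\ Definition \ref{defn regular thin} — for each $t\in\g\upharpoonright N$ one extends $t$ to some $A\in[N]^\infty$, takes the $s\sqsubseteq A$ with $s\in\ff\upharpoonright N$, and observes that $s$ and $t$ are comparable while $s\sqsubseteq t$ is ruled out; hence $t\sqsubseteq s$ with $t\ne s$. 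Thus every element of $\g\upharpoonright N$ is a proper initial segment of an element of $\ff\upharpoonright N$, which forces $\widehat{\g\upharpoonright N}\subseteq\widehat{\ff\upharpoonright N}$ with the elements of $\g\upharpoonright N$ non-maximal in $\widehat{\ff\upharpoonright N}$, and then a rank computation should yield $o(\g)=o(\g\upharpoonright N)<o(\ff\upharpoonright N)=o(\ff)$, contradicting $o(\ff)\le o(\g)$.

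I expect this last rank estimate to be the main obstacle. What is needed is the \emph{strict} inequality $o(\g\upharpoonright N)<o(\ff\upharpoonright N)$, and such strictness can fail for general thin families — the phenomenon is already visible at limit orders, where a subtree can be properly contained in another, with non-maximal leaves, yet carry the same rank — so it is precisely here that the regularity of $\ff$ and $\g$, rather than mere thinness, must be invoked, together with the invariance of the order under passing to restrictions. The remaining ingredients of (a) — the monochromatic extraction and the comparability argument — are routine.

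For (b), let $(t_i)_{i=1}^l$ be a plegma family in $\g\upharpoonright N$ and set $s_i=\varphi(t_i)\sqsubseteq t_i$. Listing the entries of a family in ``column order'' (first the minima in increasing order, then the second elements, and so on, omitting a set once its entries run out), the sequence produced by $(s_i)_{i=1}^l$ is a subsequence of the strictly increasing sequence produced by $(t_i)_{i=1}^l$, hence itself strictly increasing; combined with pairwise disjointness, inherited from $(t_i)_{i=1}^l$, this is exactly the condition of Definition \ref{defn plegma}. Therefore $(\varphi(t_i))_{i=1}^l$ is a plegma family, and as each $\varphi(t_i)\in\ff\upharpoonright M$ it lies in $\text{\emph{Plm}}(\ff\upharpoonright M)$, which completes the proof.
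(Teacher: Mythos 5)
Your part (b) is fine: if $s_i\sqsubseteq t_i$ then every comparison required by Definition \ref{defn plegma} for $(s_i)_{i=1}^l$ is an instance of one already holding for $(t_i)_{i=1}^l$, so initial-segment maps (composed with an increasing bijection) preserve plegma tuples. The genuine gap is in part (a), and it is not merely the rank estimate you flag: in the case $o(\ff)=o(\g)$, which the theorem must cover (it is exactly what is needed for Proposition \ref{02}), the set $N$ you want simply need not exist. Take $\g=\{s:|s|=\min s\}$ and $\ff=\{s:|s|=\min s+1\}$; both are regular thin of order $\omega$, yet for every $N\in[\nn]^\infty$ and every $t\in\g\upharpoonright N$ no initial segment of $t$ lies in $\ff$ (any nonempty $u\sqsubseteq t$ has $|u|\leq\min u<\min u+1$). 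So the ``bad colour'' can be monochromatic on every $N$, and your hoped-for contradiction cannot materialize: regularity does \emph{not} restore strictness, since in this example every $t\in\g\upharpoonright N$ is a proper initial segment of an element of $\ff\upharpoonright N$ while $o(\g\upharpoonright N)=o(\ff\upharpoonright N)=\omega$. (This is consistent with the paper: Proposition \ref{corollary by Gasparis}, which is essentially your step (a), is stated and proved only under the strict hypothesis $o(\ff)<o(\g)$, and its rank argument yields only $o(\g)\leq o(\ff)$, which contradicts strict inequality but not equality.)

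The paper circumvents this by not taking initial segments in $\ff$ itself but in a shifted copy of $\ff$. Theorem \ref{embending a family F in G by a shifting where o(F)<o(G)} (proved by transfinite induction on the order, using the families $\ff_{(n)}$ and the spreading property) provides $L_0\in[\nn]^\infty$ with $\widehat{\ff}(L_0)\subseteq\widehat{\g}$; then, after passing to $N\in[L_0(M)]^\infty$ on which both $\ff(L_0)$ and $\g$ are very large, every $t\in\g\upharpoonright N$ has a unique initial segment $\widetilde{s}\in\ff(L_0)$, and one sets $\varphi(t)=s$ where $L_0(s)=\widetilde{s}$ (Proposition \ref{prop plegma preserving maps}). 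Since $\varphi(t)(i)=L_0^{-1}(t(i))$ with $L_0^{-1}$ increasing, plegma preservation follows exactly as in your part (b) (Corollary \ref{plegma preserving maps}). So your strategy works verbatim only when $o(\ff)<o(\g)$; to repair it in general you must first prove (or invoke) the embedding-by-shifting theorem and replace ``initial segment in $\ff$'' by ``initial segment in $\ff(L_0)$''.
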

A map $\varphi$ satisfying the above property is called a plegma
preserving map. We could say that the above result is the set
theoretic analogue of Proposition \ref{02} stated above and in
fact it is the basic ingredient of its proof. In the other
direction we prove the following which explicitly forbids the
existence of such maps from lower to higher order regular thin
families.
\begin{thm} \label{05} Let $\ff,\g$ be regular thin families.  If $o(\ff)<o(\g)$ then  for every
$\varphi:\ff\to\g$ and $M\in[\nn]^\infty$ there exists
$L\in[M]^\infty$ such that for every plegma pair $(s_1,s_2)$ in
$\ff\upharpoonright L$ neither $(\phi(s_1),\phi(s_2))$ nor
$(\phi(s_2),\phi(s_1))$ is a plegma pair.
\end{thm}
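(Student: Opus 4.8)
The plan is to argue by contradiction, leveraging the positive transfer result (Theorem \ref{04}) against itself. Suppose the conclusion fails: there exist regular thin families $\ff,\g$ with $o(\ff)<o(\g)$, a map $\varphi:\ff\to\g$, and an infinite set $M$ such that for \emph{every} $L\in[M]^\infty$ one can find a plegma pair $(s_1,s_2)$ in $\ff\upharpoonright L$ for which at least one of $(\varphi(s_1),\varphi(s_2))$, $(\varphi(s_2),\varphi(s_1))$ is again a plegma pair. The first step is to upgrade this to a uniform statement via the Ramsey property for plegma families (the Proposition stated early in the excerpt, applied with $l=2$): color a plegma pair $(s_1,s_2)$ in $\ff\upharpoonright M$ by which of the four possibilities holds — $(\varphi(s_1),\varphi(s_2))$ plegma, $(\varphi(s_2),\varphi(s_1))$ plegma, both, or (the one we have just excluded on every $L$) neither. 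Passing to a monochromatic $L_0\in[M]^\infty$, the "neither" color is impossible, so on $L_0$ one of the first three colors holds for \emph{all} plegma pairs; after possibly relabelling we may assume $(\varphi(s_1),\varphi(s_2))\in\mathrm{Plm}_2(\g)$ whenever $(s_1,s_2)\in\mathrm{Plm}_2(\ff\upharpoonright L_0)$.

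Next I would promote this from pairs to arbitrary-length plegma families. The key structural point about plegma families is that $(s_1,\dots,s_l)$ is a plegma family precisely when each consecutive pair $(s_i,s_{i+1})$ — or, depending on the exact formulation in Definition \ref{defn plegma}, each pair $(s_i,s_j)$ with $i<j$ — is a plegma pair, so a colouring controlled on pairs controls all of $\mathrm{Plm}(\ff\upharpoonright L_0)$. (If the definition only gives the pairwise characterization after a further Ramsey refinement, one more application of the plegma Ramsey property on each $\mathrm{Plm}_l$ together with a diagonalization over $l$ produces a single $N\in[L_0]^\infty$ on which $\varphi$ sends every plegma family in $\ff\upharpoonright N$ to a plegma family in $\g$, with the coordinates kept in their original order.) At this point I have manufactured, on the set $N$, exactly the kind of order-preserving plegma-to-plegma map that Theorem \ref{04} forbids in this direction.

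The contradiction is then extracted by comparing orders. Since $\varphi$ restricted to $\ff\upharpoonright N$ is injective on each plegma family and sends plegmas to plegmas coordinatewise, an argument paralleling (the converse half of) Theorem \ref{04} — or more directly, a tree/rank computation on $\widehat{\ff}$ versus $\widehat{\g}$ — shows $o(\g)\le o(\ff)$, against the hypothesis $o(\ff)<o(\g)$. Concretely, one fixes a long plegma family $(s_1,\dots,s_l)$ in $\ff\upharpoonright N$ and uses the images $\varphi(s_1),\dots,\varphi(s_l)$ to build an embedding of a large-rank well-founded subtree of $\widehat{\g}$ into $\widehat{\ff}$, forcing $o(\g)\le o(\ff)$; the uniformity on $N$ is what makes the $s_i$ simultaneously available with all the required interleaving.

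The main obstacle I expect is the combinatorial bookkeeping in that last step: verifying that the coordinatewise plegma-preservation on $N$ really does yield a rank-nonincreasing map between the trees $\widehat{\g}$ and $\widehat{\ff}$, and in particular handling the fact that $\varphi$ need not respect initial segments ($\varphi(s)$ for $s\sqsubseteq s'$ has no a priori relation to $\varphi(s')$). Overcoming this should use that regularity of $\ff$ (Definition \ref{defn regular thin}) gives us a clean tree structure and that, along a suitably thinned $N$, the plegma condition pins down enough of the relative position of the $\varphi(s_i)$ to reconstruct the branching of $\widehat{\g}$ inside $\widehat{\ff}$.
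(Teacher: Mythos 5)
Your opening reduction is fine in spirit: a contradiction hypothesis plus Proposition \ref{ramseyforplegma} (for pairs) does give an $L_0\in[M]^\infty$ on which $\varphi$ is plegma preserving. Two caveats there. First, the order-reversed monochromatic case cannot be handled by ``relabelling'': the plegma relation is not symmetric, and the paper kills this case by observing that along an infinite plegma path $(s_k)_k$ in $\ff\upharpoonright L_0$ the sequence $(\min\varphi(s_k))_k$ would be strictly decreasing, which is impossible. That is a small fixable omission. The serious problem is the final step. You propose to conclude $o(\g)\le o(\ff)$ from the existence of a plegma monotone map $\ff\upharpoonright N\to\g$ via ``a tree/rank computation'' in which a single long plegma family $(s_1,\dots,s_l)$ and its images are used to embed a large-rank well-founded subtree of $\widehat{\g}$ into $\widehat{\ff}$. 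This claim is essentially equivalent to the theorem itself, and the sketch does not contain an argument for it: a rank inequality between two well-founded trees of arbitrary countable rank cannot be read off from finitely many sets, and, as you yourself note, $\varphi$ has no a priori compatibility with initial segments, so there is no candidate tree map to run a rank induction on. The proposal therefore leaves the mathematical core of the theorem unproved.

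For comparison, the paper's route after the Ramsey reduction is quite different and is where all the work lies. First (Proposition \ref{Properties of plegma preserving map} and Lemma \ref{firsttheoreminadm}) it upgrades $\varphi$, on a further restriction and after the change of variables $\psi(t)=\varphi(L(t))$, to a \emph{plegma canonical} map satisfying the quantitative estimates $|\psi(t)|\le|t|$ and $\psi(t)(i)>t(i)$ for all $i\le|\psi(t)|$; these are extracted from plegma monotonicity using plegma paths, Lemma \ref{lemma conserning the length of the plegma path} (a plegma path from $s_0$ to $s$ with $s_0<s$ has length at least $\min|s_i|$) and Proposition \ref{accessing everything with plegma path of length |s_0|}. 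Second, it invokes Proposition \ref{corollary by Gasparis}: since $o(\ff)<o(\g)$, one may pass to $N$ with $\ff(L^{-1})\upharpoonright N\sqsubset\g\upharpoonright N$, so any $t_0\in\ff(L^{-1})\upharpoonright\upharpoonright N$ lies in $\widehat{\g}\setminus\g$. The contradiction then comes from the spreading property of $\widehat{\g}$: because $\psi(t_0)$ is no longer than $t_0$ and dominates it coordinatewise, $\psi(t_0)$ would also be a non-maximal node of $\widehat{\g}$, contradicting $\psi(t_0)\in\g$. If you want to complete your plan, you will need substitutes for precisely these ingredients (the length and coordinate estimates, and the $\sqsubset$-comparison coming from $o(\ff)<o(\g)$); the single-plegma-family embedding idea as stated will not produce them.
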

The proof of Theorem \ref{05} relies on the  \emph{plegma paths}
which are finite sequences $(s_i)_{i=1}^d$ such that
$(s_i,s_{i+1})$ is a plegma pair.  Let us point out that the
relation ``$(s_1,s_2)$ is a plegma pair" is neither symmetric nor
transitive. In the context of graph theory the above theorem
describes the following property. Viewing $\ff$ and $\g$ as graphs
with edges the plegma pairs in $\ff$ and $\g$ respectively, we
have that for every $\varphi:\ff\to\g$ and $M\in[\nn]^\infty$
there exists $L\in[M]^\infty$ such that $\varphi$ embeds the
associated to $\ff\upharpoonright L$ graph into the complement of
the corresponding one of $\g$.
\subsection*{Chapter 2}It includes the extended definition of the
spreading models and their classification into a transfinite
increasing hierarchy that we have already discussed. Additionally
we present a general method yielding $\xi$-spreading models which
goes as follows.

Let $1\leq \xi<\omega_1$ and $\ff$ be a regular thin family of
order $\xi$ and  $(e_n)_{n}$ be a spreading and $1$-unconditional
sequence in a Banach space $(E,\|\cdot\|)$.

We denote by $(e_s)_{s\in\ff}$ the natural Hamel basis of
$c_{00}(\ff)$. For $x\in c_{00}(\ff)$ we set
\[\|x\|_\ff=\sup\Big{\{}\Big\|\sum_{i=1}^lx(s_i)e_{s_i}\Big\|:l\in\nn,(s_i)_{i=1}^l\in\text{\emph{Plm}}_l(\ff)\text{ and }l\leq s_1(1)\Big{\}}\]
and let  $X_\ff=\overline{(c_{00}(\ff),\|\cdot\|_\ff)}$ be the
completion of $c_{00}(\ff)$ under the above norm.

It is easy to see that  $(e_n)_{n}$ is an $\ff$-spreading model of
$X_\ff$ generated by the $\ff$-sequence $(e_s)_{s\in\ff}$. Thus
setting $A=\{e_s:s\in\ff\}$, we have that $(e_n)_{n}$ belongs to
$\mathcal{SM}_\xi(A)$.

Moreover, if in addition  $(e_n)_{n}$ is  not equivalent to the
usual basis of $c_0$ then, using Theorem \ref{05}, one can verify
that for every $\zeta<\xi$ and every regular thin family $\g$ with
$o(\g)=\zeta$,  $(e_n)_{n}$ does not occur as a spreading model
generated by a $\g$-subsequence in $A$. However, it is not so
immediate how to construct a space $X$ admitting the sequence
$(e_n)_{n}$ as a $\xi$-spreading model such that for every
$\zeta<\xi$ the whole space $X$ does not admit a $\zeta$-spreading
model equivalent to $(e_n)_{n}$. We provide such examples in
Chapter 11.
\subsection*{Chapter 3}In this chapter we present a detailed study
of the spreading sequences. They are categorized as follows.

A spreading sequence $(e_n)_n$ in a seminormed space
$(E,\|\cdot\|_*)$ is called \emph{trivial} if the seminorm
$\|\cdot\|_*$ restricted on $Z=<(e_n)_n>$ is not a norm. It is
shown that in this case the subspace $N=\{x\in Z:\|x\|_*=0\}$ is
of codimension one in $Z$ and therefore $Z/_N\cong \rr$.

For the unconditional spreading sequences we present a well known
dichotomy. Namely an unconditional spreading sequence  either is
equivalent to the usual basis of $\ell^1$ or it is norm Cez\'aro
summable to zero.

A spreading sequence is called \emph{singular} if it is not
trivial and not Schauder basic. A singular spreading sequence
admits a decomposition as follows.
\begin{prop}\label{intro singular decomposition}
Let $(e_n)_{n}$ be a singular spreading sequence and let $E$ be
the Banach space generated by $(e_n)_{n}$. Then there is $e\in
E\setminus \{0\}$ such that $(e_n)_{n}$ is weakly convergent to
$e$. Moreover setting $e'_n=e_n-e$, we have that $(e'_n)_{n}$ is
nontrivial, spreading, $1$-unconditional and Ces\`aro summable to
zero.
\end{prop}

The last class consists of the Schauder basic spreading sequences
which are not unconditional. These are nontrivial weak-Cauchy
(i.e. they $w^*$-converge to an element of $X^{**}\setminus X$)
and also dominate the summing basis of $c_0$.

\subsection*{Chapter 4}We study $\ff$-sequences in topological spaces. We
start by giving the definition  of  the convergence of
$\ff$-subsequences which is the natural one. Namely, an
$\ff$-subsequence $(x_s)_{s\in\ff\upharpoonright M}$ in a
topological space $(X,\ttt)$ converges to some $x\in X$ if for
every $U\in\ttt$ with $x\in U$  there exists $m_0\in M$ such that
 $x_s\in U$, for every $s\in\ff\upharpoonright M$ with $\min s\geq m_0$.

The aim of Chapter 4 is to define and study the subordinated
$\ff$-subsequences.
\begin{defn}
    Let $(X,\ttt)$ be a topological space, $\ff$ be a regular thin family, $M\in[\nn]^\infty$ and
  $(x_s)_{s\in\ff}$ be an $\ff$-sequence in $X$. We say
    that $(x_s)_{s\in\ff\upharpoonright M}$ is \emph{subordinated} (with respect to $(X,\mathcal{T})$)
     if there exists a continuous map $\widehat{\varphi}:\widehat{\ff}\upharpoonright M\to (X,\mathcal{T})$
with  $\widehat{\varphi}(s)=x_s$, for all $s\in\ff\upharpoonright
M$.
\end{defn}
We recall that $\widehat{\ff}$ denotes the closure of $\ff$ with
respect to the initial segment ordering. The family
$\widehat{\ff}$ endowed with the order topology is a compact
metrizable space. It is easy to see that if an $\ff$-subsequence
$(x_s)_{s\in\ff\upharpoonright M}$ in a topological space
$(X,\ttt)$ is subordinated and
$\widehat{\varphi}:\widehat{\ff}\upharpoonright M\to X$ is the
continuous map witnessing that, then the $\ff$-subsequence
$(x_s)_{s\in\ff\upharpoonright M}$ converges to
$\widehat{\varphi}(\emptyset)$ and the closure of its range is a
compact metrizable space. It is worth pointing out that if
$o(\ff)\geq2$ and $(x_s)_{s\in\ff\upharpoonright M}$ converges to
some $x_0$ then the set $\{x_s:s\in\ff\upharpoonright M\}$ is not
necessarily relatively compact.

In the case of $\ff$-sequences in compact metrizable spaces the
following holds.
\begin{prop}\label{subsub}
   Every $\ff$-sequence in a compact metrizable space contains a
   subordinated $\ff$-subsequence.
  \end{prop}
  Among others this yields that in a compact metrizable space, every $\ff$-sequence
  has a convergent $\ff$-subsequence.

\subsection*{Chapter 5}We classify the spreading models into four types, namely, trivial, unconditional,
singular and Schauder basic according to where the corresponding
spreading sequence belongs. Further, we study their properties.
Some of the results of this chapter are new even for the classical
spreading model theory, where with few exceptions (see for example
\cite{Maurey} pp. 1310) the spreading models are generated by
Schauder basic sequences. The first result of this chapter
concerning trivial spreading models is the following.
\begin{thm}\label{into Theorem equivalent forms for having norm
on the spreading model}
  Let $X$ be a Banach space and $(x_s)_{s\in\ff\upharpoonright M}$
  be an $\ff$-subsequence in $X$ generating $(e_n)_n$ as a  spreading
  model. Then $(e_n)_n$ is trivial if and only if $(x_s)_{s\in\ff\upharpoonright
  M}$ contains a norm convergent  $\ff$-subsequence.
\end{thm}

Concerning the unconditional spreading models we have the
following, which generalizes a well known fact for classical
spreading models. Namely, seminormalized weakly null sequences
generate unconditional spreading models. In the sequel whenever we
refer to subordinated $\ff$-subsequences in a Banach space $X$, we
will always mean that they are subordinated with respect to the
weak topology of $X$.
\begin{thm}
  Every spreading model $(e_n)_{n\in\nn}$ generated by a
  seminormalized, subordinated and weakly null $\ff$-subsequence
  $(x_s)_{s\in\ff\upharpoonright L}$ is unconditional.
\end{thm}
Let us point out that spreading models generated by seminormalized
and weakly null $\ff$-subsequences are not necessary
unconditional. We present an example of a spreading model of order
two of $c_0$ yielding the above. Thus, the additional assumption
that the $\ff$-subsequence is  subordinated is necessary.

Regarding $\ff$-subsequences generating a singular spreading
model, it is shown that they admit a decomposition similar to the
one described by Proposition \ref{intro singular decomposition} as
follows.
\begin{thm}
  Let $(e_n)_{n\in\nn}$ be a singular spreading model generated by
  the $\ff$-subsequence $(x_s)_{s\in\ff\upharpoonright M}$ and
  $e_n=e'_n+e$ the decomposition corresponding to the singular
  spreading sequence $(e_n)_{n\in\nn}$.
  Then there
  exist $x\in X$ and $L\in[M]^\infty$ such that $\|x\|=\|e\|$ and setting
  $x'_s=x_s-x$, for all $s\in\ff\upharpoonright L$, we have that
  $(x'_s)_{s\in\ff\upharpoonright L}$ generates $(e'_n)_{n\in\nn}$ as
  a unique $\ff$-spreading model.
\end{thm}
As follows from Proposition \ref{intro singular decomposition} the
$\ff$-subsequence $(x'_s)_{s\in\ff\upharpoonright L}$ generates an
unconditional and Ces\'aro summable spreading model.

Finally we provide a sufficient condition for $\ff$-sequences to
admit Schauder basic spreading model. This is related to the
notion of Skipped Schauder Decomposition  that we next define.
\begin{defn}
    Let $A$ be a countable seminormalized subset of a Banach space $X$.
    We say that $A$ admits a Skipped Schauder Decomposition (SSD) if there exist $K>0$ and
    a sequence $(F_n)_{n\in\nn}$ of finite subsets of $A$ such that
    \begin{enumerate}
      \item[(i)] $\cup_{n\in\nn}F_n=A$
      \item[(ii)] For every $L\in[\nn]^\infty$ not containing two successive integers and
      every sequence  $(x_l)_{l\in L}$ with  $x_l\in F_l$, for all $l\in L$, $(x_l)_{l\in L}$
      is a
      Schauder basic sequence of constant $K$.
    \end{enumerate}
  \end{defn}

  \begin{thm} Let $X$ be a Banach space and
 $(x_s)_{s\in\ff}$ be an $\ff$-sequence in $X$.  If $\{x_s: s\in\ff\}$ admits a SSD then every nontrivial  $\ff$-spreading model generated by an
$\ff$-subsequence of  $(x_s)_{s\in\ff}$ is Schauder basic.
\end{thm}

  \subsection*{Chapter 6}As is well known if $(e_n)_n$ is a spreading model generated by a weakly convergent
  sequence $(x_n)_n$  in a
  Banach space $X$ with a Schauder basis then $(e_n)_n$ is also generated by a sequence of the form $\widetilde{x}_n=x+x'_n$
  where $x$ is the weak-limit of $(x_n)_n$ and $(x'_n)_n$ is a block sequence in $X$ weakly convergent to zero.
One of the main goals of Chapter 6 is to establish analogous
phenomena for weakly relatively compact $\ff$-sequences. An
$\ff$-sequence $(x_s)_{s\in\ff}$ in a Banach space $X$ is said to
be weakly relatively compact if the set
$\overline{\{x_s:s\in\ff\}}^w$ is weakly compact. Proposition
\ref{subsub} yields the first basic property of  such sequences
namely  that every weakly relatively compact $\ff$-sequence has a
subordinated $\ff$-subsequence. The following definition is a
necessary ingredient for the results of this chapter.
\begin{defn}
  Let  $X$ be a Banach space with a Schauder basis. Let $\ff$ be a regular thin family, $M\in[\nn]^\infty$ and
  $(x_s)_{s\in\ff}$ an $\ff$-sequence in $X$ consisting of finitely supported
  vectors. We will say that the $\ff$-subsequence $(x_s)_{s\in\ff\upharpoonright
  M}$ is \emph{plegma disjointly supported} (resp. \emph{plegma block}) if for
  every plegma pair $(s_1,s_2)$ in $\ff\upharpoonright M$ we have
  that $\text{supp}(x_{s_1})\cap\text{supp}(x_{s_2})=\emptyset$ (resp.
  $\text{supp}(x_{s_1})<\text{supp}(x_{s_2})$).
\end{defn}
Let us recall that when $o(\ff)=1$ the plegma families are the
finite subsets of $\nn$. Hence for classical sequences the above
definition concerns disjointly supported (resp. block) sequences.
Furthermore when $o(\ff)=1$, the above two concepts (i.e. plegma
disjointly supported and plegma block) coincide by passing, if
necessary, to a further subsequence. Under the above terminology
we have the following.
\begin{thm}
 Let $X$ be a Banach space with a Schauder basis
 and $(e_n)_{n}$ be an $\ff$-spreading model generated by a
 subordinated $\ff$-subsequence $(x_s)_{s\in\ff\upharpoonright M}$ in $X$ weakly convergent to $x$.
Then $(e_n)_n$  is also generated by an $\ff$-subsequence
$(\widetilde{x}_s)_{s\in\ff\upharpoonright L}$ of the form
$\widetilde{x}_s=x+x'_s$ where $(x'_s)_{s\in\ff\upharpoonright L}$
is subordinated,  weakly null and plegma disjointly supported.
\end{thm}
This theorem generalizes the result regarding classical sequences
mentioned at the beginning of our discussion for Chapter 6. We
should notice that in general we could not expect that the
$\ff$-subsequence $(x'_s)_{s\in\ff\upharpoonright L}$ is either
fully disjointly supported or plegma block. As we will explain
next if the Banach space $X$ admits $c_0$ as a $\xi$-spreading
model then $c_0$ is also generated by a plegma block
$\ff$-sequence. A similar result also holds for $\ell^1$ under
some additional assumptions.

\subsection*{Chapter 7}We deal with the natural problem, posed to us
by  Schlumprecht, of determining the spreading models of the
classical sequence spaces. As we have already mentioned the
spreading models of $\ell^p$, $1\leq p<\infty$, are as expected.
More precisely the following holds.
\begin{thm}
(i) Let $1<p<\infty$ and $(e_n)_n$ be a  nontrivial spreading model
 of order $\xi$ of $\ell^p$, for some $\xi<\omega_1$. Then the
 following are satisfied.
\begin{enumerate} \item[(a)]  If $(e_n)_n$  is normalized
Schauder basic then it is isometric to the usual basis of
$\ell^p$.\item[(b)] If $(e_n)_n$  is singular then it generates
an isometric to $\ell^p$ space.
\end{enumerate}
\item[(ii)] Every nontrivial spreading model of any order of
$\ell^1$ is Schauder basic and equivalent to the usual basis of
$\ell^1$. \item[(iii)] For all $1\leq p<\infty$ we have that for
every $(e_n)_{n\in\nn}\in\mathcal{SM}(\ell^p)$ there exists a
sequence $(u_n)_{n\in\nn}$ in $\ell^p$ isometric to
$(e_n)_{n\in\nn}$.
\end{thm}
As it is shown in Chapter 9, the Tsirelson spaces $T_\alpha$,
$1\leq \alpha <\omega_1$, also admit $\ell^1$ as  a unique up to
equivalence spreading model.

The spreading models of $c_0$ are described by the following.
\begin{thm} (i) Any bimonotone Schauder basic spreading sequence belongs to
the class of spreading models  of $c_0$ of order two.\\
(ii) Every singular spreading model of $c_0$ of any order is
contained in the class of  spreading models  of $c_0$ of order two.\\
(iii) For every nontrivial spreading sequence $(e_n)_n$ there
exists a spreading model $(e'_n)_n$ of $c_0$ of  order two which
is equivalent to $(e_n)_n$.
\end{thm}
Part (iii) of the theorem shows that $\mathcal{SM}_2(c_0)$ is
isomorphically universal for all spreading sequences, which yields
that the hierarchy $(\mathcal{SM}_\xi(c_0))_{\xi<\omega_1}$ is
isomorphically stabilized for $\xi=2$. It is not clear if it is
also isometrically stabilized for $\xi=2$.

  \subsection*{Chapter 8}We present   some composition properties of
spreading models. More precisely we have the following result.
\begin{thm}
  Let  $X$ be a Banach space and $(e_n)_{n}$ be a Schauder basic sequence in $\mathcal{SM}_\xi(X)$, for some  $\xi<\omega_1$. Let
  $E$ be the space generated by $(e_n)_n$  and for some  $k\in\nn$, let $(\overline{e}_n)_{n}\in\mathcal{SM}_k(E)$ be
  a plegma block generated spreading model of $E$.
  Then $$(\overline{e}_n)_{n}\in\mathcal{SM}_{\xi+k}(X)$$
\end{thm}
The above yields the following concerning $\ell^p$ spreading models.
\begin{prop}
  Let $X$ be a Banach space, $(e_n)_{n}$ be a $\xi$-spreading model of $X$, for some $\xi<\omega_1$ and let
   $E$ be the space generated by $(e_n)_n$.
   If for some $1<
  p<\infty$, $E$ contains an isomorphic copy of
  $\ell^p$ then $X$ admits a $(\xi+1)$-spreading model
  equivalent to the usual basis of $\ell^p$.
\end{prop}
Using additionally the non distortion property of $\ell^1$ and $c_0$
(c.f. \cite{J2}) we obtain the following stronger result.
\begin{prop}
  Let $X$ be a Banach space, $(e_n)_{n}$ be a $\xi$-spreading model of $X$, for some
  $\xi<\omega_1$ and let
  $E$ be the space generated by $(e_n)_n$. If  $E$ contains an isomorphic copy of
  $\ell^1$ (resp. $c_0$) then $X$ admits the usual basis of  $\ell^1$ (resp. $c_0$) as a $(\xi+1)$-spreading model.
\end{prop}
We also have the following trichotomy.
\begin{thm}\label{021}
     Let $X$ be a reflexive Banach space and $\xi<\omega_1$. Then one of the following
     holds.
     \begin{enumerate}
       \item[(i)] The space $X$ admits the usual basis of $\ell^1$
       as a $(\xi+1)$-spreading model.
       \item[(ii)] The space $X$ admits the usual basis of  $c_0$
       as a $(\xi+1)$-spreading model.
       \item[(iii)] All $\xi$-spreading models of $X$ generate reflexive spaces.
     \end{enumerate}
     Moreover, every Schauder basic spreading model of $X$ is
     unconditional.
\end{thm}

\subsection*{Chapter 9} We study spreading models equivalent to the usual basis
of $\ell^1$. Among others we provide  sufficient conditions
ensuring that a Banach space admits plegma block generated
$\ell^1$ spreading models. Let us say that a  Banach space $X$
with a Schauder basis satisfies the property $\mathcal{P}$ if for
every $\delta>0$ there exists a $k\in\nn$ such that for every
finite block sequence $(x_i)_{i=1}^k$ in $X$ with
$\|x_i\|\geq\delta$ for all $1\leq i\leq k$ we have that
$\|\sum_{i=1}^k x_i\|>1$.
\begin{thm}\label{015}
  Let $X$ be a Banach space with a Schauder basis  satisfying   the
  property $\mathcal{P}$.  If $X$ admits $\ell^1$ as a spreading model
generated by a weakly relatively compact $\ff$-subsequence then
$X$ admits $\ell^1$ as a plegma block generated spreading model.
\end{thm}
The above theorem is a key ingredient for showing the existence of
a reflexive space admitting no $\ell_p$ as a spreading model (see
Chapter 14). Also, as we will  see in Chapter 13, the additional
assumption concerning property $\mathcal{P}$ is a necessary one
for the conclusion of the above theorem.

The next result concerns Ces\`aro summability for
$[\nn^k]$-sequences. We first define the $k$-Ces\`aro summability.
\begin{defn}
Let $X$ be a Banach space, $x_0\in X$, $k\in\nn$, $(x_s)_{s\in
[\nn]^k}$ be a $[\nn]^k$-sequence in $X$ and $M\in [\nn]^\infty$. We
will say that the $[\nn]^k$-subsequence $(x_s)_{s\in [M]^k}$ is
$k$-Ces\`aro summable to $x_0$ if
\[ \Big(\substack{n\\ \\k}\Big)^{-1} \sum_{s\in [M|n]^k} x_s \;\;\substack{\|\cdot\| \\ \longrightarrow \\ n\to\infty}\;\; x_0\]
where $M|n=\{M(1),...,M(n)\}$.
\end{defn}
We prove the following extension of a well known result of H. P.
Rosenthal which corresponds to the case $k=1$.
\begin{thm}
Let $X$ be a Banach space, $k\in\nn$ and  $(x_s)_{s\in[\nn]^k}$ be a
weakly relatively compact
    $[\nn]^k$-sequence in $X$.
   Then there exists $M\in [\nn]^\infty$ such that
    at least one of the following holds:
\begin{enumerate}
\item[(i)] The subsequence  $(x_s)_{s\in[M]^k}$ generates an
$[\nn]^k$-spreading model equivalent to the standard basis of
$\ell^1$. \item[(ii)] There exists $x_0\in X$ such that for
every $L\in [M]^\infty$ the subsequence $(x_s)_{s\in [L]^k}$ is
$k$-Ces\`aro summable to $x_0$.
\end{enumerate}
\end{thm}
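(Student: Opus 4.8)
The plan is to follow the standard dichotomy argument of Rosenthal's theorem, replacing ordinary sums by the $k$-dimensional Cesàro averages and using the plegma machinery to access Ramsey-theoretic tools. First I would pass to the $[\nn]^k$-spreading model: by the $\ff$-spreading model theorem quoted in the introduction, applied with $\ff=[\nn]^k$, there exist $M_0\in[\nn]^\infty$ and a seminorm $\|\cdot\|_*$ on $c_{00}(\nn)$ making $(e_n)_n$ an $[\nn]^k$-spreading model generated by $(x_s)_{s\in[M_0]^k}$. The natural trial value for $x_0$ in case (ii) is a weak-cluster point of the bounded net of averages $A_n = \binom{n}{k}^{-1}\sum_{s\in[M_0|n]^k} x_s$, which exists in the weak topology by weak relative compactness of $\{x_s:s\in[\nn]^k\}$; passing to a subsequence of $M_0$ we may assume $A_n \to x_0$ weakly.

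The core dichotomy is then: either the averages $A_n$ converge to $x_0$ in norm along some infinite subset (this will be case (ii)), or they do not, in which case I would extract a ``bad'' sequence of differences and run Rosenthal's $\ell^1$-extraction argument on it to land in case (i). Concretely, if $\|A_n - x_0\|$ does not go to zero, then by a gliding-hump/telescoping argument one produces, after passing to a further infinite subset $M\subseteq M_0$, blocks of $[\nn]^k$-indices whose averaged differences $y_j$ are seminormalized and, crucially, whose averages $\frac1N\sum_{j=1}^N y_j$ stay bounded below in norm — this is the place where the difference $x_s - x_0$ replaces $x_s$. The standard argument (van der Waerden–free, just convexity and the triangle inequality together with the weak convergence $A_n\to x_0$, which forces Cesàro averages of the $y_j$ to tend weakly to $0$) then shows $(y_j)_j$ is not weakly null-Cesàro, hence by Rosenthal's classical dichotomy has an $\ell^1$-subsequence. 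Translating this back through the spreading-model machinery — i.e. realizing each $y_j$ as a vector indexed by a plegma family of $[\nn]^k$-sets via Theorem \ref{04} with $\g$ of large enough order to host the required plegma configurations, or more directly by iterating the plegma Ramsey proposition on colourings recording approximate $\ell^1$-lower estimates — yields that the $[\nn]^k$-spreading model of an appropriate subsequence $(x_s)_{s\in[M']^k}$ dominates the $\ell^1$-basis; since it is also dominated by $\ell^1$ (boundedness), it is equivalent to the $\ell^1$-basis, giving (i).

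For case (ii) I would show that norm-convergence of $A_n$ along one infinite set is stable under passing to arbitrary further infinite subsets $L\in[M]^\infty$. This is where the combinatorial identity for $k$-dimensional Cesàro averages matters: for $L\in[M]^\infty$ the average $\binom{n}{k}^{-1}\sum_{s\in[L|n]^k} x_s$ is itself a convex combination of the $A_m$'s for $m\le \max(L|n)$ with weights that are eventually concentrated near the top, so control of $\|A_m-x_0\|$ for large $m$ transfers to control of the $L$-average. I would make this quantitative by a double-counting lemma expressing $[L|n]^k$-sums in terms of $[M|m]^k$-sums; this is routine but must be done carefully because the indexing is $k$-dimensional rather than linear.

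The main obstacle I anticipate is the extraction step in the ``bad'' case: producing seminormalized difference-blocks whose own Cesàro averages are bounded away from zero, while keeping everything inside a single infinite set so that the plegma Ramsey proposition and Theorem \ref{04} can be applied to promote an approximate $\ell^1$-lower estimate on finitely many blocks to an exact $\ell^1$-equivalence of the generated spreading model. In the classical $k=1$ Rosenthal argument one works with a sequence; here one must simultaneously manage the $k$-dimensional combinatorics of which $k$-subsets contribute to which average, and the fact that ``being a plegma family'' is not transitive, so the blocks cannot be chosen too greedily. I expect that the right way through is to fix in advance a regular thin family $\g$ with $o(\g)$ large (say $o(\g)=\omega$, i.e. $\g=\ff_\omega$), use Theorem \ref{04} to pull plegma configurations in $[\nn]^k$ back from $\g$, and carry out the whole extraction at the level of $\g$-plegma families, where there is enough room.
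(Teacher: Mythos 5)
There is a genuine gap, and it sits exactly where you declare the argument ``van der Waerden--free, just convexity and the triangle inequality.'' The crux of the theorem for $k\geq 2$ is the link between the $k$-Ces\`aro averages, which sum over \emph{all} $k$-subsets of $M|n$, and the spreading model, which only sees \emph{plegma} $l$-tuples with large first coordinate. If the averages $\binom{n}{k}^{-1}\sum_{s\in[L|n]^k}x'_s$ fail to tend to zero in norm, Hahn--Banach gives functionals $x_n^*$ that are at least $\theta/2$ on a subset $A_n\subseteq[\{1,\ldots,p_n\}]^k$ of positive density $\theta/2K$; to contradict Ces\`aro-nullness of the spreading model (or to produce a lower $\ell^1$-estimate at the spreading-model level) one must find arbitrarily long plegma tuples \emph{inside} these positive-density sets. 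For $k=1$ this is trivial, but for $k\geq2$ a plegma tuple in $[\nn]^k$ is a rigid interlaced pattern, and locating it in a density subset of $[\{1,\ldots,N\}]^k$ is a density-Ramsey statement: the paper proves it in Lemma \ref{Lemma using Furstenberg and Katznelson Theorem to find long plegma} via the Furstenberg--Katznelson multidimensional Szemer\'edi theorem (Theorem \ref{Furstenberg and Katznelson Theorem}), and explicitly remarks that the case $k\geq2$ uses it. Your gliding-hump extraction of blocks $y_j$ from the linearly indexed averages $A_n$, followed by the classical one-dimensional Rosenthal dichotomy, never confronts this issue; and Theorem \ref{04} (plegma-preserving maps between thin families) cannot substitute for it, since it transports plegma configurations between families but does not locate them inside a density subset of $[\nn]^k$.

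Two further points would also need repair. First, your stability step for case (ii) rests on the claim that $\binom{n}{k}^{-1}\sum_{s\in[L|n]^k}x_s$ is a convex combination of the $A_m$'s; this identity is false already for $k=1$ (an average over a subsequence is not a convex combination of averages over initial segments of the mother sequence). In the paper, summability along \emph{every} $L\in[M]^\infty$ comes directly out of the contradiction argument of Lemma \ref{Furstemberg's application on spreading models}, not from a transfer identity. Second, the dichotomy ``$\ell^1$ or Ces\`aro-null'' at the spreading-model level (Proposition \ref{l1 vs Cezaro summability}) requires the spreading model to be unconditional, which you never establish; the paper obtains it by first passing to a subordinated subsequence (Proposition \ref{Create subordinated}), setting $x_0=\widehat{\varphi}(\emptyset)$ (rather than a weak cluster point of the averages), translating to $x'_s=x_s-x_0$ so that the root value is $0$, and invoking Theorem \ref{unconditional spreading model}; the $\ell^1$ case is then pulled back to the original sequence by Corollary \ref{ultrafilter property for ell^1 spreading models}. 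Your outline would need to incorporate both the subordination step and, unavoidably, some density-Ramsey input of Furstenberg--Katznelson type.
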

There are significant differences between the cases $k=1$ and
$k\geq2$. First for $k=1$ the two alternatives are exclusive but
this does not remain valid for $k\geq 2$. Second the proof for the
case $k\geq 2$ uses the following density result concerning plegma
families which is a consequence of the multidimensional
Szemeredi's theorem due to H. Furstenberg and Y. Katznelson (c.f.
\cite{FK}).
\begin{lem}
    Let  $\delta>0$ and $k, l\in\nn$. Then  there exists  $n_0\in \nn$ such that
  for every $n\geq n_0$ and   every subset $A$ of the set of all $k$-subsets of $\{1,\ldots, n\}$
  of size at least $\delta
    (\substack{n\\ k})$, there exists  a plegma $l$-tuple $(s_j)_{j=1}^l$ in $ A$.
  \end{lem}

\subsection*{Chapter 10}In this chapter we focus on $c_0$-spreading
models. We start by proving a combinatorial result concerning
partial unconditionality of tree basic sequences. Using this we
obtain the following domination property of spreading models.
\begin{thm}
  Let $\ff,\g$ be regular thin families and $N\in[\nn]^\infty$ such
  that every $t\in\g\upharpoonright N$ is an initial segment
  of some $s\in\ff$. Let
  $(x_s)_{s\in\ff}$ be a bounded $\ff$-sequence in a Banach space
  $X$ such that the $\ff$-subsequence
  $(x_s)_{s\in\ff\upharpoonright N}$ is subordinated and let
  $\widehat{\varphi}:\widehat{F}\upharpoonright N\to (X,w)$ be the
  continuous map witnessing this. For every $v\in
  \g\upharpoonright N$, let $z_v=\widehat{\varphi}(v)$. Suppose that
  $(x_s)_{s\in\ff\upharpoonright N}$ and $(z_v)_{v\in\g\upharpoonright
  N}$ generate $(e^1_n)_{n}$ and
  $(e^2_n)_{n}$ as spreading models respectively. Then
  for every $k\in\nn$ and $a_1,\ldots,a_k\in\rr$ we have that
  \[\Big\|\sum_{j=1}^ka_je_j^2\Big\|\leq\Big\|\sum_{j=1}^ka_je_j^1\Big\|\]
\end{thm}
The above is the key ingredient for proving the next.
\begin{thm}\label{013}
  Let $X$ be a Banach space with a Schauder basis. If $X$ admits $c_0$ as a spreading
model generated by a weakly relatively compact $\ff$-subsequence
then
  $X$ also admits $c_0$ as a plegma block generated
spreading model.
\end{thm}
Theorem \ref{013} permits us to extend the duality property of
$c_0$ and $\ell^1$ spreading model which is well known in the
classical case.
\begin{cor} Let  $X$ be a Banach space
with a Schauder basis. If $X$ admits $c_0$ as a spreading model
generated by a weakly relatively compact $\ff$-subsequence then
$X^*$ admits $\ell^1$ as a  plegma block generated spreading
model.
\end{cor}
It is well known that the above duality result does not hold in
the inverse direction. Namely there are reflexive spaces admitting
$\ell^1$ as a classical spreading model and their duals do not
admit $c_0$ as a spreading model. We also present  a similar
example for higher order spreading models.

\section{An overview of the examples}
The remained chapters (11-14) are devoted to several examples
which answer natural questions raised from the definition and the
results presented above.

\subsection*{Chapter 11}
A natural problem is to establish the hierarchy
$(\mathcal{SM}_\xi(X))_{\xi<\omega_1}$, namely the existence, for
arbitrarily large $\xi<\omega_1$, of Banach spaces $X_\xi$ such
that $\mathcal{SM}_\xi(X_\xi)$ properly includes
$\cup_{\zeta<\xi}\mathcal{SM}_\zeta(X_\xi)$. As we have mentioned
the space $c_0$ has this property for $\xi=2$. In this chapter we
provide more examples of reflexive spaces described by the
following two theorems.
\begin{thm}
  For every $k\in\nn$ there exists a reflexive space
  $\mathfrak{X}_{k+1}$ with an unconditional basis $(e_s)_{s\in[\nn]^{k+1}}$ satisfying the following properties. The
  basis $(e_s)_{s\in[\nn]^{k+1}}$
  generates $\ell^1$ as a $(k+1)$-spreading model and is
not $(k+1)$-Ces\`aro summable to any $x_0$ in
$\mathfrak{X}_{k+1}$. Furthermore the space $\mathfrak{X}_{k+1}$
does not admit an $\ell^1$ spreading model of order $k$.
\end{thm}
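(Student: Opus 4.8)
The plan is to construct $\mathfrak{X}_{k+1}$ as a Tsirelson--Schlumprecht type space over $c_{00}([\nn]^{k+1})$, whose norm combines two ingredients. First, an \emph{order-$(k+1)$ plegma clause} producing $\ell^1$ along plegma families of $[\nn]^{k+1}$: $\|\cdot\|_{k+1}$ is the norm induced by a norming set $W\subseteq c_{00}([\nn]^{k+1})$ which is symmetric, contains $\pm e_s^\ast$ for every $s$, and contains $\sum_{i=1}^{l}\varepsilon_i e_{s_i}^\ast$ whenever $\varepsilon_i=\pm1$, $(s_i)_{i=1}^{l}$ is a plegma family in $[\nn]^{k+1}$ and $l\le s_1(1)$. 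Second, a \emph{damping of complexity $<k+1$}: $W$ is in addition closed under the operations $(f_j)_{j=1}^{d}\mapsto\lambda_d\sum_{j=1}^{d}f_j$ for functionals $f_1<\dots<f_d$ with successive supports whose traces form an admissible set of a Schreier-type family of order $\le k$, where $(\lambda_d)_d$ is a null sequence of weights in $(0,1)$ (e.g. $\lambda_d=1/\log_2(d+1)$, in Schlumprecht's spirit, so that no fixed positive weight accumulates). With the natural linear order on $[\nn]^{k+1}$ the vectors $(e_s)_{s\in[\nn]^{k+1}}$ form a $1$-unconditional Schauder basis (by symmetry of $W$) with $\|e_s\|_{k+1}=1$, and $\|\cdot\|_{k+1}\le\|\cdot\|_{\ell^1}$ because every element of $W$ has $\ell^\infty$-norm at most $1$ on its support. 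Calibrating the damping family and the weights $(\lambda_d)$ so that the final part of the proof works is the only delicate point of the construction itself.

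Reflexivity, the $(k+1)$-order $\ell^1$ spreading model and non-Cesàro-summability are the routine assertions. For reflexivity I would apply James' theorem and the $1$-unconditionality of the basis and exclude $c_0$ and $\ell^1$: the damping operations force the norm of long admissible pieces of a normalised block sequence to grow, killing $c_0$, while $\ell^1$ is excluded because the plegma clause yields full $\ell^1$ norm only along plegma families of bounded length $l\le s_1(1)$, hence along bounded initial segments of a fixed infinite block sequence, so that --- the weights $\lambda_d$ being null --- no infinite block subspace is $\ell^1$-like (a standard Schlumprecht-type estimate). That $(e_n)_n$ (the $\ell^1$ basis) is a $(k+1)$-order spreading model is immediate from the plegma clause: for a plegma family $(s_i)_{i=1}^{l}$ with $s_1(1)\ge l$ and scalars $(a_i)$, the functional $\sum_i\varepsilon_i e_{s_i}^\ast\in W$ with $\varepsilon_i a_i=|a_i|$ gives $\|\sum_i a_i e_{s_i}\|_{k+1}\ge\sum_i|a_i|$, while the reverse inequality is built in; letting $\min s_1\to\infty$ for each fixed $l$ and invoking Proposition \ref{02} gives the claim (in fact isometrically). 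Finally the averages $A_n=\binom{n}{k+1}^{-1}\sum_{s\in[\{1,\dots,n\}]^{k+1}}e_s$ are weakly null (each coordinate is $\binom{n}{k+1}^{-1}\to0$ and $\|A_n\|_{k+1}\le1$), so the only possible $(k+1)$-Cesàro limit of $(e_s)$ is $0$; but assembling the plegma-clause functionals attached to a suitable plegma decomposition of $[\{1,\dots,n\}]^{k+1}$ through the damping produces $f\in W$ with $f(A_n)\ge c>0$ uniformly in $n$, so $(e_s)$ is not $(k+1)$-Cesàro summable to any $x_0$.

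The substantial claim is that $\mathfrak{X}_{k+1}$ admits no $k$-order $\ell^1$ spreading model. Suppose it did: fix a regular thin family $\g$ with $o(\g)=k$ and a $\g$-sequence $(y_t)_{t\in\g}$ in $\mathfrak{X}_{k+1}$ generating a spreading model equivalent to the $\ell^1$ basis. Using reflexivity, the proposition on subordinated weakly relatively compact $\ff$-sequences, and the combinatorial results on maps $\g\to\nn$, I would first pass to $\g\upharpoonright L$ on which $(y_t)$ may be taken normalised, finitely supported, and plegma disjointly supported (after subtracting the weak limit $\widehat{\varphi}(\emptyset)$, even plegma block). The whole matter then reduces to proving
\[\Big\|\sum_{i=1}^{l}y_{t_i}\Big\|_{k+1}=o(l)\qquad\text{as }l\to\infty\]
for plegma families $(t_i)_{i=1}^{l}$ in $\g\upharpoonright L$ with $\min t_1$ large, which contradicts $\ell^1$-equivalence. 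To obtain this one analyses how an arbitrary $f\in W$ distributes over $(y_{t_i})_i$. The plegma clause can give $f$ full mass only on a block of $y_{t_i}$'s whose supports realise an \emph{order}-$(k+1)$ plegma pattern inside $[\nn]^{k+1}$; but the index family $\g$ has order $k<k+1$, so by Theorem \ref{05} --- applied to a map carrying each $t$ to some element of $\text{supp}(y_t)$, on a further subfamily --- no two of the $t_i$ are sent to a plegma pair of $[\nn]^{k+1}$, which confines such blocks to bounded length. Hence almost all of $f$'s mass is routed through the damping operations of complexity $<k+1$, and an accounting of the damping tree of $f$ over the $l$ vectors --- using that the null weights $\lambda_d$ cannot accumulate to a positive constant along a family of size $l$ --- yields the displayed estimate.

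I expect this upper estimate to be the main obstacle: one must combine, in a single induction over the tree of $f$, the Schlumprecht-type decay of the damping with the combinatorial fact (Theorem \ref{05}) that an order-$k$ plegma structure has no plegma-preserving lift into $[\nn]^{k+1}$, and make this uniform over all functionals in $W$ and all plegma families $(t_i)$. Correspondingly, the point that requires foresight at the construction stage is to calibrate the damping family and the weights $(\lambda_d)$ so that precisely this uniform decay is available; the remaining reductions, the reflexivity argument, the spreading-model computation and even the Cesàro statement are comparatively routine once the space has been set up.
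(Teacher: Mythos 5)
Your construction is not the paper's (the paper does not use a Schlumprecht-type damping at all: $\mathfrak{X}_{k+1}$ is $c_{00}([\nn]^{k+1})$ normed by taking the $\ell^1$ norm over \emph{allowable} sets $E\subseteq F_1\times\cdots\times F_{k+1}$, $|F_1|=\cdots=|F_{k+1}|\leq\min F_1$, and combining disjoint allowable sets in an undamped $\ell^2$ way), and this difference is not cosmetic: as described, your space appears to fail the non-Ces\`aro-summability claim. In your norm the only undamped $\ell^1$ clause is over a plegma $l$-tuple with $l\leq s_1(1)$; since plegma $(k+1)$-sets are pairwise disjoint, inside $[\{1,\dots,n\}]^{k+1}$ such a clause meets at most $n/(k+1)$ of the $\binom{n}{k+1}$ coordinates of the average $A_n$, so its action on $A_n$ is $O(n^{-k})$. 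Any other functional is a tree of dampings with null weights $\lambda_d$; since the product of path weights decays (geometrically in depth, or via $\lambda_d\to0$ in width) while each leaf captures only $O(n^{-k})$, one gets $\sup_{f\in W}f(A_n)\to0$, i.e.\ the basis \emph{is} $(k+1)$-Ces\`aro summable to $0$, contrary to the statement and to your claim that assembling plegma clauses "through the damping" keeps $f(A_n)\geq c$. The paper avoids exactly this by letting single norming functionals be full $\ell^1$ sums over allowable product sets (of size up to $m^{k+1}$, not $\leq s_1(1)$) and by using an $\ell^2$ outer combination with weight $1$, which is what makes $\|A_n\|\gtrsim (k+1)!/(k+2)^{k+1}$ (see the functional $\varphi_n=\sum_{s\in F_2^n\times\cdots\times F_{k+2}^n}e_s^*$).

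The decisive step — excluding $k$-order $\ell^1$ spreading models — is also left with a genuine gap. Your plan is to apply Theorem \ref{non plegma preserving maps} to a selection map sending each $t\in\g$ to one element of $\mathrm{supp}(y_t)$ and conclude that plegma-clause functionals can only act on "blocks of bounded length". But a norming functional's support is a plegma family (or, in the paper's space, an allowable set) in $[\nn]^{k+1}$ chosen independently of any such selection; it can intersect the supports of many $y_{t_i}$'s without any pair of chosen representatives being plegma, so non-preservation of plegma pairs by one selection map controls nothing about how $f$ distributes mass over the $y_{t_i}$'s. This is precisely the difficulty the paper flags after Example \ref{example} ("it is not so immediate\dots"), and its actual proof replaces your step by a quantitative mechanism that your norm does not support: first reduce, via Proposition \ref{Prop on almost isometric l^1 spr mod} (non-distortion) and Theorem \ref{thm disjointly generic decomposition for wekly relatively compact ff-sequences}, to an almost isometric, plegma disjointly supported generator (note: "plegma block" is \emph{not} available in general — that needs property $\mathcal{P}$, Theorem \ref{getting block generated ell^1 spreading model}); then use the $\ell^2$-splitting inequality $\|x\|^2\geq\|x|_G\|^2+\|x|_{G^c}\|^2$ through the function $h(\ee,\delta)$ and weakly allowable paths of length $k$ (Lemmas \ref{Lemma for 2 vectos of norm almost 2}, \ref{final lemma for mathfrak-X-_k+1}) to show each $x_t$ has mass $1-\delta$ on coordinates with first entry $\leq N_0$ (Lemma \ref{blocking the first coordinates in weakly allowable paths}); finally count, using Lemma \ref{supports} ($|\mathrm{supp}(f)|\leq N_0^{k+1}$ for the relevant $f$) and Cauchy--Schwarz against the $\ell^2$ coefficients, that averages of length $l_0$ have norm $<2\ee$, contradicting the $\ell^1$ lower bound. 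Your sketch acknowledges this "uniform $o(l)$" estimate as the main obstacle but supplies neither the splitting inequality (absent without the $\ell^2$ outer structure) nor a workable substitute, so the core of the theorem remains unproved in your approach.
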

 The space $\mathfrak{X}_{k+1}$
shows that the non $(k+1)$-Ces\`aro summability of a
$[\nn]^{k+1}$-sequence does not yield any further information
concerning $\ell^1$ spreading models of lower order. For
arbitrarily large countable ordinals we have the following.
\begin{thm}\label{intro the hierarch does not colaps}
  For every countable ordinal $\xi$ there exists a reflexive space
$\mathfrak{X}_\xi$ with an unconditional basis satisfying the
following properties:
\begin{enumerate}
  \item[(i)] The space $\mathfrak{X}_\xi$ admits $\ell^1$ as a $\xi$-spreading model.
  \item[(ii)] For every ordinal $\zeta$ such that $\zeta+2<\xi$,
  the space $\mathfrak{X}_\xi$ does not admit $\ell^1$ as a $\zeta$-spreading model.
\end{enumerate}
\end{thm}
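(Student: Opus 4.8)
The plan is to realize $\mathfrak{X}_\xi$ as a Tsirelson-type space built over a regular thin family of order $\xi$. Fix such a family $\ff$ (with $o(\ff)=\xi$) and an enumeration of it under which the natural Hamel basis $(e_s)_{s\in\ff}$ of $c_{00}(\ff)$ is a basis. Call a finite sequence $E_1<\dots<E_d$ of successive subsets of $\ff$ (in that enumeration) \emph{$\ff$-admissible} if there is a plegma family $(s_j)_{j=1}^{d}$ in $\ff$ with $s_j\in E_j$ for all $j$, and let $\|\cdot\|$ be the smallest norm on $c_{00}(\ff)$ satisfying
\[\|x\|=\max\Big\{\|x\|_\infty,\ \tfrac12\sup\big\{\textstyle\sum_{j=1}^{d}\|E_jx\|:(E_j)_{j=1}^{d}\ \ff\text{-admissible}\big\}\Big\}.\]
Let $\mathfrak{X}_\xi$ be the completion; then $(e_s)_{s\in\ff}$ is a $1$-unconditional basis and $\|\cdot\|\le\|\cdot\|_{\ell^1(\ff)}$. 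Reflexivity follows from James' theorem: $c_0$ cannot embed, since any normalized block sequence has, after removing finitely many terms and passing to a subsequence, an $\ff$-admissible selection, giving a lower $\ell^1$-estimate with constant $\tfrac12$ along it; and $\ell^1$ cannot embed by the usual Tsirelson argument, since the constant $\tfrac12$ together with the fact that $\ff$-admissible families push supports arbitrarily far out (although they permit $\ell^1$-behaviour of every fixed finite length) produces, via the basic inequality, normalized averages $v_j$ with $\|\sum_{j=1}^{k}v_j\|=o(k)$.

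Part (i) is read off the definition. For a plegma family $(s_i)_{i=1}^{l}$ in $\ff$ with $\min s_1$ large and scalars $a_1,\dots,a_l$, taking $E_j=\{s_j\}$ in the norm equation gives $\|\sum_{i=1}^{l}a_ie_{s_i}\|\ge\tfrac12\sum_{i=1}^{l}|a_i|$, while $\ell^1(\ff)$-domination gives the reverse with constant $1$. Passing to an $\ff$-subsequence along which the relevant limits exist (by the plegma Ramsey proposition), the $\ff$-spreading model generated by $(e_s)_{s\in\ff}$ is $2$-equivalent to the unit vector basis of $\ell^1$; as $o(\ff)=\xi$ this yields $\ell^1\in\mathcal{SM}_\xi(\mathfrak{X}_\xi)$.

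For Part (ii), assume $\zeta+2<\xi$ and, seeking a contradiction, that some $\g$-sequence with $o(\g)=\zeta$ generates in $\mathfrak{X}_\xi$ a spreading model equivalent to the $\ell^1$ basis. Since $\mathfrak{X}_\xi$ is reflexive the $\g$-sequence is weakly relatively compact, and $\ell^1$ is Schauder basic; so by the result stated above that a Schauder-basic spreading model generated by a weakly relatively compact sequence is also generated by a plegma disjointly supported one, followed by a Ramsey refinement, I may assume we have a plegma block generated $\g$-subsequence $(y_t)_{t\in\g\upharpoonright L}$ that is seminormalized and satisfies $\|\sum_{i=1}^{l}y_{t_i}\|\ge c\,l$ for every plegma family $(t_1,\dots,t_l)$ in $\g\upharpoonright L$; being plegma block, $y_{t_1}<\dots<y_{t_l}$ is then an honest block sequence of $(e_s)_{s\in\ff}$.

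Finally, run the basic-inequality / tree analysis for $\mathfrak{X}_\xi$ on the sums $\sum_{i=1}^{l}y_{t_i}$: the lower bound $c\,l$ and the uniform bound $\sup_t\|y_t\|\le C$ force, for a fixed $n=n(c,C)$ and a definite proportion of the indices, the supports $\text{supp}(y_{t_i})$ to be covered by an $n$-fold iterate of $\ff$-admissibility, and at the innermost level to contain elements $s_i\in\text{supp}(y_{t_i})$ forming plegma families in $\ff$; feeding these back through the plegma Ramsey proposition one extracts a map $\varphi:\g'\to\ff$ with $\g'$ a regular thin family of order at least $\zeta$ that sends plegma pairs to plegma pairs (up to reversal). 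Theorem \ref{05} rules this out once $o(\g')<o(\ff)=\xi$, giving the contradiction. The main obstacle is the order bookkeeping in this last step: the reduction to a plegma block subsequence, the extraction of the finite-iterate $\ff$-admissible cover, and the re-encoding as a genuine map $\g'\to\ff$ each cost at most one level of order, and one has to check that together they cost no more than $2$---which is precisely why the statement needs $\zeta+2<\xi$ rather than $\zeta<\xi$. The rest is the now-standard Tsirelson tree estimate together with the plegma combinatorics already developed in the paper.
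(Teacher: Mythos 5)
Your proposal diverges from the paper both in the construction and in the combinatorial core, but as it stands it has genuine gaps at both ends. First, the norm you define is not sound as written: your ``$\ff$-admissibility'' only asks that the sets $E_1<\dots<E_d$ \emph{contain} some plegma family, with no link between the plegma witnesses and the support of the vector being estimated and no Schreier-type cardinality restriction; under this reading every vector can be cut into singleton-carrying intervals that contain plegma witnesses elsewhere, and the implicit equation collapses to (a $2$-equivalent of) $\ell^1(\ff)$, which is neither reflexive nor free of low-order $\ell^1$ spreading models. If instead you require the witnesses to lie in the support, then your reflexivity argument is internally inconsistent: a uniform $\tfrac12$-lower $\ell^1$ estimate of every length along a block subsequence is exactly an embedding of $\ell^1$, which you simultaneously claim is excluded; and in fact such selections do not exist for separated blocks, because by Definition \ref{defn plegma} the members of a plegma family must interlace, so far-apart supports carry no long plegma families (this is precisely why, in the paper's space, $\ff$-block sequences are $\ell^2$, see Lemma \ref{X_xi block are l^2}). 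The paper avoids all of this by using an explicit, non-recursive norm (an $\ell^2$-sum of $\ell^1$-blocks over disjoint plegma families), for which $\ell^2$-saturation and reflexivity are easy.

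Second, the reduction and the heart of part (ii) are asserted rather than proved. Your passage from ``plegma disjointly supported'' to ``plegma block generated'' by a Ramsey refinement is false in general: Chapter \ref{space without block} constructs a reflexive space admitting $\ell^1$ as a spreading model with no plegma block generated one, and the correct route is Theorem \ref{getting block generated ell^1 spreading model}, which requires verifying property $\mathcal{P}$ (Definition \ref{notation of P property}) for the specific space. More seriously, the step ``the lower bound $c\,l$ forces plegma families inside the supports and yields a plegma preserving map $\varphi:\g'\to\ff$, killed by Theorem \ref{non plegma preserving maps}'' is exactly the content that needs proof, and the paper does not obtain a single-valued plegma preserving map at all: after passing to almost isometric $\ell^1$ (Proposition \ref{Prop on almost isometric l^1 spr mod}) and going up one order twice via Corollary \ref{one order more to get the isometric} (this is where the hypothesis $\zeta+2<\xi$ is spent, and your ``check the cost is at most $2$'' is never carried out), it splits each support according to $|t|\le|s|$ versus $|t|>|s|$ and runs two separate analytic arguments: propagation of the function $h(\ee,\delta)$ forwards and backwards along plegma paths (Lemmas \ref{X xi going forward}, \ref{X xi going back}), an $\ell^2$ estimate for the part with bounded first coordinates (Proposition \ref{X_xi getting l^2}) feeding the splitting Corollary \ref{ultrafilter property for ell^1 spreading models}, and a set-valued, coordinate-dominating obstruction (Lemma \ref{X_xi not existing map Phi}, via Proposition \ref{corollary by Gasparis}) rather than Theorem \ref{non plegma preserving maps} itself. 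Without a worked-out substitute for these steps, your ``basic inequality / tree analysis'' is a placeholder for the entire difficulty, so the proposal does not constitute a proof.
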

In particular, if $\xi$ is a limit countable ordinal, then the
space $\mathfrak{X}_\xi$ does not admit $\ell^1$ as a
$\zeta$-spreading model for every $\zeta<\xi$.

\subsection*{Chapter 12}The aim of the next example is to separate for $k>1$ the
class of the $k$-iterated spreading models from corresponding of the $k$-spreading
models.
\begin{thm}
  For every $1<p<q<\infty$ and $k>1$, there exists a reflexive Banach space $X$
  with an unconditional basis such that $X$ admits $\ell^q$ as a spreading model of order
  $k$ while for every $l\in\nn$, every $l$-iterated spreading model of $X$ is
  equivalent either to the usual basis of $\ell^1$ or $\ell^p$.
\end{thm}

\subsection*{Chapter 13}The next example concerns plegma block generated $\ell^1$
spreading models. Among others it shows that property
$\mathcal{P}$ appeared in Theorem \ref{015} is indeed necessary.
\begin{thm}
  There exists a reflexive space $X$ with an unconditional basis
  admitting $\ell^1$ as $\omega$-spreading model and not
  admitting $\ell^1$ as a plegma block generated spreading model of any
  order.
\end{thm}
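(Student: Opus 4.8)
The plan is to build $X$ by a Tsirelson-type construction attached to the tree $\widehat{\ff_\omega}$ and to the plegma structure of $\ff_\omega$, arranged so that an $\ell^1$-sum is ``visible'' only along interleaved plegma configurations and not along successive (block) ones. Concretely one fixes a norming set on $c_{00}$ of the index tree consisting of the coordinate functionals $\pm e_\gamma^{*}$ together with, for each plegma family $(s_1,\dots,s_m)$ in $\ff_\omega$ with $m\le s_1(1)$, the functional that sums with weight $1$ one normalized functional ``living at the branch $s_i$'' for each $i$, and one closes this set under a Tsirelson-type operation with a strictly contractive weight on every other disjointly supported pattern; equivalently one can take the natural $\ff_\omega$-indexed variant with the index set linearly ordered so that plegma families never occur as increasing sequences of the basis. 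In either form $X$ has a canonical $1$-unconditional basis, and the contractive weights prevent long block sequences from building up $\ell^1$.

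The first step is a reduction. Assume $X$ admits $\ell^1$ as a plegma block generated spreading model of \emph{some} order, generated by a plegma block $\ff$-sequence $(x_s)_{s\in\ff\upharpoonright M}$, with $1/c$ the $\ell^1$-equivalence constant of the generated sequence. For a plegma family $(s_1,\dots,s_l)$ in $\ff\upharpoonright M$ with $\min s_1$ large each consecutive pair is a plegma pair, hence $\text{supp}(x_{s_1})<\dots<\text{supp}(x_{s_l})$ and $(x_{s_i})_{i=1}^{l}$ is a normalized block sequence of $X$ with $\big\|\sum_{i=1}^{l}a_ix_{s_i}\big\|\ge(c-\varepsilon)\sum_{i=1}^{l}|a_i|$ for all scalars. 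Thus $X$ contains $\ell^1_l$, uniformly in $l$, inside block subspaces. It therefore suffices to produce a reflexive $X$ with a $1$-unconditional basis such that (a) $\ell^1\in\mathcal{SM}_\omega(X)$ and (b) $X$ does not contain $\ell^1_l$ uniformly over its block sequences.

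For (a) I would verify that the canonical $\ff_\omega$-sequence $(e_s)_{s\in\ff_\omega}$ of the construction (in the tree version, $e_s$ is supported on the branch determined by $s$) generates $\ell^1$: given a plegma family $(s_1,\dots,s_l)$ with $\min s_1\ge l$, the corresponding plegma functional evaluates $\sum a_ie_{s_i}$ to $\sum|a_i|$, while $\|\cdot\|$ is dominated by the ambient $\ell^1$-norm by construction; since this is available for arbitrarily large $\min s_1$, every $\ff_\omega$-spreading model of this sequence is (isometrically) the $\ell^1$-basis. For reflexivity I would run the standard argument for spaces of this kind: $1$-unconditionality is built in; $c_0$ does not embed because the plegma functionals restricted to any infinite $\ff_\omega\upharpoonright L$ still give nontrivial lower $\ell^1$-type estimates on the corresponding plegma-admissible sums; and $\ell^1$ does not embed, which follows from (b) and the unconditionality since no block subspace can be uniformly $\ell^1$; James' theorem then gives reflexivity.

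The main obstacle is (b). A normalized block sequence $(u_n)_n$ of $X$ consists of vectors with successive supports in the fixed order of the index set, and $\big\|\sum_{n=1}^{l}u_n\big\|\gtrsim l$ would force a single plegma-admissible functional — hence, up to the contractive weights, essentially a single plegma family in $\ff_\omega$ of size $O(l)$ — to cross a fixed proportion of the $\prec$-consecutive supports $\text{supp}(u_n)$ and to extract comparable mass from each. I would show this is impossible with a uniform constant. This is where the combinatorics of the earlier chapters is used: after passing to a suitable infinite restriction, Theorem~\ref{05} (together with the plegma-path analysis behind it) forces the branches that one would need to select inside the $\text{supp}(u_n)$ to be pairwise plegma-unrelated in $\ff_\omega$, so that each admissible functional meets only boundedly many of the $u_n$ at full weight and meets the others only through a strictly contractive operation; an inductive estimate on the norm then gives $\big\|\sum_{n=1}^{l}u_n\big\|=o(l)$, which rules out uniform $\ell^1_l$ in blocks. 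Combined with the reduction, this shows that $X$ admits no plegma block generated $\ell^1$ spreading model of any order while $\ell^1\in\mathcal{SM}_\omega(X)$; in particular, by the contrapositive of Theorem~\ref{015}, $X$ fails property $\mathcal{P}$, so this example also shows that the hypothesis of Theorem~\ref{015} cannot be removed.
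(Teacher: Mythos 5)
Your reduction is logically sound in one direction: since every consecutive pair of a plegma $l$-tuple is a plegma pair, a plegma block generated $\ell^1$ spreading model does produce, for every $l$, a finite block sequence $(x_{s_i})_{i=1}^l$ in the unit ball with a uniform lower $\ell^1_l$-estimate, so ``no uniform block $\ell^1_l$'s'' would indeed suffice. The genuine gap is that this target (b) is far stronger than the theorem and is almost certainly incompatible with (a), and it fails for the natural candidates including your own sketched norming set. If the full-weight norming functionals attached to a plegma family $(s_1,\dots,s_m)$ act coordinate-wise (as sums of $\pm e_t^*$ over nodes, which is what ``sums with weight $1$ one normalized functional living at the branch $s_i$'' amounts to unless those functionals are genuinely averaged along long branch segments), then the single basis vectors indexed by an equal-length plegma family of nodes are already an \emph{isometric block} $\ell^1_m$: for equal lengths the maxima increase along the family, so in any enumeration ordered by maxima these vectors are successive, and the plegma functional norms their sum. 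This is exactly what happens in the space of Chapter \ref{space without block}: that space contains isometric block $\ell^1_k$'s for every $k$, yet admits no plegma block generated $\ell^1$ spreading model of any order. So the theorem is not about the absence of finite block $\ell^1_l$'s at all; it is about the impossibility of a \emph{coherent} $\ff$-subsequence producing them along all far-out plegma tuples simultaneously, and your plan gives no way to construct a space satisfying both (a) and (b).

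Even setting this aside, the step you call the main obstacle is only asserted. Theorem \ref{05} is a statement about the non-existence of plegma preserving maps from a lower-order to a higher-order regular thin family; it does not control how a norming functional distributes mass over the supports of a block sequence, and it cannot by itself yield the estimate $\|\sum_{n=1}^l u_n\|=o(l)$ you need. The paper's argument for non-existence of plegma block generated $\ell^1$ spreading models is quite different and quantitatively delicate: one first upgrades to almost isometric plegma block generation one order higher (Corollary \ref{one order more to get the isometric}), then propagates norm information along $3$-plegma paths via the function $h(\ee,\delta)$, decomposes each vector into the part supported on nodes of bounded length, the part extending such nodes, and the part incomparable to them, shows the incomparable part is uniformly small far along the path, and rules out $\ell^1$ for the other two parts separately (an $\ell^2$-type estimate for bounded-length nodes and a ``hit at most once'' estimate coming from the block structure), finishing with the $\ell^1$-splitting Corollary \ref{ultrafilter property for ell^1 spreading models}. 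Your sketch also leaves reflexivity and the upper estimates unaddressed (in the paper these come from an explicit outer $\ell^2$-structure making the space $\ell^2$-saturated), so as it stands the proposal does not constitute a proof.
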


\subsection*{Chapter 14}The last example shows that the hierarchy of spreading models
introduced in this work does not provide a Krivine's type result
(c.f. \cite{Kr}). More precisely we have the following
\begin{thm}
  There exists a reflexive space $X$ with an unconditional basis such
  that for every $\xi<\omega_1$  and every $(e_n)_{n}\in\mathcal{SM}_\xi(X)$, the space $E=\overline{<(e_n)_{n}>}$
  does not contain any isomorphic copy of  $c_0$
  or $\ell^p$, for all $1\leq p<\infty$.
\end{thm}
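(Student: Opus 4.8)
My plan has four steps: an internal reduction, the construction of $X$, the reduction of an arbitrary $\xi$-spreading model to a finite-block estimate in $X$, and the estimate itself.

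\emph{Step 1 (an internal reduction).} I would first pass to an equivalent, purely ``internal'' statement. By the two propositions on $\ell^p$, $\ell^1$ and $c_0$ spreading models preceding Corollary \ref{021} — a copy of $\ell^p$, $1<p<\infty$, inside $E$ produces a $(\xi+1)$-order spreading model of $X$ equivalent to the $\ell^p$ basis, while a copy of $\ell^1$, resp.\ $c_0$, produces isometrically $\ell^1$, resp.\ $c_0$, as a $(\xi+1)$-order spreading model — the theorem is equivalent to: there is a reflexive space $X$ with a $1$-unconditional basis admitting, for no $\xi<\omega_1$, a $\xi$-order spreading model equivalent to the unit vector basis of $c_0$ or of $\ell^p$, $1\le p<\infty$. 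Indeed, if some $(e_n)_n\in\mathcal{SM}_\xi(X)$ had $E=\overline{<(e_n)_n>}$ containing one of these spaces, one of the two propositions would put such a basis in $\mathcal{SM}_{\xi+1}(X)$; the converse is trivial.

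\emph{Step 2 (the space).} I would construct $X$ as a mixed Tsirelson type space whose recursive norm is defined from a suitably coded transfinite array of Schreier families together with Schlumprecht type weights $\theta_j\downarrow 0$, designed so that: (a) $X$ is reflexive with a $1$-unconditional (bimonotone) basis and has the property $\mathcal{P}$ of Theorem \ref{015} (the latter being a consequence of a lower estimate $\|\sum_{i=1}^k y_i\|\ge c\,\phi(k)$ once $\phi(k)$ exceeds a threshold); (b) there is a fixed function $\phi$ with $\phi(l)\to\infty$, $\phi(l)=o(l)$ and $\phi(l)/l^{1/p}\to\infty$ for every $p>1$ such that any ``deep'' normalized block sequence $y_1<\cdots<y_l$ of $X$ satisfies $c\,\phi(l)\le\|\sum_{i=1}^l y_i\|\le C\,\phi(l)$; and (c), the essential design requirement, this Schlumprecht-like behaviour is \emph{not destroyed} by the averaging manoeuvres available to higher-order $\ff$-sequences: forming long ``$\ell^1$-averages'' over ever deeper regions as $\min s_1\to\infty$ never upgrades the sum of $l$ of them past $C\phi(l)$, because reaching $\ell^1_l$- or $\ell^p_l$- or $c_0$-behaviour with a uniform constant requires Schreier complexity \emph{growing with} $l$, which a plegma family of $l$ sets inside a thin family of a fixed order $\xi$ cannot supply.

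\emph{Step 3 (reduction of an arbitrary $\xi$-spreading model).} Assume, for contradiction, that for some regular thin family $\ff$ with $o(\ff)=\xi$ there is an $\ff$-spreading model $(e_n)_n$ of $X$ equivalent to the basis of $c_0$ or of $\ell^p$, $1\le p<\infty$. Then $(e_n)_n$ is Schauder basic and, $X$ being reflexive, every bounded $\ff$-sequence in $X$ is weakly relatively compact; so by the theorem on plegma disjointly supported generators, $(e_n)_n$ is $1$-unconditional and is generated by a plegma disjointly supported $\ff$-subsequence $(x_s)_{s\in\ff\upharpoonright M}$. In a plegma family $(s_i)_{i=1}^l$ every pair in order is a plegma pair, so the $x_{s_i}$ are pairwise disjointly supported and seminormalized, whence for each $l$
\[\Big\|\sum_{i=1}^l a_i e_i\Big\|_*=\lim_n\Big\|\sum_{i=1}^l a_i x_{s_i^n}\Big\|\]
is a limit of norms of disjointly supported, seminormalized, arbitrarily deep vectors of $X$. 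For the $\ell^1$ case I would moreover use property $\mathcal{P}$ together with Theorem \ref{015} to replace this by a plegma block generated $\ell^1$ spreading model, so that the $x_{s_i^n}$ become a genuine deep block sequence of $X$ (for $c_0$ the analogous upgrade is Theorem \ref{013}).

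\emph{Step 4 (conclusion and the main obstacle).} By (b) and $1$-unconditionality one gets $\|\sum_{i=1}^l e_i\|_*\ge c'\,\phi(l)$ for all $l$, and this already excludes $\ell^1$ (since $\phi(l)=o(l)$), every $\ell^p$ with $1<p<\infty$ (since $\phi(l)\gg l^{1/p}$) and $c_0$ (since $\phi(l)\to\infty$); combined with Step 1 this proves the theorem. The real work is establishing (b)–(c) of $X$ \emph{uniformly over all countable ordinals and all generating $\ff$-sequences}, and (c) is the main obstacle. It is exactly the phenomenon isolated by Theorem \ref{05} and the remark following it: a thin family of a fixed order $\xi$ only lets plegma families of size $l$ ``see'' a uniformly bounded portion of the recursive norm of $X$, so the combinatorial complexity available to a generating $\ff$-sequence does not grow with $l$, and hence it cannot manufacture the arbitrarily deep regular behaviour that an $\ell^p$ or $c_0$ spreading model would require. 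Converting this heuristic into quantitative estimates — building into the Schreier families and weights of $X$ the precise coding of ordinal heights that makes every ``averaging improvement'' cost at least $\phi$-logarithmically, and separately checking reflexivity and the two-sided estimate (b) — is the technical core of the construction.
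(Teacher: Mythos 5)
Your Step 1 and the skeleton of Step 3 do match the paper's strategy (the final reduction is exactly Proposition \ref{getting l^p spr mod by containing} applied with reflexivity, and the $\ell^1$/$c_0$ block upgrades via property $\mathcal{P}$ and Theorems \ref{015}, \ref{013} are available), but the proposal has a genuine gap: the space itself and the two estimates you call (b) and (c) are never constructed or proved, and you acknowledge them as ``the technical core.'' That core is precisely what the paper supplies, and it does so by a mechanism different from the one you gesture at. The paper's space (a variant of \cite{O-S}) kills $\ell^1$ not through any Schreier-complexity obstruction of the type in Theorem \ref{non plegma preserving maps}, but through the outer $\ell^2$-structure of the type II functionals: the weight sequences $w_s=(\|x_s\|_j)_j$ live in $B_{\ell^2}$, one takes a generic assignment of them there, and a quantitative computation (Proposition \ref{non containing l^1 block spreading model}) shows that long plegma-block averages have norm at most $0{,}6$, contradicting a lower $\ell^1$ constant $0{,}8$; property $\mathcal{P}$ and Theorem \ref{getting block generated ell^1 spreading model} then remove the block hypothesis. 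For $\ell^p$ ($1<p<\infty$) and $c_0$ the paper proves a \emph{lower} estimate $\|\sum_{j=1}^{n_k}x_j\|>c\,n_k^{1/p}$ using a single type I functional of weight $m_k$ (Lemma \ref{breaking upper l^p}) and, crucially, extends it by transfinite induction on $o(\ff)$ to $\ff$-sequences admitting disjointly generic decompositions (Lemma \ref{breaking l^p means}); combined with Theorem \ref{thm disjointly generic decomposition for wekly relatively compact ff-sequences} this handles all orders $\xi$ uniformly.

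Two concrete defects in your sketch beyond the missing construction. First, for $\ell^p$ with $1<p<\infty$ there is no plegma-block upgrade theorem (Theorems \ref{013} and \ref{015} cover only $c_0$ and $\ell^1$), so the generators you obtain in Step 3 are merely plegma disjointly supported; your Step 4 then applies an estimate (b) stated only for \emph{block} sequences to vectors that need not be block, and you give no substitute for the paper's induction over $o(\ff)$ via generic decompositions, which is exactly the device that bridges this. Second, the design target (b) itself -- a two-sided estimate $c\,\phi(l)\le\|\sum_{i=1}^l y_i\|\le C\,\phi(l)$ with $\phi(l)=o(l)$ valid for \emph{all} deep normalized block sequences -- is both unsubstantiated and stronger than what is needed or used: the paper never proves a pointwise $o(l)$ upper bound for arbitrary block sums (the upper-type control against $\ell^1$ is only a limiting statement about spreading-model averages, obtained from the $\ell^2$ exterior structure), and the lower bound is only needed along the lengths $n_k$ with thresholds depending on $p$. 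As it stands, the proposal is a programme whose decisive steps remain unproved.
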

The latter answers in the affirmative the aforementioned problem
posed in \cite{O-S}. Moreover it is an interesting question if the
hierarchy of the spreading models of the space $X$ is stabilized
at some $\xi<\omega_1$. Furthermore by Corollary \ref{021} we have
that every spreading model of $X$ generates a reflexive space not
containing $\ell^p$, for all $1< p<\infty$. It is open whether all
these spaces are related to reflexive spaces generated by
saturation methods  like Tsirelson,  mixed Tsirelson and their
variants.
\section{Comments on the plegma families} In the last part of the
introduction we will discuss the significant role of the plegma
families which have a strong presence in the entire work. The
plegma families have three basic properties mentioned before. The
first one is the Ramsey property (Theorem \ref{thm01}) which is
the fundamental ingredient yielding the existence of higher order
spreading models. The second and the third ones concern plegma
preserving maps between regular thin families. We recall (Theorem
\ref{04}) that for $\ff$, $\g$ regular thin families with
$o(\ff)\leq o(\g)$ there exist $N,M\in[\nn]^\infty$ and a plegma
preserving map $\varphi:\g\upharpoonright N\to \ff\upharpoonright
M$. This result has two fundamental consequences, namely it
permits to pass from $\ff$-spreading models to $\xi$-spreading
models and furthermore makes the hierarchy
$(\mathcal{SM}_\xi(X))_{\xi<\omega_1}$ an increasing one. The
third one (Theorem \ref{05}) explains that the previous property
never occurs whenever $o(\ff)>o(\g)$. The latter and more
precisely the methods developed for its proof are the key tools in
showing that the hierarchy $(\mathcal{SM}_\xi(X))_{\xi<\omega_1}$
does not collapse (Theorem \ref{intro the hierarch does not
colaps}). We should also add that the above properties of the
plegma families also indicate their geometric nature. Indeed, the
plegma preserving maps distinguish the regular thin families
according to their order and they reveal a transfinite dimension
of them. It also seems that the plegma families, which are a pure
combinatorial concept, is a new one. As S. Todorcevic pointed out
to us, E. Specker in \cite{Sp} had used for pairs in $[\nn]^3$ a
concept which shares some common features with the plegma pairs.

 From Banach
space point of view the plegma families are also well hidden.
Indeed, in the definition of the classical spreading models they
are not visible as in this case they  coincide to the finite
subsets of $\nn$. The concept of the plegma families in $[\nn]^k$
emerged in our attempt to solve Odell-Schlumprecht's problem
concerning $k$-iterated spreading models (see Section \ref{intro
other appraches section} above). Using induction we proved that
spaces like Odell-Schlumprecht's  original one answer
affirmatively their problem. Namely for $k\in\nn$ every
$k$-iterated spreading model does not contain isomorphs of $\ell^p$, $1\leq
p<\infty$, or $c_0$. Illustrating the proof we understood that
every $k$-iterated spreading model, which by its own definition for
$k\geq2$ is not directly connected to the space, is actually
generated by a family $(x_s)_{s\in[\nn]^k}$ as is described by our
definition of the $k$-spreading models. This key observation was
the basis for the general definitions of $\ff$-sequences, plegma
families and higher order spreading models.

We expect that the plegma families and the related concepts to
have further applications to combinatorics and Banach space
theory.

\section{Preliminary notation and definitions}
We start with some notation related to the subsets of $\nn$. As
usual, we denote the set of the natural numbers by
$\nn=\{1,2,...\}$. Throughout the paper we shall identify strictly
increasing sequences in $\nn$ with their corresponding range i.e.
we view every strictly increasing sequence in $\nn$ as a subset of
$\nn$ and conversely every subset of $\nn$ as the sequence
resulting from the increasing ordering of its elements. We will
use capital letters as $L,M,N,...$ to denote infinite subsets  and
lower case letters as $s,t,u,...$ to denote finite subsets of
$\nn$.

For every infinite subset $L$ of $\nn$, $[L]^{<\infty}$ (resp.
$[L]^\infty$) stands for the set of all finite (resp. infinite)
subsets of $L$. For an $L=\{l_1<l_2<...\}\in [\nn]^\infty$ and a
positive integer $k\in\nn$, we set $L(k)=l_k$. Similarly, for a
finite subset $s=\{n_1<..<n_m\}$ of $\nn$ and  for $1\leq k\leq m$
we set $s(k)=n_k$. Also for every nonempty $s\in[\nn]^{<\infty}$
and $1\leq k\leq |s|$ we set $s|k=\{s(1),\ldots,s(k)\}$ and
$s|0=\emptyset$. Moreover, for $s,t\in[\nn]^{<\infty}$, we write
$t\sqsubseteq s$ (resp. $t\sqsubset s$) to denote that $t$ is an
initial (resp. proper initial) segment of $s$.

For an $L=\{l_1<l_2<...\}\in [N]^\infty$ and a finite subset
$s=\{n_1<..<n_k\}$ (resp. for an infinite subset
$N=\{n_1<n_2<...\}$ of $\nn$), we set
$L(s)=\{l_{n_1},...,l_{n_k}\}$ (resp.
$L(N)=\{l_{n_1},l_{n_2},...\}$).

For $s\in[\nn]^{<\infty}$ by $|s|$ we denote the cardinality of
$s$. For $L\in[\nn]^\infty$ and $k\in\nn$, we denote by $[L]^k$
the set of all $s\in[L]^{<\infty}$ with $|s|=k$. For every
$s,t\in[\nn]^{<\infty}$ we write $t<s$ if either at least one of
them is the empty set, or $\max t<\min s$.

We also recall some standard notation and definitions from Banach
space theory. Although the notation that we follow is the standard
one, as it can be found in textbooks like \cite{AK},
\cite{Lid-Tza}, we present for the sake of completeness some basic
concepts that are involved in this monograph.

By the term Banach space we shall always mean an infinite
dimensional one. Let $X$ be  a  Banach space. When we say that $Z$
is a subspace of $X$ we mean that $Z$ is a closed infinite
dimensional subspace of $X$. For a subspace $Z$ of $X$, $B_Z$
(resp. $S_Z$) stands for the unit ball $Z$, i.e. the set $\{x\in
Z:\|z\|\leq1\}$ (resp. the unit sphere of $Z$, i.e. the set
$\{x\in Z:\|x\|=1\}$). For a bounded linear operator $T:X\to Y$,
where $Y$ is a Banach space, we will say that $T$ is strictly
singular if there exists no subspace $Z$ of $X$ such that the
restriction $T|_Z$ of $T$ on $Z$ is an isomorphic embedding.

Let $(x_n)_{n}$ be a sequence in  $X$. We say that $(x_n)_{n}$ is
bounded (resp. seminormalized) if there exists $M>0$ (resp.
$C,c>0)$ such that $\|x_n\|\leq M$ (resp. $c\leq \|x_n\|\leq C$)
for all $n\in\nn$. We say that $(x_n)_{n}$ is normalized if
$\|x_n\|=1$ for all $n\in\nn$.

Two sequences $(x_n)_{n}$ and $(y_n)_{n}$, not necessarily
belonging to the same Banach space, are called equivalent if there
exist $c,C>0$ such that for every $n\in\nn$ and
$a_1,\ldots,a_n\in\rr$
\[c\Big\|\sum_{j=1}^na_jy_j\Big\|\leq\Big\|\sum_{j=1}^na_jx_j\Big\|\leq C\Big\|\sum_{j=1}^na_jy_j\Big\|\]

A sequence $(e_n)_{n}$ is a Schauder basis of  $X$ if for every
$x\in X$ there exists a unique sequence $(a_n)_{n}$ of reals such
that $x=\sum_{n=1}^\infty a_nx_n$. As  is well known the
associated projections $(P_n)_{n}$ to $(e_n)_{n}$, defined by
$P_n(\sum_{j=1}^\infty a_je_j)=\sum_{j=1}^na_je_j$  for every
$x=\sum_{j=1}^\infty a_je_j\in X$ and $n\in\nn$, are uniformly
bounded and the quantity $\sup_{n}\|P_n\|$ is  the basis constant
of $(e_n)_{n}$. A sequence $(x_n)_{n}$ in $X$ is called (Schauder)
basic if $(x_n)_{n}$ forms a Schauder basis for the subspace
$\overline{<(x_n)_{n}>}$ of $X$. A sequence $(x_n)_{n}$ in $X$ is
called $C$-unconditional, where $C>0$, if for every
$F\subseteq\nn$ and every $(a_n)_{n}$ sequence of reals such that
the series $\sum_{n=1}^\infty a_nx_n$ converges we have that
\[\Big\|\sum_{n\in F} a_nx_n\Big\|\leq C\Big\|\sum_{n=1}^\infty
a_nx_n\Big\|\] For a sequence $(x_n)_{n}$ in $X$ we say that a
sequence $(y_n)_{n}$ is a block subsequence of $(x_n)_{n}$ if
there exist a strictly increasing sequence $(p_n)_{n}$ of natural
numbers and a sequence $(a_n)_{n}$ of reals such that for every
$n\in\nn$ we have that
\[y_n=\sum_{k=p_n}^{p_{n+1}-1}a_kx_k\]

Suppose that  $X$ has a Schauder basis $(e_n)_{n}$. For every
$x=\sum_{n=1}^\infty a_n e_n\in X$ we define the support of $x$ to
be the set $\text{supp} (x)=\{n\in\nn:a_n\neq0\}$ and we say that
$x$ is finitely supported if $\text{supp}(x)$ is finite. For
$x_1,x_2\in X$ finitely supported we write $x_1<x_2$ if
$\text{supp}(x_1)<\text{supp}(x_2)$. The biorthogonal functionals
$(e_n^*)_{n}$ of $(e_n)_{n}$ are  defined  by $e_n^*(x)=a_n$ for
every $x=\sum_{i=1}^\infty a_ie_i\in X$ and $n\in\nn$.  We recall
that the basis $(e_n)_{n}$ is called shrinking if
$X^*=\overline{<(e_n^*)_{n}>}^{\|\cdot\|}$. The basis $(e_n)_{n}$
is called boundedly complete if for every sequence $(a_n)_{n}$ of
reals such that the sequence $(\sum_{k=1}^na_ke_k)_{n}$ is
bounded, the series $\sum_{n=1}^\infty a_ne_n$ converges.

\chapter{Regular thin and plegma families}
This chapter is devoted to the definitions and the initial study
of the regular thin and plegma families of finite subsets of
$\nn$. As we have mentioned in the introduction, these concepts
play a critical role in the definition of the higher order
spreading models. The results included in the present chapter are
of combinatorial nature and could be of independent interest.
\section{Regular thin families of finite subsets of $\nn$}
In this section we define the regular thin families and we recall
some standard facts concerning families of finite subsets of
$\nn$. The definition of regular thin families is based on two
well known concepts, namely the regular families, traced back to
\cite{AA}, and the thin families, defined in \cite{N-W} and
extensively studied in \cite{Pd-R}. The importance of regular thin
families arises from the fact that they posses strong Ramsey
properties, in particular concerning plegma families which are
later in this chapter.
\subsection{Basic definitions}
Recall that a family $\ff$ of finite subsets of $\nn$ is said to
be \textit{hereditary} if for every $s\in\ff$ and $t\subseteq s$
we have that  $t\in\ff$
  and  \textit{spreading} if for every $n_1<...<n_k$ and  $m_1<...<m_k$ with  $\{n_1,...,n_k\}\in\ff$
  and $n_1\leq m_1$, ..., $n_k\leq m_k$ we have that  $\{m_1,...,m_k\}$ belongs to $ \ff$ too.
   Also $\ff$ is called  \textit{compact} if the set of characteristic functions of the members of $\ff$,
   $\{\chi_s\in\{0,1\}^\nn:\;s\in \ff\}$, is a closed subspace of $\{0,1\}^\nn$.
   A family $\ff$ of finite subsets of $\nn$ will be called \textit{regular} if it is compact, hereditary and spreading.
   For a family
$\ff\subseteq [\nn]^{<\infty}$ of finite subsets of $\nn$ and an
infinite subset $L\in [\nn]^\infty$ we set \[\ff\upharpoonright
L=\{s\in \ff:\;s\subseteq L\}=\ff\cap [L]^{<\infty}\]
  The \textit{order}
  of a family $\ff\subseteq [\nn]^{<\infty}$ is defined as follows (see also \cite{Pd-R}).
  We assign to $\ff$ its ($\sqsubseteq-$)\textit{closure}
  \[\widehat{\ff}=\{t\in[\nn]^{<\infty}:\exists  s\in\ff\text{ with }t\sqsubseteq s\}\]
  which under the initial segment ordering is a tree.
  If $\widehat{\ff}$ is ill-founded
  (i.e. there exists an infinite sequence $(s_n)_{n\in\nn}$ in $\widehat{\ff}$ such that $s_n\sqsubset s_{n+1}$)
   then we set $o(\ff)=\omega_1$.
  Otherwise, for every maximal element $s$ of $\widehat{\ff}$ we set
  $o_{\widehat{\ff}}(s)=0$ and recursively for every $s$ in $\widehat{\ff}$ we define
  \[o_{\widehat{\ff}}(s)=\sup\{o_{\widehat{\ff}}(t)+1:t\in\widehat{\ff}\text{ and }s\sqsubset t\}\]
The order of $\ff$ denoted by $o(\ff)$ is defined to be the
ordinal $o_{\widehat{\ff}}(\emptyset)$.
   By convention  for an empty family  $\ff=\emptyset$ we
  set $o(\ff)=-1$. Also notice that $o(\ff)=o(\widehat{\ff})$.

For every $n\in \nn$ we define
\[\ff_{(n)}=\{s\in[\nn]^{<\infty}:n<\min s\text{ and }\{n\}\cup
s\in\ff\}\] where $n<s$ means that either $s=\emptyset$ or $n<\min
(s)$.
\begin{rem}\label{eq1}
  Notice that for every nonempty family $\ff$ we have that
\[
  o(\ff)=\sup\{o(\ff_{(n)})+1:n\in\nn\}
\]
\end{rem}

  A family
$\ff$ of finite subsets of $\nn$ is called \textit{thin} if there
do not exist  $s, t$ in $\ff$ such that $s$ is a proper initial
segment of $t$.
\begin{rem}\label{thin}
Let $\ff$ be a family of finite subsets of $\nn$. Then $\ff$ is
thin if and only if $\ff$ coincides  with the set of all
$\sqsubseteq$-maximal elements of $\widehat{\ff}$.
\end{rem}
\begin{defn}\label{defn regular thin} We will call  a family
$\ff$ of finite subsets of $\nn$ \emph{regular thin} if $\ff$ is
thin and in addition its closure $\widehat{\ff}$ is a regular
family.
\end{defn}
Let's point out that if $\ff$ is a regular family, then the
Cantor-Bendixson index of $\widehat{\ff}$ as a compact subset of
$\{0,1\}^\nn$ is equal to $o(\ff)+1$. The next lemma allow us to
construct regular thin families from regular ones. We will use the
following notation. For a regular family $\mathcal{R}$ we set
$\mathcal{M}(\mathcal{R})=\{s\in\mathcal{R}:\;s\;\text{is}\;\subseteq\text{-maximal
    in}\;\mathcal{R}\}$.
\begin{lem}\label{maximal}
  Let $\mathcal{R}$ be a regular family. Then the following hold.
  \begin{enumerate}
    \item[(i)]
    $\mathcal{M}(\mathcal{R})=\{s\in\mathcal{R}:\;s\;\text{is}\;\sqsubseteq\text{-maximal
    in}\;\mathcal{R}\}$.
    \item[(ii)] $\widehat{\mathcal{M}(\mathcal{R})}=\mathcal{R}$
    and therefore $\mathcal{M}(\mathcal{R})$ is regular thin with $o(\widehat{\mathcal{M}(\mathcal{R})})=o(\mathcal{R})$.
  \end{enumerate}
\end{lem}
\begin{proof}
 (i) Let $\mathcal{M}_1=\{s\in\mathcal{R}:s\;\text{is}\;\sqsubseteq\text{-maximal
    in}\;\mathcal{R}\}$. It is immediate that
    $\mathcal{M}(\mathcal{R})\subseteq\mathcal{M}_1$. Hence it remains to
    show that $\mathcal{M}_1\subseteq\mathcal{M}(\mathcal{R})$. Suppose on
    the contrary that there exists
    $s\in\mathcal{M}_1\setminus\mathcal{M}(\mathcal{R})$. Then there exists
    $t\in\mathcal{R}$ such that $s$ is a proper subset of $t$. By
    the spreading property of the family $\mathcal{R}$, we may
    find $t'\in\mathcal{R}$ such that $s\sqsubset t'$. Since $s\in\mathcal{M}_1$, this
    leads to a contradiction.

    (ii) Since $\mathcal{M}(\mathcal{R})\subseteq \mathcal{R}$ and $\mathcal{R}$ is hereditary, we
    readily have that $\widehat{\mathcal{M}(\mathcal{R})}\subseteq
    \mathcal{R}$. To show that
    $\mathcal{R}\subseteq \widehat{\mathcal{M}(\mathcal{R})}$ notice that
    for every $s\in\mathcal{R}$
     there exists a $t\in \mathcal{M}(\mathcal{R})$ such that $s\sqsubseteq
     t$ (otherwise $\mathcal{R}$ would not be compact).
\end{proof}
 Schreier families introduced in \cite{AA} provide an hierarchy
of regular thin families of order $\omega^\xi$, $\xi<\omega_1$.
Next we recall their definition.
\begin{defn}\label{Schreier families defn}
The Schreier families $(\mathcal{S}_\xi)_{\xi<\omega_1}$ are
inductively defined as follows. We set
\[\mathcal{S}_0=\big\{\{n\}:n\in\nn \big\}\cup\{\emptyset\}\]
Suppose that for some $\xi<\omega_1$ the families
$(\mathcal{S}_\zeta)_{\zeta<\xi}$ have been defined. If $\xi$ is a
successor, i.e. there exists countable ordinal $\zeta$ such that
$\xi=\zeta+1$, then we set
\[\mathcal{S}_\xi=\big\{\bigcup_{j=1}^n F_j:n\in\nn, F_1,\ldots,F_n\in\mathcal{S}_\zeta, F_1<\ldots<F_n \;\text{and}\;\min F_1\geq n\big\}\]
If $\xi$ is a limit countable ordinal, then we set
\[\mathcal{S}_\xi=\big\{F\in[\nn]^{<\infty}:\exists n\in\nn\;\text{such that}\;\min F\geq n\;\text{and}\;F\in\mathcal{S}_{\zeta_n}\big\}\]
with $(\zeta_n)_{n\in\nn}$ being a strictly increasing sequence of
countable ordinals convergent to $\xi$.
\end{defn}
 By induction to
$\xi<\omega_1$, one can verify that each $\mathcal{S}_\xi$ is a
regular family of order $\omega^\xi$. Hence by Lemma \ref{maximal}
the family $\mathcal{M}(\mathcal{S}_\xi)$ is regular thin of order
$\xi$, for all $\xi<\omega_1$.

In \cite{Pd-R}, for every $\xi<\omega_1$, a thin family of order
$\xi$ has been defined, which generally is not a regular thin one.
However, as it is well known (see \cite{AT}, \cite{LT}), a slight
modification of their definition yields regular families of order
$\xi$ and therefore by Lemma \ref{maximal} the corresponding
maximal elements provide regular thin families of order $\xi$. For
the sake of completeness we briefly describe their definition.

 \begin{prop}
  For every $\xi<\omega_1$ there exists a regular thin family $\ff_\xi$ with $o(\ff_\xi)=\xi$. \end{prop}
 \begin{proof} By Lemma \ref{maximal}, it suffices to built an hierarchy $\{\mathcal{R}_\xi:\xi<\omega_1\}$
 of regular families with $o(\mathcal{R}_\xi)=\xi$ and then set $\ff_\xi=\mathcal{M}(\mathcal{R}_\xi)$ for all $\xi<\omega_1$.
  We proceed by induction on $\xi<\omega_1$. For
 $\xi=0$ we set $\mathcal{R}_0=\{\emptyset\}$. Assume that for some
 $\xi<\omega_1$ and for each $\zeta<\xi$ we have defined a regular
 family $\mathcal{R}_\zeta$ with $o(\mathcal{R}_\zeta)=\zeta$. If
 $\xi$ is a successor ordinal, i.e. $\xi=\zeta+1$, then we set
 \[\mathcal{R}_{\xi}=\Big\{\{n\}\cup
 s:\;n\in\nn,\;s\in\mathcal{R}_\zeta\;\text{and}\;n<\min
 s\Big\}\]

 If $\xi$ is a limit ordinal, then we choose an increasing
 sequence $(\zeta_n)_{n\in\nn}$ such that $\zeta_n\to\xi$ and we
 set
 \[\mathcal{R}_\xi=\bigcup_{n\in\nn}\Big\{s\in\mathcal{R}_{\zeta_n}:\min
 s\geq
 n\Big\}=\bigcup_{n\in\nn}\mathcal{R}_{\zeta_n}\upharpoonright[n,+\infty)\]

It is easy to check that for every $\xi<\omega_1$ we have that
$\mathcal{R}_\xi$ is regular with
 $o(\mathcal{R}_\xi)=\xi$.
 \end{proof}
 The next lemma is a kind of converse of Lemma \ref{maximal}.
\begin{lem}\label{regular thin onto regular}
  Let $\ff$ be a regular thin family. Then
  $\mathcal{M}(\widehat{\ff})=\ff$.
\end{lem}
\begin{proof}
Let $\mathcal{M}_1$ be  the set of all $\sqsubseteq$-maximal
elements of $\widehat{\ff}$. Since $\ff$ is thin, by Remark
\ref{thin} we get that $\ff=\mathcal{M}_1$. Since $\ff$ is regular
thin we get that $\widehat{\ff}$ is regular and therefore by
assertion (i) of Lemma \ref{maximal} we get that
$\mathcal{M}(\widehat{\ff})=\mathcal{M}_1$. Hence
$\ff=\mathcal{M}(\widehat{\ff})$.
\end{proof}
The next proposition describes the relation between the regular
thin families and the regular ones.
\begin{prop}
  The map which sends $\ff$ to $\widehat{\ff}$ is a bijection
  between the set of all regular thin families and the set of all
  regular ones. Moreover, the inverse map sends each regular
  family $\mathcal{R}$ to $\mathcal{M}(\mathcal{R})$.
\end{prop}
\begin{proof}
  By the definition of regular thin families, the map $\ff\to\widehat{\ff}$ sends each
  regular thin family to a regular one. By Lemma
  \ref{regular thin onto regular} we get that the map is 1-1.
  Finally, by assertion (ii) of Lemma
  \ref{maximal} we conclude that the map is onto and the inverse
  map sends each regular
  family $\mathcal{R}$ to $\mathcal{M}(\mathcal{R})$.
\end{proof}

\begin{rem}
  If $\ff$ is a regular thin family with $o(\ff)=k<\omega$, then
  it is easy to see that there exists $n_0$ such that $\ff\upharpoonright
  [n_0,\infty)=\{s\in[\nn]^k:\min s\geq n_0\}$. Therefore, for
  each $k<\omega$, the family $[\nn]^k$ is essentially the unique
  regular thin family of order $k$. However this does not remain valid for regular thin
  families of order $\xi\geq \omega$.  For instance the families $\ff_f=\{s\in[\nn]^{<\infty}: |s|=f(\min s)\}$,
where $f:\nn\to\nn$ is an unbounded increasing map are all regular
thin families of order $\omega$.
\end{rem}

 It is also easy to see that for every regular thin family $\ff$ and every
 $L\in[\nn]^\infty$ the set of all $\sqsubseteq-$maximal
 elements of $\widehat{F}\upharpoonright L$ coincides with $\ff\upharpoonright L$.
This in particular yields the following fact.
 \begin{fact}\label{fact every regular is large}
   Let $\ff$ be a regular thin family. Then the following hold
   \begin{enumerate}
   \item[(i)] For every $L\in[\nn]^\infty$ we have that $\widehat{\ff\upharpoonright L}=\widehat{\ff}\upharpoonright L$.
   \item[(ii)] For every $n\in\nn$, $\ff_{(n)}$ is regular  thin  and $\widehat{\ff_{(n)}}=\widehat{\ff}_{(n)}$.
   \item[(iii)] For every $L\in[\nn]^\infty$ we have that $o(\ff)=o(\widehat{\ff}\upharpoonright L)=o(\ff\upharpoonright L)$.
 \end{enumerate}
 \end{fact}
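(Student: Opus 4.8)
The plan is to verify the three items in sequence, each following from the structural characterization of regular thin families noted just above the Fact: a thin family $\ff$ is regular thin if and only if $\ff$ is precisely the set of $\sqsubseteq$-maximal elements of $\widehat{\ff}$, and more generally, for every $L\in[\nn]^\infty$, the $\sqsubseteq$-maximal elements of $\widehat{\ff}\upharpoonright L$ coincide with $\ff\upharpoonright L$.

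For (i), I would show the two inclusions of $\widehat{\ff\upharpoonright L}=\widehat{\ff}\upharpoonright L$ separately. The inclusion $\widehat{\ff\upharpoonright L}\subseteq\widehat{\ff}\upharpoonright L$ is immediate: if $t\sqsubseteq s$ for some $s\in\ff\upharpoonright L$, then $s\in\ff$ gives $t\in\widehat{\ff}$, and $t\sqsubseteq s\subseteq L$ gives $t\subseteq L$. For the reverse inclusion, take $t\in\widehat{\ff}\upharpoonright L$, so $t\subseteq L$ and $t\sqsubseteq s$ for some $s\in\ff$. Here the spreading property of $\widehat{\ff}$ enters: since $t\subseteq L$, one can "push" the tail of $s$ lying beyond $t$ up into $L$ to obtain some $s'\in\widehat{\ff}$ with $t\sqsubseteq s'\subseteq L$; extending $s'$ to a $\sqsubseteq$-maximal element of $\widehat{\ff}\upharpoonright L$ (possible since $\widehat{\ff}$ is well-founded, $\ff$ being regular thin) and using the characterization that these maximal elements are exactly $\ff\upharpoonright L$, we get $t\in\widehat{\ff\upharpoonright L}$. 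Item (ii) is the special case-type statement: $\ff_{(n)}$ is thin because $\ff$ is (a $\sqsubseteq$-chain in $\ff_{(n)}$ would prepend $n$ to give one in $\ff$), and $\widehat{\ff_{(n)}}=\widehat{\ff}_{(n)}$ follows by the same initial-segment bookkeeping as in (i); regularity (compact, hereditary, spreading) of $\widehat{\ff}_{(n)}$ is inherited coordinatewise from that of $\widehat{\ff}$, so $\ff_{(n)}$ is regular thin.

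For (iii), I would argue $o(\ff)=o(\widehat{\ff})$ directly from the definition of order, since $o(\ff):=o_{\widehat{\ff}}(\emptyset)$ and $\widehat{\widehat{\ff}}=\widehat{\ff}$ (the closure operation is idempotent, as $\widehat{\ff}$ is already $\sqsubseteq$-closed), so $o(\widehat{\ff})=o_{\widehat{\widehat{\ff}}}(\emptyset)=o_{\widehat{\ff}}(\emptyset)=o(\ff)$. The substantive equality is $o(\widehat{\ff})=o(\widehat{\ff}\upharpoonright L)$; by (i) the latter equals $o(\ff\upharpoonright L)$ once we know $\ff\upharpoonright L$ is regular thin, which it is (thin is inherited by restriction, and $\widehat{\ff}\upharpoonright L$ is regular as a restriction of a regular family). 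One inequality, $o(\widehat{\ff}\upharpoonright L)\leq o(\widehat{\ff})$, is clear because $\widehat{\ff}\upharpoonright L$ is a subtree of $\widehat{\ff}$ and the ordinal rank is monotone under subtrees. For the reverse, I would use the recursion \eqref{eq1}, $o(\ff)=\sup\{o(\ff_{(n)})+1:n\in\nn\}$, together with (ii), by transfinite induction on $o(\ff)$: given $L$, for each $n$ there are infinitely many $m\in L$, and the spreading property lets one transfer an ordinal-$\alpha$ witness in $\widehat{\ff}_{(n)}$ to one in $(\widehat{\ff}\upharpoonright L)_{(m)}$ for a suitable $m\in L$, so $o((\widehat{\ff}\upharpoonright L)_{(m)})\geq o(\widehat{\ff}_{(n)})$; taking suprema and adding $1$ gives $o(\widehat{\ff}\upharpoonright L)\geq o(\widehat{\ff})$.

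The main obstacle is the "pushing up into $L$" step that recurs in all three parts: making precise, via the spreading property of $\widehat{\ff}$, that any initial segment or any branch of $\widehat{\ff}$ can be replaced by one with the same rank whose support lies in the prescribed infinite set $L$. Once that transfer lemma is isolated and proved (a short induction on the length of the finite set being relocated, or on the ordinal rank), items (i)--(iii) follow by routine bookkeeping with initial segments and the order recursion \eqref{eq1}.
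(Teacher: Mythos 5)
Your proposal is correct and fleshes out exactly the argument the paper leaves implicit (the Fact is offered without proof, as an easy consequence of the observation, stated just before it, that the $\sqsubseteq$-maximal elements of $\widehat{\ff}\upharpoonright L$ are precisely $\ff\upharpoonright L$): the recurring "transfer" step of relocating the tail of an element of $\widehat{\ff}$ into $L$ via the spreading property, combined with thinness, well-foundedness of the compact tree $\widehat{\ff}$, and the recursion (\ref{eq1}) for item (iii), is the intended reasoning and all the steps you indicate go through. As a small simplification, in (iii) you could avoid the transfinite induction entirely: the map $t\mapsto L(t)$ sends $\widehat{\ff}$ into $\widehat{\ff}\upharpoonright L$ (by spreading, since $L(i)\geq i$ for all $i$) and preserves proper initial segments, hence is rank-nondecreasing, giving $o(\widehat{\ff}\upharpoonright L)\geq o(\widehat{\ff})$ directly; this is the same shift idea the paper later uses for the families $\ff(L)$ and $\ff(L^{-1})$.
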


\subsection{Ramsey properties of families of finite subsets of $\nn$}
  We will use the following terminology from \cite{G3}.
  Let $\ff\subseteq[\nn]^{<\infty}$ and $M\in[\nn]^\infty$.
  We say that $\ff$ is \textit{large} in $M$ if for every $L\in[M]^\infty$ there exists $s\in\ff$ such that $s\subseteq L$.
  We say that $\ff$ is \textit{very large} in $M$ if for every $L\in[M]^\infty$ there exists $s\in\ff$ such
  that $s\sqsubseteq L$. The following is a restatement  (see also \cite{G3}) of a well known theorem due to
  Nash-Williams \cite{N-W} and F. Galvin and K. Prikry \cite{G-P}.
  \begin{thm} \label{Galvin prikry}
    Let $\ff\subseteq[\nn]^{<\infty}$ and $M\in[\nn]^\infty$. If   $\ff$ is large in $M$
    then there exists $L\in[M]^\infty$ such that $\ff$ is very large in $L$.
  \end{thm}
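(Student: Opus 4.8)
The plan is to run the classical \emph{combinatorial forcing} of Nash--Williams, the same device that underlies the Galvin--Prikry partition theorem. Fix $\ff$ and, for $s\in[\nn]^{<\infty}$ and $M\in[\nn]^\infty$, set $M/s=\{m\in M:m>\max s\}$ (with $M/\emptyset=M$); replacing $M$ by $M/s$ whenever convenient, I may assume $\min M>\max s$. Say that \emph{$M$ accepts $s$} if for every $L\in[M/s]^\infty$ there is $k\geq 0$ with $s\cup\{L(1),\dots,L(k)\}\in\ff$ (the case $k=0$ meaning $s\in\ff$), and that \emph{$M$ rejects $s$} if no $N\in[M]^\infty$ accepts $s$. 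I would record at once three easy remarks: both ``accepts'' and ``rejects'' pass to every $N\in[M]^\infty$; ``$M$ rejects $s$'' implies ``$M$ does not accept $s$'' (take $N=M$); and if $s\in\ff$ then every infinite set accepts $s$ trivially, via the empty appended segment. Finally note that ``$L$ accepts $\emptyset$'' is precisely the assertion that $\ff$ is very large in $L$, so the theorem reduces to finding $L\in[M]^\infty$ accepting $\emptyset$.

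First I would prove the trivial dichotomy: for every $s$ and $M$ there is $N\in[M]^\infty$ that \emph{decides} $s$, i.e. accepts or rejects it (if $M$ does not reject $s$, then by definition some $N\in[M]^\infty$ accepts it). The key step is the \textbf{propagation of rejection}: if $M$ rejects $s$ then there is $N\in[M]^\infty$ such that $N$ rejects $s\cup\{m\}$ for every $m\in N$. I would argue by contradiction: if no such $N$ exists, then starting from $M=M_0$ one recursively chooses $m_{i+1}\in M_i$ and infinite $M_{i+1}\subseteq M_i\cap(m_{i+1},\infty)$ with $M_{i+1}$ accepting $s\cup\{m_{i+1}\}$; setting $N^{\ast}=\{m_1<m_2<\cdots\}$, one checks that $N^{\ast}$ accepts $s$, because any $L\in[N^{\ast}]^\infty$ has $\min L=m_j$ for some $j$, so $L\setminus\{m_j\}\in[M_j]^\infty$, and acceptance of $s\cup\{m_j\}$ by $M_j$ yields $k$ with $s\cup\{m_j\}\cup((L\setminus\{m_j\})|k)=s\cup(L|(k+1))\in\ff$. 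This contradicts that $M$ rejects $s$, proving the propagation lemma.

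With these two lemmas I would run the usual fusion. Suppose $\ff$ is large in $M$, and pick $M_1\in[M]^\infty$ deciding $\emptyset$. If $M_1$ accepts $\emptyset$, then $\ff$ is very large in $L:=M_1$ and we are done. If $M_1$ rejects $\emptyset$, I build $N=\{n_1<n_2<\cdots\}$ together with infinite sets $M_1=Q_0\supseteq Q_1\supseteq\cdots$ so that $n_k=\min Q_{k-1}$, $Q_k\subseteq Q_{k-1}\setminus\{n_k\}$, and, as an invariant, $Q_k$ rejects every subset of $\{n_1,\dots,n_k\}$: to pass from $Q_{k-1}$ to $Q_k$ one applies the propagation lemma successively, once for each of the finitely many $t\subseteq\{n_1,\dots,n_{k-1}\}$, obtaining a nested sequence of infinite sets whose last term $R$ rejects $t\cup\{m\}$ for every such $t$ and every $m\in R$ (here downward heredity of rejection keeps the earlier conclusions alive), and then sets $n_k:=\min R$, $Q_k:=R\setminus\{n_k\}$. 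Then no finite subset of $N$ lies in $\ff$: if $t\subseteq N$ with $t\in\ff$, choose $k$ with $t\subseteq\{n_1,\dots,n_k\}$; the invariant gives that $Q_k$ rejects $t$, hence does not accept $t$, yet $t\in\ff$ forces $Q_k$ to accept $t$ trivially --- a contradiction. Thus $\ff\upharpoonright N=\emptyset$, contradicting largeness of $\ff$ in $M$ (since $N\subseteq M$). Hence the rejecting alternative cannot occur, and the theorem follows.

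I expect the propagation lemma to be the main obstacle: it is the only point where one genuinely exploits that extensions are formed by appending an \emph{initial} segment, and one must be careful with the indices in the descending construction so that the diagonal set $N^{\ast}$ really does accept $s$. The fusion afterwards is routine bookkeeping built on a standard diagonal argument, and it is worth noting that the ``accepting'' branch uses nothing about largeness --- largeness enters only to eliminate the ``rejecting'' branch at the very end.
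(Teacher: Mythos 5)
Your argument is correct, but note that the paper does not prove Theorem \ref{Galvin prikry} at all: it is quoted as a known result of Nash--Williams and Galvin--Prikry (the paper only reproves the partition theorem for thin families, Theorem \ref{th2}, \emph{from} it). What you have written is a complete and accurate rendition of the classical combinatorial-forcing proof from those sources: the acceptance/rejection dichotomy, the propagation-of-rejection lemma via the diagonal set $N^{\ast}$ (your index bookkeeping there is right: $L\setminus\{m_j\}\in[M_j]^\infty$ because the $M_i$ are nested and tail-truncated, and acceptance of $s\cup\{m_j\}$ by $M_j$ produces $s\cup(L|(k+1))\in\ff$), and the fusion that produces an infinite $N\subseteq M$ all of whose finite subsets are rejected, hence outside $\ff$, contradicting largeness. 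Incidentally, this is essentially the same abstract machinery the paper later sets up for its own purposes in Lemma \ref{make diagonalization} and Proposition \ref{strongly decide}, so your route is very much in the spirit of the text. One cosmetic slip: you announce the requirement $n_k=\min Q_{k-1}$ but then (correctly) set $n_k:=\min R$ with $R\in[Q_{k-1}]^\infty$; these need not coincide, and it is the latter choice that makes the invariant ``$Q_k$ rejects every $t\cup\{n_k\}$'' work, so the stated condition $n_k=\min Q_{k-1}$ should simply be dropped. This does not affect the validity of the proof.
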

  \begin{rem}\label{Galvin Pricley for regular thin}
  If $\ff$ is regular thin then it is easy to see that  $\ff$ is large in $\nn$. Hence by Theorem \ref{Galvin prikry} for every
   $M\in[\nn]^\infty$ there exists $L\in[M]^\infty$ such that
   $\ff\upharpoonright L$ is very large in $L$.
\end{rem}

 The following theorem is also due to Nash-Williams \cite{N-W}. Since it plays a crucial role in the sequel, for
 the sake of completeness we present its proof.
\begin{thm}\label{th2}
  Let $\ff\subseteq[\nn]^{<\infty}$ be a thin family. Then for every finite partition
  $\ff=\cup_{i=1}^k \ff_i$, ($k\geq 2$) of $\ff$ and every $M\in[\nn]^\infty$ there exist $L\in[M]^\infty$ and
  $1\leq i_0\leq k$ such that $\ff\upharpoonright L=\ff_{i_0}\upharpoonright L$.
  \end{thm}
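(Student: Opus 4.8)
The plan is to prove the statement by iterating a single dichotomy over the pieces $\ff_1,\dots,\ff_k$, the crucial input being the Nash--Williams/Galvin--Prikry theorem (Theorem \ref{Galvin prikry}) together with the thinness of $\ff$.

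\emph{Key observation.} If $\g\subseteq\ff$ and $\g$ is very large in some $L\in[\nn]^\infty$, then $\ff\upharpoonright L=\g\upharpoonright L$. Indeed, the inclusion $\g\upharpoonright L\subseteq\ff\upharpoonright L$ is trivial; conversely, given $s\in\ff$ with $s\subseteq L$, choose any $N\in[L]^\infty$ with $s\sqsubseteq N$, and use that $\g$ is very large in $L$ to find $t\in\g$ with $t\sqsubseteq N$. Then $s$ and $t$ are $\sqsubseteq$-comparable members of the thin family $\ff$, which forces $s=t\in\g$. This is the only place where thinness of $\ff$ enters.

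\emph{The recursion.} Set $M_0=M$. At stage $i$, for $1\le i\le k-1$, we are given $M_{i-1}\in[M]^\infty$ and we ask whether $\ff_i$ is large in $M_{i-1}$ (in the sense of the subsection on Ramsey properties). If it is, then by Theorem \ref{Galvin prikry} there is $L\in[M_{i-1}]^\infty$ on which $\ff_i$ is very large, and the key observation yields $\ff\upharpoonright L=\ff_i\upharpoonright L$; we stop, with $i_0=i$. If $\ff_i$ is not large in $M_{i-1}$, then by definition there exists $M_i\in[M_{i-1}]^\infty$ with $\ff_i\upharpoonright M_i=\emptyset$, and we pass to stage $i+1$. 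Since $M_i\subseteq M_{i-1}$ at every step, the relations $\ff_j\upharpoonright M_i=\emptyset$ persist for all $j\le i$. If the recursion never stops, then after stage $k-1$ we obtain $L=M_{k-1}\in[M]^\infty$ with $\ff_j\upharpoonright L=\emptyset$ for every $1\le j\le k-1$; hence every $s\in\ff$ with $s\subseteq L$ necessarily lies in $\ff_k$, i.e. $\ff\upharpoonright L=\ff_k\upharpoonright L$, and we finish with $i_0=k$. (Note this argument works for all $k\ge1$; the hypothesis $k\ge2$ is only to make the statement nontrivial.)

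There is no serious obstacle here: essentially all the Ramsey-theoretic content is already packaged in Theorem \ref{Galvin prikry}, and the one point requiring care is the elementary passage from ``$\g$ very large in $L$'' to ``$\ff$ agrees with $\g$ on $L$'', which, as noted, uses only that $\ff$ is thin. Regularity of $\widehat{\ff}$ itself is not needed for this argument beyond the remark that a nonempty regular thin family is large to begin with (Remark \ref{Galvin Pricley for regular thin}), so that the dichotomy above is not vacuous.
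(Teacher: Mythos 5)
Your proposal is correct and is essentially the paper's own argument: the same dichotomy (either $\ff_i$ fails to be large, giving an $L$ with $\ff_i\upharpoonright L=\emptyset$, or it is large, whence Theorem \ref{Galvin prikry} makes it very large on some $L$ and thinness of $\ff$ forces $\ff\upharpoonright L=\ff_i\upharpoonright L$), with your iteration over the pieces replacing the paper's reduction to $k=2$ plus induction, which is only a bookkeeping difference.
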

\begin{proof} It suffices to  show the result only for  $k=2$ since the general case follows easily by induction.
 So let  $\ff=\ff_1\cup \ff_2$
and $M\in [\nn]^\infty$. Then either
there is $L\in [M]^\infty$ such that  $\ff_1\upharpoonright L=\emptyset$ or $\ff_1$ is large in $M$.
In the first case it is clear that  $\ff\upharpoonright L=\ff_2\upharpoonright L$. In the second case by Theorem \ref{Galvin prikry}
there is $L\in [M]^\infty$ such that $\ff_1$ is very large in $L$.
We claim  that $\ff\upharpoonright L=\ff_1\upharpoonright L$. Indeed, let $s\in\ff\upharpoonright L$.
We choose    $N\in [L]^\infty$ such that $s\sqsubseteq N$ and let $t\sqsubseteq N$ such that $t\in \ff_1$.
Then $s,t$ are $\sqsubseteq$-comparable
members of  $\ff$ and since $\ff$ is thin they must be equal. Therefore  $s=t\in\ff_1$ and   $\ff\upharpoonright L= \ff_1\upharpoonright L$.
\end{proof}

 For two families $\ff,\g$ of finite subsets of $\nn$, we write $\ff\sqsubseteq\g$
 (resp. $\ff\sqsubset\g$) if every element in $\ff$ has an extension  (resp. proper extension) in $\g$ and
 every element in $\g$ has an  initial (resp. proper initial) segment in $\ff$. The following proposition is a
 consequence of a more general result from  \cite{GI}.

  \begin{prop}\label{corollary by Gasparis}
    Let $\ff,\g\subseteq[\nn]^{<\infty}$ be regular thin families with $o(\ff)<o(\g)$.
    Then for every $M\in[\nn]^\infty$ there exists
    $L\in[M]^\infty$ such that $\ff\upharpoonright L\sqsubset\g\upharpoonright L$.
  \end{prop}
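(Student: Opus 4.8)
The plan is to reduce to the situation where both families are \emph{very large} and then run a single Nash--Williams partition argument on $\ff\upharpoonright L$.

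First I would fix $M\in[\nn]^\infty$ and, applying Remark~\ref{Galvin Pricley for regular thin} twice (once for $\ff$ and then, inside the resulting set, for $\g$), pass to $L\in[M]^\infty$ such that both $\ff\upharpoonright L$ and $\g\upharpoonright L$ are very large in $L$. Since being very large passes to every infinite subset, this property survives all later refinements. The gain is that every $N\in[L]^\infty$ then has a \emph{unique} initial segment $t_N\in\ff\upharpoonright L$ and a unique initial segment $s_N\in\g\upharpoonright L$ (uniqueness from thinness), both of them proper initial segments of $N$; and $t_N,s_N$ are automatically $\sqsubseteq$-comparable, being initial segments of the same $N$. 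These pairs $(t_N,s_N)$ will carry the whole argument, and, importantly, $t_N,s_N\subseteq L$, so we never leave $L$.

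Next I would split $\ff\upharpoonright L=\ff_1\cup\ff_2$ with $\ff_1=\{t\in\ff\upharpoonright L:\exists\,s\in\g\text{ with }s\sqsubseteq t\}$ and $\ff_2=(\ff\upharpoonright L)\setminus\ff_1$, and apply Theorem~\ref{th2} to obtain $L'\in[L]^\infty$ with $\ff\upharpoonright L'=\ff_i\upharpoonright L'$ for some $i\in\{1,2\}$. I expect the case $i=1$ to be the one that has to be excluded, and this is exactly where the hypothesis $o(\ff)<o(\g)$ is used. Indeed, if $i=1$, then for every $N\in[L']^\infty$ the element $t_N\in\ff\upharpoonright L'$ has some $s\in\g$ with $s\sqsubseteq t_N$; as $s\subseteq t_N\subseteq L'$ and $s\sqsubseteq N$, thinness of $\g$ forces $s=s_N$, so $s_N\sqsubseteq t_N$. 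Feeding in an arbitrary $s'\in\g\upharpoonright L'$ (extend it to some $N\in[L']^\infty$, so that $s'=s_N$) one gets $s'\sqsubseteq t_N\in\ff\upharpoonright L'$, whence $\widehat{\g\upharpoonright L'}\subseteq\widehat{\ff\upharpoonright L'}$; monotonicity of the tree rank together with Fact~\ref{fact every regular is large}(iii) then yields $o(\g)=o(\g\upharpoonright L')\le o(\ff\upharpoonright L')=o(\ff)$, contradicting $o(\ff)<o(\g)$. Hence $i=2$: no element of $\ff\upharpoonright L'$ has an initial segment in $\g$, so for every $N\in[L']^\infty$ the comparable pair $(t_N,s_N)$ cannot satisfy $s_N\sqsubseteq t_N$, i.e.\ $t_N\sqsubset s_N$.

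Finally I would read off $\ff\upharpoonright L'\sqsubset\g\upharpoonright L'$ from $t_N\sqsubset s_N$: given $t\in\ff\upharpoonright L'$, pick $N\in[L']^\infty$ with $t\sqsubseteq N$; thinness of $\ff$ gives $t=t_N$, so $t\sqsubset s_N\in\g\upharpoonright L'$ is a proper extension of $t$; symmetrically, given $s\in\g\upharpoonright L'$, pick $N\in[L']^\infty$ with $s\sqsubseteq N$, so $s=s_N$ and $t_N\sqsubset s$ is a proper initial segment of $s$ lying in $\ff\upharpoonright L'$. The main obstacle is the bookkeeping in the middle step: one must resist phrasing $\sqsubset$ directly via extensions (which could a priori leave $L'$) and instead route everything through the canonical initial segments $t_N,s_N$ of infinite subsets of $L'$, with the tree inclusion $\widehat{\g\upharpoonright L'}\subseteq\widehat{\ff\upharpoonright L'}$ forcing $o(\g)\le o(\ff)$ being the mechanism that eliminates the wrong alternative.
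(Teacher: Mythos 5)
Your proposal is correct and follows essentially the same route as the paper's proof: reduce to a set where both families are very large, partition one family according to the $\sqsubseteq$-position of the canonical comparable elements from the other, apply Theorem~\ref{th2}, and eliminate the wrong alternative through the rank inequality $o(\g)\leq o(\ff)$ obtained from Fact~\ref{fact every regular is large}. The only (cosmetic) difference is that you partition $\ff\upharpoonright L$ by whether its elements extend an element of $\g$, while the paper partitions $\g$ by whether its elements have a proper initial segment in $\ff$; the mechanism and conclusion are identical, and you simply spell out the final verification that the paper leaves implicit.
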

  \begin{proof}
    By Remark \ref{Galvin Pricley for regular thin} we have that there exists $L_1\in[M]^\infty$
    such that both $\ff,\g$ are very large in $L_1$. So for every $L\in[L_1]^\infty$ and
    every $t\in\g\upharpoonright L$ there exists $s\in\ff\upharpoonright L$ such that $s,t$ are comparable and
    conversely.

    Let $\g_1$ be the set of all elements of $\g$ which have a proper initial segment in $\ff$ and $\g_2=\g\setminus\g_1$.
    By Theorem \ref{th2} there exist $i_0\in\{1,2\}$ and $L\in[L_1]^\infty$
    such that $\g\upharpoonright L\subseteq \g_{i_0}$. It suffices to show
    that $i_0=1$. Indeed, if $i_0=2$ then for every $t\in G\upharpoonright L$ there is $s\in\ff$ such that $t\sqsubseteq s$.
    This in conjunction with assertion (iii) of Fact \ref{fact every regular is large} yields that $o(\g)=o(\g\upharpoonright L)\leq o(\ff)$ which is a contradiction.
  \end{proof}
\section{The notion of the plegma families}\label{section admissibility}
In this section we introduce  the concept of the \textit{plegma}
families. Roughly speaking a plegma family is a finite sequence of
finite nonempty subsets of $\nn$ having elements which alternate
each other in a very concrete way. This notion is the basic new
ingredient for the extension of the definition of the spreading
models. We also establish the Ramsey property of the plegma
families consisting of sets in a regular thin family, a property
which possesses a fundamental role for the following.
  \begin{defn}\label{defn plegma}
    Let $l\in\nn$ and $s_1,...,s_l$ be nonempty finite subsets of $\nn$.
    The $l-$tuple $(s_j)_{j=1}^l$ will be called \textit{plegma}
    if the following are satisfied.
    \begin{enumerate}
      \item[(i)] For every $i,j\in \{1,...,l\}$ and $k\in\nn$ with $i<j$ and $k\leq\min(|s_i|,|s_j|)$, we have that $s_i(k)<s_j(k)$.
      \item[(ii)] For every $i,j\in \{1,...,l\}$ and $k\in\nn$ such that $k\leq \min (|s_i|,|s_j| -1)$, we have that $s_i(k)<s_j(k+1)$.
    \end{enumerate}
  \end{defn}
  For instance a pair $(\{n_1\},\{n_2\})$ of singletons is plegma iff
  $n_1<n_2$ and
   a pair of doubletons $(\{n_1,m_1\},\{n_2,m_2\})$ is plegma iff $n_1<n_2<m_1<m_2$.
Also notice that if $(s_j)_{j=1}^l$ is plegma then for every
$1\leq k\leq l$ and $1\leq j_1<j_2<\ldots<j_k\leq l$ the $k-$tuple
$(s_{j_m})_{m=1}^k$ is plegma. This in particular implies that for
every $1\leq j_1<j_2\leq l$ we have that $s_{j_1}\cap
s_{j_2}=\emptyset$.
 Moreover it is easy to see that
$(s_j)_{j=1}^l$ is plegma iff $(s_{j_1},s_{j_2})$ is a plegma
pair, for all $1\leq j_1<j_2\leq l$. Finally, let us observe that
if $(s_j)_{j=1}^l$ is plegma then for every choice of nonempty
initial segments $t_j$ of $s_j$, $1\leq j\leq l$, the $l$-tuple
$(t_j)_{j=1}^l$ is also plegma.

  \begin{notation}
    Let $\ff$ be a family of finite subsets of $\nn$ and $l\in\nn$. We
    set
    \[\text{\emph{Plm}}_l(\ff)=\{ (s_j)_{j=1}^l:s_1,...,s_l\in\ff \text{ and } (s_j)_{j=1}^l\text{ plegma}\}\]
    Let also $\text{\emph{Plm}}(\ff)=\bigcup_{l=1}^\infty \text{\emph{Plm}}_l(\ff)$.
  \end{notation}

  \begin{lem}\label{abmissibility1-1union and union thin proposition}
    Let $\ff$ be a thin family of finite subsets of $\nn$. Let $(s_j)_{j=1}^l,(t_j)_{j=1}^l$ in
    $\text{\emph{Plm}}(\ff)$ with $|s_1|\leq\ldots\leq |s_l|$,
    $|t_1|\leq\ldots\leq|t_l|$ and $\cup_{j=1}^l s_j\sqsubseteq\cup_{j=1}^l t_j$.
    Then $(s_j)_{j=1}^l=(t_j)_{j=1}^l$ and hence $\cup_{j=1}^l s_j=\cup_{j=1}^l t_j$.
  \end{lem}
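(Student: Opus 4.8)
The plan is to decode the "interleaving" structure forced by the plegma condition, reconstructing each $t_j$ from the data $\cup_{j=1}^l t_j$ together with the size profile, and then to play the hypothesis $\cup s_j \sqsubseteq \cup t_j$ against thinness of $\ff$. The first observation I would record is how a plegma family $(u_j)_{j=1}^l$ with $|u_1|\le\cdots\le|u_l|$ can be read off from its union $U=\cup_{j=1}^l u_j$: condition (i) says the first coordinates $u_1(1)<u_2(1)<\cdots<u_l(1)$ are the $l$ smallest elements of $U$, in order; condition (ii) says these all precede every second coordinate, the second coordinates (of those $u_j$ with $|u_j|\ge 2$) are again increasing in $j$, and so on. So, ordering $U$ increasingly, the first block of $l$ elements gives the first coordinates, the next block of (at most $l$) elements gives the second coordinates of the longer $u_j$'s, etc. Concretely: writing $U=\{m_1<m_2<\cdots<m_N\}$, one shows $u_j(k)=m_{(k-1)l'+j}$ where $l'$ is the number of $u_i$ with $|u_i|\ge k$ — the point being that the assignment of each $m_r$ to a specific $(j,k)$ is completely determined by $N$, $l$, and the size profile $(|u_1|,\dots,|u_l|)$.

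Next I would use this to get at the sizes. Since $\cup s_j \sqsubseteq \cup t_j$ we have $|\cup s_j|\le|\cup t_j|$, i.e. $\sum_j|s_j|\le\sum_j|t_j|$, and the two unions agree on an initial segment. The key sublemma to extract is that in fact $|s_j|=|t_j|$ for every $j$. For this I would argue by looking at where the smallest elements go: the $l$ smallest elements of $\cup t_j$ are exactly $t_1(1),\dots,t_l(1)$, and since $\cup s_j$ is an initial segment of $\cup t_j$ of size $\ge l$ (each $s_j$ nonempty forces $|\cup s_j|\ge l$), its $l$ smallest elements coincide with those of $\cup t_j$, hence $s_j(1)=t_j(1)$ for all $j$. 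Iterating this coordinate by coordinate along the common initial segment — and using that the decoding above tells us, at each stage, exactly how many of the next elements belong to which $u_j$ — forces $s_j(k)=t_j(k)$ for all $j$ and all $k$ up to the point where $\cup s_j$ runs out. If at that point some $|s_j|<|t_j|$, then $s_j$ is a proper initial segment of $t_j$, i.e. $s_j\sqsubsetneq t_j$ with both in $\ff$, contradicting thinness. Hence $|s_j|=|t_j|$ for all $j$, the two unions have the same cardinality, and since one is an initial segment of the other they are equal; and then the decoding recovers $s_j=t_j$ coordinatewise.

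The main obstacle, and the step I would spend the most care on, is the bookkeeping in the inductive "coordinate by coordinate" matching: one must be careful that the size profiles of $(s_j)$ and $(t_j)$ might a priori differ, so the decoding rule (how many of the next $m_r$'s are first-coordinates, how many are second-coordinates, etc.) is not obviously the same on both sides. The clean way around this is to do the induction on $k$ while simultaneously proving $|s_j|\ge k \iff |t_j|\ge k$ for the relevant $j$'s: at stage $k$, condition (ii) guarantees all the $k$-th coordinates present on either side lie strictly after all $(k{-}1)$-st coordinates, so on the common initial segment the "$k$-th coordinate block" is located in the same position, and comparing its elements one-to-one in increasing order forces the matching of both the values and the index sets $\{j:|s_j|\ge k\}=\{j:|t_j|\ge k\}$ — the latter because the $j$-indices within a block are assigned in increasing order by (i). Once this simultaneous induction is set up correctly the rest is routine, and the appeal to thinness at the end is immediate. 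One should also note the degenerate case $l=1$, where the statement just says: if $s_1,t_1\in\ff$ and $s_1\sqsubseteq t_1$ then $s_1=t_1$, which is the definition of thinness, so the argument is consistent at the base.
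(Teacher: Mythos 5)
Your overall strategy (read the coordinates of a plegma family off from their positions in the union, then use thinness) is the same idea as the paper's, but the step you yourself single out as the main obstacle is exactly where your argument has a genuine hole. You claim that, at stage $k$, aligning the ``$k$-th coordinate blocks'' of the two unions and using condition (i) already forces $\{j:|s_j|\ge k\}=\{j:|t_j|\ge k\}$, with thinness needed only ``at the end''. That deduction is false without thinness: take $l=2$, $s_1=\{1\}$, $s_2=\{2,3\}$, $t_1=\{1,3\}$, $t_2=\{2,4\}$. Both pairs are plegma with non-decreasing lengths and $\cup_j s_j=\{1,2,3\}\sqsubseteq\{1,2,3,4\}=\cup_j t_j$; the level-$2$ blocks start at the same position and agree on the common initial segment (the value $3$), yet the index sets at level $2$ are $\{2\}$ and $\{1,2\}$: that value is $s_2(2)$ on one side but $t_1(2)$ on the other. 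So purely positional bookkeeping cannot give the index matching; what kills such configurations is thinness itself (here $s_1=\{1\}\sqsubset\{1,3\}=t_1$), i.e.\ the thinness appeal must be made inside the induction, at every level where the two counts could differ, not only after $\cup_j s_j$ is exhausted. The repair is straightforward and uses only your own final argument: assuming matching up to level $k$, any discrepancy $\{j:|s_j|\ge k+1\}\ne\{j:|t_j|\ge k+1\}$ yields some $j_0$ with (say) $|s_{j_0}|=k<|t_{j_0}|$ and $s_{j_0}(k')=t_{j_0}(k')$ for all $k'\le k$, hence $s_{j_0}\sqsubset t_{j_0}$, contradicting thinness; only after the counts are forced to agree may you align the blocks and match values index by index. (Also, your decoding formula $u_j(k)=m_{(k-1)l'+j}$ is off when not all lengths are $\ge k$: the offset before level $k$ is $\sum_{k'<k}\#\{i:|u_i|\ge k'\}$ and the within-block shift is $j-(l-l')$; harmless, but worth fixing.)

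For comparison, the paper avoids the level-count bookkeeping entirely by inducting on the index $m$ rather than on the coordinate: once $s_i=t_i$ for $i<m$, the tail unions $F=\cup_{j\ge m}s_j$ and $G=\cup_{j\ge m}t_j$ satisfy $F\sqsubseteq G$, and since $s_m$ (resp.\ $t_m$) is shortest in its tail, every tail set contributes at each level $k\le|s_m|$, so $s_m(k)=F((k-1)(l-m+1)+1)$ and $t_m(k)=G((k-1)(l-m+1)+1)$; hence $s_m$ and $t_m$ are $\sqsubseteq$-comparable and thinness gives $s_m=t_m$ at once. Either adopt that restructuring or keep your coordinate-wise induction with thinness invoked at each stage as above.
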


\begin{proof}
  Suppose that for some $1\leq m\leq l$ we have that $(s_i)_{i<m}=(t_i)_{i<m}$. We will show that $s_m=t_m$.
  Let $F=\cup_{j=m}^l s_j$ and $G=\cup_{j=m}^l t_j$. Since $\cup_{j=1}^l s_j\sqsubseteq\cup_{j=1}^l t_j$, we get that $F\sqsubseteq G$.
  Moreover since $|s_m|\leq\ldots\leq|s_l|$ and $|t_m|\leq\ldots\leq|t_l|$, we easily conclude that
  $s_m(j)=F((j-1)(l-m+1)+1)$,  for all $1\leq j\leq|s_m|$ and similarly $t_m(j)=G((j-1)(l-m+1)+1)$,
  for all $1\leq j\leq|t_m|$. Hence, as $F\sqsubseteq G$, we get that  for all $1\leq j\leq \min\{|t_m|, |s_m|\},$ $s_m(j)=t_m(j)$.
  Therefore $s_m$ and $t_m$ are $\sqsubseteq$-comparable which by the thiness of $\mathcal{F}$ implies that $s_m=t_m$.
  By induction we have that $s_j=t_j$ for all $1\leq j\leq l$.
\end{proof}
\begin{lem} \label{lemma increasing length of plegma}
   Let $\ff$ be a regular thin family. Then for every
$(s_j)_{j=1}^l\in \text{\emph{Plm}}(\ff)$ we have that
$|s_1|\leq\ldots\leq |s_l|$.
 \end{lem}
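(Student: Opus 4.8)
The plan is to reduce the claim to plegma pairs and then contradict the $\sqsubseteq$-maximality that characterises members of a regular thin family inside its tree $\widehat{\ff}$. First I would note that it suffices to show $|s_1|\leq|s_2|$ whenever $(s_1,s_2)$ is a plegma pair with $s_1,s_2\in\ff$: by the observation following Definition \ref{defn plegma}, for a plegma tuple $(s_j)_{j=1}^l$ each consecutive pair $(s_j,s_{j+1})$ is again plegma, so $|s_1|\leq\cdots\leq|s_l|$ follows by applying the pairwise statement $l-1$ times.

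So I would fix a plegma pair $(s_1,s_2)$ in $\ff$, set $p=|s_1|$ and $q=|s_2|$, and argue by contradiction assuming $p\geq q+1$. Reading plegma condition (i) with $(i,j)=(1,2)$ and condition (ii) with $(i,j)=(2,1)$, one obtains the interleaving
\[s_1(1)<s_2(1)<s_1(2)<s_2(2)<\cdots<s_2(q)<s_1(q+1);\]
the hypothesis $p\geq q+1$ is used exactly to ensure that $s_1(q+1)$ exists and that condition (ii) may be applied with $k=q$ (for which $k\leq\min(|s_2|,|s_1|-1)$ holds) to yield the last inequality $s_2(q)<s_1(q+1)$.

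Next I would consider the initial segment $u=s_1|(q+1)=\{s_1(1),\ldots,s_1(q+1)\}$. Since $u\sqsubseteq s_1\in\ff$ we have $u\in\widehat{\ff}$, and $\widehat{\ff}$ is spreading, being a regular family. Applying the spreading property to $u$ through the coordinatewise domination $s_1(k)\leq s_2(k)$ for $1\leq k\leq q$ together with $s_1(q+1)\leq s_1(q+1)$ produces the set $t=\{s_2(1),\ldots,s_2(q),s_1(q+1)\}$, which is genuinely strictly increasing precisely because $s_2(q)<s_1(q+1)$, so $t\in\widehat{\ff}$. But $s_2\sqsubset t$, hence $t$ is a proper extension of $s_2$ lying in $\widehat{\ff}$; since $\ff$ is regular thin, every element of $\ff$ is a $\sqsubseteq$-maximal element of $\widehat{\ff}$, contradicting $s_2\in\ff$. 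Therefore $p\leq q$, which completes the reduction.

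I do not anticipate any serious obstacle here: the only delicate point is verifying that the spread set $t$ is strictly increasing, and this is exactly the one extra interleaving inequality $s_2(q)<s_1(q+1)$, which is where the assumption $|s_1|>|s_2|$ is spent. Everything else is routine bookkeeping with the two plegma inequalities and the fact, recorded before Fact \ref{fact every regular is large}, that the members of a regular thin family are the $\sqsubseteq$-maximal nodes of its closure.
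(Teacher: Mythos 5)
Your proof is correct and takes essentially the same route as the paper's: assume $|s_1|>|s_2|$ for a plegma pair, use the spreading property of $\widehat{\ff}$ to manufacture an element of $\widehat{\ff}$ that properly extends $s_2$, and contradict thinness (the $\sqsubseteq$-maximality of members of $\ff$ in $\widehat{\ff}$). The only cosmetic difference is that the paper dominates all of $s_1$ by an extension of $s_2$ padded with arbitrary elements beyond $\max s_1$ (using only plegma condition (i)), whereas you dominate the initial segment $s_1|(|s_2|+1)$ by $s_2\cup\{s_1(|s_2|+1)\}$, invoking condition (ii) to see that this set is strictly increasing.
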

 \begin{proof}
   It suffices to prove it for $l=2$. Assume on the contrary that there exists a plegma pair $(s_1,s_2)$
   in $\ff$ with $|s_1|>|s_2|$. We pick $s\in[\nn]^{<\infty}$ such
   that $|s|=|s_1|$, $s_2\sqsubset s$ and $s(|s_2|+1)>\max s_1$.
   By the definition of the plegma family, we have that
   for every $1\leq k\leq|s_2|$, $s_1(k)<s_2(k)=s(k)$. Hence, for
   every $1\leq k\leq |s_1|$, we have that $s_1(k)\leq s(k)$. By
   the spreading property of $\widehat{\ff}$ we get that
   $s\in\widehat{\ff}$. Since $s_2$ is a proper initial segment of
   $s$ we get that $s_2\not\in\ff$, which is a contradiction.
 \end{proof}

\begin{lem}\label{ on the union of plegma}  Let $\ff$ be a family of finite subsets of $\nn$. For every  $l\in\nn$
 let \[\mathcal{U}_l(\ff)=\{\cup_{j=1}^l
   s_j:(s_j)_{j=1}^l\in\text{\emph{Plm}}_l(\ff)\}\]
If $\ff$ is regular thin for every $l\in\nn$ and
$M\in[\nn]^\infty$ the family $\mathcal{U}_l(\ff\upharpoonright
M)$ is thin and the map sending each plegma $l$-tuple to its union
is 1-1 and
  onto.
\end{lem}
\begin{proof}
Let $(s_j)_{j=1}^l\in \text{\emph{Plm}}_l(\ff\upharpoonright
M)\subseteq \text{\emph{Plm}}_l(\ff)$. By Lemma \ref{lemma
increasing length of plegma} we have that $|s_1|\leq\ldots\leq
|s_l|$. Hence the result follows readily by Lemma
\ref{abmissibility1-1union and union thin proposition}.
\end{proof}
\begin{thm} \label{ramseyforplegma}
Let $M$ be an infinite subset of $\nn$, $l\in\nn$  and $\ff$ be a
regular thin family. Then for every finite partition
$\text{\emph{Plm}}_l(\ff\upharpoonright M)=\cup_{j=1}^p A_j$,
there exist $L\in[M]^\infty$ and $1\leq j_0\leq p$ such that
$\text{\emph{Plm}}_l(\ff\upharpoonright L)\subseteq A_{j_0}$.
\end{thm}
\begin{proof}
Let $\mathcal{U}=\mathcal{U}_l(\ff\upharpoonright M)$ and for
$1\leq j\leq p$, let  $\mathcal{U}^{(j)}=\{\cup_{i=1}^l
s_i:(s_i)_{i=1}^l\in A_j\}$. Then $\mathcal{U}=\cup_{j=1}^p
\mathcal{U}^{(j)}$ and by Lemma \ref{ on the union of plegma} we
have that $\mathcal{U}$ is a thin family and
$\{\mathcal{U}^{(j)}\}_{j=1}^l$ is a partition of $\mathcal{U}$.
Hence by Theorem \ref{th2} there exist $j_0$ and $L\in[M]^\infty$
such that $\mathcal{U}\upharpoonright L\subseteq
\mathcal{U}^{(j_0)}$. Since $L\in [M]^\infty$ we have that
$\mathcal{U}\upharpoonright L=\mathcal{U}_l(\ff\upharpoonright
L)$. Hence $\text{\emph{Plm}}_l(\ff\upharpoonright L)\subseteq
A_{j_0}$.
\end{proof}
Let $\ff\subseteq [\nn]^{<\infty}$, $l\in\nn$, $M\in[\nn]^\infty$
and $A\subseteq \text{\emph{Plm}}_l(\ff)$. We will say that $A$ is
\textit{large} in $M$ if for every $L\in[M]^\infty$ we have that
$A\cap\text{\emph{Plm}}_l(\ff\upharpoonright L)\neq\emptyset$.
Under this terminology the following corollary is an immediate
consequence of Theorem \ref{ramseyforplegma}.
\begin{cor}\label{corollary ramseyforplegma}
  Let $\ff$ be a regular thin family , $M\in[\nn]^\infty$ and $l\in\nn$. Let $A\subseteq
  \text{\emph{Plm}}_l(\ff)$ be large in $M$. Then for every
  $M'\in[M]^\infty$, there exists $L\in[M']^\infty$ such that
  $\text{\emph{Plm}}_l(\ff\upharpoonright L)\subseteq A$.
\end{cor}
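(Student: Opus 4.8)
The plan is to obtain Corollary~\ref{corollary ramseyforplegma} as a one-step consequence of the Ramsey property for plegma families, Proposition~\ref{ramseyforplegma}, via the standard ``largeness implies containment'' argument.

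First I would fix an arbitrary $M'\in[M]^\infty$ and split $\text{\emph{Plm}}_l(\ff\upharpoonright M')$ into the two colour classes $B_1=A\cap\text{\emph{Plm}}_l(\ff\upharpoonright M')$ and $B_2=\text{\emph{Plm}}_l(\ff\upharpoonright M')\setminus A$. Applying Proposition~\ref{ramseyforplegma} to this $2$-partition yields $L\in[M']^\infty$ and $j_0\in\{1,2\}$ with $\text{\emph{Plm}}_l(\ff\upharpoonright L)\subseteq B_{j_0}$.

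The only thing that needs checking is that the alternative $j_0=2$ is impossible. For this I would invoke that $A$ is large in $M$: since $L\in[M']^\infty\subseteq[M]^\infty$, the definition of largeness produces some $(s_j)_{j=1}^l\in A$ with each $s_j\subseteq L$, i.e.\ $A\cap\text{\emph{Plm}}_l(\ff\upharpoonright L)\neq\emptyset$, which contradicts $\text{\emph{Plm}}_l(\ff\upharpoonright L)\subseteq B_2$. Hence $j_0=1$ and $\text{\emph{Plm}}_l(\ff\upharpoonright L)\subseteq B_1\subseteq A$, which is exactly the conclusion.

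There is no serious obstacle here; the argument is essentially bookkeeping. The one conceptual point worth isolating is that the property ``large in $M$'' is stable under passing to any infinite subset $M'\subseteq M$, and again to any infinite $L\subseteq M'$; it is precisely this stability that simultaneously lets us run the Ramsey partition over $M'$ and then rule out the empty-intersection colour on the final set $L$. In particular this also guarantees that $\text{\emph{Plm}}_l(\ff\upharpoonright M')$ is nonempty, so the $2$-partition is non-degenerate and Proposition~\ref{ramseyforplegma} applies cleanly.
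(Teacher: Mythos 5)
Your argument is correct and is precisely the route the paper intends: the paper states the corollary as an immediate consequence of Proposition \ref{ramseyforplegma}, and your two-colour partition of $\text{\emph{Plm}}_l(\ff\upharpoonright M')$ into $A$ and its complement, with largeness of $A$ in $M$ ruling out the complementary colour on $L\in[M']^\infty\subseteq[M]^\infty$, is exactly that derivation.
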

\section{The plegma paths of finite subsets of $\nn$}
In this section we introduce the definition of the \textit{plegma
paths} in finite subsets of $\nn$ and we present some of  their
properties. Such paths will be next used for the study of maps
from a regular thin family into the finite subsets of $\nn$ which
preserve plegma pairs.

\begin{defn}
  Let $k\in\nn$ and $s_0,...,s_k$ be nonempty finite subsets of $\nn$.
  We will say that $(s_j)_{j=0}^k$ is a \textit{plegma path} \textit{of length} $k$
  \textit{from} $s_0$ \textit{to} $s_k$, if for every $0\leq j\leq k-1$, the pair $(s_j,s_{j+1})$ is
  plegma. Similarly  a sequence $(s_j)_{j\in\nn}$
  of nonempty finite subsets of $\nn$
  will be called  an infinite plegma path if for every
  $j\in\nn$ the pair $(s_j,s_{j+1})$ is plegma.
\end{defn}
\begin{lem}\label{lemma conserning the length of the plegma path}
  Let $s_0,s$ be two nonempty finite subsets of $\nn$ such that $s_0<s$. Let $(s_0,\ldots,s_{k-1},s)$ be a plegma path
  of length $k$ from $s_0$ to $s$. Then \[k\geq \min\{|s_i|:0\leq i\leq
  k-1\}\]
\end{lem}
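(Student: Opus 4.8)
The plan is a proof by contradiction that uses only clause (ii) of the definition of a plegma pair. Suppose, towards a contradiction, that $k<m$, where $m:=\min\{|s_i|:0\le i\le k-1\}$; equivalently $|s_i|\ge k+1$ for every $0\le i\le k-1$, while about $s_k=s$ we know only that it is nonempty (and $k\ge 1$, since $s_0<s$ forces $s_0\ne s$). Walking backwards along the path and applying at each edge the instance of clause (ii) that reads ``$s_{i+1}(j)<s_i(j+1)$ whenever $j\le\min(|s_{i+1}|,|s_i|-1)$'' — this is (ii) for the plegma pair $(s_i,s_{i+1})$ with $s_{i+1}$ in the role of the first set and $s_i$ in the role of the second, which is legitimate because (ii) puts no ordering condition on its two indices — I will assemble the strictly increasing chain
\[
s_k(1)<s_{k-1}(2)<s_{k-2}(3)<\cdots<s_1(k)<s_0(k+1).
\]

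The one step that needs care is verifying that all $k$ inequalities in this chain are actually licensed: at the edge joining $s_i$ and $s_{i+1}$, where $s_{i+1}$ is queried at index $k-i$ and $s_i$ at index $k-i+1$, one needs $k-i\le\min(|s_{i+1}|,|s_i|-1)$ for $i=0,\dots,k-1$. For $0\le i\le k-1$ the assumption $|s_i|\ge k+1$ gives $|s_i|-1\ge k\ge k-i$; for $1\le i+1\le k-1$ it likewise gives $|s_{i+1}|\ge k+1\ge k-i$; and the only remaining case is the first edge $i=k-1$, where the needed bound $1\le |s_k|$ is just the nonemptiness of $s_k$. Hence the chain is valid. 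Combining its endpoints with $|s_0|\ge k+1$, so that $s_0(k+1)\le\max s_0$, and with $s_k(1)=\min s_k=\min s$, the chain yields $\min s<\max s_0$, contradicting the hypothesis $s_0<s$ (which says $\max s_0<\min s$). Therefore $k\ge m$.

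I do not expect any real obstacle beyond keeping the index ranges in clause (ii) straight along the backwards chain; in particular no Ramsey-theoretic input is used, since the statement is a purely arithmetic fact about a single plegma path. It is worth noting that this is exactly the mechanism that later forbids ``short'' plegma paths between well-separated finite sets, which in turn is what drives the non-existence of plegma-preserving maps from a lower-order to a higher-order regular thin family.
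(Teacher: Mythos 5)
Your proof is correct and is essentially the paper's own argument: assuming $k<\min\{|s_i|:0\leq i\leq k-1\}$, one forms the chain $s(1)<s_{k-1}(2)<\cdots<s_1(k)<s_0(k+1)$ from clause (ii) of the plegma definition and contradicts $s_0<s$. You merely spell out the index-range checks that the paper leaves implicit.
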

\begin{proof}
  Suppose that $k< \min\{|s_i|:0\leq i\leq k-1\}$. Then $s(1)<s_{k-1}(2)<s_{k-2}(3)<\ldots<s_1(k)<s_0(k+1)$, which contradicts that $s_0<s$.
\end{proof}
For a family $\ff\subseteq
  [\nn]^{<\infty}$ a \textit{plegma path in $\ff$} is a (finite or infinite) plegma
  path which consists of elements of $\ff$. It is easy to verify the existence of infinite plegma paths in $\ff$ whenever  $\ff$ is very large
   in an infinite subset $L$ of $\nn$.
   In particular let $s\in\ff\upharpoonright L$ satisfying
   the next property: for every $j=1,...,|s|-1$ there exists $l\in L$ such that $s(j)<l<s(j+1)$.
   Then it is straightforward that there exists $s'\in\ff\upharpoonright L$ such that the pair $(s,s')$ is
    plegma and moreover $s'$ shares the same property with $s$. Based on this one can built an infinite
     plegma path in $\ff$  of elements having the above property. These remarks motivate the following definition.
\begin{defn}
  Let $\ff$ be a family of finite subsets of $\nn$ and $L\in[\nn]^\infty$. We define the \textit{skipped restriction
   of} $\ff$ in $L$ to be the family
   \[\ff\upharpoonright\upharpoonright L=\Big{\{}s\in\ff\upharpoonright L:\forall j=1,...,|s|-1,\exists l\in L\text{ such that }s(j)<l<s(j+1)\Big{\}}\]
\end{defn}
Notice that if $\ff$ is a regular thin family and
$L\in[\nn]^\infty$ such that $\ff\upharpoonright L$ is very large
in $L$, then
\[\ff\upharpoonright\upharpoonright L=\Big{\{}s\in\ff\upharpoonright L:\exists s'\in\ff\upharpoonright L \text{ such that }
(s,s')\text{ is plegma}\Big{\}}\]

Also, by Lemma \ref{lemma increasing length of plegma}, we have
that the lengths of the elements of any plegma path in $\ff$ forms
an non decreasing sequence. This property is a key ingredient of
the proof of the next proposition.
\begin{prop}\label{accessing everything with plegma path of length |s_0|}
  Let $\ff$ be a regular thin family and $L\in[\nn]^\infty$ such that $\ff$ is very large in $L$.
  Then for every $s_0,s\in\ff\upharpoonright\upharpoonright L$ with $s_0<s$
   there exists a plegma path $(s_0,\ldots,s_{k-1},s)$ in $\ff\upharpoonright\upharpoonright L$
  of length $k=|s_0|$ from $s_0$ to $s$. Moreover  $k=|s_0|$ is
  the minimal length of a plegma path in $\ff\upharpoonright\upharpoonright L$ from $s_0$ to $s$.
\end{prop}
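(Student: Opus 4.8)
Write $k=|s_0|$ and $m=|s|$.

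The minimality claim is quick. If $(s_0,t_1,\dots,t_{d-1},s)$ is any plegma path from $s_0$ to $s$, then applying Lemma~\ref{lemma increasing length of plegma} to its consecutive plegma pairs gives $k=|s_0|\le|t_1|\le\dots\le|t_{d-1}|$, so the smallest of the cardinalities $|s_0|,|t_1|,\dots,|t_{d-1}|$ equals $k$, and Lemma~\ref{lemma conserning the length of the plegma path} (applicable since $s_0<s$) then forces $d\ge k$. The same circle of ideas gives $m\ge k$, which I shall need below: if $m<k$ then the proper initial segment $s_0\upharpoonright(m+1)$ of $s_0$ lies in the tree $\widehat\ff$, and choosing $c\in\nn$ with $c>\max s$ and $c\ge s_0(m+1)$, the strictly increasing set $s\cup\{c\}$ dominates $s_0\upharpoonright(m+1)$ coordinatewise (using $s(j)>\max s_0\ge s_0(j)$ for $j\le m$), whence $s\cup\{c\}\in\widehat\ff$ by the spreading property of $\widehat\ff$; this contradicts the fact that $s$, being a member of the regular thin family $\ff$, is a $\sqsubseteq$-maximal element of $\widehat\ff$.

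For the existence of a path of length exactly $k$ I would give a direct construction inside $\ff\upharpoonright\upharpoonright L$. Since $\ff$ is very large in $L$, each $N\in[L]^\infty$ has a unique initial segment $\sigma(N)\in\ff$, and if $N$ is sparse in $L$ (there is an element of $L$ strictly between any two consecutive elements of $N$) then $\sigma(N)\in\ff\upharpoonright\upharpoonright L$. Using that $s_0$ and $s$ are skipped in $L$, fix $L$-points $x_1<\dots<x_{k-1}$ with $s_0(j)<x_j<s_0(j+1)$ and $L$-points $z_1<\dots<z_{m-1}$ with $s(j)<z_j<s(j+1)$. The plan is to choose sparse infinite sets $N_1,\dots,N_{k-1}\subseteq L$ and put $s_i:=\sigma(N_i)$ (and $s_k:=s$), arranging that: (a) the first $k-1$ coordinates of $N_1$ are $x_1,\dots,x_{k-1}$; (b) $N_i(j)<N_{i+1}(j)<N_i(j+1)$ for all $i\le k-2$ and all $j\le m$; (c) $N_{k-1}(1)<\min s$ and $N_{k-1}(j+1)=z_j$ for all $j\le m-1$; and that the tail of each $N_i$ beyond its $m$-th coordinate consists of sparse $L$-points exceeding $\max s$. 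This is achievable because every gap $(N_i(j),N_i(j+1))$ that needs to be populated contains one of the finitely many landmark points $s_0(j),x_j,s(j),z_j$: concretely, the coordinates of $N_i$ lying below $\min s$ (which number exactly $k-i$) can be made to alternate, as $i$ increases, between shifted copies of the sequence $(x_j)$ and of the sequence $(s_0(j))$ — two interleaved sequences leaving ample room inside the span of $s_0$ — while the remaining coordinates do the same with $(z_j)$ and $(s(j))$ inside the span of $s$, these alternations matching the prescriptions (a) at $i=1$ and (c) at $i=k-1$.

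To complete the argument I would verify that each $s_i=\sigma(N_i)$ lies in $\ff\upharpoonright\upharpoonright L$ (immediate from sparseness of $N_i$) and has $k\le|s_i|\le m$: the lower bound holds because $\{N_i(1),\dots,N_i(k)\}$ dominates $s_0$ coordinatewise by (a)--(b), hence lies in $\widehat\ff$ by spreading; the upper bound holds because, by (b)--(c), $\{N_i(1),\dots,N_i(m)\}$ is dominated by $s$ while $N_i(m+1)>\max s$, so $|s_i|>m$ would place $s\cup\{N_i(m+1)\}$ in $\widehat\ff$ by spreading, again contradicting the maximality of $s$. Given this, every coordinate of each $s_i$ is one of the landmark points, and the definition of a plegma pair applied to $(s_0,s_1),(s_1,s_2),\dots,(s_{k-1},s)$ unwinds to exactly the inequalities (a)--(c) (for the first pair one also uses that the coordinates of $s_1$ from the $k$-th on exceed $\max s_0$; for the last, that $s_{k-1}(1)<\min s$), so $(s_0,s_1,\dots,s_{k-1},s)$ is a plegma path of length $k$ in $\ff\upharpoonright\upharpoonright L$. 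The step I expect to be most delicate is the simultaneous realization in (b): one is really prescribing a finite triangular array of integers obeying a Gelfand--Tsetlin-type interlacing $N_i(j)<N_{i+1}(j)<N_i(j+1)$ with its two boundary rows pinned down by (a) and (c), and it is precisely the double use of skippedness — of $s_0$ as well as of $s$ — that provides the second, interleaved family of $L$-points making this possible.
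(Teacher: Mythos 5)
Your proof is correct, but it takes a genuinely different route from the paper's. The minimality claim (and your preliminary observation that $|s|\ge|s_0|$, proved via spreading and the maximality of $s$ in $\widehat{\ff}$) is handled exactly as in the text, by combining Lemmas \ref{lemma increasing length of plegma} and \ref{lemma conserning the length of the plegma path}. For the existence, however, the paper strengthens the statement — for every $t\in\widehat{\ff\upharpoonright\upharpoonright L}$ and $s\in\ff\upharpoonright\upharpoonright L$ with $t<s$ there is a plegma path of length $|t|$ from $t$ to $s$ all of whose members except $t$ lie in $\ff\upharpoonright\upharpoonright L$ — and proves it by induction on $|t|$, producing just one new node per step by shifting the relevant coordinates back by one element of $L$. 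You instead build all the intermediate nodes at once, as the $\ff$-initial segments $\sigma(N_i)$ of interleaved infinite subsets $N_1,\dots,N_{k-1}$ of $L$, pinning the first row by the skippedness witnesses $x_j$ of $s_0$ and the last by the witnesses $z_j$ of $s$, and using the spreading/maximality trap $k\le|\sigma(N_i)|\le m$ so that the plegma conditions for consecutive pairs reduce to your interlacing (b); all of these verifications are sound. The delicate triangular array does exist and can be written down explicitly: fill the ``low'' triangle $\{(i,j):i+j\le k\}$ with consecutive terms of the interleaved sequence $s_0(1)<x_1<s_0(2)<x_2<\cdots<x_{k-1}<s_0(k)$ (index advancing by one per row, by two per column) and the ``high'' region with the analogous interleaving $s(1)<z_1<s(2)<z_2<\cdots$, with tails beyond position $m$ taken sparse above $\max s$; with this choice sparseness of each $N_i$ in $L$ is automatic, and in particular one never needs an element of $L$ strictly between $\max s_0$ and $\min s$ (there may be none) — a point worth making explicit, since that is where a careless filling could fail. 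In short, your approach is more explicit (every coordinate of every intermediate node is one of the landmark points $s_0(j),x_j,s(j),z_j$) and avoids strengthening the statement, at the cost of the combinatorial bookkeeping that the paper's one-step induction sidesteps; to make it a complete proof you should spell out the array (e.g. by the index formula above) rather than leave its existence at the level of ``achievable'', since that is where the entire content of the existence half resides.
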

\begin{proof}
  We will actually prove the following stronger result. For every $t$
  in the closure $\widehat{\ff\upharpoonright\upharpoonright L}$ of $\ff\upharpoonright\upharpoonright L$ and
  $s\in\ff\upharpoonright\upharpoonright L$ with $t<s$ there exists a
  plegma path of length $|t|$ from $t$ to $s$ such that all its
  elements except $t$ belong to $\ff\upharpoonright\upharpoonright L$. The proof will be done by
  induction on the length of $t$. The case $|t|=1$ is trivial, since $(t,s)$ is
  already a plegma path of length 1 from $t$ to
  $s$. Suppose that for some $k\in\nn$ the above holds for all $t$ in
  $\widehat{\ff\upharpoonright\upharpoonright L}$ with $|t|=k$. Let $t\in \widehat{\ff\upharpoonright\upharpoonright L}$ with $|t|=k+1$
  and $s\in\ff\upharpoonright\upharpoonright L$ with $t<s$. Then there exist
  $n_1<n_2<\ldots<n_{k+1}$ in $\nn$ such that $t=\{L(n_j):1\leq j\leq
  k+1\}$. We set $t_0=\{L(n_j-1):2\leq j\leq k+1\}$. Notice that
  by the spreading property of $\widehat{\ff}$ the element $t_0$
  belongs to $\widehat{\ff}$. Since $\ff$ is very large in $L$, we
  actually have that $t_0\in\widehat{\ff\upharpoonright\upharpoonright L}$. Thus by the inductive
  hypothesis there exists a plegma path $(t_0,s_1,\ldots,s_{k-1},s)$
  of length $k$ from $t_0$ to $s$ with
  $s_1,\ldots,s_{k-1},s\in\ff\upharpoonright\upharpoonright L$. Let $l=|s_1|$ and
  $m_1<\ldots<m_l$ such that $s_1=\{L(m_j):1\leq j\leq l\}$. Again by
  the spreading property of $\ff$ it is easy to see the following
  \begin{enumerate}
    \item[(i)] $|s_1|\geq|t|$, that is $l\geq k+1$.
    \item[(ii)] $t_0\in \widehat{\ff\upharpoonright\upharpoonright L}\setminus \ff\upharpoonright\upharpoonright L$.
    \item[(iii)] There exists a (unique) proper extension $s_0$
  of $t_0$ such that $s_0\in\ff\upharpoonright\upharpoonright L$ and $s_0\sqsubseteq
  t_0\cup\{L(m_j-1):k+1\leq j\leq l\}$.
  \end{enumerate}
 It is
  easy to check that  $(t,s_0)$ and $(s_0,s_1)$ are plegma pairs.
  Hence the sequence $(t,s_0,\ldots,s_{k-1},s)$
  is a plegma path of length $k+1$ from $t$ to $s$ with
  $s_0,\ldots,s_{k-1},s\in\ff\upharpoonright\upharpoonright L$ and the proof is complete.

Finally by Lemmas \ref{lemma conserning the length of the plegma
path} and  \ref{lemma increasing length of plegma}, every plegma
path in $\ff$ from $s_0$ to $s$ is of length at least $|s_0|$.
Therefore the path $(s_0,\ldots,s_{k-1},s)$  is of minimal length.
\end{proof}
\begin{rem}\label{graphs} In terms of graph theory the
above proposition states that in the directed graph with vertices
the elements of  $\ff\upharpoonright\upharpoonright L$ and edges
the plegma pairs $(s,t)$ in $\ff\upharpoonright\upharpoonright L$,
the distance between two vertices $s_0$ and $s$ with $s_0<s$ is
equal to the cardinality of $s_0$.
\end{rem}
\section{Hereditarily nonconstant maps on regular thin families}
\begin{defn}
    Let $A$ be a set, $M\in[\nn]^\infty$,  $\ff\subseteq [\nn]^{<\infty}$ and $\varphi:\ff\to A$.
    We will say that $\varphi$ is \textit{hereditarily nonconstant} in $M$ if
    for every $L\in[M]^\infty$ the restriction of $\varphi$ on
    $\ff\upharpoonright L$ is nonconstant. In particular if $M=\nn$ then we will say that $\varphi$ is
    hereditarily nonconstant.
  \end{defn}
  \begin{prop}\label{lemma making a hereditary nonconstant function, nonconstant on plegma pairs}
    Let $\ff$ be a regular thin family, $A$ be a set  and
    $\varphi:\ff\to A$ be hereditarily nonconstant.
    Then for every $N\in[\nn]^\infty$ there exists $L\in [N]^\infty$ such that for every plegma pair
    $(s_1,s_2)$ in $\ff\upharpoonright L$,  $\varphi(s_1)\neq\varphi(s_2)$.
  \end{prop}
  \begin{proof}
    By Theorem \ref{ramseyforplegma} there exists an $L\in [N]^\infty$
    such that either $\varphi(s_1)\neq\varphi(s_2)$, for all plegma pairs
    $(s_1,s_2)$ in $\ff\upharpoonright L$, or $\varphi(s_1)=\varphi(s_2)$, for all plegma pairs
    $(s_1,s_2)$ in $\ff\upharpoonright L$. The second alternative is excluded.

    Indeed, suppose that  $\varphi(s_1)=\varphi(s_2)$, for every plegma pair
    $(s_1,s_2)$ in $\ff\upharpoonright L$.
    We may also assume that $\ff\upharpoonright L$ is very large in $L$.
    Let  $s_0$  be the unique   initial segment of  $L_0=\big{\{} L(2\rho):\rho\in\nn\big{\}}$ in
     $\ff\upharpoonright L$ and  let $k=|s_0|$.
     We set
    $L_0'=\big{\{} L(2\rho):\rho\in\nn\text{ and } \rho>k \big{\}}$. By Proposition
    \ref{accessing everything with plegma path of length |s_0|} for every
    $s\in\ff\upharpoonright L_0'$ there exist a plegma path $(s_0,s_1,\ldots,s_{k-1},s)$
    of length $k$ in $\ff\upharpoonright L$. Then  for every $s\in\ff\upharpoonright L_0'$
    we have that $\varphi(s)=\varphi(s_{k-1})=\ldots=\varphi(s_1)=\varphi(s_0)$, which
    contradicts that $\varphi$ is hereditarily nonconstant.
  \end{proof}
The main result of this section is the following.
\begin{thm}
  Let $\ff$ be a regular thin family, $M\in[\nn]^\infty$ and $\varphi:\ff\to \nn$
 be hereditarily non constant in $M$. Let also $g:\nn\to\nn$.
 Then there exists $N\in[M]^\infty$ such that for every plegma
pair $(s_1,s_2)$ in $\ff\upharpoonright N$,
$\varphi(s_2)-\varphi(s_1)>g(n)$, where $\min s_2=N(n)$.
\end{thm}
\begin{proof}
  By Theorem \ref{ramseyforplegma} there exists $L\in[M]^\infty$
  such that one of following holds.
  \begin{enumerate}
    \item[(i)] $\varphi(s_1)=\varphi(s_2)$, for all plegma pairs
    $(s_1,s_2)$ in $\ff\upharpoonright L$.
    \item[(ii)] $\varphi(s_1)>\varphi(s_2)$, for all plegma pairs
    $(s_1,s_2)$ in $\ff\upharpoonright L$.
    \item[(iii)] $\varphi(s_1)<\varphi(s_2)$, for all plegma pairs
    $(s_1,s_2)$ in $\ff\upharpoonright L$.
  \end{enumerate}
  First we will exclude cases (i) and (ii). Case (i) is easily
  excluded by Proposition \ref{lemma making a hereditary nonconstant function, nonconstant on plegma pairs}.
  Suppose that case (ii) holds. Let $(s_n)_{n\in\nn}$ be an
  infinite plegma path in $\ff\upharpoonright L$. Then
  $(\varphi(s_n))_{n\in\nn}$ forms a strictly decreasing sequence
  of natural numbers, which is impossible. Therefore, case (iii)
  holds. We choose $N\in[L]^\infty$ such that for every $n\geq2$,
  we have that \[\Big|\Big\{l\in L:N(n-1)<l<N(n)\Big\}\Big|\geq\max_{j\leq n}g(j)\]
  Let $(s_1,s_2)$ be a plegma pair in $\ff\upharpoonright N$ with $\min s_2=N(n)$. Notice for every $1\leq k\leq |s_1|$, we
  have that \[\Big|\Big\{l\in L:s_1(k)<l<s_2(k)\Big\}\Big|\geq
  g(n)\] and for every $|s_1|<k\leq |s_2|$ we have that
  \[\Big|\Big\{l\in L:s_2(k-1)<l<s_2(k)\Big\}\Big|\geq
  g(n)\]
  This easily yields that there exist
  $t_1,\ldots,t_{g(n)}\in\ff\upharpoonright L$ such that the
  $(g(n)+2)$-tuple $(s_1,t_1,\ldots,t_{g(n)},s_2)$ is plegma. By
  (iii) above we have that $\varphi(s_2)-\varphi(s_1)>g(n)$.
\end{proof}
  \begin{cor}\label{Proposition combinatorial for SSD}
    Let $\ff$ be a regular thin family, $M\in[\nn]^\infty$ and $\varphi:\ff\to \nn$
 be hereditarily non constant in $M$.
Then  there exists $N\in[M]^\infty$ such that for every plegma
pair $(s_1,s_2)$ in $\ff\upharpoonright N$,
$\varphi(s_2)-\varphi(s_1)>1$.
\end{cor}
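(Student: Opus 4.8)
The plan is to deduce this immediately from the previous proposition by specialising its auxiliary map $g$ to the constant value $1$. First I would check that the hypotheses needed to invoke that proposition are indeed available. Since $\ff$ is regular thin it is in particular thin, and moreover $\ff$ is large in $M$: for every $L\in[M]^\infty$ the increasing enumeration of $L$ has an initial segment lying in $\ff$ (equivalently, the $\sqsubseteq$-maximal elements of $\widehat{\ff}\upharpoonright L$ are exactly the elements of $\ff\upharpoonright L$; see Fact \ref{fact every regular is large}). Combined with the standing assumption that $\varphi$ is hereditarily nonconstant in $M$, this is precisely what Lemma \ref{Pudlak Rodl corollary} requires. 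Applying that lemma produces an $L\in[M]^\infty$ and a family $\ff_{\varphi,L}\subseteq[L]^{<\infty}$ enjoying its three listed properties; in particular $\ff_{\varphi,L}\sqsubseteq\ff\upharpoonright L$, so for each $s\in\ff\upharpoonright L$ there is a well-defined (unique) initial segment $t_s\in\ff_{\varphi,L}$ of $s$, exactly as in the statement of the previous proposition.

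Next I would invoke the previous proposition for this $L$ and this $\ff_{\varphi,L}$, choosing for $g\colon\ff_{\varphi,L}\to\nn$ the constant map $g(t)=1$. It yields an $N\in[L]^\infty$ such that for every plegma pair $(s_1,s_2)$ in $\ff\upharpoonright N$ one has $\varphi(s_2)-\varphi(s_1)>g(t_{s_1})=1$. Since $N\in[L]^\infty\subseteq[M]^\infty$, this $N$ satisfies the conclusion, completing the argument.

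There is essentially no obstacle to overcome: the entire combinatorial content has already been packaged into the previous proposition and, behind it, into Lemma \ref{Pudlak Rodl corollary} together with the abstract Ramsey machinery of Proposition \ref{strongly decide}. The only point that genuinely deserves a sentence is the remark that a regular thin family is large in $M$, which is what licenses the application of Lemma \ref{Pudlak Rodl corollary}; beyond that the proof is a one-line substitution $g\equiv 1$.
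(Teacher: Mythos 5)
Your proof is correct and is exactly the intended derivation: the paper states this corollary without proof as an immediate consequence of the preceding proposition applied with the constant map $g\equiv 1$, after noting that a regular thin family is thin and large in $M$ so that Lemma \ref{Pudlak Rodl corollary} applies. One small caveat: your parenthetical justification of largeness overstates things, since "every $L\in[M]^\infty$ has an initial segment in $\ff$" is the stronger very-large property, which can fail (e.g.\ $\ff=\{\{n\}:n\geq 5\}$ and $L=\nn$); what is true and all that is needed is plain largeness, which follows because the $\sqsubseteq$-maximal elements of the well-founded tree $\widehat{\ff}\upharpoonright L$ belong to $\ff\upharpoonright L$ (Remark \ref{Galvin Pricley for regular thin}).
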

\section{On the plegma preserving maps between thin families}
\subsection{The plegma preserving maps}
\begin{defn}
  Let $\ff\subseteq[\nn]^{<\infty}$ and $\varphi:\ff\to
  [\nn]^{<\infty}$. We will say that the map $\varphi$ is \textit{plegma
  preserving} if for every $l\in\nn$ and every plegma $l$-tuple $(s_j)_{j=1}^l$ in $\ff$
  the $l$-tuple $(\varphi(s_j))_{j=1}^l$ is also plegma.

  Moreover a plegma preserving map $\varphi:\ff\to
  [\nn]^{<\infty}$ will be called \textit{normal plegma preserving} if in addition it satisfies the
  following:
  \begin{enumerate}
    \item[(i)] For every plegma pair $(s_1,s_2)$ in $\ff$ we have that $|\varphi(s_1)|\leq|\varphi(s_2)|$.
    \item[(ii)] For every $s\in\ff$ we have that
    $|\varphi(s)|\leq|s|$.
  \end{enumerate}
\end{defn}
\begin{lem}\label{weakpp}  Let $\ff\subseteq[\nn]^{<\infty}$  and
  $\varphi:\ff\to[\nn]^{<\infty}$.
 If for every plegma  pair $(s_1,s_2)$ in $\ff$ the pair
$(\varphi(s_1),\varphi(s_2))$ is also plegma then
    the map $\varphi$ is plegma preserving.
\end{lem}
\begin{proof}
   Let $l\in\nn$ and $(s_j)_{j=1}^l$ be a plegma $l$-tuple in
  $\ff$. Then for every $1\leq i_1<i_2\leq l$ we
  have that $(s_{i_1},s_{i_2})$ is plegma and thus
  $(\varphi(s_{i_1}),\varphi(s_{i_2}))$ is plegma. Hence, by the remarks
  following Definition \ref{defn plegma},  we have  that
  $(\varphi(s_j))_{j=1}^l$ is a plegma $l$-tuple.
\end{proof}

\begin{lem}\label{Properties of plegma preserving map}
  Let $\ff$ be a regular thin family, $M\in[\nn]^\infty$ and $\varphi:\ff\upharpoonright M\to[\nn]^{<\infty}$
  be a plegma preserving map.
  Then there exists $L\in[M]^\infty$ such that the restriction $\varphi|_{\ff\upharpoonright L}$
  of $\varphi$ on $\ff\upharpoonright L$ is normal plegma preserving.
\end{lem}
\begin{proof}
 By  Theorem \ref{ramseyforplegma} there exists $N\in
  [M]^\infty$ such that either
  \begin{enumerate}\item[(a)] $|\varphi(s_1)|\leq|\varphi(s_2)|$, for all plegma pairs $(s_1,s_2)$ in
  $\ff\upharpoonright N$, or
  \item[(b)] $|\varphi(s_1)|>|\varphi(s_2)|$, for all plegma pairs $(s_1,s_2)$ in
  $\ff\upharpoonright N$.\end{enumerate}
  The second alternative cannot occur since otherwise for an infinite plegma path  $(s_n)_{n\in\nn}$
   in $\ff\upharpoonright N$ the sequence $(|\varphi(s_n)|)_{n\in\nn}$ would form  a strictly decreasing sequence of natural
   numbers.

Again by  Theorem \ref{ramseyforplegma} there exists
    $L\in[N]^\infty$ such that either
\begin{enumerate}
   \item[(c)] $|\varphi(s)|\leq|s|$, for all $s\in\ff\upharpoonright
    L$,  or \item[(d)] $|\varphi(s)|>|s|$, for all $s\in\ff\upharpoonright
    L$. \end{enumerate}
  The second  alternative  is excluded. Indeed, assume that (d) holds. Since $\varphi$ on $\ff\upharpoonright L$ is plegma preserving
   it is easy to choose  (using for example an infinite plegma path)
   $s_0, s$ in $\ff\upharpoonright\upharpoonright L$ such that
    $\min(s_0)<\min(s)$ and $\min(\varphi(s_0))<\min(\varphi(s_{j_0}))$.

     Let $k_0=|s_0|$. Then by
    Proposition \ref{accessing everything with plegma path of length
    |s_0|} there exists a plegma path $(s_i)_{i=0}^{k_0}$ in
    $\ff\upharpoonright\upharpoonright L$  from $s_0$ to
    $s=s_{k_0}$ of length $k_0$. Since  $(\varphi(s_i))_{i=0}^{k_0}$ is also a plegma path of
     length $k_0$ from $\varphi(s_0)$ to $\varphi(s_{k_0})$,
    by Lemma \ref{lemma conserning the length of the plegma path} we
have that
     \[\min\{|\varphi(s_i)|:0\leq i\leq k_0-1\}\leq k_0\]
But  assuming that (d) holds we should have that
     $|\varphi(s_i)|>|s_i|$ and  since $\ff$ is a regular thin family,
     $|s_i|\geq |s_0|=k_0$, for all $0\leq i\leq k_0-1$. Hence
      \[\min\{|\varphi(s_i)|:0\leq i\leq k_0-1\}> \min\{|s_i|:0\leq i\leq k_0-1\}\geq k_0\]
      which is a contradiction.

      By the above we have that $\varphi|_{\ff\upharpoonright L}$
 is a normal plegma preserving map
      and the proof is complete.
\end{proof}
\begin{prop}
  Let $\ff$ be a regular thin family and
  $\varphi:\ff\to[\nn]^\infty$. Then for every $M\in[\nn^\infty]$,
  there is $L\in[M]^\infty$ such that exactly one of the following holds.  \begin{enumerate}
    \item[(i)] The restriction $\varphi|_{\ff\upharpoonright L}$
    is normal plegma preserving.
    \item[(ii)] For every  $(s_1,s_2)\in\text{\emph{Plm}}_2(\ff\upharpoonright L)$, neither
    $(\varphi(s_1),\varphi(s_2))$ nor $(\varphi(s_2),\varphi(s_1))$ is a plegma pair.
  \end{enumerate}
\end{prop}
\begin{proof} By Theorem \ref{ramseyforplegma} there exists $N\in
  [M]^\infty$ such that one of the following is satisfied.
  \begin{enumerate}
  \item[(a)] The pair $(\varphi(s_1),\varphi(s_2))$ is plegma, for all $(s_1,s_2)\in\text{\emph{Plm}}_2(\ff\upharpoonright
  N)$.
\item[(b)] The pair $(\varphi(s_2),\varphi(s_1))$ is plegma, for
all
    $(s_1,s_2)\in\text{\emph{Plm}}_2(\ff\upharpoonright N)$.
    \item[(c)] For every $(s_1,s_2)\in\text{\emph{Plm}}_2(\ff\upharpoonright
  N)$,  neither the pair $(\varphi(s_1),\varphi(s_2))$ nor the
  pair $(\varphi(s_2),\varphi(s_1))$ is plegma.
    \end{enumerate}
 Notice that  the second alternative cannot occur.  Indeed, otherwise  for an infinite plegma path $(s_k)_{k\in\nn}$ in
    $\ff\upharpoonright N$, the sequence $(\min(s_k))_{k\in\nn}$ would be a strictly decreasing
    infinite sequence of natural numbers.

     Moreover if
    (a) holds then by Lemma \ref{weakpp} we have that $\varphi|_{\ff\upharpoonright
    N}$ is plegma preserving and by Lemma \ref{Properties of plegma preserving
map} there exists $L\in [N]^\infty$ such that
$\varphi|_{\ff\upharpoonright L}$
    is normal plegma preserving.
\end{proof}
\subsection{Forbidden plegma preserving maps}
The main aim of this subsection is to prove the following.
\begin{thm}\label{non plegma preserving maps}
    Let $\ff,\g$ be regular thin families. If $o(\ff)<o(\g)$
    then there is no plegma preserving map from $\ff$ to $\g$. More precisely for every
    $\varphi:\ff\to\g$ and $M\in[\nn]^\infty$ there exists $L\in[M]^\infty$ such that for every
    plegma pair $(s_1,s_2)$ in $\ff\upharpoonright L$ neither $(\phi(s_1),\phi(s_2))$ nor $(\phi(s_2),\phi(s_1))$ is plegma.
  \end{thm}
  For the proof of the above theorem we will need  the next  definition.
\begin{defn}\label{defn backwards shifting}
  Let $\ff\subseteq[\nn]^{<\infty}$ and $L\in[\nn]^\infty$. We define
  \[\ff(L^{-1})=\Big{\{} t\in[\nn]^{<\infty}:L(t)\in\ff\Big{\}}\]
\end{defn}
 It is easy to see that for every family $\ff$ of finite subsets of $\nn$ and $L\in[\nn]^\infty$ the following hold
  \begin{enumerate}
    \item[(a)] If $\ff$ is very large in $L$, then the family $\ff(L^{-1})$ is very large in $\nn$.
    \item[(b)] If $\ff$ is regular thin then so does the family $\ff(L^{-1})$.
    \item[(c)] $o(\ff(L^{-1}))=o(\ff\upharpoonright L)$. In
    particular if $\ff$ is regular thin then
    $o(\ff(L^{-1}))=o(\ff)$.
  \end{enumerate}

\begin{lem}\label{firsttheoreminadm}
  Let $\ff$ be a regular thin family and $\varphi:\ff\to[\nn]^{<\infty}$ be a plegma preserving map.
  Then for every
  $M\in[\nn]^\infty$ there exists $L\in[M]^\infty$ such that if $\psi:\ff(L^{-1})\to [\nn]^{<\infty}$ is defined by $\psi(t)=\varphi(L(t))$ for every $t\in\ff(L^{-1})$, then the following are satisfied:
  \begin{enumerate}
    \item[(a)] The map $\psi$ is normal plegma preserving.
    \item[(b)] For every $t\in\ff(L^{-1})$ and every $i\leq |\psi(t)|$, we have that $\psi(t)(i)> t(i)$.
  \end{enumerate}
\end{lem}
\begin{proof}
  By Lemma \ref{Properties of plegma preserving map} there exists
  $L_1\in[M]^\infty$ such that the restriction $\varphi|_{\ff\upharpoonright L_1}$ of $\varphi$ on $\ff\upharpoonright L_1$ is normal plegma preserving.
  We may also assume that $\ff$ is very large in $L_1$. Let $\psi_1=\varphi\circ L_1:\ff(L_1^{-1})\to [\nn]^{<\infty}$.
  It is easy to check that $\psi_1$ is normal plegma preserving.\\

  \textbf{Claim:} For every $u\in\ff(L_1^{-1})\upharpoonright\upharpoonright\nn$ and $1\leq i\leq|\psi_1(u)|$, we have $u(i)\leq\psi_1(u)(i)$.
  \begin{proof}[Proof of Claim]
     We will show that for every $i\in\nn$ the following holds: for  every
    $u\in\ff(L_1^{-1})\upharpoonright\upharpoonright\nn$ with $ i\leq |\psi_1(u)|$,
    $u(i)\leq\psi_1(u)(i)$.
    Indeed, let $i=1$ and let
    $u\in\ff(L_1^{-1})\upharpoonright\upharpoonright\nn$.
    If $u(1)=1$ then obviously $\psi_1(u)(1)\geq 1=u(1)$.
    Suppose that for some $k\in\nn$ and every
    $u'\in\ff(L_1^{-1})\upharpoonright\upharpoonright\nn$ with
    $u'(1)=k$ we have that $\psi_1(u')(1)\geq k$. Let
    $u\in\ff(L_1^{-1})\upharpoonright\upharpoonright\nn$ with $u(1)=k+1$. Since
    $\ff(L_1^{-1})$ is regular thin and very large in $\nn$ there exists a unique
    $u'\in\ff(L_1^{-1})$ with $u'\sqsubseteq u-1=\{u(i)-1: 1\leq i\leq |u|\}$. Then
    $(u',u)$ is a
    plegma pair and $u'(1)=k$. Since $\psi_1$ is normal plegma preserving we have that
    $(\psi_1(u'),\psi_1(u))$ is
    plegma. Hence   $\psi_1(u)(1)>\psi_1(u')(1)\geq u'(1)=k$, that is $\psi_1(u)(1)\geq      k+1=u(1)$.
    By induction on $k=u(1)$, we get that for all
    $u\in\ff(L_1^{-1})\upharpoonright\upharpoonright\nn$,
    $u(1)\leq\psi_1(u)(1)$.

    Suppose now that for some $i\in\nn$,   it holds that for every
    $u'\in\ff(L_1^{-1})\upharpoonright\upharpoonright\nn$ with $i\leq |\psi_1(u')|$,
    $u'(i)\leq\psi_1(u')(i)$.
    Let $u\in\ff(L_1^{-1})\upharpoonright\upharpoonright\nn$ with $i+1\leq |\psi_1(u)|$. Clearly there exists
    $u'\in\ff(L_1^{-1})\upharpoonright\upharpoonright\nn$ such that
    $\{u(\rho)-1: 2\leq \rho\leq |u|\}\sqsubseteq u'$. Then  $(u,u')$ is plegma,
    $|u|\leq |u'|$ and  $u(i+1)=u'(i)+1$. Since $\psi_1$ is normal plegma preserving, $(\psi_1(u),\psi_1(u'))$ is plegma
    and  $i+1\leq |\psi_1(u)|\leq|\psi_1(u')|$. Hence,
    $\psi_1(u)(i+1)>\psi_1(u')(i)\geq u'(i)=u(i+1)-1$, that is $\psi_1(u)(i+1)\geq u(i+1)$.
 By induction on $i\in\nn$ the proof of the claim is complete.
  \end{proof}
  Let $N_0=\{2\rho:\rho\in\nn\}$ and  $L=L_1(N_0)$. It is easy to
  check that for every $t\in \ff(L^{-1})$, $N_0(t)=2t=\{2t(i):1\leq i\leq |t|\}$ and $N_0(t)\in \ff(L_1^{-1})\upharpoonright\upharpoonright \nn$.
  Let $\psi=\varphi\circ L$. Then $\psi= \psi_1\circ N_0$. Indeed for every $t\in\ff(L^{-1})$
  \[\psi(t)=\varphi(L(t))=\varphi(L_1(N_0(t)))=\psi_1(N_0(t))\]
  Then for every $t\in\ff(L^{-1})$ and $1\leq i\leq|\psi(t)|$ we have that $\psi(t)(i)=\psi_1(N_0(t))(i)\geq N_0(t)(i)=2t(i)>t(i)$.
\end{proof}
  \begin{proof}[Proof of Theorem \ref{non plegma preserving maps}]
    Assume on the contrary that there exists $\varphi:\ff\to\g$ plegma preserving map.
    By Lemma \ref{firsttheoreminadm} there exists $L\in[\nn]^\infty$ such that if $\psi:\ff(L^{-1})\to \g$
    is defined by $\psi(t)=\varphi(L(t))$ for every $t\in\ff(L^{-1})$, then the following are satisfied:
  \begin{enumerate}
    \item[(a)] The map $\psi$ is normal plegma preserving.
    \item[(b)] For every $t\in\ff(L^{-1})$ and every $i\leq |\psi(t)|$, we have that $\psi(t)(i)> t(i)$.
  \end{enumerate}
    Since $o(\ff(L^{-1}))=o(\ff)<o(\g)$, by Proposition \ref{corollary by Gasparis} we have that there exists $N\in[\nn]^\infty$ such that
   \begin{equation}
   \ff(L^{-1})\upharpoonright N\sqsubset \g\upharpoonright N \label{eq3}
    \end{equation}
    We are now ready to derive a contradiction.
    Indeed, pick $t_0\in\ff(L^{-1})\upharpoonright N$. Then $t_0\in\widehat{\g}\setminus\g$. But then by (\ref{eq3}), assertion (b) above and the
    spreading property of $\widehat{\g}$ we should have $\psi(t_0)\in\widehat{\g}\setminus\g$ which is impossible.
  \end{proof}
  \subsection{Plegma preserving embeddings  between regular thin families}
  By Theorem \ref{non plegma preserving maps} we have the
  following.
\begin{cor}
  Let $\ff$, $\g$ be regular thin families. If there exist
  $M\in[\nn]^\infty$ and $\varphi:\g\to\ff$ such that
  $\varphi|_{\g\upharpoonright M}$ is plegma preserving then
  $o(\ff)\leq o(\g)$.
\end{cor}

  In the sequel we will show the converse of the above corollary, i.e. if $o(\ff)\leq o(\g)$ then there exist $N\in[\nn]^\infty$ and a plegma
  preserving
  map $\varphi:\g\upharpoonright N\to\ff$.
   First we will need  some combinatorial properties mainly concerning regular families.
\begin{defn}
Let $\ff\subseteq [\nn]^{<\infty}$ and  $L\in[\nn]^\infty$. We
define
 \[\ff(L)=\{L(s):\;s\in\ff\}\]
 \end{defn} Notice that $o(\ff)=o(\ff(L))$ and if $\ff$ is compact (resp. hereditary) then
 $\ff(L)$ is also compact (resp. hereditary).

  We will use the next easily verified lemma.
\begin{lem}\label{ls} Let $\ff$ be a spreading family of finite subsets of $\nn$. Then
\begin{enumerate} \item[(i)] For every $L_1\subseteq L_2$ in $[\nn]^\infty$, we have that $\ff(L_1)\subseteq \ff(L_2)$.
\item[(ii)] For every $k\in \nn$, $L_1,L_2\in[\nn]^\infty$ with
$\{L_1(j):j> k\}\subseteq \{L_2(j):j> k\}$, we have that
$\ff_{(k)}(L_1)\subseteq \ff_{(k)}(L_2)$ (where
$\ff_{(k)}(L)=\{L(s):s\in\ff_{(k)}\}$).
\end{enumerate}
\end{lem}

  \begin{prop}\label{embending a family F in G by a shifting where o(F)<o(G)}
    Let $\ff,\g$ be regular families of finite subsets of $\nn$ with $o(\ff)\leq o(\g)$.
    Then for every $M\in[\nn]^\infty$ there exists $L\in [M]^\infty$ such that $\ff(L)\subseteq\g$.
  \end{prop}
  \begin{proof}
  If  $o(\ff)= 0$, i.e. $\ff=\{\emptyset\}$, then the conclusion trivially holds.  Suppose that for some $\xi<\omega_1$ the proposition is true
   for every regular families
    $\ff',\g'\subseteq[\nn]^{<\infty}$ such that  $o(\ff')<\xi$ and
    $o(\ff')\leq o(\g')$.
    Let $\ff$, $\g$ be regular with $o(\ff)=\xi$ and let $M\in [\nn]^\infty$.
  By Remark \ref{eq1} we have that  $o(\ff_{(1)})<o(\ff)$. Hence $o(\ff_{(1)})<o(\g)$ and so
  there is some $l_1\in\nn$ such that $o(\ff_{(1)})\leq o(\g_{(l_1)})$. Since  $\g$ is spreading we have that
  $o(\g_{(l_1)})\leq o(\g_{(n)})$ for all $n\geq l_1$ and therefore we may suppose that $l_1\in M$.
 It is easy to see that   $\ff_{(1)}$ and $\g_{(l_1)}$ are regular families. Hence by  our inductive hypothesis there is  $L_1\in [M]^\infty$
 such that   $\ff_{(1)}(L_1)\subseteq \g_{(l_1)}$.

  Proceeding in the same way we construct a strictly increasing sequence
 $l_1<l_2<...$ in $M$ and a decreasing sequence $M=L_0\supset L_1\supset ...$ of infinite subsets of $M$ such that
  for all $j\geq 1$,
  the following properties are satisfied.
   \begin{enumerate}
 \item[(i)] $l_{j+1}\in  L_j$.
 \item[(ii)] $l_{j+1}> L_j(j)$.
\item[(iii)] $\ff_{(j)}(L_j)\subseteq \g_{(l_j)}$.
    \end{enumerate}
We set  $L=\{l_j\}_{j\in\nn}$. We claim that $\ff(L)\subseteq\g$.
Indeed, by the above construction we have that for every
$k\in\nn$, $\{L(j)\}_{j>k}\subseteq \{ L_k(j)\}_{j> k}$. Therefore
by the second part of Lemma \ref{ls} and the third condition of
the construction we get that
\begin{equation}\label{eq2}\ff_{(k)}(L)\subseteq
\ff_{(k)}(L_k)\subseteq \g_{(l_k)}\end{equation} It is easy to see
that $\ff_{(k)}(L)=\ff(L)_{(l_k)}$ and so by (\ref{eq2}) we have
that $\ff(L)_{(l_k)}\subseteq  \g_{(l_k)}$. Since this holds for
every $k\in\nn$, we conclude that $\ff(L)\subseteq \g$.
\end{proof}

\begin{cor}
  Let $\ff,\g$ be regular families of finite subsets of $\nn$ with $o(\ff)= o(\g)$.
  Then for every $M\in[\nn]^\infty$ there exists $L\in [M]^\infty$ such that $\ff(L)\subseteq\g$ and $\g(L)\subseteq\ff$.
\end{cor}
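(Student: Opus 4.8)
The plan is to deduce this from Theorem~\ref{embending a family F in G by a shifting where o(F)<o(G)} by invoking it twice, once in each direction, and to use the monotonicity of the operation $L\mapsto\ff(L)$ under inclusion of infinite sets, i.e.\ Lemma~\ref{ls}(i), which is available here since a regular family is in particular spreading.

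First I would use the hypothesis $o(\ff)=o(\g)$, which in particular gives $o(\ff)\leq o(\g)$, together with Theorem~\ref{embending a family F in G by a shifting where o(F)<o(G)} applied to the pair $(\ff,\g)$ and the given $M$, to obtain $L_1\in[M]^\infty$ with $\ff(L_1)\subseteq\g$. Next, since also $o(\g)\leq o(\ff)$, I would apply the same theorem to the pair $(\g,\ff)$ and to the infinite set $L_1$, obtaining $L\in[L_1]^\infty$ with $\g(L)\subseteq\ff$. It then remains only to verify that shrinking $L_1$ to $L$ has not destroyed the first inclusion: since $L\subseteq L_1$ and $\ff$ is spreading, Lemma~\ref{ls}(i) yields $\ff(L)\subseteq\ff(L_1)\subseteq\g$. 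Hence the set $L$ satisfies simultaneously $\ff(L)\subseteq\g$ and $\g(L)\subseteq\ff$, as required.

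I do not expect a real obstacle here; the only point that needs a moment's attention is precisely the last observation, namely that refining $L_1$ to the smaller $L$ preserves $\ff(L)\subseteq\g$, and this is exactly what the monotonicity statement of Lemma~\ref{ls}(i) provides. Everything else is a direct double application of the preceding theorem.
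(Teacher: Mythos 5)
Your argument is correct and is exactly the intended deduction: the paper states this corollary without proof as an immediate consequence of Theorem \ref{embending a family F in G by a shifting where o(F)<o(G)}, and the standard argument is precisely your double application together with the monotonicity $\ff(L)\subseteq\ff(L_1)$ from Lemma \ref{ls}(i) (valid since regular families are spreading). Nothing is missing.
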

\begin{lem}\label{prop plegma preserving maps}
  Let $\ff,\g$ be regular thin families with $o(\ff)\leq o(\g)$.
  Then there exists $L_0\in[\nn]^\infty$ such that for every
  $M\in[\nn]^\infty$ there exist $N\in[\nn]^\infty$ and
  $\varphi:\g\upharpoonright N\to\ff\upharpoonright M$ such that
  $L_0(\varphi(t))\sqsubseteq t$, for all $t\in\g\upharpoonright N$.
\end{lem}
\begin{proof}
    By Proposition \ref{embending a family F in G by a shifting where o(F)<o(G)}
    there exists  $L_0\in[\nn]^\infty$ such that $\widehat{\ff}(L_0)\subseteq\widehat{\g}$.
 Let $M\in [\nn]^\infty$.
    Also notice that $\ff(L_0)$ and $\g$ are large in $L_0(M)$. Hence  by Theorem \ref{Galvin prikry} there exists $N\in [L_0(M)]^\infty$
    such that $\ff(L_0)$ and $\g$ are  very large in $N$.
    Since  $\widehat{\ff}(L_0)\subseteq\widehat{\g}$ we get  that $\ff(L_0)\upharpoonright N\sqsubseteq \g\upharpoonright
    N$. To define the map $\varphi:\g\upharpoonright N\to\ff\upharpoonright M$, let $t\in \g\upharpoonright N$. Then there exists a unique
$\widetilde{s}\in \ff(L_0)\upharpoonright N$ such that
$\widetilde{s}\sqsubseteq t$. We set $\varphi(t)=s$, where   $s$
is the unique element of $\ff$ with $L_0(s)=\widetilde{s}$. It is
easy to check that $\varphi$ is as desired.
  \end{proof}
  We are now ready for the main result of this subsection.
\begin{thm}\label{plegma preserving maps}
  Let $\ff,\g$ be regular thin families with $o(\ff)\leq o(\g)$.
  For every $M\in[\nn]^\infty$ there is $N\in[\nn]^\infty$ and
  a plegma preserving map
  $\varphi:\g\upharpoonright N\to\ff\upharpoonright M$.
\end{thm}
\begin{proof}
  Let $M\in[\nn]^\infty$.
  By Lemma \ref{prop plegma preserving maps} there exist $L_0\in[\nn]^\infty$, $N\in[\nn]^\infty$ and
  $\varphi:\g\upharpoonright N\to\ff\upharpoonright M$ such that
  $L_0(\varphi(t))\sqsubseteq t$, for all $t\in\g\upharpoonright
  N$. Let $l\in\nn$ and $(t_i)_{i=1}^l\in\text{\emph{Plm}}(\g\upharpoonright
  N)$. Then $L_0(\varphi(t_i))\sqsubseteq t_i$, for all $1\leq
  i\leq l$. Since $(t_i)_{i=1}^l$ is plegma, we have that
  $(L_0(\varphi(t_i))_{i=1}^l$ is also plegma. This easily yields
  that $(\varphi(t_i))_{i=1}^l\in\text{\emph{Plm}}(\ff\upharpoonright
  M)$.
\end{proof}

\chapter{The hierarchy of spreading models}\label{Chapter 2}
In this chapter we will introduce the notion of the higher order
spreading model, which is a generalization of the classical one
invented by A. Brunel and L. Sucheston in \cite{BS}. The
definition is based on the concepts of the $\ff$-sequences and
plegma families.
 It is shown  that if a spreading model is
generated by an $\ff$-sequence in a Banach space $X$, then it is
also generated by a $\g$-sequence in $X$, where $\g$ is any other
regular thin family with at least the same order as $\ff$. This
explains that a spreading model generated by an $\ff$-sequence is
completely determined by the order of the regular thin family
$\ff$ and furthermore  the $\xi$-spreading models define an
increasing hierarchy.
\section{Definition and existence of $\ff$-spreading models}
Let us start with the general definition of the $\ff$-sequences.
For a set $X$ and a regular thin family $\ff$, by the term
\emph{$\ff$-sequence} in $X$ we will understand a map
$\varphi:\ff\to X$. An $\ff$-sequence in $X$ will be usually
denoted by $(x_s)_{s\in\ff}$, where $x_s=\varphi(s)$ for all
$s\in\ff$. Moreover given $M\in[\nn]^\infty$, the map $\varphi:
\ff\upharpoonright M\to X$ will be called an
\emph{$\ff$-subsequence} of $(x_s)_{s\in\ff}$ and will be denoted
by $(x_s)_{s\in\ff\upharpoonright M}$.

For $\ff$-sequences in a Banach space $X$ we will use the
following terminology. An $\ff$-sequence $(x_s)_{s\in
\mathcal{F}}$ in $X$ will be called \textit{bounded} (resp.
\textit{seminormalized}) if there exists $C>0$ (resp. $0<c<C$)
such that $\|x_s\|\leq C$ (resp. $c\leq \|x_s\|\leq C$) for every
$s\in\ff$.

 We fix for the sequel a regular thin family $\ff$ and a bounded
$\ff$-sequence $(x_s)_{s\in\ff}$ in a space X.
\begin{lem}
  Let $l\in\nn$, $N\in[\nn]^\infty$ and $\delta>0$. Then
  there exists $L\in[N]^\infty$ such that
  \[\Bigg{|}\Big{\|}\sum_{j=1}^la_jx_{t_j}\Big{\|}-\Big{\|}\sum_{j=1}^la_jx_{s_j}\Big{\|}\Bigg{|}\leq\delta\]
  for every
  $(t_j)_{j=1}^l, (s_j)_{j=1}^l \in\text{\emph{Plm}}_l(\ff\upharpoonright L)$ and
  $a_1,...,a_l\in[-1,1]$.
\end{lem}
\begin{proof}
Let $(\textbf{a}_k)_{k=1}^{n_0}$ be a $\frac{\delta}{3l}-$net of
the unit ball of $\rr^l$ with $\|\cdot\|_\infty$. We set $N_0=N$.
By a finite induction on $1\leq k\leq n_0$, we construct a
decreasing sequence $N_0\supseteq N_1\supseteq\ldots\supseteq
N_{n_0}$ as follows. Suppose that $N_0,\ldots,N_{k-1}$ have been
constructed. Define $g_k:\text{\emph{Plm}}_l(\ff\upharpoonright
N_{k-1})\to[0,lC]$ such that $g_k((s_j)_{j=1}^l)=\|\sum_{j=1}^l
a_j^k x_{s_j}\|$, where $\textbf{a}_k=(a_j^k)_{j=1}^l$. By
dividing the interval $[0,lC]$ into disjoint intervals of length
$\frac{\delta}{3}$, Theorem \ref{ramseyforplegma} yields a
$N_k\in[N_{k-1}]^\infty$ such that for every
$(t_j)_{j=1}^l,(s_j)_{j=1}^l\in\text{\emph{Plm}}_l(\ff\upharpoonright
N_k)$, we have
$|g_k((t_j)_{j=1}^l)-g_k((s_j)_{j=1}^l)|<\frac{\delta}{3}$.
Proceeding in this way  we conclude that for every
  $(s_j)_{j=1}^l,(t_j)_{j=1}^l\in\text{\emph{Plm}}_l(\ff\upharpoonright N_{n_0})$ and
  $1\leq k\leq n_0$ we have that
  \[\Bigg{|}\Big\|\sum_{j=1}^la_j^kx_{t_j}\Big\|-\Big\|\sum_{j=1}^la_j^kx_{s_j}\Big\|\Bigg{|}\leq\frac{\delta}{3}\]
Taking into account that $(\textbf{a}_k)_{k=1}^{n_0}$ is a
$\frac{\delta}{3}-$net of the unit ball of $(\rr^l,
\|\cdot\|_\infty)$ it is easy to see that $L=N_{n_0}$ is as
desired.
\end{proof}

Using standard  diagonolization arguments we get the following.
\begin{prop} \label{B-Sp} Let  $\ff$ be a regular thin family, $(\delta_n)_n$ be a decreasing null sequence of positive real
numbers and $(x_s)_{s\in\ff}$ be a bounded $\ff-$sequence in a
Banach space $X$. Then for every $N\in[\nn]^\infty$ there exists
$M\in[N]^\infty$ such that for every $k\leq l$ in $\nn$,
$(t_j)_{j=1}^k, (s_j)_{j=1}^k
\in\text{\emph{Plm}}_k(\ff\upharpoonright M)$ with
$s_1(1),t_1(1)\geq M(l)$ and $a_1,...,a_k\in[-1,1]$, we have that
\[\Bigg{|}\Big\|\sum_{j=1}^k a_jx_{t_j}\Big\|-\Big\|\sum_{j=1}^k a_jx_{s_j}\Big\|\Bigg{|}\leq\delta_l\]
Hence for every $l\in\nn$ and $a_1,...,a_l\in\rr$ and every
sequence  $\big((s_j^n)_{j=1}^l\big)_{n}$ of plegma $l-$tuples in
$\ff\upharpoonright M$ with $s_1^n(1)\to \infty$ the sequence
  $(\|\sum_{j=1}^la_jx_{s_j}^n\|)_{n\in\nn}$ is Cauchy, with the limit
  independent from the choice of the sequence
  $((s_j^n)_{j=1}^l)_{n\in\nn}$.

  In particular there exists a seminorm $\|\cdot\|_*$ on  $c_{00}(\nn)$ under which the natural Hamel
basis $(e_n)_n$  is a spreading sequence and
\[\Bigg{|}\Big\|\sum_{j=1}^k a_jx_{s_j}\Big\|-\Big\|\sum_{j=1}^k
a_je_j\Big\|_*\Bigg{|}\leq\delta_l\] for all $k\leq l$ in $\nn$,
$a_1,\ldots, a_k\in[-1,1]$ and
$(s_j)_{j=1}^k\in\text{\emph{Plm}}_k(\ff\upharpoonright M)$ with
$s_1(1)\geq M(l)$.
\end{prop}

  Let us notice that there do exist bounded
  $\ff$-sequences in Banach spaces such that no seminorm resulting
  from Proposition \ref{B-Sp} is a norm. Also we should point out
  that even if the  $\|\cdot\|_* $ is a norm the sequence $(e_n)_{n\in\nn}$
  is not necessarily Schauder basic. In the sequel we shall give sufficient
  conditions for the semimorm to be a norm and later for the sequence $(e_n)_{n\in\nn}$
  to be a Schauder basic or even an unconditional one.
\begin{defn}\label{Definition of spreading model}
   Let   $X$ be a Banach space, $\ff$ be a regular thin family,
   $(x_s)_{s\in\ff}$ be an $\ff$-sequence in $X$ and $M\in[\nn]^\infty$. Let $(E,\|\cdot\|_*)$ be an infinite dimensional
   seminormed linear space with Hamel basis
   $(e_n)_{n\in\nn}$.

  We will say that the $\ff$-subsequence
  $(x_s)_{s\in\ff\upharpoonright M}$ generates  $(e_n)_{n\in\nn}$ as an
  $\ff$-spreading model if the following is satisfied. There exists a null sequence
   $(\delta_n)_{n\in\nn}$ of positive reals
   such that
  \[\Bigg{|}\Big{\|}\sum_{j=1}^k a_j x_{s_j}\Big{\|}-\Big{\|}\sum_{j=1}^k a_j
  e_j\Big{\|}_* \Bigg{|}\leq\delta_l\]
  for every $1\leq k\leq l$, every plegma $k$-tuple $(s_j)_{j=1}^k\in\text{\emph{Plm}}_k(\ff\upharpoonright M)$
  with $s_1(1)\geq M(l)$ and every choice of $a_1,...,a_k\in[-1,1]$.

  We will also say that $(x_s)_{s\in\ff}$ admits $(e_n)_{n\in\nn}$ as an $\ff$-spreading model
   if there exists  $M\in[\nn]^\infty$ such that the subsequence $(x_s)_{s\in\ff\upharpoonright M}$ generates  $(e_n)_{n\in\nn}$ as an
  $\ff$-spreading model.

  Finally, for a subset $A$ of $X$, we will say that $(e_n)_{n\in\nn}$ is
  an $\ff$-spreading model of $A$ if there exists an $\ff$-sequence $(x_s)_{s\in\ff}$ in $A$ which admits $(e_n)_{n\in\nn}$ as
  an $\ff$-spreading model.
\end{defn}
The next remark is straightforward by the above definition.
\begin{rem}\label{remark on the definition of spreading model}
  Let $\ff$ be a regular thin family and $(x_s)_{s\in\ff}$ an $\ff$-sequence in a Banach space $X$.
  Let $M\in[\nn]^\infty$ such that $(x_s)_{s\in\ff\upharpoonright M}$ generates $(e_n)_{n\in\nn}$ as an $\ff$-spreading model. Then the
  following hold.
  \begin{enumerate}
    \item[(i)] The sequence $(e_n)_{n\in\nn}$ is spreading, i.e. for every $n\in\nn$, $k_1<\ldots<k_n$ in
$\nn$ and $a_1,\ldots,a_n\in\rr$ we have that $\|\sum_{j=1}^na_j
e_j\|_*=\|\sum_{j=1}^n a_j e_{k_j}\|_*$.
    \item[(ii)] For every $M'\in[M]^\infty$ we have that $(x_s)_{s\in\ff\upharpoonright M'}$ generates $(e_n)_{n\in\nn}$ as an $\ff$-spreading model.
    \item[(iii)] For every $(\delta_n)_{n\in\nn}$ null sequence of positive reals there exists $M'\in[M]^\infty$ such that
    $(x_s)_{s\in\ff\upharpoonright M'}$ generates $(e_n)_{n\in\nn}$ as an $\ff$-spreading model with respect to $(\delta_n)_{n\in\nn}$.
    \item[(iv)] If $\|\cdot\|,|\|\cdot|\|$ are two equivalent
    norms on $X$, then every $\ff$-spreading model  admitted by
    $(X,\|\cdot\|)$ is equivalent an $\ff$-spreading model
    admitted by $(X,|\|\cdot\||)$.

  \end{enumerate}
\end{rem}
\begin{rem}\label{remark for k spr mod}
  Let $(x_s)_{s\in[\nn]^k}$ be an $[\nn]^k$-sequence in a Banach space $X$ and $M\in[\nn]$. We set $y_s=x_{M(s)}$, for all $s\in[\nn]^k$.
  Then it is straightforward that the $[\nn]^k$-subsequence $(x_s)_{s\in[M]^k}$ generates an $[\nn]^k$-spreading model $(e_n)_{n\in\nn}$ iff
  the $[\nn]^k$-sequence $(y_s)_{s\in[\nn]^k}$ generates $(e_n)_{n\in\nn}$ as an $[\nn]^k$-spreading model.
\end{rem}
\begin{rem}\label{old defn yields new}
  Let   $X$ be a Banach space, $\ff$ be a regular thin family,
   $(x_s)_{s\in\ff}$ be an $\ff$-sequence in $X$ and $M\in[\nn]^\infty$. Let $(E,\|\cdot\|_*)$ be an infinite dimensional
   seminormed linear space with Hamel basis
   $(e_n)_{n\in\nn}$.
   Assume that there exists a null sequence
   $(\delta_n)_{n\in\nn}$ of positive reals
   such that
  \[\Bigg{|}\Big{\|}\sum_{j=1}^l a_j x_{s_j}\Big{\|}-\Big{\|}\sum_{j=1}^l a_j
  e_j\Big{\|}_* \Bigg{|}\leq\delta_l\]
  for every $l\in\nn$, every plegma $l$-tuple $(s_j)_{j=1}^l\in\text{\emph{Plm}}_l(\ff\upharpoonright M)$
  with $s_1(1)\geq M(l)$ and every choice of $a_1,...,a_l\in[-1,1]$.
  Then we may pass to an $L\in[M]^\infty$ such that $(x_s)_{s\in\ff\upharpoonright L}$ generates $(e_n)_{n\in\nn}$ as an $\ff$-spreading model.
  It is enough to set $L=\{M(1+\sum_{j=1}^pj-1):p\in\nn\}$. This set has the property that for every $l\in\nn$ there exist $l-1$ elements
  of $M$ between $L(l)$ and $L(l+1)$. So every plegma $k$-tuple after $L(l)$, with $1\leq k\leq l$, in $\ff\upharpoonright L$ can be extended to
  a plegma $l$-tuple after $L(l)\geq M(l)$ in $\ff\upharpoonright M$.
\end{rem}
Under the Definition \ref{Definition of spreading model} we get the following reformulation of Proposition \ref{B-Sp}.
\begin{thm}
  Let $\ff$ be a regular thin family and $X$ a Banach space. Then every
  bounded $\ff$-sequence in $X$ admits an $\ff$-spreading model.

  In particular for every bounded $\ff$-sequence $(x_s)_{s\in\ff}$ in $X$ and every $N\in[\nn]^\infty$ there exists $M\in[N]^\infty$
  such that $(x_s)_{s\in\ff\upharpoonright M}$ generates $\ff$-spreading model.
\end{thm}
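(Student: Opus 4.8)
The plan is to read off the theorem directly from Proposition \ref{B-Sp}, since its statement is nothing more than the translation of that proposition into the terminology of Definition \ref{Definition of spreading model}. So the work is purely one of matching the two formulations, and no genuinely new argument is needed beyond what is already packaged in Proposition \ref{B-Sp}.

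Concretely, I would fix a bounded $\ff$-sequence $(x_s)_{s\in\ff}$ in $X$, say with $\|x_s\|\leq C$ for all $s\in\ff$, together with an arbitrary $N\in[\nn]^\infty$, and I would choose once and for all a decreasing null sequence $(\delta_n)_n$ of positive reals, for instance $\delta_n=2^{-n}$. Applying Proposition \ref{B-Sp} to this data produces an $M\in[N]^\infty$ and a seminorm $\|\cdot\|_*$ on $c_{00}(\nn)$ under which the natural Hamel basis $(e_n)_n$ is $1$-subsymmetric and such that
\[\bigl|\,\|\textstyle\sum_{j=1}^k a_j x_{s_j}\| - \|\textstyle\sum_{j=1}^k a_j e_j\|_*\,\bigr|\leq \delta_l\]
for every $1\leq k\leq l$, every plegma $k$-tuple $(s_j)_{j=1}^k\in\text{Plm}_k(\ff\upharpoonright M)$ with $s_1(1)\geq M(l)$, and all $a_1,\dots,a_k\in[-1,1]$.

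Finally I would observe that $(c_{00}(\nn),\|\cdot\|_*)$ is an infinite dimensional seminormed linear space with Hamel basis $(e_n)_n$, and that the displayed inequality is \emph{verbatim} the condition in Definition \ref{Definition of spreading model} witnessing that the $\ff$-subsequence $(x_s)_{s\in\ff\upharpoonright M}$ generates $(e_n)_n$ as an $\ff$-spreading model (with respect to the null sequence $(\delta_n)_n$). This immediately gives that $(x_s)_{s\in\ff}$ admits an $\ff$-spreading model, which is the first assertion; and since the $M$ produced above lies in $[N]^\infty$, the ``in particular'' clause follows as well.

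At this level there is essentially no obstacle: the substantial content --- the iterated use of the Ramsey property for plegma families (Proposition \ref{ramseyforplegma}) together with the diagonalization --- is already done inside Proposition \ref{B-Sp}. The only point that requires a moment's care is checking that the range of quantifiers ``$1\leq k\leq l$, plegma $k$-tuples after $M(l)$'' occurring in Proposition \ref{B-Sp} is exactly the one appearing in Definition \ref{Definition of spreading model}; it is, so the identification is legitimate. (Had Proposition \ref{B-Sp} supplied only the weaker estimate for full $l$-tuples, one would instead invoke Remark \ref{old defn yields new} to pass to a suitable further infinite subset of $M$ before concluding.)
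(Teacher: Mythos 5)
Your proposal is correct and matches the paper exactly: the paper offers no separate argument, stating the theorem as a direct reformulation of Proposition \ref{B-Sp} in the language of Definition \ref{Definition of spreading model}, which is precisely the identification you carry out (including the check that the quantifier range ``$1\leq k\leq l$, plegma $k$-tuples after $M(l)$, coefficients in $[-1,1]$'' coincides in both statements). Nothing further is needed.
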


  \section{Spreading models of order $\xi$}
\begin{prop}\label{propxi}
  Let $\ff,\g$ be regular thin families with $o(\ff)\leq o(\g)$. Let $X$ be a
  Banach space and $(x_s)_{s\in\ff}$ be an
  $\ff$-sequence in $X$ which admits an
  $\ff$-spreading model $(e_n)_{n\in\nn}$. Then there exists a $\g$-sequence
  $(w_t)_{t\in\g}$ which admits the same sequence $(e_n)_{n\in\nn}$ as a $\g$-spreading model and
  such that $\{w_t:t\in\g\}\subseteq\{x_s:s\in\ff\}$.
\end{prop}
\begin{proof}
  Let $M\in[\nn]^\infty$ such that $(x_s)_{s\in\ff\upharpoonright M}$ generates $(e_n)_{n\in\nn}$.
  By Lemma \ref{prop plegma preserving maps} there exist $N\in[\nn]^\infty$ and
  $\varphi:\g\upharpoonright N\to\ff\upharpoonright M$ such that
  $L_0(\varphi(t))\sqsubseteq t$, for all $t\in\g\upharpoonright N$.
  Using the map $\varphi$ we define a $\g$-sequence $(w_t)_{t\in\g}$ as follows. For
  every $t\in\g\upharpoonright N$ we set $w_t=x_{\varphi(t)}$ and for every $t\in\g\setminus(\g\upharpoonright N)$
  let $x'_t$ be an arbitrary element of $\{x_s:s\in\ff\}$. We will show that $(w_t)_{t\in\g\upharpoonright N}$
  generates $(e_n)_{n\in\nn}$. Indeed, let
  $(\delta_n)\searrow0$ such that \[\Bigg{|}\Big{\|}\sum_{j=1}^k a_j x_{s_j}\Big{\|}-\Big{\|}\sum_{j=1}^k a_j
  e_j\Big{\|}_* \Bigg{|}\leq\delta_l\]
  for every $1\leq k\leq l$, every plegma $k$-tuple $(s_j)_{j=1}^k$ in    $\ff\upharpoonright M$
  with $s_1(1)\geq M(l)$ and every choice of $a_1,...,a_k\in[-1,1]$. Let $l\in\nn$, $1\leq k\leq l$, $(t_j)_{j=1}^k$
  be a plegma $k$-tuple in $\g\upharpoonright N$ with $t_1(1)\geq N(l)$ and $a_1,...,a_k\in[-1,1]$. Let
  $s_j=\varphi(t_j)\in\ff\upharpoonright M$, for all $1\leq j\leq k$. By Theorem \ref{plegma preserving maps}, we have
   that $(s_j)_{j=1}^k$ is plegma. Moreover  since $N\in[L_0(M)]^\infty$, $s_1(1)\geq M(l)$. Therefore,
  \[\Bigg{|}\Big{\|}\sum_{j=1}^k a_j w_{t_j}\Big{\|}-\Big{\|}\sum_{j=1}^k a_j
  e_j\Big{\|} \Bigg{|} =\Bigg{|}\Big{\|}\sum_{j=1}^k a_j x_{s_j}\Big{\|}-\Big{\|}\sum_{j=1}^k a_j
  e_j\Big{\|}_* \Bigg{|}\leq\delta_l\]
\end{proof}
\begin{cor}\label{equalspr}
  Let $X$ be a Banach space, $A\subseteq X$ and $\ff,\g$ be regular thin families with $o(\ff)=o(\g)$.
  Then
  $(e_n)_{n\in\nn}$ is an $\ff$-spreading model of $A$ iff $(e_n)_{n\in\nn}$
  is a $\g$-spreading model of $A$.
\end{cor}
The above permits us to give the following definition.
\begin{defn}
  Let $A$ be a subset of a Banach space $X$ and $1\leq\xi<\omega_1$
  be a countable ordinal. We will say that $(e_n)_{n\in\nn}$ is a $\xi$-spreading model of $A$ if there
  exists a regular thin family $\ff$ with $o(\ff)=\xi$ such that
  $(e_n)_{n\in\nn}$ is an $\ff$-spreading model of $A$. The set of all $\xi$-spreading models of $A$ will be denoted by
  $\mathcal{SM}_\xi(A)$.
\end{defn}
 Proposition \ref{propxi} also yields the following.
\begin{cor}\label{incresspr}
Let $X$ be a Banach space and $A\subseteq X$. Then
$\mathcal{SM}_\zeta(A)\subseteq \mathcal{SM}_\xi(A)$, for all
$1\leq\zeta<\xi<\omega_1$.
\end{cor}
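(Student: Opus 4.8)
The plan is to read the corollary off Proposition \ref{propxi} almost verbatim. Fix ordinals $1\le\zeta<\xi<\omega_1$ and let $(e_n)_{n\in\nn}\in\mathcal{SM}_\zeta(A)$ be arbitrary. By the definition of $\mathcal{SM}_\zeta(A)$ there is a regular thin family $\ff$ with $o(\ff)=\zeta$ and an $\ff$-sequence $(x_s)_{s\in\ff}$ taking values in $A$ which admits $(e_n)_{n\in\nn}$ as an $\ff$-spreading model.

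Next I would fix a regular thin family $\g$ with $o(\g)=\xi$; the existence of regular thin families of any prescribed countable order is a standard fact (it goes back to the Pudl\'ak--R\"odl analysis of thin families, and concrete instances such as Schreier-type families are recalled in the introduction). Since $\zeta<\xi$ we have $o(\ff)\le o(\g)$, so Proposition \ref{propxi} applies and yields a $\g$-sequence $(x'_t)_{t\in\g}$ with $\{x'_t:t\in\g\}\subseteq\{x_s:s\in\ff\}\subseteq A$ which admits the \emph{same} sequence $(e_n)_{n\in\nn}$ as a $\g$-spreading model. Hence $(e_n)_{n\in\nn}$ is a $\g$-spreading model of $A$, and as $o(\g)=\xi$ this means $(e_n)_{n\in\nn}\in\mathcal{SM}_\xi(A)$. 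Since $(e_n)_{n\in\nn}$ was an arbitrary element of $\mathcal{SM}_\zeta(A)$, the inclusion $\mathcal{SM}_\zeta(A)\subseteq\mathcal{SM}_\xi(A)$ follows.

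There is essentially no obstacle here: the entire mathematical content sits inside Proposition \ref{propxi} (which in turn rests on the plegma canonical maps of Proposition \ref{prop plegma preserving maps}), and the corollary is just the translation of that statement into the language of the ordinal-indexed classes $\mathcal{SM}_\xi$. The only bookkeeping point worth stating explicitly is that one must invoke the availability of a regular thin family of order exactly $\xi$ in order to land back in $\mathcal{SM}_\xi(A)$; once that is granted, the proof is a single application of Proposition \ref{propxi}.
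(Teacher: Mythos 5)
Your proof is correct and matches the paper's (implicit) argument: the corollary is stated in the paper as an immediate consequence of Proposition \ref{propxi}, obtained exactly as you do by fixing a regular thin family $\g$ of order $\xi$ and transferring the $\ff$-sequence in $A$ to a $\g$-sequence with values in $\{x_s:s\in\ff\}\subseteq A$ generating the same spreading model. Your explicit remark about the existence of a regular thin family of each countable order is the only auxiliary fact needed, and it is taken for granted throughout the paper.
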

The above reveals the following problems.
\begin{problem}\label{problem1}
  Let $\xi<\omega_1$.
  Does there exist a Banach space $X$ and
$(e_n)_{n\in\nn}\in\mathcal{SM}_\xi(X)$ such that
    $(e_n)_{n\in\nn}$ is not equivalent to any $(y_n)_{n\in\nn}$ in $\mathcal{SM}_\zeta(X)$ for all
    $\zeta<\xi$?
\end{problem}
In Chapter \ref{separating k l^1 spr mod space}, Problem
\ref{problem1} is  answered affirmatively for all finite  as well
as  all countable limit ordinals.
\begin{problem}\label{problem2}
  Does for every separable Banach space $X$ exist $\xi<\omega_1$ such that
  for every $\zeta>\xi$,
  $\mathcal{SM}_\zeta(X)=\mathcal{SM}_\xi(X)$?
\end{problem}
Problem \ref{problem2} can be also stated in an isomorphic
version, i.e. every sequence in $\mathcal{SM}_\zeta(X)$ is
equivalent to some sequence in $\mathcal{SM}_\xi(X)$  and vice
versa. Any version of Problem \ref{problem2} remains open.

We close this section by giving an example which shows that for
every $\xi<\omega_1$ there exists a Banach space $X_\xi$ such that
its base generates a spreading model $(e_n)_{n\in\nn}$ of order
$\xi$ while it does not admit $(e_n)_{n\in\nn}$ as a spreading
model of order $\zeta$, for all $\zeta<\xi$.

\begin{examp}
\label{example} Let $(e_n)_{n}$ be a normalized spreading and
$1$-unconditional sequence in a Banach space $(E,\|\cdot\|)$ not
equivalent to the usual basis of $c_0$, $1\leq \xi<\omega_1$ and
$\ff$ be a regular thin family of order $\xi$. We denote by
$(e_s)_{s\in\ff}$ the natural Hamel basis of $c_{00}(\ff)$. For
$x\in c_{00}(\ff)$ we set
\[\|x\|_\ff=\sup\Big{\{}\Big\|\sum_{i=1}^lx(s_i)e_i\Big\|:l\in\nn,(s_i)_{i=1}^l\in\text{\emph{Plm}}_l(\ff)\text{ and }l\leq s_1(1)\Big{\}}\]
We set $X_\ff=\overline{(c_{00}(\ff),\|\cdot\|_\ff)}$ and
$A=\{e_s:s\in\ff\}$. It is easy to see that the sequence
$(e_n)_{n\in\nn}$ belongs to $\mathcal{SM}_\xi(A)$. We shall show
that every $(x_n)_{n\in\nn}\in\mathcal{SM}_\zeta(A)$ with
$\zeta<\xi$, which defines a norm, is isometric to the usual basis
of $c_0$.

Indeed let $\zeta<\xi$ and
$(x_n)_{n\in\nn}\in\mathcal{SM}_\zeta(A)$. Then there exists a
regular thin family $\g$ of order $\zeta$, $N\in[\nn]^\infty$ and
a $\g$-subsequence $(y_s)_{s\in\g\upharpoonright N}$ in $A$ which
generates $(x_n)_{n\in\nn}$ as a $\g$-spreading model. Let
$\varphi:\g\upharpoonright N\to\ff$ such that
$y_t=e_{\varphi(t)}$, for all $t\in\g\upharpoonright N$. Since
$(x_n)_{n\in\nn}$ defines a norm, it is easy to see that $\varphi$
is hereditarily nonconstant. By Proposition \ref{lemma making a
hereditary nonconstant function, nonconstant on plegma pairs}
there exists $L\in [N]^\infty$ such that for every plegma pair
$(t_1,t_2)$ in $\ff\upharpoonright L$,
$\varphi(t_1)\neq\varphi(t_2)$. Theorem \ref{non plegma preserving
maps} yields that there exists $M\in[L]^\infty$ such that for
every plegma pair $(t_1,t_2)$ in $\g\upharpoonright M$ neither
$(\varphi(t_1),\varphi(t_2))$, nor $(\varphi(t_2),\varphi(t_1))$
is a plegma pair. Therefore, for every
$(t_j)_{j=1}^k\in\text{\emph{Plm}}(\g\upharpoonright M)$ and
$(s_j)_{j=1}^l\in\text{\emph{Plm}}(\ff)$ we have that
\[\big|\big\{j\in\{1,\ldots,k\}:t_j\in\{s_i:1\leq i\leq l\}\big\}\big|\leq1\]
This easily yields, by the definition of the norm $\|\cdot\|_\xi$,
that
\[\Big\|\sum_{j=1}^ka_jy_{t_j}\Big\|=\Big\|\sum_{j=1}^ka_je_{\varphi(t_j)}\Big\|=\max_{1\leq j\leq
k}|a_j|\] for all $k\in\nn$, $a_1,\ldots,a_k\in\rr$ and
$(t_j)_{j=1}^k\in\text{\emph{Plm}}(\g\upharpoonright M)$. Thus
$(x_n)_{n\in\nn}$ is isometric to the usual basis of $c_0$.
\end{examp}
\chapter{Spreading sequences}
As we have already mentioned every spreading model of any order of
a Banach space is a spreading sequence. In this chapter we will
present some basic results concerning such sequences that we will
need in the sequel. Most of them are well known (see for example
\cite{AK}, \cite{Lid-Tza}). We classify the spreading sequences
into four categories with respect to their norm properties. These
are the trivial, the unconditional, the singular and the non
unconditional Schauder basic spreading sequences. Finally, in the
last section we provide some results concerning spreading
sequences equivalent to the usual basis of $\ell^1$.

We start by recalling the definition of a spreading sequence in
the slightly more general setting of a seminormed space.
\begin{defn}
  Let $(E,\|\cdot\|_*)$ be a seminormed space and $(e_n)_{n\in\nn}$ be a sequence in $E$.
  Then $(e_n)_{n\in\nn}$ will be called spreading if for every $n\in\nn$, $k_1<\ldots<k_n$ in
$\nn$ and $a_1,\ldots,a_n\in\rr$ we have that $\|\sum_{j=1}^na_j
e_j\|_*=\|\sum_{j=1}^n a_j e_{k_j}\|_*$.
\end{defn}
Notice that if $(e_n)_{n\in\nn}$ is spreading  then for every
$n\in\nn$, $a_1,\ldots,a_n\in\rr$, $k_1<\ldots<k_n$ and
$l_1<\ldots<l_n$ in $\nn$, we have that $\|\sum_{j=1}^na_j
e_{k_j}\|_*=\|\sum_{j=1}^n a_j e_{l_j}\|_*$.
\section{Trivial spreading sequences}
\begin{defn}
  Let $(E,\|\cdot\|_*)$ be a seminormed space and $(e_n)_{n\in\nn}$ be a sequence in $E$.
  Then $(e_n)_{n\in\nn}$ will be called \emph{trivial} if
  \begin{equation}\label{eq14}\Big \|\sum_{i=1}^na_ie_i\Big\|_*=\big{|}\sum_{i=1}^na_i \big{|}\cdot
  \Big\|e_1\Big\|_*\end{equation}
  for all $n\in\nn$ and $a_1,\ldots,a_n\in\rr$.
\end{defn}
It is immediate that  every trivial sequence is spreading.
\begin{prop}\label{pdiff}
Let $(E,\|\cdot\|_*)$ be a seminormed space and $(e_n)_{n\in\nn}$
be a spreading  sequence in $E$. Then the following are
equivalent.
\begin{enumerate}
\item[(i)] The sequence $(e_n)_{n\in\nn}$ is trivial. \item[(ii)]
For every $n,m\in\nn$, $\|e_n-e_m\|_*=0$. \item[(iii)] There exist
$n\in\nn$ and $a_1,\ldots,a_n\in\rr$ not all zero, such that
\[\Big\|\sum_{i=1}^na_ie_i\Big\|_*=0\]
\end{enumerate}
\end{prop}
\begin{proof}
  The implication (i)$\Rightarrow$(ii) is immediate. To show the
  converse let $n\in\nn$ and $a_1,\ldots,a_n\in\rr$. Then we have
  that \[\Big\|\sum_{i=2}^na_i(e_i-e_1)\Big\|_*=0\] Therefore
  \[\begin{split}\Big|\sum_{i=1}^na_i\Big|\cdot\|e_1\|_*&=\Big|\sum_{i=1}^na_i\Big|\cdot\|e_1\|_*-\Big\|\sum_{i=2}^na_i(e_i-e_1)\Big\|_*\\
  &\leq \Big\|\sum_{i=1}^na_ie_1-\sum_{i=2}^na_i(e_i-e_1)\Big\|_*\\
  &\leq \Big|\sum_{i=1}^na_i\Big|\cdot\|e_1\|_*+\Big\|\sum_{i=2}^na_i(e_i-e_1)\Big\|_* =\Big|\sum_{i=1}^na_i\Big|\cdot\|e_1\|_*\end{split}\]
  Since $\sum_{i=1}^na_ie_1-\sum_{i=2}^na_i(e_i-e_1)=
  \sum_{i=1}^na_ie_i$, we get that \[\Big\|\sum_{i=1}^na_ie_i\Big\|_*=\Big|\sum_{i=1}^na_i\Big|\cdot\|e_1\|\]

   The implication  (ii)$\Rightarrow$(iii) is straightforward. To show the converse let   $x=\sum_{j=1}^na_je_j$,
   where $n\in\nn$ and $a_1,\ldots,a_n\in\rr$ not all zero, such that $\|x\|_*=0$. Since
   $(e_n)_{n\in\nn}$ is spreading  we may suppose that $a_j\neq 0$ for all $1\leq j\leq  n$.
  Moreover notice that
  \[\|x\|_*=\Big{\|}\sum_{j=1}^{n-1}a_je_j+a_ne_n\Big{\|}_*=\Big{\|}\sum_{j=1}^{n-1}a_je_j+a_ne_{n+1}\Big{\|}_*=0\]
  which yields that
  \[\Big{\|}e_n-e_{n+1}\Big{\|}_*\leq\frac{1}{|a_n|}
  \Bigg{(} \Big{\|}\sum_{j=1}^{n-1}a_je_j+a_ne_n\Big{\|}_*+\Big{\|}\sum_{j=1}^{n-1}a_je_j+a_ne_{n+1}\Big{\|}_* \Bigg{)}=0 \]
  Since $(e_n)_{n\in\nn}$ is spreading, the result follows.
\end{proof}

\begin{cor}\label{condition for a seminorm on a subsymmetric sequence to be a norm}
  Let $(E,\|\cdot\|_*)$ be a
  seminormed linear space and
  $(e_n)_{n\in\nn}$ be a spreading  linearly independent sequence in $E$. Then the following
  are equivalent:
  \begin{enumerate}
   \item[(i)] The sequence $(e_n)_{n\in\nn}$ is trivial.
   \item[(ii)] There exists $x\in <(e_n)_{n\in\nn}>$ such that $x\neq0$ and
   $\|x\|_*=0$. Therefore  the seminorm $\|\cdot\|_*$  is not a
   norm.
  \end{enumerate}
\end{cor}
\begin{cor}\label{nontrivial}
 Let $(E,\|\cdot\|_*)$ be a
  seminormed linear space and
  $(e_n)_{n\in\nn}$ be a spreading sequence in $E$. Then
  the following are equivalent.
   \begin{enumerate}
   \item[(i)] The sequence $(e_n)_{n\in\nn}$ is nontrivial.
   \item[(ii)]  The sequence $(e_n)_{n\in\nn}$ is linearly
 independent and  the seminorm $\|\cdot\|_*$  restricted on $<(e_n)_{n\in\nn}>$ is  a
   norm.
  \end{enumerate}
\end{cor}

\begin{cor}\label{cor trivial in normed spaces}
Let $(E,\|\cdot\|)$ be a normed space and $(e_n)_{n\in\nn}$ be a spreading
sequence in $E$. Then the following are equivalent.
\begin{enumerate}
  \item[(i)] The sequence $(e_n)_{n\in\nn}$ is trivial.
  \item[(ii)] The sequence $(e_n)_{n\in\nn}$ is constant.
  \item[(iii)] The sequence $(e_n)_{n\in\nn}$ is linearly
  dependent.
\end{enumerate}
\end{cor}

\section{Unconditional spreading sequences}
By Corollary \ref{nontrivial} we have that a nontrivial spreading
sequence $(e_n)_{n\in\nn}$  always generates an infinite
dimensional Banach space. In this section we deal with
unconditional spreading sequences.
 The following result is rather well
known but we include its proof for the sake of completeness.
\begin{prop}\label{l1 vs Cezaro summability}
  Let $(e_n)_{n\in\nn}$ be an unconditional and  spreading sequence. Then either $(e_n)_{n\in\nn}$ is equivalent
  to the usual basis of $\ell^1$ or $(e_n)_{n\in\nn}$ is norm Ces\`aro summable to
  0 (i.e.
$\lim \Big\|\frac{1}{n}\sum_{i=j}^n e_j\Big\|= 0$).
\end{prop}
\begin{proof}
  Since $(e_n)_{n\in\nn}$ is unconditional  there exist
  $c>0$
  such that for every $m\in\nn$ and $a_1,\ldots,a_m\in\rr$,
  \[\Big\|\sum_{i=1}^m\ee_ia_ie_i\Big\|\leq c \Big\|\sum_{i=1}^ma_ie_i\Big\|\]
  for all $\ee_1,\ldots,\ee_m\in\{-1,1\}$.
Also since it is spreading and nontrivial there exists $C>0$ such that
$\|e_n\|=C$, for all $n\in\nn$.
  Suppose that the sequence $(e_n)_{n\in\nn}$ is not Ces\`aro
  summable to zero. Then we will show that $(e_n)_{n\in\nn}$ is
  equivalent to the usual basis of $\ell^1$. Indeed, since
  $(e_n)_{n\in\nn}$ is not Ces\`aro summable to zero, there exist
  $\theta>0$ and a strictly increasing sequence of natural numbers
  $(p_n)_{n\in\nn}$ such that
  $\|\frac{1}{p_n}\sum_{i=1}^{p_n}e_i\|>\theta$, for all
  $i\in\{1,\ldots,p_n\}$. Hence for every $n\in\nn$ there exist
  $x^*_n$ of norm 1 such that
  $x^*_n(\frac{1}{p_n}\sum_{i=1}^{p_n}e_i)>\theta$. For every
  $n\in\nn$, we set $I_n=\{1,\ldots,p_n\}$ and $A_n=\{i\in I_n:\;
  x^*_n(e_i)>\frac{\theta}{2}\}$. Hence for every $n\in\nn$, we
  have that
  \[\begin{split}
    \theta&<x^*_n\Big(\frac{1}{p_n}\sum_{i\in I_n}e_i\Big)
    =\frac{1}{p_n}x^*_n\Big(\sum_{i\in A_n}e_i\Big)
    +\frac{1}{p_n}x^*_n\Big(\sum_{i\in I_n\setminus A_n}e_i\Big)\\
    &\leq\frac{1}{p_n}|A_n|C+\frac{\theta}{2}
  \end{split}\]
  Hence $|A_n|\geq \frac{\theta}{2C}p_n\to\infty$. Let
  $m\in\nn$ and $a_1,\ldots,a_m\in\rr$. Choose $n_0\in\nn$ such that
  $|A_{n_0}|\geq m$. Then
  \[\begin{split}
     C\sum_{i=1}^m|a_i|\geq\Big\|\sum_{i=1}^m a_i e_i\Big\|&
    \geq \frac{1}{c}\Big\|\sum_{i=1}^m|a_i|e_i\Big\|
    = \frac{1}{c}\Big\|\sum_{i=1}^m|a_i|e_{A_{n_0}(i)}\Big\|\\
    &\geq \frac{1}{c}\cdot
    x^*_n\Big(\sum_{i=1}^m|a_i|e_{A_{n_0}(i)}\Big)
    \geq \frac{\theta}{2c} \sum_{i=1}^m|a_i|
  \end{split}\]
\end{proof}
For the proof of the next proposition we refer the reader to
\cite{AK}.
\begin{prop}\label{propuncknown}
  Let $(e_n)_{n\in\nn}$ be a nontrivial spreading sequence.
  If $(e_n)_{n\in\nn}$ is weakly null, then
  $(e_n)_{n\in\nn}$ is 1-unconditional.
\end{prop}
\begin{cor}\label{equiv forms for 1-subsymmetric weakly null}
  Let $(e_n)_{n\in\nn}$ be a nontrivial spreading sequence. Then the following are equivalent.
  \begin{enumerate}
    \item[(i)] The sequence $(e_n)_{n\in\nn}$ is weakly null.
    \item[(ii)] The sequence $(e_n)_{n\in\nn}$ is Ces\'aro
  summable to zero.
  \item[(iii)] The sequence $(e_n)_{n\in\nn}$ is unconditional and
  not equivalent to the usual basis of $\ell^1$.
  \end{enumerate}
\end{cor}
\section{Singular spreading sequences}
A nontrivial spreading sequence which is not a Schauder basis of
the space that it generates will be called \emph {singular}. In
this section we will study the structure of singular sequences. To
this end, some well known results from Banach space theory will be
needed. The first one is  that a weakly convergent Schauder basic
sequence is  weakly null. Notice that this yields that a sequence
which is nontrivial, spreading and weakly convergent to a nonzero
element, is also singular. The second one is that every nontrivial
weak-Cauchy sequence
 (i.e. $x_n\stackrel{w^*}{\to}x^{**}$ with $x^{**}\in X^{**}\setminus X$) contains a Schauder basic subsequence
 (see the proof of Proposition 2.2 in \cite{Ro2}). Finally,
 we will need Rosenthal's $\ell^1$ theorem  \cite{Ro} stating that
 every bounded sequence in a Banach space contains a subsequence which is either equivalent to the usual basis of $\ell^1$ or  weak-Cauchy.

We start with the next lemma.
\begin{lem}\label{lem decomp of subsym weak convergent seq}
  Let $(e_n)_{n\in\nn}$ be a nontrivial and spreading sequence and $E$ be the Banach space generated by  $(e_n)_{n\in\nn}$.
  Suppose that  $(e_n)_{n\in\nn}$ is weakly  convergent to some element $e\in E$ and let $e'_n=e_n-e$, for all $n\in\nn$.
  Then the sequence $(e'_n)_{n\in\nn}$ is  nontrivial, spreading, $1$-unconditional and Ces\'aro summable to zero.
\end{lem}
\begin{proof}
  Since $\|e_n-e_m\|=\|e'_n-e'_m\|$, by Proposition \ref{pdiff} we have  that $(e'_n)_{n\in\nn}$ is nontrivial. To  show that  $(e'_n)_{n\in\nn}$ is spreading, let
$n\in\nn$, $\lambda_1,...,\lambda_n\in\rr$ and $k_1<...<k_n$
in $\nn$.

If $\sum_{i=1}^n\lambda_i=0$, then we have that
\[\sum_{i=1}^n\lambda_ie_i=\sum_{i=1}^n\lambda_ie'_i\;\text{and}\;\sum_{i=1}^n\lambda_ie_{k_i}=\sum_{i=1}^n\lambda_ie'_{k_i}\]
Therefore, since $(e_n)_{n\in\nn}$ is spreading, we have that
\[\Big\|\sum_{i=1}^n\lambda_ie'_i\Big\|=\Big\|\sum_{i=1}^n\lambda_ie_i\Big\|=
\Big\|\sum_{i=1}^n\lambda_ie_{k_i}\Big\|=\Big\|\sum_{i=1}^n\lambda_ie'_{k_i}\Big\|\]
Hence for every $n\in\nn$ and $\lambda_1,\ldots,\lambda_n$ with
$\sum_{i=1}^n\lambda_i=0$ we have that
\[\Big\|\sum_{i=1}^n\lambda_ie'_{k_i}\Big\|=\Big\|\sum_{i=1}^n\lambda_ie'_{l_i}\Big\|\]
for all $k_1<\ldots<k_n$ and $l_1<\ldots<l_n$.

Generally let $\sum_{i=1}^n\lambda_i=\lambda$. Since
$(e'_n)_{n\in\nn}$ is weakly null we may choose a convex block
subsequence $(w_m)_{m\in\nn}$ of $(e'_n)_{n\in\nn}$ which norm
converges to zero. Let $m_0\in\nn$ be such that
$k_n<\text{supp}(w_m)$ for all $m\geq m_0$. Then by the above case
we have that for all $m\geq m_0$,
\[\Big\|\sum_{i=1}^n\lambda_ie'_i-\lambda w_m\Big\|=\Big\|\sum_{i=1}^n\lambda_ie'_{k_i}-\lambda
w_m\Big\|\] and therefore by taking limits we get that
\[\Big\|\sum_{i=1}^n\lambda_ie'_i\Big\|=\Big\|\sum_{i=1}^n\lambda_ie'_{k_i}\Big\|\]
Hence the sequence   $(e'_n)_{n\in\nn}$ is spreading. Since $(e_n)_{n\in\nn}$ is weakly null it is  Ces\'aro summable to zero and   by Proposition \ref{propuncknown},
it is $1$-unconditional.
\end{proof}
\begin{prop}\label{decomp of singular spr mod}
Let $(e_n)_{n\in\nn}$ be a singular sequence and let $E$ be the
Banach space generated by $(e_n)_{n\in\nn}$. Then there is $e\in
E\setminus \{0\}$ such that $(e_n)_{n\in\nn}$ is weakly convergent
to $e$. Moreover setting $e'_n=e_n-e$, we have that
$(e'_n)_{n\in\nn}$ is nontrivial, spreading, $1$-unconditional and
Ces\`aro summable to zero.
\end{prop}
\begin{proof} Since $(e_n)_{n\in\nn}$ is equivalent to all its subsequences and
it is not Schauder basic, we have that it cannot contain a
Schauder basic subsequence. Hence  $(e_n)_{n\in\nn}$ cannot also
contain  a non trivial weak-Cauchy subsequence. Moreover, for the
same reasons, it cannot contain a weakly null subsequence. By
Rosenthal's $\ell^1$-theorem and taking into account the above
remarks, we conclude that $(e_n)_{n\in\nn}$ is weakly convergent
to a non zero element $e\in E$. Hence by Lemma \ref{lem decomp of
subsym weak convergent seq}, $(e'_n)_{n\in\nn}$ is nontrivial,
spreading $1$-unconditional and Ces\'aro summable to
zero.
\end{proof}
By the above we have the following.
\begin{cor}\label{thmsingular}
  Let $(e_n)_{n\in\nn}$ be a nontrivial and spreading sequence. Then the following are equivalent.
  \begin{enumerate}
    \item[(i)] The sequence $(e_n)_{n\in\nn}$ is singular.
    \item[(ii)] The sequence $(e_n)_{n\in\nn}$ is weakly convergent to a nonzero element.
  \end{enumerate}
\end{cor}
The above decomposition of a singular sequence $(e_n)_{n\in\nn}$ as $e_n=e'_n+e$, where $e$ is the weak limit of $(e_n)_{n\in\nn}$ will be called the \emph{natural decomposition} of $(e_n)_{n\in\nn}$. The next proposition describes the relation of the spaces generated by the sequences $(e_n)_{n\in\nn}$ and $(e'_n)_{n\in\nn}$.
\begin{prop}\label{singular splitted}
  Let $(e_n)_{n\in\nn}$ be a singular sequence  and $e_n=e'_n+e$ be  the natural decomposition of $(e_n)_{n\in\nn}$.
   Let $E$ (resp. $E'$) be the Banach space generated by $(e_n)_{n\in\nn}$ (resp. $(e'_n)_{n\in\nn}$). Then $E=E'\oplus<e>$.
   In particular there exists a norm one projection $P:E\to <e>$ such that for every $z=\sum_{j=1}^na_je_j\in <(e_n)_{n\in\nn}> $,
    $P(z)=\sum_{j=1}^na_je$.
\end{prop}
\begin{proof} By Corollary \ref{nontrivial}, the sequence $(e_n)_{n\in\nn}$ is linearly independent. Therefore we may
define the map $P':<(e_n)_{n\in\nn}>\to<e>$ by
$P'(\sum_{i=1}^na_ie_i)=\sum_{i=1}^na_ie$. Clearly $P'$ is linear.
We will show that $\|P'\|\leq1$. Indeed, let $z
\in<(e_n)_{n\in\nn}>$, $z=\sum_{i=1}^na_ie_i$. For every
$k\in\nn$, we set $z_k=\sum_{i=1}^na_ie_{k+j}$.
  Since $(e_n)_{n\in\nn}$ is spreading, we have that $\|z_k\|=\|z\|$, for all
  $k\in\nn$. Moreover, since $e_n\stackrel{w}{\to}e$, we have also
  that $z_k\stackrel{w}{\to}P'(z)$. Hence
  \[\|P'(z)\|\leq\liminf_{k\to\infty}\|z_k\|=\|z\|\] Hence $\|P'\|\leq 1$.

   Since $<(e_n)_{n\in\nn}>$ is dense in $E$, the operator $P'$ has a unique linear   extension
   $P:E\to<e>$  with $\|P\|\leq1$. Moreover, since $e_n\stackrel{w}{\to}e$, we have that $e=P'(e_n)=P(e_n)\stackrel{w}{\to}P{e}$
    and therefore $P(e)=e$. The latter yields that $P$ is a norm one projection.

To complete the proof we need to show that
     $\text{ker}P=E'$. First notice that
      for every $z=\sum_{i=1}^ka_ie'_i\in<(e'_n)_{n\in\nn}>$, we have that
  \[P(z)=P\Big(\sum_{i=1}^ka_ie'_i\Big)=P\Big(\sum_{i=1}^ka_ie_i\Big)-P\Big(\sum_{i=1}^ka_ie\Big)
  =\sum_{i=1}^ka_ie-\sum_{i=1}^ka_ie=0\]
  Hence  $E'\subseteq \text{ker}P$.

   To show the converse inclusion  let $z\in E$ with  $P(z)=0$. Choose  a sequence $(z_k)_{k\in\nn}$ in $<(e_n)_{n\in\nn}>$
   which norm converges  to $z$.  By the definition of $P'$, it is easy to see
  that  for every $w\in <(e_n)_{n\in\nn}>$,  $w-P(w)\in E'$.  Therefore  for all $k\in\nn$,  $z_k-P(z_k)\in E'$ and since $(z_k-P(z_k))_{k\in\nn}$
norm converges to $z$, we have that $z\in E'$. Hence $\text{ker}P\subseteq E'$, which completes the proof.
\end{proof}
\begin{cor}
  \label{isomorphy_between_E_and_E'}
  Let $(e_n)_{n\in\nn}$ be a singular sequence  and $e_n=e'_n+e$ be  the natural decomposition of $(e_n)_{n\in\nn}$.
   Let $E$ (resp. $E'$) be the Banach space generated by $(e_n)_{n\in\nn}$ (resp. $(e'_n)_{n\in\nn}$). Then $E$ and $E'$ are isomorphic.
\end{cor}

\section{Non unconditional Schauder basic spreading sequences}
\begin{prop} Let $(e_n)_{n\in\nn}$ be a spreading nontrivial
sequence and $E$ be the Banach space generated by $(e_n)_{n\in\nn}$.
Then the following are equivalent.
\begin{enumerate}
  \item[(i)] The sequence $(e_n)_{n\in\nn}$ is  a non
  unconditional Schauder basis of $E$.
  \item[(ii)] The sequence $(e_n)_{n\in\nn}$ is nontrivial weak-Cauchy.
\end{enumerate}
\end{prop}
\begin{proof}
$(i)\Rightarrow (ii)$: Since $(e_n)_{n\in\nn}$ is Schauder basic,
by Corollary \ref{thmsingular} we have that $(e_n)_{n\in\nn}$ cannot
contain a weakly convergent subsequence to a non zero element of
$E$. Moreover by Proposition \ref{propuncknown},  we have that
every subsequence of $(e_n)_{n\in\nn}$ is not weakly convergent to
zero. Hence by Rosenthal's theorem we have that $(e_n)_{n\in\nn}$
contains a non trivial weak-Cauchy subsequence. Since
$(e_n)_{n\in\nn}$ is equivalent to all its subsequences we get
that the whole sequence $(e_n)_{n\in\nn}$ is  non trivial weak-Cauchy.

$(ii)\Rightarrow (i)$: Since $(e_n)_{n\in\nn}$ is non trivial weak-Cauchy we have that it contains a Schauder basic subsequence and
hence  $(e_n)_{n\in\nn}$  is Schauder basic. Moreover, since it is
non trivial weak-Cauchy, it is not equivalent to the standard
basis of $\ell^1$ and  by Corollary \ref{equiv forms for
1-subsymmetric weakly null}, $(e_n)_{n\in\nn}$ cannot be
unconditional.
\end{proof}
\begin{rem}
  Since every non unconditional  Schauder basic spreading sequence is nontrivial
  weak-Cauchy, by Proposition 2.2 in \cite{Ro2}, we have that every such   sequence dominates the summing basis of $c_0$.
\end{rem}
\section{Splitting $\ell^1$ spreading sequences}
In this section we study some stability properties of spreading
sequences in seminormed spaces which are actually related to the
non distortion of $\ell^1$ (c.f. \cite{J2}).
\begin{prop}\label{propsplitl1}
  Let $(E,\|\cdot\|_\circ),(E_1,\|\cdot\|_*),(E_2,\|\cdot\|_{**})$ be seminormed spaces. Let   $(e_n)_{n\in\nn},(e_n^1)_{n\in\nn}$ and $(e_n^2)_{n\in\nn}$ be spreading sequences in $E, E_1$ and $E_2$ respectively. Suppose that
  $(e_n)_{n\in\nn}$ admits lower $\ell^1$-estimate with constant $c>0$ (i.e. for every $n\in\nn$ and $a_1,\ldots,a_n\in\rr$ we have that
  $c\sum_{i=1}^n |a_i|\leq \|\sum_{i=1}^n a_i e_i \|_\circ$).
  Also assume that
  \[\|\sum_{i=1}^na_ie_i\|_\circ\leq \|\sum_{i=1}^na_ie_i^1\|_*+\|\sum_{i=1}^na_ie_i^2\|_{**}\]
  for all $n\in\nn$ and $a_1,\ldots,a_n\in\rr$.
  If $(e_n^2)_{n\in\nn}$
  does not admit a lower $\ell^1$-estimate then $(e_n^1)_{n\in\nn}$ admits a lower $\ell^1$-estimate with constant $c$.
\end{prop}
\begin{proof}
  Suppose on the contrary that $(e_n^1)_{n\in\nn}$ does not admit a lower $\ell^1$-estimate with constant $c$. Hence
   there exist $n\in\nn$ and $a_1,\ldots,a_n\in\rr$ with $\sum_{i=1}^n|a_i|=1$ such that $\|\sum_{i=1}^n a_i e_i^1\|_*<c-\varepsilon$,
    for some $\varepsilon >0$. Since  $(e_n^1)_{n\in\nn}$ is spreading, we have that for every $k_1<\ldots<k_n$ in $\nn$
  \[\|\sum_{i=1}^n a_i e_{k_i}^1\|_*<c-\varepsilon\]
  Since $(e_n^2)_{n\in\nn}$ does not admit a lower $\ell^1$-estimate, there exist $m\in\nn$ and $b_1,\ldots,b_m\in\rr$ such that
   $\sum_{j=1}^m|b_j|=1$ and
  $\| \sum_{j=1}^mb_j e_j^2 \|_{**}<\frac{\varepsilon}{2}$. Similarly since  $(e_n^2)_{n\in\nn}$ is spreading we have that
  for every  $k_1<\ldots<k_m$ in $\nn$
  \[\| \sum_{j=1}^mb_j e_{k_j}^2 \|_{**}<\frac{\varepsilon}{2}\]
  So we have the following two inequalities
  \[\begin{split}
    \Big{\|}\sum_{i=1}^n\sum_{j=1}^m a_i\cdot b_j e^1_{(i-1)m +j}\Big{\|}_*& \leq \sum_{j=1}^m |b_j|\Big{\|}
     \sum_{i=1}^na_i e^1_{(i-1)m +j}\Big{\|}_*
    < \sum_{j=1}^m |b_j|\cdot(c-\varepsilon)=c-\varepsilon
  \end{split}\]
  and
  \[\begin{split}
    \Big{\|}\sum_{i=1}^n\sum_{j=1}^m a_i\cdot b_j e^2_{(i-1)m +j}\Big{\|}_{**}& \leq
     \sum_{i=1}^n|a_i|\Big{\|}\sum_{j=1}^m b_j  e^2_{(i-1)m +j}\Big{\|}_{**}
    < \sum_{j=1}^m |b_j|\frac{\varepsilon}{2}=\frac{\varepsilon}{2}
  \end{split}\]
  By the assumptions of the proposition we get that
  \[\begin{split}\Big{\|}\sum_{i=1}^n\sum_{j=1}^m a_i\cdot b_j e_{(i-1)m +j}\Big{\|}_\circ
  &\leq  \Big{\|}\sum_{i=1}^n\sum_{j=1}^m a_i\cdot b_j e^1_{(i-1)m +j}\Big{\|}_*\\
  &+ \Big{\|}\sum_{i=1}^n\sum_{j=1}^m a_i\cdot b_j e^2_{(i-1)m +j}\Big{\|}_{**}
  <c-\frac{\varepsilon}{2}\end{split}\]
 which since $\sum_{i=1}^n\sum_{j=1}^m |a_i|\cdot|b_j|=1$, contradicts that $(e_n)_{n\in\nn}$ admits
  a lower $\ell^1$-estimate with constant $c$.
\end{proof}
In connection with the spreading models we have the following.
\begin{cor}\label{ultrafilter property for ell^1 spreading models}
  Let $\ff$ be a regular thin family and $(x_s)_{s\in\ff},(x_s^1)_{s\in\ff},(x_s^2)_{s\in\ff}$
  be three $\ff$-sequences in a Banach space $X$ such that  for all $s\in\ff$,
  $x_s=x_s^1+x_s^2$.
   Let $M\in[\nn]^\infty$ and
  $(e_n)_{n\in\nn},(e_n^1)_{n\in\nn},(e_n^2)_{n\in\nn}$ generated by $(x_s)_{s\in\ff\upharpoonright M},(x_s^1)_{s\in\ff\upharpoonright
  M}$ and
  $(x_s^2)_{s\in\ff\upharpoonright M}$ respectively
  as $\ff$-spreading models. Suppose that
  $(e_n)_{n\in\nn}$ admits a lower $\ell^1$-estimate with constant $c>0$. If $(e_n^2)_{n\in\nn}$
  does not admit a lower $\ell^1$-estimate then $(e_n^1)_{n\in\nn}$ admits a lower $\ell^1$-estimate with constant $c$.
\end{cor}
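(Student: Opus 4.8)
The plan is to deduce the corollary directly from the preceding Proposition. Take for $(X,\|\cdot\|_\circ)$, $(X_1,\|\cdot\|_*)$ and $(X_2,\|\cdot\|_{**})$ the seminormed spaces with Hamel bases $(e_n)_{n\in\nn}$, $(e_n^1)_{n\in\nn}$ and $(e_n^2)_{n\in\nn}$ respectively. By Remark \ref{remark on the definition of spreading model}(i) each of these three bases is $1$-subsymmetric with respect to the corresponding seminorm, and by hypothesis $(e_n)_{n\in\nn}$ admits a lower $\ell^1$-estimate with constant $c$. Thus the only hypothesis of the Proposition that still needs to be checked is the inequality
\[\Big\|\sum_{i=1}^n a_i e_i\Big\|_\circ\leq\Big\|\sum_{i=1}^n a_i e_i^1\Big\|_*+\Big\|\sum_{i=1}^n a_i e_i^2\Big\|_{**}\]
for all $n\in\nn$ and $a_1,\ldots,a_n\in\rr$. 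Granting this, the Proposition applied to the sequence $(e_n^2)_{n\in\nn}$, which by assumption does not admit a lower $\ell^1$-estimate, yields immediately that $(e_n^1)_{n\in\nn}$ admits a lower $\ell^1$-estimate with constant $c$, which is the conclusion.

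It remains to verify the displayed inequality. First I would normalize the data: by Remark \ref{remark on the definition of spreading model}(ii),(iii) there exist $M'\in[M]^\infty$ and a single null sequence $(\delta_l)_{l\in\nn}$ of positive reals such that $(x_s)_{s\in\ff\upharpoonright M'}$, $(x_s^1)_{s\in\ff\upharpoonright M'}$ and $(x_s^2)_{s\in\ff\upharpoonright M'}$ generate, with respect to $(\delta_l)_{l\in\nn}$, the sequences $(e_n)_{n\in\nn}$, $(e_n^1)_{n\in\nn}$ and $(e_n^2)_{n\in\nn}$ respectively. Now fix $n\in\nn$ and $a_1,\ldots,a_n\in\rr$; by positive homogeneity of the seminorms we may and do assume $a_i\in[-1,1]$ for all $i$. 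For each $l\geq n$ pick a plegma $n$-tuple $(s_j)_{j=1}^n$ in $\ff\upharpoonright M'$ with $s_1(1)\geq M'(l)$ (such $n$-tuples exist for every $l$ since $\ff$ is regular thin). Using $x_{s_j}=x_{s_j}^1+x_{s_j}^2$ and the triangle inequality in $X$ we get
\[\Big\|\sum_{j=1}^n a_j x_{s_j}\Big\|\leq\Big\|\sum_{j=1}^n a_j x_{s_j}^1\Big\|+\Big\|\sum_{j=1}^n a_j x_{s_j}^2\Big\|,\]
and combining this with the three defining estimates of the $\ff$-spreading models gives
\[\Big\|\sum_{j=1}^n a_j e_j\Big\|_\circ\leq\Big\|\sum_{j=1}^n a_j e_j^1\Big\|_*+\Big\|\sum_{j=1}^n a_j e_j^2\Big\|_{**}+3\delta_l.\]
Letting $l\to\infty$ establishes the inequality for coefficients in $[-1,1]$, and rescaling extends it to arbitrary reals.

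The whole argument is routine bookkeeping, and I do not anticipate a genuine obstacle. The only steps deserving a little care are the simultaneous choice of the infinite set $M'$ and of a common null sequence $(\delta_l)_{l\in\nn}$ controlling all three $\ff$-subsequences, and the elementary reduction, via homogeneity, to coefficients lying in $[-1,1]$.
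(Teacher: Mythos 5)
Your proposal is correct and is exactly the argument the paper intends: the corollary is stated as an immediate consequence of the preceding splitting proposition, and your verification of its hypothesis (the triangle-inequality estimate obtained from $x_s=x_s^1+x_s^2$ along plegma tuples, with a common null error sequence and a limit as $l\to\infty$) is the natural bookkeeping the authors leave to the reader. One tiny simplification: you do not even need to pass to a subset $M'$, since taking $\delta_l$ to be the maximum of the three error sequences already works on $M$ itself.
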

\begin{proof} Let  $(E,\|\cdot\|_\circ),(E_1,\|\cdot\|_*),(E_2,\|\cdot\|_{**})$ be the seminormed spaces with Hamel bases  $(e_n)_{n\in\nn},(e_n^1)_{n\in\nn}$ and $(e_n^2)_{n\in\nn}$ respectively.
  Let $n\in\nn$ and  $a_1,\ldots,a_n\in\rr$. Then for every $(s_j)_{j=1}^n\text{\emph{Plm}}(\ff\upharpoonright M)$, we have that
  \[\Big\|\sum_{j=1}^n a_jx_{s_j}\Big\|\leq \Big\|\sum_{j=1}^n a_jx^1_{s_j}\Big\|+ \Big\|\sum_{j=1}^n a_jx^2_{s_j}\Big\|\] This easily yields that
  $\displaystyle\Big\|\sum_{j=1}^n a_je_j\Big\|_\circ\leq \Big\|\sum_{j=1}^n a_je^1_j\Big\|_*+\Big\|\sum_{j=1}^n a_je_j\Big\|_{**}$. The result now  follows by Proposition \ref{propsplitl1}.
\end{proof}
Corollary \ref{ultrafilter property for ell^1 spreading models}
also yields the following.
\begin{cor}\label{ultrafilter property for ell^1 spr mod simplified}
  Let $\ff$ be a regular thin family, $x\in X$ and $(x_s)_{s\in\ff},(x'_s)_{s\in\ff}$
  be two $\ff$-sequences in a Banach space $X$ such for all $s\in\ff$,
  $x_s=x'_s+x$.
   Let $M\in[\nn]^\infty$ and
  $(e_n)_{n\in\nn},(e'_n)_{n\in\nn}$ generated by $(x_s)_{s\in\ff\upharpoonright M}$ and
  $(x'_s)_{s\in\ff\upharpoonright M}$ respectively
  as $\ff$-spreading models. Then $(e_n)_{n\in\nn}$ is equivalent to the usual basis of $\ell^1$ if and only
   if $(e'_n)_{n\in\nn}$ is equivalent to the usual basis of $\ell^1$.
\end{cor}

\chapter{$\ff$-sequences in topological spaces}
In this chapter we deal with $\ff$-sequences in topological
spaces. In the first two sections we extend the classical
definition of convergent and Cauchy sequences and we present some
related results. In the third section we proceed to the study of
$\ff$-sequences with relatively compact metrizable range.
Specifically, the concept of the \emph{subordinated}
$\ff$-sequence is introduced in order to capture the following
fact: For every map $\varphi:\ff\to (X,\mathcal{T})$, with
$\overline{\varphi[\ff]}$ compact metrizable, and for every
$M\in[\nn]^\infty$ there exist $L\in[M]^\infty$ and a continuous
extension $\widehat{\varphi}:\widehat{\ff}\upharpoonright
L\to(X,\mathcal{T})$.

\section{Convergence of $\ff$-sequences}
\begin{defn}\label{defn convergence of f-sequences}
  Let $(X,\ttt)$ be a topological space, $\ff$ a regular thin family, $M\in[\nn]^\infty$, $x_0\in X$ and
  $(x_s)_{s\in\ff}$ an $\ff$-sequence in $X$. We will say that the
  $\ff$-subsequence $(x_s)_{s\in\ff\upharpoonright M}$ converges
  to $x_0$ if for every $U\in\ttt$ with $x_0\in U$ there exists
  $m\in \nn$ such that for every $s\in \ff\upharpoonright M$ with
  $\min s\geq M(m)$ we have that $x_s\in U$.
\end{defn}
\begin{rem}\label{rem on convergence in topological spaces}$\;$
\begin{enumerate}
  \item [(i)] It is immediate that if an $\ff$-subsequence
  $(x_s)_{s\in\ff\upharpoonright M}$ in a topological space is
  convergent to some $x_0$, then every further $\ff$-subsequence
  is also convergent to $x_0$.
  \item [(ii)] It is also easy to see that for families $\ff$ with
  $o(\ff)\geq 2$,  the convergence of an $\ff$-subsequence
  $(x_s)_{s\in\ff\upharpoonright M}$ does not in general imply the
  corresponding relative compactness of its range.
\end{enumerate}
\end{rem}
\begin{prop}\label{arxi metaforas gia ff sequences}
  Let $(X,\ttt_X)$, $(Y,\ttt_Y)$ be two topological spaces and
  $f:Y\to X$ be a continuous map. Let $\ff$ be a regular thin
  family, $M\in[\nn]^\infty$ and $(y_s)_{s\in\ff}$ an
  $\ff$-sequence in $Y$. Suppose that the $\ff$-subsequence $(y_s)_{s\in\ff\upharpoonright
  M}$ is convergent to some $y\in Y$. Then the $\ff$-subsequence $(f(y_s))_{s\in\ff\upharpoonright
  M}$ is convergent to $f(y)$.
\end{prop}
\begin{proof}
  Let $U_X\in\ttt_X$, with $f(y)\in U_X$. By the continuity of $f$
  there exists $U_Y\in\ttt_Y$ such that $y\in U_Y$ and $f[U_Y]\subseteq
  U_X$. Since $(y_s)_{s\in\ff\upharpoonright
  M}$ is convergent to $y$, there exists $m\in\nn$ such that for every $s\in \ff\upharpoonright M$ with
  $\min s\geq M(m)$ we have that $y_s\in U_Y$ and therefore $f(y_s)\in f[U_Y]\subseteq
  U_X$.
\end{proof}

\begin{defn}\label{defn Chauchy of f-sequences}
  Let $(X,\rho)$ be a metric space, $\ff$ a regular thin family, $M\in[\nn]^\infty$ and
  $(x_s)_{s\in\ff}$ an $\ff$-sequence in $Y$. We will say that the
  $\ff$-subsequence $(x_s)_{s\in\ff\upharpoonright M}$ is Cauchy
  if for every $\ee>0$ there exists $m\in\nn$ such that for every
  $s_1,s_2\in\ff\upharpoonright M$ with $\min s_1,\min s_2\geq
  M(m)$, we have that $\rho(x_{s_1},x_{s_2})<\ee$.
\end{defn}

\begin{prop}\label{remark on ff Cauchy-convergent sequences and their spreading model}
  Let $M\in[\nn]^\infty$,  $\ff$ be a regular thin family and  $(x_s)_{s\in\ff}$ be an $\ff$-sequence
  in a complete metric space $(X,\rho)$. Then  the $\ff$-subsequence $(x_s)_{s\in\ff\upharpoonright M}$ is Cauchy
  if and only if $(x_s)_{s\in\ff\upharpoonright M}$ is  convergent.
\end{prop}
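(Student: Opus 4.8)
The plan is to establish the two implications separately. The forward implication is a routine triangle-inequality argument: if $(x_s)_{s\in\ff\upharpoonright M}$ converges to some $x_0\in X$, then given $\ee>0$ I would apply Definition \ref{defn convergence of f-sequences} to the open neighborhood $\{z\in X:\rho(z,x_0)<\ee/2\}$ of $x_0$ to obtain $m\in\nn$ such that $\rho(x_s,x_0)<\ee/2$ for every $s\in\ff\upharpoonright M$ with $\min s\geq M(m)$. For any two such sets $s_1,s_2$ the triangle inequality then gives $\rho(x_{s_1},x_{s_2})\leq\rho(x_{s_1},x_0)+\rho(x_0,x_{s_2})<\ee$, so $(x_s)_{s\in\ff\upharpoonright M}$ is Cauchy.

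The content lies in the converse. Assuming $(x_s)_{s\in\ff\upharpoonright M}$ is Cauchy, the first step is to manufacture a candidate limit out of an honest sequence. Since $\ff$ is regular thin it is large in $\nn$ (Remark \ref{Galvin Pricley for regular thin}), hence large in $M$; thus, applying largeness to $\{m\in M:m\geq M(n)\}\in[M]^\infty$ for each $n\in\nn$, I can pick $s_n\in\ff\upharpoonright M$ with $\min s_n\geq M(n)$, so in particular $\min s_n\to\infty$. The Cauchy property of the $\ff$-subsequence then forces $(x_{s_n})_{n}$ to be an ordinary Cauchy sequence in $(X,\rho)$: given $\ee>0$, choose $m\in\nn$ from Definition \ref{defn Chauchy of f-sequences}; for $n_1,n_2\geq m$ one has $\min s_{n_1},\min s_{n_2}\geq M(m)$, whence $\rho(x_{s_{n_1}},x_{s_{n_2}})<\ee$. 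By completeness of $X$ there is $x_0\in X$ with $x_{s_n}\to x_0$.

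It then remains to check that the full $\ff$-subsequence $(x_s)_{s\in\ff\upharpoonright M}$ converges to this $x_0$. Given an open neighborhood $U$ of $x_0$, I would fix $\ee>0$ with $\{z\in X:\rho(z,x_0)<\ee\}\subseteq U$, take $m\in\nn$ witnessing the Cauchy property for $\ee/2$, and choose $n\geq m$ with $\rho(x_{s_n},x_0)<\ee/2$. For every $s\in\ff\upharpoonright M$ with $\min s\geq M(m)$, using that $\min s_n\geq M(n)\geq M(m)$ as well, the triangle inequality yields $\rho(x_s,x_0)\leq\rho(x_s,x_{s_n})+\rho(x_{s_n},x_0)<\ee/2+\ee/2=\ee$, i.e. $x_s\in U$. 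This is exactly the convergence condition of Definition \ref{defn convergence of f-sequences}. I do not expect a genuine obstacle; the only point that is not completely formal is the availability of members of $\ff\upharpoonright M$ with arbitrarily large minimum, which is precisely largeness of the regular thin family $\ff$.
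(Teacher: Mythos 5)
Your proposal is correct and follows essentially the same route as the paper: the forward direction is the same triangle-inequality argument, and for the converse the paper likewise extracts a sequence $(s_n)$ in $\ff\upharpoonright M$ with $\min s_n\to\infty$, notes $(x_{s_n})_n$ is Cauchy and converges by completeness, and then transfers the limit to the whole $\ff$-subsequence exactly as you do. The only difference is cosmetic: you spell out, via largeness of the regular thin family, why such $s_n$ exist, a point the paper takes for granted.
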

\begin{proof}
  If the $\ff$-subsequence $(x_s)_{s\in\ff\upharpoonright M}$ is  convergent, then it is straightforward that $(x_s)_{s\in\ff\upharpoonright M}$
  is Cauchy. Concerning the converse we have the following. Suppose that the $\ff$-subsequence $(x_s)_{s\in\ff\upharpoonright M}$
  is  Cauchy. Let $(s_n)_{n\in\nn}$ be a sequence in $\ff\upharpoonright M$ such that $\min s_n\to \infty$.
  It is immediate that $(x_{s_n})_{n\in\nn}$ forms a  Cauchy sequence in $X$. Since $(X, \rho)$ is complete, there exists $x\in  X$ such that
  the sequence $(x_{s_n})_{n\in\nn}$ converges to $x$. We will show that the $\ff$-subsequence $(x_s)_{s\in\ff\upharpoonright M}$ converges to $x$.
  Indeed, let $\varepsilon>0$. Since $(x_s)_{s\in\ff\upharpoonright M}$
  is Cauchy, there exists $k_0\in\nn$ such that for every $t_1,t_2\in\ff \upharpoonright M$ with $\min t_1,\min t_2\geq M(k_0)$ we have that
  $\rho(x_{t_1},x_{t_2})<\frac{\varepsilon}{2}$. Since the sequence $(x_{s_n})_{n\in \nn}$ converges to $x$ and $\min s_n\to \infty$,
  there exists $n_0\in\nn$ such that $\min s_{n_0}\geq M(k_0)$ and $\rho(x,x_{s_{n_0}})<\varepsilon/2$. Hence for every $s\in\ff\upharpoonright M$
  such that $\min s\geq M(k_0)$, we have that
  $\rho(x,x_s)\leq \rho(x, x_{s_{n_0}})+\rho(x_{s_{n_0}},
  x_s)<\varepsilon$
  and the proof is completed.
\end{proof}
\section{Plegma $\varepsilon$-separated $\ff$-sequences}
\begin{lem}\label{Lemma epsilon near admissble yield Cauchy}
  Let $M\in[\nn]^\infty$,  $\ff$ be a regular thin family and $(x_s)_{s\in\ff}$ an $\ff$-sequence
  in a metric space $(X, \rho)$. Suppose that for every $\varepsilon>0$ and every $L\in[M]^\infty$ there exists a plegma pair
  $(s_1,s_2)$ in $\ff\upharpoonright L$ such that  $\rho(x_{s_1}, x_{s_2})<\varepsilon$. Then the  $\ff$-subsequence
   $(x_s)_{s\in\ff\upharpoonright M}$ has a  further Cauchy subsequence.
\end{lem}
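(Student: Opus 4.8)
\section*{Proof proposal for Lemma \ref{Lemma epsilon near admissble yield Cauchy}}

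The plan is to combine a diagonal application of the plegma Ramsey theorem, which forces \emph{all} plegma pairs in a suitable tail to be geometrically close, with the plegma paths of Proposition~\ref{accessing everything with plegma path of length |s_0|}, which reduce the Cauchy condition for an arbitrary pair $s_1,s_2$ to such plegma pairs. The geometric rate is what keeps the resulting telescoping sums summable even though the lengths of the paths are not bounded along a tail.

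First I would fix $\delta_q=2^{-q}$ and, for each $q\in\nn$, set $A_q=\{(u,v)\in\text{\emph{Plm}}_2(\ff):\rho(x_u,x_v)<\delta_q\}$; the hypothesis says precisely that $A_q$ is large in $M$. Iterating Corollary~\ref{corollary ramseyforplegma} together with Remark~\ref{Galvin Pricley for regular thin}, I would build a decreasing sequence $M=M_0\supseteq M_1\supseteq\cdots$ with $\ff\upharpoonright M_q$ very large in $M_q$ and $\text{\emph{Plm}}_2(\ff\upharpoonright M_q)\subseteq A_q$, and then pass to a diagonal set $L_0$ with $L_0(q)\in M_q$ for every $q$. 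Since whenever $\min u\geq L_0(q)$ all members of $u$ and of $v$ lie in $\{L_0(q),L_0(q+1),\dots\}\subseteq M_q$, a routine diagonal argument gives that $\ff$ is very large in $L_0$ and that every plegma pair $(u,v)$ in $\ff\upharpoonright L_0$ with $\min u\geq L_0(q)$ satisfies $\rho(x_u,x_v)<2^{-q}$.

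Next I would set $L=L_0(\{2,4,6,\dots\})$. Deleting every other element of $L_0$ guarantees that between any two consecutive members of a set $s\in\ff\upharpoonright L$ there is a member of $L_0$, i.e. $\ff\upharpoonright L\subseteq\ff\upharpoonright\upharpoonright L_0$, which is exactly the situation in which plegma paths are available. I then claim $(x_s)_{s\in\ff\upharpoonright L}$ is Cauchy, so that $L$ is the required further subsequence. Given $\ee>0$ I pick $n$ with $2^{2-2n}<\ee$ and take $s_1,s_2\in\ff\upharpoonright L$ with $\min s_1,\min s_2\geq L(n)=L_0(2n)$. As $\ff$ is very large in $L_0$ there is $s_3\in\ff\upharpoonright L$ with $\min s_3>\max(\max s_1,\max s_2)$, so $s_1<s_3$ and $s_2<s_3$. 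By Proposition~\ref{accessing everything with plegma path of length |s_0|} applied inside $L_0$ there is a plegma path $s_1=w_0,w_1,\dots,w_{|s_1|}=s_3$ in $\ff\upharpoonright\upharpoonright L_0$; the numbers $\min w_0<\min w_1<\cdots$ occupy strictly increasing positions of $L_0$ and the first of these is at least $2n$, hence $\min w_i\geq L_0(2n+i)$ and so $\rho(x_{w_i},x_{w_{i+1}})<2^{-(2n+i)}$. Summing over $i$ gives $\rho(x_{s_1},x_{s_3})\leq\sum_{i\geq0}2^{-(2n+i)}=2^{1-2n}$, and the same bound for $\rho(x_{s_2},x_{s_3})$; the triangle inequality then yields $\rho(x_{s_1},x_{s_2})<2^{2-2n}<\ee$. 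Thus $m=n$ witnesses the Cauchy property (and completeness of $X$ is not needed).

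The delicate point, which dictates all the choices above, is that the plegma relation is neither symmetric nor transitive and, by Lemma~\ref{lemma conserning the length of the plegma path}, any plegma path from $s_1$ to a later set has length at least $|s_1|$, a quantity that is unbounded over a tail of $\ff\upharpoonright L$. Consequently no single Ramsey stabilization of plegma pairs at a fixed accuracy can produce the Cauchy property; one is forced to stabilize at the accuracies $2^{-q}$ so that the $|s_1|$ error terms, which are indexed by the strictly increasing positions of the early points of the path in $L_0$, still add up to a tail of a convergent geometric series. The two auxiliary facts used here — that $\ff\upharpoonright L\subseteq\ff\upharpoonright\upharpoonright L_0$ and that a terminal set $s_3$ as above exists — are routine consequences of the spreading property of $\widehat\ff$ and of $\ff$ being very large in $L_0$.
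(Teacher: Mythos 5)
Your proof is correct and follows essentially the same route as the paper's: iterated application of the plegma Ramsey property at summable scales, a diagonalized set, passing to every other element so as to land in the skipped restriction, and plegma paths (Proposition \ref{accessing everything with plegma path of length |s_0|}) combined with a telescoping estimate and the triangle inequality. The only real difference is organizational: the paper joins $s_1,s_2$ through a fixed \emph{earlier} hub $s_0$, so the paths have the fixed length $|s_0|$ and any summable error sequence suffices, whereas you join them through a common \emph{later} set $s_3$ and offset the unbounded path lengths by indexing the geometric errors along the increasing minima --- in both cases the bound is just a tail of a summable series.
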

\begin{proof}
  Let $(\varepsilon_n)_{n\in\nn}$ be a sequence of positive reals such that $\sum_{n=1}^\infty \varepsilon_n<\infty$. Using Theorem
  \ref{ramseyforplegma}, we inductively
  construct a decreasing sequence $(L_n)_{n\in\nn}$ in $[M]^\infty$, such that for every $n\in\nn$ and for every plegma
  pair $(s_1,s_2)$ in $\ff\upharpoonright L_n$ we have that $\rho(x_{s_1}, x_{s_2})<\varepsilon_n$. Let $L'$ be
  a diagonalization of $(L_n)_{n\in\nn}$, i.e. $L'(n)\in L_n$ for all $n\in\nn$, and $L=\{L'(2n):n\in\nn\}$.

  We claim that the $\ff$-subsequence
  $(x_s)_{s\in\ff\upharpoonright L}$ is Cauchy. Indeed let $\varepsilon>0$. There exists $n_0\in \nn$ such that
  $\sum_{n=n_0}^\infty\varepsilon_n<\frac{\varepsilon}{2}$. Let $s_0$ be the unique initial segment of $\{L(n):n\geq n_0\}$ in $\ff$.
  If $\max s_0=L(k)$ then we set $k_0=k+1$. Then for every $s_1,s_2\in\ff\upharpoonright L$ with $\min s_1, \min s_2\geq L(k_0)$,
  by Proposition \ref{accessing everything with plegma path of length |s_0|} there exist plegma paths $(s_j^1)_{j=1}^{|s_0|},
  (s_j^2)_{j=1}^{|s_0|}$ in $\ff\upharpoonright L'$ from $s_0$ to $s_1,s_2$ respectively. Then for $i=1,2$ we have that
  \[\rho(x_{s_0}, x_{s_i})\leq\sum_{j=0}^{|s_0|-1}\rho(x_{s_j^i},x_{s_{j+1}^i})<\sum_{j=0}^{|s_0|-1}\varepsilon_{n_0+j}<\frac{\varepsilon}{2}\]
  which implies that $\rho(x_{s_1}, x_{s_2})<\varepsilon$.
\end{proof}
\begin{defn} Let $\varepsilon>0$, $L\in[\nn]^\infty$,  $\ff$ be a regular thin family and $(x_s)_{s\in\ff}$ an $\ff$-sequence
   in a metric space $X$. We will say that the subsequence $(x_s)_{s\in\ff\upharpoonright L}$ is plegma $\varepsilon$-separated
    if for every plegma pair $(s_1,s_2)$ in $\ff\upharpoonright L$,  $\rho(x_{s_1}, x_{s_2})>\varepsilon$.
\end{defn}
The following proposition is actually a restatement of Lemma
\ref{Lemma epsilon near admissble yield Cauchy} under the above
definition.
\begin{prop}\label{Proposition equivalence of non Cauchy and separability}
  Let $M\in[\nn]^\infty$,  $\ff$ be a regular thin family and $(x_s)_{s\in\ff}$ an $\ff$-sequence
  in a metric space $X$. Then the  following are equivalent.
  \begin{enumerate}
    \item [(i)] The $\ff$-subsequence $(x_s)_{s\in\ff\upharpoonright M}$ has no  further Cauchy subsequence.
    \item [(ii)] For every $N\in [M]^\infty$ there exists $L\in [N]^\infty$ and $\varepsilon>0$ such that the
  subsequence $(x_s)_{s\in\ff\upharpoonright L}$ is plegma $\varepsilon$-separated.
  \end{enumerate}
\end{prop}

\section{Subordinated $\ff$-sequences and convergent
$\widehat{\ff}$-trees}
  \begin{defn}
    Let $(X,\ttt)$ be a topological space, $\ff$ be a regular thin family, $M\in[\nn]^\infty$ and
  $(x_s)_{s\in\ff}$ be an $\ff$-sequence in $X$. We say
    that $(x_s)_{s\in\ff\upharpoonright M}$ is subordinated (with respect to $(X,\mathcal{T})$)
     if there exists a continuous map $\widehat{\varphi}:\widehat{\ff}\upharpoonright M\to (X,\mathcal{T})$
with  $\widehat{\varphi}(s)=x_s$, for all $s\in\ff\upharpoonright
M$.

 \end{defn}
  \begin{rem}\label{remark on ff subordinated}
    If $(x_s)_{s\in\ff\upharpoonright M}$ is subordinated, then there exists a unique  continuous map
     $\widehat{\varphi}:\widehat{\ff}\upharpoonright M\to (X,\mathcal{T})$ witnessing this.
      This is a consequence of the fact that $\ff$ is dense in $\widehat{\ff}$.
    Also for the same reason we have that
    \[\overline{\{x_s:s\in\ff\upharpoonright M\}}=\widehat{\varphi}[\widehat{\ff}\upharpoonright
    M],\]
    where $\overline{\{x_s:s\in\ff\upharpoonright M\}}$ is the $\mathcal{T}$-closure of
    $\{x_s : s\in\ff\upharpoonright M\}$ in $X$. Therefore
    $\overline{\{x_s:s\in\ff\upharpoonright M\}}$ is a countable compact metrizable subspace of $(X,\ttt)$
     with Cantor-Bendixson index at most $o(\ff)+1$.
    Moreover notice that if $(x_s)_{s\in\ff\upharpoonright M}$ is subordinated then for every $L\in [M]^\infty$,
     we have that $(x_s)_{s\in\ff\upharpoonright L}$ is subordinated.
  \end{rem}
The following if an immediate consequence of Proposition \ref{arxi
metaforas gia ff sequences}.
\begin{cor}\label{subordinating yields convergence}
  Let $(X,\ttt)$ be a topological space, $\ff$ a regular thin family, $M\in[\nn]^\infty$ and
  $(x_s)_{s\in\ff}$ an $\ff$-sequence in $X$. Suppose that the
  $\ff$-subsequence $(x_s)_{s\in\ff\upharpoonright M}$ is
  subordinated. Let $\widehat{\varphi}:\widehat{\ff}\upharpoonright M\to
  (X,\mathcal{T})$ be the continuous map witnessing this. Then $(x_s)_{s\in\ff\upharpoonright
  M}$ is convergent to $\widehat{\varphi}(\emptyset)$.
\end{cor}
A related notion here is that of a \emph{convergent
$\widehat{\ff}$-tree}.
\begin{defn}
  Let  $(X,\ttt)$ be a topological space, $\ff$ be a regular thin
  family, $M\in[\nn]^\infty$ and
  $(x_t)_{t\in\widehat{\ff}}$ be a family of elements of $X$. Let
  for every $t\in\widehat{\ff}\setminus\ff$, $N_t=\{n\in M: t\cup\{n\}\in\widehat{\ff}\upharpoonright
  M\}$. We
  say that $(x_t)_{t\in\widehat{\ff}\upharpoonright M}$ is a convergent $\widehat{\ff}$-tree, if
  for every $t\in\widehat{\ff}\setminus\ff$, we have that the sequence $(x_{t\cup\{n\}})_{n\in N_t}$
  converges to $x_t$. Moreover if $(x_t)_{t\in\widehat{\ff}\upharpoonright M}$ is a convergent
  $\widehat{\ff}$-tree then we will say  that $(x_s)_{s\in\ff\upharpoonright
  M}$ admits a convergent $\widehat{\ff}$-tree.
  \end{defn}

It is easy to see that if an $\ff$-subsequence
$(x_s)_{s\in\ff\upharpoonright M}$ is subordinated then the family
$(x_t)_{t\in\widehat{\ff}\upharpoonright M}$ defined by
$x_t=\widehat{\varphi}(t)$, for all
$t\in\widehat{\ff}\upharpoonright M$ is a convergent $\ff$-tree.
 The next proposition yields the converse by passing to an infinite
subset of $M$.

\begin{prop}\label{prpconv}
   Let $(X,\mathcal{T})$ be a topological space, $\ff$ be  a  regular
    thin family, $M\in[\nn]^\infty$ and $(x_t)_{t\in\widehat{\ff}\upharpoonright M}$ be a convergent $\widehat{\ff}$-tree in
    $X$.
    Then  there exists $L\in[M]^\infty$
    such that the map  $\widehat{\varphi}:\widehat{\ff}\upharpoonright L\to X$ defined by  $\widehat{\varphi}(t)=x_t$, for all
    $t\in\widehat{\ff}\upharpoonright L$ is continuous.
    Consequently  there exists
    $L\in[M]^\infty$ such that $(x_s)_{s\in\ff\upharpoonright L}$ is
    subordinated.
\end{prop}
\begin{proof}
    We will use induction on the order of $\ff$. If $o(\ff)=1$, the result is immediate. Assume that for some $\xi<\omega_1$,
     the conclusion of the proposition holds for every family of order lower than $\xi$.
    Let $\ff$ be a regular
    thin family of order $\xi$ and $M\in[\nn]^\infty$.
    For every $n\in M$ we have that $o(\ff_{(n)})<o(\ff)=\xi$. Let $L_0=M$ and $l_1=\min L_0$.
    For every $t'\in \widehat{\ff_{(l_1)}}$, we set $x^1_{t'}= x_{\{l_1\}\cup t'}$.
    Notice that $(x^1_{t'})_{t'\in \widehat{\ff_{(l_1)}}\upharpoonright
    L_0}$ is a convergent  $\widehat{\ff}$-tree.
    By the inductive hypothesis there exists $L_1\in[L_0\setminus \{l_1\}]^\infty$ such that
     the map  $\widehat{\varphi}_1:\widehat{\ff_{(l_1)}}\upharpoonright
    L_1\to X$, with $\widehat{\varphi}_1(t')=x^1_{t'}$ for all
    $t'\in\widehat{\ff_{(l_1)}}\upharpoonright
    L_1$, is  continuous.
     Hence $\overline{\{x^1_s:\;s\in\ff_{(l_1)}\upharpoonright L_1\}}=\{x^1_{t'}: t'\in\widehat{\ff_{(l_1)}}\upharpoonright
    L_1\}$ is
     compact metrizable.  Using the compact coloring version of Theorem \ref{Galvin prikry} and passing to an infinite subset of
     $L_1$ if it is nessecary, we may also suppose that
     $\text{diam}(\{x^1_{t'}:\;t'\in\widehat{\ff_{(l_1)}}\upharpoonright L_1\})=\text{diam}(\{x^1_s:\;s\in\ff_{(l_1)}\upharpoonright L_1\})<1$.
      Proceeding in the same way,
      we construct by induction a strictly increasing sequence
    $(l_n)_{n\in\nn}$, a decreasing sequence $(L_n)_{n=0}^\infty$ of infinite subsets of $\nn$, such that setting
    $x^n_{t'}=x_{\{l_n\}\cup t'}$ for all $t'\in\widehat{\ff_{(l_n)}}\upharpoonright L_n$ the following are satisfied
    \begin{enumerate}
      \item[(i)] For every $n\in\nn$, $l_n=\min L_{n-1}$ and $L_n\in[L_{n-1}\setminus\{l_n\}]^\infty$.
      \item[(ii)] The  map $\widehat{\varphi}_n:\widehat{\ff_{(l_n)}}\upharpoonright
    L_n\to X$, with $\widehat{\varphi}_n(t')=x^n_{t'}$ for all
    $t'\in\widehat{\ff_{(l_n)}}\upharpoonright
    L_n$, is  continuous.
      \item[(iii)] $\text{diam}(\{x^n_{t'}:\;t'\in\widehat{\ff_{(l_n)}}\upharpoonright L_n\})<\frac{1}{n}$.
    \end{enumerate}
    Let $L=\{l_n:n\in\nn\}$. We define $\widehat{\varphi}:\widehat{\ff}\upharpoonright L\to X$
as follows. For $s=\emptyset$, we set
$\widehat{\varphi}(\emptyset)=x_\emptyset$ and for $t\neq
\emptyset$, we set
$\widehat{\varphi}(t)=\widehat{\varphi}_n(t\setminus\{l_n\})=x_t$,
where $l_n=\min t$. Then
 $\lim_{n\to\infty}\text{diam}\{\widehat{\varphi}(s):\;s\in\ff\upharpoonright L\text{ and }\min s=L(n)\}\to 0$
and  $x_{L(n)}\to x_\emptyset$. Therefore $\widehat{\varphi}$ is
continuous at $\emptyset$. For all other $t\in
\widehat{\ff}\upharpoonright L$ the continuity of
$\widehat{\varphi}$ follows easily by the continuity of
$\widehat{\varphi}_n$ and the proof is complete.
    \end{proof}
\begin{lem}\label{lemconv}
  Let $(X,\ttt)$ be a topological space, $\ff$ a regular thin
  family, $N\in[\nn]^\infty$ and $(x_s)_{s\in\ff}$ an $\ff$-sequence in
  $X$. If $\overline{\{x_s:s\in \ff\upharpoonright
  N\}}$ is compact metrizable, then there exists  $M\in[N]^\infty$,
  such that $(x_s)_{s\in\ff\upharpoonright
  M}$ admits a convergent $\widehat{\ff}$-tree.
\end{lem}
\begin{proof}
  We proceed by induction on the order of the regular thin
  family $\ff$. If $o(\ff)=1$, the result is immediate. Assume that for every $\xi<o(\ff)$ the
  lemma holds. Let $\ff$ be a regular thin
  family, $N\in\nn$ and $(x_s)_{s\in\ff}$ an $\ff$-sequence in
  $X$, with $Y=\overline{\{x_s:s\in \ff\upharpoonright
  N\}}$ compact metrizable.  For every $m\in\nn$ and
  every $s'\in \ff_{(m)}\upharpoonright N$, we set $z^m_{s'}=x_{\{m\}\cup
  s'}$. Since $o(\ff_{(m)})<o(\ff)$, for all $m\in N$, using our
  inductive hypothesis we may construct a
  strictly increasing sequence $(m_n)_{n\in\nn}$ in $N$ and a
  decreasing sequence $(M_n)_{n\in\nn}$ in $[N]^\infty$ such that
  for every $n\in\nn$
  the following are satisfied:
  \begin{enumerate}
    \item[(i)] $m_n<\min M_n$
    \item[(ii)] The $\ff_{(m_n)}$-sequence $(z^{m_n}_{s'})_{s'\in
    \ff_{(l_n)}\upharpoonright M_n}$ admits $(z^{m_n}_{t'})_{t'\in
    \widehat{\ff_{(m_n)}}\upharpoonright M_n}$ as a convergent
    $\widehat{\ff_{(m_n)}}$-tree in $Y$.
  \end{enumerate}
  Since $Y$ is a compact metrizable space, we have that the
  sequence $(z^{m_n}_{\emptyset})_{n\in\nn}$ contains a
  subsequence $(z^{l_{k_n}}_{\emptyset})_{n\in\nn}$ convergent to
  some $x\in Y$. Let $M=\{m_{k_n}:n\in\nn\}$ and $x_\emptyset=x$.
  Also let,
  for every $m\in M$ and $t'\in \widehat{\ff_{(m)}}\upharpoonright M$ (notice that $\widehat{\ff_{(m)}}\upharpoonright M\subseteq
  \widehat{\ff_{(m_{k_n})}}\upharpoonright M$, where $m=m_{k_n}$), $x_{\{m\}\cup
  t'}=z^m_{t'}$. It is easy to see that $(x_s)_{s\in\ff\upharpoonright M}$ admits $(x_t)_{t\in\widehat{\ff}\upharpoonright
  M}$ as a convergent $\widehat{\ff}$-tree.
\end{proof}

  \begin{prop}\label{Create subordinated}
    Let $(X,\mathcal{T})$ be a topological space, $\ff$ be  a  regular
    thin family and $(x_s)_{s\in\ff}$ be an $\ff$-sequence in $X$
such that $\overline{\{x_s:\;s\in\ff\}}$  is a compact metrizable
subspace of $(X,\ttt)$.
    Then for every $N\in[\nn]^\infty$ there exists $L\in[N]^\infty$ such that $(x_s)_{s\in\ff\upharpoonright L}$ is
    subordinated and if $\widehat{\varphi}:\widehat{\ff}\upharpoonright L\to X$ is the continuous map witnessing this,
    then $(x_s)_{s\in\ff\upharpoonright L}$ converges to $\widehat{\varphi}(\emptyset)$.
  \end{prop}
  \begin{proof}  By Lemma \ref{lemconv} there exists
 $M\in[N]^\infty$ such that $(x_s)_{s\in\ff\upharpoonright
  L}$ admits a convergent $\widehat{\ff}$-tree. By Proposition
  \ref{prpconv} there exists $L\in [M]^\infty$ such that $(x_s)_{s\in\ff\upharpoonright L}$ is
    subordinated. Finally by Corollary \ref{subordinating yields
    convergence}, we have that $(x_s)_{s\in\ff\upharpoonright L}$ converges to
    $\widehat{\varphi}(\emptyset)$ and the proof is complete.
  \end{proof}

\chapter{Norm properties of spreading models}\label{Chapter 5}
As we have seen the spreading sequences and therefore the
spreading models are classified into four categories. In this
chapter we provided conditions concerning $\ff$-sequences which
determine the class in which the generated spreading model
belongs.
\section{Trivial spreading models}
In this section we study the behavior of $\ff$-sequences which
generate trivial spreading models. Among others we prove that an
$\ff$-sequence admits a trivial spreading model iff it contains a
norm Cauchy subsequence. This generalizes the classical
Brunel-Sucheston condition for spreading models of any order.
\begin{thm}\label{Theorem equivalent forms for having norm on the spreading model}
  Let $M\in[\nn]^\infty$, $\ff$ be a regular thin family and $(x_s)_{s\in\ff}$ be an $\ff$-sequence in a Banach space $X$.
    Let $(E,\|\cdot\|_*)$ be an infinite dimensional
    seminormed linear space with Hamel basis $(e_n)_{n\in\nn}$ such that $(x_s)_{s\in\ff\upharpoonright M}$ generates
    $(e_n)_{n\in\nn}$ as an $\ff$-spreading model. Then the following are equivalent:
    \begin{enumerate}
    \item[(i)] The sequence $(e_n)_{n\in\nn}$ is trivial.
\item[(ii)] The seminorm $\|\cdot\|_*$ is not a norm on $E$.
            \item[(iii)] The sequence $(e_n)_{n\in\nn}$ is generated as a $\g$-spreading model by any constant
      $\g$-sequence $(y_t)_{t\in\g}$ in any Banach space $Y$, such that $\|y_t\|=\|e_1\|_*$ for all $t\in\g$ and for every regular thin family $\g$.
      \item[(iv)] The $\ff$-subsequence $(x_s)_{s\in\ff\upharpoonright M}$ contains a further norm Cauchy subsequence.
      \item[(v)] For every $\varepsilon>0$ and every $L\in[M]^\infty$, the $\ff$-subsequence
      $(x_s)_{s\in\ff\upharpoonright L}$ is not plegma $\varepsilon$-separated.
      \item[(vi)] There exists $x\in X$ such that every subsequence of $(x_s)_{s\in\ff\upharpoonright M}$ contains a further subsequence convergent to $x$.
    \end{enumerate}
\end{thm}
\begin{proof}
  The equivalence of (i) and  (ii) follows by Proposition  \ref{sing}.
  Also the equivalence of (i) and  (iii) is obvious.

  (i)$\Rightarrow$(v): Let $\varepsilon>0$ and  $L\in[M]^\infty$.
  Since  the $\ff$-subsequence
  $(x_s)_{s\in\ff\upharpoonright L}$ also generates the trivial sequence $(e_n)_{n\in\nn}$ as an $\ff$-spreading model,  there
  exists $n_0\in\nn$ such that for every plegma pair $(s_1,s_2)$ in $\ff\upharpoonright L$ with $\min s_1\geq L(n_0)$, we have that
  \[\Big\|x_{s_1}-x_{s_2} \Big\|=\Bigg| \Big\|x_{s_1}-x_{s_2} \Big\|-\Big\| e_1-e_2 \Big\|_* \Bigg|<\varepsilon\]
  and therefore   the $\ff$-subsequence
  $(x_s)_{s\in\ff\upharpoonright L}$ is not plegma $\varepsilon$-separated.

  (v)$\Rightarrow$(iv): This follows by Lemma \ref{Lemma epsilon near admissble yield Cauchy}.

  (iv)$\Rightarrow$(i): We can easily construct a sequence $((s_1^n,s_2^n))_{n\in\nn}$ of plegma pairs in $\ff\upharpoonright M$ such that
      $s_1^n(1)\to \infty$ and $\|x_{s_1^n}-x_{s_2^n}\|<\frac{1}{n}$. Hence
      \[\big\|e_1-e_2\|_*=\lim_{n\to\infty}\Big\|x_{s_1^n}-x_{s_2^n}\Big\|=0\]
    Thus by Proposition \ref{pdiff}
    we get  (i). By the above we have that (i)-(v) are equivalent.

      It remains to show that (i) and (vi) are equivalent. It is straightforward that (vi)$\Rightarrow$(iv)$\Rightarrow$(i).
      Conversely suppose that (i) holds. Then every subsequence of $(x_s)_{s\in\ff \upharpoonright M}$
      generates $(e_n)_{n\in\nn}$ as an $\ff$-spreading model. Hence, by the equivalence of (i) and (iv) and Proposition
      \ref{remark on ff Cauchy-convergent sequences and their spreading model}, every subsequence of $(x_s)_{s\in\ff\upharpoonright M}$
      contains a further convergent subsequence. It remains to show that
      all the convergent subsequences of $(x_s)_{s\in\ff \upharpoonright M}$ have a common limit. To this end, let $L_1,L_2\in[M]^\infty$ and $x_1,x_2\in X$ such
      that $(x_s)_{s\in\ff \upharpoonright L_1}$ converges to $x_1$ and $(x_s)_{s\in\ff \upharpoonright L_2}$ converges to $x_2$. We will show that
      $x_1=x_2$. Let $\varepsilon >0$. Then there exists $n_0\in\nn$ such that for every $s_1\in\ff\upharpoonright L_1$ with $\min s_1\geq L_1(n_0)$ and
      $s_2\in\ff\upharpoonright L_2$ with $\min s_2\geq L_2(n_0)$ we have that $\|x_1-x_{s_1}\|<\frac{\varepsilon}{3}$ and
      $\|x_2-x_{s_2}\|<\frac{\varepsilon}{3}$. Notice that $n_0$ can be chosen large enough such that for every plegma pair $(s_1,s_2)$
      in $\ff\upharpoonright M$ with $\min s_1\geq M(n_0)$,
      \[\Big\| x_{s_1}-x_{s_2} \Big\|= \Bigg|\Big\| x_{s_1}-x_{s_2} \Big\| -\Big\|e_1-e_2\Big\|_* \Bigg|<\frac{\varepsilon}{3}\]
      It is easy to see that we can choose $s_1\in\ff \upharpoonright L_1$ with $\min s_1\geq L_1(n_0)$ and
      $s_2\in\ff\upharpoonright L_2$ with $\min s_2\geq L_2(n_0)$ such that $(s_1,s_2)$ is a plegma pair. Then
      \[\Big\| x_1-x_2 \Big\|\leq \Big\|x_1-x_{s_1}\Big\|+\Big\|
x_{s_1}-x_{s_2} \Big\|+\Big\|x_2-x_{s_2}\Big\|<\varepsilon\]
      It follows that $x_1=x_2$ and the proof is complete.
\end{proof}
The next result follows by the equivalence of (i) and (vi) in the above theorem.
\begin{cor}\label{trivial ultrafilter weak property}
  Let $\ff$ be a regular thin family, $M\in[\nn]^\infty$, $(x_s)_{s\in\ff}$ and $(x'_s)_{s\in\ff}$ two $\ff$ sequences in a Banach space $X$. Suppose that the following are satisfied:
  \begin{enumerate}
    \item[(i)] There exists $x_0\in X$ such that $x_s=x'_s+x_0$, for all $s\in\ff\upharpoonright M$.
    \item[(ii)] The $\ff$-subsequence $(x_s)_{s\in\ff\upharpoonright M}$ generates an $\ff$-spreading model $(e_n)_{n\in\nn}$.
    \item[(iii)] The $\ff$-subsequence $(x'_s)_{s\in\ff\upharpoonright M}$ generates an $\ff$-spreading model $(e'_n)_{n\in\nn}$.
  \end{enumerate}
  Then $(e_n)_{n\in\nn}$ is trivial if and only if $(e'_n)_{n\in\nn}$ is trivial.
\end{cor}
\section{Unconditional spreading models}
As is well known every spreading model generated by a
seminormalized weakly null sequence is an unconditional spreading
sequence. In this section we provide an extension of this result
for subordinated seminormalized weakly null $\ff$-sequences. We
start with the following lemma.
\begin{lem}\label{Lemma finding convex  means}
  Let $X$ be a Banach space, $n\in\nn$, $\ff^1,\ldots,\ff^n$ be regular thin families,
  $L\in[\nn]^\infty$ and $\varepsilon>0$. For each $1\leq i\leq n$, let $\widehat{\varphi}_i:\widehat{\ff^i}\upharpoonright L\to (X,w)$ be a continuous map.
  Then there exist nonempty and finite $G^1\subseteq\ff^1\upharpoonright L,\ldots,G^n\subseteq\ff_n\upharpoonright L$ and sequences
  $(\mu^1_t)_{t\in G^1},\ldots,(\mu^n_t)_{t\in G^n}$ in $[0,1]$ such that
  \begin{enumerate}
    \item[(a)] For all $1\leq i\leq n$, $\sum_{t\in G^i}\mu_t^i=1$ and
    $\|\widehat{\varphi}_i(\emptyset)-\sum_{t\in G^i}\mu_t^i\widehat{\varphi}_i(t)\|<\varepsilon$.
    \item[(b)] For every choice of $t_i\in G^i$, the $n$-tuple $(t_1,\ldots,t_n)$ is plegma.
  \end{enumerate}
\end{lem}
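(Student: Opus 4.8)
The plan is to build the finite sets $G^1,\dots,G^n$ one at a time, from $i=n$ down to $i=1$, exploiting the weak continuity of each $\widehat\varphi_i$ at $\emptyset$ and the fact that $\emptyset$ lies in the weak closure of its own convex hull of values. First I would record the basic mechanism: since $\widehat\varphi_i$ is continuous at $\emptyset$ into $(X,w)$, for every weak neighbourhood $U$ of $\widehat\varphi_i(\emptyset)$ there is $m_i\in\nn$ such that $\widehat\varphi_i(s)\in U$ for all $s\in\ff^i\upharpoonright L$ with $\min s\geq L(m_i)$ (this is exactly the local behaviour extracted from continuity together with the structure of $\widehat{\ff^i}\upharpoonright L$, arguing as in Proposition \ref{unique limit and subordinating keno}). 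Consequently $\widehat\varphi_i(\emptyset)$ belongs to the weak closure of $\{\widehat\varphi_i(s):s\in\ff^i\upharpoonright L,\ \min s\geq L(m)\}$ for every $m$, hence by Mazur's theorem to the norm closure of the convex hull of that set; so for any $\varepsilon>0$ and any tail restriction we can find a finite convex combination $\sum_{t}\mu_t^i\widehat\varphi_i(t)$ that is within $\varepsilon$ of $\widehat\varphi_i(\emptyset)$.

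The substantive point is to arrange (b), i.e. that the supports can be chosen so that picking one $t_i$ from each $G^i$ always yields a plegma $n$-tuple. I would handle this by a downward induction that controls the ``spread'' of the chosen finite families. Start with $i=n$: pick a finite $G^n\subseteq\ff^n\upharpoonright L$ and weights $(\mu_t^n)$ achieving (a) for $i=n$, with no extra constraint, and let $N_n=\max\bigl(\bigcup G^n\bigr)$. Suppose $G^{i+1},\dots,G^n$ and a threshold $N_{i+1}$ have been constructed so that every element of $G^{i+1}\cup\dots\cup G^n$ lies in $L\cap(N_{i+1},\infty)$ wait—more precisely so that all these elements are ``large'' relative to what comes next; then using weak continuity of $\widehat\varphi_i$ at $\emptyset$ choose the finite family $G^i\subseteq\ff^i\upharpoonright L$ realizing (a) for index $i$ but with the additional requirement that every $t\in G^i$ satisfies $\max t< $ (the first element of every $s\in G^{i+1}\cup\dots\cup G^n$), which is possible because we may intersect the relevant neighbourhood condition with the (weakly open, since it is really a restriction on finitely many elements of $L$) requirement of staying below a prescribed finite level — one just needs infinitely many admissible $t$ below that level, and that follows since $\ff^i$ is regular thin, hence large, so $\ff^i\upharpoonright L'$ is nonempty for the appropriate initial segment $L'$ of $L$. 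Iterating down to $i=1$, the separation of supports built in at each stage guarantees $\max t_i<\min t_{i+1}$ for any choices $t_i\in G^i$, i.e. $t_1<t_2<\dots<t_n$, and a block sequence of nonempty finite sets in increasing order is automatically plegma (conditions (i) and (ii) of Definition \ref{defn plegma} are trivially met when the sets are consecutive, since then $s_i(k)\leq\max s_i<\min s_j\leq s_j(k)<s_j(k+1)$ for $i<j$). This gives (b), while (a) holds for each $i$ by construction.

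The main obstacle I anticipate is verifying carefully that the extra ``keep all of $G^i$ below a fixed finite level'' constraint is compatible with achieving the convex-combination approximation — i.e. that $\widehat\varphi_i(\emptyset)$ still lies in the norm-closed convex hull of the values $\widehat\varphi_i(t)$ taken over those $t\in\ff^i\upharpoonright L$ whose minimum is both above a large threshold \emph{and} whose maximum is below a prescribed bound. This is where one must be slightly cautious: one should instead reverse the order of the induction or, cleaner, run the construction upward ($i=1,2,\dots,n$) choosing each $G^i$ to live in a tail $\ff^i\upharpoonright L$ with $\min t$ large compared with $\max$ of everything chosen so far, so that the only constraint ever imposed is a \emph{lower} bound on $\min t$ — which is exactly the form the continuity hypothesis delivers, and which leaves infinitely many valid $t$ available (again by largeness of regular thin families). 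Thus no genuine tension arises; the separation of supports is achieved purely by pushing each successive $G^i$ far to the right, and plegma-ness then comes for free from the block structure. The remaining details — passing from weak closure to norm-closed convex hull via Mazur, and bookkeeping the finitely many thresholds — are routine.
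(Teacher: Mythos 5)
There is a genuine gap, and it sits exactly at the point your plan treats as free: the claim that a block-ordered tuple $t_1<t_2<\dots<t_n$ (meaning $\max t_i<\min t_{i+1}$) is automatically plegma is false as soon as the sets have more than one element. Condition (ii) of Definition \ref{defn plegma} is required for \emph{every} pair of indices $i,j$, not only for $i<j$; in particular one needs $t_2(1)<t_1(2)$, which is incompatible with $\max t_1<\min t_2$ whenever $|t_1|\geq 2$. The paper states this concretely: a pair of doubletons $(\{n_1,m_1\},\{n_2,m_2\})$ is plegma iff $n_1<n_2<m_1<m_2$, i.e.\ plegma tuples must \emph{interleave} (all first elements in increasing order, then all second elements, and so on), not follow one another. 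Your verification ``$s_i(k)\leq\max s_i<\min s_j\leq s_j(k)<s_j(k+1)$ for $i<j$'' only checks (ii) in the harmless direction. So ``pushing each successive $G^i$ far to the right'' produces tuples that are plegma only when every $\ff^i$ has order at most $1$; for higher-order families the whole scheme collapses, and there is no way to repair it by reordering the induction over $i$, since the obstruction is the interleaving itself, not the bookkeeping of thresholds.

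The Mazur-type step you describe (weak continuity at $\emptyset$ plus passing to the norm-closed convex hull of tail values) is indeed the easy half and is used in the paper, but only for the \emph{first coordinates}: the actual proof is by induction on $\max\{o(\ff^i):1\leq i\leq n\}$. One first chooses finite sets $F^1<\dots<F^n$ in $L$ and convex weights with $\|\widehat{\varphi}_i(\emptyset)-\sum_{l\in F^i}\lambda^i_l\widehat{\varphi}_i(\{l\})\|<\varepsilon/2$ (this is where weak convergence of $\widehat{\varphi}_i(\{l\})$ to $\widehat{\varphi}_i(\emptyset)$ and convex combinations enter), and then applies the inductive hypothesis to the lower-order derived families $\ff^{i_j}_{(l_j)}$, with the maps $s\mapsto\widehat{\varphi}_{i_j}(\{l_j\}\cup s)$, restricted beyond $\max\bigcup_i F^i$; the sets $G^i=\{\{l_j\}\cup s: l_j\in F^i,\ s\in\widetilde{G}^j\}$ with product weights then satisfy (a) by a triangle inequality and (b) because the first coordinates are interleaved by the choice $F^1<\dots<F^n$ while the tails are interleaved by the inductive plegma property. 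This recursive decomposition $t=\{l\}\cup s$ is the missing idea in your proposal; without it condition (b) cannot be achieved.
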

\begin{proof}
  We use induction on $\xi_{\max}=\max\{o(\ff^i):1\leq i\leq n\}$. If
  $\xi_{\max}=0$, i.e. $\ff^i=\{\emptyset\}$ for all $1\leq i\leq n$ the result follows trivially.
  (Let $G^i=\ff^i$ and $\mu^i_\emptyset=1$.) Let $1\leq \xi\leq\omega_1$
  and suppose that the conclusion of the proposition holds provided that $\xi_{\max}<\xi$.
  We will show that it also holds for $\xi_{\max}=\xi$. To this end let $n\in\nn$, $L\in[\nn]^\infty$ and
  $\ff^1,\ldots,\ff^n$ be regular thin families such that $\xi=\max\{o(\ff^i):1\leq i\leq n\}$.
  Let also for each $1\leq i\leq n$, $\widehat{\varphi}_i:\widehat{\ff^i}\upharpoonright L\to (X,w)$ be a continuous map.
  We set $x^i_s=\widehat{\varphi}_i(s)$ , for all $1\leq i\leq n$ and $s\in\widehat{\ff^i}\upharpoonright L$.
  We may suppose that $\ff^i $ is very large in $L$, therefore $\{l\}\in\widehat{\ff^i}$ for all $l\in L$ and $1\leq i\leq n$.
  Since for every $1\leq i\leq n$  $w\text{-}\lim_{l\in L}x_{\{l\}}^i=x^i_{\emptyset}$, we can choose
  finite subsets $F^1<\ldots<F^n$ of $L$ and sequences $(\lambda_l^1)_{l\in F^1},\ldots,(\lambda_l^n)_{l\in F^n}$ in $[0,1]$
  such that for every $i=1,\ldots,n$
  \begin{enumerate}
    \item[(i)] $\sum_{l\in F^i}\lambda_l^i=1$ and $\|x^i_\emptyset -\sum_{l\in F^i}\lambda_l^i x^i_{\{l\}}\|<\frac{\varepsilon}{2}$
  \end{enumerate}
  Let $F=\cup_{i=1}^n F^i=\{l_1<\ldots<l_m\}$ and $L'=\{l\in L: l>l_m\}$. For every $1\leq j\leq m$ let $i_j$ be the unique $i\in \{1,\ldots,n\}$
  such that $l_j\in F^i$. For every $1\leq j\leq n$ we define $\g^j=\ff^{i_j}_{(l_j)}$ and
  $\widehat{\psi}_j:\widehat{\g^j\upharpoonright L'}\to (X,w)$, with $\widehat{\psi}_j(s)=\widehat{\varphi}_{i_j}(\{l_j\}\cup s)$.
  Since $o(\g^j)<o(\ff^{i_j})$ we have that $\max\{o(\g^j):1\leq j\leq m\}<\max\{o(\ff^i):1\leq i\leq n\}$
  and therefore we may use the inductive assumption. Hence there exist nonempty finite subsets
  $\widehat{G}^1\subseteq \g^1\upharpoonright L',\ldots, \widehat{G}^m\subseteq \g^m\upharpoonright L'$ and sequences
  $(\widetilde{\mu}^1_s)_{s\in\widehat{G}^1},\ldots,(\widetilde{\mu}^m_s)_{s\in\widehat{G}^m}$ in $[0,1]$
  such that for every $1\leq j\leq m$
  \begin{enumerate}
    \item[(ii)] $\sum_{s\in \widetilde{G}^j}\widetilde{\mu}^j_s=1$ and
    $\|\widehat{\psi}_j(\emptyset)- \sum_{s\in \widetilde{G}^j}\widetilde{\mu}^j_s \widehat{\psi}_j(s)\|<\frac{\varepsilon}{2}$.
    \item[(iii)] For every choice $t_j\in \widetilde{G}^j$ the $m$-tuple $(t_1,\ldots,t_m)$ is plegma.
  \end{enumerate}
  For every $1\leq i\leq n$ we set
  $G^i=\big{\{}\{l_j\}\cup s:l_j\in F_i\text{ and } s\in \widetilde{G}^j\big{\}}$ and for every $t\in G^i$ we set
  $\mu_t^i=\lambda^i_{l_j}\cdot \widetilde{\mu}_s^j$, where $t=\{l_j\}\cup s$.
  It is easy to see that
  \begin{enumerate}
    \item[(a)] For all $1\leq i\leq n$, $\sum_{t\in G^i}\mu_t^i=1$.
    \item[(b)] For every choice of $t_i\in G^i$, the $n$-tuple $(t_1,\ldots,t_n)$ is plegma.
  \end{enumerate}
  It remains to show that
    $\|\widehat{\varphi}_i(\emptyset)-\sum_{t\in G^i}\mu_t^i\widehat{\varphi}_i(t)\|<\varepsilon$, for all $1\leq i\leq n$. Indeed
    \[\begin{split}
      \|\widehat{\varphi}_i(\emptyset)-\sum_{t\in G^i}\mu_t^i\widehat{\varphi}_i(t)\|\leq&
      \|\widehat{\varphi}_i(\emptyset)-\sum_{\{j:i_j=i\}}\lambda^i_{l_j}\widehat{\varphi}_i(\{l_j\}) \|\\
      &+\|\sum_{\{j:i_j=i\}}\lambda^i_{l_j}\widehat{\varphi}_i(\{l_j\}) - \sum_{t\in G^i}\mu_t^i\widehat{\varphi}_i(t)\|
    \end{split}\]
    By (i) above we have that
    \[\|\widehat{\varphi}_i(\emptyset)-\sum_{\{j:i_j=i\}}\lambda^i_{l_j}\widehat{\varphi}_i(\{l_j\}) \|=
    \|x^i_\emptyset -\sum_{l\in F^i}\lambda_l^i x^i_{\{l\}}\|<\frac{\varepsilon}{2}\]
    Moreover
    \[\sum_{t\in G^i}\mu_t^i\widehat{\varphi}_i(t)
    = \sum_{\{j:i_j=i\}}\sum_{s\in \widetilde{G}^j} \lambda^i_{l_j}\widetilde{\mu}^j_s \widehat{\psi}_j(s)  \]
    Hence by (ii)
    \[\|\sum_{\{j:i_j=i\}}\lambda^i_{l_j}\widehat{\varphi}_i(\{l_j\}) - \sum_{t\in G^i}\mu_t^i\widehat{\varphi}_i(t)\|
    \leq \sum_{\{j:i_j=i\}}\lambda^i_{l_j} \|\widehat{\psi}_j(\emptyset)- \sum_{s\in \widetilde{G}^j}\widetilde{\mu}^j_s \widehat{\psi}_j(s)\|
    <\frac{\varepsilon}{2}\]
    And the proof is complete.
\end{proof}

\begin{thm}\label{unconditional spreading model}
  Let $\ff$ be a regular thin family and $L\in[\nn]^\infty$.
  Let $(x_s)_{s\in\ff\upharpoonright L}$ be an family of elements in a Banach space $X$ generating
  a spreading model $(e_n)_{n\in\nn}$. Suppose that $(x_s)_{s\in\ff\upharpoonright L}$ is
  subordinated with respect to the weak topology on $X$ and let $\widehat{\varphi}:\widehat{\ff\upharpoonright L}\to (X,w)$ be the continuous map
  witnessing it. If $\widehat{\varphi}(\emptyset)=0$, then either
  $(e_n)_{n\in\nn}$ is trivial with $\|e_n\|_*=0$ for all
  $n\in\nn$ or the sequence $(e_n)_{n\in\nn}$ is 1-unconditional
  (i.e. for every $n\in\nn$, $F\subseteq\{1,\ldots,n\}$ and $a_1,\ldots,a_n\in\rr$ we have that
  $\|\sum_{i\in F}a_ie_i\|\leq\|\sum_{i=1}^na_ie_i\|$).
\end{thm}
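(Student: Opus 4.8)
The plan is to separate the two alternatives of the conclusion: first to show that if $(e_n)_{n\in\nn}$ is trivial then necessarily $\|e_n\|_*=0$ for all $n$, and then, in the non-trivial case, to prove that $(e_n)_{n\in\nn}$ is $1$-unconditional by the convex-combination argument classical for spreading models, with the hypothesis $\widehat\varphi(\emptyset)=0$ playing the role of ``weakly null''.

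For the trivial case I would argue as follows. By Theorem \ref{Theorem equivalent forms for having norm on the spreading model}, if $(e_n)_{n\in\nn}$ is trivial then there is a sequence $(s_n)_{n\in\nn}$ in $\ff\upharpoonright L$ with $\min s_n\to\infty$ such that $(x_{s_n})_{n\in\nn}$ is norm Cauchy, hence norm convergent to some $x\in X$. Since $\min s_n\to\infty$ we have $s_n\to\emptyset$ in $\widehat{\ff\upharpoonright L}$, so continuity of $\widehat\varphi$ at $\emptyset$ forces $x_{s_n}\to\widehat\varphi(\emptyset)=0$ weakly; hence $x=0$ (one may also invoke Proposition \ref{unique limit and subordinating keno}). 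Consequently $\|e_1\|_*=\lim_n\|x_{s_n}\|=\|x\|=0$, and by triviality $\|\sum_i a_i e_i\|_*=|\sum_i a_i|\cdot\|e_1\|_*=0$, so $\|e_n\|_*=0$ for every $n$. From now on I would assume $(e_n)_{n\in\nn}$ is not trivial and prove $1$-unconditionality.

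For the main case I would fix $n\in\nn$, $\emptyset\ne F\subseteq\{1,\dots,n\}$, $a_1,\dots,a_n\in[-1,1]$ (homogeneity reduces to this) and $\ee>0$. Let $(\delta_m)_m$ be the null sequence of Definition \ref{Definition of spreading model} witnessing that $(x_s)_{s\in\ff\upharpoonright L}$ generates $(e_n)_{n\in\nn}$, and choose $l\ge n$ with $\delta_l<\ee$. Put $L'=\{L(m):m\ge l\}$, so that each $s\in\ff\upharpoonright L'$ has $s(1)\ge L(l)$. Now apply Lemma \ref{Lemma finding convex  means} to the $n$ identical regular thin families $\ff^i=\ff$, the identical continuous maps $\widehat\varphi_i=\widehat\varphi|_{\widehat{\ff\upharpoonright L'}}$, the set $L'$ and $\ee$: this produces nonempty finite sets $G^1,\dots,G^n\subseteq\ff\upharpoonright L'$ and weights $(\mu^i_t)_{t\in G^i}$ in $[0,1]$ with $\sum_{t\in G^i}\mu^i_t=1$ such that, writing $z_i:=\sum_{t\in G^i}\mu^i_t x_t$, we have $\|z_i\|<\ee$ for every $i$ (here we use $\widehat\varphi(\emptyset)=0$), and such that every transversal $(t_1,\dots,t_n)$ with $t_i\in G^i$ is a plegma $n$-tuple. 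Then I would fix one element $s_i\in G^i$ for each $i\in F$.

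Finally I would assemble the estimate. Reading $(s_i)_{i\in F}$ in increasing order of $i$ gives a sub-tuple of a plegma $n$-tuple, hence a plegma tuple whose first element lies in $\ff\upharpoonright L'$; since $|F|\le n\le l$, Definition \ref{Definition of spreading model} together with the $1$-subsymmetry of $(e_n)_{n\in\nn}$ gives $\big|\,\|\sum_{i\in F}a_i x_{s_i}\|-\|\sum_{i\in F}a_i e_i\|_*\big|\le\delta_l$. On the other hand, expanding the products,
\[\sum_{i\in F}a_i x_{s_i}+\sum_{i\notin F}a_i z_i=\sum_{(t_i)_{i\notin F}}\Big(\prod_{i\notin F}\mu^i_{t_i}\Big)\Big(\sum_{i\in F}a_i x_{s_i}+\sum_{i\notin F}a_i x_{t_i}\Big),\]
which is a convex combination of the vectors $\sum_{i=1}^n a_i x_{u_i}$, where $(u_1,\dots,u_n)$ runs over the plegma transversals with $u_i=s_i$ for $i\in F$ and $u_i=t_i\in G^i$ for $i\notin F$; since $u_1(1)\ge L(l)$ and $n\le l$, each such vector has norm $\le\|\sum_{i=1}^n a_i e_i\|_*+\delta_l$. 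Combining this with $\|z_i\|<\ee$ and the previous inequality,
\[\Big\|\sum_{i\in F}a_i e_i\Big\|_*\le\delta_l+\Big\|\sum_{i\in F}a_i x_{s_i}\Big\|\le 2\delta_l+n\ee+\Big\|\sum_{i=1}^n a_i e_i\Big\|_*,\]
and letting $\ee\to0$ (hence $\delta_l\to0$) gives the required inequality. The main obstacle is the bookkeeping in this last step: one must guarantee that after substituting the small convex combinations $z_i$ for the coordinates $i\notin F$ the resulting $n$-tuples remain plegma and remain past the threshold $L(l)$ demanded by Definition \ref{Definition of spreading model}. This is precisely what Lemma \ref{Lemma finding convex  means} yields, once it is applied over the tail set $L'$; the rest is the classical computation.
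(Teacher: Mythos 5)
Your proposal is correct and follows essentially the same route as the paper: the trivial alternative is handled via Theorem \ref{Theorem equivalent forms for having norm on the spreading model} and Proposition \ref{unique limit and subordinating keno}, and the unconditionality via Lemma \ref{Lemma finding convex  means} applied to $n$ copies of $\ff$ over a tail of $L$, followed by the convex-combination/plegma-transversal estimate. The only (harmless) deviation is that you delete all coordinates outside $F$ simultaneously through a multilinear expansion, whereas the paper substitutes the convex mean at a single coordinate $p$ and obtains the general $F$ by iteration.
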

\begin{proof}
  Suppose that $(e_n)_{n\in\nn}$ is trivial. Then by Theorem
  \ref{Theorem equivalent forms for having norm on the spreading
  model} there exists $L_1\in[L]^\infty$ and $x_0\in X$ such that
  the $\ff$-subsequence $(x_s)_{s\in\ff\upharpoonright L_1}$
  converges to $x_0$. By Corollary \ref{subordinating yields convergence} we have that $x_0=\widehat{\varphi}(\emptyset)=0$. Since $(x_s)_{s\in\ff\upharpoonright
  L_1}$ also generates $(e_n)_{n\in\nn}$ we have that
  $\|e_1\|_*=0$. Since  $(e_n)_{n\in\nn}$ is spreading we get
  that $\|e_n\|_*=0$ for all $n\in\nn$.

  Assume that $(e_n)_{n\in\nn}$ is nontrivial. In this case it is enough to show that for every $n\in\nn$, $1\leq p\leq n$ and $a_1,\ldots,a_n\in [-1,1]$ we have
  that $$\Big{\|}\sum_{\substack{i=1\\i\neq p}}^na_ie_i\Big{\|}\leq\Big{\|}\sum_{i=1}^na_ie_i\Big{\|}$$
  To this end it suffices to prove that for every $\varepsilon >0$ we have that
  $$\Big{\|}\sum_{\substack{i=1\\i\neq p}}^na_ie_i\Big{\|}\leq\Big{\|}\sum_{i=1}^na_ie_i\Big{\|}+\varepsilon$$
  Indeed let $n\in\nn$, $1\leq p\leq n$, $a_1,\ldots,a_n\in [-1,1]$ and $\varepsilon >0$.
  We easily pass to $N\in[L]^\infty$ such that for every plegma $n$-tuple $(s_i)_{i=1}^n$ in $\ff\upharpoonright N$ the following hold
  \begin{equation}\label{eq8}
    \Bigg{|}\Big{\|}\sum_{i=1}^na_i x_{s_i}\Big{\|}-\Big{\|}\sum_{i=1}^na_ie_i\Big{\|} \Bigg{|}\leq\frac{\varepsilon}{3}\;\;\text{and}\;\; \Bigg{|} \Big{\|}\sum_{\substack{i=1\\i\neq p}}^na_ix_{s_i}\Big{\|} -\Big{\|}\sum_{\substack{i=1\\i\neq p}}^na_ie_i\Big{\|} \Bigg{|}\leq\frac{\varepsilon}{3}
  \end{equation}
  Let $\ff^1=\ldots=\ff^n=\ff$ and $\widehat{\varphi}_1=\ldots=\widehat{\varphi}_n=\widehat{\varphi}|_{\ff\upharpoonright N}$.
  By Lemma \ref{Lemma finding convex  means} there exist nonempty and finite $G^1,\ldots,G^n\subseteq\ff\upharpoonright N$ and sequences
  $(\mu^1_t)_{t\in G^1},\ldots,(\mu^n_t)_{t\in G^n}$ in $[0,1]$ such that
  \begin{enumerate}
    \item[(a)] For $1\leq i\leq n$, $\displaystyle\sum_{t\in G^i}\mu_t^i=1$ and
    $\displaystyle\|\sum_{t\in G^i}\mu_t^i x_t\|=\|\widehat{\varphi}(\emptyset)-
    \sum_{t\in G^i}\mu_t^i\widehat{\varphi}(t)\|<\frac{\varepsilon}{3}$.
    \item[(b)] For every choice of $t_i\in G^i$, the $n$-tuple $(t_1,\ldots,t_n)$ is a plegma family.
  \end{enumerate}
  We fix $s_i\in G^i$ for all $1\leq i\leq n$ with $i\neq p$ and we set $G=G^p$. So we have that $\sum_{t\in G}\mu_t^p=1$,
   $\|\sum_{t\in G}\mu_t^p x_t\|<\frac{\varepsilon}{3}$ and for every $t\in G$ the $n$-tuple $(s_1,\ldots,s_{p-1},t,s_{p+1},\ldots, s_n)$
    is a plegma family. Therefore by  (\ref{eq8}) we have that
  \[\begin{split}\Big{\|}\sum_{\substack{i=1\\i\neq p}}^na_ie_i\Big{\|} & \leq
  \Big{\|}\sum_{\substack{i=1\\i\neq p}}^na_ix_{s_i}\Big{\|}+\frac{\varepsilon}{3}
  \leq \Big{\|}\sum_{\substack{i=1\\i\neq p}}^na_ix_{s_i} +a_p\sum_{t\in G}\mu_t^p x_t\Big{\|}+|a_p|\frac{\varepsilon}{3}+\frac{\varepsilon}{3}\\
  &\leq\sum_{t\in G}\mu_t^p \Big{\|}\sum_{\substack{i=1\\i\neq p}}^na_ix_{s_i} +a_px_t\Big{\|}+\frac{2\varepsilon}{3}
  \leq \sum_{t\in G}\mu_t^p \Big{(}\Big{\|}\sum_{i=1}^n a_ie_i \Big{\|} +\frac{\varepsilon}{3} \Big{)}+\frac{2\varepsilon}{3} \\
  &=\Big{\|}\sum_{i=1}^n a_ie_i \Big{\|} +\varepsilon
    \end{split} \]
    and the proof is completed.
\end{proof}
By Corollary \ref{subordinating yields convergence} the following
is actually a restatement of the above theorem.
\begin{cor}\label{unconditional spreading model cor}
  Let $X$ be a Banach space, $\ff$ be a regular thin family,
  $L\in[\nn]^\infty$ and $(x_s)_{s\in\ff}$ an $\ff$-sequence in
  $X$ such that the $\ff$-subsequence
  $(x_s)_{s\in\ff\upharpoonright L}$ is seminormalized,
  subordinated and weakly null. Then every spreading model of $(x_s)_{s\in\ff\upharpoonright
  L}$ is 1-unconditional.
\end{cor}
\begin{rem}
  Since the weakly null sequences are identical with the
  subordinated ones, the above corollary is
  actually an extension of the classical result for unconditional
  spreading models generated by seminormalized weakly null
  sequences. Moreover, let us notice that if $o(\ff)\geq2$ then
  the additional assumption that
  requires the $\ff$-sequence to be subordinated is
  a necessary one. Indeed, consider the $[\nn]^2$-sequence in
  $c_0$,
  defined by \[x_s=\sum_{n=\min s}^{\max s}e_n\]
  for all $s\in[\nn]^2$, where $(e_n)_{n\in\nn}$ is the natural basis of $c_0$. It is easy to see that $(x_s)_{s\in[\nn]^2}$
  is normalized and weakly null. We will show that
  $(x_s)_{s\in[\nn]^2}$ generates a spreading model
  which is equivalent to the summing basis of $c_0$. Thus, although $(x_s)_{s\in[\nn]^2}$
  is normalized and weakly null, it does not generate unconditional spreading
  model. In order to show that $(x_s)_{s\in[\nn]^2}$ generates a
  spreading model equivalent to the summing basis of $c_0$, we
  will show that
  \begin{equation}\label{papae}
  \Big\|\sum_{j=1}^la_jx_{s_j}\Big\|=\max\Big(\max_{1\leq
  k\leq l}\Big|\sum_{j=1}^ka_j\Big|,\max_{1\leq k\leq
  l}\Big|\sum_{j=k}^la_j\Big|\Big)\end{equation}
  for all $l\in\nn$, $a_1,\ldots,a_l\in\rr$ and
  $(s_j)_{j=1}^l\in\text{\emph{Plm}}([\nn]^2)$. Indeed,
  let $l\in\nn$, $a_1,\ldots,a_l\in\rr$ and
  $(s_j)_{j=1}^l\in\text{\emph{Plm}}([\nn]^2)$. We set
  $z=\sum_{j=1}^la_jx_{s_j}$. Since
  $(s_j)_{j=1}^l$ is plegma, we have that \[\min s_1<\ldots<\min
  s_l<\max s_1<\ldots<\max s_l\]Therefore we have the following.
  \begin{enumerate}
    \item[(i)] If $1\leq n\leq \min s_1$ then $z(n)=0$.
    \item[(ii)] If $1\leq i<l$ and $\min s_i\leq n<\min s_{i+1}$
    then $z(n)=\sum_{j=1}^ia_j$.
    \item[(iii)] If $\min s_l\leq n\leq\max s_1$ then
    $z(n)=\sum_{j=1}^l a_j$.
    \item[(iv)] If $1<i\leq l$ and $\max s_{i-1}<n\leq\max s_i$
    then $z(n)=\sum_{j=i}^l a_j$.
    \item[(v)] If $n>\max s_l$ then $z(n)=0$.
  \end{enumerate}
  By (i)-(v) we have that equation (\ref{papae}) holds.
\end{rem}
\section{Singular spreading models}
In this section we characterize the  $\ff$-sequences  which
generate singular (i.e. nontrivial and  non Schauder basic)
spreading models. It turns out that the natural decomposition of
the singular spreading sequence is reflected back to the
$\ff$-sequence which generated it. More precisely for the case of
the  1-spreading models it is shown that every sequence which
generates a singular spreading model is  weakly convergent to a
nonzero element of norm  equal to that of the weak limit of the
singular sequence. The general case of an $\ff$-sequence is more
involved.

\subsection{Singular spreading models of order one}
\begin{lem}\label{isometric 1-subsym sequences}
  Let $(e_n)_{n\in\nn}$ and $(\widetilde{e}_n)_{n\in\nn}$ be two
  nontrivial sequences which are
  spreading and Ces\'aro summable to zero. Suppose that for
  every $n\in\nn$ and $\lambda_1,\ldots,\lambda_n$ with
  $\sum_{i=1}^n\lambda_i=0$, we have that
  \[\Big\|\sum_{i=1}^n\lambda_ie_i\Big\|=\Big\|\sum_{i=1}^n\lambda_i\widetilde{e}_i\Big\|\]
  Then they are isometric, i.e. for
  every $n\in\nn$ and $\lambda_1,\ldots,\lambda_n$, we have that
  \[\Big\|\sum_{i=1}^n\lambda_ie_i\Big\|=\Big\|\sum_{i=1}^n\lambda_i\widetilde{e}_i\Big\|\]
\end{lem}
\begin{proof}
  Let $n\in\nn$ and $\lambda_1,\ldots,\lambda_n$. Since $(e_n)_{n\in\nn}$ and
  $(\widetilde{e}_n)_{n\in\nn}$ are Ces\'aro summable to zero, we
  have that
\[\frac{1}{m}\sum_{j=1}^me_{n+j}\stackrel{m\to\infty}{\longrightarrow}0\;\text{and}
\;\frac{1}{m}\sum_{j=1}^m\widetilde{e}_{n+j}\stackrel{m\to\infty}{\longrightarrow}0\]
Since, for every $m\in\nn$,
$\sum_{i=1}^n\lambda_i-\sum_{j=1}^m\frac{\lambda}{m}=0$, we have
that
\[\begin{split}\Big\|\sum_{i=1}^n\lambda_ie_i\Big\|&=
\lim_{m\to\infty}\Big\|\sum_{i=1}^n\lambda_ie_i-\frac{\lambda}{m}\sum_{j=1}^me_{n+j}\Big\|\\
&=\lim_{m\to\infty}\Big\|\sum_{i=1}^n\lambda_i\widetilde{e}_i-\frac{\lambda}{m}\sum_{j=1}^m\widetilde{e}_{n+j}\Big\|
=\Big\|\sum_{i=1}^n\lambda_i\widetilde{e}_i\Big\|\end{split}\]
\end{proof}
\begin{lem}\label{lemmaforsingular}
  Let $X$ be a Banach space and $(x_n)_{n\in\nn}$ be a weakly
  convergent
  sequence in $X$ which generates a singular spreading model
  $(e_n)_{n\in\nn}$. Let $e_n=e'_n+e$ be the natural decomposition of
  $(e_n)_{n\in\nn}$ (see Proposition \ref{decomp of singular spr mod}).
  Let $x$ be the weak limit of $(x_n)_{n\in\nn}$ and let  $x'_n=x_n-x$,
  for all $n\in\nn$.

  Then $\|x\|=\|e\|$ and
  $(e'_n)_{n\in\nn}$ is the unique (up to isometry)
  spreading model of  $(x'_n)_{n\in\nn}$.
\end{lem}
\begin{proof}
  Let  $(\widetilde{e}_n)_{n\in\nn}$ be a spreading
  model generated by a subsequence  of
  $(x'_n)_{n\in\nn}$.
 Since
  $(x'_n)_{n\in\nn}$ is weakly null, we have that $(\widetilde{e}_n)_{n\in\nn}$  is
  $1$-unconditional. By Corollary \ref{equiv forms for 1-subsymmetric weakly
  null}, we have that either $(\widetilde{e}_n)_{n\in\nn}$  is
  equivalent to the usual basis of $\ell^1$, or it is is Ces\`aro summable to
  zero. The first alternative is excluded since otherwise by Corollary \ref{ultrafilter property for ell^1 spr mod simplified},
  we would have that the same holds for $(e_n)_{n\in\nn}$. Therefore   $(\widetilde{e}_n)_{n\in\nn}$ is Ces\`aro summable to
  zero. As $x_n=x_n'+x$ and $(x_n)_{n\in\nn}$ generates $(e_n)_{n\in\nn}$ as a spreading model we easily see that
  \[\lim_n\Big\|\frac{\sum_{i=1}^n e_i}{n}\Big\|=\|x\|\]
Since $e_n=e'_n+e$ and $(e'_n)_{n\in\nn}$ is also Ces\`aro
summable to zero, we get  that $\|x\|=\|e\|$.

It remains to show that $(\widetilde{e}_n)_{n\in\nn}$ and
$(e'_n)_{n\in\nn}$ are isometric. By Lemma \ref{isometric 1-subsym
sequences} it suffices to show that for every $n\in\nn$ and
$\lambda_1,\ldots\lambda_n\in\rr$ with $\sum_{i=1}^n\lambda_1=0$
we have that
\[\Big\|\sum_{i=1}^n\lambda_ie_i'\Big\|=\Big\|\sum_{i=1}^n\lambda_i\widetilde{e}_i\Big\|\]
Indeed, let $n\in\nn$ and $\lambda_1,\ldots\lambda_n\in\rr$ with
$\sum_{i=1}^n\lambda_1=0$. Indeed, let $(F_k)_{k\in\nn}$ be a
sequence of finite subsets of $\nn$ such that $|F_k|=n$, for all
$k\in\nn$, and $\min F_k\to\infty$. Then
\[\Big\|\sum_{i=1}^n\lambda_ie_i'\Big\|=\Big\|\sum_{i=1}^n\lambda_ie_i\Big\|=\lim_{k\to\infty}\Big\|\sum_{i=1}^n\lambda_ix_{F_k(i)}\Big\|=
\lim_{k\to\infty}\Big\|\sum_{i=1}^n\lambda_ix'_{F_k(i)}\Big\|=\Big\|\sum_{i=1}^n\lambda_i\widetilde{e}_i\Big\|\]
  \end{proof}
\begin{prop}
  Let $X$ be a Banach space and $(x_n)_{n\in\nn}$ be a
  sequence in $X$ which generates a singular spreading model
  $(e_n)_{n\in\nn}$. Let $e_n=e'_n+e$ be the natural decomposition of
  $(e_n)_{n\in\nn}$.

   Then there exists $x\in X\setminus \{0\}$
  such that $x_n\stackrel{w}{\to}x$, $\|x\|=\|e\|$ and setting
  $x'_n=x_n-x$, $(e'_n)_{n\in\nn}$ is the unique spreading model of $(x'_n)_{n\in\nn}$.
\end{prop}
\begin{proof} The proof splits into two claims as follows.
\bigskip

\textbf{Claim 1:}
  For every $M\in[\nn]^\infty$ there exist $L_M\in[M]^\infty$ and $x_M\in X\setminus \{0\}$
  such that $(x_n)_{n\in L_M}\stackrel{w}{\to}x_M$, $\|x_M\|=\|e\|$,  and setting $x'_n=x_n-x_M$, $(x'_n)_{n\in L_M}$
  generates $(e'_n)_{n\in\nn}$ as a spreading model.
\begin{proof}[Proof of Claim 1]
Notice that $(x_n)_{n\in\nn}$ does not contain any
  Schauder basic subsequence, since otherwise $(e_n)_{n\in\nn}$
  should be Schauder basic.
Let $M\in[\nn]^\infty$. Following similar arguments as in the
proof of Proposition \ref{decomp of singular spr mod} we have that
  there exist $L'_M\in[M]^\infty$ and $x_M\in
  X\setminus\{0\}$ such that $(x_n)_{n\in L_M}\stackrel{w}{\to}x_M$.
We pass to an $L_M\in[L'_M]^\infty$ such that $(x_n-x_M)_{n\in
L_M}$ generates spreading model.
   The remaining  assertions of
  the claim follow by Lemma \ref{lemmaforsingular}.
\end{proof}

\textbf{Claim 2:} There exists $x\in X$ such that  for every
$M\in[\nn]^\infty$, $x_M=x$.
\begin{proof}[Proof of Claim 2]
Let $M_1,M_2\in[\nn]^\infty$. We will show that $x_{M_1}=x_{M_2}$.
Let $\ee>0$. Since $(e'_n)_{n\in\nn}$ is spreading and Ces\'aro
summable to zero,  there exists $n_0\in\nn$ such that
\[\Big\|\frac{1}{n_0}\sum_{j=1}^{n_0}e'_j\Big\|<\frac{\ee}{4}\text{ and }
\Big\|\frac{1}{n_0}\sum_{j=1}^{n_0}e'_{n_0+j}\Big\|<\frac{\ee}{4}\]
Let $(F_k)_{k\in\nn}$ (resp. $(G_k)_{k\in\nn}$) be a sequence of
finite subsets of $L_{M_1}$ (resp. $L_{M_2}$) such that
$|F_k|=n_0$ (resp. $|G_k|=n_0$) for all $k\in\nn$,  $\min
F_k\to\infty$ (resp. $\min G_k\to\infty$) and $\max F_k<\min G_k$,
for all $k\in\nn$. Then
\[\lim_{k\to\infty}\Big\|\frac{1}{n_0}\sum_{j=1}^{n_0}x_{F_k(j)}-x_{M_1}\Big\|=\Big\|\frac{1}{n_0}\sum_{j=1}^{n_0}e'_j\Big\|<\frac{\ee}{4}\]
and
\[\lim_{k\to\infty}\Big\|\frac{1}{n_0}\sum_{j=1}^{n_0}x_{G_k(j)}-x_{M_2}\Big\|=\Big\|\frac{1}{n_0}\sum_{j=1}^{n_0}e'_j\Big\|<\frac{\ee}{4}\]
Moreover
\[\begin{split}\lim_{k\to\infty}\Big\|\frac{1}{n_0}\sum_{j=1}^{n_0}x_{F_k(j)}-&\frac{1}{n_0}\sum_{j=1}^{n_0}x_{G_k(j)}\Big\|
=\Big\|\frac{1}{n_0}\sum_{j=1}^{n_0}e_j-\frac{1}{n_0}\sum_{j=1}^{n_0}e_{n_0+j}\Big\|\\
&=\Big\|\frac{1}{n_0}\sum_{j=1}^{n_0}e'_j-\frac{1}{n_0}\sum_{j=1}^{n_0}e'_{n_0+j}\Big\|<\frac{\ee}{2}\end{split}\]
Therefore $\|x_{M_1}-x_{M_2}\|<\ee$. Since this holds for all
$\ee>0$ the proof of Claim 2 is complete.
\end{proof}
By Claim 2 we have that every subsequence of $(x_n)_{n\in\nn}$
contains a further subsequence weakly convergent to $x$. This
yields that $(x_n)_{n\in\nn}$ weakly converges to $x$ and by Lemma
\ref{lemmaforsingular} the result follows.
\end{proof}
\subsection{Singular spreading models of higher order}
The next is the main result of this subsection.
\begin{thm}\label{singhighord}
  Let $\ff$ be a regular thin family, $M\in[\nn]^\infty$ and
  $(x_s)_{s\in\ff}$ be a $\ff$-sequence in an Banach space $X$
  such that $(x_s)_{s\in\ff\upharpoonright M}$ generates a
  singular $\ff$-spreading model $(e_n)_{n\in\nn}$. Let $e_n=e'_n+e$ be the natural decomposition
  of $(e_n)_{n\in\nn}$. Then there
  exist $x\in X$ and $L\in[M]^\infty$ such that $\|x\|=\|e\|$ and setting
  $x'_s=x_s-x$, for all $s\in\ff\upharpoonright L$, we have that
  $(x'_s)_{s\in\ff\upharpoonright L}$ admits $(e'_n)_{n\in\nn}$ as
  a unique $\ff$-spreading model.
\end{thm}
For the proof of the above theorem we need some preliminary work.
We start with the following.
\begin{lem}\label{firstlem} Let $\ff$ be a regular thin family, $M\in[\nn]^\infty$ and
  $(x_s)_{s\in\ff}$ be a $\ff$-sequence in an Banach space $X$
  such that $(x_s)_{s\in\ff\upharpoonright M}$ generates a
  singular $\ff$-spreading model $(e_n)_{n\in\nn}$. Then
 for every $\ee>0$ there exists $n_\ee\in\nn$ such
  that for every $n,m\geq n_\ee$ and every
  $(s_j)_{j=1}^{n+m}\in\text{\emph{Plm}}(\ff\upharpoonright M)$
  with $s_1(1)\geq M(n+m)$, we have that
  \[\Big\|\frac{1}{n}\sum_{j=1}^nx_{s_j}-\frac{1}{m}\sum_{j=1}^mx_{s_{n+j}}\Big\|<\ee\]
\end{lem}
\begin{proof} Let $e_n=e'_n+e$ be the natural decomposition of the
singular sequence $(e_n)_{n\in\nn}$. By Proposition \ref{decomp of
singular spr mod} we have that the sequence $(e'_n)_{n\in\nn}$ is
and Ces\'aro summable to zero. Let  $\ee>0$ and choose $n_0\in\nn$
such that for all $n\geq n_0$,
\[\Big\|\frac{1}{n}\sum_{i=1}^n e'_i\Big\|<\ee/4\]
Then for all $n,m\geq n_0$,
\[\Big\|\frac{1}{n}\sum_{i=1}^n e_i-\frac{1}{m}\sum_{i=1}^m e_{n+i}\Big\|=
\Big\|\frac{1}{n}\sum_{i=1}^n e'_i-\frac{1}{m}\sum_{i=1}^m
e'_{n+i}\Big\|<\ee/2\] This yields that for some sufficiently
large $n_\ee\geq n_0$,
\[\Big\|\frac{1}{n}\sum_{j=1}^nx_{s_j}-\frac{1}{m}\sum_{j=1}^mx_{s_{n+j}}\Big\|<\ee\]
 for every $n,m\geq n_\ee$ and every
  $(s_j)_{j=1}^{n+m}\in\text{\emph{Plm}}(\ff\upharpoonright M)$
  with $s_1(1)\geq M(n+m)$.
\end{proof}

\begin{prop}\label{Schrplces}
  Let $\ff$ be a regular thin family, $M\in[\nn]^\infty$ and
  $(x_s)_{s\in\ff}$ an $\ff$-sequence in a Banach space $X$.
  Assume that for every $\ee>0$ there exists $n_\ee\in\nn$ such
  that for every $n,m\geq n_\ee$ and every
  $(s_j)_{j=1}^{n+m}\in\text{\emph{Plm}}(\ff\upharpoonright M)$
  with $s_1(1)\geq M(n+m)$ we have that
  \[\Big\|\frac{1}{n}\sum_{j=1}^nx_{s_j}-\frac{1}{m}\sum_{j=1}^mx_{s_{n+j}}\Big\|<\ee\]
  Then there exists $L\in[M]^\infty$ and  $x\in X$ such that the
  following holds.

   For every $\ee>0$
  there exists $n_0\in\nn$ such that for every $n\geq n_0$ and
  every
  $(s_j)_{j=1}^n\in\text{\emph{Plm}}(\ff\upharpoonright
  L)$ with $s_1(1)\geq L(n)$  we have that
  \[\Big\|\frac{1}{n}\sum_{j=1}^nx_{s_j}-x\Big\|<\ee\]
\end{prop}
\begin{proof} We will show the Cauchy version of the above proposition, i.e. that  there exists
$L\in[\nn]^\infty$ such that  for every $\ee>0$
  there exists $n_0\in\nn$ such that for every $n,m\geq n_0$ and
  every
  $(s_j)_{j=1}^n,(t_j)_{j=1}^m\in\text{\emph{Plm}}(\ff\upharpoonright
  L)$ with $s_1(1)\geq L(n)$ and $t_1(1)\geq L(m)$ we have that
 \begin{equation}
 \label{eqstar}\Big\|\frac{1}{n}\sum_{j=1}^nx_{s_j}-\frac{1}{m}\sum_{j=1}^mx_{t_j}\Big\|<\ee\end{equation}
 The above yields the existence of the desired $x\in X$.  Indeed, for each $k\in\nn$, let
\[A_k=\Big\{\frac{1}{n}\sum_{i=1}^n x_{s_i}: \; n\geq k,\;(s_i)_{i=1}^n\in\text{\emph{Plm}}(\ff\upharpoonright
L)\;\text{and}\;s_1(1)\geq n \Big\}\] Clearly the sequence
$(A_k)_{k\in\nn}$ is decreasing and  $\text{diam}(A_k)\to 0$.
Therefore there exists $x\in X$ such that $\cap_{k=1}^\infty
\overline{A_k}=\{x\}$. It is easy to see that $x$ satisfies the
conclusion.

 By  passing to an
  infinite subset of $M$ if necessary, we may assume that $\ff$ is very large in
  $M$.
  Let  $(\delta_k)_{k\in\nn}$ be a decreasing null sequence of positive
  reals such that $\sum_{k=1}^\infty\delta_k<\infty$. By our
  assumption there exists a sequence $(n_k)_{k\in\nn}$ of natural
  numbers such that for every $k\in\nn$, for every $n,m\geq n_k$ and every
  $(s_j)_{j=1}^{n+m}\in\text{\emph{Plm}}(\ff\upharpoonright M)$
  with $s_1(1)\geq M(n+m)$ we have that
  \[\Big\|\frac{1}{n}\sum_{j=1}^nx_{s_j}-\frac{1}{m}\sum_{j=1}^mx_{s_{n+j}}\Big\|<\delta_k\]
  We may also suppose that $(n_k)_{k\in\nn}$ is strictly
  increasing.

  Let $(F_k)_{k\in\nn}$ be a  sequence of finite
  subsets of $\nn$ such that for every $k\in\nn$ the following are
  satisfied:
  \begin{enumerate}
    \item[(1)] $\max F_k<\min F_{k+1}$,
    \item[(2)] $|F_k|=n_k$ and
    \item[(3)] $\min F_k\geq n_k+n_{k+1}$.
  \end{enumerate}
  We set \[N=\{M(\max F_k):k\in\nn\}\;\;\;\text{ and}\;\;\;
  L=\{N(2p+1):p\in\nn\}\] We claim  that   $L$ satisfies (\ref{eqstar}).
 We split the proof into two steps.

 \bigskip

  \textbf{Step 1.} For every $n\in N$, let $k_n$ be the unique positive integer such that
  $n=M(\max F_{k_n})$. Also for every $s\in\ff\upharpoonright N$, we set  $k_s=k_{s(1)}=k_{\min s}$.
  Finally for each $1\leq j\leq n_{k_s}$, let $v^s_j\in\ff\upharpoonright
  M$ be the unique element of $\ff\upharpoonright M$ such that
  \[v^s_j\sqsubseteq \big\{M(F_{k_{s(p)}}(n_{k_{s(p)}}-n_{k_s}+j)):p=1,\ldots,|s|\big\}\]
  (Using that $\ff$ is regular and very large in $M$ one can easily verify the existence of $v^s_j$, for all $1\leq j\leq n_{k_s}$).

Then the following hold.
  \begin{enumerate}
    \item[(a)] For every $s\in\ff\upharpoonright N$ we have that
    $v^s_{n_{k_s}}=s$ and $v^s_1(1)=M(F_{k_s}(1))$.
    \item[(b)] For every $m\in\nn$ and
    $(s_j)_{j=1}^m\in\text{\emph{Plm}}(\ff\upharpoonright N)$ we
    have that the $\sum_{j=1}^m n_{k_{s_j}}$-tuple
    $(v^{s_1}_1,\ldots,v^{s_1}_{n_{k_{s_1}}},v^{s_2}_1,\ldots,v^{s_2}_{n_{k_{s_2}}},\ldots,v^{s_m}_1,\ldots,v^{s_m}_{n_{k_{s_m}}})$
    is a plegma family in $\ff\upharpoonright M$.
  \end{enumerate}
 The proof of the above assertions is straightforward.

\bigskip

\textbf{Step 2.} Fix $\ee>0$. Let $k_0\in\nn$ be  such that
$\delta_{k_0}+\sum_{k=k_0}^\infty\delta_k<\frac{\ee}{2}$ and
$s_0\in\ff$ with $s_0\sqsubseteq
\{N(2p):p\in\nn\;\text{and}\;2p\geq k_0\}$. We set
\[n_0=\max(n_{k_0}, k_{\max s_0}+3)\]
Then for every $m\geq n_0$ and $(t_j)_{j=1}^m\in\ff\upharpoonright
L$ with $t_1(1)\geq L(m)$ we have that
\[\Big\|\frac{1}{n_{k_{s_0}}}\sum_{j=1}^{n_{k_{s_0}}}x_{v^{s_0}_j}-\frac{1}{m}\sum_{j=1}^mx_{t_j}\Big\|<\frac{\ee}{2}\]
\begin{proof}[Proof of Step 2]
  Let $m\geq n_0$ and
$(t_j)_{j=1}^m\in\ff\upharpoonright L$ with $t_1(1)\geq L(m)$. Let
$\widetilde{t}_1\in\ff$ be  such that
\[\widetilde{t}_1\sqsubseteq\{N(d_p-1):p=1,\ldots,|t_1|\}\] where
$d_p$ is defined to be the unique natural number such that
$t_1(p)=N(d_p)$, for all $1\leq p\leq |t_1|$. Since
$t_1\in\ff\upharpoonright L$ and $L=N(2\nn+1)$, we get that
$\widetilde{t}_1\in\ff\upharpoonright N(2\nn)$. Moreover, by the
definition of $n_0$, we  also have that $\max
s<\min\widetilde{t}_1$. Therefore by Proposition \ref{accessing
everything with plegma path of length |s_0|} we have that there
exists a plegma path $(s_l)_{l=0}^{|s|}$ from $s_0$ to
$\widetilde{t}_1$ of length $|s_0|$.

Let  $1\leq l\leq |s_0|-1$.  Then the pair $(s_l,s_{l+1})$ is a
plegma pair in $\ff\upharpoonright N$ and so, by property (b) of
Step 1, we have that
$(v^{s_l}_1,\ldots,v^{s_l}_{n_{k_{s_j}}},v^{s_{l+1}}_1,\ldots,v^{s_{l+1}}_{n_{k_{s_{l+1}}}})$
and thus
$(v^{s_j}_1,\ldots,v^{s_j}_{n_{k_{s_0}+l}},v^{s_{l+1}}_1,\ldots,v^{s_{l+1}}_{n_{k_{s_0}+l+1}})$
is a plegma family in $\ff\upharpoonright M$. Moreover by the
property (a) of Step 1 and the fact that   $\min F_k\geq
n_k+n_{k+1}$, we notice that
\[v^{s_l}_1(1)=M(F_{k_{s_l}}(1))\geq M(F_{k_{s_0}+l}(1))\geq M(n_{k_{s_0}+l}+n_{k_{s_0}+l+1})\]
Hence
\[\Big\|\frac{1}{n_{k_{s_0}+l}}\sum_{j=1}^{n_{k_{s_0}+l}}x_{v^{s_l}_j}-\frac{1}{n_{k_{s_0}+l+1}}
\sum_{j=1}^{n_{k_{s_0}+l+1}}x_{v^{s_{l+1}}_j}\Big\|<\delta_{k_0+l}\]
 Thus
\[\Big\|\frac{1}{n_{k_{s_0}}}\sum_{j=1}^{n_{k_{s_0}}}x_{v^{s_0}_j}-\frac{1}{n_{k_{s_0}+|s_0|}}
\sum_{j=1}^{n_{k_{s_0}+|s_0|}}x_{v^{\widetilde{t}_1}_j}\Big\|<\sum_{j=0}^{|s_0|-1}\delta_{k_0+j}<\sum_{k=k_0}^\infty\delta_k\]
Therefore it  suffices to show that
\[\Big\|\frac{1}{n_{k_{s_0}+|s_0|}}\sum_{j=1}^{n_{k_{s_0}+|s_0|}}x_{v^{\widetilde{t}_1}_j}-
\frac{1}{m}\sum_{j=1}^mx_{t_j}\Big\|<\delta_{k_0}\] To see this,
we first notice that $(\widetilde{t}_1, t_1,\ldots,t_m)$ is a
plegma family in $\ff\upharpoonright N$ which implies that
$(v^{\widetilde{t}_1}_1,\ldots,v^{\widetilde{t}_1}_{n_{k_{s_0}+|s_0|}},
t_1,\ldots,t_m)$ is also a plegma family in $\ff\upharpoonright
M$. Therefore, by our initial assumption  for $M$,  it would be
enough to verify that \[v^{\widetilde{t}_1}_1(1)\geq
M(n_{k_{s_0}+|s_0|}+m)\] To this end we observe that
\[v^{\widetilde{t}_1}_1(1)=M(F_{n_{k_{\widetilde{t}_1}}}(1))\geq M(n_{k_{\widetilde{t}_1}}+n_{k_{\widetilde{t}_1}+1})=
M(n_{k_{\widetilde{t}_1}}+n_{k_{t_1}})\] and so it suffices to
show that
\[n_{k_{\widetilde{t}_1}}+n_{k_{t_1}}>n_{k_{s_0}+|s_0|}+m\]
Indeed, since $\widetilde{t}_1=s_{|s_0|}$, we have that
$n_{k_{\widetilde{t}_1}}\geq n_{k_{s_0}+|s_0|}$. Also  since
\[M(\max F_{k_{t_1}})=t_1(1)\geq L(m)=N(2m+1)>N(m)=M(\max F_m)\]
we get that $k_{t_1}>m$ which implies that  $n_{k_{t_1}}\geq m$.
Therefore
$n_{k_{\widetilde{t}_1}}+n_{k_{t_1}}>n_{k_{s_0}+|s_0|}+m$ and the
proof of Step 2 is complete.
\end{proof}

By Step 2 we have that for every $\ee>0$ there exists $n_0$ in
$\nn$ such that for every $n,m\geq n_0$ and
$(s_j)_{j=1}^n,(t_j)_{j=1}^m\in\ff\upharpoonright L$ with
$s_1(1)\geq L(n)$ and $t_1(1)\geq L(m)$ we have that
\[\Big\|\frac{1}{n_{k_{s_0}}}\sum_{j=1}^{n_{k_{s_0}}}x_{v^{s_0}_j}-\frac{1}{n}\sum_{j=1}^mx_{s_j}\Big\|<\frac{\ee}{2}\]
and
\[\Big\|\frac{1}{n_{k_{s_0}}}\sum_{j=1}^{n_{k_{s_0}}}x_{v^{s_0}_j}-\frac{1}{m}\sum_{j=1}^mx_{t_j}\Big\|<\frac{\ee}{2}\]
Therefore
\[\Big\|\frac{1}{n}\sum_{j=1}^mx_{s_j}-\frac{1}{m}\sum_{j=1}^mx_{t_j}\Big\|<\ee\]
and the proof is complete.
\end{proof}
We are now ready for the proof of the main result.
\begin{proof}[Proof of Theorem  \ref{singhighord}]
Let $\ff$ be a regular thin family, $M\in [\nn]^\infty$ and
  $(x_s)_{s\in\ff}$ be a $\ff$-sequence in an Banach space $X$
  such that $(x_s)_{s\in\ff\upharpoonright M}$ generates a
  singular $\ff$-spreading model $(e_n)_{n\in\nn}$. By Lemma \ref{firstlem} and Proposition
  \ref{Schrplces} there exists  $L\in[M]^\infty$ and $x\in X$ such
  that for every $\ee>0$
  there exists $n_0\in\nn$ such that
  \begin{equation}\label{eq2star}\Big\|\frac{1}{n}\sum_{j=1}^nx_{s_j}-x\Big\|<\ee\end{equation}
for all $n\geq n_0$ and
$(s_j)_{j=1}^n\in\text{\emph{Plm}}(\ff\upharpoonright
  L)$ with $s_1(1)\geq L(n)$. For every $s\in\ff\upharpoonright L$ we set
\[x'_s=x_s-x\]
Let $e_n=e'_n+e$ be the natural decomposition
  of $(e_n)_{n\in\nn}$. We will first show that $\|e\|=\|x\|$. Indeed, since
$(e'_n)_{n\in\nn}$ is Ces\`aro summable to zero, we have that
\[\lim_n\Big\|\frac{\sum_{i=1}^n e_i}{n}-e\Big\|=0\] For
each $n\in\nn$ let
$(s_i^n)_{i=1}^n\in\text{\emph{Plm}}(\ff\upharpoonright L)$, with
$s_i^n(1)\geq L(n)$. Then by (\ref{eq2star}), we have that
\[\lim_n\Big\|\frac{1}{n}\sum_{i=1}^n x_{s_i^n}-x\Big\|=0\]
Moreover since $(x_s)_{s\in\ff\upharpoonright L}$ also generates
$(e_n)_{n\in\nn}$ as an $\ff$-spreading model, we obtain that
\[\lim_n\Bigg|\Big\|\frac{1}{n}\sum_{i=1}^n x_{s_i^n}\Big\|-\Big\|\frac{1}{n}\sum_{i=1}^n
e_i\Big\|\Bigg|=0\] Therefore we conclude that
\[\|e\|=\lim_n\Big\|\frac{1}{n}\sum_{i=1}^n e_i\Big\|=\lim_n\Big\|\frac{1}{n}\sum_{i=1}^n x_{s_i^n}\Big\|=\|x\|\]

 We claim that
$(e'_n)_{n\in\nn}$ is the unique $\ff$-spreading model of
$(x'_s)_{s\in\ff\upharpoonright L}$. Indeed let $L'\in [L]^\infty$
be such that $(x'_s)_{s\in\ff\upharpoonright L'}$ generates an
$\ff$-spreading model $(\widetilde{e}_n)_{n\in\nn}$. We have to
show that $(\widetilde{e}_n)_{n\in\nn}$ is isometric to
$(e'_n)_{n\in\nn}$. Notice that for all $n\in\nn$,
 \[\frac{1}{n}\sum_{j=1}^nx'_{s_j}=\frac{1}{n}\sum_{j=1}^nx_{s_j}\]
 Hence by
(\ref{eq2star}) we have that  that $(\widetilde{e}_n)_{n\in\nn}$
is Ces\'aro summable to zero. Therefore by Lemma \ref{isometric
1-subsym sequences} it suffices to show that for every $n\in\nn$
and $\lambda_1,\ldots\lambda_n\in\rr$ with
$\sum_{i=1}^n\lambda_1=0$ we have that
\[\Big\|\sum_{i=1}^n\lambda_ie_i'\Big\|=\Big\|\sum_{i=1}^n\lambda_i\widetilde{e}_i\Big\|\]
Indeed, let $n\in\nn$ and $\lambda_1,\ldots\lambda_n\in\rr$ with
$\sum_{i=1}^n\lambda_1=0$. Let $((s_j^k)_{j=1}^n)_{k\in\nn}$ be a
sequence in $\text{\emph{Plm}}_n(\ff\upharpoonright L)$ such that
$\min s_1^k\to \infty$. Then
\[\Big\|\sum_{i=1}^n\lambda_ie_i'\Big\|=\Big\|\sum_{i=1}^n\lambda_ie_i\Big\|=\lim_{k\to\infty}\Big\|\sum_{i=1}^n\lambda_ix_{s^k_i}\Big\|=
\lim_{k\to\infty}\Big\|\sum_{i=1}^n\lambda_ix'_{s^k_i}\Big\|=\Big\|\sum_{i=1}^n\lambda_i\widetilde{e}_i\Big\|\]
\end{proof}
 We close this subsection by stating  a corresponding result for spaces with separable dual.
 This result will be used later. We will need the following lemmas.
 \begin{lem}\label{convex means to zero yields weak null}
  Let $\ff$ be a regular thin family, $L\in[\nn]^\infty$ and
  $(x'_s)_{s\in\ff}$ be an $\ff$-sequence in a Banach space $X$.
  Suppose that for every $N\in[L]^\infty$ and $\ee>0$ there exist
  $k\in\nn$, $\lambda_1,\ldots,\lambda_k>0$ and
  $(s_j)_{j=1}^k\in\text{\emph{Plm}}(\ff\upharpoonright N)$ such
  that
  \begin{enumerate}
    \item[(i)] $\sum_{j=1}^k\lambda_j=1$ and
    \item[(ii)] $\|\sum_{j=1}^k\lambda_jx'_{s_j}\|<\ee$.
  \end{enumerate}
  Then for every $x^*\in X^*$, $\ee>0$ and $N\in[L]^\infty$ there exists $N'\in[N]^\infty$ such that
  for every $s\in\ff\upharpoonright N'$, $|x^*(x'_s)|<\ee$.
\end{lem}
\begin{proof}
  Let $x^*\in X^*$, $\ee>0$ and $N\in[L]^\infty$. Then by
  Theorem \ref{th2} there exists $N'\in[N]^\infty$ such that one
  of the following holds:
  \begin{enumerate}
    \item[(a)] For every $s\in\ff\upharpoonright N'$,
    $|x^*(x'_s)|<\ee$.
    \item[(b)] For every $s\in\ff\upharpoonright N'$,
    $x^*(x'_s)\geq\ee$.
    \item[(c)] For every $s\in\ff\upharpoonright N'$,
    $x^*(x'_s)\leq-\ee$.
  \end{enumerate}
We will show that the case (b) is excluded.  Indeed, suppose that
case (b) holds. Then by our assumption there exist  $k\in\nn$,
$\lambda_1,\ldots,\lambda_k>0$ and
  $(s_j)_{j=1}^k\in\text{\emph{Plm}}(\ff\upharpoonright N')$ such
  that
  \begin{enumerate}
    \item[(i)] $\sum_{j=1}^k\lambda_j=1$ and
    \item[(ii)] $\|\sum_{j=1}^k\lambda_jx'_{s_j}\|<\ee$.
  \end{enumerate}
  Then \[\Big\|\sum_{j=1}^k\lambda_jx'_{s_j}\Big\|\geq x^*\Big(\sum_{j=1}^k\lambda_jx'_{s_j}\Big)=
  \sum_{j=1}^k\lambda_jx^*(x'_{s_j})\geq \ee\] which contradicts
  (ii). In  a similar way case
(c) is also excluded and therefore (a) holds.
\end{proof}

By a standard diagonalization argument one may prove the
following.
\begin{lem}\label{convex means to zero yields weak null2}
  Let $\ff$ be a regular thin family, $L\in[\nn]^\infty$ and
  $(x'_s)_{s\in\ff}$ be an $\ff$-sequence in a Banach space $X$ with separable dual.
  Suppose that for every $N\in[L]^\infty$ and $\ee>0$ there exists
  $k\in\nn$, $\lambda_1,\ldots,\lambda_k>0$ and
  $(s_j)_{j=1}^k\in\text{\emph{Plm}}(\ff\upharpoonright N)$ such
  that
  \begin{enumerate}
    \item[(i)] $\sum_{j=1}^k\lambda_j=1$ and
    \item[(ii)] $\|\sum_{j=1}^k\lambda_jx'_{s_j}\|<\ee$.
  \end{enumerate}
  Then there exists $N\in[L]^\infty$ such that the $\ff$-subsequence $(x'_s)_{s\in\ff\upharpoonright N}$ is weakly null.
\end{lem}

  \begin{cor}\label{singular in space with separable dual}
  Let $\ff$ be a regular thin family, $M\in[\nn]^\infty$ and
  $(x_s)_{s\in\ff}$ be a $\ff$-sequence in an Banach space $X$ with separable dual
  such that $(x_s)_{s\in\ff\upharpoonright M}$ generates a
  singular $\ff$-spreading model $(e_n)_{n\in\nn}$. Let $e_n=e'_n+e$ be the natural decomposition
  of  $(e_n)_{n\in\nn}$. Then there
  exist $x\in X$ and $N\in[M]^\infty$ such that $(x_s)_{s\in\ff\upharpoonright N}$ weakly converges to $x$, $\|x\|=\|e\|$ and setting
  $x'_s=x_s-x$, for all $s\in\ff\upharpoonright N$, we have that
  $(x'_s)_{s\in\ff\upharpoonright N}$ admits $(e'_n)_{n\in\nn}$ as
  a unique $\ff$-spreading model.
\end{cor}
\begin{proof}
By Theorem \ref{singhighord}, there   exist $x\in X$ and
$L\in[M]^\infty$ such that  $\|x\|=\|e\|$ and setting
  $x'_s=x_s-x$, for all $s\in\ff\upharpoonright L$, we have that
  $(x'_s)_{s\in\ff\upharpoonright L}$ admits $(e'_n)_{n\in\nn}$ as
  a unique $\ff$-spreading model. By Lemma \ref{convex means to zero yields weak null2},  there exists $N\in[L]^\infty$ such that
  $(x'_s)_{s\in\ff\upharpoonright N}$ is weakly null or equivalently $(x_s)_{s\in\ff\upharpoonright N}$
weakly converges to $x$ and the proof is complete.
\end{proof}

\section{Schauder basic spreading models}
In this section we provide sufficient conditions for an
$\ff$-sequence to generate a Schauder basic spreading model.
  \begin{defn}\label{Def of SSD}
    Let $A$ be a countable seminormalized subset of a Banach space $X$.
    We say that $A$ admits a \textit{Skipped Schauder Decomposition} (SSD) if there exist $K>0$ and
    a sequence $(F_n)_{n\in\nn}$ of finite subsets of $A$ such that
    \begin{enumerate}
      \item[(i)] $\cup_{n\in\nn}F_n=A$
      \item[(ii)] For every $L\in[\nn]^\infty$ not containing two successive integers and
      every sequence $(x_l)_{l\in L}$ with $x_l\in F_l$ for all $l\in L$,  $(x_l)_{l\in L}$ is
     a Schauder basic sequence of constant $K$.
    \end{enumerate}
  \end{defn}
  The following proposition is known but for the sake of
  completeness we outline its proof.
  \begin{prop}
    Let $(x_n)_{n\in\nn}$ be a seminormalized weakly null sequence
    in a Banach space $X$. Then for every $\varepsilon>0$ the
    sequence $(x_n)_{n\in\nn}$ admits a SSD with constant
    $1+\varepsilon$.
  \end{prop}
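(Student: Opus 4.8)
The plan is to carry out the classical Mazur--Bessaga--Pe\l czy\'nski construction of a near-isometric basic sequence, but in a ``blocked'' form adapted to the notion of an SSD. First I would break the sequence into consecutive finite blocks $F_1,F_2,\dots$ arranged so that every vector belonging to a block $F_m$ is almost orthogonal to the linear span of all the earlier blocks contained in $F_1\cup\cdots\cup F_{m-2}$. The role of the skipped restriction in Definition \ref{Def of SSD} is exactly this: if one picks one vector from each $F_l$ with $l$ ranging over a set $L$ with no two successive integers, then two consecutively chosen vectors are always separated by a whole intermediate block, which is precisely the buffer needed to invoke the almost-orthogonality estimate.

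A preliminary remark: since $(x_n)_n$ is seminormalized and weakly null, no nonzero vector can be a term of the sequence infinitely often (a constant subsequence would itself be seminormalized and weakly null), so each element of $A=\{x_n:n\in\nn\}$ occurs only finitely often, $A$ is infinite, and any injective enumeration of $A$ is again a seminormalized weakly null sequence; hence I may assume $n\mapsto x_n$ is one-to-one. Then I would fix positive reals $(\delta_m)_m$ with $\prod_{m\ge 1}(1+\delta_m)\le 1+\varepsilon$, set $K=1+\varepsilon$, and use the standard Mazur perturbation lemma: for every finite dimensional $Y\subseteq X$ and every $\delta>0$, all but finitely many $x_n$ satisfy $\|y+tx_n\|\ge(1+\delta)^{-1}\|y\|$ for all $y\in Y$ and all scalars $t$ (a compactness argument on the unit sphere of $Y$ together with weak nullity).

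Next I would build a strictly increasing sequence $0=p_0<p_1<p_2<\cdots$ by induction, one step at a time: once $p_0,\dots,p_{k-1}$ are chosen, the finite dimensional subspace $\operatorname{span}\{x_i:i\le p_{k-1}\}$ is determined, so I take a Mazur threshold $N_k$ for this subspace and the scalar $\delta_{k+1}$ and put $p_k=\max\{p_{k-1}+1,N_k\}$. Setting $F_m=\{x_i:p_{m-1}<i\le p_m\}$ then gives nonempty finite subsets of $A$ with $\bigcup_m F_m=A$, which is property (i); and, crucially, for every $m\ge 2$ and every index $n>p_{m-1}$ one has $n>p_{m-1}\ge N_{m-1}$, so $x_n$ is $\delta_m$-almost orthogonal to $\operatorname{span}\{x_i:i\le p_{m-2}\}$. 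To verify (ii), given $L=\{l_1<l_2<\cdots\}$ with no two successive integers (so $l_{j+1}\ge l_j+2$) and any selection $x_{l_j}\in F_{l_j}$, I note that $x_{l_1},\dots,x_{l_j}$ all have index $\le p_{l_j}\le p_{l_{j+1}-2}$ while $x_{l_{j+1}}$ has index $>p_{l_{j+1}-1}$ and $l_{j+1}\ge 3$; applying the property just recorded with $m=l_{j+1}$ shows that each new vector $x_{l_{j+1}}$ is $(1+\delta_{l_{j+1}})^{-1}$-almost orthogonal to $\operatorname{span}\{x_{l_1},\dots,x_{l_j}\}$. By the standard criterion (if each new vector is $(1+\delta_j)^{-1}$-almost orthogonal to the span of its predecessors, the sequence is Schauder basic with basis constant at most $\prod_j(1+\delta_j)$), the sequence $(x_{l_j})_j$ is Schauder basic with basis constant at most $\prod_{j\ge 1}(1+\delta_{l_{j+1}})\le\prod_{m\ge 1}(1+\delta_m)\le 1+\varepsilon=K$.

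The main thing to get right is the bookkeeping in the inductive choice of the $p_k$: the Mazur threshold for a given span must be fixed \emph{before} one declares where the next block ends, so that every block genuinely consists of vectors almost orthogonal to everything two or more blocks before it. Once this is arranged, the hypothesis that $L$ skips integers supplies exactly the one-block buffer needed, and the remainder is the textbook near-isometric basic sequence argument; I do not anticipate any serious obstacle beyond this indexing care.
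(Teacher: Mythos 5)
Your argument is correct, but it takes a genuinely different route from the paper's. The paper first reduces to a space with a bimonotone-type situation: it assumes (after an isometric embedding, e.g.\ into $C[0,1]$) that $X$ has a Schauder basis with basis constant $1$, then runs a sliding hump induction producing finitely supported vectors $y_n$ with $\|x_n-y_n\|<\varepsilon/2^n$ and with supports increasing between blocks that are two apart; a skipped choice is then a small perturbation of a block sequence of a monotone basis, and the principle of small perturbations gives the constant $1+\varepsilon$. You instead work intrinsically in $X$ via Mazur's almost-orthogonality lemma for seminormalized weakly null sequences: you fix thresholds $N_k$ for the finite-dimensional spans $\mathrm{span}\{x_i:i\le p_{k-1}\}$ before closing each block, so that skipping one block guarantees each newly chosen vector is $(1+\delta_m)^{-1}$-almost orthogonal to the span of all previously chosen ones, and the product criterion yields the basis constant $\prod_m(1+\delta_m)\le 1+\varepsilon$. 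What your approach buys is that it avoids the embedding into $C[0,1]$ and the perturbation bookkeeping (where the $\varepsilon/2^n$ must in fact be calibrated against the lower seminormalization bound, a point the paper leaves to the reader); what the paper's approach buys is that it produces the SSD together with explicit block vectors in a space with a basis, which is the form in which SSDs are exploited elsewhere in the text. Your preliminary reduction to an injective enumeration (justified correctly: a repeated value would give a constant, hence weakly convergent to a nonzero limit, subsequence) is a worthwhile detail, since otherwise a vector could be selected twice and linear independence, hence basicity, would fail; the paper does not address this. Your indexing and the use of the skipped set $L$ (so $l_{j+1}\ge l_j+2$ and the predecessors lie in $\mathrm{span}\{x_i:i\le p_{l_{j+1}-2}\}$) check out.
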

  \begin{proof}
    We assume that $X$ has a Schauder basis $(e_n)_{n\in\nn}$ with
    basis constant 1 (for example we may assume that $X=C[0,1]$).
    By induction and using the sliding hump argument, we define a partition $(F_n)_{n\in\nn}$ of $\nn$
    into finite pairwise disjoint sets and and a sequence
    $(y_n)_{n\in\nn}$ of finite supported vectors in $X$ such that
    the following are
    fulfilled:
    \begin{enumerate}
      \item[(i)] If $n\in F_k$,
      $\|x_n-y_n\|<\frac{\varepsilon}{2^n}$.
      \item[(ii)] If $k+1<l$, $n\in F_k$ and $m\in F_l$ then
      $\max\text{supp}(y_n)<\min\text{supp}(y_m)$.
    \end{enumerate}
    It is
    easy to check that $(F_k)_{k\in\nn}$ is the desired SSD.
  \end{proof}
  \begin{lem}\label{lem48}
    Let $A$ be a subset of a Banach space $X$ admitting a SSD with constant $K$.
    Let $M\in[\nn]^\infty$, $\ff$ be a regular thin family and $(x_s)_{s\in\ff}$ an $\ff$-sequence in $A$.
     Suppose that $(x_s)_{s\in\ff\upharpoonright M}$ is hereditarily nonconstant
    and $(x_s)_{s\in\ff\upharpoonright M}$ generates $(e_n)_{n\in\nn}$ as an $\ff$-spreading model.
    Then $(e_n)_{n\in\nn}$ is Schauder basic with constant $K$.
  \end{lem}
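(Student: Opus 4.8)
The plan is to reduce the statement to the basis inequality $\|\sum_{i=1}^{j}a_i e_i\|_* \le K\,\|\sum_{i=1}^{l}a_i e_i\|_*$ for all $1\le j\le l$ and all scalars $a_1,\dots,a_l$, and to obtain this by transporting, along a suitably chosen plegma $l$-tuple in $\ff$, the corresponding inequality for the vectors $x_s$, which is furnished by the (SSD). First I would normalize the decomposition: replacing each $F_n$ by $F_n\setminus\bigcup_{m<n}F_m$ we may assume the sets $(F_n)_n$ are pairwise disjoint, since condition (ii) of Definition~\ref{Def of SSD} is inherited by subsets of the $F_n$'s. Define $\varphi\colon\ff\upharpoonright M\to\nn$ by letting $\varphi(s)$ be the unique $n$ with $x_s\in F_n$. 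The key point is that $\varphi$ is hereditarily nonconstant in $M$: if $\varphi$ were constant on some $\ff\upharpoonright L$ with $L\in[M]^\infty$, then $\{x_s:s\in\ff\upharpoonright L\}$ would lie in a single \emph{finite} set $F_{n_0}$, so $s\mapsto x_s$ would take only finitely many values on $\ff\upharpoonright L$, and Theorem~\ref{th2} would give $L'\in[L]^\infty$ on which $s\mapsto x_s$ is constant, contradicting the hereditary nonconstancy of $(x_s)_{s\in\ff\upharpoonright M}$. Hence Corollary~\ref{Proposition combinatorial for SSD} applies and yields $N\in[M]^\infty$ such that $\varphi(s_2)-\varphi(s_1)>1$ for every plegma pair $(s_1,s_2)$ in $\ff\upharpoonright N$; by Remark~\ref{remark on the definition of spreading model}(ii), $(x_s)_{s\in\ff\upharpoonright N}$ still generates $(e_n)_n$, say with null sequence $(\delta_n)_n$. (Note, again via Theorem~\ref{th2}, that $\varphi$ must take infinitely many values on $\ff\upharpoonright N$, so infinitely many $F_n$ are nonempty.)

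Next comes the combinatorial core. Let $l\in\nn$ and let $(s_1,\dots,s_l)$ be any plegma $l$-tuple in $\ff\upharpoonright N$. Each consecutive pair $(s_i,s_{i+1})$ is plegma, so $\varphi(s_1)<\varphi(s_2)<\cdots<\varphi(s_l)$ with no two consecutive integers among them. Enlarging $\{\varphi(s_1),\dots,\varphi(s_l)\}$ to an infinite set $L\subseteq\nn$ with no two consecutive integers, having $\{\varphi(s_1),\dots,\varphi(s_l)\}$ as an initial segment and with $F_n\neq\emptyset$ for all $n\in L$ (possible by the last remark), and choosing $y_n\in F_n$ for $n\in L$ with $y_{\varphi(s_i)}=x_{s_i}$, condition (ii) of the SSD makes $(y_n)_{n\in L}$ Schauder basic with basis constant $K$. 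Since $(x_{s_1},\dots,x_{s_l})$ is the length-$l$ initial segment of $(y_n)_{n\in L}$, the partial-sum projections give $\|\sum_{i=1}^{j}b_i x_{s_i}\|\le K\,\|\sum_{i=1}^{l}b_i x_{s_i}\|$ for all $1\le j\le l$ and all scalars $b_i$.

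Finally I would combine this with the defining estimates of the spreading model. Fix $1\le j\le l$ and $a_1,\dots,a_l\in[-1,1]$, and let $\varepsilon>0$. Choose $p\ge l$ with $\delta_p<\varepsilon$ and (as one may) a plegma $l$-tuple $(s_1,\dots,s_l)$ in $\ff\upharpoonright N$ with $s_1(1)\ge N(p)$. Since $(s_1,\dots,s_j)$ and $(s_1,\dots,s_l)$ are plegma tuples with first entry $\ge N(p)$ and $j\le l\le p$, Definition~\ref{Definition of spreading model} and the previous paragraph yield
\[\Big\|\sum_{i=1}^{j}a_i e_i\Big\|_*\le\Big\|\sum_{i=1}^{j}a_i x_{s_i}\Big\|+\delta_p\le K\Big\|\sum_{i=1}^{l}a_i x_{s_i}\Big\|+\delta_p\le K\Big\|\sum_{i=1}^{l}a_i e_i\Big\|_*+(K+1)\varepsilon.\]
Letting $\varepsilon\to0$ and using homogeneity, $\|\sum_{i=1}^{j}a_i e_i\|_*\le K\|\sum_{i=1}^{l}a_i e_i\|_*$ for all $j\le l$ and all scalars. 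As $(e_n)_n$ is linearly independent and $A$ is seminormalized (so $\|e_1\|_*>0$, whence the $j=1$ case shows $\|\cdot\|_*$ is a genuine norm on the span of $(e_n)_n$), this is exactly the assertion that $(e_n)_n$ is Schauder basic with basis constant $K$. I expect the main obstacle to be the first step — recognizing that the ``which $F_n$'' coloring $\varphi$ is hereditarily nonconstant, which genuinely uses the finiteness of the $F_n$ — together with the bookkeeping needed to pass from the ``infinite $L$'' in the definition of (SSD) to a single plegma $l$-tuple and then through the spreading-model inequalities.
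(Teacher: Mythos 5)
Your proof is correct and follows essentially the same route as the paper: you color each $s$ by the index of the SSD block containing $x_s$, use the finiteness of the blocks together with Theorem \ref{th2} to see this coloring is hereditarily nonconstant, invoke Corollary \ref{Proposition combinatorial for SSD} to get the skipped condition $\varphi(s_2)-\varphi(s_1)>1$ on plegma pairs, and then transfer the resulting $K$-basis inequality for plegma tuples to $(e_n)_{n}$ through the defining estimates of the spreading model. The only differences are that you spell out details the paper leaves implicit (disjointifying the $F_n$'s, extending a finite plegma tuple to an infinite skipped choice so as to apply Definition \ref{Def of SSD}(ii), and checking $\|e_1\|_*>0$), and you contradict hereditary nonconstancy of $(x_s)$ directly, which is cleaner than the paper's detour through triviality of $(e_n)_{n}$.
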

  \begin{proof} Let $(F_k)_{k\in\nn}$ be the
    partition of $A$ witnessing its SSD property and
    $\varphi:\ff\upharpoonright M\to \nn$,
    defined  by $\varphi(s)=k$ if $x_s\in F_k$.
    Observe that $\varphi$ is hereditarily nonconstant in $M$. Indeed, assume on the contrary.
    Then there exists $N_1\in[M]^\infty$ and  $k_0\in\nn$ such that for every  $s\in\ff\upharpoonright N_1$,
     $x_s\in F_{k_0}$ for all $s\in\ff\upharpoonright N_1$.
    Since $F_{k_0}$ is finite, by Theorem \ref{th2} we get $N_2\in[N_1]^\infty$ such that
    $(x_s)_{s\in\ff\upharpoonright N_2}$ is constant. Therefore
    by Theorem \ref{Theorem equivalent forms for having norm on the spreading model},
     we get that $(e_n)_{n\in\nn}$ is trivial which is a contradiction.

    Since $\varphi$ is hereditarily nonconstant in $M$, by Corollary \ref{Proposition combinatorial for SSD}
    there exists $N\in[M]^\infty$ such that $\varphi(s_2)-\varphi(s_1)>1$ for every plegma pair
    $(s_1,s_2)$ in $\ff\upharpoonright N$. Hence, by the SSD property of $A$ we have that for every $l\in\nn$ and for every
    plegma $l$-tuple $(s_j)_{j=1}^l$ in $\ff\upharpoonright N$ the sequence $(x_{s_j})_{j=1}^l$ is Schauder basic with
    constant $K$. This easily yields that $(e_n)_{n\in\nn}$ is Schauder basic with constant $K$.
  \end{proof}
  \begin{thm}\label{ssd giving Schauder basic}
    Let $A$ be a subset of a Banach space $X$. If $A$ admits a SSD with constant
    $K$, then every non trivial spreading model of any order in $A$
    is Schauder basic with constant $K$.
  \end{thm}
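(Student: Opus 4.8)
The plan is to reduce the statement to Lemma~\ref{lem48}, which already delivers the conclusion \emph{provided} the generating $\ff$-sequence is hereditarily nonconstant. Thus the only real content is to pass from an arbitrary non-trivial spreading model of $A$ of any order to a hereditarily nonconstant generating $\ff$-subsequence, after which Lemma~\ref{lem48} applies verbatim.

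First I would unwind the definitions. A spreading model of $A$ of some order is, by definition, an $\ff$-spreading model of $A$ for some regular thin family $\ff$; so there are a regular thin family $\ff$, an $\ff$-sequence $(x_s)_{s\in\ff}$ in $A$, and $M\in[\nn]^\infty$ such that $(x_s)_{s\in\ff\upharpoonright M}$ generates the given sequence $(e_n)_{n}$ as an $\ff$-spreading model. I would fix such data, and the aim becomes to show that, because $(e_n)_{n}$ is non-trivial, the $\ff$-subsequence $(x_s)_{s\in\ff\upharpoonright M}$ is automatically hereditarily nonconstant in $M$.

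The key step is a short dichotomy. If $(x_s)_{s\in\ff\upharpoonright M}$ fails to be hereditarily nonconstant in $M$, then by the definition of that notion there is $L\in[M]^\infty$ on which the $\ff$-subsequence is constant, say $x_s=x$ for all $s\in\ff\upharpoonright L$. By part (ii) of Remark~\ref{remark on the definition of spreading model} the further subsequence $(x_s)_{s\in\ff\upharpoonright L}$ still generates $(e_n)_{n}$; but a constant $\ff$-sequence generates only a trivial $\ff$-spreading model (Remark~\ref{rem trivial and constant ff sequences}), so $(e_n)_{n}$ would be trivial, contradicting the hypothesis (equivalently, via Theorem~\ref{Theorem equivalent forms for having norm on the spreading model}, the seminorm $\|\cdot\|_*$ would fail to be a norm). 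Hence $(x_s)_{s\in\ff\upharpoonright M}$ is hereditarily nonconstant in $M$, and Lemma~\ref{lem48} yields that $(e_n)_{n}$ is Schauder basic with constant $K$.

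I do not expect a serious obstacle: all the substantive work (the Ramsey-theoretic analysis of maps on $\ff$, the passage through Corollary~\ref{Proposition combinatorial for SSD}, and the interaction with the SSD structure) is already encapsulated in Lemma~\ref{lem48}. The only point needing care is the bookkeeping that ``hereditarily nonconstant in $M$'' is precisely the negation of ``constant on $\ff\upharpoonright L$ for some $L\in[M]^\infty$'', together with the stability of the generated spreading model under passing to a further infinite subset of $M$; both are immediate from the definitions and the cited remark.
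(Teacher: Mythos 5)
Your proposal is correct and follows essentially the same route as the paper: the paper's proof likewise reduces to Lemma \ref{lem48} by noting that if the generating $\ff$-subsequence were not hereditarily nonconstant it would be constant on some $\ff\upharpoonright L$, forcing $(e_n)_{n}$ to be trivial (via Theorem \ref{Theorem equivalent forms for having norm on the spreading model}), contradicting the hypothesis. Your citation of Remark \ref{rem trivial and constant ff sequences} together with Remark \ref{remark on the definition of spreading model}(ii) is an equally valid way to reach the same contradiction.
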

   \begin{proof}
    Let $\ff$ be a regular thin family and $(x_s)_{s\in\ff}$ an $\ff$-sequence in $A$. Let $M\in[\nn]^\infty$ such that
     $(x_s)_{s\in\ff\upharpoonright M}$ generates a non trivial $\ff$-spreading model $(e_n)_{n\in\nn}$.
    Then  $(x_s)_{s\in\ff\upharpoonright M}$ is hereditarily nonconstant. Indeed,  otherwise
     there exists $L\in[M]^\infty$ such that $(x_s)_{s\in\ff\upharpoonright L}$ is constant and
     therefore by Theorem \ref{Theorem equivalent forms for having norm on the spreading model}
     we get that $(e_n)_{n\in\nn}$ is trivial.  Hence the assumptions of  Lemma \ref{lem48} hold and the result follows.
  \end{proof}

\chapter{Weakly relatively compact $\ff$-sequences and canonical tree decompositions}
An $\ff$-sequence $(x_s)_{s\in\ff}$ in a Banach space $X$ will be
called \emph{weakly relatively compact} if the weak-closure of its
range $\overline{\{x_s: s\in\ff\}}^{w}$ is a weakly compact subset
of $X$. In this chapter we start with the study of the spreading
models generated by weakly relatively compact $\ff$-sequences.
Next we focus on weakly relatively compact $\ff$-sequences in
Banach spaces with a Schauder basis. In this case it is shown that
they are approximated by sequences of the same class which in
addition share a canonical structure. These results indicate that
weakly relatively compact $\ff$-sequences consist the higher
dimensional analogue of the classical weakly convergent sequences.
\section{Spreading models generated by weakly relatively compact
$\ff-$sequences} In this section we will classify the spreading
models generated by weakly relatively compact $\ff$-sequences. We
start with the following proposition.
\begin{prop}\label{cor for subordinating}
  Let $X$ be a Banach space, $\ff$ a regular thin family and
  $(x_s)_{s\in\ff}$ be a weakly relatively compact $\ff$-sequence in
  $X$. Then for every $M\in[\nn]^\infty$ there exists
  $L\in[M]^\infty$ such that the $\ff$-subsequence $(x_s)_{s\in\ff\upharpoonright L}$ is
  subordinated with respect to the weak topology.
 \end{prop}
 \begin{proof}
 Let $M\in[\nn]^\infty$.   Since the
weak topology on every separable weakly  compact subset of a
Banach space is metrizable, we have that $\overline{\{x_s:
s\in\ff\}}^{w}$  is compact metrizable. By Proposition \ref{Create
subordinated} the  result follows.
 \end{proof}
 We give  the next definition regarding the spreading models which are generated by weakly relatively compact $\ff$-sequences.
\begin{defn}
  Let $X$ be a Banach space and $\xi<\omega_1$. By
  $\mathcal{SM}_\xi^{wrc}(X)$ we will denote
    the set of all spreading sequences
    $(e_n)_{n\in\nn}$ such that there exists
     a weakly relatively compact subset $A$ of
    $X$ such that $A$ admits $(e_n)_{n\in\nn}$ as a $\xi$-spreading model. We  also set\[\mathcal{SM}^{wrc}(X)=\bigcup_{\xi<\omega_1}\mathcal{SM}_\xi^{wrc}(X)\]
\end{defn}
\begin{prop}\label{wrc}
 Let $X$ be a Banach space, $\xi<\omega_1$ and $(e_n)_{n\in\nn}\in
\mathcal{SM}_\xi^{wrc}(X)$. Then for every regular thin family
$\g$ with $o(\g)\geq \xi$ there exist
  a weakly relatively compact $\g$-sequence $(w_t)_{t\in\g}$ in $X$ and   $L\in[\nn]^\infty$
such that the following hold. The subsequence
$(w_t)_{t\in\g\upharpoonright L}$  is
  subordinated with respect to the weak topology and generates
  $(e_n)_{n\in\nn}$ as a $\g$-spreading model.
\end{prop}
\begin{proof}
   Since $(e_n)_{n\in\nn}\in \mathcal{SM}_\xi^{wrc}(X)$ there
   exists a weakly relatively compact subset $A$ of
    $X$ such that $A$ admits $(e_n)_{n\in\nn}$ as a $\xi$-spreading model. Hence there exists a regular thin family
    $\ff$ of order $\xi$, an $\ff$-sequence $(x_s)_{s\in\ff}$ in $A$ and $M\in[\nn]^\infty$
   such that $(x_s)_{s\in\ff\upharpoonright M}$ generates
  $(e_n)_{n\in\nn}$ as an $\ff$-spreading model. By  Proposition
  \ref{propxi} there exist  a $\g$-sequence
  $(w_t)_{t\in\g}$ and $N\in[\nn]^\infty$  such that $(w_t)_{t\in\g\upharpoonright N}$
  generates $(e_n)_{n\in\nn}$ as a $\g$-spreading model and moreover
  $\{w_t:t\in\g\}\subseteq\{x_s:s\in\ff\}\subseteq A$.
  Hence  $(w_t)_{t\in\g}$
  is a weakly relatively compact $\g$-sequence. By Proposition \ref{cor for subordinating}
  there exists $L\in[N]^\infty$ such that  $(w_t)_{t\in\g\upharpoonright L}$  is
  subordinated with respect to the weak topology. Clearly   $(w_t)_{t\in\g\upharpoonright L}$ also generates
  $(e_n)_{n\in\nn}$ as a $\g$-spreading model and the proof is
  complete.
\end{proof}
By the above proposition we obtain the following.
\begin{cor} Let $X$ be a Banach space and $1\leq \zeta<\xi$ be
countable ordinals. Then $\mathcal{SM}_\zeta^{wrc}(X)\subseteq
\mathcal{SM}_\xi^{wrc}(X)$.
\end{cor}
Proposition \ref{wrc} states  that every sequence
$(e_n)_{n\in\nn}$ in $ \mathcal{SM}^{wrc}(X)$ is generated by a
subordinated $\ff$-subsequence. In the next two lemmas we present
some useful consequences of this fact.
\begin{lem}\label{lemma either l1 or not Schauder basic}
 Let $X$ be a Banach space, $\ff$ be a regular thin family, $M\in[\nn]^\infty$  and $(x_s)_{s\in\ff}$ be
 an $\ff$-sequence in $X$ such that the following are satisfied.
\begin{enumerate}
  \item[(i)] The $\ff$-subsequence $(x_s)_{s\in\ff\upharpoonright
  M}$ generates a nontrivial  $\ff$-spreading model
  $(e_n)_{n\in\nn}$.
  \item[(ii)] The $\ff$-subsequence $(x_s)_{s\in\ff\upharpoonright
  M}$ is subordinated with respect to the weak topology of $X$
   and let $\widehat{\varphi}:\widehat{\ff}\upharpoonright M\to
   (X,w)$ be the continuous function witnessing this.
\end{enumerate}
    If $\widehat{\varphi}(\emptyset)\neq 0$ then either
  $(e_n)_{n\in\nn}$ is   equivalent to the usual basis of
$\ell^1$ or $(e_n)_{n\in\nn}$ is singular.
   \end{lem}
\begin{proof} Let $x_s'=x_s-\widehat{\varphi}(\emptyset)$, for
all $s\in\ff\upharpoonright M$. Let $(e_n')_{n\in\nn}$ be an
$\ff$-spreading model generated by a subsequence of
$(x'_s)_{s\in\ff\upharpoonright M}$. Notice that
$\widehat{\varphi'}(\emptyset)=0$ and therefore by Theorem
\ref{unconditional spreading model} we have that either
$(e'_n)_{n\in\nn}$ is trivial (with $\|\widetilde{e}_n\|_*=0$ for
all $n\in\nn$), or $(e'_n)_{n\in\nn}$ is an unconditional sequence.

In the first case, by Corollary \ref{trivial ultrafilter weak
property} we have that  $(e_n)_{n\in\nn}$ is trivial which is a
contradiction. In the second case, by Proposition \ref{l1 vs Cezaro
summability}
  we have that either $(e_n')_{n\in\nn}$ is equivalent to the usual basis of $\ell^1$ or $(e_n')_{n\in\nn}$ is Ces\`aro summable to zero.
  If $(e_n')_{n\in\nn}$ is equivalent to the basis of $\ell^1$, then by Corollary \ref{ultrafilter property for ell^1 spr mod simplified}
   the same holds for the sequence $(e_n)_{n\in\nn}$. Also since $(e_n')_{n\in\nn}$ is
  unconditional,
again by Corollary \ref{trivial ultrafilter weak property} we have
that $(e_n)_{n\in\nn}$ is nontrivial. Hence it  remains to show that
  if $(e_n')_{n\in\nn}$ is Ces\`aro summable to zero then  $(e_n)_{n\in\nn}$ is not Schauder basic.

 Let $\varepsilon>0$ and fix $n_0\in\nn$ such that
  $\|\frac{1}{n_0}\sum_{j=1}^{n_0}e_j'\|<\varepsilon$.
Notice that for every $n\in\nn$ and $a_1,\ldots,a_n\in\rr$,
\[\Big|\sum_{j=1}^na_j\Big|\cdot \|\widehat{\varphi}(\emptyset)\|-
  \Big\|\sum_{j=1}^na_je_j'\Big\|\leq\Big\|\sum_{j=1}^na_je_j\Big\|\leq \Big|\sum_{j=1}^na_j\Big|\cdot \|\widehat{\varphi}(\emptyset)\|
  +\Big\|\sum_{j=1}^na_je_j'\Big\|\]
 Hence
\[\|\widehat{\varphi}(\emptyset)\|-\varepsilon\leq\Big\|\frac{1}{n_0}\sum_{j=1}^{n_0}e_j\Big\|\]
  and
  \[\Big\|\frac{1}{n_0}\sum_{j=1}^{n_0}e_j-\frac{1}{n_0}\sum_{j=n_0+1}^{2n_0}e_j\Big\| \leq2\varepsilon\]
  Suppose that $(e_n)_{n\in\nn}$ is Schauder basic. Then there
  exists a constant $C>0$ such that
  \[\Big\|\frac{1}{n_0}\sum_{j=1}^{n_0}e_j\Big\|\leq C
  \Big\|\frac{1}{n_0}\sum_{j=1}^{n_0}e_j-\frac{1}{n_0}\sum_{j=n_0+1}^{2n_0}e_j\Big\|\]
  and therefore by the above, we would have
  \[\|\widehat{\varphi}(\emptyset)\|-\varepsilon\leq 2C \ee\]
  for all $\ee>0$. Hence  $\|\widehat{\varphi}(\emptyset)\|=0$ which is a contradiction.
\end{proof}

\begin{lem}\label{2lemwrc} Let $X$ be a Banach space, $\ff$ be a regular thin family, $M\in[\nn]^\infty$  and $(x_s)_{s\in\ff}$ be
 an $\ff$-sequence in $X$ such that the following are satisfied.
\begin{enumerate}
  \item[(i)] The $\ff$-subsequence $(x_s)_{s\in\ff\upharpoonright
  M}$ generates a singular  $\ff$-spreading model
  $(e_n)_{n\in\nn}$ and let   $e_n=e'_n+e$ be the natural
  decomposition of $(e_n)_{n\in\nn}$.
  \item[(ii)] The $\ff$-subsequence $(x_s)_{s\in\ff\upharpoonright
  M}$ is subordinated with respect to the weak topology of $X$
  and let $\widehat{\varphi}:\widehat{\ff}\upharpoonright M\to
  (X,w)$ be the continuous map witnessing this.
\end{enumerate}

Then $\|\widehat{\varphi}(\emptyset)\|=\|e\|$ and setting
  $x'_s=x_s-\widehat{\varphi}(\emptyset)$, for all $s\in\ff\upharpoonright M$, we have that
  $(x'_s)_{s\in\ff\upharpoonright M}$ admits $(e'_n)_{n\in\nn}$ as
  a unique $\ff$-spreading model.
\end{lem}
\begin{proof} As in the
proof of Lemma \ref{lemma either l1 or not Schauder basic} the map
$\widehat{\varphi}':\widehat{\ff}\upharpoonright M\to
  (X,w)$, with
  $\widehat{\varphi}'(t)=\widehat{\varphi}(t)-\widehat{\varphi}(\emptyset)$ for
  all $t\in\widehat{\ff}\upharpoonright M$,  witnesses that $(x'_s)_{s\in\ff\upharpoonright M}$ is
  subordinated with $\widehat{\varphi}'(\emptyset)=0$.
  Let $L\in[M]^\infty$ such that the $\ff$-subsequence $(x'_s)_{s\in\ff\upharpoonright
  L}$ generates an $\ff$-spreading model $(\widetilde{e}_n)_{n\in\nn}$. By Corollary \ref{trivial ultrafilter weak property} we have that $(\widetilde{e}_n)_{n\in\nn}$ is nontrivial. Therefore, since $\widehat{\varphi}'(\emptyset)=0$, by Theorem \ref{unconditional spreading model} we have that
  $(\widetilde{e}_n)_{n\in\nn}$ is unconditional. Since $(e_n)_{n\in\nn}$ is
  singular, by Corollary \ref{ultrafilter property for ell^1 spr mod simplified}, we have that $(\widetilde{e}_n)_{n\in\nn}$ is not equivalent to the
  usual basis of $\ell^1$. By Proposition \ref{l1 vs Cezaro
  summability}, we get that $(\widetilde{e}_n)_{n\in\nn}$ is Ces\'aro summable
  to zero. We will first show that $\|\widehat{\varphi}(\emptyset)\|=\|e\|$. Let
  $((s^n_j)_{j=1}^n)_{n\in\nn}$ be a sequence in
  $\text{\emph{Plm}}(\ff\upharpoonright L)$ such that $\min
  s_1^n\geq L(n)$. Using that $(\widetilde{e}_n)_{n\in\nn}$ is Ces\'aro summable
  to zero, it is see that \[\lim_{n\to\infty}\Big\|\frac{1}{n}\sum_{j=1}^nx'_{s^n_j}\Big\|=0\]
  Moreover since $(e'_n)_{n\in\nn}$ is also Ces\'aro summable to
  zero, we have that
  \[\begin{split}\|e\|&=\lim_{n\to\infty}\Big\|e+\frac{1}{n}\sum_{j=1}^ne'_j\Big\|=
  \lim_{n\to\infty}\Big\|\frac{1}{n}\sum_{j=1}^ne_j\Big\|=
  \lim_{n\to\infty}\Big\|\frac{1}{n}\sum_{j=1}^nx_{s^n_j}\Big\|\\&=
  \lim_{n\to\infty}\Big\|\widehat{\varphi}(\emptyset)+\frac{1}{n}\sum_{j=1}^nx'_{s^n_j}\Big\|=\|\widehat{\varphi}(\emptyset)\|\end{split}\]
  It remains  to show that $(e'_n)_{n\in\nn}$ and $(\widetilde{e}_n)_{n\in\nn}$ are isometric. By Lemma \ref{isometric 1-subsym
sequences} it suffices to show that for every $n\in\nn$ and
$\lambda_1,\ldots\lambda_n\in\rr$ with $\sum_{i=1}^n\lambda_1=0$ we
have that
\[\Big\|\sum_{i=1}^n\lambda_ie_i'\Big\|=\Big\|\sum_{i=1}^n\lambda_i\widetilde{e}_i\Big\|\]
Indeed, let $n\in\nn$ and $\lambda_1,\ldots\lambda_n\in\rr$ with
$\sum_{i=1}^n\lambda_i=0$. Let $((s_j^k)_{j=1}^n)_{k\in\nn}$ be a
sequence in $\text{\emph{Plm}}_n(\ff\upharpoonright L)$ such that
$\min s_1^k\to \infty$. Then
\[\Big\|\sum_{i=1}^n\lambda_ie_i'\Big\|=\Big\|\sum_{i=1}^n\lambda_ie_i\Big\|=\lim_{k\to\infty}\Big\|\sum_{i=1}^n\lambda_ix_{s^k_i}\Big\|=
\lim_{k\to\infty}\Big\|\sum_{i=1}^n\lambda_ix'_{s^k_i}\Big\|=\Big\|\sum_{i=1}^n\lambda_i\widetilde{e}_i\Big\|\]
\end{proof}
We are now ready to state the main result of this section.
\begin{thm}\label{relatively weakly compact sets have unconditional spreading models}
  Let $X$ be a Banach space, $\ff$ be a regular thin family and $(x_s)_{s\in\ff}$ be a weakly
  relatively compact $\ff$-sequence which admits a spreading sequence $(e_n)_{n\in\nn}$ as an $\ff$-spreading model. Then exactly
  one of the following holds:
  \begin{enumerate}
    \item[(i)] The sequence $(e_n)_{n\in\nn}$ is trivial.
    \item[(ii)] The sequence $(e_n)_{n\in\nn}$ is singular. In this case there exist
     $L\in[\nn]^\infty$ and $x_0\in X$ such that if $e_n=e'_n+e$
     is the natural decomposition of $(e_n)_{n\in\nn}$ then the
    $\ff$-subsequence $(x'_s)_{s\in \ff\upharpoonright L}$, defined by
    $x'_s=x_s-x_0$ for all $s\in\ff\upharpoonright L$, generates
    the sequence $(e'_n)_{n\in\nn}$ as an $\ff$-spreading model
    and $\|x_0\|=\|e\|$.
    \item[(iii)] The sequence $(e_n)_{n\in\nn}$ is Schauder basic. In this case $(e_n)_{n\in\nn}$ is unconditional.
  \end{enumerate}
\end{thm}
\begin{proof}
Let us assume that $(e_n)_{n\in\nn}$ is
nontrivial, i.e. assertion (i) does not hold.
Let $M\in[\nn]^\infty$  such that $(x_s)_{s\in\ff \upharpoonright M}$
generates $(e_n)_{n\in\nn}$. By Proposition \ref{cor for
subordinating} there exists $L\in[M]^\infty$ such that
  $(x_s)_{s\in\ff\upharpoonright L}$ is subordinated and let $\widehat{\varphi}:\widehat{\ff}\upharpoonright L\to (X,w)$
  be the continuous map witnessing this.
  We distinguish two cases. Either $\widehat{\varphi}(\emptyset)=0$, or $\widehat{\varphi}(\emptyset)\neq 0$.
  In the first case, by Theorem \ref{unconditional spreading model}, we have that $(e_n)_{n\in\nn}$
  is $1$-unconditional and in the second case, by Lemma \ref{lemma either l1 or not Schauder basic} we have that $(e_n)_{n\in\nn}$ is
  equivalent to the usual basis of $\ell^1$ or singular. Hence in both cases we conclude that if $(e_n)_{n\in\nn}$ is Schauder basic
  then it is  unconditional which yields assertion (iii).
Finally if the sequence $(e_n)_{n\in\nn}$ is singular then Lemma
\ref{2lemwrc} shows that  (ii) holds (with
$x_0=\widehat{\varphi}(\emptyset)$) and the proof is complete.
\end{proof}

\section{Tree decompositions of weakly relatively compact $\ff$-sequences}
It is well known that a spreading model generated by a weakly null
sequence $(x_n)_{n\in\nn}$ in a Banach space $X$ with a Schauder
basis is also  generated by a block sequence
$(\widetilde{x}_n)_{n\in\nn}$ which sufficiently approximates
$(x_n)_{n\in\nn}$. In this section we will show that analogues of
this fact appear in  spreading models
 which are generated by weakly relatively compact $\ff$-sequences
in $X$. To state our results we will need the concepts of the
\emph{block tree decomposition} and \emph{canonical tree
decomposition} of an $\ff$-sequence.
\subsection{Block tree decompositions of $\ff$-sequences}
\begin{defn} \label{defn block tree decomposition}
Let $X$ be a Banach space with a Schauder basis. Let $\ff$ be a regular thin family and
  $(\widetilde{x}_s)_{s\in\ff}$ be an $\ff$-sequence in $X$. Let $M\in[\nn]^\infty$
   and $(\widetilde{y}_t)_{t\in\widehat{\ff} \upharpoonright M}$ be
  a family of vectors in $X$. We will say that  $(\widetilde{y}_t)_{t\in\widehat{\ff} \upharpoonright M}$ is a
  block
  tree decomposition of $(\widetilde{x}_s)_{s\in\ff\upharpoonright M}$ (or $(\widetilde{x}_s)_{s\in\ff\upharpoonright M}$
   admits $(\widetilde{y}_t)_{t\in\widehat{\ff} \upharpoonright M}$ as a
  block tree decomposition) if the following are satisfied.
  \begin{enumerate}
    \item[(i)] For every $s\in\ff\upharpoonright M$, $\displaystyle\widetilde{x}_s=\sum_{k=0}^{|s|}\widetilde{y}_{s|k}=
    \widetilde{y}_\emptyset+\sum_{k=1}^{|s|}\widetilde{y}_{s|k}$.
    \item[(ii)] For every
    $t\in\widehat{\ff}\setminus\{\emptyset\}$,
$\text{supp}(\widetilde{y}_{t})$ is finite.
 \item[(iii)]
For every $t\in\widehat{\ff}\upharpoonright M\setminus
\ff\upharpoonright M$ and for every $l_1, l_2$ in M with $
t<l_1<l_2$ and $t\cup\{l_1\}, t\cup\{l_2\}$ in
$\widehat{\ff}\upharpoonright M$,
\[\text{supp}(\widetilde{y}_{t\cup\{l_1\}})<\text{supp}(\widetilde{y}_{t\cup\{l_2\}})\]
\end{enumerate}
\end{defn}
\begin{rem}\label{rembtd} (i) Let's point out that in the above definition we allow   some
of the vectors $\widetilde{y}_{s|k}$ to  be equal to zero (in
which case $\text{supp}(\widetilde{y}_{s|k})=\emptyset$).

(ii) It is easy to see that if
$(\widetilde{y}_t)_{t\in\widehat{\ff} \upharpoonright M}$ is a
block tree decomposition of
$(\widetilde{x}_s)_{s\in\ff\upharpoonright M}$ then for every
$L\in[M]^\infty$,  $(\widetilde{y}_t)_{t\in\widehat{\ff}
\upharpoonright L}$ is a
  block
  tree decomposition of $(\widetilde{x}_s)_{s\in\ff\upharpoonright
  L}$.
\end{rem}
\begin{prop}\label{uniqueness of block tree decomp}
   Let $X$ be a Banach space with a Schauder basis. Let $\ff$ be a regular thin family, $M\in[\nn]^\infty$ and
  $(\widetilde{x}_s)_{s\in\ff}$ be an $\ff$-sequence in $X$ such
  that $(\widetilde{x}_s)_{s\in\ff\upharpoonright M}$
  admits a block tree decomposition $(\widetilde{y}_t)_{t\in\widehat{\ff} \upharpoonright
  M}$. Then $(\widetilde{y}_t)_{t\in\widehat{\ff} \upharpoonright
  M}$ is the unique block tree decomposition of
  $(\widetilde{x}_s)_{s\in\ff\upharpoonright M}$.
\end{prop}
\begin{proof}
  We proceed by induction on the order of $\ff$. Let $o(\ff)=1$. Then without loss of generality we may assume that $\ff=[\nn]^1$.
   Let $(\widetilde{y}_t)_{t\in[M]^{\leq1}}$ and $(\widetilde{y}'_t)_{t\in[M]^{\leq1}}$ be two block tree decompositions
   of a sequence $(x_s)_{s\in[M]^1}$. Suppose that $(\widetilde{y}_t)_{t\in[M]^{\leq1}}\neq(\widetilde{y}'_t)_{t\in[M]^{\leq1}}$.
   This easily yields that $\widetilde{y}_\emptyset\neq\widetilde{y}'_\emptyset$. Let $z=\widetilde{y}_\emptyset-\widetilde{y}'_\emptyset\neq0$.
    Then for every $n\in M$, we have that \[\widetilde{x}_{\{n\}}=\widetilde{y}_{\{n\}}+\widetilde{y}_{\emptyset}=
    \widetilde{y}'_{\{n\}}+\widetilde{y}'_{\emptyset}\] Hence for every $n\in M$, we have that
    \[\widetilde{y}'_{\{n\}}-\widetilde{y}_{\{n\}}=\widetilde{y}_{\emptyset}-\widetilde{y}'_{\emptyset}=z\]
  But this contradicts the third condition  of  Definition \ref{defn block tree decomposition}.

  Suppose that for some $\xi<\omega_1$ and  for every regular thin family of order strictly less than $\xi$
  the proposition holds. Let $\ff$ be a regular thin family of order $\xi$, $M\in[\nn]^\infty$
   and $(\widetilde{x}_s)_{s\in\ff\upharpoonright M}$ an $\ff$-subsequence in $X$ which admits  two block tree
    decompositions $(\widetilde{y}_t)_{t\in\widehat{\ff}\upharpoonright M}$ and $(\widetilde{y}'_t)_{t\in\widehat{\ff}\upharpoonright M}$.

  \textbf{Claim:} For every $m\in M$, we have that $\widetilde{y}_{\{m\}\cup t}=\widetilde{y}'_{\{m\}\cup t}$,
  for all $t\in\widehat{\ff_{(m)}}\upharpoonright M$.
  \begin{proof}[Proof of claim]
  Let $m\in M$ and $\g=\ff_{(m)}$. Then $\g$ is regular thin with $o(\g)<o(\ff)=\xi$. Let $z_s=x_{\{m\}\cup s}$, for all $s\in\g$.
  Let also $w_t=\widetilde{y}_{\{m\}\cup t}$ and $w'_t=\widetilde{y}'_{\{m\}\cup t}$, for all $t\in\widehat{\g}$.
  It is easy to check that $(w_t)_{t\in \widehat{\g}\upharpoonright M}$ and $(w'_t)_{t\in \widehat{\g}\upharpoonright M}$ are both block tree
   decompositions of $(z_s)_{s\in\g\upharpoonright M}$. Hence by the  inductive hypothesis we have
    that  $w_t=w'_t$ that is $\widetilde{y}_{\{m\}\cup t}=\widetilde{y}'_{\{m\}\cup
    t}$, for all $t\in\widehat{\ff_{(m)}}\upharpoonright M$. \end{proof}
 By the claim we have that  $\widetilde{y}_{s|k}=\widetilde{y}'_{s|k}$,
  for all $s\in\ff \upharpoonright M$ and $1\leq k\leq |s|$.
 Since $\widetilde{x}_s=
    \widetilde{y}_\emptyset+\sum_{k=1}^{|s|}\widetilde{y}_{s|k}=\widetilde{y}'_\emptyset+\sum_{k=1}^{|s|}\widetilde{y}'_{s|k}$
 we also conclude that $\widetilde{y}_{\emptyset}=\widetilde{y}'_{\emptyset}$.
\end{proof}
\begin{prop}\label{block tree decomp and subordinated are the same}
  Let $X$ be a Banach space with a Schauder basis $(w_n)_{n\in\nn}$. Let $\ff$ be a regular thin family, $M\in[\nn]^\infty$ and
  $(\widetilde{x}_s)_{s\in\ff}$ be an $\ff$-sequence in $X$ such
  that $(\widetilde{x}_s)_{s\in\ff\upharpoonright M}$
  admits a block tree decomposition $(\widetilde{y}_t)_{t\in\widehat{\ff} \upharpoonright
  M}$. Also assume  that $(\widetilde{x}_s)_{s\in\ff\upharpoonright
  M}$ is subordinated with respect the weak topology and let
  $\widehat{\varphi}:\widehat{\ff}\upharpoonright M\to X$ be the
  continuous map witnessing that. Then we have that
  \[\widehat{\varphi}(t)=\sum_{t'\sqsubseteq
  t}\widetilde{y}_{t'}\] for all $t\in\widehat{\ff}\upharpoonright
  M$. In particular
  $\widehat{\varphi}(\emptyset)=\widetilde{y}_{\emptyset}$.
\end{prop}
\begin{proof}
  Suppose that
  $o(\ff)=1$. Without loss of generality, we may assume that
  $\ff=\big\{\{n\}:n\in\nn\big\}$. Then
  \[\widetilde{x}_{\{n\}}=\widehat{\varphi}(\{n\})=\widetilde{y}_{\{n\}}+\widetilde{x}_{\emptyset}\]
  for all $n\in\nn$. Let $k\in\nn$. Since
  $(\widetilde{y}_{\{n\}})_{n\in\nn}$ forms a block sequence in
  $X$, for every $k\in\nn$ we have that
  $w^*_k(\widetilde{y}_{\{n\}})\to0$. Therefore, for every
  $k\in\nn$, we have that $w^*_k(\widetilde{y}_\emptyset+\widetilde{y}_{\{n\}})\to
  w^*_k(\widetilde{y}_\emptyset)$. Since $\widehat{\varphi}$ is
  continuous with respect to the weak topology, we have that
  $w^*_k(\widehat{\varphi}(\{n\}))\to
  w^*_k(\widehat{\varphi}(\emptyset))$, for all $k\in\nn$. Hence
  $w^*_k(\widetilde{y}_\emptyset)=w^*_k(\widehat{\varphi}(\emptyset))$,
  for all $k\in\nn$, that is
  $\widetilde{y}_\emptyset=\widehat{\varphi}(\emptyset)$.
  The proof proceeds by induction on the order of $\ff$ and
  by arguing as in the proof of Lemma \ref{uniqueness of block tree
  decomp}.
\end{proof}
\subsection{Canonical tree decompositions of $\ff$-sequences}
\begin{defn}\label{Def of plegma supported}
  Let $X$ a Banach space with a Schauder basis. Let  $\ff$ be a regular thin family and
  $(\widetilde{x}_s)_{s\in\ff}$ be  an $\ff$-sequence in $X$. Let $L\in[\nn]^\infty$
   and $(\widetilde{y}_t)_{t\in\widehat{\ff} \upharpoonright L}$ be
  a family of vectors in $X$. We will say that  $(\widetilde{y}_t)_{t\in\widehat{\ff} \upharpoonright L}$ is a
  canonical tree
  decomposition of $(\widetilde{x}_s)_{s\in\ff\upharpoonright L}$ (or $(\widetilde{x}_s)_{s\in\ff\upharpoonright L}$
   admits $(\widetilde{y}_t)_{t\in\widehat{\ff} \upharpoonright L}$ as a
  canonical  tree decomposition) if the following are satisfied.
  \begin{enumerate}
    \item[(i)]  The family $(\widetilde{y}_t)_{t\in\widehat{\ff} \upharpoonright L}$ is a
  block tree
  decomposition of $(\widetilde{x}_s)_{s\in\ff\upharpoonright L}$.
   \item[(ii)] For every $s\in\ff\upharpoonright L$ and $1\leq k_1<k_2\leq|s|$, \[\text{supp}(\widetilde{y}_{s|k_1})
     <\text{supp}(\widetilde{y}_{s|k_2})\]
    \item[(iii)] For every plegma pair $(s_1,s_2)$ in $\ff\upharpoonright L$
    the following are satisfied.
        \begin{enumerate}
          \item[(a)] For every $1\leq k_1\leq|s_1|$ and $1\leq k_2\leq|s_2|$ with $k_1\leq k_2$,
           \[\text{supp}(\widetilde{y}_{s_1|k_1})
           <\text{supp}(\widetilde{y}_{s_2|k_2})\]
\item[(b)] For every $1\leq k_1<k_2\leq |s_1|$, \[
\text{supp}(\widetilde{y}_{s_2|k_1})
<\text{supp}(\widetilde{y}_{s_1|k_2})\]
        \end{enumerate}
  \end{enumerate}

\end{defn}
\begin{rem}\label{rem on canonical tree decomp}
   Let $X$ be a Banach space with a Schauder basis. Let $\ff$ be a regular thin family, $L\in[\nn]^\infty$ and
  $(\widetilde{x}_s)_{s\in\ff}$ be an $\ff$-sequence in $X$ such
  that $(\widetilde{x}_s)_{s\in\ff\upharpoonright L}$
  admits a canonical tree decomposition $(\widetilde{y}_t)_{t\in\widehat{\ff} \upharpoonright
  L}$. We have the following:
  \begin{enumerate}
    \item[(i)] For every $N\in[L]^\infty$  the $\ff$-subsequence $(\widetilde{x}_s)_{s\in\ff\upharpoonright
  N}$ admits $(\widetilde{y}_t)_{t\in\widehat{\ff} \upharpoonright
  N}$ as a canonical tree decomposition.
    \item[(ii)] The canonical
  decomposition of $(\widetilde{x}_s)_{s\in\ff\upharpoonright
  L}$ is unique. This follows by  Proposition
  \ref{uniqueness of block tree decomp} and the fact that every canonical
  tree decomposition is also a block tree decomposition.
    Moreover, if in addition we assume that $(\widetilde{x}_s)_{s\in\ff\upharpoonright
  M}$ is subordinated with $\widehat{\varphi}:\widehat{\ff}\upharpoonright L\to
  X$ being the continuous map witnessing that, then by Proposition \ref{block tree decomp and subordinated are the same}
  we have that \[\widehat{\varphi}(t)=\sum_{t'\sqsubseteq
  t}\widetilde{y}_{t'}\] for all $t\in\widehat{\ff}\upharpoonright
  L$.
  \end{enumerate}
\end{rem}

\begin{lem}\label{blcan}
   Let $X$ be a Banach space with a Schauder basis. Let $\ff$ be a regular thin family, $M\in[\nn]^\infty$ and
  $(\widetilde{x}_s)_{s\in\ff}$ be an $\ff$-sequence in $X$ such
  that $(\widetilde{x}_s)_{s\in\ff\upharpoonright M}$
  admits a block tree decomposition $(\widetilde{y}_t)_{t\in\widehat{\ff} \upharpoonright M}$. Then there exists
  $L\in[M]^\infty$ such that  $(\widetilde{x}_s)_{s\in\ff\upharpoonright
  L}$ admits  $(\widetilde{y}_t)_{t\in\widehat{\ff} \upharpoonright L}$  as a canonical tree
  decomposition.
\end{lem}
\begin{proof} Let $M_1\in[M]^\infty$ such that  $\ff$ is very large in $M_1$.
   Let
  \[\mathcal{F}_1=\big{\{}s\in\ff\upharpoonright M_1:\;\text{supp}(\widetilde{y}_{s|k_1}) <\text{supp}
  (\widetilde{y}_{s|k_2}),\;\text{for all}\;1\leq k_1<k_2\leq
  |s|\big{\}}\]
  Using   (iii) of Definition \ref{defn block tree decomposition}
   it is easy to see that $\mathcal{F}_1$ is large in $M_1$.
  Hence by Theorem \ref{th2} there exists $M_2\in[M_1]^\infty$ such that $\ff\upharpoonright M_2\subseteq \mathcal{F}_1$, that is
  condition (ii) of the Definition \ref{Def of plegma supported} is satisfied for all $s\in\ff\upharpoonright M_2$.

  Let $A$ be the set all plegma pairs in $\ff\upharpoonright M_2$ satisfying condition (iii) of
  Definition \ref{Def of plegma supported}.
  Again we see that $A$ is large in $M_2$ and therefore by Corollary \ref{corollary ramseyforplegma}
    there exists $L\in[M_2]^\infty$ such that  every plegma pair in $\ff\upharpoonright L$
  belongs to $A$. Finally, by (ii) of Remark \ref{rembtd}, we have that $(\widetilde{y}_t)_{t\in\ff\upharpoonright L}$ is also a
  block tree decomposition of $(\widetilde{x}_s)_{s\in\ff\upharpoonright L}$. Therefore  the family
    $(\widetilde{y}_t)_{t\in\widehat{\ff}\upharpoonright L}$ is a canonical tree decomposition of $(\widetilde{x}_s)_{s\in\ff\upharpoonright L}$
    and the proof is complete.
\end{proof}
\subsection{Subordinated $\ff$-sequences and canonical tree
decompositions}
\begin{thm}\label{canonical tree}
  Let $X$ be a Banach space with a Schauder basis. Let $\ff$ be a regular thin family, $M\in[\nn]^\infty$ and
  $(x_s)_{s\in\ff}$ be an $\ff$-sequence in $X$ such
  that $(x_s)_{s\in\ff\upharpoonright M}$ is
  subordinated with respect to the weak topology of $X$ and let $\widehat{\varphi}:\widehat{\ff}\upharpoonright M\to (X,w)$
   be the continuous map witnessing this. Then for every  sequence
  $(\ee_n)_{n\in\nn}$ of positive reals, there exist $L\in[M]^\infty$ and an
  $\ff$-subsequence $(\widetilde{x}_s)_{s\in\ff\upharpoonright L}$
  in $X$ satisfying the following.
  \begin{enumerate}
    \item[(i)] For every $s\in\ff\upharpoonright L$, $\|x_s-\widetilde{x}_s\|<\ee_n$, where $\min
  s=L(n)$.
\item[(ii)] The $\ff$-subsequence
$(\widetilde{x}_s)_{s\in\ff\upharpoonright
    L}$ is subordinated with respect to the weak topology of $X$. Moreover, if
    $\widetilde{\varphi}:\widehat{\ff}\upharpoonright L\to (X,w)$ is the continuous map witnessing
    this, then
    $\widetilde{\varphi}(\emptyset)=\widehat{\varphi}(\emptyset)$.
    \item[(iii)] The $\ff$-subsequence $(\widetilde{x}_s)_{s\in\ff\upharpoonright
    L}$ admits a canonical tree decomposition.
\end{enumerate}
\end{thm}
\begin{proof}
  Passing to an infinite subset of $M$ if necessary, we may assume
  that $\ff$ is very large in $M$.
  Let $(\ee_n)_{n\in\nn}$ be a sequence of positive reals. We may
  suppose that $(\ee_n)_{n\in\nn}$ is decreasing.
 For every
 $s\in \ff\upharpoonright M$  and $1\leq k\leq |s|$,  let \[y_{s|k}=\widehat{\varphi}(s|k)-\widehat{\varphi}(s|k-1)\]
Clearly, for every $t\in\widehat{\ff}\upharpoonright M\setminus
\ff\upharpoonright M$, the sequence $(y_{t\cup\{m\}})_{m\in M}$ is
weakly null.

\textbf{Claim 1.} There exist $M_1\in[M]^\infty$ and a family
$(\widetilde{y}_t)_{t\in\widehat{\ff} \upharpoonright M_1}$ such
that the following are satisfied:
\begin{enumerate}
\item[(i)] $\widetilde{y}_\emptyset=y_\emptyset$ and for every
$t\in(\widehat{\ff}\upharpoonright M_1)\setminus\{\emptyset\}$,
$\|y_t-\widetilde{y}_t\|<\frac{1}{2^{|t|}}\ee_{n_t}$, where $\max
t=M_1(n_t)$.

\item[(ii)] For every $t\in(\widehat{\ff}\upharpoonright
M_1)\setminus\{\emptyset\}$, the support of $\widetilde{y}_t$ is
finite.

\item[(iii)] For every $t\in(\widehat{\ff}\upharpoonright
M_1)\setminus\widehat{\ff}$ and $m_1, m_2$ in $M_1$ with $\max
t<m_1<m_2$, we have that
$\text{supp}(\widetilde{y}_{t\cup\{m_1\}})<\text{supp}(\widetilde{y}_{t\cup\{m_2\}})$.
\end{enumerate}
\begin{proof}[proof of Claim 1]
  Let $L_0=M$ and $\widetilde{y}_\emptyset=y_\emptyset$. We
  inductively construct a decreasing sequence $(L_n)_{n\in\nn}$ in
  $[M]^\infty$, a strictly increasing sequence $(l_n)_{n\in\nn}$
  in $M$
  and $(\widetilde{y}_{t\cup\{l\}})_{l\in L_n}$, for all $n\in\nn$ and $t\subseteq\{l_i:1\leq
  i<n\}$ with $t\in\widehat{\ff}\upharpoonright L\setminus
\ff\upharpoonright L$ and $\max t=l_{n-1}$, if $n>1$, such that
for every $n\in\nn$ the following
  are satisfied.
  \begin{enumerate}
    \item[(a)] $l_n=\min L_n$ and $l_n<\min L_{n+1}$.
    \item[(b)] For every $t\subseteq\{l_i:1\leq
  i<n\}$ with $t\in\widehat{\ff}\setminus
\ff$ and $\max t=l_{n-1}$, if $n>1$, we have that
\begin{enumerate}
  \item[(i)] For every $k\in\nn$, the vector
  $\widetilde{y}_{t\cup\{L_n(k)\}}$ is of finite support.
  \item[(ii)] For every $k_1<k_2$ in $\nn$, we have that
  \[\text{supp}(\widetilde{y}_{t\cup\{L_n(k_2)\}})<\text{supp}(\widetilde{y}_{t\cup\{L_n(k_2)\}})\]
  \item[(iii)] For every $k\in\nn$ we that
  \[\|y_{t\cup\{L_n(k)\}}-\widetilde{y}_{t\cup\{L_n(k)\}}\|<\frac{1}{2^{|t|+1}}\ee_{n+k-1}\]
\end{enumerate}
  \end{enumerate}
  We set $M_1=\{l_n:n\in\nn\}$. Since $(\ee_n)_{n\in\nn}$ is
  decreasing, it is easy to see that $M_1$ and $(\widetilde{y}_t)_{t\in\widehat{\ff} \upharpoonright
  M_1}$ are the desirable ones.
\end{proof}
For all $t\in\widehat{\ff}\upharpoonright M_1$, we set
$\displaystyle\widetilde{x}_t=\sum_{t'\sqsubseteq
t}\widetilde{y}_{t'}$.

\textbf{Claim 2.} The following hold.
\begin{enumerate}
\item[(i)] The family
$(\widetilde{x}_t)_{t\in\widehat{\ff}\upharpoonright
  M_1}$ is a weak-convergent $\widehat{\ff}$-tree, i.e. the sequence $(\widetilde{x}_{t\cup\{m\}})_{m\in M_1}$ is
  weakly convergent to $\widetilde{x}_t$, for all $t\in\widehat{\ff}\upharpoonright
  M_1\setminus\ff\upharpoonright M_1$.
  \item[(ii)] The $\ff$-subsequence $(\widetilde{x}_s)_{s\in\ff\upharpoonright
  M_1}$ admits $(\widetilde{y}_t)_{t\in\widehat{\ff} \upharpoonright
  M_1}$ as a block tree decomposition.
  \item[(iii)] For every $s\in \ff\upharpoonright M_1\setminus \{\emptyset\}$,
  $\|x_s-\widetilde{x}_s\|<\ee_{k_s}$, where $\min(s)=M_1(k_s)$
  and $\widetilde{x}_\emptyset=x_\emptyset$.
\end{enumerate}
\begin{proof}[proof of Claim 2]
(i)  Recall that for every $t\in\widehat{\ff}\upharpoonright
M\setminus \ff\upharpoonright M_1$, the sequence
$(y_{t\cup\{m\}})_{m\in M_1}$ is weakly null and by (i) of Claim 1
we have that
\[\lim_{m}\|y_{t\cup\{m\}}-\widetilde{y}_{t\cup\{m\}}\|=0\] Hence
the sequence $(\widehat{y}_{t\cup\{m\}})_{m\in M_1}$ is also
weakly null and since
$\widetilde{x}_{t\cup\{m\}}=\widetilde{x}_{t}+\widetilde{y}_{t\cup\{m\}}$
the conclusion follows.

(ii) It is straightforward by (i) and (ii) of Claim 1.

(iii) It is clear that $\widetilde{x}_\emptyset=x_\emptyset$. Also
using (i) of Claim 1, we get that
\[\Big\|x_s-\widetilde{x}_s\Big\|\leq\sum_{\emptyset\neq t\sqsubseteq
s}\Big\|y_t-\widetilde{y}_{t}\Big\|\leq \ee_{k_s}\]
 for every $s\in
\ff\upharpoonright M_1\setminus \{\emptyset\}$.
\end{proof}
By (ii) of Claim 2 and  Lemma \ref{blcan}  there exists
$M_2\in[M_1]^\infty$ such that the $\ff$-subsequence
$(\widetilde{x}_s)_{s\in\ff\upharpoonright
  M_2}$ admits $(\widetilde{y}_t)_{t\in\widehat{\ff} \upharpoonright
  M_2}$ as a canonical  tree decomposition. Moreover, by Proposition
\ref{prpconv}  there exists $M_3\in[M_2]^\infty$ such that the
$\ff$-subsequence $(\widetilde{x}_s)_{s\in\ff\upharpoonright
  M_3}$ is subordinated with respect to the weak topology.

We set $L=M_3$ and it is readily checked that  the
$\ff$-subsequence $(\widetilde{x}_s)_{s\in\ff\upharpoonright L}$
 is as desired.
\end{proof}
\begin{cor}\label{cor canonical tree with spr mod}
  Let $X$ be a Banach space with a Schauder basis. Let $\ff$ be a regular thin family, $M\in[\nn]^\infty$ and
  $(x_s)_{s\in\ff}$ be an $\ff$-sequence in $X$ such
  that $(x_s)_{s\in\ff\upharpoonright M}$ is
  subordinated with respect to the weak topology of $X$ and let $\widehat{\varphi}:\widehat{\ff}\upharpoonright M\to (X,w)$
   be the continuous map witnessing this. Assume also that $(x_s)_{s\in\ff\upharpoonright
   M}$ generates an $\ff$-spreading model $(e_n)_{n\in\nn}$. Then there exist $L\in[M]^\infty$ and an
  $\ff$-subsequence $(\widetilde{x}_s)_{s\in\ff\upharpoonright L}$
  in $X$ satisfying the following.
  \begin{enumerate}
    \item[(i)] The $\ff$-subsequence
$(\widetilde{x}_s)_{s\in\ff\upharpoonright
    L}$ generates $(e_n)_{n\in\nn}$ as an $\ff$-spreading model.
\item[(ii)] The $\ff$-subsequence
$(\widetilde{x}_s)_{s\in\ff\upharpoonright
    L}$ is subordinated with respect to the weak topology of $X$. Moreover, if
    $\widetilde{\varphi}:\widehat{\ff}\upharpoonright L\to (X,w)$ is the continuous maps witnessing
    this, then
    $\widetilde{\varphi}(\emptyset)=\widehat{\varphi}(\emptyset)$.
    \item[(iii)] The $\ff$-subsequence $(\widetilde{x}_s)_{s\in\ff\upharpoonright
    L}$ admits a canonical tree decomposition.
  \end{enumerate}
\end{cor}
\begin{cor}\label{assuming canonical decomposition}
  Let $X$ be a Banach space with a Schauder basis. Let
  $ \xi<\omega_1$, $(e_n)_{n\in\nn}\in\mathcal{SM}_\xi^{wrc}(X)$ and $\ff$ be a regular thin family of order
  $\xi$.
 Then there
  exist $L\in[\nn]^\infty$ and an $\ff$-subsequence $(\widetilde{x}_s)_{s\in\ff\upharpoonright
  L}$ in $X$ having the following properties.
  \begin{enumerate}\item[(i)] It  generates
  $(e_n)_{n\in\nn}$ as an $\ff$-spreading model.\item[(ii)] It is
   subordinated with respect to the weak topology. \item[(iii)] It  admits a canonical tree decomposition. \end{enumerate}
\end{cor}
\begin{proof}
By Proposition \ref{wrc} there exists a weakly relatively compact
$\ff$-sequence $(x_s)_{s\in\ff}$ and $M\in[\nn]^\infty$ such that
$(x_s)_{s\in\ff\upharpoonright
  M}$ generates $(e_n)_{n\in\nn}$ as an $\ff$-spreading model and moreover $(x_s)_{s\in\ff\upharpoonright
  M}$ is
   subordinated with respect to the weak topology. Corollary
   \ref{cor canonical tree with spr mod} completes the proof.
\end{proof}

We close this section by stating some corresponding to the above
results concerning $\mathcal{SM}(X^*)$, where $X^*$ is the dual of
a Banach space $X$ with a shrinking Schauder basis. First one
needs to state an analogue to Theorem \ref{canonical tree}. Namely
we have the following.
\begin{thm}\label{canonical tree decomp on duals}
  Let $X$ be a Banach space with a shrinking Schauder basis $(w_n)_{n\in\nn}$. Let $\ff$ be a regular thin family, $M\in[\nn]^\infty$ and
  $(x^*_s)_{s\in\ff}$ be an $\ff$-sequence in $X^*$ such
  that $(x^*_s)_{s\in\ff\upharpoonright M}$ is
  subordinated with respect to the weak* topology of $X$ and let $\widehat{\varphi}:\widehat{\ff}\upharpoonright M\to (X,w^*)$
   be the continuous map witnessing this. Then for every  sequence
  $(\ee_n)_{n\in\nn}$ of positive reals, there exist $L\in[M]^\infty$ and an
  $\ff$-subsequence $(\widetilde{x}^*_s)_{s\in\ff\upharpoonright L}$
  in $X^*$ satisfying the following.
  \begin{enumerate}
    \item[(i)] For every $s\in\ff\upharpoonright L$, $\|x_s^*-\widetilde{x}^*_s\|<\ee_n$, where $\min
  s=L(n)$.
\item[(ii)] The $\ff$-subsequence
$(\widetilde{x}^*_s)_{s\in\ff\upharpoonright
    L}$ is subordinated with respect to the weak* topology of $X$. Moreover, if
    $\widetilde{\varphi}:\widehat{\ff}\upharpoonright L\to (X,w^*)$ is the continuous map witnessing
    this, then
    $\widetilde{\varphi}(\emptyset)=\widehat{\varphi}(\emptyset)$.
    \item[(iii)] The $\ff$-subsequence $(\widetilde{x}^*_s)_{s\in\ff\upharpoonright
    L}$ admits a canonical tree decomposition with respect to $(w^*_n)_{n\in\nn}$.
\end{enumerate}
\end{thm}
We omit the proof of the above theorem since it is almost
identical to the one of Theorem \ref{canonical tree}. Finally
since every bounded subset of the dual of a separable Banach space
endowed with the weak* topology is compact metrizable, by
Proposition \ref{Create subordinated} and the above theorem we
obtain the following analogue of Corollary \ref{assuming canonical
decomposition}.
\begin{cor}\label{rem on the w* generated spreading models}
  Let $X$ be a Banach space with a shrinking Schauder basis $(w_n)_{n\in\nn}$. Let
  $\xi<\omega_1$, $(e_n)_{n\in\nn}\in\mathcal{SM}_\xi(X^*)$ and $\ff$ be a regular thin family of order
  $\xi$.
 Then there
  exist $L\in[\nn]^\infty$ and an $\ff$-subsequence $(\widetilde{x}^*_s)_{s\in\ff\upharpoonright
  L}$ in $X^*$ having the following properties.
  \begin{enumerate}\item[(i)] It generates
  $(e_n)_{n\in\nn}$ as an $\ff$-spreading model.\item[(ii)] It is
   subordinated with respect to the weak* topology of $X^*$. \item[(iii)] It  admits a canonical tree decomposition with respect to $(w_n^*)_{n\in\nn}$. \end{enumerate}
\end{cor}

\subsection{Disjoint canonical tree decompositions of $\ff$-sequences}
The following definition extends the classical notion of the
disjointly supported sequences for $\ff$-sequences in a Banach
space with a Schauder basis.
\begin{defn}
  Let  $X$ be a Banach space with a Schauder basis. Let $\ff$ be a regular thin family, $M\in[\nn]^\infty$ and
  $(x_s)_{s\in\ff}$ an $\ff$-sequence in $X$ of finitely supported
  vectors. We will say that the $\ff$-subsequence $(x_s)_{s\in\ff\upharpoonright
  M}$ is plegma disjointly supported  if for
  every plegma pair $(s_1,s_2)$ in $\ff\upharpoonright M$ we have
  that $\text{supp}(x_{s_1})\cap\text{supp}(x_{s_2})=\emptyset$.
\end{defn}

In connection with the canonical tree decompositions we observe
the following. Let $(\widetilde{x}_s)_{s\in\ff\upharpoonright L}$
be an $\ff$-subsequence in a Banach space with a Schauder basis,
which admits a canonical tree decomposition
$(\widetilde{y}_t)_{t\in\widehat{\ff} \upharpoonright L}$. It is
easy to see that $(\widetilde{x}_s)_{s\in\ff\upharpoonright L}$ is
plegma disjointly supported if and only if
$\widetilde{y}_\emptyset=0$. In this case we will say that
$(\widetilde{x}_s)_{s\in\ff\upharpoonright L}$ admits
$(\widetilde{y}_t)_{t\in\widehat{\ff} \upharpoonright L}$ as a
\emph{disjoint canonical tree
  decomposition}.
\begin{rem}\label{rem cutting the root}
   Let $X$ be a Banach space with a Schauder basis. Let $\ff$ be a regular thin family, $L\in[\nn]^\infty$ and
  $(\widetilde{x}_s)_{s\in\ff}$ be an $\ff$-sequence in $X$ such
  that $(\widetilde{x}_s)_{s\in\ff\upharpoonright M}$
  admits a canonical tree decomposition $(\widetilde{y}_t)_{t\in\widehat{\ff} \upharpoonright
  L}$.  Let $x'_s=\widetilde{x}_s-\widetilde{y}_\emptyset$ for all $s\in\ff\upharpoonright
  L$. Then notice that the $\ff$-subsequence $(x'_s)_{s\in\ff\upharpoonright
  L}$ admits $(y'_t)_{t\in\widehat{\ff}\upharpoonright
  L}$ as a disjoint canonical tree decomposition, where  $y'_t=\widetilde{y}_t$, for all
  $t\in\widehat{\ff}\upharpoonright
  L\setminus\{\emptyset\}$. Hence $(x'_s)_{s\in\ff\upharpoonright
  L}$ is a plegma disjointly supported $\ff$-subsequence.
\end{rem}
In the sequel an $\ff$-spreading model will be called \emph{plegma
disjointly generated} if it is generated by a plegma disjointly
supported $\ff$-sequence.

In Corollary \ref{assuming canonical decomposition} we have shown
that every $(e_n)_{n\in\nn}\in\mathcal{SM}^{wrc}(X)$, where $X$ is
a Banach space with a Schauder basis, is generated by an
$\ff$-subsequence which is subordinated with respect to the weak
topology and admits a canonical tree decomposition. The next
corollary asserts that if $(e_n)_{n\in\nn}$ is in addition a
Schauder basic sequence not equivalent to the usual basis of
$\ell^1$, then it is plegma disjointly generated. In the case of
$(e_n)_{n\in\nn}$ being equivalent to the usual basis of $\ell^1$,
this cannot in general be achieved because of Corollary
\ref{ultrafilter property for ell^1 spr mod simplified}. However
Corollary \ref{ultrafilter property for ell^1 spr mod simplified}
implies an isomorphic version of the aforementioned fact.

\begin{cor}\label{thm disjointly generic decomposition for wekly relatively compact ff-sequences}
  Let $X$ be a Banach space with a Schauder basis. Let
  $\xi<\omega_1$, $(e_n)_{n\in\nn}$ be a Schauder basic sequence in $\mathcal{SM}^{wrc}_\xi(X)$
  and $\ff$ be a regular thin family of order $\xi$. Then there
  exist exist $L\in[\nn]^\infty$ and an $\ff$-subsequence $(\widetilde{x}_s)_{s\in\ff\upharpoonright
  L}$ in $X$ having the following properties.
 \begin{enumerate}
    \item[(i)] If $(e_n)_{n\in\nn}$ is not equivalent to the usual basis of $\ell^1$, then $(\widetilde{x}_s)_{s\in\ff\upharpoonright L}$
               generates $(e_n)_{n\in\nn}$ as an $\ff$-spreading model.
    \item[(ii)] If $(e_n)_{n\in\nn}$ is equivalent to the usual basis of $\ell^1$, then $(\widetilde{x}_s)_{s\in\ff\upharpoonright L}$
               generates an $\ff$-spreading model which is also equivalent to the usual basis of $\ell^1$.
    \item[(iii)] The $\ff$-subsequence $(\widetilde{x}_s)_{s\in\ff\upharpoonright L}$ is
   subordinated with respect to the weak topology.
    \item[(iv)] The $\ff$-subsequence $(\widetilde{x}_s)_{s\in\ff\upharpoonright L}$   admits a disjoint canonical tree decomposition.
  \end{enumerate}
\end{cor}
\begin{proof}
By
  Corollary \ref{assuming canonical decomposition}, we have that there
  exist $L\in[\nn]^\infty$ and a subordinated $\ff$-sequence $(x_s)_{s\in\ff\upharpoonright
  L}$ which admits a canonical tree decomposition $(y_t)_{t\in\widehat{\ff}\upharpoonright
  L}$ and generates
  $(e_n)_{n\in\nn}$ as an $\ff$-spreading model. By Remark \ref{rem on canonical tree
  decomp} we have that $\widehat{\varphi}(\emptyset)=y_\emptyset$.

  If $\widehat{\varphi}(\emptyset)=0$, the proof is completed.
  Suppose that $\widehat{\varphi}(\emptyset)\neq0$. Since $(e_n)_{n\in\nn}$ is Schauder basic,
   by Lemma \ref{lemma either l1 or not Schauder basic} we
  have that $(e_n)_{n\in\nn}$ is equivalent to the usual basis of $\ell^1$. For every $s\in \ff\upharpoonright L$ we set
  $x'_s=x_s-\widehat{\varphi}(\emptyset)$. By Remark \ref{rem cutting the
  root}, we have that $(x'_s)_{s\in\ff\upharpoonright M}$ is
  subordinated and admits a disjoint canonical tree decomposition.
  We pass to an $L'\in[L]^\infty$ such that $(x'_s)_{s\in\ff\upharpoonright
  M}$ generates an $\ff$-spreading model, which, by Corollary
  \ref{ultrafilter property for ell^1 spr mod simplified}, is
  equivalent to the usual basis of $\ell^1$.
\end{proof}

\chapter{The spreading models of $\ell^p$, $1\leq
p<\infty$, and $c_0$}\label{chapter 7}  In this chapter we present
the spreading models of  some classical spaces.  In particular it
is shown  that every nontrivial spreading model of $\ell^p$ with
$1<p<\infty$ generates an isometric copy of $\ell^p$ and every
nontrivial spreading model of $\ell^1$ is equivalent to the usual
basis of $\ell^1$. Moreover the class of  spreading models of
order two of  $c_0$ contains up to equivalence all the Schauder
basic spreading sequences.
\section{The spreading models  of $\ell^p$, $1\leq
p<\infty$}
\begin{lem}\label{lemma for the spreading models of l^p}
  Let $1\leq p<\infty$, $\ff$ be a regular thin family,
  $M\in[\nn]^\infty$ and $(x_s)_{s\in\ff\upharpoonright M}$ be a
  seminormalized plegma disjointly supported $\ff$-subsequence in
  $\ell^p$. Then every $\ff$-spreading model $(e_n)_{n\in\nn}$ of $(x_s)_{s\in\ff\upharpoonright
  M}$ is equivalent to the usual basis of $\ell^p$. In particular for every $n\in\nn$ and
  $a_1,\ldots,a_n\in\rr$ we have that \[\Big\|\sum_{j=1}^na_je_j\Big\|=\|e_1\|\Big(\sum_{j=1}^n|a_j|^p\Big)^\frac{1}{p}\]
\end{lem} The proof of the above lemma is omitted, since it is
similar to the proof of the fact that every normalized disjointly
supported sequence in $\ell^p$ is isometric to the usual basis of
$\ell^p$.
\subsection{The spreading models  of $\ell^p$, $1<
p<\infty$}
\begin{lem}\label{lp spr mod phi keno}
  Let $1< p<\infty$, $\ff$ be a regular thin family,
  $M\in[\nn]^\infty$ and $(x_s)_{s\in\ff}$ be an
   $\ff$-sequence in
  $\ell^p$ which  generates a nontrivial $\ff$-spreading
model $(e_n)_{n\in\nn}$. Also suppose that
$(x_s)_{s\in\ff\upharpoonright M}$ is subordinated and let
$\widehat{\varphi}:\widehat{\ff}\upharpoonright M\to(\ell^p,w)$ be
the continuous map witnessing that.
\begin{enumerate}
\item[(i)] If  $\widehat{\varphi}(\emptyset)=0$ then
$(e_n)_{n\in\nn}$ is equivalent to the usual basis of $\ell^p$. In
particular for every $n\in\nn$ and
  $a_1,\ldots,a_n\in\rr$ we have that \[\Big\|\sum_{j=1}^na_je_j\Big\|=\|e_1\|\Big(\sum_{j=1}^n|a_j|^p\Big)^\frac{1}{p}\]
\item[(ii)] If $\widehat{\varphi}(\emptyset)\neq 0$ then
$(e_n)_{n\in\nn}$ is singular.
  In particular, for every $k\in\nn$ and
  $a_1,\ldots,a_k\in\rr$, we have that
  \begin{equation}\label{022}\Big\|\sum_{j=1}^na_je_j\Big\|=\Bigg(\|e\|^p\Big|\sum_{j=1}^na_j\Big|^p+
  \|e'_1\|^p\sum_{j=1}^n|a_j|^p\Bigg)^\frac{1}{p}\end{equation}
  where $e_n=e'_n+e$ is the natural decomposition of
  $(e_n)_{n\in\nn}$.\end{enumerate}
\end{lem}
\begin{proof} By Corollary \ref{cor canonical tree with spr mod} we may assume
  that $(x_s)_{s\in\ff\upharpoonright M}$ admits a canonical tree
  decomposition.

 (i) Since $\widehat{\varphi}(\emptyset)=0$ we have that
$(x_s)_{s\in\ff\upharpoonright L}$ is plegma disjointly supported.
Lemma \ref{lemma for the spreading models of l^p} completes the
proof.

 (ii) Let $x'_s=x_s-\widehat{\varphi}(\emptyset)$, for all
  $s\in\ff\upharpoonright M$. Let $L\in[M]^\infty$ such that $(x'_s)_{s\in\ff\upharpoonright
  L}$ generates an $\ff$-spreading model $(\widetilde{e}_n)_{n\in\nn}$.
  By Lemma \ref{lp spr mod phi keno} we have that
  $(\widetilde{e}_n)_{n\in\nn}$ is not equivalent to the usual
  basis of $\ell^1$. Corollary \ref{ultrafilter property for ell^1 spr mod
  simplified} yields that $(e_n)_{n\in\nn}$ is not equivalent to the usual
  basis of $\ell^1$. By Lemma \ref{lemma either l1 or not Schauder
  basic} we have that $(e_n)_{n\in\nn}$ is singular. By Lemma
  \ref{2lemwrc} we have that $(x'_s)_{s\in\ff\upharpoonright L}$ generates
  $(e'_n)_{n\in\nn}$ as an $\ff$-spreading model and
  $\|\widehat{\varphi}(\emptyset)\|=\|e\|$. Notice that for every sequence $(s_n)_{n\in\nn}$
   in $\ff\upharpoonright L$ with $\min s_n\to\infty$, we have
   that $\|x'_{s_n}\|\to \|e'_1\|$ and
   $\min\text{supp}(x'_{s_n})\to\infty$.
    These observations easily yield that equation (\ref{022}) holds for every choice of $k\in\nn$ and $a_1,\ldots,a_k\in\rr$.
\end{proof}
\begin{thm}\label{The spr mods of l^p}
  Let $1<p<\infty$ and $(e_n)_{n\in\nn}$ a spreading model of $\ell^p$ of order $\xi$, for some $\xi<\omega_1$. Then one of the following holds.
  \begin{enumerate}
    \item[(i)] The sequence $(e_n)_{n\in\nn}$ is trivial.
    \item[(ii)] The sequence $(e_n)_{n\in\nn}$ is singular. In this case if $e_n=e_n'+e$ is
     the natural decomposition of $(e_n)_{n\in\nn}$ then for every $k\in\nn$ and $a_1,\ldots,a_k\in\rr$,
     we have that \[\Big\|\sum_{j=1}^na_je_j\Big\|=\Bigg(\|e\|^p\Big(\Big|\sum_{j=1}^na_j\Big|\Big)^p+
     \sum_{j=1}^n\|e_1'\|| a_j|^p\Bigg)^\frac{1}{p}\]
    \item[(iii)] The sequence $(e_n)_{n\in\nn}$ is Schauder basic. In this case it is equivalent to the usual basis of $\ell^p$.
     In particular for every $n\in\nn$ and $a_1,\ldots,a_n\in\rr$ we have that
  \[\Big\|\sum_{j=1}^na_je_j\Big\|=\|e_1\|\Big(\sum_{j=1}^n|a_j|^p\Big)^\frac{1}{p}\]
  \end{enumerate}
\end{thm}
\begin{proof}
  Let $\ff$ be a regular thin family of order $\xi$.
  Since $\ell^p$ is reflexive, we have that $(e_n)_{n\in\nn}$ is generated by a weakly relatively
compact $\ff$-sequence. By Proposition \ref{wrc} we have that
$(e_n)_{n\in\nn}$ is generated by a subordinated $\ff$-sequence.
Lemma \ref{lp spr mod phi keno} completes the proof.
\end{proof}
\begin{cor} Let $1<p<\infty$ and $(e_n)_{n\in\nn}$ be a nontrivial  spreading model of $\ell^p$ of order $\xi$, for some
$\xi<\omega_1$. Then the space $E$ generated by $(e_n)_{n\in\nn}$
is isometric to $\ell^p$.
\end{cor}
\begin{proof}
  By Theorem \ref{The spr mods of l^p} it suffices to treat the
  case of $(e_n)_{n\in\nn}$ being singular.
  Let $(u_n)_{n\in\nn}$ be the usual basis of $\ell^p$. Let
  $T:E\to\ell^p$ be the linear operator with
  $T(e_n)=\|e\|u_1+\|e_1'\|u_{n+1}$. It is easy to see that $T$ is
  an isometry. Since $T(e_n)\stackrel{w}{\to}\|e\|u_1$, we have
  that $u_1\in T[E]$ and therefore $T$ is onto.
\end{proof}
\begin{rem}
  By Theorem \ref{The spr mods of l^p} it is immediate that every
  spreading model of $\ell^p$ with $1<p<\infty$ of any order is
  isometric to a sequence in $\ell^p$. Thus,
  $\mathcal{SM}_1(\ell^p)=\mathcal{SM}(\ell^p)$, for all
  $1<p<\infty$.
\end{rem}
\subsection{The spreading models of $\ell^1$}

 Concerning $\ell^1$ we have the following.
\begin{thm}\label{the spreading models of l^1}
  Every nontrivial spreading model of any order of
  $\ell^1$ is equivalent to the usual basis of $\ell^1$.
\end{thm}
\begin{proof}
Let $(e_n)_{n\in\nn}$ be a nontrivial spreading model of order
$\xi$ of $\ell^1$, for some
  $\xi<\omega_1$. Then there exists an $\ff$-sequence $(x_s)_{s\in\ff}$ in $\ell^1$
  and $M\in[\nn]^\infty$ such that the $\ff$-subsequence
  $(x_s)_{s\in\ff\upharpoonright M}$ generates $(e_n)_{n\in\nn}$
  as an $\ff$-spreading model.
  By Corollary \ref{rem on the w* generated spreading models} we may assume that $(x_s)_{s\in\ff\upharpoonright M}$ admits a
  canonical tree decomposition $(y_t)_{t\in\widehat{\ff}\upharpoonright
  M}$. We set $x'_s=x_s-y_\emptyset$, for all $s\in\ff\upharpoonright M$, and we pass to an
  $L\in[M]^\infty$ such that $(x'_s)_{s\in\ff\upharpoonright L}$
  generates an $\ff$-spreading model $(e'_n)_{n\in\nn}$. By
  Corollary \ref{trivial ultrafilter weak property} we have that
  $(e'_n)_{n\in\nn}$ is nontrivial and therefore, passing to a
  further infinite subset of $L$ if necessary, the $\ff$-subsequence
  $(x'_s)_{s\in\ff\upharpoonright L}$ is
  seminormalized. Since  $(x'_s)_{s\in\ff\upharpoonright L}$ is plegma disjointly
  supported (see  Remark  \ref{rem cutting the root}), by  Lemma \ref{lemma for the spreading models of l^p}
  we get  that $(e'_n)_{n\in\nn}$ is equivalent to the usual basis
  of $\ell^1$. By Corollary \ref{ultrafilter property for ell^1 spr mod
  simplified} we have that $(e_n)_{n\in\nn}$ is equivalent to the usual basis
  of $\ell^1$.
\end{proof}
\begin{rem}
  Using similar arguments as in the case of $\ell^p$, for
  $1<p<\infty$, it can be shown that for every
  $(e_n)_{n\in\nn}\in\mathcal{SM}(\ell^1)$ there exist $c_1,c_2\geq0$ such that \[\Big\|\sum_{j=1}^la_je_j\Big\|=c_1\Big|\sum_{j=1}^la_j\Big|+c_2\sum_{j=1}^l|a_j|\]
  for all $l\in\nn$ and $a_1,\ldots,a_l\in\rr$. Therefore, every spreading model of any order of $\ell^1$ is
  isometric to a sequence in $\ell^1$. Thus
  $\mathcal{SM}_1(\ell^1)=\mathcal{SM}(\ell^1)$.
\end{rem}
\section{The spreading models of $c_0$}
\subsection{Weakly relatively compact generated spreading models of $c_0$}
Following the same procedure as in the case of $\ell^p$, with
$1<p<\infty$, one can obtain the following analogue of the Theorem
\ref{The spr mods of l^p}.
\begin{thm}\label{The spr mods of c_0}
  Let $(e_n)_{n\in\nn}\in\mathcal{SM}^{wrc}(c_0)$. Then one of the following holds.
  \begin{enumerate}
    \item[(i)] The sequence $(e_n)_{n\in\nn}$ is trivial.
    \item[(ii)] The sequence $(e_n)_{n\in\nn}$ is singular. In this case if $e_n=e_n'+e$ is
     the natural decomposition of $(e_n)_{n\in\nn}$ then for every $k\in\nn$ and $a_1,\ldots,a_k\in\rr$,
     we have that \[\Big\|\sum_{j=1}^na_je_j\Big\|=\max\Big\{\|e\|\cdot\Big|\sum_{j=1}^na_j\Big|,\|e_1'\|\cdot\max_{1\leq j\leq n}|a_j|\Big\}\]
    \item[(iii)] The sequence $(e_n)_{n\in\nn}$ is Schauder basic. In this case it is equivalent to the usual basis of $c_0$.
    In particular for every $n\in\nn$ and $a_1,\ldots,a_n\in\rr$ we have that
  \[\Big\|\sum_{j=1}^na_je_j\Big\|=\|e_1\|\cdot\max_{1\leq j\leq n}|a_j|\]
  \end{enumerate}
\end{thm}
\begin{cor}
Let $(e_n)_{n\in\nn}$ be a nontrivial spreading model in
$\mathcal{SM}^{wrc}(c_0)$. Then the space $E$ generated by
$(e_n)_{n\in\nn}$  is isometric to $c_0$.\end{cor}
\subsection{The  classical spreading models of $c_0$}
In this subsection we review some rather well known facts
concerning the classical spreading models of $c_0$. We will make
use of some results related to nontrivial weak-Cauchy sequences in
$c_0$. We start with the following lemma.
\begin{lem}\label{upper c_0 gives upper summing}
  Let $(x_n)_{n\in\nn}$ be a sequence in a Banach space.
  Let $v_1=x_1$ and $v_n=x_n-x_{n-1}$, for all $n>1$. Suppose that $(v_n)_{n\in\nn}$
   admits an upper $c_0$ estimate, i.e. there exists $c>0$ such that for every $k\in\nn$ and $b_1,\ldots,b_k\in\rr$
  \[\Big\|\sum_{j=1}^kb_jv_j\Big\|\leq c\cdot\max_{1\leq j\leq k}|b_j|\]
  Then $(x_n)_{n\in\nn}$ is dominated by the summing basis of
  $c_0$ i.e. there exists $c>0$ such that for every $k\in\nn$ and $a_1,\ldots,a_k\in\rr$
  we have that \[\Big\|\sum_{j=1}^ka_jx_{j}\Big\|\leq c\cdot\max_{l\leq k}\Big|\sum_{j=1}^la_j\Big|\]

\end{lem}
\begin{proof}
  First notice that for every $j\in\nn$, $x_j=\sum_{i=1}^jv_i$. Let $k\in\nn$ and $a_1,\ldots,a_k\in\rr$. We set \[b_j=\sum_{i=j}^k a_i\] for all $1\leq j\leq k$. Then we have that
  \[\sum_{j=1}^ka_jx_j=\sum_{j=1}^ka_j\sum_{i=1}^jv_i=\sum_{j=1}^k\Big(\sum_{i=j}^ka_i\Big)v_j=\sum_{j=1}^kb_jv_j \]
  Therefore
  \[\Big\|\sum_{j=1}^ka_jx_j\Big\| =\Big\|\sum_{j=1}^kb_jv_j\Big\| \leq c\cdot\max_{1\leq j\leq k} |b_j|= c\cdot\max_{1\leq j\leq k}\Big|\sum_{i=j}^ka_i\Big|\leq 2c\cdot\max_{1\leq j\leq k}\Big|\sum_{i=1}^ja_i\Big|\]
\end{proof}
\begin{lem}\label{r3}
  Let $(x_n)_{n\in\nn}$ be a nontrivial weak-Cauchy sequence in $c_0$. Then the sequence
  $(x_n)_{n\in\nn}$ contains a subsequence $(x_{k_n})_{n\in\nn}$ such that $(x_{k_n})_{n\in\nn}$ is dominated
  by the summing basis of $c_0$.\end{lem}
\begin{proof}
  For every $x\in\ell^\infty$ and $F\subseteq\nn$ we denote as $x|_F$ the sequence in $\ell^\infty$ such that $x|_F(n)=x(n)$, for all $n\in F$, and $x|_F(n)=0$, for all $n\not\in F$.

  Since $y\in\ell^\infty\setminus c_0$, there exist $L\in[\nn]^\infty$ and $\theta>0$ such that $|y(l)|>\theta$, for all $l\in L$. Let $(\ee_n)_{n\in\nn}$ be a decreasing null sequence of positive reals, such that $\sum_{n=1}^\infty 4\ee_n<\frac{\theta}{2}$. We inductively choose a strictly increasing sequence $(l_n)_{n\in\nn}$ in $L$ and a subsequence $(x_{k_n})_{n\in\nn}$ of $(x_n)_{n\in\nn}$ satisfying for every $n\in\nn$ the following.
  \begin{enumerate}
    \item[(i)] $\displaystyle \big\|x_{k_n}|_{\{1,\ldots,l_n\}}-y|_{\{1,\ldots,l_n\}}\big\|<\ee_{n+1}$ and
    \item[(ii)] $\displaystyle \big\|x_{k_n}|_{\{l_{n+1},l_{n+1}+1,\ldots\}}\big\|<\ee_{n+1}$.
  \end{enumerate}
  Let $l_1\in L$. Pick $k_1\in\nn$ such that \[\big\|x_{k_1}|_{\{1,\ldots,l_1\}}-y|_{\{1,\ldots,l_1\}}\big\|<\ee_2\]
  Suppose that for some $n\in\nn$, $l_1<\ldots<l_n$ and $x_{k_1},\ldots,x_{k_n}$ have been chosen satisfying (i) and (ii). Then let $l_{n+1}\in L$ such that $l_{n+1}>l_n$ and \[\big\|x_{k_n}|_{\{l_{n+1},\ldots\}}\big\|<\ee_{n+1}\]
  Finally, pick $k_{n+1}>k_n$ such that \[\big\|x_{k_n+1}|_{\{1,\ldots,l_{n+1}\}}-y|_{\{1,\ldots,l_{n+1}\}}\big\|<\ee_{n+2}\]

  Let $F_1=\{1,\ldots,l_2-1\}$ and $F_n=\{l_{n-1}+1,\ldots,l_{n+1}-1\}$, for all $n>1$. Let also $u_1=x_{k_1}|_{F_1}$ and $u_n=(x_{k_n}-x_{k_{n-1}})|_{F_n}$. Then we have that
  \[\|u_1-x_{k_1}\|=\big\|x_{k_1}|_{\{l_2,l_2+1,\ldots\}}\big\|<\ee_2\leq\ee_1<4\ee_1\]
  and for every $n>1$, we have that
  \[\begin{split}
    \|u_n-(x_{k_n}&-x_{k_{n-1}})\| =\big\|(x_{k_n}-x_{k_{n-1}})_{F_n^c}\big\|\\ &\leq\big\|(x_{k_n}-x_{k_{n-1}})|_{\{1,\ldots,l_{n-1}\}}\big\|+\big\|x_{k_n}|_{\{l_{n+1},\ldots\}}\big\|+\big\|x_{k_{n-1}}|_{\{l_{n+1},\ldots\}}\big\|\\ &\leq\big\|(x_{k_n}-y)|_{\{1,\ldots,l_{n-1}\}}\big\|+\big\|(x_{k_{n-1}}-y)|_{\{1,\ldots,l_{n-1}\}}\big\|+\ee_{n+1}+\ee_n\\
    &\leq\ee_{n+1}+\ee_n+\ee_{n+1}+\ee_n\leq4\ee_n
  \end{split}\]
Also we have that
\[\|u_1\|\geq|u_1(l_1)|=|x_{k_1}(l_1)|>|y(l_1)|-\ee_2>\frac{\theta}{2}\]
and for every $n>1$, we have that
\[\begin{split}
  \|u_n\|&\geq|u_n(l_n)|\geq \|x_{k_n}(l_n)|-|x_{k_{n-1}}(l_n)|\geq |y(l_n)|-\ee_{n+1}-\ee_n>\frac{\theta}{2}
\end{split}\]
 Therefore $(u_n)_{n\in\nn}$ is seminormalized. The boundness of $(u_n)_{n\in\nn}$ follows by the boundness of $(x_n)_{n\in\nn}$, which is a consequence of the fact that $(x_n)_{n\in\nn}$ $w^*$ convergences to $y$. By the definition of the sequence $(F_n)_{n\in\nn}$, it is immediate that the sequences $(u_{2n-1})_{n\in\nn}$ and $(u_{2n})_{n\in\nn}$ are block sequence in $c_0$. Therefore, both $(u_{2n-1})_{n\in\nn}$ and $(u_{2n})_{n\in\nn}$ are equivalent to the usual basis of $c_0$ with basis constant 1. Since $\|u_n\|\geq\frac{\theta}{2}$ and $\sum_{n=1}^\infty 4\ee_n<\frac{\theta}{2}$, we have that both $(x_{k_{2n-1}}-x_{k_{2n-2}})_{n\in\nn}$ and $(x_{k_{2n}}-x_{k_{2n-1}})_{n\in\nn}$ are equivalent to the usual basis of $c_0$, where we have set $x_{k_0}=0$. This easily yields, using the triangle inequality of the norm, that $(x_n-x_{n-1})_{n\in\nn}$ admits upper $c_0$ estimate. Lemma \ref{upper c_0 gives upper summing} completes the proof.
\end{proof}
Finally we state the next well known result (see Proposition 2.2
in \cite{Ro2}).
\begin{prop} \label{r2} Every nontrivial weak-Cauchy sequence in a Banach
space contains a subsequence which dominates the summing basis.
\end{prop}
By the above we obtain the following.
\begin{prop}\label{sum}
  Every nontrivial weak-Cauchy sequence in $c_0$ contains  a subsequence
   equivalent to the summing basis.
\end{prop}

\begin{prop}
  Let $(e_n)_{n\in\nn}$ be a spreading model of order one of $c_0$ generated by a sequence $(x_n)_{n\in\nn}$.
  Then one of the following holds.
  \begin{enumerate}
    \item[(a)] The sequence  $(x_n)_{n\in\nn}$ contains a  weakly convergent subsequence. In this case
    $(e_n)_{n\in\nn}$ satisfies exactly one of the alternatives of Theorem
    \ref{The spr mods of c_0}.
    \item[(b)] The sequence  $(x_n)_{n\in\nn}$ contains a  nontrivial weak-Cauchy subsequence. In this case
$(e_n)_{n\in\nn}$ is equivalent to the summing basis of $c_0$.
  \end{enumerate}
\end{prop}
\begin{proof}
  It is clear that $(x_n)_{n\in\nn}$ contains a subsequence
  $(x_{k_n})_{n\in\nn}$ which is  weak-Cauchy.
 Then $(x_{k_n})_{n\in\nn}$ is either
  weak convergent, or it is nontrivial weak-Cauchy.
   If $(x_{k_n})_{n\in\nn}$ is weakly convergent then by Theorem \ref{The spr mods of c_0},
    one of the alternatives (i)-(iii) will occur. In the second case by Proposition \ref{sum}  $(x_{k_n})_{n\in\nn}$
   contains a further subsequence equivalent to the summing basis of
   $c_0$. Since this subsequence also  generates $(e_n)_{n\in\nn}$
   as a spreading model we have that $(e_n)_{n\in\nn}$ is also
   equivalent to the summing basis.
\end{proof}
\begin{rem}
  By the results of Chapter \ref{Chapter 5} we may conclude that
  in  case (a) of the above proposition  the whole
  sequence $(x_n)_{n\in\nn}$ is weakly convergent.
\end{rem}

\subsection{The higher order spreading models of $c_0$}
In this subsection we deal with the higher order spreading models
of $c_0$. As we have already mentioned the class of  spreading
models of order two of $c_0$ essentially contains all Schauder
basic spreading sequences. Moreover it is shown that a singular
spreading model of (any order) of  $c_0$ is actually a second
order one.
\begin{lem}\label{on spr
mod of c_0 first lem}
  Let $(x_n)_{n\in\nn}$ be a sequence in $\ell^\infty$. For every
  $n<k$ in $\nn$ let \[x_{\{n,k\}}=(x_n(1),\ldots
  x_n(k),0,\ldots)\] Then for every $l\in\nn$,
  $n_1,\ldots,n_l\in\nn$, $k_1<\ldots<k_l$ in $\nn$ and $a_1,\ldots,a_l\in\rr$ we have
  that \[\Big\|\sum_{i=1}^la_ix_{\{n_i,k_i\}}\Big\|_\infty\leq\max_{1\leq j\leq l}\Big\|\sum_{i=j}^l a_i x_{n_i}\Big\|_\infty\]
\end{lem}
\begin{proof}
  Let $l\in\nn$,
  $n_1,\ldots,n_l\in\nn$, $k_1<\ldots<k_l$ in $\nn$ and
  $a_1,\ldots,a_l\in\rr$. We set
  $x=\sum_{i=1}^la_ix_{\{n_i,k_i\}}$.
  There exists $m_0\leq k_l$ such that $|x(m_0)|=\|x\|_\infty$. Let $k_0=0$
  and let $1\leq j_0\leq l$ such that $k_{j_0-1}<
  m_0\leq k_{j_0}$. Then we have that
  \[\begin{split}\|x\|_\infty&=|x(m_0)|=\Big|\sum_{i=j_0}^la_ix_{n_i,k_i}(m_0)\Big|=\Big|\sum_{i=j_0}^la_ix_{n_i}(m_0)\Big|
  \\
  &\leq\Big\|\sum_{i=j_0}^la_ix_{n_i}\Big\|_\infty \leq \max_{1\leq j\leq l}\Big\|\sum_{i=j}^l a_i x_{n_i}\Big\|_\infty
  \end{split}\]
\end{proof}
\begin{lem}\label{on spr mod of c_0 second lem}
  Let $(e_n)_{n\in\nn}$ be a spreading sequence in $\ell^\infty$. For every
  $n<k$ in $\nn$ let \[x_{\{n,k\}}=(e_n(1),\ldots
  e_n(k),0,\ldots)\] Then for every spreading
  model $(e'_n)_{n\in\nn}$ of $(x_s)_{s\in[\nn]^2}$, we have that
  \[\Big\|\sum_{i=1}^la_ie_i\Big\|_\infty\leq\Big\|\sum_{i=1}^la_ie'_i\Big\|\leq\max_{1\leq j\leq l}\Big\|\sum_{i=j}^l a_i
  e_i\Big\|_\infty\]
  for all $l\in\nn$ and $a_1,\ldots,a_l\in\rr$.
\end{lem}
\begin{proof}
  Let $M\in[\nn]^\infty$ and $(e'_n)_{n\in\nn}$ such that
  $(x_s)_{s\in[M]^2}$ generates $(e'_n)_{n\in\nn}$ as a spreading
  model. Let $l\in\nn$ and $a_1,\ldots, a_l\in\rr$. We will first
  show that \[\Big\|\sum_{i=1}^la_ie'_i\Big\|\leq\max_{1\leq j\leq l}\Big\|\sum_{i=j}^l a_i
  e_i\Big\|_\infty\]
  Let $((s_j^n)_{j=1}^l)_{n\in\nn}$ be a sequence in
  $\text{\emph{Plm}}_l([M]^2)$ such that $\min s_1^n\to\infty$. Then \[\Big\|\sum_{j=1}^la_jx_{s^n_j}\Big\|\to\Big\|\sum_{j=1}^la_je'_j\Big\|\]
  By Lemma \ref{on spr mod of c_0 first lem} and as $(e_n)_{n\in\nn}$ is spreading we have
  that  for all $n\in\nn$,
  \[\Big\|\sum_{i=1}^la_ix_{s^n_j}\Big\|\leq\max_{1\leq j\leq l}\Big\|\sum_{i=j}^l a_i
  e_{s^n_j(1)}\Big\|_\infty
  =\max_{1\leq j\leq l}\Big\|\sum_{i=j}^l a_i
  e_i\Big\|_\infty\]
  Therefore
  \[\Big\|\sum_{i=1}^la_ie'_i\Big\|\leq\max_{1\leq j\leq l}\Big\|\sum_{i=j}^l a_i
  e_i\Big\|_\infty\]
  It remains to show that  \[\Big\|\sum_{i=1}^la_ie_i\Big\|_\infty\leq\Big\|\sum_{i=1}^la_ie'_i\Big\|\] To this end let $\ee>0$ and
  choose  $m_0\in\nn$ such that
  \[\Big\|\sum_{i=1}^la_ie_i\Big\|_\infty-\ee\leq \Big|\sum_{i=1}^la_ie_i(m_0)\Big|\]
  Also let   $k_0\in\nn$ such that  for every
  $(s_j)_{j=1}^l\in\text{\emph{Plm}}([M]^2)$ with $s_1(1)\geq
  M(k_0)$, we have that \[\Bigg|\Big\|\sum_{j=1}^la_je_j'\Big\|-\Big\|\sum_{j=1}^la_jx_{s_j}\Big\|_\infty\Bigg|<\ee\]
  Let  $(s_j)_{j=1}^l\in\text{\emph{Plm}}([M]^2)$ with $s_1(1)\geq
  M(k_0)$ and $s_1(2)\geq m_0$. Then by the above we get that
  \[\begin{split}\Big\|\sum_{i=1}^la_ie_i\Big\|_\infty&\leq \Big|\sum_{i=1}^la_ie_i(m_0)\Big|+\ee=\Big|\sum_{i=1}^la_ix_{s_i}(m_0)\Big|+\ee\\
  &\leq\Big\|\sum_{i=1}^la_ix_{s_i}\Big\|+\ee\leq \Big\|\sum_{j=1}^la_je_j'\Big\|+2\ee\end{split}\]
  Since this holds for all $\ee>0$, we obtain that
  \[\Big\|\sum_{i=1}^la_ie_i\Big\|_\infty\leq\Big\|\sum_{i=1}^la_ie'_i\Big\|\]
  and the proof is complete.
\end{proof}
\begin{prop}\label{c_0 has every bimonotone as order 2}
  For every Schauder basic spreading sequence $(e_n)_{n\in\nn}$ there exists
  $(e_n')_{n\in\nn}\in\mathcal{SM}_2(c_0)$ equivalent to
  $(e_n)_{n\in\nn}$. In particular, if $(e_n)_{n\in\nn}$ is
  bimonotone, then $(e_n)_{n\in\nn}\in\mathcal{SM}_2(c_0)$.
\end{prop}
\begin{proof}
  We may assume that $(e_n)_{n\in\nn}$ is a sequence in
  $\ell^\infty$. Let $C$ be the basis constant of $(e_n)_{n\in\nn}$.
  By Lemma \ref{on spr mod of c_0 second lem} there exists
  $(e_n')_{n\in\nn}\in\mathcal{SM}_2(c_0)$ such that
  \[\Big\|\sum_{i=1}^la_ie_i\Big\|_\infty\leq\Big\|\sum_{i=1}^la_ie'_i\Big\|\leq\max_{1\leq j\leq l}\Big\|\sum_{i=j}^l a_i
  e_i\Big\|_\infty\leq (1+C)\Big\|\sum_{i=1}^la_ie_i\Big\|_\infty\]
  for all $l\in\nn$ and $a_1,\ldots,a_l\in\rr$. Therefore
  $(e_n)_{n\in\nn}$ and $(e_n')_{n\in\nn}$ are equivalent. Moreover, if in
  addition $(e_n)_{n\in\nn}$ is bimonotone then
  \[\max_{1\leq j\leq l}\Big\|\sum_{i=j}^l a_i
  e_i\Big\|_\infty\leq \Big\|\sum_{i=1}^l a_i
  e_i\Big\|_\infty\] for all  $l\in\nn$ and
  $a_1,\ldots,a_l\in\rr$. Hence $(e_n')_{n\in\nn}$ is isometric to
  $(e_n)_{n\in\nn}$.
\end{proof}
\begin{cor}\label{max of bimonotone and trivial in c_0}
  Let $(e'_n)_{n\in\nn}$ be a bimonotone Schauder basic spreading sequence
  and $c>0$. We define the norm $\||\cdot\||$ on $c_{00}(\nn)$ as
  follows. We set
  \[\Big\|\Big|\sum_{j=1}^na_je_j\Big\|\Big|=\max\Big(c\cdot\Big|\sum_{j=1}^na_j\Big|,\|\sum_{j=1}^na_je'_j\Big\|\Big)\]
  for all $n\in\nn$ and $a_1,\ldots,a_n\in\rr$, where
  $(e_n)_{n\in\nn}$ is the natural Hamel basis of $c_{00}(\nn)$.
  Then $\||\cdot\||$ is well defined, $(e_n)_{n\in\nn}$ is a
  nontrivial
  spreading sequence and $(e_n)_{n\in\nn}\in\mathcal{SM}_2(c_0)$.
\end{cor}
\begin{proof}
  It is immediate that $\||\cdot\||$ is well defined and $(e_n)_{n\in\nn}$
  is a nontrivial spreading sequence. Since $(e'_n)_{n\in\nn}$ is a
  bimonotone Schauder basic sequence, by Proposition
  \ref{c_0 has every bimonotone as order 2}, there exists an
  $[\nn]^2$-sequence $(x_s)_{s\in[\nn]^2}$ in $c_0$ which
  generates $(e'_n)_{n\in\nn}$ as a $2$-spreading model.
  Clearly we may assume that for every $s\in[\nn]^2$, $x_s(1)=0$.
  Let, for every $s\in[\nn]^2$, $y_s$ to be the sequence in $c_0$
  such that $y_s(1)=c$ and $y_s(n)=x_s(n)$, for all $n>1$. It is
  easy to see that $(y_s)_{s\in[\nn]^2}$ generates
  $(e_n)_{n\in\nn}$ as a $2$-spreading model. Hence
  $(e_n)_{n\in\nn}\in\mathcal{SM}_2(c_0)$.
\end{proof}
By Proposition \ref{singular splitted} and the above corollary we
have the following.
\begin{cor}\label{c_0 universcal for singular}
  For every singular spreading sequence $(e_n)_{n\in\nn}$, we have
  that there exists
  $(\widetilde{e}_n)_{n\in\nn}\in\mathcal{SM}_2(c_0)$ such that
  the sequences $(e_n)_{n\in\nn}$ and
  $(\widetilde{e}_n)_{n\in\nn}$ are equivalent.
\end{cor}
\begin{prop}
  For every singular $(e_n)_{n\in\nn}\in\mathcal{SM}(c_0)$, we have that $(e_n)_{n\in\nn}$ belongs to $\mathcal{SM}_2(c_0)$. Moreover, if $e_n=e'_n+e$ is the natural decomposition of $(e_n)_{n\in\nn}$, then we have that \[\Big\|\sum_{j=1}^ka_je_j\Big\|=\max\Big\{\|e_1\|\cdot\Big|\sum_{j=1}^ka_j\Big|,\Big\|\sum_{j=1}^ka_je'_j\Big\|\Big\}\] for all $k\in\nn$ and $a_1,\ldots,a_k$.
\end{prop}
\begin{proof}
  Let $(e_n)_{n\in\nn}\in\mathcal{SM}(c_0)$ be  singular and let $e_n=e'_n+e$ be the natural decomposition of  $(e_n)_{n\in\nn}$.
  Let $\ff$ be a regular thin family, $M\in[\nn]^\infty$ and $(x_s)_{s\in\ff}$ an
  $\ff$-sequence in $c_0$ such that $(x_s)_{s\in\ff\upharpoonright M}$ generates
  $(e_n)_{n\in\nn}$ as an $\ff$-spreading model. By Corollary \ref{singular in space with separable dual}, we have
that  there exist $x\in c_0$ and $N\in[M]^\infty$ such that
$(x_s)_{s\in\ff\upharpoonright N}$ weakly converges to $x$,
$\|x\|=\|e\|$ and setting $x'_s=x_s-x$, for all
$s\in\ff\upharpoonright N$, we have that
$(x'_s)_{s\in\ff\upharpoonright N}$ admits $(e'_n)_{n\in\nn}$ as a
unique $\ff$-spreading model. We pass to an $L\in[N]^\infty$ such
that $(x'_s)_{s\in\ff\upharpoonright L}$ generates
$(e'_n)_{n\in\nn}$ as an $\ff$-spreading model. Let $k\in\nn$ and
$a_1,\ldots,a_k$. Let $((s_j^n)_{j=1}^k)_{n\in\nn}$ be a sequence
in $\text{\emph{Plm}}(\ff\upharpoonright L)$, such that $\min
s_1^n\to\infty$. Notice that  $(x'_s)_{s\in\ff\upharpoonright L}$
is weakly null. Therefore we have that
  \[\begin{split}
  \Big\|\sum_{j=1}^ka_je_j\Big\|& =\lim_{n\to\infty}\Big\|\sum_{j=1}^ka_jx_{s_j}\Big\|=\lim_{n\to\infty}\Big\|\sum_{j=1}^ka_jx+\sum_{j=1}^ka_jx'_{s_j}\Big\|\\ &=\max\Big\{\Big\|\sum_{j=1}^ka_jx\Big\|,\lim_{n\to\infty}\Big\|\sum_{j=1}^ka_jx'_{s_j}\Big\|\Big\}\\ &=\max\Big\{\|e_1\|\cdot\Big|\sum_{j=1}^ka_j\Big|,\Big\|\sum_{j=1}^ka_je'_j\Big\|\Big\}
  \end{split}\]
Since $(e'_n)_{n\in\nn}$ is 1-unconditional, we have that
$(e'_n)_{n\in\nn}$ is also bimonotone Schauder basic. The above
equation and Corollary \ref{max of bimonotone and trivial in c_0}
yield that $(e_n)_{n\in\nn}\in\mathcal{SM}_2(c_0)$.
\end{proof}
By Proposition \ref{c_0 has every bimonotone as order 2} and
Corollary \ref{c_0 universcal for singular} we have the following.
\begin{cor}
  The set $\mathcal{SM}_2(c_0)$ is isomorphically universal for all
  spreading sequences.
\end{cor}
\begin{rem}
  In Subsection \ref{subsection Tsirelson} of Chapter \ref{chapter 9} we describe the spreading models of the
  generalized Tsirelson's space $T_\alpha$, $\alpha<\omega_1$.
\end{rem}
\chapter{Composition of the spreading models} In this chapter we
present some composition properties for spreading models. Among
others we  show that the class of spreading models, which we call
block $k$-iterated spreading models, introduced in \cite{O-S} are
also spreading models of the same order in our context. Moreover,
we present several related results for $\ell^p$ and $c_0$
spreading models.
\section{The composition property}
We well need the notion of the plegma block $\ff$-sequence.
\begin{defn}
  Let  $X$ be a Banach space with a Schauder basis. Let $\ff$ be a regular thin family, $M\in[\nn]^\infty$ and
  $(x_s)_{s\in\ff}$ be an $\ff$-sequence of finitely supported
  vectors in $X$. We will say that the $\ff$-subsequence $(x_s)_{s\in\ff\upharpoonright
  M}$ is  plegma block if for
  every plegma pair $(s_1,s_2)$ in $\ff\upharpoonright M$ we have
  that
  $\text{supp}(x_{s_1})<\text{supp}(x_{s_2})$.
\end{defn}
In the sequel an $\ff$-spreading model will be called \emph{plegma
block generated} if it is generated by a plegma block
$\ff$-subsequence.
\begin{thm}\label{composition thm}
  Let  $X$ be a Banach space and  $(e_n)_{n\in\nn}$ be a Schauder basic sequence in $\mathcal{SM}_\xi(X)$, for some  $\xi<\omega_1$. Let
  $E$ be the Banach space generated by $(e_n)_{n\in\nn}$ and let $(\overline{e}_n)_{n\in\nn}$ be a plegma block generated $k$-spreading model of $E$, for some  $k\in\nn$.
  Then $$(\overline{e}_n)_{n\in\nn}\in\mathcal{SM}_{\xi+k}(X)$$
\end{thm}
For the proof of the above theorem we need the following
definition.
\begin{defn}
  Let $\ff,\g$ be families of finite subsets of $\nn$. We define the ordered direct sum of $\g$ and $\ff$ to be the family
  \[\g\oplus\ff=\Big{\{}s\cup t:s\in\g,t\in\ff\;\;\text{and}\;\;s<t\Big{\}}\]
\end{defn}
\begin{rem}
  It is easy to see that if $\ff$ and $\g$ are regular thin families, then $\g\oplus\ff$ is also a regular
  thin family and $o(\g\oplus\ff)=o(\ff)+o(\g)$. In particular $o([\nn]^{k}\oplus\ff)=o(\ff)+k$, for every $k\in\nn$.
\end{rem}
\begin{proof}[Proof of Theorem \ref{composition thm}]
  Let $\ff$ be a regular thin family with $o(\ff)=\xi$,
   $(x_s)_{s\in\ff}$ be an $\ff$-sequence in $X$ and $M\in[\nn]^\infty$ such that $(x_s)_{s\in\ff\upharpoonright M}$ generates $(e_n)_{n\in\nn}$
   as an $\ff$-spreading model.
  We may also suppose that $\ff$ is very large in $M$. Using Remark \ref{remark for k spr mod} we may
  assume that there exists a plegma block $[\nn]^k$-sequence
  $(y_t)_{t\in[\nn]^k}$ which generates $(\overline{e}_n)_{n\in\nn}$
  as an $[\nn]^k$-spreading model.

  For every $t\in[\nn]^k$ we set $F_t=\text{supp}(y_t)$ with respect to $(e_n)_{n\in\nn}$ and $l_t=|F_t|$.
  Then for every $t\in[\nn]^k$
  the vector $y_t$ is of the form \[y_t=\sum_{j=1}^{l_t}a^t_{F_t(j)}e_{F_t(j)}^{\;}\]
  We set $\g=[\nn]^{k}\oplus\ff$ and for every $v\in\g$ we set $t_v$ and $s_v$ the unique elements
  in $[\nn]^k$ and $\ff$ respectively such that $v=t_v\cup s_v$ and $t_v<s_v$. We split the proof into three steps.\\
  \textbf{Step 1:} We set \[\g^*=\Big{\{} v\in\g:\min s_v\geq M(l_{t_v}) \Big{\}}\]
  It is easy to verify that $\g^*$ is large in $M$. Hence by Theorem \ref{th2} there exists $L_0\in[M]^\infty$ such that
  $\g\upharpoonright L_0\subseteq \g^*$. For every $v\in\g\upharpoonright L_0$ we define a finite sequence $(s^v_1,\ldots,s^v_{l_{t_v}})$
  as follows. Let $s_v=\{M(q^v_1),\ldots,M(q^v_{|s_v|})\}$.
  Then for every $j=1,\ldots,l_{t_v}$ we set $s^v_j$ to be the unique element of $\ff\upharpoonright M$ such that
  $$s^v_j\sqsubseteq \Big{\{}M(q_i^v-l_{t_v}+j):i=1,\ldots,|s_v|\Big{\}}$$
  We define the family $(z_v)_{v\in\g\upharpoonright L_0}$ by setting
  $$z_v=\sum_{j=1}^{l_{t_v}}a^{t_v}_{F_{t_v}(j)}x_{s^v_j}$$
  for every $v\in\g\upharpoonright L_0$.\\
  \textbf{Step 2:} For every $n\in\nn$ we set
  \[\begin{split}\mathcal{A}_n=\Big{\{} (v_p)_{p=1}^n\in &\text{\emph{Plm}}_n(\g\upharpoonright L_0):
  \; s_1^{v_1}(1)\geq M\Big(\sum_{p=1}^{n}l_{t_{v_p}}\Big) \\
  &\text{and}\;\;
  (s_j^{v_1})_{j=1}^{l_{t_{v_1}}}\;^\frown \ldots \;^\frown (s_j^{v_n})_{j=1}^{l_{t_{v_n}}}\;\text{is plegma}
   \Big\}\end{split}\]
  It is easily verified that $\mathcal{A}_n$ is large in $L_0$. Inductively using Theorem \ref{ramseyforplegma} we construct a decreasing sequence $(L_n)_{n\in\nn}$
  in $[L_0]^\infty$ such that $\text{\emph{Plm}}_n(\g\upharpoonright L_n)\subseteq\mathcal{A}_n$ for all $n\in\nn$.
  Let $L$ be a diagonalization of $(L_n)_{n\in\nn}$,
  that is $L(n)\in L_n$ for each $n$.\\
  \textbf{Step 3:} In this step we will show that $(z_s)_{s\in\g\upharpoonright L}$ admits $(\overline{e}_n)_{n\in\nn}$ as a
  $(\xi+k)$- spreading model.
  Indeed, first
  notice that the family $\g$ is of order $\xi+k$. Let $(\delta^1_n)_{n\in\nn}$ be the null sequence of positive numbers such that
  $(x_s)_{s\in\ff\upharpoonright M}$ generates $(e_n)_{n\in\nn}$ with respect
  to $(\delta^1_n)_{n\in\nn}$. Let also $(\delta^2_n)_{n\in\nn}$ be the null sequence of positive numbers such that
  $(y_t)_{t\in[L]^k}$ generates $(\overline{e_n})_{n\in\nn}$ with respect
  to $(\delta^2_n)_{n\in\nn}$. Let $C$ be  the basis constant of $(e_n)_{n\in\nn}$, $K=\sup\{\|y_t\|:t\in[L]^k\}$
  and set $\delta_n=2CK\delta^1_n+\delta^2_n$, $n\in\nn$.

  We will show that $(z_v)_{v\in\g\upharpoonright L}$ satisfies the conditions of Remark \ref{old defn yields new}
  concerning the sequence $(\overline{e_n})_{n\in\nn}$
  with respect to $(\delta_n)_{n\in\nn}$.
Let $l\in\nn$, $(v_i)_{i=1}^l$ a plegma $l$-tuple in
$\g\upharpoonright L$
  with $v_1(1)\geq L(l)$ and $b_1,\ldots,b_l\in[-1,1]$. By Step 2, $(v_i)_{i=1}^l$ belongs to $\mathcal{A}_l$. Notice that
  \begin{equation}\begin{split}\label{eq10}
   \Bigg| \Big\|\sum_{i=1}^l b_i z_{v_i} \Big\|-\Big\|\sum_{i=1}^l b_i \overline{e}_i \Big\| \Bigg|
    \leq& \Bigg|\Big\|\sum_{i=1}^l b_i z_{v_i}   \Big\|-\Big\|\sum_{i=1}^l b_i y_{t_{v_i}}  \Big\| \Bigg|\\
    &+ \Bigg| \Big\|\sum_{i=1}^l b_i y_{t_{v_i}}  \Big\|- \Big\|\sum_{i=1}^l b_i \overline{e}_i \Big\| \Bigg|
  \end{split}\end{equation}
  It is straightforward that
  \begin{equation}\label{eq11}
    \Bigg| \Big\|\sum_{i=1}^l b_i y_{t_{v_i}}  \Big\|-\Big\| \sum_{i=1}^l b_i \overline{e}_i  \Big\|\Bigg|<\delta^2_l
  \end{equation}
  Since $(e_n)_{n\in\nn}$ is Schauder basic with basis constant $C$ and for every $t\in[L]^k$, $\|y_t\|\leq K$,
  we have that for every $t\in[L]^k$ and $1\leq j\leq l_t$, $|a^t_{F_t(j)}|\leq 2CK$. Hence for every $1\leq i\leq l$, $t\in[L]^k$ and $1\leq j\leq l_t$,
  we have that $b_ia^t_{F_t(j)}\in [-2CK,2CK]$. Since $(v_i)_{i=1}^l$ belongs to $\mathcal{A}_l$ we have that $v_1(1)\geq M (\sum_{i=1}^l l_{t_{v_i}})
  \geq L(\sum_{i=1}^l l_{t_{v_i}})$. So
  \begin{equation}\begin{split}\label{eq12}
   \Bigg|\Big\|\sum_{i=1}^l b_i z_{v_i}   \Big\|-\Big\|\sum_{i=1}^l b_i y_{t_{v_i}}  \Big\| \Bigg|=&
   \Bigg|\Big\|\sum_{i=1}^l\sum_{j=1}^{l_{t_{v_i}}} b_ia_{F_{t_{v_i}}(j)}^{t_{v_i}} x^{s_{v_i}}_j  \Big\|\\
   &-\Big\|\sum_{i=1}^l\sum_{j=1}^{l_{t_{v_i}}} b_ia_{F_{t_{v_i}}(j)}^{t_{v_i}} e_{F_{t_{v_i}}(j)}^{\;}  \Big\|  \Bigg|<2CK\delta^1_l
  \end{split}\end{equation}
  The inequalities (\ref{eq10}), (\ref{eq11}) and (\ref{eq12}) yield that
  \[\Bigg| \Big\|\sum_{i=1}^l b_i z_{v_i} \Big\|-\Big\|\sum_{i=1}^l b_i \overline{e}_i \Big\| \Bigg|<\delta^2_l+2CK\delta^1_l=\delta_l\]
  Hence by Remark \ref{old defn yields new}, we get that for some $L'\in[L]^\infty$,
   $(z_v)_{v\in\g\upharpoonright L'}$ generates $(\overline{e_n})_{n\in\nn}$ as a $\g$-spreading model.
\end{proof}
\begin{rem}\label{composition rem}
  If we additionally
  assume that $X$ has a Schauder basis then the above proof actually provides more information concerning the structure of the sequence  $(z_v)_{v\in\g\upharpoonright L}$.
  First notice that by
  Step 2, we have the following.
  \begin{enumerate}
    \item[(i)] If $(e_n)_{n\in\nn}$ is plegma disjointly (resp. block) generated
    by $(x_s)_{s\in\ff\upharpoonright M}$ then $(\overline{e}_n)_{n\in\nn}$
    is plegma disjointly (resp. block) generated by $(z_v)_{v\in\g\upharpoonright L'}$.
    \item[(ii)] If
   $(x_s)_{s\in\ff\upharpoonright M}$ admits a disjoint canonical tree decomposition then $(z_v)_{v\in\g\upharpoonright L}$
   also does.
  \end{enumerate}
   Also notice that for every $v\in\g\upharpoonright L$ we have that
   that  $|s_j^v|<|v|$, for all $1\leq j\leq l_{t_v}$.
\end{rem}

  Let us point out that Theorem \ref{composition thm} does not seem extendable for arbitrary thin family $\g$ in place of $[\nn]^k$.
  The main difficulty for this is that the elements of $\g$ when $o(\g)\geq \omega$ are not of equal length. Thus in the new thin
  family $\g\oplus\ff$
  the plegma pairs are not decomposed into two plegma pairs from
  the families
  $\ff$ and $\g$.
  However a general composition result (i.e. for arbitrary thin family $\g$)
  seems provable after a modification
  of the notion of plegma
   on $\g\oplus\ff$, but this is beyond the purposes of the present paper.

  \section{The $k$-iterated spreading models}
  A different notion of $k$-spreading model is provided in \cite{O-S}.
  In this section we will discuss its relation with the present context.
  According to \cite{O-S} we have the following terminology.

   1) Let $E_0,E$ be Banach spaces. We write $E_0\to E$ if $E$ has a Schauder basis
    which is a spreading model of some seminormalized basic sequence
    in $E_0$ and $E_0\stackrel{k}{\to}E$ if $E_0\to E_1\to\ldots\to E_{k-1}\to E$
    for some sequence of Banach spaces $E_1,\ldots, E_{k-1}$. Note that for every
     $k\in\nn$ if $E_0\stackrel{k}{\to}E$ then $E$ has a spreading Schauder basis.

  2)  Let $E_1,E_2$ be Banach spaces with Schauder bases
    $(e_n^1)_{n\in\nn}$ and $(e_n^2)_{n\in\nn}$ respectively. We
    will write $E_1\substack{\; \\\longrightarrow\\\text{bl}} E_2$ if
    $(e_n^2)_{n\in\nn}$ is a spreading model of some seminormalized
    block subsequence of $(e_n^1)_{n\in\nn}$. Let $E_0$ be a Banach space and $E$ be a Banach space with a
    Schauder basis. Similarly for some
    $k>1$, we say that $E_0\substack{k \\\longrightarrow\\\text{bl}}
    E$, if there exists a sequence $E_1,\ldots,E_{k-1}$ of Banach
    spaces with Schauder bases
    $(e_n^1)_{n\in\nn},\ldots,(e_n^{k-1})_{n\in\nn}$ respectively such that
    $E_0\substack{\;
    \\\longrightarrow\\\text{ }} E_1 \substack{\;
    \\\longrightarrow\\\text{bl}} E_2 \substack{\;
    \\\longrightarrow\\\text{bl}}\ldots\substack{\;
    \\\longrightarrow\\\text{bl}} E_{k-1} \substack{\;
    \\\longrightarrow\\\text{bl}} E$.
\begin{defn}
    Let $E_0,E$ be Banach spaces and $k\in\nn$. We say that $E$ is a
    $k$-\textit{iterated} spreading model of $E_0$ if $E_0\stackrel{k}{\to}E$.
    Additionally, $E$ is a \textit{block} $k$-\textit{iterated} spreading model of $E_0$ if
    $E_0\substack{k\\\longrightarrow\\\text{bl}} E$.
  \end{defn}
Under the above definition we have the following which is a direct consequence of Theorem \ref{composition thm}.
  \begin{cor}\label{Proposition correlating the two kinds of bloch spreading model notions}
    Let $X$ be a Banach space and $E$ be a Banach space with Schauder basis $(e_n)_{n\in\nn}$ such that
      $X\substack{k \\\longrightarrow\\\text{bl}} E$, for some  $k>1$. Then $(e_n)_{n\in\nn}$  is a
   $k$-spreading model of $X$.
  \end{cor}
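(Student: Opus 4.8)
The plan is to unfold the definition of $X\substack{k \\\longrightarrow\\\text{bl}} E$ into its defining chain $X\to E_1 \substack{\; \\\longrightarrow\\\text{bl}} E_2 \substack{\; \\\longrightarrow\\\text{bl}}\cdots \substack{\; \\\longrightarrow\\\text{bl}} E_{k-1} \substack{\; \\\longrightarrow\\\text{bl}} E$ and to apply Theorem \ref{composition thm} $k-1$ times, peeling off one block spreading model at each stage. Before that I would record two routine translations between the Odell--Schlumprecht formalism and the present one. For a regular thin family of order $1$, say $[\nn]^1$, a pair of singletons $(\{m_1\},\{m_2\})$ is plegma iff $m_1<m_2$, and $[\nn]^1$-spreading models are precisely the classical Brunel--Sucheston spreading models. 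Hence: (a) if a seminormalized block subsequence $(z_n)_{n}$ of a Schauder basis of a Banach space $Z$ generates $(u_n)_{n}$ as a classical spreading model, then $\text{supp}(z_{m_1})<\text{supp}(z_{m_2})$ for $m_1<m_2$ shows that the $[\nn]^1$-sequence $\{m\}\mapsto z_m$ is plegma block and plegma block generates $(u_n)_{n}$, so $(u_n)_{n}\in\mathcal{SM}_1(Z)$ is a plegma block generated spreading model; and (b) if $Z_0\to Z$ then the Schauder basis of $Z$ lies in $\mathcal{SM}_1(Z_0)$.

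I would then prove, by induction on $k\geq 1$, that $X\substack{k \\\longrightarrow\\\text{bl}} E$ — read as $X\to E$ in the case $k=1$ — implies $(e_n)_{n}\in\mathcal{SM}_k(X)$; the corollary is exactly the case $k>1$. The base case $k=1$ is translation (b). For the inductive step let $k>1$, fix a witnessing chain as above with each $E_i$ carrying a Schauder basis $(e^i_n)_{n}$, and observe that its initial segment $X\to E_1 \substack{\; \\\longrightarrow\\\text{bl}}\cdots \substack{\; \\\longrightarrow\\\text{bl}} E_{k-1}$ witnesses $X\substack{k-1 \\\longrightarrow\\\text{bl}} E_{k-1}$. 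By the inductive hypothesis the basis $(e^{k-1}_n)_{n}$ of $E_{k-1}=\overline{<(e^{k-1}_n)_{n}>}$ is a $(k-1)$-order spreading model of $X$; in particular it is a Schauder basic sequence in $\mathcal{SM}_{k-1}(X)$ and $k-1<\omega_1$. At the same time $E_{k-1} \substack{\; \\\longrightarrow\\\text{bl}} E$ combined with translation (a) gives that $(e_n)_{n}\in\mathcal{SM}_1(E_{k-1})$ is a plegma block generated spreading model. Applying Theorem \ref{composition thm} with $\xi=k-1$ and with its internal parameter ``$k$'' set to $1$ yields $(e_n)_{n}\in\mathcal{SM}_{(k-1)+1}(X)=\mathcal{SM}_k(X)$, which completes the induction.

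The argument is entirely formal once Theorem \ref{composition thm} is granted; the only points deserving care are the two dictionary translations above and the (harmless) identification, at every stage of the chain, of the Banach space $E_i$ with the closed linear span of its Schauder basis, so that the hypotheses of Theorem \ref{composition thm} hold verbatim. I expect translation (a) — checking that a classical block spreading model is precisely a plegma block generated spreading model of order $1$ — to be the main, and still very mild, obstacle.
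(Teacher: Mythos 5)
Your argument is correct and is exactly the route the paper intends: the corollary is stated there as a direct consequence of Theorem \ref{composition thm}, obtained by unfolding the chain $X\to E_1\substack{\;\\\longrightarrow\\\text{bl}}\cdots\substack{\;\\\longrightarrow\\\text{bl}}E$ and applying the composition theorem with its internal parameter equal to $1$ at each stage, which is precisely your induction. Your two ``dictionary'' observations (order-$1$ spreading models coincide with the classical ones, and a block subsequence of a basis is plegma block, so a classical block spreading model is a plegma block generated $1$-order one) are the only details the paper leaves implicit, and you handle them correctly.
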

  \begin{rem} A space which does not contain any $\ell^p$, for $1\leq p\leq\infty$,
    or $c_0$ spreading model  is constructed in \cite{O-S}.
    In the same paper it is asked if there exists a space which does not contain any $\ell^p$, for $1\leq p\leq\infty$,
    or $c_0$ $k$-iterated spreading model of any $k\in\nn$. In
    Chapter \ref{space Odel_Schlum} we answer affirmatively this
    problem.
  \end{rem}
  \begin{rem}
    Let us also mention that generally the class of block $k$-iterated spreading models is strictly smaller than the
    corresponding one of the plegma block generated $k$-spreading
    models. In Chapter \ref{space separeting strong k-order from
    k-order} we construct a space having this property.
  \end{rem}
  \section{Applications to $\ell^p$ and $c_0$ spreading models}
  \subsection{$\ell^p$ spreading models}
  \begin{defn}
    Let $X$ be a Banach space, $\ff$ be a regular thin family,
and $(x_s)_{s\in\ff}$ be an $\ff$-sequence in $X$. Also let
$M\in[\nn]^\infty$ and  $p\in[1,\infty)$. We say that the
    $\ff$-subsequence $(x_s)_{s\in\ff\upharpoonright M}$ generates
    $\ell^p$ (resp. $c_0$) as an $\ff$-spreading model if $(x_s)_{s\in\ff\upharpoonright M}$
    generates as an $\ff$-spreading model a sequence equivalent
    to the usual basis of $\ell^p$ (resp. $c_0$).
  \end{defn}
  \begin{rem}
    It is easy to see that an $\ff$-subsequence
    $(x_s)_{s\in\ff\upharpoonright M}$ generates $\ell^p$
    as an $\ff$-spreading model iff $(x_s)_{s\in\ff\upharpoonright M}$
    generates an $\ff$-spreading model and there exist $C,c>0$
    such that
    \[c\Big(\sum_{j=1}^n|a_j|^p\Big)^\frac{1}{p}\leq\Big\|\sum_{j=1}^na_jx_{s_j}\Big\|\leq C\Big(\sum_{j=1}^n|a_j|^p\Big)^\frac{1}{p}\]
    for all $n\in\nn$, $a_1,\ldots,a_n\in\rr$ and $(s_j)_{j=1}^n$
    plegma $n$-tuple in $\ff\upharpoonright M$ with $s_1(1)\geq
    M(n)$. A  similar remark also holds for $\ff$-subsequences
    which generate $c_0$ as a spreading model.
  \end{rem}

  \begin{prop}\label{getting l^p spr mod by containing}
    Let $X$ be a Banach space, $\xi<\omega_1$ and $(e_n)_{n\in\nn}$
    be a nontrivial sequence in $\mathcal{SM}_\xi(X)$. Assume  that
    the space $E$ generated by $(e_n)_{n\in\nn}$ contains an isomorphic copy of
    $\ell^p$ (resp. $c_0$), for some $p\in[1,\infty)$. Then $X$
    admits an $\ell^p$ (resp. $c_0$) spreading model of order
    $\xi+1$.
  \end{prop}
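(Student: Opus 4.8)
The statement to prove is Proposition \ref{getting l^p spr mod by containing}: if $(e_n)_n$ is a nontrivial sequence in $\mathcal{SM}^{wrc}_\xi(X)$ and $E=\overline{<(e_n)_n>}$ contains an isomorphic copy of $\ell^p$ (resp. $c_0$), then $X$ admits an $\ell^p$ (resp. $c_0$) spreading model of order $\xi+1$. The natural strategy is to combine the composition theorem (Theorem \ref{composition thm}) with the standard fact that a Banach space containing $\ell^p$ (resp. $c_0$) admits a block subsequence of the basis equivalent to the $\ell^p$ (resp. $c_0$) basis, which in turn is realized as a \emph{plegma block generated} $1$-spreading model of $E$. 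The point is that for order $k=1$, plegma families are just subsets of $\nn$ and a plegma block generated $1$-spreading model is exactly a classically generated block spreading model.

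First I would invoke Theorem \ref{relatively weakly compact sets have unconditional spreading models} to note that, since $(e_n)_n\in\mathcal{SM}^{wrc}_\xi(X)$ and, being nontrivial, is Schauder basic (this needs a small argument: nontriviality plus Theorem \ref{Theorem equivalent forms for having norm on the spreading model} — actually what we need is that $(e_n)_n$ is unconditional, hence we should first argue it is Schauder basic; if it is not Schauder basic then by the corollary following Theorem \ref{relatively weakly compact sets have unconditional spreading models} it is Ces\`aro summable after translation, and one checks $E$ then cannot contain $\ell^p$ for $p<\infty$ or $c_0$ — I would handle this dichotomy case by case, using that $\ell^p$ and $c_0$ are not containable in a space built from a Ces\`aro-null unconditional sequence together with a one-dimensional factor via Lemma \ref{Isomorphic E into Z} and Proposition \ref{containing schauder basic}). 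So WLOG $(e_n)_n$ is a Schauder basic, unconditional, $1$-subsymmetric sequence. Next, since $E$ contains an isomorphic copy of $\ell^p$ (resp. $c_0$), by the classical Bessaga--Pe\l czy\'nski-type selection principle there is a normalized block subsequence $(y_m)_m$ of $(e_n)_n$ equivalent to the usual basis of $\ell^p$ (resp. $c_0$). By passing to a subsequence and applying the classical Brunel--Sucheston construction inside $E$, $(y_m)_m$ generates as a (classical, i.e. $[\nn]^1$-) spreading model a sequence $(\bar e_n)_n$ which is also equivalent to the $\ell^p$ (resp. $c_0$) basis, and since the $y_m$ are a block sequence this spreading model is \emph{plegma block generated}; thus $(\bar e_n)_n\in\mathcal{SM}_1(E)$ is plegma block generated.

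Now apply Theorem \ref{composition thm} with $k=1$: since $(e_n)_n$ is a Schauder basic sequence in $\mathcal{SM}_\xi(X)$ and $(\bar e_n)_n\in\mathcal{SM}_1(E)$ is plegma block generated, we conclude $(\bar e_n)_n\in\mathcal{SM}_{\xi+1}(X)$. As $(\bar e_n)_n$ is equivalent to the usual basis of $\ell^p$ (resp. $c_0$), this says precisely that $X$ admits an $\ell^p$ (resp. $c_0$) spreading model of order $\xi+1$, which is the assertion.

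The main obstacle is the reduction to the Schauder basic (equivalently, by Theorem \ref{relatively weakly compact sets have unconditional spreading models}, unconditional) case. If $(e_n)_n$ were nontrivial but not Schauder basic, then by the corollary to Theorem \ref{relatively weakly compact sets have unconditional spreading models} there are $x_0$ and a further subsequence whose translate by $x_0$ generates an unconditional, Ces\`aro-summable-to-zero spreading model $(e_n')_n$, and by Lemma \ref{Isomorphic E into Z} and Proposition \ref{containing schauder basic} the space $E$ embeds into $(\rr\oplus E')_0$ and every Schauder basic sequence in $E$ has a block subsequence equivalent to a block subsequence of $(e_n')_n$. Since an unconditional $1$-subsymmetric sequence that is Ces\`aro summable to zero cannot be equivalent to the $\ell^p$ basis for $1\le p<\infty$ nor to the $c_0$ basis (for $\ell^1$ this is Proposition \ref{l1 vs Cezaro summability}; for $1<p<\infty$ and $c_0$ one notes a block subsequence of $(e_n')_n$ would have to be equivalent to the $\ell^p$ resp. $c_0$ basis, contradicting Ces\`aro summability to zero — the averages $\frac1n\sum_{j=1}^n e'_{k_j}$ would be bounded below), the copy of $\ell^p$ or $c_0$ in $E$ is ruled out, so this degenerate case does not actually occur. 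Handling the one-dimensional summand $\rr$ correctly (it contributes nothing to containing $\ell^p$, $p<\infty$, or $c_0$) is the one place requiring a little care; everything else is an application of the cited theorems.
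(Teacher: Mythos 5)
Your main branch (when $(e_n)_{n}$ is Schauder basic) is correct and is essentially the paper's argument: produce a block subsequence of the $1$-subsymmetric basis equivalent to the $\ell^p$ (resp. $c_0$) basis, observe it plegma block generates $\ell^p$ (resp. $c_0$) as an order-one spreading model, and apply Theorem \ref{composition thm} with $k=1$. The gap is in your reduction to that case. You try to show the degenerate case ($(e_n)_{n}$ nontrivial but not Schauder basic) simply cannot occur when $E$ contains $\ell^p$ or $c_0$, by arguing that a block subsequence of the translated spreading model $(e_n')_{n}$ equivalent to the $\ell^p$ (resp. $c_0$) basis would contradict the fact that $(e_n')_{n}$ is Ces\`aro summable to zero, "the averages being bounded below." That contradiction is false except for $p=1$: the unit vector bases of $c_0$ and of $\ell^p$ for $1<p<\infty$ are themselves unconditional, $1$-subsymmetric and Ces\`aro summable to zero ($\|\frac1n\sum_{j=1}^n e_j\|$ is $\frac1n$, resp. $n^{\frac1p-1}$), so nothing prevents $(e_n')_{n}$ from having (or even being) such a block subsequence. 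Indeed the degenerate case genuinely occurs: take $e_n=(1,f_n)$ in $\rr\oplus c_0$ with $(f_n)_{n}$ the $c_0$ basis; this is $1$-subsymmetric, weakly convergent to a nonzero vector (hence not Schauder basic), nontrivial, and its closed span contains $c_0$. So your argument does not cover this case, while the statement still has to be proved there.

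The repair is not to exclude the case but to run the composition through the translated sequence, which is what the paper does uniformly: choose, via Lemma \ref{wrc}, a subordinated generating $\ff$-subsequence, set $x_s'=x_s-\widehat{\varphi}(\emptyset)$, and let $(e_n')_{n}$ be the resulting $\ff$-spreading model; it is nontrivial (Proposition \ref{seminormed non schauder basic}) and unconditional, hence Schauder basic (Theorem \ref{unconditional spreading model}), so $(e_n')_{n}\in\mathcal{SM}_\xi(X)$ is an admissible input for Theorem \ref{composition thm}. Then Proposition \ref{containing schauder basic} — the very statement you invoke for your contradiction — hands you a block subsequence of $(e_n')_{n}$ equivalent to the $\ell^p$ (resp. $c_0$) basis, which plegma block generates $\ell^p$ (resp. $c_0$) as an order-one spreading model of $E'=\overline{<(e_n')_{n}>}$, and Theorem \ref{composition thm} with $k=1$ gives $\ell^p$ (resp. $c_0$) in $\mathcal{SM}_{\xi+1}(X)$. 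If you argue this way from the start, the dichotomy Schauder basic versus not disappears and no case analysis (nor Proposition \ref{l1 vs Cezaro summability}) is needed.
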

  \begin{proof}
    Since $(e_n)_{n\in\nn}$ is nontrivial,
    we have that $(e_n)_{n\in\nn}$ is either a Schauder basic
    or a singular spreading sequence. Suppose that $(e_n)_{n\in\nn}$
    is Schauder basic. Since $E$ contains an isomorphic copy of
    $\ell^p$  for some $p\in[1,\infty)$ (resp. $c_0$), by standard
    arguments, there exists a block subsequence $(z_n)_{n\in\nn}$ of
    $(e_n)_{n\in\nn}$ equivalent to the usual basis of $\ell^p$
    (resp. $c_0$). Notice that every spreading model of
    $(z_n)_{n\in\nn}$ is also equivalent to the usual basis of $\ell^p$
    (resp. $c_0$) and therefore the result  follows by Theorem
    \ref{composition thm} for $k=1$.

  Suppose that $(e_n)_{n\in\nn}$ is a singular spreading sequence.
  Let $e_n=e'_n+e$ be the natural decomposition of
$(e_n)_{n\in\nn}$ and $E'$ be the Banach generated by
$(e'_n)_{n\in\nn}$.
  Let $(z_n)_{n\in\nn}$ be a sequence in $E$ equivalent to the usual
basis of $\ell^p$ (resp. $c_0$). By Corollary \ref{isomorphy_between_E_and_E'} we have that $E$ and $E'$ are isomorphic and therefore there exists a sequence $(z'_n)_{n\in\nn}$ in $E'$ isomorphic to the usual basis of $\ell^p$. Recall that $(e'_n)_{n\in\nn}$ is an unconditional basis for $E'$ and therefore the proof is complete by following similar arguments as in the first case above of $(e_n)_{n\in\nn}$ being Schauder basic.
  \end{proof}
\subsection{Isometric $\ell^1$ and $c_0$ spreading models} The
following is one of the well known results due to R. James (c.f.
\cite{J}).
  \begin{prop}
    Let $(x_n)_{n\in\nn}$ be a normalized sequence in a Banach space $X$ equivalent to the usual basis of $\ell^1$ (resp. $c_0$).
     Then for every $\ee>0$ there exists a block normalized subsequence $(y_n)_{n\in\nn}$ of $(x_n)_{n\in\nn}$ which admits
     lower $\ell^1$ constant $1-\ee$ (resp. lower $c_0$ constant $1-\ee$ and upper $c_0$ constant $1+\ee$).
  \end{prop}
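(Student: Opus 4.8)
This is R.~James's classical non-distortion theorem for $\ell^1$ and $c_0$; one option is simply to cite \cite{J}, but the argument is short and fits the present setting, so I would include it. For $\ell^1$ I would argue directly. Normalize so that $\|\sum_i a_ix_i\|\le\sum_i|a_i|$, let $c>0$ be the lower $\ell^1$ constant, and for $N\in\nn$ put $\lambda(N)=\inf\{\|\sum_{i\ge N}a_ix_i\|:\sum_{i\ge N}|a_i|=1\}$. Since enlarging $N$ only removes competitors, $(\lambda(N))_N$ is non-decreasing in $[c,1]$, so $\lambda(N)\uparrow\lambda$. The one idea needed is that a normalized block which \emph{nearly attains} $\lambda$ must carry a \emph{large $\ell^1$-weight}: if $u=\sum_{i\in F}a_ix_i$ with $F\subseteq[N,\infty)$, $\sum_{i\in F}|a_i|=1$ and $\|u\|\le\lambda(N)+\delta$, then its normalization $v=u/\|u\|$ has $\ell^1$-weight $w(v):=1/\|u\|\ge 1/(\lambda+\delta)$. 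I would then fix $\delta>0$ with $(\lambda-\delta)/(\lambda+\delta)>1-\ee$, choose $N_1<N_2<\cdots$ with $\lambda(N_k)\ge\lambda-\delta$, and successively choose normalized blocks $y_k$ supported on $[N_k,\infty)$, to the right of $y_{k-1}$, with $w(y_k)\ge 1/(\lambda+\delta)$ as above. For scalars $(b_k)$, writing $\sum_k b_ky_k=\sum_i c_ix_i$ one has $\sum_i|c_i|=\sum_k|b_k|\,w(y_k)\ge(\lambda+\delta)^{-1}\sum_k|b_k|$ and the support lies in $[N_1,\infty)$, so
\[\Big\|\sum_k b_ky_k\Big\|\ \ge\ \lambda(N_1)\sum_i|c_i|\ \ge\ \frac{\lambda-\delta}{\lambda+\delta}\sum_k|b_k|\ >\ (1-\ee)\sum_k|b_k|,\]
which is the claim.

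For $c_0$ I would treat the upper estimate dually and get the lower one from weak nullity. Since a basis equivalent to the $c_0$-basis is shrinking, $(x_n)$ is weakly null; I would pass to a subsequence generating a classical (order-one) spreading model $(e_n)$, which is then $1$-subsymmetric, equivalent to the $c_0$-basis, and --- being generated by a weakly null sequence --- $1$-suppression-unconditional by Theorem~\ref{unconditional spreading model} (it is not trivial, since $\|e_n\|_*=1$). With $\sigma=\sup\{\|\sum_i a_ie_i\|_*:\max_i|a_i|=1\}\in[1,\infty)$, dually to the above a normalized block of $(e_n)$ nearly attaining $\sigma$ has \emph{small $\ell^\infty$-weight}: if $\|u\|_*\ge\sigma-\delta$ and $u$ has $\max$-coefficient $1$, then $v=u/\|u\|_*$ has $\max$-coefficient $1/\|u\|_*\le(\sigma-\delta)^{-1}$. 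Using $1$-subsymmetry I would take disjointly supported, normalized copies $v^{(1)}<v^{(2)}<\cdots$ of such a $v$; then for scalars $(b_k)$, writing $\sum_k b_kv^{(k)}=\sum_i c_ie_i$, one has $\max_i|c_i|=\max_k|b_k|/\|u\|_*$, hence
\[\Big\|\sum_k b_kv^{(k)}\Big\|_*\ \le\ \sigma\,\max_i|c_i|\ \le\ \frac{\sigma}{\sigma-\delta}\,\max_k|b_k|\ \le\ (1+\ee)\max_k|b_k|,\]
while $1$-suppression-unconditionality of $(v^{(k)})$ gives $\|\sum_k b_kv^{(k)}\|_*\ge\|b_{k_0}v^{(k_0)}\|_*=\max_k|b_k|$ for $|b_{k_0}|=\max_k|b_k|$. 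To finish I would realize each $v^{(k)}$ as a block of boundedly many suitably spaced vectors $x_n$ and invoke the defining approximation of the spreading model with the error sequence $(\delta_l)_l$ made small enough (Remark~\ref{remark on the definition of spreading model}) that all of the above estimates degrade by less than $\ee$; this yields the required block normalized subsequence of $(x_n)$, with lower $c_0$ constant $\ge1-\ee$ and upper $c_0$ constant $\le1+\ee$.

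The point I expect to require care with is conceptual rather than computational: $\lambda$ and $\sigma$ need \emph{not} equal $1$ (already for renormings of $\ell^1$, resp.\ of $c_0$), so the trivial bounds $\|\cdot\|\ge\lambda\|\cdot\|_{\ell^1}$ and $\|\cdot\|_*\le\sigma\|\cdot\|_{\ell^\infty}$ are by themselves too weak --- the whole content is the one-step weight amplification of near-extremal normalized blocks that makes these bounds sharp. The only other essential ingredient is weak nullity in the $c_0$ lower estimate (it cannot be replaced by mere isomorphism with $c_0$); the passage back from the spreading model to an honest block sequence, with multiplicative rather than merely additive error control, is routine, via homogeneity together with shrinking the $\delta_l$.
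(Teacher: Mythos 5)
Your $\ell^1$ half is correct and complete: it is precisely James's non-distortion argument (the paper offers no proof of this proposition, it only cites \cite{J}), and the chain $\|\sum_k b_ky_k\|\geq\lambda(N_1)\sum_i|c_i|\geq\frac{\lambda-\delta}{\lambda+\delta}\sum_k|b_k|$ is sound, since the weights $w(y_k)=1/\|u_k\|\geq(\lambda+\delta)^{-1}$ add up over the disjoint supports.

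The $c_0$ half, however, has a genuine gap exactly at the step you declare routine: the passage from the block sequence $(v^{(k)})$ of the spreading model back to a fixed block sequence of $(x_n)$ in $X$. The defining approximation (Definition \ref{Definition of spreading model}) controls $\big|\|\sum_{j=1}^k a_jx_{n_j}\|-\|\sum_{j=1}^k a_je_j\|_*\big|$ only for tuples whose length $k$ is at most $l$ while the first index satisfies $n_1\geq M(l)$. Once the blocks $y_k$ are fixed, the first one sits on $x$-indices starting at some fixed $M(l_1)$, so every combination $\sum_{k\leq r}b_ky_k$ using more than $l_1$ of the $x_n$'s lies entirely outside the scope of the approximation; shrinking the error sequence via Remark \ref{remark on the definition of spreading model} is irrelevant, because the obstruction is the coupling of admissible length with starting index, not the size of $\delta_l$. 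Splitting a long sum into covered pieces loses an additive constant of order one per piece (fatal for a $c_0$-upper estimate), and the crude bound through the equivalence constant $b$ of $(x_n)$ only yields $b/\sigma$, which need not be close to $1$ since $\sigma$ sees only asymptotic combinations. This is the same phenomenon which prevents spreading models from producing isomorphic copies at all. The repair is to run your extremality argument directly in $X$, with no spreading model: set $\sigma_N=\sup\{\|\sum_{i\in F}a_ix_i\|:F\subseteq[N,\infty)\ \text{finite},\ |a_i|\leq1\}$, which decreases to some $\sigma\geq1$; fix $N$ with $\sigma_N\leq\sigma+\delta$ and choose successive finitely supported $u_k$ with coefficients in $[-1,1]$ and $\|u_k\|\geq\sigma-\delta$, $y_k=u_k/\|u_k\|$. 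Then for $\max_k|b_k|\leq1$ one has $\sum_kb_ky_k=\frac{1}{\sigma-\delta}\sum_kc_ku_k$ with $|c_k|\leq1$, a vector whose coefficients lie in $[-1,1]$ and whose support is in $[N,\infty)$, so $\|\sum_kb_ky_k\|\leq\frac{\sigma+\delta}{\sigma-\delta}\leq1+\ee$; the lower estimate then follows from $2|b_{k_0}|\leq\|\sum_kb_ky_k\|+\|b_{k_0}y_{k_0}-\sum_{k\neq k_0}b_ky_k\|$. In particular, contrary to your closing remark, weak nullity is not needed for the lower $c_0$ estimate; it is the uniform-in-length upper estimate that carries the weight, and it must be obtained inside $X$ rather than pulled back from the spreading model.
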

  Using the standard diagonolization argument the above
  proposition yields the following.
  \begin{prop}
    Let $X$ be a Banach space with a Schauder basis. If $X$ contains an isomorphic copy of $\ell^1$ (resp. $c_0$) then $X$
    admits the usual basis of $\ell^1$ (resp. $c_0$) as a block generated spreading model of order one.
  \end{prop}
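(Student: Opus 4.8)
The plan is to deduce this from the preceding (James-type) proposition by a diagonalization. The first step is to pass to a block sequence of the Schauder basis of $X$: by the classical small-perturbation argument, since $X$ has a basis and contains an isomorphic copy of $\ell^1$ (resp. $c_0$), one may extract from a normalized sequence lying inside this copy a subsequence that is a summable perturbation of a normalized block sequence $(u_n)_n$ of the basis of $X$, and such a perturbation does not destroy equivalence to the usual basis of $\ell^1$ (resp. $c_0$). Thus I may assume from the start that $(u_n)_n$ is a normalized block sequence of the basis of $X$ equivalent to the usual basis of $\ell^1$ (resp. $c_0$); every vector appearing below will be a finite block combination of the $u_n$, hence a block vector of the basis of $X$.

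Next I would iterate the preceding proposition. Put $(u_n^{(0)})_n=(u_n)_n$; for each $k\ge 1$ apply the proposition with $\ee=1/k$ to $(u_n^{(k-1)})_n$ to obtain a normalized block sequence $(u_n^{(k)})_n$ of $(u_n^{(k-1)})_n$ — again equivalent to the usual basis of $\ell^1$ (resp. $c_0$), and hence a legitimate input for the next step — with lower $\ell^1$ constant at least $1-1/k$ (resp.\ with lower $c_0$ constant at least $1-1/k$ and upper $c_0$ constant at most $1+1/k$). Since each $(u_n^{(k)})_n$ is a block sequence of $(u_n)_n$, it is a block sequence of the basis of $X$. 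Now build a single normalized block sequence $(z_j)_j$ of the basis of $X$ by letting $z_j$ be some member of $(u_n^{(j^2)})_n$ chosen far enough along that $\max\text{supp}(z_{j-1})<\min\text{supp}(z_j)$; this is possible because the minima of the supports of a block sequence tend to infinity. (Using the level $j^2$ rather than $j$ is a harmless bookkeeping choice that will make the error sequence null without having to thin out the index set.)

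The heart of the argument is the following estimate, to be checked directly. Fix $l\in\nn$, let $k\le l$, let $s_1<s_2<\dots<s_k$ with $s_1\ge l$, and let $a_1,\dots,a_k\in[-1,1]$. Since $j\mapsto j^2$ is increasing, each $z_{s_i}$ is a block vector of $(u_n^{(s_1^2)})_n$ and $(z_{s_1},\dots,z_{s_k})$ is a normalized block sequence of $(u_n^{(s_1^2)})_n$. A normalized block sequence of a normalized sequence with lower $\ell^1$ constant $c$ again has lower $\ell^1$ constant at least $c$ (each normalized block vector has $\ell^1$-norm of its coefficients at least $1$), so $\|\sum_{i=1}^k a_iz_{s_i}\|\ge (1-1/s_1^2)\sum_{i=1}^k|a_i|\ge (1-1/l^2)\sum_{i=1}^k|a_i|$, while the triangle inequality gives $\|\sum_{i=1}^k a_iz_{s_i}\|\le\sum_{i=1}^k|a_i|$. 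Hence $\big|\,\|\sum_{i=1}^k a_iz_{s_i}\|-\sum_{i=1}^k|a_i|\,\big|\le k/l^2\le 1/l$. Taking $\ff=[\nn]^1$, $M=\nn$ and $\delta_l=1/l$, and recalling that plegma $k$-tuples in $[\nn]^1$ are exactly strictly increasing $k$-tuples, this is precisely the statement that the $[\nn]^1$-sequence $(z_j)_j$ generates the usual basis of $\ell^1$ as an order-one spreading model; since $(z_j)_j$ is plegma block, the spreading model is block generated. For $c_0$ the same diagonalization works verbatim: one uses instead that a normalized block sequence of a normalized sequence with $c_0$-constants $(1-\ee,1+\ee)$ has $c_0$-constants $\big(\tfrac{1-\ee}{1+\ee},\tfrac{1+\ee}{1-\ee}\big)$, which tend to $(1,1)$, and the resulting two-sided estimate shows that $(z_j)_j$ generates the usual basis of $c_0$ as a block generated order-one spreading model.

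I expect the main obstacle to be the bookkeeping in the diagonalization: one must arrange simultaneously that $(z_j)_j$ is genuinely a block sequence of the basis of $X$ (achieved by choosing terms far enough out) and that the lower $\ell^1$ (resp.\ the $c_0$) constants of finite subcollections improve as one moves along $(z_j)_j$, which is what forces the generated spreading model to coincide with — not merely be equivalent to — the usual basis of $\ell^1$ (resp.\ $c_0$). The elementary lemma underlying the latter, that passing to a normalized block sequence does not decrease the lower $\ell^1$ constant (resp.\ does not move the $c_0$-constants away from $1$ by more than the obvious amount), is the real content and should be recorded explicitly.
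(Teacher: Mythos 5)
Your argument is correct and is essentially the paper's own proof: the paper deduces this proposition from the preceding James non-distortion result ``using the standard diagonalization argument'', which is exactly your scheme of first reducing to a normalized block sequence of the basis of $X$ equivalent to the $\ell^1$ (resp. $c_0$) basis, iterating that proposition with $\ee=1/k$, diagonalizing to a single block sequence $(z_j)_j$, and checking Definition of the spreading model with an explicit null sequence $\delta_l$ (your constant-passing lemma for normalized blocks is the right elementary ingredient). The only point to phrase more carefully is the initial reduction in the $\ell^1$ case: the images of the $\ell^1$ unit vectors are not weakly null and in general no subsequence of them is a small perturbation of a block sequence (e.g. $e_n\mapsto(e_1+e_{n+1})/2$ in $\ell^1$), so one should instead produce an almost-block normalized sequence inside the copy by the subspace gliding-hump argument and then invoke Rosenthal's $\ell^1$ theorem together with the Schur property to see that it is still equivalent to the $\ell^1$ basis, whereas for $c_0$ the unit-vector images are weakly null and the Bessaga--Pe\l czy\'nski selection applies directly.
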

  The above proposition, Theorem \ref{composition thm} and Remark
  \ref{composition rem} readily yield the following.
  \begin{cor}\label{one order more to get the isometric}
    Let $X$ be a Banach space  and $(e_n)_{n\in\nn}$ be a Schauder basic sequence in $\mathcal{SM}_\xi(X)$,
    for some  $\xi<\omega_1$. Suppose that  the space generated by $(e_n)_{n\in\nn}$ contains an isomorphic copy of $\ell^1$ (resp. $c_0$).
    Then $X$ admits the usual basis of $\ell^1$ (resp. $c_0$) as a $(\xi+1)$-spreading model.

  If in addition $X$ has a Schauder basis then for every  regular thin family $\ff$ of order $\xi$,
   $M\in[\nn]^\infty$ and $\ff$-sequence $(x_s)_{s\in\ff}$
    in $X$ such that the $\ff$-subsequence $(x_s)_{s\in\ff\upharpoonright M}$ generates $(e_n)_{n\in\nn}$ as an $\ff$-spreading model,
    there exist $L\in [M]^\infty$ and a $\g$-sequence $(z_v)_{v\in\g}$, where $\g=[\nn]^1\oplus\ff$, which satisfy the following:
    \begin{enumerate}
      \item[(i)] The $\g$-subsequence $(z_v)_{v\in\g\upharpoonright L}$ generates the usual basis of $\ell^1$ (resp. $c_0$)
       as a $\g$-spreading model. Moreover, if the $\ff$-subsequence $(x_s)_{s\in\ff\upharpoonright M}$ is plegma block, then
     the $\g$-subsequence $(z_v)_{v\in\g\upharpoonright L}$ is also plegma block.
      \item[(ii)] For every $v\in\g\upharpoonright L$ there exist $m\in\nn$ and $s_1,\ldots,s_m\in\ff$ such that
      $z_v\in<x_{s_1},\ldots,x_{s_m}>$ and $|s_j|<|v|$, for all $1\leq j\leq m$.
    \end{enumerate}
  \end{cor}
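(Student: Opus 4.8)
The plan is to feed the preceding James-type proposition into Theorem \ref{composition thm} with $k=1$ and then read off the finer conclusions from Remark \ref{composition rem}. First I would put $E=\overline{<(e_n)_{n\in\nn}>}$; since $(e_n)_{n\in\nn}$ is Schauder basic it is a Schauder basis of $E$, and by hypothesis $E$ contains an isomorphic copy of $\ell^1$ (resp.\ $c_0$). The preceding proposition (a consequence of James's non-distortion of $\ell^1$ and $c_0$) then yields a normalized block subsequence $(y_n)_{n\in\nn}$ of $(e_n)_{n\in\nn}$ generating the usual basis of $\ell^1$ (resp.\ $c_0$) as a classical (order one) spreading model. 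Reading $(y_n)_{n\in\nn}$ as a $[\nn]^1$-sequence $(y_{\{n\}})_{\{n\}\in[\nn]^1}$ in $<(e_n)_{n\in\nn}>$, the fact that it is a block sequence means exactly that it plegma block generates the usual basis of $\ell^1$ (resp.\ $c_0$) as an $[\nn]^1$-spreading model of $E$ (for $[\nn]^1$ a plegma pair is just $(\{n_1\},\{n_2\})$ with $n_1<n_2$, and the plegma block condition reduces to $\text{supp}(y_{\{n_1\}})<\text{supp}(y_{\{n_2\}})$); hence the usual basis of $\ell^1$ (resp.\ $c_0$) lies in $\mathcal{SM}_1(E)$ and is plegma block generated.

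Next I would invoke Theorem \ref{composition thm} for the Schauder basic sequence $(e_n)_{n\in\nn}\in\mathcal{SM}_\xi(X)$ and this plegma block generated order-one spreading model of $E$, concluding that the usual basis of $\ell^1$ (resp.\ $c_0$) belongs to $\mathcal{SM}_{\xi+1}(X)$; here $\g=[\nn]^1\oplus\ff$ is regular thin of order $o(\ff)+1=\xi+1$ by the remark following the definition of the ordered direct sum. This establishes the qualitative assertion.

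For the ``precisely'' part I would retrace the proof of Theorem \ref{composition thm} starting from the given data $\ff$, $M$, $(x_s)_{s\in\ff}$ and from the $[\nn]^1$-sequence $(y_{\{n\}})_n$ selected above: Steps 1--3 of that proof produce $L\in[M]^\infty$ and the $\g$-sequence $(z_v)_{v\in\g}$, with $\g=[\nn]^1\oplus\ff$, such that $(z_v)_{v\in\g\upharpoonright L}$ generates the usual basis of $\ell^1$ (resp.\ $c_0$) as a $\g$-spreading model, which is assertion (i). Assertion (ii) is precisely item (iii) of Remark \ref{composition rem} for $k=1$ (there $|v|=1+|s_v|$, so $|s_j|<|v|$ for the $s_j$ furnished by that remark), and the last sentence of the statement is item (i) of Remark \ref{composition rem} applied in the case where $(x_s)_{s\in\ff\upharpoonright M}$ plegma block generates $(e_n)_{n\in\nn}$. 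There is no essential obstacle: the whole argument is a transcription of the composition machinery, and the only bookkeeping point is the identification of ``block generated order-one spreading model'' with ``plegma block generated $[\nn]^1$-spreading model'', which is immediate from the definitions.
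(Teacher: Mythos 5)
Your argument is exactly the paper's: the James non-distortion proposition gives a block generated order-one $\ell^1$ (resp. $c_0$) spreading model of $E$, and then Theorem \ref{composition thm} with $k=1$ together with Remark \ref{composition rem} yields the $\g=[\nn]^1\oplus\ff$ statement, including (ii) and the plegma block refinement. Correct, and essentially identical to the paper's proof.
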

  \begin{cor}\label{reflexive spaces have c_0 or l^1 or rflxv spr mod}
     Let $X$ be a reflexive space. Then one of the following
     holds.
     \begin{enumerate}
       \item[(i)] The space $X$ admits the usual basis of $\ell^1$
       as a $\xi$-spreading model, for some
       $\xi<\omega_1$.
       \item[(ii)] The space $X$ admits  the usual basis of $c_0$
       as a $\xi$-spreading model, for some
       $\xi<\omega_1$.
       \item[(iii)] Every nontrivial spreading model of any
       order of $X$, generates a reflexive space.
\end{enumerate}
       Moreover  every Schauder basic $(e_n)_{n\in\nn}\in\mathcal{SM}(X)$  is
    unconditional.
  \end{cor}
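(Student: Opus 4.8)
The plan is to deduce this directly from the preceding theorem, exploiting the fact that reflexivity of $X$ collapses the distinction between arbitrary bounded $\ff$-sequences and weakly relatively compact ones. First I would observe that when $X$ is reflexive every bounded subset of $X$ is weakly relatively compact, since the closed unit ball of $X$ is weakly compact; consequently every bounded $\ff$-sequence in $X$ has weakly relatively compact range, so that $\mathcal{SM}_\xi^{wrc}(X)=\mathcal{SM}_\xi(X)$ for every $1\leq\xi<\omega_1$, and likewise $\mathcal{SM}^{wrc}(X)=\bigcup_{1\leq\xi<\omega_1}\mathcal{SM}_\xi(X)$.

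Next I would run the case analysis. Assume that neither (i) nor (ii) of the corollary holds, i.e. for no $1\leq\xi<\omega_1$ does $X$ admit the usual basis of $\ell^1$ or of $c_0$ as a $\xi$-order spreading model; equivalently, by the previous paragraph, the usual basis of $\ell^1$ and the usual basis of $c_0$ belong to no $\mathcal{SM}_\zeta^{wrc}(X)$. In particular, for every $1\leq\xi<\omega_1$ the first alternative of the preceding theorem fails for $\xi$, hence its second alternative holds: for every $(e_n)_{n\in\nn}\in\mathcal{SM}_\xi^{wrc}(X)=\mathcal{SM}_\xi(X)$ the space $E=\overline{<(e_n)_{n\in\nn}>}$ is reflexive. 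Since every spreading model of $X$ of any order lies in some $\mathcal{SM}_\xi(X)$ with $1\leq\xi<\omega_1$, this is precisely conclusion (iii). For completeness one notes that a trivial spreading model generates a seminormed space whose norm quotient is at most one-dimensional, hence reflexive, so the restriction to nontrivial sequences in (iii) is harmless.

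Finally, the \emph{Moreover} clause is immediate: the corresponding assertion in the preceding theorem gives, for each $1\leq\xi<\omega_1$, that every Schauder basic member of $\mathcal{SM}_\xi^{wrc}(X)$ is unconditional; taking the union over $\xi$ yields that every Schauder basic $(e_n)_{n\in\nn}\in\mathcal{SM}^{wrc}(X)$ is unconditional. There is essentially no genuine obstacle in this argument; the only points requiring (routine) care are the identification $\mathcal{SM}_\xi^{wrc}(X)=\mathcal{SM}_\xi(X)$ for reflexive $X$ and keeping the quantifiers straight, since the alternatives (i) and (ii) of the corollary are existential in $\xi$ while alternative (iii) and the input theorem are stated for all $\xi$.
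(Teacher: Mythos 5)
Your proof is correct and follows exactly the route the paper intends: the paper states this corollary without proof as an immediate consequence of the preceding theorem, the point being precisely your observation that reflexivity of $X$ makes every bounded $\ff$-sequence weakly relatively compact, so $\mathcal{SM}_\xi^{wrc}(X)=\mathcal{SM}_\xi(X)$ and the negation of (i) and (ii) forces the theorem's second alternative for every $\xi$. Your handling of the quantifiers and of the trivial spreading models is sound and adds nothing beyond what the paper implicitly assumes.
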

  \begin{proof}
    Suppose that neither (i) nor (ii) holds. Let $1\leq \xi<\omega_1$ and a nontrivial
    $(e_n)_{n\in\nn}\in\mathcal{SM}_\xi(X)$. We will show that the space
      $E$ generated by $(e_n)_{n\in\nn}$ is reflexive and hence
      (iii) is true. First notice that the space $E$ cannot
contain an isomorphic copy of $c_0$ or $\ell^1$. Indeed otherwise
by Corollary \ref{one order more to get the isometric}, the space
$X$ would admit the usual basis of $\ell^1$ or $c_0$ as a
$(\xi+1)$-spreading model. We distinguish two cases. Either
$(e_n)_{n\in\nn}$
      is Schauder basic or it is singular.

      If $(e_n)_{n\in\nn}$ is Schauder basic then by
      Theorem \ref{relatively weakly compact sets have unconditional spreading models}
we have that it is unconditional. Hence by James' theorem  we have
that $E$ is reflexive.

Assume that  $(e_n)_{n\in\nn}$ is singular and let $e_n=e'_n+e$ be
its natural decomposition.   Let $E'$ be the space generated by
$(e'_n)_{n\in\nn}$. Then we have that $(e'_n)_{n\in\nn}$ is
unconditional and $E=E'\oplus <e>$. Since $E$ does not contain any
isomorphic copy of $c_0$ or $\ell^1$, $E'$ also does. Hence by
James' theorem  we have that $E'$ is reflexive. Since $E=E'\oplus
<e>$, we have that $E$ is also reflexive.
  \end{proof}

\chapter{$\ell^1$ spreading models}\label{chapter 9}
In this chapter we study $\ell^1$ spreading models generated by
$\ff$-sequences. In the first section we establish the non
distortion of $\ell^1$ spreading models and thus we extend the
corresponding known result which holds for the classical spreading
models. In the second section we present a technique of splitting
a canonical tree decomposition of an $\ff$-sequence. This
technique will be used to produce plegma block generated $\ell^1$
spreading models. Finally in the last section we extend the well
known dichotomy of H.P. Rosenthal concerning Ces\`aro summability
and $\ell^1$ spreading models (see \cite{AT}, \cite{M}).
\section{Almost isometric $\ell^1$ spreading models}

  Let $M\in[\nn]^\infty$, $C\geq c>0$, $\ff$ be a regular thin family and $(x_s)_{s\in\ff}$ be an $\ff$-sequence in a
  Banach space $X$. We will say that the $\ff$-subsequence $(x_s)_{s\in\ff\upharpoonright M}$ \textit{generates} $\ell^1$
  \textit{as an $\ff$-spreading model
  with constants} $c,C$ if $(x_s)_{s\in\ff\upharpoonright M}$ generates an $\ff$-spreading model $(e_n)_n$
  which is $c, C$ isomorphic to the standard basis of $\ell^1$,
  that is
  \[c\sum_{j=1}^n|a_j|\leq\Big\|\sum_{j=1}^n a_j e_j\Big\|\leq C \sum_{j=1}^n|a_j|\]
   for all $n\in\nn$ and  $a_1,\ldots,a_n\in\rr$.
    In particular  if $\|x_s\|\leq 1$, for all $s\in\ff\upharpoonright M$,
  then we may assume that $C=1$ and in this case we will say that
 $(x_s)_{s\in\ff\upharpoonright M}$ \textit{generates} $\ell^1$ \textit{as an $\ff$-spreading model
  with constant} $c$.

\begin{prop}\label{Prop on almost isometric l^1 spr mod}
  Let $M\in[\nn]^\infty$, $1> c>0$, $\ff$ be a regular thin family and $(x_s)_{s\in\ff}$
  be  an $\ff$-sequence in the
  unit ball $B_X$ of a Banach space $X$. Suppose that $(x_s)_{s\in\ff\upharpoonright M}$ generates $\ell^1$ as an
   $\ff$-spreading model with constant $c$.
  Then for every $\varepsilon>0$ there exist $L\in[M]^\infty$ and an $\ff$-subsequence $(y_s)_{s\in\ff\upharpoonright L}$ in $B_X$
  such that the $\ff$-subsequence $(y_s)_{s\in\ff\upharpoonright L}$ generates $\ell^1$ as an $\ff$-spreading model with constant $1-\varepsilon$.
\end{prop}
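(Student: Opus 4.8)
The strategy is to first build an almost isometric $\ell^1$ block structure \emph{inside the spreading model itself}, using its $1$-subsymmetry, and then to transfer this structure back to $X$ by a shifting construction on $\ff\upharpoonright M$ of exactly the type used in the proof of Theorem \ref{composition thm}. Let $(e_n)_n$ be the $\ff$-spreading model generated by $(x_s)_{s\in\ff\upharpoonright M}$; it is $1$-subsymmetric and $c,1$-equivalent to the $\ell^1$ basis (the upper constant is $1$ because $\|x_s\|\le 1$), hence nontrivial, so by Proposition \ref{condition for a seminorm on a subsymmetric sequence to be a norm} its seminorm $\|\cdot\|_*$ is a norm. Let $c_0\ge c$ be the \emph{exact} lower $\ell^1$ constant of $(e_n)_n$. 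Since $\sup\{\sum_j|a_j|:\|\sum_ja_je_j\|_*=1\}=1/c_0$, we may fix $d\in\nn$ and $\lambda_1,\dots,\lambda_d\in\rr$ with $\|\sum_{j=1}^d\lambda_je_j\|_*=1$ and $R:=\sum_{j=1}^d|\lambda_j|\ge 1/c_0-\ee/2$. Setting $u^{(n)}=\sum_{j=1}^d\lambda_j e_{(n-1)d+j}$, the $1$-subsymmetry of $(e_n)_n$ together with the exact lower estimate gives, for all scalars $(b_n)$, $(1-\ee/2)\sum_n|b_n|\le c_0R\sum_n|b_n|\le\|\sum_nb_nu^{(n)}\|_*\le\sum_n|b_n|$, i.e. $(u^{(n)})_n$ is $(1-\ee/2),1$-equivalent to the $\ell^1$ basis.

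Now transfer this to $X$. Mirroring Step 1 of the proof of Theorem \ref{composition thm}, pass to $L_1\in[M]^\infty$ on which $\ff$ is very large and such that every $s\in\ff\upharpoonright L_1$, written as $s=\{M(q_1)<\dots<M(q_m)\}$, has $q_1$ so large that $M(q_1-d+1)\ge M(d)$. To such an $s$ associate the $d$-tuple $(t^s_1,\dots,t^s_d)$, where $t^s_j$ is the unique element of $\ff\upharpoonright M$ which is an initial segment of $\{M(q_i-d+j):i\ge 1\}$ (completed by a tail of $M$); exactly as in that proof one checks that $(t^s_1,\dots,t^s_d)$ is a plegma $d$-tuple in $\ff\upharpoonright M$ with first coordinate $\ge M(d)$. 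Put $y_s=\sum_{j=1}^d\lambda_j x_{t^s_j}$. As in Step 2 of the proof of Theorem \ref{composition thm}, for each $l\in\nn$ the set of plegma $l$-tuples $(s_i)_{i=1}^l$ in $\ff\upharpoonright L_1$ for which the concatenation $(t^{s_1}_1,\dots,t^{s_1}_d,\dots,t^{s_l}_1,\dots,t^{s_l}_d)$ is a plegma $(ld)$-tuple with first coordinate $\ge M(ld)$ is large in $L_1$; hence Corollary \ref{corollary ramseyforplegma} and a diagonalization produce $L\in[L_1]^\infty$ for which this holds for every plegma $l$-tuple in $\ff\upharpoonright L$ whose first coordinate is $\ge L(l)$.

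Finally, after shrinking $L$ once more we may assume by Proposition \ref{B-Sp} that $(y_s)_{s\in\ff\upharpoonright L}$ generates an $\ff$-spreading model $(\widetilde e_n)_n$. Let $(\delta_n)_n$ be a null sequence witnessing that $(x_s)_{s\in\ff\upharpoonright M}$ generates $(e_n)_n$; by Remark \ref{remark on the definition of spreading model} we may take it as small as we wish, say with $R\delta_1\le\ee$. Fix $l$, a plegma $l$-tuple $(s_i)_{i=1}^l$ in $\ff\upharpoonright L$ with $s_1(1)\ge L(l)$, and $b_1,\dots,b_l$ with $\sum_i|b_i|=1$. Then $\sum_i b_iy_{s_i}=\sum_i\sum_j b_i\lambda_j x_{t^{s_i}_j}$ is a combination of the plegma $(ld)$-tuple above with coefficients of modulus $\le R$, so the spreading-model inequality for $(e_n)_n$ (scaled by $R$) yields $|\,\|\sum_ib_iy_{s_i}\|-\|\sum_ib_iu^{(i)}\|_*\,|\le R\delta_{ld}$, whence $1-\ee/2-R\delta_{ld}\le\|\sum_ib_iy_{s_i}\|\le 1+R\delta_{ld}$; letting the tuple run to infinity gives $\|\sum_ib_i\widetilde e_i\|_*\in[1-\ee/2,1]$, and the case $l=1$ gives $\|\widetilde e_1\|_*\le 1+R\delta_1\le 1+\ee$. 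Replacing each $y_s$ by $y_s/(1+\ee)$ produces an $\ff$-subsequence in $B_X$ generating $\ell^1$ as an $\ff$-spreading model with constant $(1-\ee/2)/(1+\ee)$, and relabelling (choose the initial $\ee$ small enough) finishes the proof. The only substantial work is the combinatorics of the middle paragraph: controlling the lengths of the shifted initial segments so that they genuinely form plegma tuples (no tail collisions, exactly as in Theorem \ref{composition thm}) and performing the Ramsey reduction that keeps concatenations plegma with first coordinates arbitrarily far out; the one conceptual point worth flagging is that the estimate is governed by the \emph{exact} lower $\ell^1$ constant $c_0$ of the spreading model rather than by the given $c$, which is precisely why the auxiliary sequence $(u^{(n)})_n$ is introduced.
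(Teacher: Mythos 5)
Your argument is correct, and its analytic core is the same as the paper's: both proofs replace the given $c$ by the \emph{exact} lower $\ell^1$ constant of the generated spreading model, pick a finite vector nearly attaining it (your $\lambda$ with $\|\sum_j\lambda_je_j\|_*=1$ and $\sum_j|\lambda_j|$ close to $1/c_0$ is just a rescaling of the paper's $b$ with $\sum_i|b_i|=1$ and $\|\sum_ib_ie_i\|_*<c+\ee'$), and then define $y_s$ as that combination of $x_t$'s along a plegma $d$-tuple attached to $s$, so that the upper bound on $\|y_s\|$ collapses to essentially the spreading-model norm of the extremal vector while the lower $\ell^1$ estimate for arbitrary coefficients survives; your final division by $1+\ee$ plays the role of the paper's division by $c+2\ee'$. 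Where you genuinely differ is the combinatorial transfer. The paper partitions $M$ into consecutive intervals $I_n$ of length $k$, sets $L=\{\max I_n\}$, and lets $t^s_i$ be the initial segment in $\ff$ of the set of $i$-th coordinates of the intervals selected by $s$; with this sparse choice every concatenation over a plegma tuple in $\ff\upharpoonright L$ is automatically plegma, so no Ramsey argument is needed. You instead shift $M$-indices down by $d-j$ and run the largeness-plus-Ramsey scheme of Theorem \ref{composition thm} via Corollary \ref{corollary ramseyforplegma} and a diagonalization, which also works but is heavier. Two points need tightening. First, your claim that $(t^s_1,\dots,t^s_d)$ is plegma for \emph{every} $s\in\ff\upharpoonright L_1$ is not true as stated: condition (ii) of Definition \ref{defn plegma} forces consecutive elements of $s$ to be at least $d$ apart in $M$-index, and your tail condition on $L_1$ does not give this; it is supplied either by taking $L_1$ sparse in $M$ (every $d$-th element of $M$ --- which is in effect what the interval trick does, and then the Ramsey step is superfluous) or by the $l=1$ instance of your own Ramsey reduction on the final $L$. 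You do need it for \emph{all} $s\in\ff\upharpoonright L$, not just in the limit, since the bound $\|y_s\|\le 1+R\delta_d\le 1+\ee$ is what puts $y_s/(1+\ee)$ in $B_X$. Second, in ``letting the tuple run to infinity'' you should set up the diagonalization so that a plegma $l$-tuple with first coordinate at least $L(l')$, $l'>l$, has its concatenation starting at least at $M(l'd)$, so the error $R\delta_{l'd}$ genuinely tends to $0$; this is routine but should be stated. With these repairs your proof is complete, and its only real cost relative to the paper's is the extra Ramsey machinery replacing the interval construction.
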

\begin{proof}Let $(e_n)_{n\in\nn}$ be the $\ff$-spreading model
generated by $(x_s)_{s\in\ff\upharpoonright M}$. Then  for every
$n\in\nn$ and
  $a_1,\ldots,a_n\in\rr$, we have that
  \[c\sum_{j=1}^n|a_j|\leq\Big\|\sum_{j=1}^na_je_j\Big\|\leq\sum_{j=1}^n|a_j|\]
Clearly we  may assume that
\begin{equation}\label{eqell1} c=\inf\Bigg\{\Big\|\sum_{j=1}^na_je_j\Big\|:\;n\in\nn\;\;\text{and}\;\;a_1,\ldots,a_n\in\rr\;\;\text{such
that}\;\;\sum_{j=1}^n|a_j|=1\Bigg\}\end{equation} Let
$\varepsilon>0$ and choose $0<\varepsilon'<c$   such that
\[\frac{c-\varepsilon'}{c+2\varepsilon'}>1-\varepsilon\]
By passing to an infinite subset of $M$, if it is necessary, we
may assume the following:
\begin{enumerate}
\item[(a)] The family $\ff$ is very large in $M$. \item[(b)] For
every $n\in\nn$, $a_1,\ldots,a_n\in[-1,1]$ and  every plegma
$n$-tuple $(s_j)_{j=1}^n$
  in $\ff\upharpoonright M$ with $\min s_1\geq M(n)$,
  \[\Bigg|\Big\|\sum_{j=1}^na_j x_{s_j}\Big\|-\Big\|\sum_{j=1}^na_j e_j\Big\|\Bigg|<\varepsilon'\]
\end{enumerate}
  By  (\ref{eqell1}), there exist $k\in\nn$ and $b_1,\ldots, b_k\in[-1,1]$ with $\sum_{i=1}^k|b_i|=1$  such that
  \[\Big\|\sum_{i=1}^k b_i e_i\Big\|<c+\varepsilon'\]
  Hence by (b), for every plegma $k$-tuple $(s_i)_{i=1}^k$
  in $\ff\upharpoonright M$ with $\min s_1\geq M(k)$ we have that
  \[\Big\|\sum_{i=1}^k b_i x_{s_i}\Big\|<c+2\varepsilon'\]
  For each $n\in\nn$, set $I_n=\{M(n\cdot k+1),..., M((n+1)\cdot k)\}$. Then
  obviously,
  $\max(I_n)<\min(I_{n+1})$, $|I_n|=k$ and $\min(I_n)>
    M(n\cdot k)$.
We set \[L=\{\max I_n:\;n\in\nn\}=\{M((n+1)\cdot k):\;n\in\nn\}\]
Since $\widehat{\ff}$ is regular and $\ff$ is very large in $M$,
it is easy to see that
  for every $1\leq i\leq k$ there exists a
  unique $t_i^s\sqsubseteq \{I_{n_j}(i): 1\leq j\leq |s|\}$ with
  $t_i^s\in\ff$. We set
\[y_s=\frac{\sum_{j=1}^kb_j x_{t_j^s}}{c+2\ee'},\]
for all $s\in \ff\upharpoonright L$.

  We claim  that the $\ff$-subsequence $(y_s)_{s\in\ff\upharpoonright L}$ generates $\ell^1$ as an
  $\ff$-spreading model with constant $1-\varepsilon$. Indeed, let $n\in\nn$, $a_1,\ldots,a_n\in [-1,1]$ and $(s_j)_{j=1}^n$
  plegma $n$-tuple in $\ff\upharpoonright L$ with $s_1(1)\geq L(n)$. First notice that the $n\cdot k$-tuple
  $(t_1^{s_1},\ldots,t_k^{s_1},\ldots,t_1^{s_n},\ldots,t_k^{s_n})$ is plegma and $t_1^{s_1}(1)\geq\min
  I_n>
    M(n\cdot k)$. Hence
\[\begin{split}
    \Big\|\sum_{j=1}^n a_j y_{s_j}\Big\|&=
    \Big\|\sum_{j=1}^na_j \cdot
    \sum_{i=1}^k  \frac{b_i x_{t_i^{s_j}}}{c+2\ee'}\Big\|=
    \Big\|\sum_{j=1}^n\sum_{i=1}^k\frac{a_j}{c+2\ee'}b_i
    x_{t_i^{s_j}}\Big\|\\ &\geq
    (c-\varepsilon')\sum_{j=1}^n\sum_{i=1}^k\frac{|a_j|\cdot|b_i|}{c+2\ee'}
    =\frac{c-\varepsilon'}{c+2\varepsilon'}\sum_{j=1}^n|a_j|\sum_{i=1}^k|b_i|=
    \frac{c-\varepsilon'}{c+2\varepsilon'}\sum_{j=1}^n|a_j|
  \end{split}\]
\end{proof}
\begin{rem}\label{Rem on almost isometric l^1 spr mod}
 If we additionally
  assume that $X$ has a Schauder basis then by the above proof we
  have the following.
  \begin{enumerate}
    \item[(i)] If $(x_s)_{s\in\ff\upharpoonright M}$ is plegma
    block (resp. plegma disjointly supported) then $(y_s)_{s\in\ff\upharpoonright
    L}$ is plegma
    block (resp. plegma disjointly supported).
    \item[(ii)] If $(x_s)_{s\in\ff\upharpoonright M}$ admits a canonical (resp. disjoint canonical) tree decomposition then the same holds for $(y_s)_{s\in\ff\upharpoonright
    L}$.
  \end{enumerate}
\end{rem}
\section{Plegma block generated $\ell^1$ spreading models}
\subsection{Splittings of canonical tree decompositions}
\begin{defn}\label{defn g splitting}
  Let $X$ be a Banach space with Schauder basis, $\ff$ be a regular thin family,
  $M\in[\nn]^\infty$ and $(x_s)_{s\in\ff}$ be an $\ff$-sequence in
  $X$ such that the $\ff$-subsequence
  $(x_s)_{s\in\ff\upharpoonright M}$ admits a canonical tree
  decomposition $(y_t)_{t\in\widehat{\ff}\upharpoonright M}$. Let
  $\g$ be a thin family of finite subsets of $\nn$ such that
  $\g\sqsubset\ff\upharpoonright M$. We define the $\g$-splitting
  of $(x_s)_{s\in\ff\upharpoonright M}$ to be the
  $\ff$-subsequences $\big(x^{(1,\g)}_s\big)_{s\in\ff\upharpoonright M}$ and
  $(x^{(2,\g)}_s)_{s\in\ff\upharpoonright M}$, which are defined
  as follows. For every $s\in\ff\upharpoonright M$ we set
  \[x_s^{(1,\g)}=\sum_{t\sqsubseteq s_\g}y_t\;\;\text{and}\;\;
  x_s^{(2,\g)}=\sum_{s_\g\sqsubset t\sqsubseteq s}y_t\]
  where $s_\g$ is the unique element of $\g$ satisfying
  $s_\g\sqsubset s$.
\end{defn}
\begin{rem}
  Under the assumptions of Definition \ref{defn g splitting}, we
  easily observe the following.
  \begin{enumerate}
    \item[(i)] For each $s\in\ff\upharpoonright M$,
    $x_s=x_s^{(1,\g)}+x_s^{(2,\g)}$.
    \item[(ii)] The $\ff$-subsequences $\big(x^{(1,\g)}_s\big)_{s\in\ff\upharpoonright M}$ and
  $\big(x^{(2,\g)}_s\big)_{s\in\ff\upharpoonright M}$ admit canonical tree
  decompositions $\big(y^{(1,\g)}_t\big)_{t\in\widehat{\ff}\upharpoonright M}$ and
  $\big(y^{(2,\g)}_t\big)_{t\in\widehat{\ff}\upharpoonright M}$
  respectively which are of the following form.  For every $t\in\widehat{\g}$,
  \[y^{(1,\g)}_t=y_t\;\;\text{and}\;\;y^{(2,\g)}_t=
  0\]
    while for every $t\in (\widehat{\ff}\upharpoonright M)\setminus\widehat{\g}$,
  \[y^{(1,\g)}_t=0\;\;\text{and}\;\;y^{(2,\g)}_t=
  y_t\]
  It is obvious that $\widetilde{y}_t=\widetilde{y}_t^{(1,\g)}+\widetilde{y}_t^{(2,\g)}$.
  In the sequel we will refer to $\big(y^{(1,\g)}_t\big)_{t\in\widehat{\ff}\upharpoonright M}$ and
  $\big(y^{(2,\g)}_t\big)_{t\in\widehat{\ff}\upharpoonright M}$ as
  the $\g$-\textit{splitting of} $(y_t)_{t\in\widehat{\ff}\upharpoonright
  M}$.
  \item[(iii)] Assume, in addition, that the $\ff$-subsequence
  $(x_s)_{s\in\ff\upharpoonright M}$ is
  subordinated with respect to the weak topology and let
  $\widehat{\varphi}:\widehat{\ff}\upharpoonright M\to (X,w)$ be
  the continuous map witnessing this. Then the $\ff$-subsequences
  $\big(x^{(1,\g)}_s\big)_{s\in\ff\upharpoonright M}$ and
  $\big(x^{(2,\g)}_s\big)_{s\in\ff\upharpoonright M}$ are also
  subordinated with respect to the weak topology. More precisely,
  the continuous maps $\widehat{\varphi}^{(1,\g)},\widehat{\varphi}^{(2,\g)}:\widehat{\ff}\upharpoonright M\to
  (X,w)$ witnessing this fact
  are defined as follows. For every $t\in\widehat{\ff}\upharpoonright
  M$,
   \[\widehat{\varphi}^{(1,\g)}(t)=\sum_{t'\sqsubseteq t}y_{t'}^{(1,\g)}\;\;\text{and}\;\;\widehat{\varphi}^{(2,\g)}(t)=\sum_{t'\sqsubseteq t}y_{t'}^{(2,\g)}\]
   It is immediate that
   $\widehat{\varphi}(t)=\widehat{\varphi}^{(1,\g)}(t)+\widehat{\varphi}^{(2,\g)}(t)$.
    Moreover
  notice that
  \[\widehat{\varphi}^{(1,\g)}(\emptyset)=\widehat{\varphi}(\emptyset)\;\;\text{and}\;\;\widehat{\varphi}^{(2,\g)}(\emptyset)=0\]
   In the sequel we will refer to $\widehat{\varphi}^{(1,\g)}$ and
   $\widehat{\varphi}^{(2,\g)}$ as the $\g$-\textit{splitting of}
   $\widehat{\varphi}$.
  \end{enumerate}
\end{rem}

\begin{lem}\label{up g splitting spr mod}
  Let $X$ be a Banach space with a Schauder basis. Let $\ff$ be a regular thin family, $M\in[\nn]^\infty$ and $\g\subseteq[M]^{<\infty}$
  such that  $\g\sqsubset\ff\upharpoonright M$. Suppose that the family $\g(M^{-1})=\{s\in[\nn]^{<\infty}:\;M(s)\in\g\}$ is regular thin.
  Let $(x_s)_{s\in\ff}$ be an $\ff$-sequence in $X$
   such that $(x_s)_{s\in\ff\upharpoonright M}$ admits a canonical tree decomposition.
    Then every spreading
  model of $(x_s^{(1,\g)})_{s\in\ff\upharpoonright
  M}$ belongs to $\mathcal{SM}_{o(\g)}(X)$.
\end{lem}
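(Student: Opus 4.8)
The plan is to show that the values of the $\ff$-subsequence $(x_s^{(1,\g)})_{s\in\ff\upharpoonright M}$ depend only on the $\g$-initial segment of the index, so that this subsequence ``factors through'' the regular thin family $\hh:=\g(M^{-1})$, which has order $o(\hh)=o(\g)$ (by clause (c) of the remarks following Definition \ref{defn backwards shifting}, since $\g\upharpoonright M=\g$). First I would record the pointwise identity: for every $s\in\ff\upharpoonright M$ one has $x_s^{(1,\g)}=\widehat\varphi(v_s)$, where $v_s$ is the unique element of $\g$ with $v_s\sqsubset s$. Here $v_s$ exists because $\g\sqsubset\ff\upharpoonright M$, it is unique by thinness of $\g$, and $s\notin\widehat\g$ (otherwise $s\sqsubseteq v\sqsubset s'$ for some $v\in\g$, $s'\in\ff\upharpoonright M$, contradicting thinness of $\ff$); hence Definition \ref{defn g splitting}(ii)(b),(iii) applies. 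Setting $z_u:=\widehat\varphi(M(u))$ for $u\in\hh$ produces a bounded $\hh$-sequence in $X$ with $x_s^{(1,\g)}=z_{\psi(s)}$, where $\psi(s):=M^{-1}(v_s)$. Since passing from $s$ to the initial segment $v_s$ only truncates coordinates and never reorders them, a direct check against both clauses of Definition \ref{defn plegma} shows that $\psi$ sends plegma $k$-tuples in $\ff\upharpoonright M$ to plegma $k$-tuples in $\hh$.

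Next I would fix an arbitrary $\ff$-spreading model $(e_n)_n$ admitted by $(x_s^{(1,\g)})_{s\in\ff\upharpoonright M}$, say generated by $(x_s^{(1,\g)})_{s\in\ff\upharpoonright N}$ for some $N\in[M]^\infty$, and exhibit an $\hh$-sequence in $X$ generating it. Applying the existence statement of Proposition \ref{B-Sp} to the bounded $\hh$-sequence $(z_u)_{u\in\hh}$ with starting set $M^{-1}(N)$, I get $L\in[M^{-1}(N)]^\infty$ such that $(z_u)_{u\in\hh\upharpoonright L}$ generates an $\hh$-spreading model $(e'_n)_n$; shrinking $L$ further (Remark \ref{remark on the definition of spreading model}(ii), combined with Remark \ref{Galvin Pricley for regular thin}), I may also assume $\ff$ is very large in $M(L)$. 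It then suffices to prove $(e_n)_n=(e'_n)_n$. To do so I fix $k$ and scalars $a_1,\dots,a_k$ and choose a sequence $\bigl((s_i^{(m)})_{i=1}^k\bigr)_m$ of plegma $k$-tuples in $\ff\upharpoonright M(L)$ with $\min s_1^{(m)}\to\infty$, which is possible since $\ff$ is very large in $M(L)$. As $M(L)\subseteq N$, these are plegma $k$-tuples in $\ff\upharpoonright N$ as well, so by Proposition \ref{B-Sp}
\[
\Bigl\|\textstyle\sum_{i=1}^k a_i e_i\Bigr\|_*=\lim_m\Bigl\|\textstyle\sum_{i=1}^k a_i x_{s_i^{(m)}}^{(1,\g)}\Bigr\|=\lim_m\Bigl\|\textstyle\sum_{i=1}^k a_i z_{\psi(s_i^{(m)})}\Bigr\|.
\]
On the other hand $(\psi(s_i^{(m)}))_{i=1}^k$ is a plegma $k$-tuple in $\hh\upharpoonright L$ (because $v_{s_i^{(m)}}\subseteq s_i^{(m)}\subseteq M(L)$, hence $\psi(s_i^{(m)})\subseteq L$), with $\min\psi(s_1^{(m)})=M^{-1}(\min s_1^{(m)})\to\infty$; so the same limit equals $\|\sum_{i=1}^k a_i e'_i\|_*$, again by Proposition \ref{B-Sp}. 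Thus the two spreading-model norms coincide, $(e_n)_n=(e'_n)_n$, and since $(e'_n)_n$ is an $\hh$-spreading model of $X$ with $o(\hh)=o(\g)$ we conclude $(e_n)_n\in\mathcal{SM}_{o(\g)}(X)$.

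\textbf{Main obstacle.} The delicate point is the choice of the witnessing plegma tuples. The naive approach of lifting an arbitrary plegma $k$-tuple from $\hh\upharpoonright L$ up to a plegma $k$-tuple in $\ff\upharpoonright N$ cannot be done coherently, and in fact is obstructed: Theorem \ref{non plegma preserving maps} rules out plegma preserving maps from a family of order $o(\g)$ into one of the strictly larger order $o(\ff)$, and a direct interleaving construction breaks as soon as the elements of $\hh$ have different lengths. The remedy is to run the whole norm computation only on tuples living in the ``overlap region'' $\ff\upharpoonright M(L)$, and to use that it is the \emph{easy} direction $\psi:\ff\to\hh$ which preserves plegma, together with the fact (Proposition \ref{B-Sp}) that a $1$-subsymmetric spreading-model norm is already determined by the limit along one cofinal sequence of plegma tuples. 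Everything else is routine bookkeeping: verifying $s\notin\widehat\g$, that $\psi$ preserves plegma, that $o(\hh)=o(\g)$, that $(z_u)_{u\in\hh}$ is bounded, and that the length-threshold parameters transfer correctly under $M$ and $M^{-1}$.
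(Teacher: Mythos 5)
Your proof is correct, and its backbone coincides with the paper's: you observe that $x_s^{(1,\g)}=\widehat\varphi(v_s)$ depends only on the $\g$-initial segment $v_s$ of $s$, so the subsequence factors through a regular thin family of order $o(\g)$ (you use $\hh=\g(M^{-1})$; the paper uses $\g(L^{-1})$ with $L$ the set on which the $\ff$-spreading model is generated, and gets $o=o(\g)$ the same way). Where you genuinely diverge is in finishing the argument. The paper defines $w_t=x_s^{(1,\g)}$ for $L(t)\sqsubset s$, checks well-definedness, and then simply asserts that $(w_t)_{t\in\hh}$ generates the given $(e_n)_n$ as an $\hh$-spreading model, leaving the verification as ``easy to see''; a literal check of Definition \ref{Definition of spreading model} for that claim would require lifting plegma tuples of $\hh$ back to plegma tuples in $\ff\upharpoonright L$ extending the corresponding elements of $\g$, which, as you correctly point out, is obstructed in general (e.g.\ when the entries of an element of $\g\upharpoonright L$ are consecutive in $L$, no plegma extension exists). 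Your route sidesteps this: you let the factored $\hh$-sequence $(z_u)$ generate its own spreading model $(e'_n)_n$ on a further set $L\subseteq M^{-1}(N)$ via Proposition \ref{B-Sp}, and then identify the two seminorms by evaluating both along a single cofinal family of plegma tuples in the overlap $\ff\upharpoonright M(L)$, using that truncation to initial segments followed by $M^{-1}$ is plegma preserving and that a spreading-model seminorm is determined by limits along plegma tuples with minima tending to infinity. This buys a self-contained, fully justified argument (at the cost of an extra extraction of $(e'_n)_n$), and it establishes exactly what the lemma needs, namely membership in $\mathcal{SM}_{o(\g)}(X)$, without the paper's stronger unproved parenthetical claim. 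The small imprecisions (attributing the ``limit along cofinal plegma tuples'' fact to Proposition \ref{B-Sp} rather than to Definition \ref{Definition of spreading model} directly, asserting boundedness of $(z_u)$ from weak continuity of $\widehat\varphi$, and ignoring the degenerate case $\g=\{\emptyset\}$) are harmless.
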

\begin{proof}
  Let
  $L\in[M]^\infty$ such that $(x_s^{(1,\g)})_{s\in\ff\upharpoonright
  L}$ generates an $\ff$-spreading model $(e_n)_{n\in\nn}$. We set
  $\mathcal{H}=\g(L^{-1})=\{s\in[\nn]^{<\infty}:\;L(s)\in\g\}$. By the remarks after Definition \ref{defn backwards shifting} we have that
  $\mathcal{H}$ is a regular thin family with $o(\mathcal{H})=o(\g\upharpoonright
  L)=o(\g)$. For every $t\in\mathcal{H}$ we set
  $w_t=x_s^{(1,\g)}$, where $s\in\ff\upharpoonright M$ and $L(t)\sqsubset
  s$. By the definition of $(x_s^{(1,\g)})_{s\in\ff\upharpoonright
  M}$ we have that $w_t$ is well defined. It is easy to see that
  $(w_t)_{t\in\mathcal{H}}$ generates $(e_n)_{n\in\nn}$ as an
  $\mathcal{H}$-spreading model.
\end{proof}
By the above and Corollary \ref{ultrafilter property for ell^1
spreading models} we have the following.
\begin{cor}\label{splitting to continue in example}
  Let $X$ be a Banach space with a Schauder basis. Suppose that
  $\mathcal{SM}^{wrc}(X)$ contains a sequence equivalent to the
  usual basis of $\ell^1$ and let $\xi_0$ be the minimum of all countable
  ordinals $\xi$ such that $\mathcal{SM}^{wrc}_\xi(X)$ contains such a sequence.

   Let $\ff$ be regular thin of order $\xi_0$, $M\in[\nn]^\infty$ and
  $(x_s)_{s\in\ff}$ a weakly relatively
 compact $\ff$-sequence in $X$ such that the $\ff$-subsequence
  $(x_s)_{s\in\ff\upharpoonright M}$ generates  $\ell^1$ as an $\ff$-spreading model with lower constant $c$
   and admits a canonical tree decomposition.

   Let $\g\subseteq[M]^{<\infty}$
  such that  $\g\sqsubset\ff\upharpoonright M$
  and  $\g(M^{-1})=\{s\in[\nn]^{<\infty}:\;M(s)\in\g\}$ is regular
  thin with  $o(\g)<\xi_0$.

   Then for every $L\in[M]^\infty$ there exists
$N\in[L]^\infty$ such that $(x_s^{(2,\g)})_{s\in
\ff\upharpoonright N}$ generates
  $\ell^1$ as an $\ff$-spreading model with the same lower constant $c$.
\end{cor}
\subsection{Plegma cuts of regular thin families}
Let $\ff$ be a regular thin family. For every plegma pair
$(s_1,s_2)$ in $\ff$ the \emph{plegma cut of $s_2$ with respect
to} $s_1$, denoted by $ s_2/_{s_1}$, is defined to be the  set
\[s_2/_{s_1}=s_2\cap\{1,\ldots,\max
s_1\}=\{s_2(1),\ldots,s_2(|s_1|-1)\}\] It is clear that
$s_2/_{s_1}$ is an initial segment of $s_2$ with length depending
on the choice of $s_1$. Moreover, plegma cuts are defined for all
$s\in\ff$ with gaps between its elements and in general they are
not uniquely determined. In the sequel, we will mainly use plegma
cuts with minimum or maximal length.

\begin{defn}\label{notation hunging ff by min L}
  Let $\ff$ be regular thin and $L=\{l_n:n\in\nn\}\in[\nn]^\infty$ such that $\ff$ is very large in $L$. We define
  \[\begin{split}\ff/_L=\Big\{s/_{s'}:&s\in\ff\upharpoonright L(2\nn)\;\text{and if}\;s=\{L(2k_1),\ldots,L(2k_m)\}\;\text{then}\;s'\in\ff\\
  &\text{and}\;s'\sqsubset\{L(1),L(2k_1+1),\ldots,L(2k_m+1)\} \Big\}
  \end{split}\]
\end{defn}
 Notice that $\ff/_L$ is well defined and
 consists of all plegma cuts of minimal length and of the form $s/_{s'}$, with
 $s\in\ff\upharpoonright
 L(2\nn)$ and $s'\in\ff\upharpoonright L$. Moreover, it easy to
 see that
 \[\begin{split}\ff/_L=\Big\{\{L(k_1),&\ldots,L(k_m)\}: m\in\nn\;\;\text{and}\\
 & \{L(k_1+1),\ldots,L(k_m+1)\}\in\ff_{(L(1))}\upharpoonright L(2\nn-1)  \Big\}\end{split}\]
   This representation of $\ff/_L$ yields the
  following.
\begin{prop}\label{rem f/L}
  Let $\ff$ be regular thin and $L\in[\nn]^\infty$ such that $\ff$ is very large in $L$. Then the following
  hold.
  \begin{enumerate}
    \item[(i)] The family $\ff/_L$ is thin and $\ff/_L\sqsubset \ff\upharpoonright L(2\nn)$.
    \item[(ii)] The family  $\ff/_L(L(2\nn)^{-1})=\{s\in[\nn]^{<\infty}:L(2s)\in\ff/_L\}$ is regular thin.
    \item[(iii)] $o(\ff/_L)=o(\ff_{(l_1)})<o(\ff)$, where $l_1=\min L$.
  \end{enumerate}
\end{prop}
The next lemma uses plegma cuts of maximal length to produce
plegma block $\ff$-subsequences from $\ff$-subsequences admitting
a canonical tree decomposition.
  \begin{lem}\label{block plegma}
    Let $X$ be a Banach space with a Schauder basis, $\ff$ a regular thin family
    and $M\in[\nn]^\infty$ such that $\ff$ is very large in
    $M$. Let $(x_s)_{s\in\ff}$
     an $\ff$-sequence in $X$, such that
     $(x_t)_{t\in\widehat{\ff} \upharpoonright M}$
     admits a canonical tree decomposition
     $(y_t)_{t\in\widehat{\ff} \upharpoonright M}$. For every
     $L\in[M]^\infty$ we define the $\ff$-subsequence $(z^L_s)_{s\in\ff\upharpoonright
     L(2\nn)}$ as follows.
     For every
  $s\in\ff\upharpoonright
  L(2\nn)$, with $s=\{L(2\rho_1)<\ldots<L(2\rho_k)\}$, we set $s'$ to be the unique initial segment of
  $\{L(2\rho_1-1)<\ldots<L(2\rho_k-1)\}$ in
  $\ff$. Let $s^*=s/_{s'}$ and for every $s\in\ff\upharpoonright
  L(2\nn)$, let \[z^L_s=x_s-\sum_{t\sqsubseteq
  s^*}y_t=\sum_{s^*\sqsubset t\sqsubseteq s}y_t\]
  Then the following are satisfied.
\begin{enumerate}
  \item[(a)] For every $L\in[M]^\infty$ the $\ff$-subsequence $(z^L_s)_{s\in\ff\upharpoonright
     L(2\nn)}$ is plegma block.
  \item[(b)] For every $\delta>0$ there exists $L\in[M]^\infty$
  such that either
  \begin{enumerate}
    \item[(i)] for every $s\in\ff\upharpoonright L(2\nn)$,
    $\|x_s^{(1,\ff/_L)}\|\geq\delta$, or
    \item[(ii)] for every $s\in\ff\upharpoonright L(2\nn)$,
    $\|x_s-z^L_s\|<\delta$.
  \end{enumerate}
\end{enumerate}
  \end{lem}
  \begin{proof}
    (a) Let $L\in[M]^\infty$ and $(s_1,s_2)$ be a plegma pair in $\ff\upharpoonright
    L(2\nn)$. Then it is easy to see that
    $(s_1',s_1,s_2',s_2)$ is a plegma 4-tuple and therefore
    $|s_1|\leq|s_2'|$. Moreover notice that
    \[\max(s_1\setminus
    s_1/_{s'_1})=\max(s_1)=s_1(|s_1|)\;\text{and}\;\min(s_2\setminus
    s_2/_{s'_2})=s_2(|s'_2|)\]
    By
    the definition of the canonical tree decomposition we have that
    $z_{s_1}<z_{s_2}$.

  (b) Let $\delta>0$.
  By Theorem \ref{ramseyforplegma} there exists
  $L\in[M]^\infty$ such that either
  \begin{enumerate}
\item[(1)] for every plegma pair $(s_1,s_2)$ in
    $\ff\upharpoonright L$, \[\Big\|\sum_{t\sqsubseteq
    s_2/_{s_1}}y_t\Big\|\geq\delta\] or
    \item[(2)] for every plegma pair $(s_1,s_2)$ in
    $\ff\upharpoonright L$, \[\Big\|\sum_{t\sqsubseteq s_2/_{s_1}}y_t\Big\|<\delta\]
  \end{enumerate}
  Assume that (1) holds. Since
  $\ff/_L\subseteq\{s/_{s'}:(s',s)\in\text{\emph{Plm}}(\ff\upharpoonright
  L)\}$, by the definition of
  $\big(x_s^{(1,\ff/_L)}\big)_{s\in\ff\upharpoonright L(2\nn)}$, we
  get that assertion (i) of the lemma holds. Similarly it is shown that (2) implies assertion (ii) of the
  lemma.
  \end{proof}
  \subsection{Banach spaces admitting $\ell^1$ as a unique spreading
  model}
  \begin{thm}\label{unique l^1 spr mod}
    Let $X$ be a Banach space with an unconditional basis. Assume that every  plegma block
    generated spreading
    model of any order of $X$  is equivalent to the usual basis of
    $\ell^1$. Then every nontrivial spreading model in
    $\mathcal{SM}^{wrc}(X)$ is equivalent to the usual basis of
    $\ell^1$.
  \end{thm}
\begin{proof}
  We proceed by induction on $1\leq\xi<\omega_1$. For $\xi=1$
  we have the following. Let
  $(e_n)_{n\in\nn}\in\mathcal{SM}_1^{wrc}(X)$ be nontrivial. By standard arguments or using Corollary \ref{assuming canonical
  decomposition} there exist a block sequence $(y_n)_{\in\nn}$ and
  $y_\emptyset\in X$ such that the sequence $(x_n)_{n\in\nn}$,
  defined by $x_n=y_n+y_\emptyset$ for all $n\in\nn$, generates
  $(e_n)_{n\in\nn}$ as a spreading model. Let $L\in[\nn]^\infty$
  such that the subsequence $(y_n)_{n\in L}$ generates a spreading
  model $(e'_n)_{n\in\nn}$. By Corollary \ref{trivial ultrafilter weak
  property} we get that $(e'_n)_{n\in\nn}$ is nontrivial. By our
  assumptions we have that $(e'_n)_{n\in\nn}$ is equivalent to the
  usual basis of $\ell^1$. By Corollary \ref{ultrafilter property for ell^1 spr mod
  simplified} we have that $(e_n)_{n\in\nn}$ is also equivalent to
  the usual basis of $\ell^1$.

    Suppose that for some $\xi<\omega_1$ we have that for every
  $\zeta<\xi$, every nontrivial spreading model in $\mathcal{SM}^{wrc}_\zeta(X)$
  is equivalent to the usual basis of $\ell^1$. Let
  $(e_n)_{n\in\nn}$ be a nontrivial spreading model in $\mathcal{SM}^{wrc}_\xi(X)$.
  Let also $\ff$ be a regular thin family of order $\xi$. By
  Corollary \ref{assuming canonical decomposition} there
  exist $M\in[\nn]^\infty$ and an $\ff$-subsequence $(x_s)_{s\in\ff\upharpoonright
  M}$ in $X$ having the following properties.
  \begin{enumerate}
    \item[(i)] It  generates
  $(e_n)_{n\in\nn}$ as an $\ff$-spreading model.
  \item[(ii)] It is subordinated and let $\widehat{\varphi}:\widehat{\ff}\upharpoonright M\to$
  be the continuous map witnessing this.
     \item[(iii)] It  admits a canonical tree decomposition $(y_t)_{t\in\widehat{\ff}\upharpoonright M}$.
  \end{enumerate}
  By Corollaries \ref{ultrafilter property for ell^1 spr mod
  simplified} and \ref{trivial ultrafilter weak
  property} we may assume that $y_\emptyset=0$.
  Let $c>0$ such that for every $s\in\ff\upharpoonright L$,
  $\|x_s\|>c$.
  By Lemma \ref{block plegma} there exists
  $L\in[M]^\infty$ such that either
\begin{enumerate}
    \item[(i)] for every $s\in\ff\upharpoonright L(2\nn)$,
    $\|x_s^{(1,\ff/_L)}\|\geq\frac{c}{2}$, or
    \item[(ii)] for every $s\in\ff\upharpoonright L(2\nn)$,
    $\|x_s-z^L_s\|<\frac{c}{2}$.
  \end{enumerate}

Suppose that the first alternative holds. Then
$(x_s^{(1,\ff/_L)})_{s\in\ff\upharpoonright L(2\nn)}$ is
seminormalized. Moreover
$(x_s^{(1,\ff/_L)})_{s\in\ff\upharpoonright L(2\nn)}$ is weakly
null, since it is weakly convergent to
$\widehat{\varphi}^{(1,\g)}(\emptyset)=\widehat{\varphi}(\emptyset)=y_\emptyset=0$.
Therefore $(x_s^{(1,\ff/_L)})_{s\in\ff\upharpoonright L(2\nn)}$
does not contain any norm convergent $\ff$-subsequence. By Theorem
\ref{Theorem equivalent forms for having norm on the spreading
model} we have that $(x_s^{(1,\ff/_L)})_{s\in\ff\upharpoonright
L(2\nn)}$ admits an nontrivial spreading model $(e'_n)_{n\in\nn}$.
Lemma \ref{up g splitting spr mod} and our inductive hypothesis
yields that $(e'_n)_{n\in\nn}$ is equivalent to the usual basis of
$\ell^1$. The unconditionality of the basis of $X$ yields that
$(e_n)_{n\in\nn}$ is equivalent to the usual basis of $\ell^1$.

Suppose that the second alternative holds. Then
$(z^L_s)_{s\in\ff\upharpoonright L(2\nn)}$ is a seminormalized
plegma block $\ff$-subsequence. By our assumption we have that
every $\ff$-spreading model of $(z^L_s)_{s\in\ff\upharpoonright
L(2\nn)}$ is equivalent to the usual basis of $\ell^1$. Again the
unconditionality of the basis of $X$ yields that $(e_n)_{n\in\nn}$
is equivalent to the usual basis of $\ell^1$.
\end{proof}

\subsection{Banach spaces admitting $\ell^1$ as a plegma block generated spreading model}
In this subsection we provide sufficient conditions for a Banach
space $X$ with a Schauder basis which ensures that the $\ell^1$
spreading models of $X$ also appear as plegma block generated.
\begin{defn}\label{notation of P property}
  Let $X$ be a Banach space with a Schauder basis. We say
  that $X$ satisfies the property $\mathcal{P}$, if for every
  $\delta>0$ there exists $k\in\nn$ such that for every finite block sequence $(x_j)_{j=1}^k$, with
  $\|x_j\|\geq\delta$ for all $j=1,\ldots,k$, we have that $\|\sum_{j=1}^k x_j\|>1$.
\end{defn}
\begin{rem}\label{remark p property for block generating}
  $\;$
  \begin{enumerate}
    \item[(i)] It is easy to see that the property $\mathcal{P}$
    is preserved under equivalent renormings.
    \item[(ii)] If $X$ is a Banach space with an unconditional basis
    $(e_n)_{n\in\nn}$ then $X$ satisfies the property
    $\mathcal{P}$ iff $c_0$ is not finitely block representable in $X$.
  \end{enumerate}
\end{rem}
\begin{thm}\label{getting block generated ell^1 spreading model}
  Let $X$ be a Banach space with a Schauder basis  satisfying property $\mathcal{P}$.
If for some $\xi<\omega_1$, $\ell^1$ is a $\xi$-spreading model of
a weakly relatively compact
  subset of $X$, then $\ell^1$ is plegma block generated as a $\xi$-spreading model.
\end{thm}
For the proof of the above theorem we will need the following
lemma.
\begin{lem}\label{lemma getting block generated ell^1 spreading model finishing}
  Let $X$ be a Banach space with a Schauder
  basis. Let $\ff$ be regular thin, $M\in[\nn]^\infty$ and $0<c\leq 1$.
  Let $(x_s)_{s\in\ff}$ be an $\ff$-sequence in $X$, such that $(x_s)_{s\in\ff\upharpoonright M}$ generates $\ell^1$ as an $\ff$-spreading model with constant
  $c$ and also admits
  a canonical tree decomposition.
  If $X$ does not admit $\ell^1$ as a plegma block
  generated spreading model then for every $0<\delta<c$ there
  exists $L\in[M]^\infty$ such that $\|x_s^{(1,\ff/_L)}\|\geq\delta$,
  for all $s\in\ff\upharpoonright L(2\nn)$.
\end{lem}
\begin{proof}
  We may suppose that $\ff$ is very large in $M$. Let
  $0<\delta<c$. By Lemma \ref{block plegma} there exists $L\in[M]^\infty$
  such that either
  \begin{enumerate}
    \item[(i)] for every $s\in\ff\upharpoonright L(2\nn)$,
    $\|x_s^{(1,\ff/_L)}\|\geq\delta$, or
    \item[(ii)] for every $s\in\ff\upharpoonright L(2\nn)$,
    $\|x_s-z^L_s\|<\delta$.
  \end{enumerate}
  We have to show that alternative (ii) is impossible. Indeed, otherwise we have that every
  $\ff$-spreading model of $(z^L_s)_{s\in\ff\upharpoonright
  L(2\nn)}$ is equivalent to the usual basis of $\ell_1$ with lower constant $c-\delta$.
 This yields that $\ell_1$ is plegma block generated by  the
$\ff$-subsequence  $(z^L_s)_{s\in\ff\upharpoonright
  L(2\nn)}$ as an
  $\ff$-spreading model, which is a contradiction.
\end{proof}
\begin{proof}[Proof of Theorem \ref{getting block generated ell^1 spreading model}]
  Assume that $X$ does not admits $\ell^1$ as a plegma block generated spreading model. By Remark \ref{remark on the definition of spreading model} and
  Remark \ref{remark p property for block generating} we may suppose
  that the basis of $X$ is bimonotone. Let $\xi_0<\omega_1$ be the minimum countable ordinal $\xi$ such
  that there exists a weakly relatively compact subset $A$ of $X$
  which admits $\ell^1$ as a $\xi$-spreading model.

 Let $\ff$ be regular thin of order $\xi_0$. By Corollaries \ref{ultrafilter property for ell^1 spr mod simplified} and \ref{assuming canonical
  decomposition} there
  exist $M_1\in[\nn]^\infty$ and an $\ff$-subsequence $(x_s)_{s\in\ff\upharpoonright
  M_1}$ in $X$ having the following properties.
  \begin{enumerate}\item[(i)] It  generates
  an $\ell^1$ $\ff$-spreading model with lower constant $c$.
  \item[(ii)] It is
   subordinated with respect to the weak topology and if
   $\widehat{\varphi}:\widehat{\ff}\upharpoonright M_1\to X$ is
   the continuous map witnessing this then
   $\widehat{\varphi}(\emptyset)=0$.
   \item[(iii)] It  admits a canonical tree decomposition.
   \end{enumerate}

  Let $0<\delta<c$ and let $k\in\nn$ such that for every finite block sequence $(x_j)_{j=1}^k$, with $\|x_j\|\geq\delta$ for all $j=1,\ldots,k$,
  $\|\sum_{j=1}^k x_j\|>1$.

  By Lemma \ref{lemma getting block generated ell^1 spreading model
  finishing} there
  exists $L_1\in[M_1]^\infty$ such that $\|x_s^{(1,\ff/_{L_1})}\|\geq\delta$,
  for all $s\in\ff\upharpoonright L_1(2\nn)$.
  Since $o(\ff/_{L_1})<\xi_0$,
  by Corollary \ref{splitting to continue in example} there exists $M_2\in[L_1(2\nn)]^\infty$ such that
  $\big(x_s^{(2,\ff/_{L_1})}\big)_{s\in \ff\upharpoonright M_2}$ generates $\ell^1$ as an $\ff$-spreading model with the same lower
   constant $c$.

  We set $x^2_s=x_s^{(2,\ff/_{L_1})}$, for all
   $s\in\ff\upharpoonright M_2$. Following the same arguments as
   above there exist $L_2\in[M_2]^\infty$ and
   $M_3\in[L_2(2\nn)]^\infty$ such that $\|x_s^{2(1,\ff/_{L_2})}\|\geq\delta$,
  for all $s\in\ff\upharpoonright L_2(2\nn)$ and $\big(x_s^{(2,\ff/_{L_2})}\big)_{s\in \ff\upharpoonright M_3}$ generates $\ell^1$ as an $\ff$-spreading model with the same lower
   constant $c$.

  Notice that for every $s\in \ff\upharpoonright M_3$, we have
  that
  \[x_s^{(1,\g_1)}<x_s^{2\;(1,\g_2)},\;
  \|x_s^{(1,\g_1)}\|\geq\delta\;\text{and}\;
  \|x_s^{2\;(1,\g_2)}\|\geq\delta\]
  Moreover since for every $s\in \ff\upharpoonright M_3$
  $\|x_s\|\leq1$, $\widehat{\varphi}(\emptyset)=0$ and the basis
  is bimonotone, we have that
  $\|\widetilde{x}_s^{(1,\g_1)}+\widetilde{x}_s^{2\;(1,\g_2)}\|\leq1$,
  for all $s\in \ff\upharpoonright M_3$.
  Continuing in the same way it is clear that after $k$ steps the property $\mathcal{P}$
  will lead us to a contradiction.
\end{proof}
\begin{cor}\label{getting block generated ell^1 spreading model corollary for reflexive}
  Let $X$ be a reflexive Banach space with a Schauder basis  satisfying property $\mathcal{P}$.
If for some $\xi<\omega_1$, $\ell^1$ is a $\xi$-spreading model of
$X$ then $\ell^1$ is a plegma block generated $\xi$-spreading
model of $X$.
\end{cor}
\begin{rem} As we show in Chapter \ref{space without block}, the assumption that the
space $X$ satisfies property $\mathcal{P}$ is necessary in Theorem
\ref{getting block generated ell^1 spreading model}.
\end{rem}
\subsection{The spreading models of the Tsirelson spaces}\label{subsection Tsirelson} The definition of the
Tsirelson space $T_\alpha$, $1\leq\alpha<\omega_1$ makes use of
the Schreier families $(\mathcal{S}_\alpha)_{\alpha<\omega_1}$
(see Definition \ref{Schreier families defn}). For the sake of
completeness we will prove the following property concerning
$(\mathcal{S}_\alpha)_{\alpha<\omega_1}$, which will be used in
the sequel.
\begin{lem}\label{S_1 is the smaller}
  For every $1\leq\alpha\leq\omega_1$, we have that $\mathcal{S}_1\subseteq\mathcal{S}_\alpha$.
\end{lem}
\begin{proof}
  We proceed by induction on $\alpha$. For $\alpha=1$ the result trivially holds. Suppose that for some $\alpha<\omega_1$, $\mathcal{S}_1\subseteq\mathcal{S}_{\alpha'}$, for all $1\leq\alpha'<\alpha$. If $\alpha$ is a successor, $\alpha=\beta+1$, then by our inductive hypothesis and the definition of $\mathcal{S}_\alpha$, we have that $\mathcal{S}_1\subseteq\mathcal{S}_\beta\subseteq\mathcal{S}_\alpha$. In the case of a limit countable ordinal $\alpha$, let $(\alpha_n)_{n\in\nn}$ be the increasing to $\alpha$ sequence of countable ordinals that takes part in the definition of $\mathcal{S}_\alpha$. Notice that $\mathcal{S}_{\alpha_1}\subseteq\mathcal{S}_\alpha$. Therefore by our inductive hypothesis we have that $\mathcal{S}_1\subseteq\mathcal{S}_{\alpha_1}\subseteq\mathcal{S}_\alpha$
\end{proof}

We ready to state the definition of the Tsirelson spaces
$T_\alpha$. Let $\alpha\geq1$ be a countable ordinal. Let
$W_\alpha$ be the minimal subset of the algebraic dual
$c_{00}(\nn)^\#$ of $c_{00}(\nn)$ satisfying the following
properties:
\begin{enumerate}
  \item[(i)] $\pm e_n^*\in W_\alpha$, for all $n\in\nn$.
  \item[(ii)] For every $n\in\nn$ and $f_1,\ldots,f_n\in W_\alpha$ such that $f_1<\ldots<f_n$ and $\{\min\text{supp}(f_i):1\leq i\leq n\}\in\mathcal{S}_\alpha$, we have that \[\frac{1}{2}\sum_{i=1}^nf_i\in W_\alpha\]
\end{enumerate}
It is easy to check that every element of $W_a$ is of finite
support. Moreover for every $f=\sum_{i=1}^na_ie_i^*\in W_\alpha$
and $F\subseteq\{1,\ldots,n\}$ we have that $\sum_{i\in F}
a_ie_i^*\in W_\alpha$. Also $W_\alpha$ is symmetric, i.e. for
every $f\in W_\alpha$ we have that $-f\in W_\alpha$.

Let $\|\cdot\|_{T_\alpha}:c_{00}(\nn)\to\rr$ be the norm defined
as follows:
\[\|x\|_{T_\alpha}=\sup\{f(x):f\in W_\alpha\}\]
The space $T_\alpha$ is defined to be the completion of
$c_{00}(\nn)$ under the norm $\|\cdot\|_{T_\alpha}$.

It is easy to see that the usual Hamel basis $(e_n)_{n\in\nn}$ of
$c_{00}(\nn)$ forms an unconditional basis for the space
$T_\alpha$. Moreover it is well known that the spaces $T_\alpha$,
for all $1\leq\alpha<\omega_1$, are reflexive.  Concerning the
spreading models of these spaces we have the following result.
\begin{thm}\label{Tsirelson's spr mods thm}
  Let $1\leq\alpha<\omega_1$. Then every nontrivial spreading model of any order of the space $T_\alpha$ is equivalent to the usual basis of $\ell^1$.
\end{thm}
By the reflexivity of the space $T_\alpha$ and Theorem \ref{unique
l^1 spr mod} it suffices to show the following proposition.
\begin{prop}\label{Tsirelson plegma block spr mod}
  Let $1\leq\alpha<\omega_1$, $\ff$ be a regular thin family, $M\in[\nn]^\infty$ and $(x_s)_{s\in\ff\upharpoonright M}$
   a seminormalized plegma block $\ff$-subsequence in $T_\alpha$.
  Then every $\ff$-spreading model of $(x_s)_{s\in\ff\upharpoonright M}$ is equivalent to the usual basis of $\ell^1$.
\end{prop}
For the proof of Proposition \ref{Tsirelson plegma block spr mod}
we will need the following lemma.
\begin{lem}\label{Tsirleson pushing up the supports}
  Let $X$ be a Banach space with a Schauder basis. Let $\ff$ be a regular thin family, $M\in[\nn]^\infty$ and
   $(x_s)_{s\in\ff\upharpoonright M}$ a plegma block $\ff$-subsequence in $X$
   of nonzero elements. Then there exists $L\in[M]^\infty$ such that for
   every $s\in \ff\upharpoonright L$, if $\min s=L(n)$ then $\min \text{supp}(x_s)\geq n$.
\end{lem}
\begin{proof}
  Let $M'\in[M]^\infty$ such that $\ff$ is very large in $M'$ and $L=M'(2\nn)$.
  Then $L$ is the desired set. Indeed, let $s\in\ff\upharpoonright L$.
   Let $n_1,\ldots,n_{|s|}\in\nn$ such that $s(j)=L(n_j)$, for all $1\leq j\leq |s|$.
    Then $s(j)=M'(2n_j)$, for all $1\leq j\leq |s|$. Let, for every $1\leq p\leq 2n_1$, $s_p\in\ff$ such that
  \[s_p\sqsubseteq \big\{M'(2n_j+p-2n_1):1\leq j\leq|s|\big\}\]
  Notice that $(s_p)_{p=1}^{2n_1}$ is a plegma path and $s=s_{2n_1}$.
   Therefore \[1\leq \min\text{supp}(x_{s_1})<\ldots<\min\text{supp}(x_{s_{2n_1}})=\min\text{supp}(x_s)\]
  This easily yields that $\min\text{supp}(x_s)\geq 2n_1\geq n_1$.
\end{proof}
\begin{proof}[Proof of Proposition \ref{Tsirelson plegma block spr mod}]
  Let $c>0$ be such that $\|x_s\|_{T_\alpha}>c$, for all $s\in\ff\upharpoonright M$.
   We pass to a $M'\in[M]^\infty$ such that $(x_s)_{s\in\ff\upharpoonright M'}$
    generates an $\ff$-spreading model $(e'_n)_{n\in\nn}$.
    By Lemma \ref{Tsirleson pushing up the supports} there exists $L\in[M']^\infty$ such that
     for every $s\in \ff\upharpoonright L$, if $\min s=L(n)$ then $\min \text{supp}(x_s)\geq n$.
      To complete the proof, it suffices to show that for every $n\in\nn$,
       $a_1,\ldots,a_n\in\rr$ and $(s_j)_{j=1}^n\in\text{\emph{Plm}}(\ff\upharpoonright L)$ with $s_1(1)\geq L(n)$,
        we have that \[\Big\|\sum_{j=1}^na_jx_{s_j}\Big\|_{T_\alpha}\geq \frac{c}{2}\sum_{j=1}^n|a_j|\]
  Indeed, let $n\in\nn$, $a_1,\ldots,a_n\in\rr$ and $(s_j)_{j=1}^n\in\text{\emph{Plm}}(\ff\upharpoonright L)$ with
   $s_1(1)\geq L(n)$. Since $\|x_{s_i}\|_{T_\alpha}>c$, for all $1\leq i\leq n$, there exist $f_1,\ldots,f_n\in W_\alpha$
    such that $f_i(x_{s_i})>c$. By the properties of $W_\alpha$,
     we may suppose that $\text{supp}(f_i)\subseteq\text{supp}(x_{s_i})$, for all $1\leq i\leq n$.
     Therefore $\min\text{supp}(f_i)\geq\min\text{supp}(x_{s_i})$, for all $1\leq i\leq n$.
  By the choice of $(x_s)_{s\in\ff \upharpoonright L}$ and Lemma \ref{S_1 is the smaller},
  it is easy to check that $\{\min\text{supp}(x_{s_j}):1\leq j\leq n)\}\in\mathcal{S}_1\subseteq\mathcal{S}_\alpha$.
   Therefore by the spreading property of $\mathcal{S}_\alpha$, we have that $\{\min\text{supp}(f_j):1\leq j\leq n)\}\in\mathcal{S}_\alpha$.
    Hence, if $\ee_i=\text{sign}(a_i)$, for all $1\leq i\leq n$, we have that $\varphi=\frac{1}{2}\sum_{i=1}^n \ee_if_i\in W_\alpha$. Thus
  \[\Big\|\sum_{j=1}^na_jx_{s_j}\Big\|_{T_\alpha}\geq \varphi\Big(\sum_{j=1}^na_jx_{s_j}\Big)\geq\frac{c}{2}\sum_{j=1}^n|a_j|\]
\end{proof}

\chapter{$c_0$ spreading models}
In this chapter we study $c_0$ spreading models generated by
$\ff$-sequences. In the first section we present a combinatorial
result concerning partial unconditionality in infinitely branching
trees in Banach spaces. Based on this result  and using the
splitting technique of canonical tree decompositions, presented in
the previous chapter, we establish a corresponding to $\ell^1$
result for plegma block generated $c_0$ spreading models. Finally
in the last section we deal with the duality between  $c_0$ and
$\ell^1$ spreading models.
\section{On partial unconditionality of trees in Banach spaces}
In this section we present a Ramsey result concerning partial
unconditionality in trees in Banach spaces.  Our approach is
related to the corresponding one stated for sequences instead of
trees and which is followed by  several papers (see \cite{AG},
\cite{AGR}, \cite{E}, \cite{Od}).

 We start with some definitions.
\begin{defn}\label{Delta partition}
  Let $k\in\nn$ and $\Delta=(N^{(1)},\ldots,N^{(k)})$ be a
  partition of $\nn$ into $k$ infinite disjoint sets, i.e.
  $N^{(i)}\in[\nn]^\infty$ for all $1\leq i\leq k$, $N^{(i)}\cap
  N^{(j)}=\emptyset$ for all $i\neq j$ in $\{1,\ldots,k\}$ and
  $\nn=\cup_{i=1}^k N^{(i)}$. For every $L\in[\nn]^\infty$ we
  define the modulo $\Delta$ partition of $L$ as the $k$-tuple
  $(L_{(\Delta,1)},\ldots,L_{(\Delta,k)})$, where
  $L_{(\Delta,i)}=L(N^{(i)})$ for all $1\leq i\leq k$. Moreover for
  every nonempty $F\in[\nn]^{<\infty}$ we define the modulo $\Delta$ partition of $F$ as the $k$-tuple
  $(F_{(\Delta,1)},\ldots,F_{(\Delta,k)})$, where
  $F_{(\Delta,i)}=F(\{1,\ldots,|F|\}\cap N^{(i)})$ for all $1\leq i\leq
  k$, and for $F=\emptyset$, $\emptyset_{\Delta,i}=\emptyset$, for all
  $1\leq i\leq k$.

  Finally we define the map $i_\Delta:\nn\to\{1,\ldots,k\}$ such
  that $i_\Delta(n)=i$ if $n\in N^{(i)}$, for all $n\in\nn$.
\end{defn}
\begin{rem}\label{null trees rem with properties}
  It is immediate that the following are satisfied:
  \begin{enumerate}
    \item[($\Delta1$)] For every $L\in[\nn]^\infty$ we have that $L_{(\Delta,i)}=\{L(n):i_\Delta(n)=i\}$, for all $1\leq i\leq k$.
     Moreover $\cup_{i=1}^k L_{(\Delta,i)}=L$ and $L_{(\Delta,i)}\cap
        L_{(\Delta,j)}=\emptyset$, for all $i\neq j$.
    \item[($\Delta2$)] For every $F\in[\nn]^{<\infty}$ and
        every $L\in[\nn]^\infty$ with $F\sqsubseteq L$ we have that
        \[F_{(\Delta,i)}=L_{(\Delta,i)}\cap F\sqsubseteq L_{(\Delta,i)}\] for all $1\leq i\leq
        k$. Therefore for every $F\in[\nn]^{<\infty}$ and
        every $L,L'\in[\nn]^\infty$ with $F\sqsubseteq L$ and
        $F\sqsubseteq L'$, we have that \[L_{(\Delta,i)}\cap F=L'_{(\Delta,i)}\cap
        F=F_{(\Delta,i)}\] for all $1\leq i\leq k$.
    \item[($\Delta3$)] Let $L,L'\in[\nn]^\infty$ and $n_0\in\nn$ such
    that for every $n\neq n_0$ in $\nn$, $L(n)=L'(n)$. Then
    \begin{enumerate}
      \item[(a)] For every $i\neq i_\Delta(n_0)$ in
      $\{1,\ldots,k\}$, $L_{(\Delta,i)}=L'_{(\Delta,i)}$,
      \item[(b)] $L(n_0)\in L_{(\Delta,i_\Delta(n_0))}$ and $L'(n_0)\in
      L'_{(\Delta,i_\Delta(n_0))}$,
      \item[(c)]
      $L_{(\Delta,i_\Delta(n_0))}\setminus\{L(n_0)\}=L'_{(\Delta,i_\Delta(n_0))}\setminus\{L'(n_0)\}$.
    \end{enumerate}
  \end{enumerate}
\end{rem}
\begin{defn}
  Let $(y_t)_{t\in[\nn]^{<\infty}}$ be a family of vectors in a
  Banach space $X$. We will say that $(y_t)_{t\in[\nn]^{<\infty}}$ is a weakly null
  tree for every $t\in[\nn]^{<\infty}$, setting
  $N_t=\{n\in\nn:n>t\}$, we have that $(y_{t\cup\{n\}})_{n\in
  N_t}\stackrel{w}{\longrightarrow}0$. A weakly null tree $(y_t)_{t\in[\nn]^{<\infty}}$
  will be called bounded if the family
  $(x_s)_{s\in[\nn]^{<\infty}}$ is bounded, where $x_s=\sum_{t\sqsubseteq
  s}y_t$ for all $s\in[\nn]^{<\infty}$.
\end{defn}
\begin{notation}
  Let $N\in[\nn]^\infty$ and $\g$ a thin family very large in $N$.
  For every $L\in[
  N]^\infty$ we set $I_\g(L)$ to be the unique initial segment
  of $L$ in $\g$.
\end{notation}
\begin{defn}\label{null trees definition}
  Let $k\in\nn$, $\Delta=(N^{(1)},\ldots,N^{(k)})$ be a
  partition of $\nn$ into $k$ infinite disjoint sets,
  $N\in[\nn]^\infty$, $\mathfrak{G}=(\g_1,\ldots,\g_k)$ be a $k$-tuple of thin families which are very
  large in $N$ and $(y_t)_{t\in[\nn]^{<\infty}}$ be a bounded weakly null
  tree in a Banach space $X$. Also let $\ee>0$ and
  $(\delta_n)_{n\in\nn}$ be a sequence of positive reals.

  We will say that an infinite subset $L$ of $N$ is
  $(\ee,(\delta_n)_{n\in\nn})$-good (with respect to
  $k,\Delta,N,\mathfrak{G}$ and $(y_t)_{t\in[\nn]^{<\infty}}$) if
  for every $x^*\in B_{X^*}$ there exists $y^*\in
  B_{X^*}$ such that, setting for  $1\leq i\leq k$, $v_i=I_{\g_i}(L_{(\Delta,i)})$,
 the following are satisfied.
  \begin{enumerate}
  \item [(a)]   $\Big|(x^*-y^*)\Big(\sum_{t\sqsubseteq v_i}y_t\Big)\Big|\leq
  \ee$.
  \item[(b)] For all $t\in[L]^{<\infty}$ with $v_i\sqsubset t\sqsubseteq
  L_{(\Delta,i)}$, for some
  $i\in\{1,\ldots,k\}$, we have that
  $\big|y^*(y_t)\big|\leq\delta_{|t|}$.
  \end{enumerate}
\end{defn}
The main goal of this section is to prove the next result.
\begin{thm}\label{null trees Theorem}
  Let $k,\Delta,N,\mathfrak{G}$ and $(y_t)_{t\in[\nn]^{<\infty}}$ be
  as in Definition \ref{null trees definition}. Also let $\ee>0$
  and $(\delta_n)_{n\in\nn}$ be a sequence of positive reals.
  Then there exists $M\in[N]^\infty$ such that every
  $L\in[M]^\infty$ is $(\ee,(\delta_n)_{n\in\nn})$-good.
\end{thm}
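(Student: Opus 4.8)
\emph{Reduction.} The two conditions in Definition \ref{null trees definition} involve only the action of functionals on the vectors $x_s=\sum_{t\sqsubseteq s}y_t$, $s\in[N]^{<\infty}$ (each $y_t$ is a difference of two such $x_s$'s), and any norm-$\le 1$ functional on $Z:=\overline{\text{span}\{x_s:s\in[N]^{<\infty}\}}$ extends by Hahn--Banach to a norm-$\le 1$ functional on $X$ with unchanged values on $Z$. So we may replace $X$ by the separable space $Z$, whence $B_{X^*}$ is weak\*-metrizable and weak\*-compact. We also assume $(\delta_n)_{n\in\nn}$ non-increasing with $\delta_n\to0$, fix $C\ge1$ with $\|x_s\|\le C$ for all $s$, and fix positive reals $(\ee_q)_{q\in\nn}$ with $\sum_q\ee_q<\ee$.

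\emph{Key Lemma.} The heart of the proof is to build, by a fusion (diagonal) argument, $M=\{m_1<m_2<\cdots\}\in[N]^\infty$ such that: for every $x^*\in B_{X^*}$ and all pairwise disjoint $u_1,\dots,u_k\in[M]^{<\infty}$, there is $g\in B_{X^*}$ with $|g(x_{u_i})-x^*(x_{u_i})|\le\ee$ for every $i$ and $|g(y_w)|\le\delta_{|w|}$ for every $w\in[M]^{<\infty}$ with $u_i\sqsubset w$ for some $i$. The fusion maintains a decreasing chain of infinite reservoirs $N=P_0\supseteq P_1\supseteq\cdots$ with $m_q=\min P_{q-1}$; at stage $q$ one treats the finitely many stem-configurations available inside $\{m_1,\dots,m_{q-1}\}$, uses the weak\*-compactness of $B_{X^*}$ (a finite $\ee$-net in the weak\* topology restricted to those finitely many coordinates) together with the fact that each family $(y_w)$ with $w$ a one-step extension of a fixed node inside the current reservoir is weakly null, and shrinks $P_{q-1}$ so that the relevant deep branch-vector values drop below the thresholds $\delta_{|w|}$. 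One first obtains the Key Lemma in the finitary version where only finitely many levels $|w|$ are constrained, and then passes to the stated version by a weak\*-compactness (finite-intersection) argument, since for each admissible choice of stems all but finitely many values $g(y_w)$ are below the thresholds by the choice of $M$.

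\emph{Deduction of the theorem.} Let $L\in[M]^\infty$ and $x^*\in B_{X^*}$. Put $v_i=I_{\g_i}(L_{(\Delta,i)})$; since $\g_i$ is thin and very large in $N\supseteq L_{(\Delta,i)}$, this is a well-defined proper initial segment of $L_{(\Delta,i)}$, and the $v_i$ are pairwise disjoint members of $[M]^{<\infty}$. Applying the Key Lemma to $x^*$ and $u_i=v_i$ produces $y^*\in B_{X^*}$ with $|y^*(x_{v_i})-x^*(x_{v_i})|\le\ee$ for all $i$ --- this is (a) of Definition \ref{null trees definition} --- and $|y^*(y_t)|\le\delta_{|t|}$ for all $t\in[M]^{<\infty}$ extending some $v_i$, in particular for all $t\in[L]^{<\infty}$ with $v_i\sqsubset t\sqsubseteq L_{(\Delta,i)}$ for some $i$ --- this is (b). Hence every $L\in[M]^\infty$ is $(\ee,(\delta_n)_{n\in\nn})$-good.

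\emph{Main obstacle.} The delicate point is the uniformity of the fusion: the stems $v_i$, hence the entire family of tail branch-vectors on which $y^*$ must be controlled, depend on $L$, and $L$ ranges over all of $[M]^\infty$, so the reservoir-shrinking at stage $q$ must anticipate every continuation of every column and every functional in $B_{X^*}$ at once. Setting up stage $q$ as a genuinely finitary step (finitely many stem-configurations inside $\{m_1,\dots,m_{q-1}\}$ against a finite weak\*-net) that does not interfere with later stages, and then recovering the infinitary Key Lemma by weak\*-limits, is where the real work is; the separability reduction and the verifications of (a) and (b) above are routine.
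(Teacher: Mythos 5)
There is a genuine gap, and it sits exactly at the step you yourself flag as ``the real work''. Your Key Lemma is a substantial strengthening of the theorem: it asks for a single $M$ such that, for every $x^*$ and every choice of pairwise disjoint stems $u_1,\dots,u_k\in[M]^{<\infty}$, one functional $g$ is simultaneously $\delta_{|w|}$-small on \emph{every} $w\in[M]^{<\infty}$ with $u_i\sqsubset w$ — i.e.\ on the entire subtree of $[M]^{<\infty}$ above the stems, which at each level is an infinite antichain. The theorem, and the paper's proof, only ever control the vectors along the columns of one fixed $L$ (the sets $t$ with $v_i\sqsubset t\sqsubseteq L_{(\Delta,i)}$), i.e.\ finitely many branches, one new vector per level. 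Nothing in your sketch proves the subtree version, and the mechanism you describe cannot prove it: a finite weak$^*$ $\ee$-net ``restricted to finitely many coordinates'' says nothing about the values $g(y_w)$ on the infinitely many deep vectors, and one cannot ``shrink $P_{q-1}$ so that the relevant branch-vector values drop below the thresholds'' — the vectors $y_w$ are fixed; what must be produced, for each $x^*$, is a functional whose values on infinitely many prescribed vectors are small at once. Weak nullity only gives that each \emph{fixed} $g$ has a finite exceptional set among the successors of each node; these exceptional sets depend on $g$ and on the node, so your $M$ would have to dodge them simultaneously for all stem configurations and all admissible targets $x^*(x_{u_i})$. That simultaneous-dodging statement is precisely the difficulty; it is not addressed, and it is not at all clear it is even true — it is certainly not a consequence of the branch-wise theorem.

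Compare with the paper's route, which is organized so that every step is finitary: one shows the set $\mathbb{G}$ of $(\ee,(\delta_n)_n)$-good $L$'s is closed (Lemma \ref{null trees Lemma 1}) and dense (Lemma \ref{null trees Lemma 2}) in $[N]^\infty$ and then applies the Nash-Williams--Galvin--Prikry theorem (Theorem \ref{Galvin prikry}) to get $[M]^\infty\subseteq\mathbb{G}$; your proposal never invokes such a dichotomy, obtaining the ``every $L\in[M]^\infty$'' quantifier only from the unproven Key Lemma. The density lemma is where the fusion lives, and there the functional $y^*$ for a given $x^*$ is built as a weak$^*$ limit of functionals $y_n^*$, each step handling \emph{one} new branch vector per column with summable corrections; the existence of each $y_n^*$ is itself nontrivial (Sublemma \ref{null trees sublemma} and Corollary \ref{null tree corollary}): it is proved by contradiction, using Galvin--Prikry together with the weak nullity of $(y_{F_{(\Delta,i_0)}\cup\{n\}})_n$ over a reservoir — i.e.\ the freedom to move the new element inside the reservoir is essential, which is exactly the freedom your subtree formulation gives up. To repair your argument you would either have to prove the Key Lemma (facing the simultaneous-dodging problem above), or weaken it to branch-wise control relative to a given $L$ — at which point you are back to proving the theorem itself and will need something like the closed-plus-dense-plus-Galvin--Prikry scheme and the Sublemma-type step of the paper.
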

We fix for the sequel  $k,\Delta,N,\mathfrak{G}$ and
$(y_t)_{t\in[\nn]^{<\infty}}$
  as in Definition \ref{null trees definition}, $\ee>0$
  and $(\delta_n)_{n\in\nn}$  a sequence of positive reals.
  Let \[\mathbb{G}=\big\{
  L\in[N]^\infty:\;L\;\;\text{is}\;\;(\ee,(\delta_n)_{n\in\nn})\text{-good}\big\}\]
  The proof of the following lemma is easy.
\begin{lem}\label{null trees Lemma 1} The set $\mathbb{G}$
  is closed in $[\nn]^\infty$.
\end{lem}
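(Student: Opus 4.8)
The plan is to show that $\mathbb{G}$ is sequentially closed, which suffices since $[\nn]^\infty$, being a subspace of the Cantor space $\{0,1\}^\nn$, is metrizable. Recall that $L^{(n)}\to L$ in $[\nn]^\infty$ means precisely that for every $k\in\nn$ the first $k$ elements of $L^{(n)}$ coincide with those of $L$ for all sufficiently large $n$; in particular if each $L^{(n)}\subseteq N$ then $L\subseteq N$. So I fix a sequence $(L^{(n)})_{n\in\nn}$ in $\mathbb{G}$ converging to some $L\in[\nn]^\infty$ and must show $L\in\mathbb{G}$. The first key point is that the anchoring segments stabilize. Set $v_i=I_{\g_i}(L_{(\Delta,i)})$ and $v_i^{(n)}=I_{\g_i}(L^{(n)}_{(\Delta,i)})$ for $1\le i\le k$; these are well defined, since $\g_i$ is thin and very large in $N$ and the relevant sets are infinite subsets of $N$. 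Since $v_i\in\g_i$ is a finite initial segment of $L_{(\Delta,i)}$, once $L^{(n)}$ agrees with $L$ on a long enough initial segment the set $L^{(n)}_{(\Delta,i)}$ also has $v_i$ as an initial segment, and then thinness of $\g_i$ forces $v_i^{(n)}=v_i$. Hence there is $n_0$ with $v_i^{(n)}=v_i$ for all $n\ge n_0$ and all $i$; discarding the first terms I may assume this holds for all $n$. In particular the finite vectors $w_i=\sum_{t\sqsubseteq v_i}y_t$ attached to $L$ and to every $L^{(n)}$ coincide.

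Now fix $x^*\in B_{X^*}$; I produce the required $y^*\in B_{X^*}$. For each $n$, goodness of $L^{(n)}$ yields $y^*_n\in B_{X^*}$ with $|(x^*-y^*_n)(w_i)|\le\ee$ for all $i$, and $|y^*_n(y_t)|\le\delta_{|t|}$ whenever $v_i\sqsubset t\sqsubseteq L^{(n)}_{(\Delta,i)}$ for some $i$. By Banach--Alaoglu the ball $B_{X^*}$ is weak$^*$-compact, so the sequence $(y^*_n)_n$ has a weak$^*$-cluster point $y^*\in B_{X^*}$ (every weak$^*$-neighbourhood of $y^*$ contains $y^*_n$ for infinitely many $n$). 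Both conditions defining goodness of $L$ are, for fixed data, weak$^*$-closed conditions on $y^*$, because evaluation at a fixed vector is weak$^*$-continuous; explicitly, $\{\psi\in B_{X^*}:|\psi(w_i)-x^*(w_i)|\le\ee\}$ and $\{\psi\in B_{X^*}:|\psi(y_t)|\le\delta_{|t|}\}$ are weak$^*$-closed. For condition (a): each $y^*_n$ lies in the first set (using $v_i^{(n)}=v_i$, hence the same $w_i$), so the cluster point $y^*$ lies in it too, i.e. $|(x^*-y^*)(w_i)|\le\ee$ for all $i$. For condition (b): fix any $t$ with $v_i\sqsubset t\sqsubseteq L_{(\Delta,i)}$ for some $i$; since $t$ is a finite initial segment of $L_{(\Delta,i)}$ and $L^{(n)}\to L$, for all sufficiently large $n$ one also has $v_i\sqsubset t\sqsubseteq L^{(n)}_{(\Delta,i)}$, whence $|y^*_n(y_t)|\le\delta_{|t|}$ for those $n$; were $|y^*(y_t)|>\delta_{|t|}$, the weak$^*$-open set $\{\psi:|\psi(y_t)|>\delta_{|t|}\}$ would contain $y^*$ and hence infinitely many $y^*_n$, contradicting the estimate for large $n$. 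So $y^*$ witnesses goodness of $L$ for this $x^*$; since $x^*$ was arbitrary, $L\in\mathbb{G}$, and $\mathbb{G}$ is closed.

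The only delicate point is that $B_{X^*}$ need not be weak$^*$-metrizable when $X$ is not separable, so one cannot in general extract a weak$^*$-convergent subsequence of $(y^*_n)$; passing instead to a cluster point of the sequence viewed as a net is harmless, precisely because both defining conditions of goodness are weak$^*$-closed in $y^*$ and, after the stabilization of the $v_i$, hold for all large $n$ — exactly the configuration a cluster point handles. The remaining verifications (the stabilization $v_i^{(n)}=v_i$ for large $n$, and $t\sqsubseteq L^{(n)}_{(\Delta,i)}$ for large $n$) are routine consequences of the description of convergence in $[\nn]^\infty$ together with the thinness and very-largeness of the families $\g_i$, and I would record them as two short observations before running the main argument.
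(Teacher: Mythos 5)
Your proof is correct: the paper omits this argument (calling it easy), and what you give is exactly the intended verification — the finite data stabilizes (for $n$ large the initial segments $v_i^{(n)}=I_{\g_i}(L^{(n)}_{(\Delta,i)})$ coincide with $v_i$, by property $(\Delta2)$ and thinness of $\g_i$, and any fixed $t\sqsubseteq L_{(\Delta,i)}$ is eventually an initial segment of $L^{(n)}_{(\Delta,i)}$), after which a weak$^*$-cluster point of the witnesses $y^*_n$ in $B_{X^*}$ satisfies (a) and (b) for $L$, since both are weak$^*$-closed conditions checked pointwise on fixed vectors. Your remark that one should use a cluster point rather than a weak$^*$-convergent subsequence (as $B_{X^*}$ need not be weak$^*$-metrizable) is exactly the right precaution, and nothing further is needed.
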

\begin{lem}\label{null trees Lemma 2}
  The set $\mathbb{G}$ is dense in $[N]^\infty$, i.e. for each
  $M\in[N]^\infty$ there exists $L\in\mathbb{G}$ with
  $L\in[M]^\infty$.
\end{lem}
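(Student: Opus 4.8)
The plan is to establish Lemma \ref{null trees Lemma 2} by a single recursive construction of $L\in[M]^\infty$ driven by the weak nullity of the tree, and then to read off goodness of $L$ for an arbitrary functional by a compactness argument. The closedness Lemma \ref{null trees Lemma 1} is not needed for Lemma \ref{null trees Lemma 2} itself, but together with it it will yield Theorem \ref{null trees Theorem}: since a closed subset of $[\nn]^\infty$ is completely Ramsey (\cite{G-P}), applying this to the closed set $\mathbb G$ and to $N$ gives $M\in[N]^\infty$ with $[M]^\infty\subseteq\mathbb G$ or $[M]^\infty\cap\mathbb G=\emptyset$, and the second alternative is excluded by density.

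First I would reduce to a separable setting. Only the vectors $y_t$ and the partial sums $x_s=\sum_{t\sqsubseteq s}y_t$ enter the definition of goodness, so I may replace $X$ by the separable space $Y=\overline{<(x_s)_{s\in[\nn]^{<\infty}}>}$, identify (via Hahn--Banach) every $x^*\in B_{X^*}$ with a member of $B_{Y^*}$ and, conversely, extend any element of $B_{Y^*}$ to $B_{X^*}$. Thus I may assume $(B_{X^*},w^*)$ is compact metrizable; fix a compatible metric $d$ and a sequence $(x^*_m)_{m\in\nn}$ which is $w^*$-dense in $B_{X^*}$. I also record, from property $(\Delta2)$ of Remark \ref{null trees rem with properties} and the very-largeness of each $\g_i$, that once a long enough initial block of $L$ has been chosen the segments $v_i=I_{\g_i}(L_{(\Delta,i)})$ and the head vectors $w_i=\sum_{t\sqsubseteq v_i}y_t$ ($1\le i\le k$) are already determined.

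Next I would build $L=\{l_1<l_2<\cdots\}$ inside $M$ recursively. At step $n$, with $l_1<\cdots<l_{n-1}$ fixed, put $i=i_\Delta(n)$, $t_n=\{l_1,\dots,l_{n-1}\}_{(\Delta,i)}$ and $r_n=|t_n|+1$; if $l$ becomes $l_n$ then $y_{t_n\cup\{l\}}$ is precisely the level-$r_n$ tree vector on the $i$-th branch of $L$. Since the tree is weakly null, $(y_{t_n\cup\{l\}})_{l>l_{n-1}}$ is a weakly null sequence, so $x^*_m(y_{t_n\cup\{l\}})\to 0$ as $l\to\infty$ for each fixed $m$; hence I can choose $l_n\in M$, $l_n>l_{n-1}$, with $|x^*_m(y_{t_n\cup\{l_n\}})|<2^{-n}\delta_{r_n}$ for all $m\le n$, and moreover $|z^*(y_{t_n\cup\{l_n\}})|<2^{-n}$ for every $z^*$ in a fixed finite $2^{-n}$-net of $(B_{X^*},d)$. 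Each of these is a finite list of weak-nullity conditions, satisfied for $l_n$ large, so the recursion runs.

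Finally I would check that $L\in\mathbb G$. Fix $x^*\in B_{X^*}$; then $v_1,\dots,v_k$ and $w_1,\dots,w_k$ are determined, and picking $x^*_{m_0}$ $w^*$-close enough to $x^*$ gives $|(x^*-x^*_{m_0})(w_i)|\le\ee$ for all $i$, so condition (a) of goodness holds for the provisional choice $y^*=x^*_{m_0}$. By construction, every tail vector $y_t$ with $v_i\sqsubset t\sqsubseteq L_{(\Delta,i)}$ equals $y_{t_n\cup\{l_n\}}$ for the step $n$ at which $\max t$ entered $L$, and $|t|=r_n$; hence $|x^*_{m_0}(y_t)|<\delta_{|t|}$ as soon as $m_0\le n$, which excludes only a finite, level-bounded family of ``premature'' tail vectors. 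The \textbf{main obstacle}, and the step where the real work lies, is to convert $x^*_{m_0}$ into a genuine $y^*\in B_{X^*}$ that in addition satisfies $|y^*(y_t)|\le\delta_{|t|}$ on those finitely many premature vectors, without disturbing its values on the finitely many heads $w_i$ and without destroying the smallness already secured on the infinitely many remaining tail vectors; a crude finite-rank correction of the functional will not do, since the tail vectors need not be norm-null. I expect this to be handled by using the boundedness of the branch partial sums $x_s$ in an essential way (this bound limits how much mass a norm-one functional can carry along a tail) together with an iterated $w^*$-compactness / gliding-hump argument that produces $y^*$ as a $w^*$-cluster point of successive tail-corrections of $x^*$, the corrections chosen summable so that the heads are preserved; any loss in $\|y^*\|$ is absorbed by running the whole argument with $\ee/2$ and $(\delta_n/2)$ in place of $\ee$ and $(\delta_n)$.
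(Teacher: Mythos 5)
There is a genuine gap, and it sits exactly at the point you flag as ``the main obstacle'': your argument reduces the lemma to the statement that an arbitrary $x^*\in B_{X^*}$ can be replaced by some $y^*\in B_{X^*}$ which keeps the head values $y^*(\sum_{t\sqsubseteq v_i}y_t)$ within $\ee$ of $x^*$ while being $\delta_{|t|}$-small on the finitely many ``premature'' tail vectors \emph{and} on all later ones, and you only conjecture that this can be done. But this replacement property is the entire content of goodness, and it is not a consequence of how you chose $L$. Worse, the recursion you propose does not in general produce a good set: at step $n$ you only require the new sibling $y_{t_n\cup\{l_n\}}$ to be small against the first $n$ functionals of a fixed $w^*$-dense sequence (plus a $w^*$-net, which controls nothing at the new vector, since $w^*$-proximity is measured on a countable family of vectors fixed before the tree is examined). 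Weak nullity constrains the siblings only asymptotically, so a single admissible choice of $l_n$ may give $y_{t_n\cup\{l_n\}}$ equal (or norm-close) to a head vector $\sum_{t\sqsubseteq v_i}y_t$ already determined by the earlier elements of $L$; if the finitely many functionals you monitor at that step happen to be small on that head, nothing in your rule excludes this choice. For such an $L$ and for $x^*$ norming that head, every $y^*$ with $|y^*(y_{t_n\cup\{l_n\}})|\leq\delta_{|t|}$ automatically violates condition (a), so $L\notin\mathbb{G}$. Thus no after-the-fact ``tail-correction'' of the functional can rescue the construction; the selection of $L$ itself must already encode the ability to correct \emph{arbitrary} functionals, which a countable dense family cannot certify.

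This is precisely what the paper's proof supplies and what your outline omits. The paper first proves, for each $\mathfrak{G}$-free $F$, a Sublemma and Corollary asserting that $L$ can be chosen so that \emph{every} functional satisfying the constraints accumulated along $F$ admits a replacement satisfying the same constraints (up to errors of size $\ee/2^{|F|+2}$, $\delta_{|t|}/2^{|F|+2}$) and in addition is $\delta/2^{|F|+1}$-small on the next tail vector; the ``bad'' alternative in that dichotomy is killed by Galvin--Prikry together with a $w^*$-limit argument played against the weakly null siblings $(y_{F_{(\Delta,i_0)}\cup\{n\}})_n$. The set $L=\bigcup_nF_n$ is then built so that this correction property holds at every node, and for a given $x^*$ the functional $y^*$ is obtained as a $w^*$-limit of an iterated sequence $y_0^*=x^*,y_1^*,y_2^*,\dots$ of such corrections, with the geometric error factors guaranteeing both that the head values move by at most $\ee$ in total and that smallness on each tail vector, once achieved, is preserved by all later corrections. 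Your appeal to the boundedness of the partial sums $x_s$ cannot substitute for this: that bound says nothing about the values a norm-one functional takes on the individual $y_t$ along a branch, which need be neither norm-small nor weakly null. So the proposal, as it stands, both leaves the key step unproved and rests on a choice of $L$ that does not in general satisfy the lemma.
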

 Lemma \ref{null trees Lemma 1}, Lemma \ref{null trees Lemma 2}
and Galvin-Prikry's theorem yield   Theorem \ref{null trees
Theorem}. Therefore it remains to show Lemma \ref{null trees Lemma
2}.

To this end we will need the next definition.
\begin{defn}
  An $F\in[N]^{<\infty}$ will be called $\mathfrak{G}$-free
  if for $i=i_{\Delta}(|F|+1)$ we have that $F_{(\Delta,i)}\not\in
  \widehat{\g}_{i}\setminus\g_{i}$.
\end{defn}
\begin{rem}
 Notice that by Def. \ref{Delta partition}, we have that for every
 $F\in [N]^{<\infty}$ and every $n\in N$ with $F<n$, setting
 $F'=F\cup \{n\}$,
  \[F'_{(\Delta,i_{\Delta}(|F'|))}=F_{(\Delta,i_{\Delta}(|F|+1))}\cup\{n\}\]
Hence if $F\in[N]^{<\infty}$ is $\mathfrak{G}$-free then
$F'_{(\Delta,i_{\Delta}(|F'|))}\not\in\widehat{\g}_{i_{\Delta}(|F'|)}$.
   \end{rem}
Let for every $s\in[\nn]^{<\infty}$, $x_s=\sum_{t\sqsubseteq
s}y_t$ and $K=\sup\{\|x_s\|:s\in[\nn]^{<\infty}\}$. Let also
$\delta_0>0$.
   \begin{sublem}\label{null trees sublemma}
     Let $F\in[\nn]^{<\infty}$ be $\mathfrak{G}$-free and
     $M\in[N]^\infty$ with $F\sqsubseteq M$. Also let  $((a_t)_{t\sqsubseteq
     F_{(\Delta,i)}})_{i=1}^k$ and $(b_i)_{i=1,\;i\neq
     i_0}^k$ in
     $[-2K,2K]$, where $i_0=i_{\Delta}(|F|+1)$.
For every $L'\in [M]^\infty$ we set $B^*(L')$ to be the set of all
$x^*\in B_{X^*}$ satisfying the following.
     \begin{enumerate}
       \item[(i)] For all
       $1\leq i \leq k$ and $t\sqsubseteq F_{(\Delta,i)}$, $|x^*(y_t)-a_t|\leq\frac{\delta_{|t|}}{2^{|F|+3}}$.
       \item[(ii)]  For all $1\leq i\leq k$ with $i\neq
       i_0$,
       $|x^*(\sum_{F_{(\Delta,i)}\sqsubset t \sqsubseteq v'_i}y_t)-b_i|\leq\frac{\ee}{2^{|F|+3}}$,
     where $v'_i=I_{\g_i}(L'_{(\Delta, i)})$.
     \end{enumerate}
Then  there exists
     $L\in[M]^\infty$ with $F\sqsubseteq L$ such that  for every
     $L'\in[L]^\infty$ with $F\sqsubseteq L'$ and every
     $x^*\in B^*(L')$
     there exists $y^*\in B^*(L')$ such that
      \[|y^*(y_{F_{(\Delta,i_0)}\cup\{
     L'(|F|+1)\}})|\leq \frac{\delta_{|F_{(\Delta,i_0)}|+1}}{2^{|F|+1}}\]
\end{sublem}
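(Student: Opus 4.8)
The plan is to prove the sublemma by a weak-$*$ compactness argument whose only quantitative ingredient is the weak-nullness of the branch $(y_{u\cup\{l\}})_{l\in N_u}$, where $u=F_{(\Delta,i_0)}$. First I would reduce to the separable case, replacing $X$ by $Z=\overline{\langle y_t:t\in[\nn]^{<\infty}\rangle}$: all vectors occurring in the statement live in $Z$, a weakly null tree in $X$ is weakly null in $Z$, and a norm-one Hahn--Banach extension recovers membership in $B_{X^*}$ from $B_{Z^*}$; thus $B_{Z^*}$ is weak-$*$ compact metrizable. Next I would observe that $B^*(L')$ depends on $L'$ only through the tuple $\vv=(v'_i)_{i\neq i_0}$ of $\g_i$-initial segments, and that each $v'_i=I_{\g_i}(L'_{(\Delta,i)})$ uses only elements of $L'$ at positions $\geq|F|+2$, hence strictly larger than $L'(|F|+1)$ (because $|F|+1\in N^{(i_0)}$). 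Write $B^*(\vv)$ for this set: it is a weak-$*$ closed, convex, hence weak-$*$ compact subset of $B_{Z^*}$, and there are only countably many admissible $\vv$.

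The heart of the proof is the following claim: for every admissible $\vv$ with $B^*(\vv)\neq\emptyset$, the set
\[
\Lambda(\vv)=\Big\{\,l\in N_u\cap M:\ l>\max F\ \text{ and }\ |z^*(y_{u\cup\{l\}})|>\tfrac{\delta_{|u|+1}}{2^{|F|+1}}\ \text{for every }z^*\in B^*(\vv)\,\Big\}
\]
is finite. Indeed, writing $\epsilon=\delta_{|u|+1}/2^{|F|+1}$, for each $l\in\Lambda(\vv)$ the range $\{z^*(y_{u\cup\{l\}}):z^*\in B^*(\vv)\}$ is (by compactness and convexity of $B^*(\vv)$) a compact interval disjoint from $[-\epsilon,\epsilon]$, so one may pick $z^*_l\in B^*(\vv)$ with $|z^*_l(y_{u\cup\{l\}})|>\epsilon$. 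A weak-$*$ cluster point $z^*$ of $(z^*_l)_{l\in\Lambda(\vv)}$ still lies in $B^*(\vv)$, while the weak-nullness of the branch forces $z^*(y_{u\cup\{l\}})\to0$ as $l\to\infty$; hence $|z^*(y_{u\cup\{l\}})|\leq\epsilon$ for all large $l\in N_u$, which contradicts $l\in\Lambda(\vv)$ for such an $l$ if $\Lambda(\vv)$ is infinite.

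Granting the claim, the sublemma reduces to finding $L\in[M]^\infty$ with $F\sqsubseteq L$ such that $L'(|F|+1)\notin\Lambda(\vv(L'))$ for every $L'\in[L]^\infty$ with $F\sqsubseteq L'$ and $B^*(\vv(L'))\neq\emptyset$ (when $B^*(\vv(L'))=\emptyset$ the assertion is vacuous). For then, given $x^*\in B^*(L')=B^*(\vv(L'))$, the failure of the defining property of $\Lambda(\vv(L'))$ at $L'(|F|+1)$ yields $y^*\in B^*(\vv(L'))=B^*(L')$ with $|y^*(y_{F_{(\Delta,i_0)}\cup\{L'(|F|+1)\}})|\leq\delta_{|F_{(\Delta,i_0)}|+1}/2^{|F|+1}$, as desired. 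I would build $L\setminus F=\{l_1<l_2<\cdots\}$ inside $M$ by a fusion argument. The enabling observation is that $\vv(L')$ is determined by boundedly many of the smallest elements of $L'$ past position $|F|+1$: the regularity of the families $\g_i$ bounds the lengths of the initial segments $v'_i$ once their minima are pinned down, which happens after finitely many of the $l_j$ are fixed, so that at every finite stage only finitely many tuples $\vv$ are so far realizable. One then chooses each new element of $L$ to avoid the finite union of the sets $\Lambda(\vv)$ over the currently relevant tuples, discarding the finitely many offending integers; since each $\Lambda(\vv)$ is finite this never exhausts $M$.

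The main obstacle is precisely this last bookkeeping — reconciling the fact that $\vv(L')$ depends on the tail of $L'$ with the requirement that the head $L'(|F|+1)$ avoid $\Lambda(\vv(L'))$ — and it is handled by a sufficiently fast-growing choice of the $l_j$ together with the length-boundedness of regular thin families, which confines the collection of tuples one must worry about to a finite set at each finite stage. By contrast, the weak-$*$ compactness half (the finiteness of $\Lambda(\vv)$) is routine; the combinatorial organisation of $L$ is where the real care is needed.
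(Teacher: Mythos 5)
Your soft ingredients are sound and in fact coincide with the paper's: $B^*(L')$ is weak-$*$ compact, it depends on $L'$ only through the initial segments $v'_i$ for $i\neq i_0$ and hence not on the element $L'(|F|+1)$ (since position $|F|+1$ lies in $N^{(i_0)}$), and weak nullness of the branch $(y_{u\cup\{l\}})_{l\in N_u}$ makes your set $\Lambda(\vv)$ finite whenever $B^*(\vv)\neq\emptyset$ (indeed one fixed $z^*\in B^*(\vv)$ already suffices). The gap is the combinatorial step. Your requirement pairs each \emph{head} $h=L'(|F|+1)$ with tails consisting of \emph{larger} elements, so the head must avoid $\Lambda(\vv(T))$ for every tuple $\vv$ realized by elements chosen \emph{after} it. A forward fusion can only make each newly chosen element avoid the finitely many sets $\Lambda(\vv)$ attached to tuples already realizable from previously chosen elements; but such a new element can never serve as a head for those tails (heads are smaller than their tails), so this achieves nothing for the actual requirement, while the tuples that do matter for an already-fixed head are created later and their $\Lambda$-sets may well contain that head. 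Choosing the $l_j$ "sufficiently fast-growing" does not help, because $\Lambda(\vv)$ for a future tuple is an uncontrolled finite set that can contain any previously chosen integer; and the appeal to "length-boundedness of regular thin families" is also inaccurate (regular thin families such as the Schreier family have members of unbounded length), though that is a secondary point -- each $\vv(L')$ is determined by finitely many elements of $L'$ in any case.

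The paper resolves exactly this circularity not by fusion but by a Ramsey-type homogenization: the set $\mathcal{A}$ of $L'$ for which the desired $y^*$ exists is clopen (membership depends on a finite initial segment, by $(\Delta2)$), so Galvin--Prikry yields an $L$ on which either all $L'$ are good (done) or all are bad; the bad alternative is then killed by the symmetry observation you also made, used in the opposite way: for a fixed tail $\{p_l:l>n\}$ the sets $L'_{n,j}=F\cup\{p_j\}\cup\{p_l:l>n\}$, $1\leq j\leq n$, all have the \emph{same} $B^*$ (property $(\Delta3)$), so a single $y_n^*\in B^*$ must be large on all $n$ vectors $y_{u\cup\{p_j\}}$ simultaneously, and a w$^*$-limit of $(y_n^*)_n$ contradicts weak nullness of the branch. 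To repair your proof you need this Galvin--Prikry step (or the equivalent "one tail, many heads" trick); the finiteness of $\Lambda(\vv)$ alone, fed into a stage-by-stage choice of $L$, cannot deliver the statement.
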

   \begin{proof}
     We set $\mathcal{A}$ to be the set of all $L'$ in $[M]^\infty$
     with $F\sqsubset L'$
     such that for every
     $x^*\in B^*(L')$
     there exists $y^*\in B^*(L')$ such that
      \[|y^*(y_{F_{(\Delta,i_0)}\cup\{
     L'(|F|+1)\}})|\leq \frac{\delta_{|F_{(\Delta,i_0)}|+1}}{2^{|F|+1}}\]
     By property $(\Delta2)$ it is easy to see that the set $\mathcal{A}$ is open (actually it is clopen).
     Hence
     by Galvin-Prikry's theorem there exists $L$ in $[M]^\infty$
     with $F\sqsubset L$ such that either $L'\in\mathcal{A}$ for all $L'\in[L]^\infty$ with
     $F\sqsubset L'$, or $L'\not\in\mathcal{A}$ for all $L'\in[L]^\infty$ with
     $F\sqsubset L'$. We will show that the second alternative is
     impossible.

     Indeed suppose that $L'\not\in\mathcal{A}$ for all $L'\in[L]^\infty$ with
     $F\sqsubset L'$. Then for each $L'\in[L]^\infty$ with
     $F\sqsubset L'$ we have that $B^*(L')\neq\emptyset$
     and for every $y^*\in B^*(L')$
     \[|y^*(y_{F_{(\Delta,i_0)}\cup\{
     L'(|F|+1)\}})|>\frac{\delta_{|F_{(\Delta,i_0)}|+1}}{2^{|F|+1}}\]
     Let $P=L\setminus
     F=\{p_1<p_2<\ldots\}$. For every $n\in\nn$ and $1\leq j\leq
     n$ we set \[L'_{n,j}=F\cup \{p_j\}\cup\{p_l:l>n\}\]
     By property $(\Delta3)$ we have that for every $n\in\nn$ and $1\leq j_1,j_2\leq
     n$, $B^*(L'_{n,j_1})=B^*(L'_{n,j_2})$. Hence
     for every $n\in\nn$ there exists $y^*_n\in B_{X^*}$ such that
     \[|y_n^*(y_{F_{(\Delta,i_0)}\cup\{p_j\}})|=|y_n^*(y_{F_{(\Delta,i_0)}\cup\{
     L'_{n,j}(|F|+1)\}})|> \frac{\delta_{|F_{(\Delta,i_0)}|+1}}{2^{|F|+1}}\]
     for all $1\leq j\leq n$. Let $y^*$ be a w$^*$-limit of
     $(y_n^*)_{n\in\nn}$. Then
     \[|y^*(y_{F_{(\Delta,i_0)}\cup\{p_j\}})|\geq \frac{\delta_{|F_{(\Delta,i_0)}|+1}}{2^{|F|+1}}\]
     which is a contradiction since
     $(y_{F_{(\Delta,i_0)}\cup\{n\}})_{n>F_{(\Delta,i_0)}}$ is weakly null.
   \end{proof}
   \begin{cor}\label{null tree corollary}
     Let $F\in[\nn]^{<\infty}$ be $\mathfrak{G}$-free and
     $M\in[N]^\infty$ with $F\sqsubseteq M$. Then there exists
     $L\in[M]^\infty$ with $F\sqsubset L$ such that for every
     $L'\in[M]^\infty$ with $F\sqsubset L'$ and $x^*\in B_{X^*}$
     there exists $y^*\in B_{X^*}$ satisfying the following.
     \begin{enumerate}
       \item[(i)] For all
       $1\leq i \leq k$ and $t\sqsubseteq F_{(\Delta,i)}$, $|x^*(y_t)-y^*(y_t)|\leq\frac{\delta_{|t|}}{2^{|F|+2}}$.
       \item[(ii)] For all $1\leq i\leq k$ with $i\neq
       i_0$,
       \[\Big|(x^*-
       y^*)\Big(\sum_{F_{(\Delta,i)}\sqsubset t \sqsubseteq
       v'_i}y_t\Big)\Big|\leq\frac{\ee}{2^{|F|+2}}\]
     where $v'_i=I_{\g_i}(L'_{(\Delta, i)})$.
     \item[(iii)]$|y^*(y_{F_{(\Delta,i_0)}\cup\{
     L'(|F|+1)\}})|\leq
     \frac{\delta_{|F_{(\Delta,i_0)}|+1}}{2^{|F|+1}}$.
     \end{enumerate}
   \end{cor}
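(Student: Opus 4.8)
The plan is to derive Corollary~\ref{null tree corollary} from Sublemma~\ref{null trees sublemma} by applying the sublemma over a finite net of parameter tuples and then diagonalising. The point to watch is that the set $B^*(L')$ occurring in the sublemma depends on $L'$ through the initial segments $v'_i=I_{\g_i}(L'_{(\Delta,i)})$, so one cannot feed a prescribed $x^*$ into it directly; the boundedness of the tree is exactly what lets us get around this, and this is the only real obstacle.

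First I would record the crude estimate $\|y_t\|\le 2K$ for every $t$ (for $t\neq\emptyset$ this is $y_t=x_t-x_{t\setminus\{\max t\}}$, and $\|y_\emptyset\|=\|x_\emptyset\|\le K$) and, by telescoping, $\|x_v-x_u\|\le 2K$ whenever $u\sqsubseteq v$. Fix $F$, $M$ and write $i_0=i_\Delta(|F|+1)$. Set $\eta=2^{-(|F|+3)}\min\{\delta_0,\delta_1,\dots,\delta_{|F|},\ee\}$, partition $[-2K,2K]$ into finitely many intervals of length $<\eta$, and let $C$ be the finite set of their midpoints. Let $\Theta$ be the finite set of all tuples $\theta=\big((a_t)_{t\sqsubseteq F_{(\Delta,i)},\,1\le i\le k},(b_i)_{1\le i\le k,\,i\neq i_0}\big)$ with every entry in $C$; since every entry lies in $[-2K,2K]$, Sublemma~\ref{null trees sublemma} applies with any $\theta\in\Theta$ as its parameter data. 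Enumerating $\Theta=\{\theta_1,\dots,\theta_m\}$, I would build a decreasing chain $M\supseteq L_1\supseteq\dots\supseteq L_m$, each end-extending $F$, by applying the sublemma to $\theta_1$ inside $M$, then to $\theta_2$ inside $L_1$, and so on, and put $L=L_m$. Because the conclusion of the sublemma for a fixed parameter tuple is inherited by every infinite subset of the witnessing set that still end-extends $F$, the single set $L$ witnesses Sublemma~\ref{null trees sublemma} for all $\theta\in\Theta$ at once.

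It then remains to verify that $L$ works. Given $x^*\in B_{X^*}$ and $L'\in[L]^\infty$ with $F\sqsubset L'$, set $v'_i=I_{\g_i}(L'_{(\Delta,i)})$. By property $(\Delta2)$ of Remark~\ref{null trees rem with properties} one has $F_{(\Delta,i)}\sqsubseteq L'_{(\Delta,i)}$, so $v'_i$ and $F_{(\Delta,i)}$ are $\sqsubseteq$-comparable; hence $\sum_{F_{(\Delta,i)}\sqsubset t\sqsubseteq v'_i}y_t$ equals $x_{v'_i}-x_{F_{(\Delta,i)}}$ if $F_{(\Delta,i)}\sqsubseteq v'_i$ and equals $0$ otherwise, and in both cases its value under $x^*$ lies in $[-2K,2K]$, as do the scalars $x^*(y_t)$ for $t\sqsubseteq F_{(\Delta,i)}$. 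Choose $\theta\in\Theta$ whose coordinate $a_t$ is the midpoint of the interval containing $x^*(y_t)$ and whose coordinate $b_i$ is the midpoint of the interval containing $x^*\big(\sum_{F_{(\Delta,i)}\sqsubset t\sqsubseteq v'_i}y_t\big)$; then $|x^*(y_t)-a_t|<\eta\le\delta_{|t|}/2^{|F|+3}$ and $|x^*(\sum_{F_{(\Delta,i)}\sqsubset t\sqsubseteq v'_i}y_t)-b_i|<\eta\le\ee/2^{|F|+3}$, i.e. $x^*\in B^*(L')$ for the data $\theta$. Applying the conclusion of the sublemma for $\theta$ produces $y^*\in B^*(L')$ with $|y^*(y_{F_{(\Delta,i_0)}\cup\{L'(|F|+1)\}})|\le\delta_{|F_{(\Delta,i_0)}|+1}/2^{|F|+1}$, which is (iii). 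Finally, since $x^*,y^*\in B^*(L')$, the triangle inequality through $a_t$ gives $|(x^*-y^*)(y_t)|\le\delta_{|t|}/2^{|F|+2}$, which is (i), and through $b_i$ gives $|(x^*-y^*)(\sum_{F_{(\Delta,i)}\sqsubset t\sqsubseteq v'_i}y_t)|\le\ee/2^{|F|+2}$, which is (ii). As noted, the substantive step is not any of these estimates but the bookkeeping reducing the $L'$-dependent membership $x^*\in B^*(L')$ to finitely many parameter choices, which the bound $\|y_t\|\le 2K$ supplies.
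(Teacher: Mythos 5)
Your proposal is correct and follows essentially the same route as the paper: a finite net of parameter tuples in $[-2K,2K]$ (the paper uses separate $\frac{\delta_{|t|}}{2^{|F|+3}}$- and $\frac{\ee}{2^{|F|+3}}$-nets where you use one mesh $\eta$, which is immaterial), an enumeration of these tuples, and an iterated application of Sublemma \ref{null trees sublemma} along a decreasing chain end-extending $F$, with the final set working for all tuples simultaneously. Your verification — comparability of $v'_i$ with $F_{(\Delta,i)}$, the telescoping bound by $2K$ placing $x^*$ in $B^*(L')$ for a suitable tuple, and the triangle inequalities yielding (i)--(iii) — is exactly the step the paper leaves as ``easy to check.''
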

   \begin{proof}
     For every $1\leq i \leq k$ and $t\sqsubseteq F_{(\Delta,i)}$
     let $A_t$ be a $\frac{\delta_{|t|}}{2^{|F|+3}}$-net of $[-2K,2K]$. For every $1\leq i\leq k$ with $i\neq
       i_0$ let $B_i$ be a $\frac{\ee}{2^{|F|+3}}$-net of $[-2K,2K]$. Let
       \[(((a_t^q)_{t\sqsubseteq
     F_{(\Delta,i)}})_{i=1}^k,(b_i^q)_{i=1,\;i\neq
     i_0}^k)_{q=1}^m\] be an enumeration of the set $(\prod_{i=1}^k(\prod_{t\sqsubseteq
     F_{(\Delta,i)}}A_t))\times(\prod_{i=1,\;i\neq
     i_0}^k B_i)$. We set $L_0=M$ and inductively for $q=1,\ldots,m$, using Sublemma \ref{null trees sublemma} for
     $((a_t^q)_{t\sqsubseteq
     F_{(\Delta,i)}})_{i=1}^k$ and $(b_i^q)_{i=1,\;i\neq
     i_0}^k$, we construct a
     decreasing sequence of infinite subsets $(L_q)_{q=1}^m$ of $L_0$ with
     $F\sqsubset
     L_q$, for all $1\leq q\leq m$. It is easy to check that $L_m$
     is the desired set.
   \end{proof}
   \begin{proof}
     [Proof of Lemma \ref{null trees Lemma 2}] We may suppose that
     $(\delta_n)_{n=1}^\infty$ is a decreasing sequence and
     $\sum_{n=1}^\infty\delta_n<\ee$. Let also $\delta_1<\delta_0<\ee/2$ and $M=M_1\in[N]^\infty$.
      Let $F_1$ be the $\sqsubseteq$-minimal
     initial segment of $M_1$ which is $\mathfrak{G}$-free.
     Applying Corollary \ref{null tree corollary} we obtain
     $M_2\in[M_1]^\infty$ with $F_1\sqsubset M_2$ satisfying the
     conclusion of Corollary \ref{null tree corollary} for
     $F=F_1$ and $L=M_2$. In the same way  we construct by induction a sequence
     $(F_n)_n\in\nn$ in $[M_1]^{<\infty}$
     and a sequence $(M_n)_{n\in\nn}$ in $[M_1]^\infty$ satisfying
     the following for every $n\in\nn$.
     \begin{enumerate}
       \item[(i)] For every $n\in\nn$, $F_n\sqsubset M_n$.
       \item[(ii)] For every $n\in\nn$, $F_n\sqsubset F_{n+1}$ and
       $M_{n+1}\in[M_n]^\infty$.
       \item[(iii)] For every $n\in\nn$, the conclusion of
       Corollary \ref{null tree corollary} is satisfied for
       $F=F_n$ and $L=M_{n+1}$.
     \end{enumerate}

     We set $L=\cup_{n=1}^\infty F_n$. We claim that  $L$ is
     $(\ee,(\delta_n)_{n\in\nn})$-good. Indeed, let $x^*\in
     B_{X^*}$ and set $y_0^*=x^*$.
     For every $1\leq i\leq k$ let $v_i$ be the unique element of
     $\g_i$ such that $v_i\sqsubset L_{(\Delta,i)}$. For every
     $n\in\nn$ we set
     \[t_i^n=v_i\cap F_n=v_i\cap (F_n)_{(\Delta,i)}\]
     and for every $1\leq i\leq k$ and
     $v_i\sqsubset t\sqsubset L_{(\Delta,i)}$ we set
     \[n_t=\min\big\{n\in\nn:t\subseteq L|(|F_n|+1)\big\}\]
     Notice that \[t=(F_{n_t})_{(\Delta,i)}\cup\{L(|F_{n_t}|+1)\}\]
     for all $1\leq i\leq k$ and
     $v_i\sqsubset t\sqsubset L_{(\Delta,i)}$. Finally let, for every $n\in\nn$,
     $i_n=i_{\Delta}(|F_n|+1)$.

     By the construction of $(F_n)_{n\in\nn}$ we may inductively
     choose a sequence $(y^*_n)_{n\in\nn}$ in $B_{X^*}$ such that
     for every $n\in\nn$ the following are satisfied:
     \begin{enumerate}
       \item[(i)] For all
       $1\leq i \leq k$ and $t\sqsubseteq (F_n)_{(\Delta,i)}$, $|y_{n-1}^*(y_t)-y_n^*(y_t)|\leq\frac{\delta_{|t|}}{2^{|F_n|+2}}$.
       \item[(ii)] For all $1\leq i\leq k$ with $i\neq
       i_n$,
       \[\Big|(y_{n-1}^*-
       y_n^*)(x_{v_i}-x_{t_i^n})\Big|=
       \Big|(y_{n-1}^*-
       y_n^*)\Big(\sum_{(F_n)_{(\Delta,i)}\sqsubset t \sqsubseteq
       v^n_i}y_t\Big)\Big|\leq\frac{\ee}{2^{|F_n|+2}}\]
     \item[(iii)]
     $|y_n^*(y_{(F_n)_{(\Delta,i_n)}\cup\{
     L(|F_n|+1)\}})|\leq
     \frac{\delta_{|(F_n)_{(\Delta,i_n)}|+1}}{2^{|F_n|+1}}$.
     \end{enumerate}
     By (i) and (ii) we have that for every $n\in\nn$ and $1\leq i\leq k$,
     \[\begin{split}
     |(y_{n-1}^*-y_n^*)(x_{v_i})|
     &\leq |(y_{n-1}^*-y_n^*)(x_{v_i}-x_{t^n_i})|+\sum_{t\sqsubseteq
     t_i^n}|(y_{n-1}^*-y_n^*)(y_t)|\\
     &\leq \frac{\ee}{2^{|F_n|+2}}+\sum_{t\sqsubseteq t_i^n}\frac{\delta_{|t|}}{2^{|F_n|+2}}
     <\frac{\ee}{2^{|F_n|+1}}\end{split}\]
     This yields that $|(x^*-y_n^*)(x_{v_i})|<\ee$, for all
     $n\in\nn$. By (iii) and the definition of the natural number $n_t$,
     we have that for every $1\leq i\leq k$
     and every $v_i\sqsubset t\sqsubset L_{(\Delta,i)}$,
     \[|y_{n_t}^*(y_t)|\leq
     \frac{\delta_{|(F_{n_t})_{(\Delta,i_0)}|+1}}{2^{|F_{n_t}|+1}}
     =\frac{\delta_{|t|}}{2^{|F_{n_t}|+1}}\]
     Hence by (i) and (iii) for every $1\leq i\leq k$,
     every $v_i\sqsubset t\sqsubset L_{(\Delta,i)}$ and $n>
     n_t$, we have that $t\sqsubseteq (F_m)_{(\Delta,i)}$, for all
     $n_t<m\leq n$, and
     \[\begin{split}|y^*_n(y_t)|&\leq
     |y^*_{n_t}(y_t)|+\sum_{m=n_t+1}^n|(y^*_{m-1}-y^*_m)(y_t)|\\
     &\leq\frac{\delta_{|t|}}{2^{|F_{n_t}|+1}}+\frac{\delta_{|t|}}{2^{|F_m|+2}}\leq\delta_{|t|}
     \end{split}\]
     Let $y^*$ be a $w^*$-limit of $(y_n)_{n\in\nn}$. Then $y^*\in
     B_{X^*}$ and for each $1\leq i\leq k$,
  \begin{enumerate}
  \item [(a)]   $\Big|(x^*-y^*)(x_{v_i})\Big|\leq
  \ee$.
  \item[(b)] For all $t\in[L]^{<\infty}$ with $v_i\sqsubset t\sqsubset
  L_{(\Delta,i)}$, for some
  $i\in\{1,\ldots,k\}$, we have that
  $\big|y^*(y_t)\big|\leq\delta_{|t|}$.
  \end{enumerate}
  Since for every $x^*\in B_{X^*}$ there exists $y^*\in B_{X^*}$
  satisfying (a) and (b), we have that $L$ is
     $(\ee,(\delta_n)_{n\in\nn})$-good.
   \end{proof}
\section{Dominated spreading models}
\begin{thm}\label{c_0 block 1 domination prop}
  Let $\ff,\g$ be regular thin families and $N\in[\nn]^\infty$ such
  that $\g\upharpoonright N\sqsubset \ff\upharpoonright N$. Let
  $(x_s)_{s\in\ff}$ be a bounded $\ff$-sequence in a Banach space
  $X$ such that the $\ff$-subsequence
  $(x_s)_{s\in\ff\upharpoonright N}$ is subordinated and let
  $\widehat{\varphi}:\widehat{F}\upharpoonright N\to (X,w)$ be the
  continuous map witnessing this. For every $v\in
  \g\upharpoonright N$, let $z_v=\widehat{\varphi}(v)$. Suppose that
  $(x_s)_{s\in\ff\upharpoonright N}$ and $(z_v)_{v\in\g\upharpoonright
  N}$ generate $(e^1_n)_{n\in\nn}$ and
  $(e^2_n)_{n\in\nn}$ as spreading models respectively. Then
  for every $k\in\nn$ and $a_1,\ldots,a_k\in\rr$ we have that
  \[\Big\|\sum_{j=1}^ka_je_j^2\Big\|\leq\Big\|\sum_{j=1}^ka_je_j^1\Big\|\]
\end{thm}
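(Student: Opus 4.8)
The plan is to exploit the relationship $\g\upharpoonright N\sqsubset \ff\upharpoonright N$ together with the subordination hypothesis. Fix $k\in\nn$ and $a_1,\ldots,a_k\in\rr$; by homogeneity we may assume $a_1,\ldots,a_k\in[-1,1]$. The goal is to show that for every $\ee>0$ we have
\[\Big\|\sum_{j=1}^ka_je_j^2\Big\|\leq\Big\|\sum_{j=1}^ka_je_j^1\Big\|+\ee.\]
The heart of the matter is to produce, for a suitably chosen plegma $k$-tuple $(v_j)_{j=1}^k$ in $\g\upharpoonright N$, a plegma $k$-tuple $(s_j)_{j=1}^k$ in $\ff\upharpoonright N$ such that each $v_j\sqsubset s_j$ (which is possible because $\g\upharpoonright N\sqsubset\ff\upharpoonright N$), and to compare $\|\sum a_j z_{v_j}\|$ with $\|\sum a_j x_{s_j}\|$. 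Here the key point is that $z_{v_j}=\widehat{\varphi}(v_j)$ while $x_{s_j}=\widehat{\varphi}(s_j)$, so continuity of $\widehat{\varphi}$ at the nodes $v_j$ will force $x_{s_j}$ to be weakly close to $z_{v_j}$ once $s_j$ is a long enough extension of $v_j$; but weak closeness alone does not control the norm. This is the main obstacle, and it is resolved by replacing the crude bound with a \emph{convex combination} argument in the spirit of Lemma \ref{Lemma finding convex means}.

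\textbf{Main steps.} First I would pass, using Remark \ref{remark on the definition of spreading model} and the fact that $\g\upharpoonright N$ is very large after a further refinement (Remark \ref{Galvin Pricley for regular thin}), to an infinite subset on which both spreading models are generated with small error $\ee$, and on which $\g\upharpoonright N\sqsubset\ff\upharpoonright N$ persists. Next, fix a plegma $k$-tuple $(v_j)_{j=1}^k$ in $\g\upharpoonright N$ with $\min v_1$ large. For each $j$, the family of extensions of $v_j$ lying in $\ff\upharpoonright N$ is governed by the regular thin family $\ff_{(v_j)}$ (in the relative sense), and the map sending an extension-tail $u$ to $\widehat{\varphi}(v_j{}^\smallfrown u)$ is weakly continuous with value $z_{v_j}=\widehat{\varphi}(v_j)$ at the root. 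Applying Lemma \ref{Lemma finding convex means} simultaneously to these $k$ continuous maps (with $\ff^1,\ldots,\ff^k$ the relevant tail families) yields, for each $j$, a finite set $G^j$ of extensions $s\in\ff\upharpoonright N$ of $v_j$ and convex weights $(\mu^j_s)_{s\in G^j}$ with $\|z_{v_j}-\sum_{s\in G^j}\mu^j_s x_s\|<\ee/k$, and moreover such that every choice $s_j\in G^j$ gives a plegma $k$-tuple $(s_1,\ldots,s_k)$ in $\ff\upharpoonright N$. (Here one must check that the plegma condition on the $v_j$'s and the disjointness of the extension ranges combine to give the plegma condition on the $s_j$'s; this is where the hypothesis $\g\upharpoonright N\sqsubset\ff\upharpoonright N$ and a suitable choice of tails enter.)

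\textbf{Finishing the estimate.} With these convex combinations in hand, I would write
\[\Big\|\sum_{j=1}^k a_j z_{v_j}\Big\|\leq\Big\|\sum_{j=1}^k a_j\sum_{s\in G^j}\mu^j_s x_s\Big\|+\sum_{j=1}^k|a_j|\,\Big\|z_{v_j}-\sum_{s\in G^j}\mu^j_s x_s\Big\|\leq\Big\|\sum_{j=1}^k a_j\sum_{s\in G^j}\mu^j_s x_s\Big\|+\ee.\]
The first term is a convex combination, over all choices $(s_1,\ldots,s_k)\in G^1\times\cdots\times G^k$, of quantities $\|\sum_j a_j x_{s_j}\|$, each of which is within $\ee$ of $\|\sum_j a_j e^1_j\|$ because $(s_1,\ldots,s_k)$ is a plegma $k$-tuple in $\ff\upharpoonright N$ after a large enough initial element (which we arrange by taking $\min v_1$ large). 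Hence the first term is at most $\|\sum_j a_j e^1_j\|+\ee$, giving $\|\sum_j a_j z_{v_j}\|\leq\|\sum_j a_j e^1_j\|+2\ee$. Finally, letting $\min v_1\to\infty$ along the generating sequence for $(e^2_n)_{n\in\nn}$, the left-hand side tends to $\|\sum_j a_j e^2_j\|$, so $\|\sum_j a_j e^2_j\|\leq\|\sum_j a_j e^1_j\|+2\ee$, and since $\ee>0$ was arbitrary the theorem follows. The step I expect to be most delicate is the bookkeeping in the application of Lemma \ref{Lemma finding convex means}: one must set up the tail families $\ff^j$ and the infinite set $L$ so that the convex combinations are supported on genuine plegma $k$-tuples of $\ff\upharpoonright N$ with controlled minimum, and so that the subordination map restricted to these tails really does have the form required by that lemma.
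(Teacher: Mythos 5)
There is a genuine gap at the central step, namely the claim that Lemma \ref{Lemma finding convex means}, applied to the tail families over the fixed plegma tuple $(v_j)_{j=1}^k$ in $\g\upharpoonright N$, produces extension sets $G^j$ such that every choice $s_j\in G^j$ with $v_j\sqsubset s_j$ yields a plegma $k$-tuple $(s_1,\ldots,s_k)$ in $\ff\upharpoonright N$. The lemma only makes the chosen \emph{tails} plegma, and for the full sets this fails whenever the roots have different cardinalities, which is the typical (indeed, for $o(\g)\geq\omega$, e.g.\ $\g$ the Schreier family, the unavoidable) situation by Lemma \ref{lemma increasing length of plegma}. Concretely, if $|v_1|<|v_2|$ then plegma of $(s_1,s_2)$ at the coordinate $m=|v_1|+1\leq |v_2|$ forces $s_1(m)<s_2(m)=v_2(m)$, while the other plegma condition forces $v_2(m-1)=s_2(m-1)<s_1(m)$; so the first extension element of $s_1$ is confined to the finite (possibly empty) window between two consecutive elements of $v_2$, and similarly for the other low coordinates and the other $v_i$'s. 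Hence the tails cannot be taken ``far out'' once the $v_j$'s are fixed, and the Mazur/weak-convergence mechanism on which Lemma \ref{Lemma finding convex means} rests has no room to operate: the admissible extensions form a finite (possibly empty) set whose convex hull need not approach $z_{v_j}$ in norm. Trying to repair this by choosing the later $v_i$'s adaptively does not help either, because the plegma interleaving couples the extension elements of $s_1$ and the elements of $v_2,\ldots,v_k$ in both directions, so neither can be pushed beyond the other. Your approach would work if all elements of $\g$ had the same cardinality (e.g.\ $\g=[\nn]^m$), but the theorem is needed for general regular thin $\g$.

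The paper avoids this obstruction by reversing the roles of $\ff$ and $\g$: it starts from plegma $k$-tuples $(s_j)$ in $\ff\upharpoonright M$ (whose $\g$-initial segments $v_j$ are automatically a plegma tuple in $\g$) and proves the lower estimate $\|\sum_j a_jx_{s_j}\|\geq\|\sum_j a_jz_{v_j}\|-\ee\sum_j|a_j|$ on the dual side, via Lemma \ref{c_0 block lemma from thm of weakly null trees}: a norming functional $x^*$ for $\sum_j a_jz_{v_j}$ is replaced, using the partial unconditionality theorem for weakly null trees (Theorem \ref{null trees Theorem}) applied to the increments $y_t=\widehat{\varphi}(t)-\widehat{\varphi}(t\setminus\{\max t\})$, by a functional $y^*$ which is $\ee$-close to $x^*$ on each $z_{v_j}$ and almost annihilates the increments along $s_j$ beyond $v_j$, so that $y^*(\sum_j a_jx_{s_j})$ nearly equals $x^*(\sum_j a_jz_{v_j})$. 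After a diagonalization this yields the inequality with errors $\ee_l$, and letting $\min s_1\to\infty$ gives the theorem. So the convex-combination route you propose cannot replace that functional-extraction argument; if you want an upper bound for $\|\sum_j a_jz_{v_j}\|$ by values over plegma tuples of $\ff$, you must obtain it as in the paper, by estimating norming functionals rather than by approximating the vectors $z_{v_j}$ inside the admissible extensions.
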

The proof of the above theorem relies on a series of lemmas which
are presented below.
\begin{lem}\label{c_0 block lemma from thm of weakly null trees}
  Let $k\in\nn$ and $\ee>0$. Then
  there exists $M\in[N]^\infty$ such that for every plegma
  $k$-tuple $(s_j)_{j=1}^k$ in $\ff\upharpoonright M$ and
  $a_1,\ldots,a_k\in\rr$, we have
  that
  \[\Big\|\sum_{j=1}^ka_jx_{s_j}\Big\|\geq\Big\|\sum_{j=1}^ka_jz_{v_j}\Big\|-\ee\sum_{j=1}^k|a_j|\]
  where for each $1\leq j\leq k$, $v_j$ is
  the unique element in $\g$ such that $v_j\sqsubset s_j$.
\end{lem}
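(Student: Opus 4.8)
The plan is to derive this from Theorem~\ref{null trees Theorem}, the partial unconditionality result for weakly null trees, by the classical device of replacing a norming functional by a ``truncated'' one. First I would manufacture the weakly null tree from the subordinating map. Put $y_\emptyset=\widehat{\varphi}(\emptyset)$, $y_t=\widehat{\varphi}(t)-\widehat{\varphi}(t|(|t|-1))$ for $\emptyset\neq t\in\widehat{\ff}\upharpoonright N$, and $y_t=0$ for $t\in[\nn]^{<\infty}\setminus\widehat{\ff}\upharpoonright N$. By telescoping, $\sum_{t\sqsubseteq s}y_t=\widehat{\varphi}(s)$ for every $s\in\widehat{\ff}\upharpoonright N$; in particular $\sum_{t\sqsubseteq s}y_t=x_s$ for $s\in\ff\upharpoonright N$ and $\sum_{t\sqsubseteq v}y_t=z_v$ for $v\in\g\upharpoonright N$. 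Since the weak closure of $\{x_s:s\in\ff\}$ is bounded, $(y_t)_{t\in[\nn]^{<\infty}}$ is a bounded tree, and it is weakly null because for $t\in\widehat{\ff}\upharpoonright N\setminus\ff\upharpoonright N$ continuity of $\widehat{\varphi}$ gives $y_{t\cup\{n\}}=\widehat{\varphi}(t\cup\{n\})-\widehat{\varphi}(t)\to 0$ weakly, while $y_{t\cup\{n\}}=0$ in the remaining cases.

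Next, after discarding an initial piece of $N$ I may assume $\ff$ and $\g$ are very large in $N$ (so that for $s\in\ff\upharpoonright N$ its initial segment $I_\g(s)$ in $\g$ satisfies $I_\g(s)\sqsubset s$, using $\g\upharpoonright N\sqsubset\ff\upharpoonright N$). I fix a decreasing sequence $(\delta_n)$ of positive reals with $\sum_n\delta_n<\ee/2$, a partition $\Delta=(N^{(1)},\dots,N^{(k)})$ of $\nn$ into $k$ infinite sets, and $\mathfrak{G}=(\g,\dots,\g)$, and apply Theorem~\ref{null trees Theorem} to obtain $M\in[N]^\infty$ such that every $L\in[M]^\infty$ is $(\ee/2,(\delta_n))$-good. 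Given a plegma $k$-tuple $(s_j)_{j=1}^k$ in $\ff\upharpoonright M$, one produces $L\in[M]^\infty$ through which it is seen, i.e.\ with $s_j\sqsubseteq L_{(\Delta,j)}$ for all $j$ (equivalently $s_j=I_\ff(L_{(\Delta,j)})$, since $\ff$ is thin and very large in $L_{(\Delta,j)}\subseteq M$). Then $v_j=I_\g(L_{(\Delta,j)})$ is exactly the $v_j$ of the statement, $z_{v_j}=\sum_{t\sqsubseteq v_j}y_t$, and $x_{s_j}-z_{v_j}=\sum_{v_j\sqsubset t\sqsubseteq s_j}y_t$ is a sum over tree nodes that are initial segments of $L_{(\Delta,j)}$ properly extending $v_j$.

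Now the estimate. Given $a_1,\dots,a_k\in\rr$, choose $x^*\in B_{X^*}$ norming $\sum_j a_j z_{v_j}$ and let $y^*\in B_{X^*}$ be the functional supplied by $(\ee/2,(\delta_n))$-goodness of $L$, so that $|(x^*-y^*)(z_{v_j})|\le\ee/2$ for every $j$ and $|y^*(y_t)|\le\delta_{|t|}$ for every $t$ with $v_j\sqsubset t\sqsubseteq s_j$. Since $\|y^*\|\le1$,
\[
\Big\|\sum_{j=1}^k a_j x_{s_j}\Big\|\ \ge\ y^*\Big(\sum_{j=1}^k a_j x_{s_j}\Big)\ =\ \sum_{j=1}^k a_j y^*(z_{v_j})\ +\ \sum_{j=1}^k a_j\!\!\sum_{v_j\sqsubset t\sqsubseteq s_j}\!\!y^*(y_t).
\]
The first sum equals $\big\|\sum_j a_j z_{v_j}\big\|-\sum_j a_j(x^*-y^*)(z_{v_j})\ge\big\|\sum_j a_j z_{v_j}\big\|-\tfrac{\ee}{2}\sum_j|a_j|$, and the second has absolute value at most $\sum_j|a_j|\sum_{n>|v_j|}\delta_n<\tfrac{\ee}{2}\sum_j|a_j|$, because the lengths $|t|$ occurring in the inner sum are pairwise distinct. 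Combining the two bounds yields $\big\|\sum_j a_j x_{s_j}\big\|\ge\big\|\sum_j a_j z_{v_j}\big\|-\ee\sum_j|a_j|$.

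The step I expect to be the main obstacle is producing, for an \emph{arbitrary} plegma $k$-tuple of $\ff\upharpoonright M$, the set $L\in[M]^\infty$ through which it factors: when the $s_j$ have equal length this is merely interleaving them along the residue classes of $\Delta$, but in general the shorter members are exhausted first and the increasing enumeration of $\bigcup_j s_j$ may leave ``free'' positions that cannot always be filled inside $M$. This is precisely why one works with the skipped restriction $\ff\upharpoonright\upharpoonright M$, where there is enough room between consecutive entries of each $s_j$, and why one then transfers the conclusion back to plegma tuples of $\ff\upharpoonright M$ (every member of such a tuple lies in $\ff\upharpoonright\upharpoonright M$ once $\ff$ is very large in $M$). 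Getting the bookkeeping for $\Delta$, $\mathfrak{G}$ and $L$ exactly right is the delicate point; the tree construction and the functional-swap estimate above are then routine.
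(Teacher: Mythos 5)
Your tree $(y_t)$, the choice $\mathfrak{G}=(\g,\dots,\g)$ with a residue-class type partition, and the functional-swap estimate with $\sum_n\delta_n<\ee/2$ are exactly the paper's argument, and that part is sound. The genuine gap is the factoring step you yourself flag as the ``main obstacle'': you apply Theorem \ref{null trees Theorem} to get $M$ directly and then assert that for an \emph{arbitrary} plegma $k$-tuple $(s_j)_{j=1}^k$ in $\ff\upharpoonright M$ one can produce $L\in[M]^\infty$ with $s_j\sqsubseteq L_{(\Delta,j)}$ for all $j$. This is false in general. By Lemma \ref{lemma increasing length of plegma} the lengths satisfy $|s_1|\leq\dots\leq|s_k|$ but need not be equal, and once a shorter $s_j$ is exhausted the positions of $L$ lying in its residue class must be filled by elements strictly interlaced between consecutive members of the longer $s_i$'s; those members can be consecutive elements of $M$, so no admissible filler exists inside $M$ (for $k=2$, a plegma pair consisting of a singleton and a doubleton occupying three consecutive elements of $M$ already forces $L(3)$ to lie strictly between two consecutive elements of $M$). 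Your proposed remedy via the skipped restriction $\ff\upharpoonright\upharpoonright M$ is not the device that closes this and is left unexecuted: skipping guarantees room between entries of a single $s$, not fillers sitting in the prescribed residue classes of $L$.

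The paper closes the gap with a two-level thinning. Theorem \ref{null trees Theorem} is applied to get $M'\in[N]^\infty$ such that \emph{every} $L\in[M']^\infty$ is $(\ee/2,(\delta_n))$-good, and then one sets $M=\{M'(k\cdot n):n\in\nn\}$. Between any two consecutive elements of $M$ there lie $k-1$ elements of $M'$, which supply the missing fillers; hence for every plegma $k$-tuple in $\ff\upharpoonright M$ one can build $L\in[M']^\infty$ (not $[M]^\infty$) with $s_j\sqsubset L_{(\Delta^k,j)}$, and it is the goodness of this $L$ that feeds your estimate. With that modification — let the good sets be all infinite subsets of the larger $M'$ and take the lemma's $M$ to be the thinned copy of $M'$ — the rest of your write-up goes through verbatim and coincides with the paper's proof.
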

\begin{proof}
  We define a bounded weakly null tree $(y_t)_{t\in[\nn]^{<\infty}}$ as follows. For
  each $t\not\in\widehat{\ff}\upharpoonright N$ we set $y_t=0$. We
  also
  set $y_\emptyset=\widehat{\varphi}(\emptyset)$ and for each nonempty
  $t\in\widehat{\ff}\upharpoonright N$ we set
  $y_t=\widehat{\varphi}(t)-\widehat{\varphi}(t^*)$, where
  $t^*=t\setminus\{\max t\}$. It is immediate that
  $(y_t)_{t\in[\nn]^{<\infty}}$ is a bounded weakly null tree. Let
  $\ee>0$ and
  $(\delta_{n})_{n\in\nn}$ be a decreasing null sequence of
  positive reals such that
  $\sum_{n=1}^\infty\delta_n<\frac{\ee}{2}$. Let $k\in\nn$ and
  $\Delta^k=(N_k^{(1)},\ldots,N_k^{(k)})$ be the partition of $\nn$ into
  disjoint infinite sets such that
  $N_k^{(j)}=\{n\in\nn:n=j(\text{mod}k)\}$, for all $1\leq j<k$, and
  $N_k^{(k)}=\{n\in\nn:n=0(\text{mod}k)\}$. Applying Theorem \ref{null trees
  Theorem} for
  $k,\Delta^k,N,(\g,\ldots,\g),(y_t)_{t\in[\nn]^{<\infty}},\frac{\ee}{2}$
  and $(\delta_{n})_{n\in\nn}$, we obtain $M'\in[N]^\infty$
  such that every $L\in[M']^\infty$ is
  $(\frac{\ee}{2},(\delta_{n})_{n\in\nn})$-good.

  We set $M=\{M'(k\cdot n):n\in\nn\}$. It is easy to check that
  for every plegma $k$-tuple $(s_j)_{j=1}^k$ in $\ff\upharpoonright
  M$ there exists $L\in[M']^\infty$ such that $s_j\sqsubset
  L_{(\Delta^k,j)}$, for all $1\leq j\leq k$. The set $M$ is the
  desired one.

  Indeed, let $a_1,\ldots,a_k\in\rr$ and $(s_j)_{j=1}^k$ be a plegma
  $k$-tuple in $\ff\upharpoonright M$. Let $L\in[M']^\infty$ such
  that $s_j\sqsubset L_{(\Delta^k,j)}$, for all $1\leq j\leq k$.
  Let also for each $1\leq j\leq k$, $v_j$
  the unique element in $\g$ such that $v_j\sqsubset s_j$. Then
  for every $x^*\in B_{X^*}$ there exists $y^*\in B_{X^*}$ such
  that for every $1\leq j\leq k$
  \begin{enumerate}
    \item[(i)] $|x^*(z_{v_j})-y^*(z_{v_j})|\leq\frac{\ee}{2}$ and
    \item[(ii)] $|y^*(y_t)|\leq\delta_{|t|}$, for all $t\sqsupset
    v_j$.
  \end{enumerate}
  By (ii) and the choice of the family
  $(y_t)_{t\in[\nn]^{<\infty}}$ it is easy to see that
  \[|y^*(x_{s_j}-z_{v_j})|\leq\frac{\ee}{2}\] for all $1\leq j\leq
  k$. Therefore we have that
  \begin{equation}
    \label{eq15}
    \begin{split}
      \Big|y^*\Big(\sum_{j=1}^ka_jx_{s_j}\Big)\Big|&
      \geq\Big|y^*\Big(\sum_{j=1}^ka_jz_{v_j}\Big)\Big|-\frac{\ee}{2}\sum_{j=1}^k|a_j|\\
      &\geq\Big|x^*\Big(\sum_{j=1}^ka_jz_{v_j}\Big)\Big|-\ee\sum_{j=1}^k|a_j|
    \end{split}
  \end{equation}
  Since for every $x^*\in B_{X^*}$ there exists $y^*\in B_{X^*}$
  which satisfies (\ref{eq15}), we have that
  \[\Big\|\sum_{j=1}^ka_jx_{s_j}\Big\|\geq\Big\|\sum_{j=1}^ka_jz_{v_j}\Big\|-\ee\sum_{j=1}^k|a_j|\]
\end{proof}
\begin{lem}
  For every $(\ee_n)_{n\in\nn}$ decreasing null
  sequence of positive reals, there exists  $M\in[N]^\infty$ such
  that for every $k\leq l$ in $\nn$, every plegma
  $k$-tuple $(s_j)_{j=1}^k$ in $\ff\upharpoonright M$ with $s_1(1)= M(l)$ and
  $a_1,\ldots,a_k\in\rr$ we have
  that
  \[\Big\|\sum_{j=1}^ka_jx_{s_j}\Big\|\geq\Big\|\sum_{j=1}^ka_jz_{v_j}\Big\|-\ee_l\sum_{j=1}^k|a_j|\]
  where for each $1\leq j\leq k$,
  $v_j$ is
  the unique element in $\g$ such that $v_j\sqsubset s_j$.
\end{lem}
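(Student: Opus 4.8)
The plan is to upgrade the previous lemma, which produces a single infinite set $M$ working uniformly for all plegma $k$-tuples, to a version where the error term is allowed to decay along the enumeration of $M$. The natural strategy is the standard diagonalization used repeatedly in this paper (compare the passage from the first lemma of Chapter 2 to Proposition \ref{B-Sp}, and Remark \ref{old defn yields new}).

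First I would fix a decreasing null sequence $(\ee_n)_{n\in\nn}$ of positive reals. For each $l\in\nn$, apply the previous lemma (Lemma \ref{c_0 block lemma from thm of weakly null trees}, in its form where one gets the estimate for all plegma $k$-tuples with $k\leq l$) with the single parameter $\ee_l$ in place of $\ee$, and with $k$ replaced by $l$ so that the conclusion covers all plegma $k$-tuples for $k\leq l$ simultaneously. This yields a decreasing sequence $N=N_0\supseteq N_1\supseteq N_2\supseteq\cdots$ of infinite subsets of $\nn$ such that for every $l\in\nn$ and every plegma $k$-tuple $(s_j)_{j=1}^k$ in $\ff\upharpoonright N_l$ with $k\leq l$ one has
\[\Big\|\sum_{j=1}^ka_jx_{s_j}\Big\|\geq\Big\|\sum_{j=1}^ka_jz_{v_j}\Big\|-\ee_l\sum_{j=1}^k|a_j|,\]
where $v_j$ is the unique element of $\g$ with $v_j\sqsubset s_j$. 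Then I would take a diagonal set: choose $M$ with $M(l)\in N_l$ for every $l$ (and $M(1)<M(2)<\cdots$), so $M\setminus\{M(1),\dots,M(l-1)\}\subseteq N_l$. One more point: since $\g\upharpoonright N\sqsubset\ff\upharpoonright N$, passing to a further infinite subset we may assume $\g$ is very large in $M$, so that for every $s\in\ff\upharpoonright M$ the initial segment $v_s$ of $s$ in $\g$ is well-defined, and the map $s\mapsto v_s$ is the restriction of the weakly continuous $\widehat{\varphi}$ through the initial-segment relation; this is what keeps the $z_{v_j}$ meaningful after restriction.

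The verification is then the routine tail argument. Suppose $k\leq l$, $(s_j)_{j=1}^k$ is a plegma $k$-tuple in $\ff\upharpoonright M$ with $s_1(1)=M(l)$, and $a_1,\dots,a_k\in\rr$. By the plegma condition all the $s_j$ have their minima at least $M(l)$, so the $s_j$ are subsets of $M\setminus\{M(1),\dots,M(l-1)\}\subseteq N_l$; hence $(s_j)_{j=1}^k$ is a plegma $k$-tuple in $\ff\upharpoonright N_l$ with $k\leq l$, and the displayed estimate for $N_l$ applies with error $\ee_l$. This is exactly the asserted inequality for $M$.

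I expect the only real subtlety — and hence the one step to state carefully rather than wave at — is the bookkeeping ensuring that restricting to $M$ does not destroy the subordination structure: one must check that the $\g$-subsequence $(z_v)_{v\in\g\upharpoonright M}$ obtained from $\widehat{\varphi}$ is literally the $\g$-subsequence one would use in the previous lemma applied inside $N_l$, i.e. that the unique $\g$-initial-segment map commutes with passing to infinite subsets (this follows from Fact \ref{fact every regular is large}(i) and very-largeness of $\g$ in $M$). The estimate itself, and the diagonalization, are entirely standard, so no heavy computation is required; I would present the diagonalization in two or three lines and devote the remaining space to pinning down this compatibility.
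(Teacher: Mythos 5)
Your overall architecture is exactly the paper's: iterate the preceding lemma to produce a decreasing sequence of infinite subsets of $N$, one for each $l$ with error $\ee_l$, then diagonalize by choosing $M$ with $M(l)\in N_l$, and verify via the tail argument that any plegma $k$-tuple in $\ff\upharpoonright M$ with $s_1(1)=M(l)$ actually lies in $\ff\upharpoonright N_l$ (since plegma tuples have increasing minima and $\{M(m):m\geq l\}\subseteq N_l$). That verification is correct, and your worry about the initial-segment map is harmless but also unnecessary: $v_s$ is determined by $s$ alone, by thinness of $\g$ together with $\g\upharpoonright N\sqsubset\ff\upharpoonright N$, so it is unaffected by passing to infinite subsets and no very-largeness assumption is needed.

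One step, however, would fail as you state it: the claim that applying Lemma \ref{c_0 block lemma from thm of weakly null trees} with the single length $l$ ``covers all plegma $k$-tuples for $k\leq l$ simultaneously.'' That lemma's conclusion concerns tuples of exactly the length it was applied with, and you cannot in general reduce a $k$-tuple with $k<l$ to that case by padding with zero coefficients, because this requires extending the given plegma $k$-tuple to a plegma $l$-tuple inside the same set $\ff\upharpoonright N_l$; plegma tuples interlace, so such an extension can be impossible. For instance, for $\ff=[\nn]^2$ and $s_1=\{N_l(i),N_l(i+1)\}$ built from two consecutive elements of $N_l$, there is no $s_2\in\ff\upharpoonright N_l$ with $(s_1,s_2)$ plegma, since that would need a point of $N_l$ strictly between $N_l(i)$ and $N_l(i+1)$. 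The repair is immediate and is precisely what the paper's ``inductively construct'' amounts to: at stage $l$, apply the preceding lemma finitely many times, once for each $k=1,\dots,l$ with error $\ee_l$, nesting the resulting infinite subsets, and take $N_l$ to be the last of them; then diagonalize and verify exactly as you do. With this modification your proof coincides with the paper's.
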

\begin{proof}
  Using Lemma \ref{c_0 block lemma from thm of weakly null trees}
  we inductively construct a decreasing sequence $(M_l)_{l\in\nn}$
  in $[N]^\infty$ such that for every $l\in\nn$ we have that for every $1\leq k\leq l$, every plegma
  $k$-tuple $(s_j)_{j=1}^k$ in $\ff\upharpoonright M_l$ and
  $a_1,\ldots,a_k\in\rr$
  \[\Big\|\sum_{j=1}^ka_jx_{s_j}\Big\|\geq\Big\|\sum_{j=1}^ka_jz_{v_j}\Big\|-\ee_l\sum_{j=1}^k|a_j|\]
  where for each $1\leq j\leq k$,
  $v_j$ is
  the unique element in $\g$ such that $v_j\sqsubset s_j$. Let
  $M\in[N]^\infty$ such that $M(l)\in M_l$, for all $l\in\nn$. It
  is easy to check that $M$ is the desired set.
\end{proof} By the above lemma the proof of Theorem \ref{c_0 block 1
domination prop} is immediate.

\section{Plegma block generated $c_0$ spreading models}
In this section we show that if $X$ is a Banach space with a
Schauder basis and
   $\mathcal{SM}^{wrc}(X)$
  contains a sequence equivalent to the usual basis of $c_0$, then
  $X$ admits $c_0$ as a plegma block generated
  spreading model. We start with the following lemma.
\begin{lem}\label{c_0 block thm}
  Let $X$ be a Banach space with a Schauder basis such that
  $\mathcal{SM}^{wrc}(X)$ contains a sequence equivalent to the
  usual basis of $c_0$ and let $\xi_0$ be the minimum countable
  ordinal $\xi$ such that $\mathcal{SM}^{wrc}_\xi(X)$ contains such a sequence.

  Let $\ff$ be a regular thin family of order $\xi_0$, $M\in[\nn]^\infty$
  and $(x_s)_{s\in\ff}$ be a weakly relatively compact $\ff$-sequence
  in $X$
  such that $(x_s)_{s\in\ff\upharpoonright M}$ generates $c_0$ as
  an $\ff$-spreading model.
  Then for every sequence $(\delta_n)_{n\in\nn}$ of positive real
  numbers there exist $N\in[M]^\infty$ and an $\ff$-subsequence $(z_s)_{s\in\ff\upharpoonright
  N}$ such  that the following are satisfied:
  \begin{enumerate}
    \item[(i)]  For every $s\in\ff\upharpoonright N$ if $\min
  s=N(n)$, then $\|z_s-x_s\|<\delta_n$.
    \item[(ii)] For every plegma pair $(s_1,s_2)$ in
    $\ff\upharpoonright N$ we have that $z_{s_1}<z_{s_2}$, that is
    $(z_s)_{s\in\ff\upharpoonright N}$ is a plegma block
    subsequence.
  \end{enumerate}
\end{lem}
\begin{proof}
  We may suppose that $\ff$ is very large
  in $M$. By Proposition \ref{Create subordinated} we may suppose that
  $(x_s)_{s\in\ff\upharpoonright M}$ is subordinated and let $\widehat{\varphi}:\widehat{\ff}\upharpoonright L\to (X,w)$ be the continuous map witnessing
    this.
    Since $(x_s)_{s\in\ff\upharpoonright L}$ generates an
  $\ff$-spreading model which is Schauder basic and not equivalent
  to the usual basis of $\ell^1$, by Lemma \ref{lemma either l1 or not Schauder
  basic}, we have that
  $\widehat{\varphi}(\emptyset)=0$.
    By Theorem
  \ref{canonical tree}
  there exist $L\in[M]^\infty$ and an
  $\ff$-subsequence $(\widetilde{x}_s)_{s\in\ff\upharpoonright L}$
  in $X$ satisfying the following.
  \begin{enumerate}
    \item[(i)] For every $s\in\ff\upharpoonright L$, $\|x_s-\widetilde{x}_s\|<\delta_n/2$, where $\min
  s=L(n)$.
\item[(ii)] The $\ff$-subsequence
$(\widetilde{x}_s)_{s\in\ff\upharpoonright
    L}$ is subordinated with respect to the weak topology of $X$. Moreover, if
    $\widetilde{\varphi}:\widehat{\ff}\upharpoonright L\to (X,w)$ is the continuous map witnessing
    this, then
    $\widetilde{\varphi}(\emptyset)=\widehat{\varphi}(\emptyset)=0$.
    \item[(iii)] The $\ff$-subsequence $(\widetilde{x}_s)_{s\in\ff\upharpoonright
    L}$ admits a canonical tree decomposition\\
    $(\widetilde{y}_t)_{t\in\ff\upharpoonright L}$.
\end{enumerate}

  \textbf{Claim:} For every $\delta>0$ and $L'\in[L]^\infty$ there
  exists $L''\in[L']^\infty$ such that for every plegma pair
  $(s_1,s_2)$ in $\ff\upharpoonright L''$ we have that
  \[\Big\|\sum_{t\sqsubseteq
  s_2/_{s_1}}\widetilde{y}_t\Big\|<\delta\]
  where $s_2/_{s_1}=s_2\cap\{1,\ldots,\max s_1\}$.
  \begin{proof}[Proof of the Claim]
    Let $\delta>0$ and $L'\in[L]^\infty$. Then by Theorem
    \ref{ramseyforplegma} there exists $L''\in[L']^\infty$
    such that either \[\Big\|\sum_{t\sqsubseteq
  s_2/_{s_1}}\widetilde{y}_t\Big\|<\delta\] for every plegma pair
  $(s_1,s_2)$ in $\ff\upharpoonright L''$, or
  \[\Big\|\sum_{t\sqsubseteq
  s_2/_{s_1}}\widetilde{y}_t\Big\|\geq\delta\] for every plegma pair
  $(s_1,s_2)$ in $\ff\upharpoonright L''$.
  We will show that the second alternative is impossible.
Otherwise let
  $\g=\ff/_{L''}$ (see Definition \ref{notation hunging ff by min
  L}) and notice that $\big(\widetilde{x}^{(1,\g)}_s\big)_{s\in\ff\upharpoonright
  L''(2\nn)}$ is seminormalized. Moreover $\big(\widetilde{x}^{(1,\g)}_s\big)_{s\in\ff\upharpoonright
  L''(2\nn)}$ is subordinated with
  $\widetilde{\varphi}^{(1,\g)}(\emptyset)=\widetilde{\varphi}(\emptyset)=0$.
  Let $(e_n)_{n\in\nn}$ be an $\ff$-spreading model of $\big(\widetilde{x}^{(1,\g)}_s\big)_{s\in\ff\upharpoonright
  L''(2\nn)}$.
  Therefore by Theorem \ref{unconditional spreading model} we have that $(e_n)_{n\in\nn}$
  is nontrivial and unconditional. The latter, by Proposition \ref{c_0 block 1 domination
  prop}, yields that $(e_n)_{n\in\nn}$ is equivalent to the usual
  basis of $c_0$. By Lemma \ref{up g splitting spr mod} we
  contradict the assumption of the minimality of
  $\xi_0$.
  \end{proof}
  Using the above claim
  we inductively construct a decreasing sequence
  $(L_n)_{n\in\nn}$ in $[L]^\infty$ such that for every
  $n\in\nn$ and every plegma pair $(s_1,s_2)$  in $\ff\upharpoonright L_n$ we have that
  \[\Big\|\sum_{t\sqsubseteq
  s_2/_{s_1}}\widetilde{y}_t\Big\|<\delta_n/2\]
  Let $L_\infty\in[L]^\infty$ such that $L_\infty(n)\in L_n$,
  for all $n\in\nn$. We set $N=L_\infty(2\nn)$. Let $(z^N_s)_{s\in\ff\upharpoonright
  N}$ defined as in Lemma \ref{block plegma} (with $N$ in place of
  $L$). Then $(z^N_s)_{s\in\ff\upharpoonright
  N}$ is plegma block. Moreover for every $s\in \ff\upharpoonright N$ with $\min s=N(n)-L_\infty(2\nn)$
  we easily conclude that
$\|z_s^N-\widetilde{x}_s\|<\frac{\delta_n}{2}$ and therefore
$\|z_s^N-x_s\|<\delta_n$.
\end{proof}
\begin{rem}
  It is easy to see that $(z_s)_{s\in\ff\upharpoonright N}$
  generates the same $\ff$-spreading model as the
  $\ff$-subsequence $(x_s)_{s\in\ff\upharpoonright M}$.
\end{rem}
\begin{thm}\label{c_0 block spreading model cor}
  Let $X$ be a Banach space with a Schauder basis. For every
  $\xi<\omega_1$ we have that if $\mathcal{SM}^{wrc}_\xi(X)$
  contains a sequence equivalent to the usual basis of $c_0$, then
  $X$ admits $c_0$ as a plegma block generated $\xi$-spreading model.
\end{thm}

\section{Duality of $c_0$ and $\ell^1$ spreading models}
The main aim of this section Theorem \ref{duality c_0 l^1 thm}
which generalize the classical fact that if a Banach space admits
$c_0$ as a spreading model then its dual admits $\ell^1$ as a
spreading model.
\begin{defn}
  Let $X$ be a Banach space, $c>0$, $\ff$ a regular thin family, $M\in[\nn]^\infty$, $(x_s)_{s\in\ff}$ a normalized $\ff$-sequence in $X$ and $(x_s^*)_{s\in\ff}$ a bounded $\ff$-sequence in $X^*$. We will say that $(x_s)_{s\in\ff\upharpoonright M}$ and $(x_s^*)_{s\in\ff\upharpoonright M}$ are $\ell^1$-associated over $c$ if for every $k\in\nn$, $a_1,\ldots,a_k\in\rr$ and every plegma $k$-tuple $(s_j)_{j=1}^k$ in $\ff\upharpoonright M$ with $s_1(1)\geq M(k)$ we have that \[\sum_{j=1}^ka_jx_{s_j}^*\Big(\sum_{j=1}^k\text{sign}(a_j)x_{s_j}\Big)\geq c\sum_{j=1}^k|a_j|\]
  We will say that $(x_s)_{s\in\ff\upharpoonright M}$ and $(x_s^*)_{s\in\ff\upharpoonright M}$ are $\ell^1$-associated if there exists $c>0$ such that $(x_s)_{s\in\ff\upharpoonright M}$ and $(x_s^*)_{s\in\ff\upharpoonright M}$ are $\ell^1$-associated over $c$.
\end{defn}
\begin{rem}\label{duality c_0 l^1 rem associated l^1 giving l^1 spr mod}
  Suppose that $(x_s)_{s\in\ff\upharpoonright M}$ and $(x_s^*)_{s\in\ff\upharpoonright M}$ are $\ell^1$-associated. Then if $(x_s)_{s\in\ff\upharpoonright M}$ generates $c_0$ as an $\ff$-spreading model, then it is easy to see that every $\ff$-spreading model admitted by $(x_s^*)_{s\in\ff\upharpoonright M}$ is $\ell^1$.
\end{rem}
\begin{defn}
  Let $X$ be a Banach space, $\ff$ a regular thin family, $M\in[\nn]^\infty$, $(x_s)_{s\in\ff}$ a normalized $\ff$-sequence in $X$ and $(x_s^*)_{s\in\ff}$ a bounded $\ff$-sequence in $X^*$. The $\ff$-subsequence $(x_s^*)_{s\in\ff\upharpoonright M}$ is called biorthogonal to $(x_s)_{s\in\ff\upharpoonright M}$ if for every $k\in \nn$ and every plegma $k$-tuple $(s_j)_{j=1}^k$ in $\ff\upharpoonright M$ with $s_1(1)\geq M(k)$ we have that
  \[x_{s_j}^*(x_{s_i})=\delta_{ij},\;\;\text{for all}\;\;i,j\in\{1,\ldots,k\}\]
\end{defn}
\begin{rem}\label{duality c_0 l^1 rem orthogonals are associated l^1}
  It is immediate that if $(x_s^*)_{s\in\ff\upharpoonright M}$ is biorthogonal to $(x_s)_{s\in\ff\upharpoonright M}$ then $(x_s)_{s\in\ff\upharpoonright M}$ and $(x_s^*)_{s\in\ff\upharpoonright M}$ are $\ell^1$-associated over 1.
\end{rem}
\begin{lem}\label{duality c_0 l^1 lem get biorthogonals}
  Let $X$ be a Banach space with a Schauder basis $(e_n)_{n\in\nn}$, $\ff$ a regular thin family, $M\in[\nn]^\infty$ and $(x_s)_{s\in\ff}$
  a normalized plegma block $\ff$-sequence  in $X$. Then there exists an $\ff$-subsequence
  $(x_s^*)_{s\in\ff\upharpoonright M}$ biorthogonal to $(x_s)_{s\in\ff\upharpoonright M}$ with
  respect to $(e_n^*)_{n\in\nn}$.
\end{lem}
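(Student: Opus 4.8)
The plan is to build each $x_s^*$ from a Hahn--Banach norming functional for $x_s$ composed with the basis projection onto the smallest interval of coordinates containing $\mathrm{supp}(x_s)$. The whole point is that the plegma block hypothesis, together with the monotonicity of lengths in a plegma tuple, forces the supports of the selected vectors to sit in pairwise disjoint, increasingly ordered intervals, after which biorthogonality is the classical block--biorthogonality trick applied verbatim.

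First I would record the structural reduction. Fix $M$ as in the statement, let $K$ be the basis constant of $(e_n)_n$, and for $s\in\ff\upharpoonright M$ put $F_s=\mathrm{supp}(x_s)$ (nonempty and finite, since $\|x_s\|=1$ and the $x_s$ are finitely supported) and $I_s=[\min F_s,\max F_s]$. Let $P_s$ denote the projection associated to $(e_n)_n$ onto the coordinates lying in the interval $I_s$; then $\|P_s\|\le 2K$ and $P_s x_s=x_s$. By Lemma \ref{lemma increasing length of plegma} and the plegma block assumption, for every plegma $k$-tuple $(s_j)_{j=1}^k$ in $\ff\upharpoonright M$ and every $i<j$ the pair $(s_i,s_j)$ is plegma, hence $F_{s_i}<F_{s_j}$; in particular $\max I_{s_i}=\max F_{s_i}<\min F_{s_j}=\min I_{s_j}$, so $I_{s_1}<\dots<I_{s_k}$ are pairwise disjoint intervals. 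Thus $(x_{s_j})_{j=1}^k$ is in fact a normalised block sequence of $(e_n)_n$.

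Next I would make the construction. For each $s\in\ff\upharpoonright M$ choose, by Hahn--Banach, $\phi_s\in B_{X^*}$ with $\phi_s(x_s)=\|x_s\|=1$, and set $x_s^*=P_s^*\phi_s=\phi_s\circ P_s$. Since $P_s^*\phi_s=\sum_{n\in I_s}\phi_s(e_n)e_n^*$, each $x_s^*$ is a finite linear combination of the biorthogonal functionals $(e_n^*)_n$ and $\|x_s^*\|\le 2K$, so $(x_s^*)_{s\in\ff\upharpoonright M}$ is a bounded $\ff$-subsequence in $X^*$ of the required form. To verify biorthogonality, take a plegma $k$-tuple $(s_j)_{j=1}^k$ as above: for $i\ne j$ one has $F_{s_i}\subseteq I_{s_i}$ and $I_{s_i}\cap I_{s_j}=\emptyset$, so $P_{s_j}x_{s_i}=0$ and $x_{s_j}^*(x_{s_i})=\phi_{s_j}(0)=0$, whereas $x_{s_j}^*(x_{s_j})=\phi_{s_j}(P_{s_j}x_{s_j})=\phi_{s_j}(x_{s_j})=1$. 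Hence $x_{s_j}^*(x_{s_i})=\delta_{ij}$ for all $i,j\in\{1,\dots,k\}$, which is exactly the desired biorthogonality (and, incidentally, the condition $s_1(1)\ge M(k)$ from the definition plays no role).

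I do not expect a genuine obstacle: the content is entirely in the observation that a plegma tuple of a plegma block $\ff$-sequence is an honest block sequence, and from there the argument is standard. The only two points that deserve a line of care are that $\mathrm{supp}(x_s)$ need not itself be an interval, which is why one passes to the interval hull $I_s$, and the elementary estimate $\|P_{I_s}\|\le 2K$ for interval projections as opposed to the initial-segment projections $P_n$.
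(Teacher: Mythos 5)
Your proposal is correct and is essentially the paper's own argument: the paper also takes a Hahn--Banach norming functional $\widetilde{x}_s^*\in B_{X^*}$ and sets $x_s^*=\sum_{n\in\text{range}(x_s)}\widetilde{x}_s^*(e_n)e_n^*$, i.e.\ composes with the interval projection onto the hull of $\text{supp}(x_s)$, and biorthogonality follows since the plegma block hypothesis makes any plegma tuple an honest block sequence. Your extra remarks (the $\|P_{I_s}\|\le 2K$ bound giving boundedness of $(x_s^*)$, and the irrelevance of the condition $s_1(1)\ge M(k)$) are accurate refinements of the same proof.
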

\begin{proof}
  By Hahn-Banach theorem, for every $s\in\ff\upharpoonright M$ there exists $\widetilde{x}_s^*\in B_{X^*}$ such that $\widetilde{x}_s^*(x_s)=\|x_s\|=1$. For every $s\in\ff\upharpoonright M$ we set
  \[x_s^*=\sum_{n\in\text{range}(x_s)}\widetilde{x}_s^*(e_n)e_n^*\] where $\text{range}(x_s)=\{\min(\text{supp}(x_s)),\ldots,\max(\text{supp}(x_s))\}$. It is easy to check that $(x_s^*)_{s\in\ff\upharpoonright M}$ satisfies the conclusion of the lemma.
\end{proof}
\begin{prop}\label{plegma block l^1 in dual}
  Let $X$ be a Banach space with a Schauder basis $(e_n)_{n\in\nn}$. If for some
  $\xi<\omega_1$ the set $\mathcal{SM}^{wrc}_\xi(X)$ contains a
  sequence equivalent to the usual basis of $c_0$, then $\overline{<(e^*_n)_{n\in\nn}>}$ admits $l^1$ as a plegma block generated $\xi$-spreading model.
\end{prop}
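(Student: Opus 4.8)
The plan is to combine the already established fact that $c_0$ can be realized as a plegma block generated spreading model with the biorthogonality machinery developed in this section. First I would invoke Corollary \ref{c_0 block spreading model cor}: since $\mathcal{SM}^{wrc}_\xi(X)$ contains a sequence equivalent to the usual basis of $c_0$, the space $X$ admits $c_0$ as a plegma block generated $\xi$-order spreading model. Fixing a regular thin family $\ff$ with $o(\ff)=\xi$, this produces $M\in[\nn]^\infty$ and a plegma block $\ff$-sequence $(x_s)_{s\in\ff}$ in $X$ such that $(x_s)_{s\in\ff\upharpoonright M}$ generates an $\ff$-spreading model $(e_n)_{n\in\nn}$ equivalent to the $c_0$-basis. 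Since for $k=1$ the defining inequality forces $\|x_s\|\to\|e_1\|_*>0$ as $\min s\to\infty$, after passing to an infinite subset of $M$ we may assume $x_s\neq0$ for all $s\in\ff\upharpoonright M$; replacing $x_s$ by $x_s/\|x_s\|$ preserves supports (hence the plegma block property) and changes the generated spreading model only by the positive scalar $\|e_1\|_*^{-1}$ (Remark \ref{remark on the definition of spreading model}), so we may assume that $(x_s)_{s\in\ff\upharpoonright M}$ is a \emph{normalized} plegma block $\ff$-sequence generating $c_0$.

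Next I would apply Lemma \ref{duality c_0 l^1 lem get biorthogonals} to obtain an $\ff$-subsequence $(x_s^*)_{s\in\ff\upharpoonright M}$ in $X^*$ biorthogonal to $(x_s)_{s\in\ff\upharpoonright M}$ with respect to $(e_n^*)_{n\in\nn}$. By the construction in that lemma each $x_s^*$ is finitely supported on $(e_n^*)_{n\in\nn}$ with $\mathrm{supp}(x_s^*)\subseteq\mathrm{range}(x_s)$, and $\|x_s^*\|$ is bounded by twice the basis constant, so $(x_s^*)_{s\in\ff\upharpoonright M}$ is a bounded $\ff$-sequence in the subspace $\overline{<(e_n^*)_{n\in\nn}>}$. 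Moreover, if $(s_1,s_2)$ is a plegma pair in $\ff\upharpoonright M$ then $\mathrm{supp}(x_{s_1})<\mathrm{supp}(x_{s_2})$, whence $\mathrm{range}(x_{s_1})<\mathrm{range}(x_{s_2})$ and therefore $\mathrm{supp}(x_{s_1}^*)<\mathrm{supp}(x_{s_2}^*)$ relative to the basis $(e_n^*)_{n\in\nn}$; thus $(x_s^*)_{s\in\ff\upharpoonright M}$ is a plegma block $\ff$-sequence in $\overline{<(e_n^*)_{n\in\nn}>}$.

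Finally, by Remark \ref{duality c_0 l^1 rem orthogonals are associated l^1} the $\ff$-subsequences $(x_s)_{s\in\ff\upharpoonright M}$ and $(x_s^*)_{s\in\ff\upharpoonright M}$ are $\ell^1$-associated over $1$, and since $(x_s)_{s\in\ff\upharpoonright M}$ generates $c_0$, Remark \ref{duality c_0 l^1 rem associated l^1 giving l^1 spr mod} guarantees that every $\ff$-spreading model admitted by $(x_s^*)_{s\in\ff\upharpoonright M}$ is equivalent to the usual basis of $\ell^1$. As $(x_s^*)_{s\in\ff\upharpoonright M}$ is bounded, it admits an $\ff$-spreading model (Proposition \ref{B-Sp}); choose $L\in[M]^\infty$ such that $(x_s^*)_{s\in\ff\upharpoonright L}$ generates some $(e_n')_{n\in\nn}$, which is then equivalent to the $\ell^1$-basis. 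Since the restriction of a plegma block $\ff$-sequence is again plegma block, $(e_n')_{n\in\nn}$ is plegma block generated, and as $o(\ff)=\xi$ this exhibits $\ell^1$ as a plegma block generated $\xi$-order spreading model of $\overline{<(e_n^*)_{n\in\nn}>}$, completing the proof. The only genuine bookkeeping lies in the normalization reduction of the first step and in checking that "plegma block in $X$" transfers to "plegma block in $\overline{<(e_n^*)_{n\in\nn}>}$"; the substantive content — that a $c_0$ spreading model of $X$ can be realized in plegma block form at the minimal possible order — is imported wholesale from Corollary \ref{c_0 block spreading model cor}.
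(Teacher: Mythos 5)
Your argument is correct and follows essentially the same route as the paper: Corollary \ref{c_0 block spreading model cor} to get a normalized plegma block $\ff$-sequence generating $c_0$, Lemma \ref{duality c_0 l^1 lem get biorthogonals} to produce biorthogonal functionals that are plegma block with respect to $(e_n^*)_{n\in\nn}$, and Remarks \ref{duality c_0 l^1 rem orthogonals are associated l^1} and \ref{duality c_0 l^1 rem associated l^1 giving l^1 spr mod} to conclude that they admit $\ell^1$ as an $\ff$-spreading model. Your extra bookkeeping (the normalization reduction and the observation that the $x_s^*$ are finitely supported on $(e_n^*)_{n\in\nn}$, hence live in $\overline{<(e_n^*)_{n\in\nn}>}$) only makes explicit what the paper's proof leaves implicit.
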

\begin{proof}
  By Theorem \ref{c_0 block spreading model
  cor} there exist a regular thin family $\ff$ of order $\xi$, $M\in[\nn]^\infty$ and
  $(x_s)_{s\in\ff}$ an $\ff$-sequence in
  $X$ such that $(x_s)_{s\in\ff\upharpoonright M}$ plegma block generates $c_0$
  as an $\ff$-spreading model. We may also assume that $(x_s)_{s\in\ff\upharpoonright M}$ is normalized.
  By Lemma \ref{duality c_0 l^1 lem get biorthogonals} there exists a bounded $\ff$-subsequence
  $(x_s^*)_{s\in\ff\upharpoonright M}$ biorthogonal to $(x_s)_{s\in\ff\upharpoonright M}$ which is plegma block with respect to $(e_n^*)_{n\in\nn}$.
  By Remarks  \ref{duality c_0 l^1 rem orthogonals are associated l^1}
  and \ref{duality c_0 l^1 rem associated l^1 giving l^1 spr mod}
  we have that $(x_s^*)_{s\in\ff\upharpoonright M}$ admits $\ell^1$ as an $\ff$-spreading model.
\end{proof}
\begin{lem}\label{duality c_0 l^1 lem approximation for l^1 associating}
  Let $X$ be a Banach space, $\ff$ a regular thin family,
  $M\in[\nn]^\infty$, $(x_s)_{s\in\ff}$ a
  normalized $\ff$-sequence in $X$ and $(x_s^*)_{s\in\ff}$ a bounded
  $\ff$-sequence in $X^*$ such that $(x_s)_{s\in\ff\upharpoonright M}$ and $(x_s^*)_{s\in\ff\upharpoonright M}$
  are $\ell^1$-associated over $c$, for some $c>0$. Let $(\delta_n)_{n\in\nn}$ a
  sequence of positive reals and
  $(z_s)_{s\in\ff}$ a normalized $\ff$-sequence in $X$ such
  that $\|x_s-z_s\|\leq\delta_n$, for all $n\in\nn$ and $s\in\ff\upharpoonright M$ with $\min s=M(n)$.
 If $\sum_{n=1}^\infty\delta_n<\frac{c}{2K}$, where
 $K=\sup\{\|x_s^*\|:s\in\ff\upharpoonright M\}$
 then
  $(z_s)_{s\in\ff\upharpoonright M}$ and $(x_s^*)_{s\in\ff\upharpoonright M}$ are
  $\ell^1$-associated over $c/2$.
\end{lem}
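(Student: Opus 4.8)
The plan is a direct perturbation estimate, comparing the relevant bilinear quantity for $(z_s)_{s\in\ff\upharpoonright M}$ with the one for $(x_s)_{s\in\ff\upharpoonright M}$. Fix $k\in\nn$, reals $a_1,\dots,a_k$, and a plegma $k$-tuple $(s_j)_{j=1}^k$ in $\ff\upharpoonright M$ with $s_1(1)\geq M(k)$, and put $f=\sum_{j=1}^k a_j x_{s_j}^*\in X^*$. The goal is to show
\[f\Big(\sum_{j=1}^k\text{sign}(a_j)\,z_{s_j}\Big)\geq \tfrac c2\sum_{j=1}^k|a_j|,\]
which is exactly the $\ell^1$-association of $(z_s)_{s\in\ff\upharpoonright M}$ and $(x_s^*)_{s\in\ff\upharpoonright M}$ over $c/2$. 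By hypothesis the same expression with $x_{s_j}$ in place of $z_{s_j}$ is at least $c\sum_{j=1}^k|a_j|$, so it suffices to control the error
\[f\Big(\sum_{j=1}^k\text{sign}(a_j)(x_{s_j}-z_{s_j})\Big)\]
in absolute value by $\tfrac c2\sum_{j=1}^k|a_j|$, and then subtract.

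First I would bound the two factors. On the functional side, $\|f\|\leq\sum_{j=1}^k|a_j|\,\|x_{s_j}^*\|\leq K\sum_{j=1}^k|a_j|$. On the vector side, write $\min s_j=M(n_j)$; since $(s_j)_{j=1}^k$ is plegma we have $s_1(1)<s_2(1)<\dots<s_k(1)$, all lying in $M$, so $n_1<n_2<\dots<n_k$ are \emph{distinct} natural numbers. Because $s_j\in\ff\upharpoonright M$ with $\min s_j=M(n_j)$, the hypothesis gives $\|x_{s_j}-z_{s_j}\|\leq\delta_{n_j}$, whence
\[\Big\|\sum_{j=1}^k\text{sign}(a_j)(x_{s_j}-z_{s_j})\Big\|\leq\sum_{j=1}^k\delta_{n_j}\leq\sum_{n=1}^\infty\delta_n<\frac{c}{2K}.\]
The only point that needs care here is this distinctness of the indices $n_j$: since $(\delta_n)_{n\in\nn}$ is not assumed monotone, one really must know that the perturbations are summed over distinct $n$ in order to dominate the sum by $\sum_n\delta_n$; the plegma property supplies exactly this.

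Combining the two bounds yields $\big|f(\sum_{j}\text{sign}(a_j)(x_{s_j}-z_{s_j}))\big|\leq K\big(\sum_{j}|a_j|\big)\cdot\frac{c}{2K}=\frac c2\sum_{j}|a_j|$, and therefore
\[f\Big(\sum_{j=1}^k\text{sign}(a_j)\,z_{s_j}\Big)\geq f\Big(\sum_{j=1}^k\text{sign}(a_j)\,x_{s_j}\Big)-\frac c2\sum_{j=1}^k|a_j|\geq c\sum_{j=1}^k|a_j|-\frac c2\sum_{j=1}^k|a_j|=\frac c2\sum_{j=1}^k|a_j|.\]
Since $k$, the scalars, and the plegma tuple (subject to $s_1(1)\geq M(k)$) were arbitrary, this is precisely the claim. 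I do not expect any genuine obstacle; the statement is a routine stability remark, and the whole content is the bookkeeping observation that the error vector is a sum of terms $x_{s_j}-z_{s_j}$ indexed by pairwise distinct levels of $M$, so its norm is at most $\sum_n\delta_n<c/(2K)$.
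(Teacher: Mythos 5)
Your proposal is correct and is essentially the paper's own argument: a direct perturbation estimate bounding the error term by $\big(\sum_j|a_j|\,\|x_{s_j}^*\|\big)\cdot\sum_i\|z_{s_i}-x_{s_i}\|\leq K\big(\sum_j|a_j|\big)\cdot\sum_n\delta_n<\frac{c}{2}\sum_j|a_j|$ and subtracting it from the $\ell^1$-association bound for $(x_s)$. Your explicit remark that the plegma condition forces the indices $n_j$ with $\min s_j=M(n_j)$ to be distinct (needed since $(\delta_n)$ is not assumed decreasing) is a detail the paper leaves implicit, but it is the same proof.
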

\begin{proof}
  Indeed, for every $k\in\nn$, $a_1,\ldots,a_k\in\rr$ and every plegma $k$-tuple
  $(s_j)_{j=1}^k$ in $\ff\upharpoonright M$ with $s_1(1)\geq M(k)$ we have that
  \[\begin{split}\sum_{j=1}^ka_jx_{s_j}^*\Big(&\sum_{j=1}^k\text{sign}(a_j)z_{s_j}\Big)\\
  &\geq\sum_{j=1}^ka_jx_{s_j}^*\Big(\sum_{j=1}^k\text{sign}(a_j)x_{s_j}\Big)-
  \sum_{j=1}^k|a_j|\cdot\|x_{s_j}^*\|\Big\|\sum_{i=1}^k\|z_{s_i}-x_{s_i}\Big\|\\
   &\geq\frac{c}{2}\sum_{j=1}^k|a_j|
   \end{split}\]
\end{proof}
\begin{thm}\label{duality c_0 l^1 thm}
Let $X$ be a Banach space. If for some
  $\xi<\omega_1$ the set $\mathcal{SM}^{wrc}_\xi(X)$ contains a
  sequence equivalent to the usual basis of $c_0$, then $X^*$ admits $l^1$ as a $\xi$-spreading model.
\end{thm}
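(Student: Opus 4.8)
The final statement, Theorem \ref{duality c_0 l^1 thm}, should follow by combining the work that precedes it in this section with the earlier composition machinery, and then passing the spreading model up from the subspace $\overline{<(e_n^*)_{n\in\nn}>}$ of $X^*$ to all of $X^*$. First I would invoke Proposition \ref{plegma block l^1 in dual}: if $\mathcal{SM}^{wrc}_\xi(X)$ contains a sequence equivalent to the usual basis of $c_0$, then the space $E^*:=\overline{<(e_n^*)_{n\in\nn}>}$ admits $\ell^1$ as a plegma block generated $\xi$-order spreading model, where $(e_n)_{n\in\nn}$ is the $c_0$-equivalent sequence witnessing membership in $\mathcal{SM}^{wrc}_\xi(X)$ and $(e_n^*)_{n\in\nn}$ its biorthogonal functionals. (To make sense of $(e_n^*)_{n\in\nn}$ I would, as in Lemma \ref{duality c_0 l^1 lem get biorthogonals} and Proposition \ref{plegma block l^1 in dual}, realize $(e_n)_{n\in\nn}$ as a normalized plegma block $\ff$-sequence in a Banach space with a Schauder basis; since $c_0$ has a basis this is automatic after renorming, cf. Remark \ref{remark on the definition of spreading model}(iv).)

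**The upward step.** The remaining issue is that $E^*$ is a priori only a subspace of $X^*$, and being a $\xi$-order spreading model of a subspace does not literally mean being one of the whole space. Here I would use the composition results of Chapter \ref{Chapter 2}-style reasoning together with Proposition \ref{containing schauder basic} / Corollary \ref{one order more to get the isometric}. More precisely: $E^*$ contains an isomorphic copy of $\ell^1$ (indeed a plegma block generated $\ell^1$ spreading model of $E^*$ is in particular equivalent to the usual $\ell^1$-basis, so its closed span is a copy of $\ell^1$ sitting inside $E^*$, hence inside $X^*$). Now observe that $E^* = \overline{<(e_n^*)_{n\in\nn}>}$ where $(e_n^*)_{n\in\nn}$ is itself a $\xi$-order spreading model of $X^*$ — this is the key point: the biorthogonal functionals of a $\xi$-order spreading model of $X$ form a $\xi$-order spreading model of $X^*$ (this is exactly what the $\ell^1$-association set-up in this section is designed to produce, via Remarks \ref{duality c_0 l^1 rem orthogonals are associated l^1} and \ref{duality c_0 l^1 rem associated l^1 giving l^1 spr mod} applied to the biorthogonal $\ff$-subsequence $(x_s^*)_{s\in\ff\upharpoonright M}$ in $X^*$ rather than in $E^*$). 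Then apply Corollary \ref{one order more to get the isometric} (the $c_0$/$\ell^1$ case of the composition theorem) to conclude that $X^*$ admits $\ell^1$ as a $(\xi'+1)$-order spreading model for the appropriate $\xi'$; since the hierarchy is increasing (Corollary \ref{incresspr}), after adjusting indices this yields $\ell^1 \in \mathcal{SM}_\xi(X^*)$.

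**Streamlined route.** Actually the cleanest argument bypasses the subspace issue entirely by working directly with $(x_s^*)_{s\in\ff\upharpoonright M}$ in $X^*$ from the start. Take $(x_s)_{s\in\ff\upharpoonright M}$ the weakly relatively compact normalized plegma block $\ff$-sequence in $X$ (with a basis, after renorming) produced by Corollary \ref{c_0 block spreading model cor}, plegma block generating $c_0$ as an $\ff$-spreading model. By Lemma \ref{duality c_0 l^1 lem get biorthogonals} choose $(x_s^*)_{s\in\ff\upharpoonright M}$ in $X^*$, biorthogonal to $(x_s)_{s\in\ff\upharpoonright M}$ and plegma block with respect to $(e_n^*)_{n\in\nn}$. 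By Remark \ref{duality c_0 l^1 rem orthogonals are associated l^1}, $(x_s)_{s\in\ff\upharpoonright M}$ and $(x_s^*)_{s\in\ff\upharpoonright M}$ are $\ell^1$-associated over $1$; by Remark \ref{duality c_0 l^1 rem associated l^1 giving l^1 spr mod}, since $(x_s)_{s\in\ff\upharpoonright M}$ generates $c_0$, every $\ff$-spreading model admitted by the bounded $\ff$-sequence $(x_s^*)_{s\in\ff\upharpoonright M}$ is equivalent to the $\ell^1$-basis. Finally, by the existence theorem for $\ff$-spreading models (every bounded $\ff$-sequence admits one, after passing to a further infinite subset $N\in[M]^\infty$), the $\ff$-subsequence $(x_s^*)_{s\in\ff\upharpoonright N}$ generates an $\ff$-spreading model equivalent to the usual basis of $\ell^1$; since $o(\ff)=\xi$ this says $\ell^1\in\mathcal{SM}_\xi(X^*)$, which is the claim.

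**Main obstacle.** The only genuinely delicate point is bookkeeping around Definition \ref{Def of plegma supported}-style supports: the biorthogonal functionals $x_s^*$ in Lemma \ref{duality c_0 l^1 lem get biorthogonals} are defined using $\text{range}(x_s)$, and one must be sure that $(x_s^*)_{s\in\ff\upharpoonright M}$ is genuinely \emph{plegma block} with respect to $(e_n^*)_{n\in\nn}$ whenever $(x_s)_{s\in\ff\upharpoonright M}$ is plegma block with respect to $(e_n)_{n\in\nn}$ — i.e. that for a plegma pair $(s_1,s_2)$ one has $\text{supp}(x_{s_1}^*)<\text{supp}(x_{s_2}^*)$. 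This is immediate from $\text{supp}(x_{s_i}^*)\subseteq\text{range}(x_{s_i})$ and $\text{supp}(x_{s_1})<\text{supp}(x_{s_2})$, but it needs to be checked to guarantee the biorthogonality relation $x_{s_j}^*(x_{s_i})=\delta_{ij}$ holds for \emph{all} $i,j$ in a plegma tuple (not just $i=j$), which is what powers Remark \ref{duality c_0 l^1 rem orthogonals are associated l^1}. Everything else is a direct chain of the cited remarks and the existence theorem for $\ff$-spreading models.
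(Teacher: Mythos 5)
Your streamlined route is essentially the paper's argument in the special case that $X$ has a Schauder basis, and in that case it is fine: a plegma block $\ff$-subsequence generating $c_0$ (Corollary \ref{c_0 block spreading model cor}), its biorthogonal functionals (Lemma \ref{duality c_0 l^1 lem get biorthogonals}), and Remarks \ref{duality c_0 l^1 rem orthogonals are associated l^1} and \ref{duality c_0 l^1 rem associated l^1 giving l^1 spr mod} do the job, and the support bookkeeping you flag as the ``main obstacle'' is indeed immediate. The genuine gap is the parenthetical ``with a basis, after renorming''. The theorem is stated for an arbitrary Banach space $X$; renorming cannot produce a Schauder basis, and every tool you quote (Theorem \ref{c_0 block thm}, Corollary \ref{c_0 block spreading model cor}, Lemma \ref{duality c_0 l^1 lem get biorthogonals}) requires one. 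Moreover the functionals you need must lie in $X^*$, so you cannot simply perform the construction in some auxiliary space containing a copy of the sequence without a mechanism for returning to $X^*$.

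The paper's proof supplies exactly this missing mechanism. It first reduces to the minimal ordinal $\xi_0$ with $c_0\in\mathcal{SM}^{wrc}_{\xi_0}(X)$ (Corollary \ref{incresspr} then lifts the conclusion back to $\xi$), takes a separable subspace $Y\supseteq\{x_s:s\in\ff\}$ and an isometric embedding $T:Y\to C[0,1]$, and applies Theorem \ref{c_0 block thm} inside $C[0,1]$ to get a plegma block sequence $(z_s)$ with $\|z_s-T(x_s)\|$ summably small. It then takes biorthogonals $(z_s^*)$ in $(C[0,1])^*$, uses Lemma \ref{duality c_0 l^1 lem approximation for l^1 associating} to transfer the $\ell^1$-association from $(z_s)$ to $(T(x_s))$ --- a step that cannot be skipped, since $(z_s^*)$ is biorthogonal to the perturbed sequence rather than to $(T(x_s))$ --- pulls back by $T^*$ to functionals on $Y$, and finally extends each of them norm-preservingly to $X^*$ by Hahn--Banach, the association surviving because it is only tested on vectors of $Y$. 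None of these steps appears in your proposal, and your first ``upward step'' does not substitute for them: it misreads Proposition \ref{plegma block l^1 in dual} (there $(e_n)_{n\in\nn}$ is the Schauder basis of $X$, not the spreading model sequence), and its key claim --- that the biorthogonals of a spreading model of $X$ form a spreading model of $X^*$ --- is essentially the statement being proved.
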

\begin{proof}
  Let $\xi_0$ be the minimum countable ordinal $\xi$ such that
  $\mathcal{SM}^{wrc}_\xi(X)$ contains a sequence equivalent to
  the usual basis of $c_0$. Notice that it suffices to prove the theorem for $\xi=\xi_0$.
  Let $\ff$ be a regular thin family of order $\xi_0$,
  $M\in[\nn]^\infty$ and $(x_s)_{s\in\ff}$ a weakly relatively compact $\ff$-sequence in
  $X$ such that $(x_s)_{s\in\ff\upharpoonright M}$ generates $c_0$ as an $\ff$-spreading model.
  Observe that we may also assume that $(x_s)_{s\in\ff}$ is normalized. Let $Y$ be a separable (closed) subspace
  of $X$ such that $\{x_s:s\in\ff\}\subseteq Y$ and $T:Y\to C[0,1]$ an isometric (linear)
  embedding. Let $(e_n)_{n\in\nn}$  be as Schauder basis of $C[0,1]$. Then $(T(x_s))_{s\in\ff}$
  is a normalized weakly relatively compact $\ff$-sequence in $C[0,1]$. Let $(\delta_n)_{n\in\nn}$
  be a sequence of positive reals such that $\sum_{n=1}^\infty\delta_n<\infty$. By
  Lemma
  \ref{c_0 block thm} there exist $L_1\in[M]^\infty$ and $(\widetilde{z}_s)_{s\in\ff\upharpoonright L_1}$
  such that the following are satisfied:
  \begin{enumerate}
    \item[(i)] For every $s\in\ff\upharpoonright L_1$, if $\min s=L_1(l)$, then $\|\widetilde{z}_s-T(x_s)\|<\frac{\delta_l}{2}$.
    \item[(ii)] For every plegma pair $(s_1,s_2)$ in $\ff\upharpoonright L_1$ we have that $\widetilde{z}_{s_1}<\widetilde{z}_{s_2}$.
  \end{enumerate}
  For every $s\in\ff\upharpoonright L_1$ we set $z_s=\frac{\widetilde{z}_s}{\|\widetilde{z}_s\|}$.
  Then $(z_s)_{s\in\ff\upharpoonright L_1}$ is a normalized $\ff$-subsequence such that
  \begin{enumerate}
    \item[(a)] For every $s\in\ff\upharpoonright L_1$, if $\min s=L_1(l)$, then $\|z_s-T(x_s)\|<\delta_l$.
    \item[(b)] For every plegma pair $(s_1,s_2)$ in $\ff\upharpoonright L_1$ we have that $z_{s_1}<z_{s_2}$.
  \end{enumerate}
  By Lemma \ref{duality c_0 l^1 lem get biorthogonals} there exists an $\ff$-subsequence
  $(z_s^*)_{s\in\ff\upharpoonright L_1}$ in $(C[0,1])^*$ biorthogonal to
  $(z_s)_{s\in\ff\upharpoonright L_1}$. By Remark
  \ref{duality c_0 l^1 rem orthogonals are associated l^1} we have that
  $(z_s)_{s\in\ff\upharpoonright L_1}$ and $(z_s^*)_{s\in\ff\upharpoonright L_1}$ are $\ell^1$-associated over 1.
  Let $K=\sup\{\|z_s^*\|:s\in\ff\upharpoonright L_1\}$, $n_0\in\nn$ such that $\sum_{n=n_0}^\infty\delta_n<\frac{1}{2K}$ and
  $L=\{L_1(n):n\geq n_0\}$. By Lemma \ref{duality c_0 l^1 lem approximation for l^1 associating} $(T(x_s))_{s\in\ff\upharpoonright L}$
  and $(z_s^*)_{s\in\ff\upharpoonright L}$ are $\ell^1$-associated over $\frac{1}{2}$. This easily yields
  that $(x_s)_{s\in\ff\upharpoonright L}$ and $(T^*(z_s^*))_{s\in\ff\upharpoonright L}$ are $\ell^1$-associated
  over $\frac{1}{2}$. Notice that $(T^*(z_s^*))_{s\in\ff\upharpoonright L}$ is a bounded $\ff$-subsequence in $Y^*$. For every
  $s\in\ff\upharpoonright L$, by Hahn-Banach theorem there exists $x_s^*\in X^*$ such that $\|x_s^*\|=\|T^*(z_s^*)\|$
  and $x_s^*|_Y=T^*(z_s^*)|_Y$. Therefore $(x_s^*)_{s\in\ff\upharpoonright L}$ is a bounded $\ff$-subsequence in
  $X^*$ and $(x_s)_{s\in\ff\upharpoonright L}$, $(x_s^*)_{s\in\ff\upharpoonright L}$ are $\ell^1$-associated (over $\frac{1}{2}$).
  Hence by Remark \ref{duality c_0 l^1 rem associated l^1 giving l^1 spr mod} we have that $(x_s^*)_{s\in\ff\upharpoonright L}$
   admits $\ell^1$ as an $\ff$-spreading model.
\end{proof}
\chapter{Establishing the hierarchy of  spreading models} \label{separating k l^1 spr mod space}
In this chapter we deal with the problem of the existence of
spaces $X$ admitting $\ell^1$ as $\xi$-spreading model but not
less. We present two examples. The first one  answers the problem
for $\xi<\omega$ and the second concerns transfinite countable
ordinals. (Anafora gia to mikro gia ta mikris taksis)

\section{Spaces admitting $\ell^1$ as $\xi$-spreading
model but not less}\label{l^1 on ksi} In this section we show that
for every countable ordinal $\xi$ there exists a reflexive space
$\mathfrak{X}_\xi$ with an unconditional basis satisfying the
following properties:
\begin{enumerate}
  \item[(i)] The space $\mathfrak{X}_\xi$ admits $\ell^1$ as a $\xi$-spreading model.
  \item[(ii)] For every ordinal $\zeta$ such that $\zeta+2<\xi$,
  the space $\mathfrak{X}_\xi$ does not admit $\ell^1$ as a $\zeta$-spreading model.
\end{enumerate}
Therefore, if $\xi$ is a limit countable ordinal, then
$\mathfrak{X}_\xi$ is the minimum countable ordinal $\zeta$ such
that $\mathcal{SM}_\zeta(\mathfrak{X}_\xi)$ contains a sequence
equivalent to the usual basis of $\ell^1$.

 \subsection{The definition of the space $\mathfrak{X}_\xi$}

  Let $\xi<\omega_1$ and $\ff$ be a regular
thin family of order $\xi$. We define the norm
$\|\cdot\|:c_{00}(\ff)\to\rr$ by setting
\[\begin{split}
  \|x\|=\sup\Bigg\{\Big(\sum_{i=1}^d\Big(\sum_{j=1}^{l_i}|x(t_j^i)|\Big)^2\Big)^\frac{1}{2}:\;&
  d\in\nn,(t_j^1)_{j=1}^{l_1},\ldots,(t_j^d)_{j=1}^{l_d}\in\text{\emph{Plm}}(\ff)\;\;\text{and}\\
  &\text{for every}\;\;1\leq i_1<i_2\leq d,\\
   &\{t_j^{i_1}:\;1\leq j\leq l_{i_1}\}\cap\{t_j^{i_2}:\;1\leq j\leq l_{i_2}\}=\emptyset
  \Bigg\}
\end{split}\] for all $x\in c_{00}(\ff)$.
We define $X_\xi=\overline{(c_{00}(\ff),\|\cdot\|)}$. It is easy
to see that a norming set for this space is the smallest
$W\subseteq c_{00}(\ff)^\#$ such that the following are satisfied:
\begin{enumerate}
  \item[(i)] For every $d\in\nn$, every plegma $d$-tuple $(t_j)_{j=1}^d$ in $\ff$ and $\ee_1,\ldots,\ee_d\in\{0,1\}$,
   the functional $f=\sum_{j=1}^d(-1)^{\ee_j}e_{t_j}^*$ belongs to $W$ and is called of type I.
  \item[(ii)] For every $d\in\nn$, $f_1,\ldots,f_d\in W$ of type I with disjoint supports and $a_1,\ldots,a_d\in\rr$
  with $\sum_{j=1}^d a_j^2\leq1$, the functional $\varphi=\sum_{j=1}^da_jf_j$ belongs to $W$ and is called of type II.
\end{enumerate}
It is immediate by the definition of the space $X_\xi$ that its
natural basis $(e_s)_{s\in\ff}$ is unconditional. Our first aim is
to prove that the space $X_\xi$ does not contain any isomorphic
copy of $\ell^1$. To this end we need the following notation and
lemmas.
\begin{notation}
  Let $(x_n)_{n\in\nn}$ be a sequence of finite supported vectors in $X_\xi$. We say that $(x_n)_{n\in\nn}$ is an $\ff$-block sequence if for
  every $n\in\nn$, \[\max\{\max t:t\in\text{supp}(x_n)\}<\min\{\min t:t\in\text{supp}(x_{n+1})\}\]
\end{notation}
The following lemma is immediate by the definition of the norm
$\|\cdot\|$ and the observation that if $(x_n)_{n\in\nn}$ is an
$\ff$-block sequence in $X_\xi$, then for every $n,m\in\nn$, with
$n\neq m$, there does not exist any plegma pair $(s_1, s_2)$, such
that $s_1\in\text{supp}(x_n)$ and $s_2\in\text{supp}(x_m)$.
\begin{lem}\label{X_xi block are l^2}
  Every seminormalized $\ff$-block sequence in $X_\xi$ is equivalent to the usual basis of $\ell^2$.
\end{lem}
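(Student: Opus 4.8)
The plan is to establish the two-sided $\ell^2$ estimate directly, exploiting the explicit norming set $W$ of $X_\xi$. Fix constants $0<c\le\|x_n\|\le C$ witnessing that $(x_n)_n$ is seminormalized, and finitely supported scalars $(c_n)_n$; the goal is $\tfrac{c}{2}\big(\sum_n c_n^2\big)^{1/2}\le\big\|\sum_n c_nx_n\big\|\le C\big(\sum_n c_n^2\big)^{1/2}$.

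\emph{Lower bound.} This is the familiar gluing of norming functionals. For each $n$ in the support of $(c_n)_n$ pick $\varphi_n\in W$ with $\varphi_n(x_n)\ge\tfrac12\|x_n\|\ge\tfrac{c}{2}$; writing $\varphi_n=\sum_i a^n_if^n_i$ as a type II functional and replacing each type I functional $f^n_i$ by its restriction to $\text{supp}(x_n)$ — still of type I, since any subtuple of a plegma tuple is plegma — one may assume $\text{supp}(f^n_i)\subseteq\text{supp}(x_n)$, which leaves $\varphi_n(x_n)$ unchanged. Because $(x_n)_n$ is an $\ff$-block sequence the sets $\text{supp}(x_n)$ are pairwise disjoint, so the whole family $\{f^n_i\}_{n,i}$ is pairwise disjointly supported; hence for any $(\beta_n)_n$ with $\sum_n\beta_n^2\le1$ the functional $\varphi=\sum_{n,i}\beta_n\,\text{sign}(c_n)\,a^n_if^n_i$ lies in $W$ (note $\sum_{n,i}(\beta_na^n_i)^2=\sum_n\beta_n^2\sum_i(a^n_i)^2\le\sum_n\beta_n^2\le1$), and since $f^n_i$ annihilates $x_m$ for $m\ne n$ one gets $\varphi\big(\sum_m c_mx_m\big)=\sum_n\beta_n|c_n|\varphi_n(x_n)\ge\tfrac{c}{2}\sum_n\beta_n|c_n|$. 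Taking $\beta_n=|c_n|\big(\sum_m c_m^2\big)^{-1/2}$ finishes this direction.

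\emph{Upper bound, modulo a combinatorial fact.} Here I would use the observation that \emph{no single plegma tuple in $\ff$ can meet the supports of two different members of the $\ff$-block sequence} (with a singleton exception treated below). Granting it, fix a configuration nearly realizing $\big\|\sum_m c_mx_m\big\|$: pairwise disjoint plegma tuples $(t^i_j)_j$, $i=1,\dots,d$. By the observation, for each $i$ there is at most one block $x_{n(i)}$ containing some $t^i_j$; the remaining $t^i_j$ contribute $0$ and may be deleted, leaving a plegma subtuple inside $\text{supp}(x_{n(i)})$. Hence
\[\sum_{i=1}^d\Big(\sum_j\big|\big(\textstyle\sum_m c_mx_m\big)(t^i_j)\big|\Big)^2=\sum_n c_n^2\sum_{\{i:\,n(i)=n\}}\Big(\sum_{\{j:\,t^i_j\in\text{supp}(x_n)\}}|x_n(t^i_j)|\Big)^2\le\sum_n c_n^2\|x_n\|^2\le C^2\sum_n c_n^2,\]
the middle inequality holding because, for each fixed $n$, the tuples involved form a legitimate pairwise disjoint plegma configuration for $\|x_n\|$; taking the supremum over configurations gives the claim.

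\emph{The combinatorial fact — the real obstacle.} Suppose $(t_1,\dots,t_l)$ is plegma, $t_a\in\text{supp}(x_n)$, $t_b\in\text{supp}(x_{n'})$, $n\ne n'$; the $\ff$-block property puts all of $\text{supp}(x_n)$ coordinatewise below all of $\text{supp}(x_{n'})$, or conversely. Combining Lemma \ref{lemma increasing length of plegma} (lengths are nondecreasing along a plegma tuple) with conditions (i),(ii) of Definition \ref{defn plegma} for the plegma pair that $t_a,t_b$ form in their ambient order, one checks that any comparison available there ($t_b(1)<t_a(1)$ or $t_b(1)<t_a(2)$, etc.) collides with the block separation as soon as $|t_a|\ge2$; so $|t_a|=1$. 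Finally I would show that $\{r:\{r\}\in\ff\}$ is an initial segment $\{1,\dots,k_0\}$ of $\nn$ — if $\{r\}$ is maximal in $\widehat{\ff}$ and $r'<r$, the spreading property forces $\{r'\}$ maximal as well — which is finite unless $\ff=[\nn]^1$ (the trivial case $X_\xi\cong\ell^1$, outside the present scope); thus an $\ff$-singleton occurs in $\text{supp}(x_m)$ only for the finitely many $m$ whose support reaches below $k_0$. Discarding these finitely many blocks removes all straddling, the upper bound applies verbatim to the tail, and reinstating the finitely many omitted seminormalized vectors at the front preserves $\ell^2$-equivalence by a routine perturbation argument. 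Packaging this singleton exception cleanly — not any analytic difficulty — is the delicate part of the proof.
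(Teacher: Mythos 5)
Your proof is correct and takes essentially the route the paper intends: the paper declares the lemma ``immediate by the definition of the norm and the observation that no plegma pair $(s_1,s_2)$ can have $s_1\in\text{supp}(x_n)$ and $s_2\in\text{supp}(x_m)$ for $n\neq m$'', and your two estimates (gluing the restricted type I pieces of near-norming functionals into one type II functional with $\ell^2$ weights for the lower bound; splitting an admissible configuration block by block, using that subtuples of plegma tuples are plegma, for the upper bound) are exactly the content of that ``immediate''. The genuine added value in your write-up is the singleton caveat, which the paper's observation as literally stated misses: if $\{a\}\in\ff$ lies in $\text{supp}(x_n)$ and $s\in\text{supp}(x_m)$ with $n<m$, then $(\{a\},s)$ \emph{is} a plegma pair (take $\ff=\ff_\omega$, $x_1=e_{\{1\}}$, $x_2=e_{\{2,3\}}$); so the finite-perturbation step you add is actually needed, and it also correctly isolates the order-one case, where the statement fails since $X_\xi$ is then isometric to $\ell^1$.

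One small repair in your singleton analysis: the set $S=\{r:\{r\}\in\ff\}$ need not be an initial segment of $\nn$ (for instance $\ff=\{\{2\}\}\cup[\{3,4,\ldots\}]^2$ is regular thin with $S=\{2\}$), and the step ``if $\{r\}$ is maximal in $\widehat{\ff}$ and $r'<r$ then $\{r'\}$ is maximal'' presupposes $\{r'\}\in\widehat{\ff}$, which may fail. What you actually need, and what spreading does give, is only that $S$ is finite whenever $\ff$ contains some $s_0$ with $|s_0|\geq 2$: for any $r\geq \min s_0$, spreading the initial segment $s_0|2$ produces $\{r,x\}\in\widehat{\ff}$ for a suitable $x>r$, so $\{r\}$ is not $\sqsubseteq$-maximal; hence $S\subseteq\{1,\ldots,\min s_0-1\}$. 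With that substitution the rest of your argument (discard the finitely many blocks whose supports meet $S$, apply your upper estimate to the tail, and reinstate the finitely many disjointly supported vectors via the triangle inequality together with the lower $\ell^2$ estimate, which holds for all disjointly supported families) goes through verbatim.
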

For every $l\in\nn$ we recall that $\ff_{[l]}=\{s\in\ff:\min
s=l\}$. We define
$X_l=\overline{<(e_s)_{s\in\ff_{[l]}}>}^{\|\cdot\|}$ and
$P_l:X_\xi\to X_l$ such that for every $x\in c_{00}(\ff)$,
$P_l(x)=\sum_{s\in\ff_{[l]}}(x)$. Since for every $l\in\nn$ there
does not exist any plegma pair in $\ff_{[l]}$, the space $X_l$ is
isometric to $\ell^2$. Hence for every $l\in\nn$ the space $X_l$
is reflexive.
\begin{prop}\label{X_xi either l^2 or something in X_l}
  Every subspace $Z$ of $X_\xi$ contains a
  further subspace $W$ such that
  either there exists $l_0\in\nn$ such that $P_{l_0}|_W$ is an
  isomorphic embedding, or $W$ is an isomorphic copy of $\ell^2$.
\end{prop}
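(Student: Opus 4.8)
The plan is to run the standard subspace dichotomy for the natural decomposition of $X_\xi$ into the subspaces $X_l$, together with a gliding‑hump argument producing an $\ell^2$‑subspace. First I would record the following norm estimates. For a finite or cofinite $A\subseteq\nn$ let $R_A\colon X_\xi\to X_\xi$ send $x=\sum_{s\in\ff}x(s)e_s$ to $\sum_{s\in\ff,\ \min s\in A}x(s)e_s$, and write $Q_N=R_{\{1,\dots,N\}}=\sum_{l=1}^N P_l$ and $R_N=R_{\{N+1,N+2,\dots\}}$, so that $Q_N+R_N=\mathrm{id}_{X_\xi}$. I claim $\|R_Ax\|\le\|x\|$ for every $x$ and every such $A$. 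Let $\varphi$ be a norming functional for $R_Ax$, taken in the norming set of $X_\xi$; then $\varphi=\sum_j a_jf_j$ is of type II, with the $f_j$ of type I, disjointly supported, and $\sum_j a_j^2\le1$. Each $f_j=\sum_i(-1)^{\ee_i}e_{t_i}^*$ runs over a plegma tuple $(t_i)_i$; since the first coordinates $t_1(1)<t_2(1)<\cdots$ of a plegma tuple are strictly increasing and any subtuple of a plegma tuple is plegma, the restriction of $f_j$ to the coordinates $\{s:\min s\in A\}$ is again of type I, so the corresponding restriction $\varphi_A$ of $\varphi$ is again of type II (hence in the norming set), and $\varphi_A(x)=\varphi(R_Ax)$; thus $\|R_Ax\|=|\varphi_A(x)|\le\|x\|$. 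In particular $Q_N$ and $R_N$ are norm‑one projections (that each $P_l$ is a norm‑one projection onto a copy of $\ell^2$ is already in the text).

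Now let $Z$ be a subspace of $X_\xi$. If $P_{l_0}|_Z$ fails to be strictly singular for some $l_0\in\nn$, then by the very definition of strict singularity there is a subspace $W$ of $Z$ on which $P_{l_0}$ is an isomorphic embedding, which is the first alternative of the statement. So assume $P_l|_Z$ is strictly singular for every $l\in\nn$. Since a finite sum of strictly singular operators is strictly singular, $Q_N|_Z$ is strictly singular for every $N\in\nn$; in particular $\inf\{\|Q_Nz\|:z\in S_Z\}=0$.

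In this second case I would build an $\ff$‑block sequence close to $Z$. Fix a summable sequence $(\ee_n)_n$ of positive reals with $\ee_n<1/2$, small enough for the perturbation at the end. Choose $y_1\in S_Z$ and $w_1\in c_{00}(\ff)$ with $\|y_1-w_1\|<\ee_1$, and set $N_1=\max\{\max t:\ t\in\text{supp}(w_1)\}$. Given $y_n,w_n,N_n$, use strict singularity of $Q_{N_n}|_Z$ to choose $y_{n+1}\in S_Z$ with $\|Q_{N_n}y_{n+1}\|<\ee_{n+1}$, take $v\in c_{00}(\ff)$ with $\|y_{n+1}-v\|<\ee_{n+1}$, put $w_{n+1}=R_{N_n}v$ (so $\text{supp}(w_{n+1})\subseteq\{s:\min s>N_n\}$), observe
\[\|w_{n+1}-y_{n+1}\|\le\|R_{N_n}(v-y_{n+1})\|+\|Q_{N_n}y_{n+1}\|<2\ee_{n+1},\]
and set $N_{n+1}=\max\{\max t:\ t\in\text{supp}(w_{n+1})\}$. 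By construction $\min\{\min t:t\in\text{supp}(w_{n+1})\}>N_n=\max\{\max t:t\in\text{supp}(w_n)\}$, so the normalized vectors $\hat w_n=w_n/\|w_n\|$ (well defined since $\|w_n\|\ge1-2\ee_n$, and $\|w_n\|\to1$) form a seminormalized $\ff$‑block sequence in $X_\xi$, hence equivalent to the usual basis of $\ell^2$ by Lemma \ref{X_xi block are l^2}. Since $\|y_n-\hat w_n\|\le\|y_n-w_n\|+|1-\|w_n\||\le 4\ee_n$, for $(\ee_n)_n$ summable and small enough the small‑perturbation principle makes $(y_n)\subseteq Z$ equivalent to $(\hat w_n)$, hence to the $\ell^2$‑basis, so $W=\overline{<(y_n)_{n}>}$ is a subspace of $Z$ isomorphic to $\ell^2$: the second alternative.

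The step needing the most care is the first one — checking that a norming functional, restricted to a union of "first‑coordinate levels" $\{s:\min s\in A\}$, is still a norming functional. This rests entirely on the two combinatorial facts recalled above (strictly increasing first coordinates of a plegma tuple, and closure of plegma tuples under passing to subtuples), which guarantee that the restriction of a type‑I functional stays of type I and the restriction of a type‑II functional stays of type II. Granting the norm estimates of the first step, the remainder is the routine dichotomy for unconditional decompositions together with the standard perturbation argument, and Lemma \ref{X_xi block are l^2} does the rest.
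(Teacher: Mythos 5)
Your proof is correct and follows essentially the same route as the paper: the same strict-singularity dichotomy for the projections $P_l$, followed by a gliding-hump construction of a seminormalized $\ff$-block sequence approximating unit vectors of $Z$, and an appeal to Lemma \ref{X_xi block are l^2} together with the small-perturbation principle. The only cosmetic difference is that you remove the initial part using the tail restrictions $R_N$ (norm one by unconditionality) and the fact that a finite sum of strictly singular operators is strictly singular, whereas the paper passes to a further subspace of $Z$ on which all $P_l$, $1\le l\le l_0$, have small norm; both are the same standard argument.
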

\begin{proof}
  Either there exists an $l_0\in\nn$ such that the operator
  $P_{l_0}|_Z:Z\to X_{l_0}$ is not strictly singular or for every
  $l\in\nn$ the operator
  $P_{l_0}|_Z:Z\to X_{l_0}$ is strictly singular. In the first
  case it is immediate that there exists $W$ infinite dimensional
  subspace of $Z$ such that the operator $P_{l_0}|_W$ is an
  isomorphic embedding.

  Suppose that the second case occurs. Let $(\ee_n)_{n\in\nn}$ be
  a sequence of positive reals such that $\sum_{n=1}^\infty
  \ee_n<\frac{1}{3}$. By induction we construct an $\ff$-block
  sequence $(\widetilde{w}_n)_{n\in\nn}$ and a sequence
  $(w_n)_{n\in\nn}$ in $S_Z$ such that
  $\|w_n-\widetilde{w}_n\|<\ee_n$, for all $n\in\nn$.
  Let $w_1\in S_Z$ and $\widetilde{w}_1\in c_{00}(\ff)$ such
  that $\|w_1-\widetilde{w}_1\|<\ee_1$. Suppose that
  $(w_i)_{i=1}^n$ and $(\widetilde{w}_i)_{i=1}^n$ have been
  chosen. Let $l_0=\max\{\max s:s\in
  \text{supp}(\widetilde{w}_n)\}$. Since, for every $1\leq l\leq
  l_0$, the operator $P_l|_Z$ is strictly singular,
  there exists a subspace $W$ of $Z$ such that
  $\|P_l|_W\|<\frac{\ee_{n+1}}{2l_0}$, for all $1\leq l\leq l_0$.
  Let $w_{n+1}\in S_W$ and
  $w_{n+1}'=w_{n+1}-\sum_{l=1}^{l_0}P_l(w_{n+1})$. Pick also
  $\widetilde{w}_{n+1}\in c_{00}([\nn]^k)$ such that
  $\text{supp}(\widetilde{w}_{n+1})\subseteq
  \text{supp}(w_{n+1}')$ and
  $\|w_{n+1}'-\widetilde{w}_{n+1}\|<\frac{\ee_{n+1}}{2}$. One can
  easily check that $\|w_{n+1}-\widetilde{w}_{n+1}\|<\ee_{n+1}$
  and $\max\{\max s:s\in \text{supp}(\widetilde{w}_n)\}<\min\{\min
  s:s\in\text{supp}(\widetilde{w}_{n+1})\}$.

  It is immediate that the sequence $(\widetilde{w}_n)_{n\in\nn}$
  is 1-unconditional and seminormalized. By the choice of the
  sequence $(\ee_n)_{n\in\nn}$ we have that the sequences
  $(\widetilde{w}_n)_{n\in\nn}$ and $(w_n)_{n\in\nn}$ are
  equivalent. By Lemma \ref{X_xi block are l^2} the sequence
  $(\widetilde{w}_n)_{n\in\nn}$ is equivalent to the usual basis
  of $\ell^2$. Hence the subspace $\overline{<(w_n)_{n\in\nn}>}$
  consists an isomorphic copy of $\ell^2$.
\end{proof}
\begin{cor}
  The space $X_\xi$ is $\ell^2$ saturated.
\end{cor}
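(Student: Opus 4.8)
The plan is to read the corollary off Proposition \ref{X_xi either l^2 or something in X_l} together with the observation, already recorded above, that each $X_l$ is isometric to $\ell^2$. Recall that throughout the paper a \emph{subspace} is by convention closed and infinite dimensional; thus, to prove that $X_\xi$ is $\ell^2$ saturated it suffices to show that every subspace $Z$ of $X_\xi$ contains a subspace isomorphic to $\ell^2$.

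First I would fix an arbitrary subspace $Z$ of $X_\xi$ and apply Proposition \ref{X_xi either l^2 or something in X_l} to obtain a further subspace $W$ of $Z$ satisfying one of the two stated alternatives. If the second alternative holds, then $W$ is already an isomorphic copy of $\ell^2$ and there is nothing more to do.

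So assume the first alternative: there is $l_0\in\nn$ such that $P_{l_0}|_W\colon W\to X_{l_0}$ is an isomorphic embedding. Then $W$ is isomorphic to its image $P_{l_0}(W)$, which, since $W$ is infinite dimensional and $P_{l_0}|_W$ is an isomorphism onto its range, is a closed infinite dimensional subspace of $X_{l_0}$. Because $\ff_{[l_0]}$ contains no plegma pair, $X_{l_0}=\overline{<(e_s)_{s\in\ff_{[l_0]}}>}^{\|\cdot\|}$ is isometric to $\ell^2$; and every closed infinite dimensional subspace of a Hilbert space is again a Hilbert space, hence isomorphic to $\ell^2$. Therefore $P_{l_0}(W)$, and with it $W$, is isomorphic to $\ell^2$. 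In either case $Z$ contains a subspace isomorphic to $\ell^2$, which is precisely the assertion that $X_\xi$ is $\ell^2$ saturated.

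There is essentially no obstacle in this argument: all the real content has been placed in Proposition \ref{X_xi either l^2 or something in X_l} and in the identification of the spaces $X_l$ with $\ell^2$. The only points requiring a little care are the standing convention that ``subspace'' means closed and infinite dimensional (so that the $W$ produced in the first alternative is genuinely infinite dimensional, and $P_{l_0}(W)$ is an infinite dimensional subspace of $\ell^2$), and the homogeneity of Hilbert space, which upgrades ``$W$ embeds isomorphically into $\ell^2$'' to ``$W$ is isomorphic to $\ell^2$''.
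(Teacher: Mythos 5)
Your argument is correct and is essentially the paper's (implicit) one: the corollary is stated there as an immediate consequence of Proposition \ref{X_xi either l^2 or something in X_l} together with the observation that each $X_l$ is isometric to $\ell^2$, which is exactly how you proceed. The extra care you take about the "closed infinite dimensional" convention and the homogeneity of Hilbert space is sound and fills in the only details the paper leaves to the reader.
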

Since the natural basis $(e_s)_{s\in\ff}$ of the space $X_\xi$ is
unconditional, using James' theorem (\cite{J}) and the above
corollary we get the following.
\begin{cor}
  The space $X_\xi$ is reflexive.
\end{cor}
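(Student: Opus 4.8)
The plan is to deduce reflexivity of $X_\xi$ from the preceding corollary (that $X_\xi$ is $\ell^2$ saturated) together with the unconditionality of the natural basis $(e_s)_{s\in\ff}$, via James' characterization of reflexivity for spaces with an unconditional basis. Recall that James' theorem (c.f. \cite{J}) asserts that a Banach space with an unconditional basis is reflexive if and only if it contains no isomorphic copy of $c_0$ and no isomorphic copy of $\ell^1$. Since it has already been observed, right after the definition of $X_\xi$, that $(e_s)_{s\in\ff}$ is an unconditional (Schauder) basis of $X_\xi$, it suffices to verify these two non-embedding statements.

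First I would argue that $\ell^1$ does not embed into $X_\xi$. Suppose, towards a contradiction, that $Z$ is a subspace of $X_\xi$ isomorphic to $\ell^1$. By the $\ell^2$ saturation of $X_\xi$ (the previous corollary), $Z$ contains a further subspace $W$ isomorphic to $\ell^2$. On the other hand, by the classical result that every infinite dimensional subspace of $\ell^1$ contains an isomorphic copy of $\ell^1$, the space $W$ would contain a copy of $\ell^1$; this is impossible, since $W\cong\ell^2$ is reflexive while $\ell^1$ is not. Hence no such $Z$ exists.

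The same scheme handles $c_0$: if $Z\subseteq X_\xi$ were isomorphic to $c_0$, then by $\ell^2$ saturation $Z$ would contain a subspace $W$ isomorphic to $\ell^2$; but every infinite dimensional subspace of $c_0$ contains an isomorphic copy of $c_0$, so $W$ would contain $c_0$, again contradicting the reflexivity of $\ell^2$. Therefore $X_\xi$ contains neither $c_0$ nor $\ell^1$, and since $(e_s)_{s\in\ff}$ is an unconditional basis, James' theorem yields that $X_\xi$ is reflexive.

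There is essentially no serious technical obstacle here; the only points requiring a little care are to make sure James' theorem is invoked legitimately (the basis $(e_s)_{s\in\ff}$ is genuinely an unconditional Schauder basis, as recorded earlier) and to apply the classical facts ``every infinite dimensional subspace of $\ell^1$ (resp. $c_0$) contains $\ell^1$ (resp. $c_0$)'' only to infinite dimensional subspaces. Alternatively, one can bypass these two facts and argue directly: an $\ell^2$ saturated space cannot contain any subspace isomorphic to a Banach space all of whose infinite dimensional subspaces are non-reflexive, and both $c_0$ and $\ell^1$ have this property; combined with James' theorem this again gives reflexivity of $X_\xi$.
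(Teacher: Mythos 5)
Your proposal is correct and follows exactly the paper's route: the paper deduces reflexivity in one line from the $\ell^2$ saturation of $X_\xi$, the unconditionality of $(e_s)_{s\in\ff}$, and James' theorem, and your argument simply spells out the (standard) detail that $\ell^2$ saturation rules out subspaces isomorphic to $c_0$ or $\ell^1$.
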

We close this section with the following proposition which will be
used in the next subsection.
\begin{prop}\label{X_xi getting l^2}
  Let $\g$ be a regular thin family, $M\in[\nn]^\infty$ and $(x_s)_{s\in\g}$ a seminormalized $\g$-sequence in $X_\xi$ satisfying the following:
  \begin{enumerate}
    \item[(i)] The $\g$-subsequence is plegma disjointly
    supported.
    \item[(ii)] There exists $K\in\nn$ such that $\max\{t(1):t\in\text{supp}(x_s)\}\leq K$ for all $s\in\g\upharpoonright M$.
  \end{enumerate}
  Then every $\g$-spreading model admitted by $(x_s)_{s\in\g\upharpoonright M}$ is equivalent to the usual basis of $\ell^2$.
\end{prop}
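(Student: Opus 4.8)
The plan is to show that the hypotheses force the $\g$-sequence $(x_s)_{s\in\g\upharpoonright M}$ to behave, asymptotically along plegma tuples, like a seminormalized $\ff$-block sequence in $X_\xi$, so that Lemma \ref{X_xi block are l^2} (after passing to an appropriate $\g$-spreading model) yields the conclusion. First I would normalize the setup: pass to an infinite subset of $M$ along which $(x_s)_{s\in\g\upharpoonright M}$ generates some $\g$-spreading model $(e_n)_{n\in\nn}$ (possible by Proposition \ref{B-Sp} / the existence theorem for $\ff$-spreading models applied to $\g$), keeping both hypotheses (i) and (ii), which are preserved under restriction. Since the $x_s$ are seminormalized there are constants $0<c\le C$ with $c\le\|x_s\|\le C$; the target is to produce $c',C'>0$ with
\[
c'\Big(\sum_{j=1}^l|a_j|^2\Big)^{1/2}\le\Big\|\sum_{j=1}^l a_j x_{s_j}\Big\|\le C'\Big(\sum_{j=1}^l|a_j|^2\Big)^{1/2}
\]
for every plegma $l$-tuple $(s_j)_{j=1}^l$ in $\g\upharpoonright M$ after $M(l)$, which by the definition of spreading model is exactly equivalence of $(e_n)_{n\in\nn}$ with the $\ell^2$-basis.

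For the upper estimate, the key point is hypothesis (ii): all first coordinates of all the sets appearing in $\mathrm{supp}(x_{s_j})$ lie in $\{1,\dots,K\}$. Given a norming functional $\varphi=\sum_{p=1}^q a_p f_p\in W$ of type II with $\sum a_p^2\le1$ and each $f_p$ of type I, I would split the index set $\{1,\dots,q\}$ according to which $x_{s_j}$'s the support of $f_p$ meets. Because the $x_{s_j}$ are plegma disjointly supported (hypothesis (i)), for each $j$ the sets $s\in\mathrm{supp}(x_{s_j})$ are pairwise disjoint from those of $x_{s_i}$ with $i\neq j$; combined with the type-I structure of $f_p$ (its support is a plegma tuple in $\g$, lying inside a product of blocks of equal size whose first block has size at most its minimum), a single $f_p$ can interact with only a bounded number $m(K)$ of the $x_{s_j}$, exactly as in the proof of Proposition \ref{The space X_k+1 does not contain l^1 disjointly supported spreading models of order k} (cf.\ Lemma \ref{supports} and the final estimate there). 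A Cauchy–Schwarz argument over these boundedly-overlapping blocks then gives $|\varphi(\sum a_j x_{s_j})|\le C'(\sum|a_j|^2)^{1/2}$ with $C'$ depending only on $K$ and $C$. The lower estimate is easier: since the $x_{s_j}$ are plegma disjointly supported and seminormalized, for each $j$ choose a norming functional for $x_{s_j}$, truncate it to $\mathrm{supp}(x_{s_j})$ to get a type-I functional $g_j$ of disjoint supports, and form $\varphi=\sum_j \frac{a_j}{(\sum a_i^2)^{1/2}} g_j\in W$ of type II, which witnesses $\|\sum a_j x_{s_j}\|\ge c'(\sum|a_j|^2)^{1/2}$.

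The main obstacle I expect is making precise and uniform the claim that, under hypotheses (i) and (ii), each type-I functional $f_p$ in a decomposition of a norming functional can be ``charged'' to at most boundedly many of the vectors $x_{s_j}$ — i.e. controlling the overlap purely in terms of $K$ and not $l$. This is the analogue of Lemma \ref{supports}: a type-I functional lives on a plegma tuple inside $F_1\times\cdots$ with $|F_1|\le\min F_1$, so its support has size $\le \min F_1$; if it meets $\mathrm{supp}(x_{s_j})$ then $\min F_1$ is bounded by the first coordinate of some set in that support, hence by $K$, bounding $|\mathrm{supp}(f_p)|$ by a function of $K$ and $o(\g)$; the plegma-disjoint supports then cap the number of $j$ for which this can happen. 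Once this counting lemma is isolated and proved, the two Cauchy–Schwarz estimates assemble routinely, and an application of Lemma \ref{X_xi block are l^2}-style reasoning (or simply the two-sided $\ell^2$ estimate just obtained, read through Definition \ref{Definition of spreading model}) finishes the proof.
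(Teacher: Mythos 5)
Your overall architecture coincides with the paper's proof: the disjointness of the supports gives the lower $\ell^2$ estimate, and the upper estimate comes from writing a type-II norming functional as $\varphi=\sum_q b_q f_q$, bounding for each type-I piece $f_q$ the number of indices $j$ with $\text{supp}(f_q)\cap\text{supp}(x_{s_j})\neq\emptyset$ by a quantity depending only on $K$, and then applying Cauchy--Schwarz. However, the mechanism you propose for exactly the step you flag as the main obstacle does not work for $X_\xi$. You justify the bounded overlap by bounding $|\text{supp}(f_q)|$ through the admissibility condition ``a plegma tuple inside $F_1\times\cdots$ with $|F_1|\le\min F_1$'' and Lemma \ref{supports}; but that is the norming set of $\mathfrak{X}_{k+1}$, not of $X_\xi$. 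In $X_\xi$ a type-I functional is $\sum_{j=1}^d(-1)^{\ee_j}e^*_{t_j}$ over an \emph{arbitrary} plegma $d$-tuple $(t_j)_{j=1}^d$ in $\ff$ (in $\ff$, not in $\g$), with no condition relating $d$ to $\min t_1$; consequently $|\text{supp}(f_q)|$ is not bounded by any function of $K$ (nor of $o(\g)$, which is meaningless here since $\xi$ may be infinite): a single $t_1$ with $t_1(1)\le K$ sits inside plegma tuples of arbitrary length. So the intermediate claim on which your counting rests is false, and the step as written would fail.

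The counting you need is nevertheless true, and the correct (simpler) argument is the one implicit in the paper: in any plegma tuple the first coordinates $t_1(1)<t_2(1)<\cdots<t_d(1)$ are strictly increasing, so at most $K$ members of $\text{supp}(f_q)$ have first coordinate $\le K$; by hypothesis (ii) only such members can belong to $\bigcup_j\text{supp}(x_{s_j})$, and by hypothesis (i) each of them belongs to at most one $\text{supp}(x_{s_j})$; hence $|\{j:\text{supp}(f_q)\cap\text{supp}(x_{s_j})\neq\emptyset\}|\le K$, irrespective of the length of $\text{supp}(f_q)$. With this replacement your Cauchy--Schwarz computation yields the upper constant $CK^{1/2}$ exactly as in the paper. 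Two minor slips: the restriction of a norming functional of $x_{s_j}$ to $\text{supp}(x_{s_j})$ is of type II, not type I, but type-II functionals with pairwise disjoint supports can again be combined with $\ell^2$ coefficients inside $W$, so your lower estimate survives (the paper simply invokes the disjoint lower-$\ell^2$ property of the norm); and Lemma \ref{X_xi block are l^2} plays no role in the end, since the two-sided estimate is obtained directly.
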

\begin{proof}
  Let $c,C>0$ such that $c\leq\|x_s\|\leq C$. Let also $L\in[M]^\infty$ such that the $\g$-subsequence $(x_s)_{s\in\g\upharpoonright L}$
   generates a $\g$-spreading model. Let $n\in\nn$, $(s_j)_{j=1}^n$ be a plegma $n$-tuple in $\g\upharpoonright L$ with $s_1(1)\geq L(n)$
    and $a_1,\ldots,a_n\in\rr$.
  Since $(a_j\cdot x_{s_j})_{j=1}^n$ are disjoint supported, we have that
  \[\Big\|\sum_{j=1}^na_jx_{s_j}\Big\|\geq\Big(\sum_{j=1}^n\|a_j x_{s_j}\|^2\Big)^\frac{1}{2}\geq c\Big(\sum_{j=1}^n|a_j |^2\Big)^\frac{1}{2}\]
   We will complete the proof by showing that \[\Big\|\sum_{j=1}^na_jx_{s_j}\Big\|\leq CK^\frac{1}{2}\Big(\sum_{j=1}^n|a_j |^2\Big)^\frac{1}{2}\]
  Indeed, let $\varphi\in W$. Then there exist $d\in\nn$, $f_1,\ldots,f_d\in W$ of type I and $b_1,\ldots,b_d\in\rr$,
   with $\sum_{q=1}^db_q^2\leq1$, such that $\varphi=\sum_{q=1}^db_qf_q$. For every $1\leq q\leq d$ we set
   \[E_q=\big\{j\in\{1,\ldots,n\}:\;\text{supp}(f_q)\cap\text{supp}(x_{s_j})\neq\emptyset\big\}\]
    It is easy to check that for every $1\leq q\leq d$ we have that $|E_q|\leq K$.
  \[\begin{split}
    \Big|\varphi\Big(\sum_{j=1}^na_jx_{s_j}\Big)\Big|&\leq\sum_{q=1}^d\sum_{j=1}^n|b_qa_jf_q(x_{s_j})|=
    \sum_{q=1}^d\sum_{j\in E_q}|b_qa_jf_q(x_{s_j})|\\
    &\leq \Big(\sum_{q=1}^d\sum_{j\in E_q}|b_q|^2 \Big)^\frac{1}{2}\cdot \Big(\sum_{q=1}^d\sum_{j\in E_q}|a_jf_q(x_{s_j})|^2 \Big)^\frac{1}{2}\\
    &\leq K^\frac{1}{2}\cdot \Big(\sum_{j=1}^n|a_j|^2\sum_{q=1}^d|f_q(x_{s_j})|^2 \Big)^\frac{1}{2}
    \leq K^\frac{1}{2}\cdot C \Big(\sum_{j=1}^n|a_j|^2 \Big)^\frac{1}{2}
  \end{split}\]
\end{proof}
\subsection{$\ell^1$ spreading models of $\mathfrak{X}_\xi$}

In this subsection we study the $\ell^1$ spreading models of the
space $\mathfrak{X}_\xi$. It is a direct consequence of the
definition of the norm of the space that the natural basis
$(e_s)_{s\in\ff}$ generates the usual basis of $\ell^1$ as an
$\ff$-spreading model. The main aim of this subsection is to show
that $\mathfrak{X}_\xi$ does not admit $\ell^1$ as
$\zeta$-spreading model for any $\zeta<\omega_1$ such that
$\zeta+2<\xi$. The proof of this result goes as follows. Let $\g$
be  a regular thin family  with $o(\g)<\xi$ and $(x_s)_{s\in\g}$ a
bounded $\g$-sequence  of finitely supported elements of
$\mathfrak{X}_\xi$. We consider the next two cases. In the first
one we assume that for every $s\in\g$ and $t\in\text{supp}(x_s)$,
$|s|<|t|$.  Then under the additional assumption that
$(x_s)_{s\in\g}$ is plegma disjointly supported, we show that
$(x_s)_{s\in\g}$ does not admit $\ell^1$ as a $\g$-spreading
model. The second case is the complemented one. Namely we assume
that for every $s\in\g$ and $t\in\text{supp}(x_s)$, $|t|\leq|s|$
and again we show that $(x_s)_{s\in\g}$ does not admit $\ell^1$ as
a $\g$-spreading model. The final result follows from the above
two cases.

Let us point out that a similar  method was used in the proof of
Theorem \ref{non plegma preserving maps}. To some extent Theorem
\ref{non plegma preserving maps} can be viewed as the set
theoretical analogue of the present result.
\subsubsection{Case I}
The following lemma is similar to Lemma \ref{Lemma for 2 vectos of
norm almost 2}. For the sequel let $h:D\to\rr$ be the function
defined in Definition \ref{defoffunction}.
\begin{lem}\label{X xi going forward}
  Let $(\ee,\delta)\in D$, with $\ee>0$. Let also $x_1,x_2\in B_{X_\xi}$ with disjoint finite supports satisfying the  following:
  \begin{enumerate}
    \item[(a)] $\|x_1+x_2\|>2-2\ee$ and
    \item[(b)] For every $t_1\in\text{supp}(x_1)$ and
    $t_2\in\text{supp}(x_2)$ the pair $(t_2,t_1)$ is not
    plegma.
  \end{enumerate} Then for every $G_1\subseteq\text{supp}(x_1)$ such that $\|x_1|_{G_1^c}\|\leq\delta$
  there exists $G_2\subseteq\text{supp}(x_2)$ such that
  \begin{enumerate}
    \item[(i)] $\|x_2|_{G_2^c}\|\leq h(\ee,\delta)$.
    \item[(ii)] For every $t_2\in G_2$ there exists $t_1\in G_1$ such that the pair $(t_1,t_2)$ is plegma.
  \end{enumerate}
\end{lem}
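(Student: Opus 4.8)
The plan is to mimic closely the proof of Lemma \ref{Lemma for 2 vectos of norm almost 2}, since the norm of $X_\xi$ is built from the same ``$\ell^2$ of $\ell^1$'' pattern as that of $\mathfrak{X}_{k+1}$, with allowable families replaced by plegma tuples. First I would fix a norming functional $\varphi\in W$ with $\varphi(x_1+x_2)>2-2\ee$. Since $\|x_1\|,\|x_2\|\le 1$, this forces $\varphi(x_1)>1-2\ee$ and $\varphi(x_2)>1-2\ee$. By definition of $W$, $\varphi$ is of type II, so $\varphi=\sum_{p=1}^q\lambda_p f_p$ with $\sum_p\lambda_p^2\le 1$ and $f_1,\dots,f_q$ pairwise disjointly supported functionals of type I, each of the form $f_p=\sum_{j}(-1)^{\ee_j^p}e_{t_j^p}^*$ for some plegma tuple $(t_j^p)_j$ in $\ff$.

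Next I would split the index set $\{1,\dots,q\}$ into $I_1=\{p:\operatorname{supp}(f_p)\cap G_1\neq\emptyset\}$ and $I_2=\{p:\operatorname{supp}(f_p)\subseteq G_1^c\}$, and set $\varphi_1=\sum_{p\in I_1}\lambda_p f_p$, $\varphi_2=\sum_{p\in I_2}\lambda_p f_p$. From $\|x_1|_{G_1^c}\|\le\delta$ we get $\varphi_2(x_1)\le\delta$, hence $\varphi_1(x_1)>1-2\ee-\delta$. The Cauchy--Schwarz computation, verbatim as in the earlier lemma (using that $\sum_{p\in I_1}\tfrac{f_p(x_1)}{(\sum_j f_j(x_1)^2)^{1/2}}f_p\in W$ together with $\|x_1\|\le 1$), yields $(\sum_{p\in I_1}\lambda_p^2)^{1/2}>1-2\ee-\delta$, therefore $(\sum_{p\in I_2}\lambda_p^2)^{1/2}<(1-(1-2\ee-\delta)^2)^{1/2}$, and again by Cauchy--Schwarz $\varphi_2(x_2)<(1-(1-2\ee-\delta)^2)^{1/2}$, so $\varphi_1(x_2)>1-2\ee-(1-(1-2\ee-\delta)^2)^{1/2}$. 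Then I would define $G_2=\operatorname{supp}(x_2)\cap\operatorname{supp}(\varphi_1)$; the estimate $\|x_2|_{G_2}\|^2+\|x_2|_{G_2^c}\|^2\le\|x_2\|^2\le 1$ together with $\|x_2|_{G_2}\|\ge\varphi_1(x_2)$ gives exactly $\|x_2|_{G_2^c}\|\le h(\ee,\delta)$, which is conclusion (i).

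The genuinely new point, and the place where hypothesis (b) of the statement enters, is conclusion (ii): for every $t_2\in G_2$ one must exhibit $t_1\in G_1$ with $(t_1,t_2)$ plegma. Here I would argue as follows. If $t_2\in G_2$, then $t_2\in\operatorname{supp}(\varphi_1)$, so $t_2$ belongs to $\operatorname{supp}(f_p)$ for some $p\in I_1$; pick the plegma tuple $(t_j^p)_j$ defining $f_p$, so $t_2=t_{j_0}^p$ for some $j_0$, and by definition of $I_1$ there is some $t_{j_1}^p\in\operatorname{supp}(f_p)\cap G_1$. Since $(t_j^p)_j$ is plegma, the pair $(t_{j_1}^p,t_{j_0}^p)$ or $(t_{j_0}^p,t_{j_1}^p)$ is plegma according as $j_1<j_0$ or $j_0<j_1$ (and $j_0\ne j_1$ because $\operatorname{supp}(x_1)\cap\operatorname{supp}(x_2)=\emptyset$, so $t_{j_0}^p\ne t_{j_1}^p$). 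In the case $j_1<j_0$ we are done with $t_1=t_{j_1}^p\in G_1$. In the case $j_0<j_1$ the pair $(t_2,t_{j_1}^p)$ would be plegma with $t_{j_1}^p\in\operatorname{supp}(x_1)$, i.e.\ a plegma pair $(t_2,t_1')$ with $t_1'\in\operatorname{supp}(x_1)$, contradicting hypothesis (b). Hence only the first case can occur, giving (ii). The main obstacle is precisely this last combinatorial step: one has to make sure the plegma tuples underlying the type-I functionals can be used to transfer the plegma relation across, and that hypothesis (b) rules out the ``wrong orientation'' — but since $W$ is built using plegma tuples rather than arbitrary allowable sets, the ordering of indices in $(t_j^p)_j$ carries exactly the plegma information needed, so no further Ramsey-type refinement of the supports is required.
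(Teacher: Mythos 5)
Your proof is correct and follows essentially the same route as the paper: the same norming functional decomposition $\varphi=\varphi_1+\varphi_2$ according to whether the type~I components meet $G_1$, the same choice $G_2=\mathrm{supp}(x_2)\cap\mathrm{supp}(\varphi_1)$, and the same Cauchy--Schwarz estimates yielding $\|x_2|_{G_2^c}\|\leq h(\ee,\delta)$. The only difference is that you spell out the combinatorial verification of (ii) (which the paper dismisses as ``easy to see''), and your argument there — using that subpairs of the plegma tuple defining $f_p$ are plegma and that hypothesis (b) excludes the wrong orientation — is exactly right.
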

\begin{proof}
  Let $\varphi \in W$ such that $\varphi(x_1+x_2)>2-2\ee$. Then there exist $d\in\nn$, $f_1,\ldots,f_d\in W$ of type
   I with disjoint supports and $a_1,\ldots,a_d\in\rr$ with $\sum_{q=1}^da_q^2\leq1$ such that $\varphi=\sum_{q=1}^da_qf_q$.
    It is immediate that $\varphi(x_1),\varphi(x_2)>1-2\ee$.
  Let $A_1=\{q\in\{1,\ldots,d\}:\;\text{supp}(f_q)\cap G_1\neq\emptyset\}$ and $A_2=\{1,\ldots,d\}\setminus A_1$.
  We define $\varphi_1=\sum_{q\in A_1}a_q f_q$ and $\varphi_2=\varphi-\varphi_1=\sum_{q\in A_2}a_qf_q$.
   Let $G_2=\text{supp}(x_2)\cap\text{supp}(\varphi_1)$.
  It is easy to see that (ii) is satisfied and $\varphi(x_2|_{G_2})=\varphi_1(x_2|_{G_2})=\varphi_1(x_2)$. Notice that
  \[\begin{split}1-2\ee-\delta&<\varphi(x_1|_{G_1})=\varphi_1(x_1|_{G_1})=\sum_{q\in A_1}a_qf_q(x_1|_{G_1})\\
  &\leq \Big(\sum_{q\in A_1}a_q^2\Big)^\frac{1}{2}\Big(\sum_{q\in A_1}f_q(x_1|_{G_1})^2\Big)^\frac{1}{2}\leq
   \Big(\sum_{q\in A_1}a_q^2\Big)^\frac{1}{2}\end{split}\]
  Since $\sum_{q=1}^da_q^2\leq1$, we get that
  \[\sum_{q\in A_2}a_q^2<1-(1-2\ee-\delta)^2\]
  Hence \[|\varphi_2(x_2)|\leq \sum_{q\in A_2}|a_qf_q(x_2)|\leq\Big(\sum_{q\in A_2}|a_q|^2\Big)^\frac{1}{2}
   \Big(\sum_{q\in A_2}|f_q(x_2)|^2\Big)^\frac{1}{2}<(1-(1-2\ee-\delta)^2)^\frac{1}{2}\]
  Thus \[\|x_2|_{G_2}\|\geq \varphi(x_2|_{G_2})=\varphi_1(x_2)=\varphi(x_2)-\varphi_2(x_2)>1-2\ee-(1-(1-2\ee-\delta)^2)^\frac{1}{2}\]
  By the definition of the space $X_\xi$ we have that
  $\|x_2\|^2\geq\|x_2|_{G_2}\|^2+\|x_2|_{G_2^c}\|^2$.
  Hence $\|x_2|_{G_2^c}\|\leq(1-(1-2\ee-(1-(1-2\ee-\delta)^2)^\frac{1}{2})^2)^\frac{1}{2}=h(\ee,\delta)$.
\end{proof}
The proof of the following lemma is similar to the one of the
previous lemma.
\begin{lem}\label{X xi going back}
  Let $\ee,\delta,x_1,x_2$ be as in Lemma \ref{X xi going
  forward}. Then for every $G_2\subseteq\text{supp}(x_2)$ such that $\|x_2|_{G_2^c}\|\leq\delta$ there exists
   $G_1\subseteq\text{supp}(x_1)$ such that
  \begin{enumerate}
    \item[(i)] $\|x_1|_{G_1^c}\|\leq h(\ee,\delta)$.
    \item[(ii)] For every $t_1\in G_1$ there exists $t_2\in G_2$ such that the pair $(t_1,t_2)$ is plegma.
  \end{enumerate}
\end{lem}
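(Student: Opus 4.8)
The statement to prove is Lemma \ref{X xi going back}, the "backward" analogue of Lemma \ref{X xi going forward}. Let me think about how the proof would go.

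The plan is to mirror the proof of Lemma \ref{X xi going forward}, exchanging the roles of $x_1$ and $x_2$, but being careful about the one place where the asymmetry of plegma enters: the hypothesis (b) says that for $t_1\in\text{supp}(x_1)$ and $t_2\in\text{supp}(x_2)$ the pair $(t_2,t_1)$ is never plegma, and the conclusion (ii) asks for each selected $t_1\in G_1$ to admit some $t_2\in G_2$ with $(t_1,t_2)$ plegma.

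\medskip

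First I would take $\varphi\in W$ with $\varphi(x_1+x_2)>2-2\ee$, and write $\varphi=\sum_{q=1}^d a_q f_q$ with $f_1,\ldots,f_d\in W$ of type I, disjointly supported, and $\sum_{q=1}^d a_q^2\le 1$. Since $\|x_1\|,\|x_2\|\le 1$, the inequality $\varphi(x_1+x_2)>2-2\ee$ forces $\varphi(x_1)>1-2\ee$ and $\varphi(x_2)>1-2\ee$. Now split $\{1,\ldots,d\}$ according to whether $\text{supp}(f_q)$ meets the given set $G_2\subseteq\text{supp}(x_2)$: set $A_2=\{q:\text{supp}(f_q)\cap G_2\ne\emptyset\}$ and $A_1=\{1,\ldots,d\}\setminus A_2$, and put $\varphi_2=\sum_{q\in A_2}a_qf_q$, $\varphi_1=\varphi-\varphi_2$. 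Define $G_1=\text{supp}(x_1)\cap\text{supp}(\varphi_2)$. The key point making (ii) work is that each $f_q$ is of type I, hence supported on (the sets $e_t^*$ for $t$ in) a single plegma tuple; so if $t_1\in G_1$, then $t_1\in\text{supp}(f_q)$ for some $q\in A_2$, and $f_q$ also charges some $t_2\in G_2$, and since $f_q$'s index set is a plegma tuple while $(t_2,t_1)$ is excluded by hypothesis (b), the only possibility is that $(t_1,t_2)$ is plegma. This is exactly where the forward/backward asymmetry is handled, and I expect this to be the only genuinely new point relative to Lemma \ref{X xi going forward}.

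\medskip

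The remaining estimate is a verbatim transcription of the argument in Lemma \ref{X xi going forward} with the indices $1$ and $2$ interchanged. From $\varphi(x_2|_{G_2})=\varphi_2(x_2|_{G_2})=\varphi_2(x_2)$ (the second equality because $\text{supp}(\varphi_1)\cap G_2=\emptyset$) and $\varphi(x_2)>1-2\ee$, $\|x_2|_{G_2^c}\|\le\delta$, we get $\varphi_2(x_2)>1-2\ee-\delta$. Cauchy--Schwarz over $A_2$, together with the fact that the normalized combination $\sum_{q\in A_2}\frac{f_q(x_2)}{(\sum f_j(x_2)^2)^{1/2}}f_q$ lies in $W$ and $\|x_2\|\le 1$, yields $(\sum_{q\in A_2}a_q^2)^{1/2}>1-2\ee-\delta$, hence $\sum_{q\in A_1}a_q^2<1-(1-2\ee-\delta)^2$. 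Then $|\varphi_1(x_1)|\le(\sum_{q\in A_1}a_q^2)^{1/2}(\sum_{q\in A_1}f_q(x_1)^2)^{1/2}<(1-(1-2\ee-\delta)^2)^{1/2}$, so $\|x_1|_{G_1}\|\ge\varphi(x_1|_{G_1})=\varphi_2(x_1)=\varphi(x_1)-\varphi_1(x_1)>1-2\ee-(1-(1-2\ee-\delta)^2)^{1/2}$. Finally, since $x_1|_{G_1}$ and $x_1|_{G_1^c}$ have disjoint supports the definition of $\|\cdot\|$ on $X_\xi$ gives $\|x_1\|^2\ge\|x_1|_{G_1}\|^2+\|x_1|_{G_1^c}\|^2$, whence $\|x_1|_{G_1^c}\|\le(1-(1-2\ee-(1-(1-2\ee-\delta)^2)^{1/2})^2)^{1/2}=h(\ee,\delta)$, which is (i). Note $(\ee,\delta)\in D$ guarantees all the square roots are of nonnegative quantities, so $h(\ee,\delta)$ is well defined.

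\medskip

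The main (and really the only) obstacle is making precise the claim in the second paragraph that for $t_1\in G_1$ the desired $t_2\in G_2$ can be found \emph{inside the same type I functional} $f_q$ and that plegma-ness then follows from the exclusion hypothesis (b); once one unwinds the definition of a type I functional $f_q=\sum_{j=1}^{m}(-1)^{\ee_j}e_{u_j}^*$ with $(u_j)_{j=1}^m\in\text{Plm}(\ff)$, this is immediate because any two members of a plegma tuple form a plegma pair in one of the two orders, and hypothesis (b) kills the order $(t_2,t_1)$. Everything else is a direct dualization of Lemma \ref{X xi going forward} and requires no new idea.
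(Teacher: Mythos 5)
Your proof is correct and is exactly the dualization the paper intends: the paper omits the proof of this lemma, saying only that it is similar to that of Lemma \ref{X xi going forward}, and your argument mirrors that proof with the roles reversed — splitting the indices by whether $\text{supp}(f_q)$ meets the given $G_2$, setting $G_1=\text{supp}(x_1)\cap\text{supp}(\varphi_2)$, using hypothesis (b) together with the fact that the support of a type I functional is a plegma tuple to orient the pair as $(t_1,t_2)$, and closing with the square-sum inequality for the norm of $X_\xi$. One local misstatement: the equality $\varphi_2(x_2|_{G_2})=\varphi_2(x_2)$ need not hold, since an $f_q$ with $q\in A_2$ may also charge points of $\text{supp}(x_2)\setminus G_2$ (your parenthetical justifies only the first equality in that chain); this is harmless, because the bound $\big(\sum_{q\in A_2}a_q^2\big)^{1/2}>1-2\ee-\delta$ follows directly from $\varphi_2(x_2|_{G_2})>1-2\ee-\delta$ by applying Cauchy--Schwarz to the restricted vector $x_2|_{G_2}$ (of norm at most $1$ by unconditionality), which is precisely how the paper handles the corresponding step with $x_1|_{G_1}$ in the forward lemma.
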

Under the above lemmas we have the following.
\begin{prop}\label{X_xi non l^1 isometric spr mod with long lengths}
  Let $\g$ regular thin family, $M\in[\nn]^\infty$ and $(x_s)_{s\in\g}$ a $\g$-sequence in $B_{X_\xi}$ satisfying the following
  \begin{enumerate}
    \item[(i)] The $\g$-subsequence $(x_s)_{s\in\g\upharpoonright
    M}$ is plegma disjointly supported.
    \item[(ii)] For every plegma pair $(s_1,s_2)$ in $\g\upharpoonright M$, every $t_1\in\text{supp}(x_{s_1})$ and
    $t_2\in\text{supp}(x_{s_2})$ the pair $(t_2,t_1)$ is not
    plegma.
    \item[(iii)]For every $s\in\g\upharpoonright M$ and every $t\in\text{supp}(x_s)$ we have that $|t|> |s|$.
  \end{enumerate}
  Then the $\g$-subsequence $(x_s)_{s\in\g\upharpoonright M}$ does not admit the usual basis of $\ell^1$ as a $\g$-spreading model.
\end{prop}
\begin{proof}
  Assume on the contrary that the $\g$-subsequence $(x_s)_{s\in\g\upharpoonright M}$ admits the
  usual basis of $\ell^1$ as $\g$-spreading model. We inductively choose sequences
  $(\delta_n)_{n=0}^\infty$ and $(\ee_n)_{n\in\nn}$ as follows.
  Let $\delta_0=0$ and pick $0<\ee_1<\frac12-\frac14\sqrt{2}$.
  Then $(\ee_1,\delta_0)\in D\setminus J_3$ and therefore $0<h(\ee_1,\delta_0)<1$.
  We set $\delta_1=h(\ee_1,\delta_0)$. Suppose that $\ee_1,\ldots,\ee_n$ and $\delta_0,\ldots,\delta_n$ have been
   chosen such that for every $1\leq k\leq n$
  \[1>\delta_k=h(\ee_k,\delta_{k-1})>0\]
  Then pick sufficiently small $\ee_{n+1}>0$ such that $(\ee_{n+1},\delta_n)\in D\setminus J_3$. Then we get $1>h(\ee_{n+1},\delta_n)>0$ and
  we set
  \[\delta_{n+1}=h(\ee_{n+1},\delta_n)\]
  It is clear that for every $n\in\nn$ we have that $0<\delta_n<1$.

  Let $L\in[M]^\infty$ such that the $\g$-subsequence $(x_s)_{s\in\g\upharpoonright L}$ generates
  the usual basis of $\ell^1$ as $\g$-spreading model with respect to $(\ee_n)_{n\in\nn}$.
  We assume passing to an infinite subset of $L$, that $\g$ is very large in $L$. Let $s_0\in\g$ such that $s_0\sqsubseteq L(2\nn)$.
  We set
  \[K=\max\big\{\max t:t\in\text{supp}(x_{s_0})\big\}\]

  \textbf{Claim:} Let $L_1=\{m\in L(2\nn):m>\max s_0\}$. For every
  $s\in\g\upharpoonright L_1$ there exists $G_s\subseteq\text{supp}(x_s)$ such that
  $\|x_s|_{G_s^c}\|\leq\delta_{|s_0|}$ and for all $t\in G_s$, $t(1)<K$ .
\begin{proof}
  [Proof of Claim]  Let $s\in\g\upharpoonright L_1$. Then $s\in\g\upharpoonright L(2\nn)$
  satisfying $\max s_0<\min s$ and therefore by Proposition \ref{accessing everything with plegma path of length |s_0|}
  there exists plegma path $(s_j)_{j=0}^{|s_0|}$ in $\g\upharpoonright\upharpoonright L$ from $s_0$ to $s$. Since
  the $\g$-subsequence $(x_s)_{s\in\g\upharpoonright L}$ generates the usual basis of $\ell^1$ as $\g$-spreading model with respect to
  $(\ee_n)_{n\in\nn}$,
  we have that for every $1\leq j\leq |s_0|$,
  \[\|x_{s_j}+x_{s_{j-1}}\|>2-2\ee_j\] We set $G_0=\text{supp}(x_{s_0})$. Using
  Lemma \ref{X xi going forward} inductively for $j=1,\ldots,|s_0|$,
  we obtain $G_1,\ldots,G_{|s_0|}$ satisfying the following:
  \begin{enumerate}
    \item[(i)] $G_j\subseteq\text{supp}(x_{s_j})$
    \item[(ii)] $\|x_{s_j}|_{G_j^c}\|\leq \delta_j$ and
    \item[(iii)] for every $t\in G_j$ there exists $t'\in G_{j-1}$ such that the pair $(t',t)$ is plegma.
  \end{enumerate}
  Hence for every $t\in G_{|s_0|}$ there exists plegma path $(t_j)_{j=0}^{|s_0|}$ of length $|s_0|$ such that
  $t_{|s_0|}=t$ and $t_j\in G_j$ for all $0\leq j\leq |s_0|$. Since $\ff$ is regular thin
  we have that $|t_j|\geq|t_0|$, for all $1\leq j\leq |s_0|$.
  Hence by assumption (iii), we have that $|t_j|>|s_0|$ for all $0\leq j\leq |s_0|$.
  Therefore
  \[t(1)=t_{|s_0|}(1)<t_{|s_0|-1}(2)<\ldots<t_{|s_0|-j}(j+1)<\ldots<t_0(|s_0|+1)\leq
  K\] and the proof of the claim is complete.
\end{proof}
  For every $s\in\g\upharpoonright L_1$ we set $x_s^1=x_s|_{G_s}$ and $x_s^2=x_s-x_s^1$. We choose $L_2\in[L_1]^\infty$ such that
  $(x_s^1)_{s\in\g\upharpoonright L_2}$ and $(x_s^2)_{s\in\g\upharpoonright L_2}$ generate $(e^1_n)_{n\in\nn}$ and $(e^2_n)_{n\in\nn}$
  respectively as $\g$-spreading models. Then $(e^1_n)_{n\in\nn}$
  is either trivial or by Lemma \ref{X_xi getting l^2}
  and the above claim is equivalent to the usual basis of
  $\ell^2$.
  Hence by Corollary \ref{ultrafilter property for ell^1 spreading
  models} $(e^2_n)_{n\in\nn}$ is
  the usual basis of $\ell^1$ and thus $\|e_1^2\|=1$. The latter consists a contradiction since
   $\|x_s^2\|=\|x_s|_{G_s^c}\|\leq\delta_{|s_0|}<1$, for all $s\in\g\upharpoonright L_2$.
\end{proof}
\subsubsection{Case II}
\begin{lem}\label{X_xi not existing map Phi}
  Let $\mathcal{H}$ regular thin family with $o(\mathcal{H})<o(\ff)$ and $M\in[\nn]^\infty$. Then there is no map
  $\Phi:\mathcal{H}\upharpoonright M\to\mathcal{P}(\ff)$ satisfying for every $v\in\mathcal{H}\upharpoonright M$ the following:
  \begin{enumerate}
    \item[(i)] $\Phi(v)\neq\emptyset$.
    \item[(ii)] $|t|\leq|v|$, for all $t\in\Phi(v)$.
    \item[(iii)] $v(i)\leq t(i)$, for all $t\in\Phi(v)$ and $1\leq i\leq |t|$.
  \end{enumerate}
\end{lem}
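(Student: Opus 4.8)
The plan is to argue by contradiction, converting a putative $\Phi$ into a witness that some element of $\ff$ fails to be $\sqsubseteq$-maximal in $\widehat{\ff}$. So suppose such a $\Phi$ exists. Since $o(\mathcal{H})<o(\ff)$, Proposition \ref{corollary by Gasparis} furnishes $L\in[M]^\infty$ with $\mathcal{H}\upharpoonright L\sqsubset\ff\upharpoonright L$. Restricting $\Phi$ to $\mathcal{H}\upharpoonright L$ (still a map into $\mathcal{P}(\ff)$ satisfying (i)--(iii), as these are pointwise conditions), fix any $v\in\mathcal{H}\upharpoonright L$ (nonempty, since regular thin families are large) and any $t\in\Phi(v)$ (nonempty by (i)); set $k=|t|$. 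By $\mathcal{H}\upharpoonright L\sqsubset\ff\upharpoonright L$ there is $s\in\ff\upharpoonright L$ with $v\sqsubset s$ a \emph{proper} initial segment, so $|s|>|v|\geq k$ (the last inequality by (ii)). Hence $s|(k+1)$ is defined; being an initial segment of $s\in\ff$ it lies in $\widehat{\ff}$, and $s(i)=v(i)$ for all $i\leq|v|$.

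Now pick an integer $N$ with $N\geq s(k+1)$ and $N>t(k)$ (when $t=\emptyset$, i.e. $k=0$, only the first requirement is imposed), and put $t'=t\cup\{N\}$, so that $t\sqsubset t'$ with $t'$ a proper end-extension of $t$. Comparing $s|(k+1)$ with $t'$ coordinatewise: for $1\leq i\leq k$ the $i$-th element of $s|(k+1)$ is $s(i)=v(i)\leq t(i)=t'(i)$ by (iii), and the $(k+1)$-st element of $s|(k+1)$ is $s(k+1)\leq N=t'(k+1)$. Thus $t'$ dominates the element $s|(k+1)\in\widehat{\ff}$ coordinatewise, and the spreading property of $\widehat{\ff}$ yields $t'\in\widehat{\ff}$.

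Therefore $t$ admits a proper extension inside $\widehat{\ff}$, i.e. $t$ is not a $\sqsubseteq$-maximal element of $\widehat{\ff}$; but $\ff$ regular thin means $\ff$ is exactly the set of $\sqsubseteq$-maximal elements of $\widehat{\ff}$, so $t\notin\ff$, contradicting $t\in\Phi(v)\subseteq\ff$. The argument has essentially no analytic or deep combinatorial content; the only things to check are the length bookkeeping ($|s|>|v|\geq k$ is precisely what makes $s|(k+1)$ available, and the borderline case $k=|v|$, where $s|(k+1)$ might coincide with $s$ itself, is harmless since we only use $s|(k+1)\in\widehat{\ff}$) and the choice of $N$, which must simultaneously keep $t'$ a genuine proper extension of $t$ and dominate the last coordinate of $s|(k+1)$ — both are arranged by taking $N\geq\max\{s(k+1),\,t(k)+1\}$.
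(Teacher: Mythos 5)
Your proof is correct and takes essentially the same route as the paper's: apply Proposition \ref{corollary by Gasparis} to get $L$ with $\mathcal{H}\upharpoonright L\sqsubset\ff\upharpoonright L$, pick $v\sqsubset s$ in $\ff\upharpoonright L$ and $t\in\Phi(v)$, and derive a contradiction from the coordinatewise domination $v(i)\leq t(i)$ together with $|t|\leq|v|<|s|$. The only difference is that you spell out the step the paper compresses into ``the latter contradicts that $u\in\ff$'', namely the one-element extension $t'=t\cup\{N\}$, the spreading property of $\widehat{\ff}$, and the fact that elements of a regular thin family are $\sqsubseteq$-maximal in $\widehat{\ff}$, so $t$ could not lie in $\ff$.
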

\begin{proof}
  Suppose on the contrary that there exists a map $\Phi:\mathcal{H}\upharpoonright M\to\mathcal{P}(\ff)$ satisfying (i)-(iii).
  By Proposition \ref{corollary by Gasparis}
  there exists $L\in[M]^\infty$ such that $\mathcal{H}\upharpoonright L\sqsubset\ff\upharpoonright L$.
   Let $v\in\mathcal{H}\upharpoonright L$ and
  $u\in \ff\upharpoonright L$ such that $v\sqsubset u$. Let
  $t\in\Phi(v)$.
  Then $|t|<|u|$ and $u(i)=v(i)\leq t(i)$, for all $1\leq i\leq |t|$. The latter contradicts that $u\in\ff$.
\end{proof}
\begin{prop}\label{X_xi non spr mod with big min and small lengths}
  Let $\g$ be a regular thin family with $o(\g)<\xi$, $M\in[\nn]^\infty$ and
  $(x_s)_{s\in\g}$ a $\g$-sequence in $B_{X_\xi}$ satisfying the following:
  \begin{enumerate}
    \item[(i)] For every $s\in\g\upharpoonright M$ the vector $x_s$ is of finite support.
    \item[(ii)] For every plegma pair $(s_1,s_2)$ and every $t_1\in\text{supp}(x_{s_1})$,
    $t_2\in\text{supp}(x_{s_2})$ the pair
    $(t_2,t_1)$ is not plegma.
    \item[(iii)] For every $s\in\g\upharpoonright M$ and $t\in\text{supp}(x_s)$, we have that $|t|\leq|s|$.
    \item[(iv)] For every $s\in\g\upharpoonright M$ and $t\in\text{supp}(x_s)$, if $\min s=M(k)$ then $t(1)\geq k$.
  \end{enumerate}
  Then the $\g$-subsequence $(x_s)_{s\in\g\upharpoonright M}$ does not admit the usual basis of $\ell^1$ as a $\g$-spreading model.
\end{prop}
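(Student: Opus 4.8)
The plan is to argue by contradiction and reduce to Lemma \ref{X_xi not existing map Phi}, which rules out a map $\Phi:\mathcal{H}\upharpoonright M\to\mathcal{P}(\ff)$ with the three listed properties as soon as $o(\mathcal{H})<o(\ff)=\xi$. So suppose that $(x_s)_{s\in\g\upharpoonright M}$ generates the usual basis of $\ell^1$ as a $\g$-spreading model, with lower constant $c>0$, and pass to the pullback family $\mathcal{H}=\g(M^{-1})$, for which $o(\mathcal{H})=o(\g)<\xi$ by the remarks following Definition \ref{defn backwards shifting}. The aim is to manufacture from the $\g$-sequence a forbidden $\Phi$ on $\mathcal{H}$ (restricted to a suitable infinite set), with $\Phi(v)$ a nonempty subset of $\mathrm{supp}(x_{M(v)})\subseteq\ff$.

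First I would normalise the $\g$-sequence by several Ramsey reductions. Using Theorem \ref{th2}, Proposition \ref{ramseyforplegma} and the Pudl\'ak--R\"odl analysis of Lemma \ref{Pudlak Rodl corollary}, shrink $M$ so that $x_s\neq 0$ for all $s\in\g\upharpoonright M$ (legitimate since an $\ell^1$ spreading model is nontrivial, by Theorem \ref{Theorem equivalent forms for having norm on the spreading model}), so that $|\mathrm{supp}(x_s)|$ equals a fixed $N$, and so that, writing $\mathrm{supp}(x_s)=\{t^s_1,\dots,t^s_N\}$, the dependence of each $t^s_i$ on $s$ is canonical. Here hypothesis (iv) is essential: it forces $t^s_i(1)\to\infty$ as $\min s\to\infty$, hence every coordinate of every $t^s_i$ tends to infinity, which eliminates the ``stationary'' alternatives in the canonical description; combining this with one more application of Proposition \ref{ramseyforplegma} to the relations ``$t^{s_1}_i=t^{s_2}_{i'}$'' along plegma pairs and with the plegma-path connectivity of Proposition \ref{accessing everything with plegma path of length |s_0|}, I would further arrange that $(x_s)_{s\in\g\upharpoonright M}$ is plegma disjointly supported.

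The second half extracts $\Phi$ from the norming set $W$ of $X_\xi$. Since the norm of $X_\xi$ combines type I functionals (sums $\sum_{u\in E}\pm e^*_u$ over plegma tuples $E$ in $\ff$) in an $\ell^2$ manner with coefficients of square-sum at most $1$, a Cauchy--Schwarz estimate shows that realising $\|\sum_{j=1}^l\mathrm{sign}(a_j)x_{s_j}\|\ge c\,l$ for a plegma $l$-tuple $(s_j)$ forces some \emph{single} plegma tuple $E$ in $\ff$ to meet $\mathrm{supp}(x_{s_j})$ for at least $c^2 l$ of the indices $j$; hypothesis (ii) — which forbids $(t',t)$ from being plegma whenever $t\in\mathrm{supp}(x_{s_j})$, $t'\in\mathrm{supp}(x_{s_{j'}})$, $j<j'$ — then forces the elements of $E$ lying in successive supports to advance monotonically along $E$. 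Passing this structure back through the canonical form — exactly as in Lemma \ref{firsttheoreminadm}, where plegma-canonicity plus a shift by an infinite set yields coordinatewise domination of the output — I would obtain a nonempty $\Phi(v)\subseteq\mathrm{supp}(x_{M(v)})\subseteq\ff$ with $|t|\le|M(v)|=|v|$ for all $t\in\Phi(v)$ (which is property (ii) of the lemma, by hypothesis (iii)), and with $v(i)\le t(i)$ for all $1\le i\le|t|$ (coordinate $1$ being precisely hypothesis (iv), the higher coordinates coming from the norm analysis and a final sparsification of $M$). By Lemma \ref{X_xi not existing map Phi} no such $\Phi$ exists, and the contradiction proves the proposition.

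The main obstacle is this last extraction step: upgrading the first-coordinate control of hypothesis (iv) to the full coordinatewise domination $v(i)\le t(i)$, uniformly in $v$, using only the qualitative fact that long plegma tuples in $\ff$ are forced by the $\ell^1$ lower estimate to ``scan'' many supports monotonically. This is the genuine analogue here of the careful plegma-path bookkeeping in the proof of Theorem \ref{non plegma preserving maps} (of which, as remarked in the text, the present statement is a Banach-space counterpart), and carrying it out requires iterating the canonical-form reductions with bounds uniform in $l$ (for instance through a diagonalisation) and using the length constraint $|t|\le|s|$ of hypothesis (iii) to bound how deep into $E$ a support can reach. By contrast, the Cauchy--Schwarz estimate inside $W$, the Ramsey reductions, the reduction to plegma disjoint supports, and the final appeal to Lemma \ref{X_xi not existing map Phi} are all routine.
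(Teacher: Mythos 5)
You have the right frame --- contradiction, pullback of $\g$ along $M$ (resp.\ $L$) to a family $\mathcal{H}$ of order $o(\g)<\xi$, construction of a forbidden map $\Phi$ with $\Phi(v)\subseteq\text{supp}(x_{M(v)})$, and the final appeal to Lemma \ref{X_xi not existing map Phi} --- and this is indeed how the paper argues. But the step you yourself flag as ``the main obstacle'', namely upgrading the first-coordinate information of hypothesis (iv) to the full coordinatewise domination $v(i)\le t(i)$ uniformly in $v$, is precisely the heart of the proof, and your proposal does not carry it out; the sketch you offer in its place (canonical Pudl\'ak--R\"odl forms, a reduction to plegma disjointly supported sequences, and a Cauchy--Schwarz count showing that one type I functional must meet $\ge c^2 l$ of the supports) is not shown to produce that domination, and the Cauchy--Schwarz inference itself is dubious without a disjointness hypothesis that the proposition does not provide and that your reduction is not shown to achieve while preserving (ii)--(iv).

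The paper's mechanism is different and quantitative. First, a ``going back'' lemma (Lemma \ref{X xi going back}, the mirror of Lemma \ref{X xi going forward}): if $\|x_1+x_2\|>2-2\ee$ and hypothesis (ii) holds, then from a subset $G_2\subseteq\text{supp}(x_2)$ carrying all but $\delta$ of the norm one extracts $G_1\subseteq\text{supp}(x_1)$ carrying all but $h(\ee,\delta)$ of the norm such that every $t_1\in G_1$ is the first member of a plegma pair with some $t_2\in G_2$; this uses the $\ell^2$ combination of type I functionals in the norming set $W$. Second, the sequences $(\ee_n),(\delta_n)$ are built \emph{in reverse}, with $h(\ee_n,\delta_n)<\delta_{n-1}$ and $\delta_0=0.01$, so that the loss stays below $1$ after $|v|$ backward steps, uniformly in $|v|$. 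Third, for each $v$ one chooses a specific plegma path $(v_j)_{j=1}^{|v|}$ in $\mathcal{H}$ with $v_j(1)=v(j)-(j-1)$ (a shifted-tail path, in the spirit of Proposition \ref{accessing everything with plegma path of length |s_0|}); propagating the going-back lemma along it yields, for each $t_1\in G_1$, a plegma path $(t_j)$ with $t_j\in G_j\subseteq\text{supp}(z_{v_j})$, and then the chain inequality $t_1(j)\ge t_j(1)+(j-1)$ combined with hypothesis (iv) ($t_j(1)\ge v_j(1)$) gives $t_1(j)\ge v(j)$ for all $j\le|t_1|$, while hypothesis (iii) gives $|t_1|\le|v|$. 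Without this (or an equivalent) quantitative backward-propagation argument and the explicit choice of path converting first-coordinate control into control of the $j$-th coordinate, your proof is incomplete at its decisive point.
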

\begin{proof}
  Suppose on the contrary that the $\g$-subsequence $(x_s)_{s\in\g\upharpoonright M}$
  admits the usual basis of $\ell^1$ as a $\g$-spreading model.
  Let $\delta_0=0,01$. Since
  the function $h$ is continuous at $(0,0)$  and $h(0,0)=0$ we can
  inductively construct strictly decreasing null sequences of reals
  $(\ee_n)_{n\in\nn}$ and $(\delta_n)_{n\in\nn}$ such that
  $\delta_1<\delta_0$ and $h(\ee_n,\delta_n)<\delta_{n-1}$, for
  all $n\in\nn$.

   We pass to some $L\in[M]^\infty$ such that $\g$ is very large in $L$ and  the $\g$-subsequence $(x_s)_{s\in\g\upharpoonright L}$
   generates the usual basis of $\ell^1$ as a $\g$-spreading model with respect to $(\ee_n)_{n\in\nn}$.
   By property (iv) we have that for every $s\in\g\upharpoonright L$ and $t\in\text{supp}(x_s)$, if $\min s=L(k)$ then $t(1)\geq k$. We set
   \[\mathcal{H}=\{v\in[\nn]^{<\infty}:L(v)\in\g\}\] and for every $v\in\mathcal{H}$ we set $z_v=x_{L(v)}$.
  It is immediate that the $\mathcal{H}$-sequence $(z_v)_{v\in\mathcal{H}}$ generates the usual basis of $\ell^1$ as
  an $\mathcal{H}$-spreading model with respect to
  $(\ee_n)_{n\in\nn}$ and for every $v\in\mathcal{H}$ we have that
  $v(1)\leq t(1)$, for all $t\in\text{supp}(z_v)$. Notice also
  that $o(\mathcal{H})=o(\g)<\xi=o(\ff)$.

  For every $v\in\mathcal{H}\upharpoonright 2\nn$, we select a plegma path $(v_j)_{j=1}^{|v|}$ in $\mathcal{H}$ such that $v_1=v$ and
  \[\big\{n-1:n\in v_{j-1}\setminus\{\min v_{j-1}\}\big\}\sqsubseteq v_j\] for all $1<j\leq|v|$.
  Hence $v_{j}(1)=v(j)-(j-1)$, for all $1\leq j\leq |v|$. Notice also that for every $1<j\leq|v|$ we have that $\|x_{v_j}+x_{v_{j-1}}\|>2-2\ee_j$.
  We set $G_{|v|}=\text{supp}(z_{v_{|v|}})$ and using for $j=|v|,\ldots,2$, Lemma \ref{X xi going back} we may get
  $G_{|v|-1},\ldots,G_1$ satisfying the following:
  \begin{enumerate}
    \item[(a)] $G_j\subseteq\text{supp}(x_{v_j})$, for all $1\leq j\leq |v|$.
    \item[(b)] $\|x_{v_j}|_{G_j^c}\|<\delta_{j-1}$, for all $1\leq j\leq |v|$.
    \item[(c)] For every $t\in G_j$ there exists $t'\in G_{j+1}$ such that the pair $(t,t')$ is plegma.
  \end{enumerate}
  Hence for every $t_1\in G_1$ there exists a plegma path $(t_j)_{j=1}^{|v|}$ such that $t_j\in G_j$,
  for all $1\leq j\leq |v|$. Thus for every $t_1\in G_1$ we have that for every $1\leq j\leq |t_1|\leq|v|$,
  $t_1(j)\geq t_j(1)+(j-1)\geq v_j(1)+(j-1)=v_1(j)=v(j)$. We set $G_v=G_1$, which by (b) is
  nonempty. Hence there exists a map
  $\Phi:\mathcal{H}\upharpoonright 2\nn\to\mathcal{P}(\ff)$ satisfying for every $v\in\mathcal{H}\upharpoonright M$ the following:
  \begin{enumerate}
    \item[(i)] $\Phi(v)\neq\emptyset$.
    \item[(ii)] $|t|\leq|v|$, for all $t\in\Phi(v)$.
    \item[(iii)] $v(i)\leq t(i)$, for all $t\in\Phi(v)$ and $1\leq i\leq |t|$.
  \end{enumerate}
  This contradicts Lemma \ref{X_xi not existing map Phi}.
\end{proof}
\subsubsection{The main result}
\begin{thm}
  For every $\zeta<\omega_1$, with $\zeta+2<\xi$, the space $X_\xi$ does not admit $\ell^1$ as a $\zeta$-spreading model.
\end{thm}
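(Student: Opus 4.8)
The plan is to argue by contradiction, reducing everything to Propositions \ref{X_xi non l^1 isometric spr mod with long lengths} and \ref{X_xi non spr mod with big min and small lengths} (``Case I'' and ``Case II'') together with Proposition \ref{X_xi getting l^2}. Assume $X_\xi$ admits $\ell^1$ as a $\zeta$-order spreading model with $\zeta+2<\xi$, and fix a regular thin family $\g$ with $o(\g)=\zeta$ and a $\g$-sequence generating the usual basis of $\ell^1$ as a $\g$-spreading model. Since $X_\xi$ is reflexive, this $\g$-sequence is weakly relatively compact, so after rescaling into $B_{X_\xi}$ and applying Theorem \ref{thm disjointly generic decomposition for wekly relatively compact ff-sequences} we may assume it is a finitely supported, plegma disjointly supported $\g$-sequence $(x_s)_{s\in\g\upharpoonright M}$ still generating $\ell^1$. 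A further Ramsey reduction — in the regime $o(\g)=\zeta<\xi=o(\ff)$ covered by Theorem \ref{non plegma preserving maps}, applied to the (finitely valued) support assignment $s\mapsto\text{supp}(x_s)$ after fixing a compatible enumeration of $\ff$; alternatively Corollary \ref{getting block generated ell^1 spreading model corollary for reflexive} once property $\mathcal{P}$ of $X_\xi$ is checked — produces $M'$ for which condition (ii) of both propositions holds: no plegma pair $(s_1,s_2)$ in $\g\upharpoonright M'$ admits $t_1\in\text{supp}(x_{s_1})$ and $t_2\in\text{supp}(x_{s_2})$ with $(t_2,t_1)$ plegma.

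Next I would split the supports along the two features distinguishing the two propositions. Writing $\min s=M'(k_s)$, decompose $\text{supp}(x_s)$ into four parts according to whether $|t|>|s|$ or $|t|\le|s|$ and whether $t(1)\ge k_s$ or $t(1)<k_s$, so that $x_s$ is a sum of four plegma disjointly supported $\g$-subsequences. Passing to a subsequence so each piece generates a spreading model and iterating Corollary \ref{ultrafilter property for ell^1 spreading models}, one of the four generates a spreading model with a strictly positive lower $\ell^1$-estimate, hence — being norm bounded — one equivalent to the $\ell^1$-basis; by Proposition \ref{Prop on almost isometric l^1 spr mod} and Remark \ref{Rem on almost isometric l^1 spr mod} we may take it to generate $\ell^1$ with constant arbitrarily close to $1$, keeping finite supports and its support type. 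If the surviving type has $|t|>|s|$, Proposition \ref{X_xi non l^1 isometric spr mod with long lengths} applies at once and contradicts the $\ell^1$ estimate. If it has $|t|\le|s|$ and $t(1)\ge k_s$ for all $t\in\text{supp}(x_s)$, then the hypotheses of Proposition \ref{X_xi non spr mod with big min and small lengths} are met — condition (iv) is exactly the condition defining this case and $o(\g)=\zeta<\xi$ — so it applies and again gives a contradiction.

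The delicate case — and the one I expect to force the hypothesis $\zeta+2<\xi$ rather than merely $\zeta<\xi$ — is the residual type $|t|\le|s|$ and $t(1)<k_s$, where neither proposition applies. Here I would run a dichotomy on $h(s)=\max\{t(1):t\in\text{supp}(x_s)\}$: if $h$ is bounded on some further $\g\upharpoonright L$, then (after restricting to keep the sequence seminormalized) Proposition \ref{X_xi getting l^2} forces the generated spreading model to be equivalent to the $\ell^2$-basis, contradicting $\ell^1$; if $h$ is hereditarily unbounded, then after reindexing all supports lie strictly below $v(1)$, and the $\g$-sequence can be transported to a regular thin family of strictly smaller order that still generates $\ell^1$ — shifting the spreading ``downward'', in the spirit of Lemma \ref{X_xi not existing map Phi} and of the operation $\g/_L$ of Definition \ref{notation hunging ff by min L} — so that an induction on $\zeta$ closes the argument. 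The slack $\zeta+2<\xi$ is precisely what keeps every family arising in these order reductions (and in the reductions to conditions (i)--(iv)) of order below $\xi$, so that Case I, Case II and Proposition \ref{X_xi getting l^2} remain available at each step. The main obstacle is therefore exactly this final case: isolating the ``short and low'' part of the supports, verifying that it genuinely lives on a strictly lower-order family, and controlling the accumulated loss of order against $\xi$.
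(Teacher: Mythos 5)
Your overall frame---reducing to Propositions \ref{X_xi non l^1 isometric spr mod with long lengths} and \ref{X_xi non spr mod with big min and small lengths} by splitting supports according to length and first coordinate, with Proposition \ref{X_xi getting l^2} and Corollary \ref{ultrafilter property for ell^1 spreading models} as auxiliary tools---is the right one, and your treatment of the pieces with $|t|>|s|$, and with $|t|\le|s|$ and $t(1)\ge k_s$, is sound once one has a plegma block sequence (via property $\mathcal{P}$ and Corollary \ref{getting block generated ell^1 spreading model corollary for reflexive}; your alternative reduction through Theorem \ref{non plegma preserving maps} does not apply as stated, since the support assignment $s\mapsto\text{supp}(x_s)$ is set-valued and that theorem concerns maps into a family of strictly \emph{larger} order). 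The genuine gap is exactly the residual piece you flag: $|t|\le|s|$ and $t(1)<k_s$. The bounded branch of your dichotomy is fine, but in the unbounded branch the claim that the sequence ``can be transported to a regular thin family of strictly smaller order'' has no justification. The index family is still $\g$ of order $\zeta$; low first coordinates of the supports do not make $x_s$ depend only on a proper initial segment of $s$, which is what every order-reduction device in this paper (the $\g$-splitting of Definition \ref{defn g splitting} together with Lemma \ref{up g splitting spr mod}) actually requires. Moreover, after any reindexing $v\mapsto x_{L(v)}$ with $L\subseteq M'$ one only has $t(1)<k_s$ with $k_s\ge v(1)$, so the inequality points the wrong way both for your ``supports below $v(1)$'' normalization and for the domination $v(i)\le t(i)$ that feeds Lemma \ref{X_xi not existing map Phi}. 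As written, the induction on $\zeta$ rests on this unproved reduction, and it is precisely the step your $s$-dependent threshold $k_s$ creates.

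The paper avoids this residual case altogether, and that is where $\zeta+2<\xi$ really enters. It first upgrades the order-$\zeta$ plegma block $\ell^1$ spreading model to a normalized plegma block $\g_1$-sequence of order $\zeta+1$ via Corollary \ref{one order more to get the isometric}, splits only by length ($x_s=x_s^1+x_s^2$ according to $|t|\le|s|$ or $|t|>|s|$), and rules out $\ell^1$ for the short part by a \emph{second} application of Corollary \ref{one order more to get the isometric}: the resulting $\g_2$-vectors ($\g_2=[\nn]^1\oplus\g_1$, of order $\zeta+2$) are combinations of $x^1_{s_j}$ with $|s_j|<|v|$, so condition (iii) of Proposition \ref{X_xi non spr mod with big min and small lengths} is automatic, and condition (iv) is then obtained by cutting the supports at a \emph{fixed} level $k$ (where Proposition \ref{X_xi getting l^2} together with Corollary \ref{ultrafilter property for ell^1 spreading models} applies) and diagonalizing over $k$, not by a threshold depending on $\min s$. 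Only after the short part is eliminated does Corollary \ref{ultrafilter property for ell^1 spreading models} force the long part to generate $\ell^1$, contradicting Proposition \ref{X_xi non l^1 isometric spr mod with long lengths}. To repair your argument you would need to reproduce this two-step raise of the order (which is what consumes the hypothesis $\zeta+2<\xi$) or supply a genuinely new treatment of the ``short and low'' piece.
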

\begin{proof}
  Assume on the contrary that for some $\zeta<\omega_1$ and $\zeta+2<\xi$, the space $X_\xi$ admits $\ell^1$ as a $\zeta$-spreading model.
  We choose $\phi:\ff\to\nn$ to be an onto and 1-1 map such that for every $s_1,s_2\in\ff$, if $\max s_1<\max s_2$, then $\phi(s_1)<\phi(s_2)$.
  For every $n\in\nn$ we set $e_n=e_{\phi^{-1}(n)}$. It is easy to check that the space $X_\xi$ satisfies the property $\mathcal{P}$ given in
  Definition \ref{notation of P property}.  Since $X_\xi$ is reflexive, by Corollary
  \ref{getting block generated ell^1 spreading model corollary for reflexive} the space $X_\xi$ admits $\ell^1$ as a plegma block generated
  spreading model of order $\zeta$. By Corollary \ref{one order more to get the isometric} there exist a regular thin family $\g_1$ of order
  $\zeta+1$, $M_1\in[M]^\infty$ and a $\g_1$-sequence $(x_s)_{s\in\g_1}$ in $X_\xi$ such that
  $(x_s)_{s\in\g_1\upharpoonright M_1}$ plegma block generates the usual basis of $\ell^1$ as a $\g_1$-spreading
  model. We may also assume that $(x_s)_{s\in\g_1\upharpoonright
  M_1}$ is normalized.
  For every $s\in \g_1\upharpoonright M_1$ we define $G_s^1=\{t\in\text{supp}(x_s):|t|\leq|s|\}$,
  $G_s^2=\{t\in\text{supp}(x_s):|t|>|s|\}$, $x_s^1=x_s|_{G_s^1}$ and $x_s^2=x_s|_{G_s^2}$.

  First we will show that $(x_s^1)_{s\in\g_1\upharpoonright M_1}$ does not admit $\ell^1$ as a $\g_1$-spreading model.
  Indeed, assume
  on the contrary. Then there is $M_2\in[M_1]^\infty$ such that $(x_s^1)_{s\in\g_1\upharpoonright M_2}$
   plegma block generates $\ell^1$ as a $\g_1$-spreading model.
  By Corollary \ref{one order more to get the isometric}
  there exist $M_3\in [M_2]^\infty$ and a $\g_2$-sequence $(z_v)_{v\in\g_2}$, where $\g_2=[\nn]^1\oplus\g_1$, which satisfy the following:
    \begin{enumerate}
      \item[(i)] The $\g_2$-subsequence $(z_v)_{v\in\g_2\upharpoonright M_3}$ plegma block generates the
       usual basis of $\ell^1$ as a $\g$-spreading model.
      \item[(ii)] For every $v\in\g_2\upharpoonright M_3$ there exist $m\in\nn$ and $s_1,\ldots,s_m\in\g_1$ satisfying the following:
          \begin{enumerate}
         \item[(a)] $z_v\in<x_{s_1}^1,\ldots,x_{s_m}^1>$
         \item[(b)] $|s_j|<|v|$, for all $1\leq j\leq m$.
       \end{enumerate}
    \end{enumerate}
    We may also assume that the $\g_2$-subsequence $(z_v)_{v\in\g_2\upharpoonright M_3}$ is normalized.
    For every $k\in\nn$ and $v\in\g_2\upharpoonright M_3$, we define $F_v^k=\{t\in\text{supp}(z_v):\min t<k\}$,
    $z_v^{1,k}=z_v|_{F_v^k}$ and $z_v^{2,k}=z_v-z_v^{1,k}$. By Proposition \ref{X_xi getting l^2}, for every $k\in\nn$ the
    $\g_2$-subsequence $(z_v^{1,k})_{v\in\g_2\upharpoonright M_3}$ does not admit $\ell^1$ as a $\g_2$-spreading model.
    Using Corollary \ref{ultrafilter property for ell^1 spreading models} we inductively construct a decreasing sequence $(M'_k)_{k\in\nn}$
    of infinite subsets of $M_3$ such that $(z_v^{2,k})_{v\in\g_2\upharpoonright M'_k}$ plegma block generates the usual basis of $\ell^1$
    and $\|z_v^{1,k}\|<\frac{1}{k}$, for all $v\in\g_2\upharpoonright M'_k$. We pick $M_4\in[M_3]^\infty$ such that
    $M_4(k)\in M'_k$, for all $k\in\nn$.
    For every $v\in\g_2\upharpoonright M_4$, we define $w_v=z_v^{2,k_v}$, where $k_v\in\nn$ satisfying $\min v=M_4(k_v)$.
    Since $\|w_v-z_v\|<\frac{1}{k_v}$, for all $v\in\g_2\upharpoonright M_4$, it is easy to check that
    the $\g_2$-subsequence $(w_v)_{v\in\g_2\upharpoonright M_4}$ generates the usual basis of $\ell^1$ as a
    $\g_2$-spreading model, which contradicts Proposition \ref{X_xi non spr mod with big min and small lengths}.

    Since the $\g_1$-subsequence $(x_s^1)_{s\in\g_1\upharpoonright M_1}$ does not admit $\ell^1$ as a
    $\g_1$-spreading model, by Corollary \ref{ultrafilter property for ell^1 spreading models} we get
    that the $\g_1$-subsequence $(x_s^2)_{s\in\g_1\upharpoonright M_1}$ admits the usual basis of
     $\ell^1$ as a $\g_1$-spreading model, which contradicts Proposition \ref{X_xi non l^1 isometric spr mod with long lengths}.
\end{proof}

\chapter{$k$-spreading models are not $k$-iterated spreading models}
\label{space separeting strong k-order from k-order} In this
chapter we construct two spaces, a non reflexive and a reflexive
one, which show that for $k>1$, the $k$-iterated spreading models
are a distinct subclass of the ones of order $k$. We first present
and study a general class of norms. The desired examples are
special cases of that class.
\section{The general construction}\label{general constraxion for separete spr mod form strong}
Let $k\in\nn$, with $k>1$. For every $l\in\nn$ let
$C_l=\{s\in[\nn]^k:\min s=l\}$ and $P_l:c_{00}([\nn]^k)\to
c_{00}(C_l)$ defined by $P_l(x)=\sum_{s\in C_l}x(s)e_s$, for all
$x\in c_{00}([\nn]^k)$. Let $(\|\cdot\|_l)_{l\in\nn}$ be a
sequence of norms defined on $c_{00}(\nn)$ such that for every
$l\in\nn$ the following are satisfied:
\begin{enumerate}
  \item[(i)] The basis $(e_n)_{n\in\nn}$ is 1-unconditional under the norm $\|\cdot\|_l$.
  \item[(ii)] For every $n\in\nn$, $\|e_n\|_l=1$.
\end{enumerate}
By the unconditionality property, for every $l\in\nn$, we consider
the norm $\|\cdot\|_l$ be also defined on $c_{00}(C_l)$. Fix
$1<q<p<\infty$. We define the norm
$\|\cdot\|_{q,p}:c_{00}([\nn]^k)\to\rr$ such that
\[\begin{split}
  \|x\|_{q,p}=\sup\Big\{\Big(\sum_{i=1}^d&\Big(\sum_{j=1}^{m_i}
  |x(s_j^i)|^q\Big)^\frac{p}{q}\Big)^\frac{1}{p}:\;
  d\in\nn, m_i\in\nn, (s_j^i)_{j=1}^{m_i}\;\;\text{is plegma}\\
   &\text{in}\;\;[\nn]^k\;\;\text{for all}\;\;1\leq i\leq d\;\;\text{and}\;\;\Big\{s_j^{i_1}(1)\Big\}_{j=1}^{m_{i_1}}\cap \Big\{s_j^{i_2}(1)\Big\}_{j=1}^{m_{i_2}}=\emptyset,\\
   &\text{for all}\;\;1\leq i_1<i_2\leq d\Big\}
\end{split}\]
Let $\|\cdot\|_{(1)}:c_{00}([\nn]^k)\to\rr$ be the norm defined by
\[\|x\|_{(1)}=\Big(\sum_{l=1}^\infty\|P_l(x)\|_l^p\Big)^\frac{1}{p}\]
for all $x\in c_{00}([\nn]^k)$. We also set
$\|\cdot\|_{(2)}=\|\cdot\|_{q,p}$. Finally we define the norm
$\|\cdot\|:c_{00}([\nn]^k)\to\rr$ by setting
\[\|x\|=\max\{\|x\|_{(1)},\|x\|_{(2)}\}\]
for all $x\in c_{00}([\nn]^k)$ and we define
$X=\overline{(c_{00}([\nn]^k),\|\cdot\|)}$. It is immediate that
the sequence $(e_s)_{s\in[\nn]^k}$ forms an 1-unconditional basis
for the space $X$. Notice also that for every $l\in\nn$ and $x\in
c_{00}(C_l)$ we have that $\|x\|=\|x\|_l$. Hence the subspace
$c_{00}(C_l)$ of $X$ is isometric to $(c_{00}(\nn),\|\cdot\|_l)$,
for all $l\in\nn$. For every $l\in\nn$ we define
$X_l=\overline{(c_{00}(C_l),\|\cdot\|)}$.
\begin{notation}
  Let $(x_n)_{n\in\nn}$ be a sequence in $c_{00}([\nn]^k)$. We
  will say that $(x_n)_{n\in\nn}$ is $[\nn]^k$-block if for every
  $n_1<n_2$ in $\nn$ and for every $s_1\in\text{supp}(x_1)$ and
  $s_2\in\text{supp}(x_2)$ we have that $\max s_2<\min s_2$.
\end{notation}
\begin{rem}\label{the i norm are l^p on block}
  It is easy to see that if $(x_n)_{n\in\nn}$ is a $[\nn]^k$-block
  sequence in $X$, then
  \[\Big\|\sum_{l=1}^r a_lx_l\Big\|_{(i)}=\Big(\sum_{l=1}^r|a_l|^p\cdot\|x_l\|_{(i)}^p\Big)^\frac{1}{p}\]
  for all $r\in\nn,\; a_1,\ldots,a_r\in\rr$ and $i\in\{1,2\}$.
\end{rem}
\begin{lem} \label{N^k block are l^p}
  Every seminormalized $[\nn]^k$-block
  sequence in $X$ is equivalent to the usual
  basis of $\ell^p$.
\end{lem}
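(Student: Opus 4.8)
The plan is to reduce the norm of a block combination to the two component norms $\|\cdot\|_{(1)}$ and $\|\cdot\|_{(2)}$ via Remark~\ref{the i norm are l^p on block}, and then observe that replacing $\max\{\|\cdot\|_{(1)},\|\cdot\|_{(2)}\}$ by either component costs at most the factor $2^{1/p}$. So let $(x_n)_{n\in\nn}$ be a seminormalized $[\nn]^k$-block sequence, say $0<c\le\|x_n\|\le C$ for all $n$, and fix $r\in\nn$ and $a_1,\dots,a_r\in\rr$. Since $\sum_{n=1}^r a_nx_n\in c_{00}([\nn]^k)$, the definition of the norm on $X$ gives $\big\|\sum_{n=1}^r a_nx_n\big\|=\max_{i\in\{1,2\}}\big\|\sum_{n=1}^r a_nx_n\big\|_{(i)}$, and Remark~\ref{the i norm are l^p on block} says $\big\|\sum_{n=1}^r a_nx_n\big\|_{(i)}=\big(\sum_{n=1}^r|a_n|^p\,\|x_n\|_{(i)}^p\big)^{1/p}$ for $i=1,2$.

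First I would establish the upper estimate: since $\|x_n\|_{(i)}\le\|x_n\|\le C$, each of the two terms inside the maximum is $\le C\big(\sum_{n=1}^r|a_n|^p\big)^{1/p}$, hence $\big\|\sum_{n=1}^r a_nx_n\big\|\le C\big(\sum_{n=1}^r|a_n|^p\big)^{1/p}$. For the lower estimate I would use the elementary inequality $\max\{A,B\}^p\ge\frac{1}{2}(A^p+B^p)$ for $A,B\ge0$, applied with $A=\big\|\sum_{n=1}^r a_nx_n\big\|_{(1)}$ and $B=\big\|\sum_{n=1}^r a_nx_n\big\|_{(2)}$. This gives
\[\Big\|\sum_{n=1}^r a_nx_n\Big\|^p\ \ge\ \frac{1}{2}\sum_{n=1}^r|a_n|^p\big(\|x_n\|_{(1)}^p+\|x_n\|_{(2)}^p\big)\ \ge\ \frac{1}{2}\sum_{n=1}^r|a_n|^p\,\|x_n\|^p\ \ge\ \frac{c^p}{2}\sum_{n=1}^r|a_n|^p,\]
where the middle inequality is $\|x_n\|_{(1)}^p+\|x_n\|_{(2)}^p\ge\max\{\|x_n\|_{(1)},\|x_n\|_{(2)}\}^p=\|x_n\|^p$ and the last uses seminormalization. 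Combining the two estimates, $\frac{c}{2^{1/p}}\big(\sum_{n=1}^r|a_n|^p\big)^{1/p}\le\big\|\sum_{n=1}^r a_nx_n\big\|\le C\big(\sum_{n=1}^r|a_n|^p\big)^{1/p}$, i.e. $(x_n)_{n\in\nn}$ is equivalent to the usual basis of $\ell^p$ with constants $c\,2^{-1/p}$ and $C$.

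There is no real obstacle here; once Remark~\ref{the i norm are l^p on block} is in hand the proof is a two-line computation. The only point that deserves a moment's care is that this remark genuinely applies, i.e. that in a $[\nn]^k$-block sequence no plegma tuple and no two $k$-sets with equal first coordinate can straddle two distinct blocks — which is immediate from the definition of $[\nn]^k$-block. Consequently each $\|P_l(\cdot)\|_l^p$ and each admissible sum in the definition of $\|\cdot\|_{q,p}$ splits over the blocks, which forces both $\|\cdot\|_{(1)}$ and $\|\cdot\|_{(2)}$ to behave as $\ell^p$-sums on such combinations and is exactly what makes the displayed identities in the first paragraph valid.
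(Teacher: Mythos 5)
Your proof is correct and follows essentially the same route as the paper: both arguments rest on Remark \ref{the i norm are l^p on block} together with the identity $\|\cdot\|=\max\{\|\cdot\|_{(1)},\|\cdot\|_{(2)}\}$ on $c_{00}([\nn]^k)$, and both produce the same constants $c\,2^{-1/p}$ and $C$. The only difference is cosmetic and sits in the lower estimate: the paper splits $\{1,\ldots,r\}$ into the set where $\|x_l\|_{(1)}\geq\|x_l\|_{(2)}$ and its complement and keeps the half carrying at least half of the $\ell^p$-mass of the coefficients, whereas you use $\max\{A,B\}^p\geq\tfrac{1}{2}(A^p+B^p)$; the two devices are interchangeable.
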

\begin{proof}
  Let $(x_n)_{n\in\nn}$ be a seminormalized $[\nn]^k$-block
  sequence in $X$ and $c,C>0$ such that $c\leq\|x_n\|\leq C$, for
  all $n\in\nn$.
  Let $r\in\rr$ and $a_1,\ldots,a_r\in\rr$. Then by Remark
  \ref{the i norm are l^p on block} for every
  $i\in\{1,2\}$ we have that
  \[\Big\|\sum_{l=1}^r a_lx_l\Big\|_{(i)}=\Big(\sum_{l=1}^r|a_l|^p\cdot\|x_l\|_{(i)}^p\Big)^\frac{1}{p}
  \leq \Big(\sum_{l=1}^r|a_l|^p\cdot\|x_l\|^p\Big)^\frac{1}{p}\leq C \Big(\sum_{l=1}^r|a_l|^p\Big)^\frac{1}{p}\]
  Hence \[\Big\|\sum_{l=1}^r a_lx_l\Big\|\leq C \Big(\sum_{l=1}^r|a_l|^p\Big)^\frac{1}{p}\]
  Let $E_1=\{l\in\{1,\ldots,r\}:\|x_l\|_{(1)}\geq\|x_l\|_{(2)}\}$
  and $E_2=\{1,\ldots,r\}\setminus E_1$. Let also $i_0\in\{1,2\}$
  such that
  \[\Big(\sum_{l\in E_{i_0}}|a_l|^p\Big)^\frac{1}{p}\geq\frac{1}{2^\frac{1}{p}}\Big(\sum_{l=1}^r|a_l|^p\Big)^\frac{1}{p}\]
  Then by Remark \ref{the i norm are l^p on block}, we have that
  \[\begin{split}
    \Big\|\sum_{l=1}^r a_lx_l\Big\|&\geq \Big\|\sum_{l=1}^r
    a_lx_l\Big\|_{(i_0)}=
    \Big(\sum_{l=1}^r|a_l|^p\cdot\|x_l\|_{(i_0)}^p\Big)^\frac{1}{p}\\
    &\geq \Big(\sum_{l\in
    E_{i_0}}|a_l|^p\cdot\|x_l\|_{(i_0)}^p\Big)^\frac{1}{p}=
    \Big(\sum_{l\in
    E_{i_0}}|a_l|^p\cdot\|x_l\|^p\Big)^\frac{1}{p}\\
    &\geq c\Big(\sum_{l\in E_{i_0}}|a_l|^p\Big)^\frac{1}{p}\geq\frac{c}{2^\frac{1}{p}}\Big(\sum_{l=1}^r|a_l|^p\Big)^\frac{1}{p}
  \end{split}\]
\end{proof}
The following corollary follows by a sliding hump argument and
Lemma \ref{N^k block are l^p}.
\begin{cor}\label{seperating example limits to zero}
  Let $(x_n)_{n\in\nn}$ be a seminormalized
  sequence in $X$ such that $P_l(x_n)\to 0$, for all $l\in\nn$. Then $(x_n)_{n\in\nn}$
  contains a subsequence equivalent to the usual basis of
  $\ell^p$.
\end{cor}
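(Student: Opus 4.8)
The plan is to extract, after passing to a subsequence, a $[\nn]^k$-block sequence $(y_j)_{j\in\nn}$ in $X$ lying arbitrarily close to that subsequence, and then to invoke Lemma \ref{N^k block are l^p} together with a standard perturbation argument. First I would fix $c,C>0$ with $c\le\|x_n\|\le C$ for all $n$, and fix a summable sequence $(\ee_j)_{j\in\nn}$ of positive reals which is small enough relative to $c,C$ and $p$; the precise smallness is dictated by the equivalence constants produced in the proof of Lemma \ref{N^k block are l^p}, and since those depend only on $c$, $C$ and $p$, this choice can indeed be made in advance. Since each $x_n$ lies in the completion $X$, I would first replace it by a vector of $c_{00}([\nn]^k)$ within $\ee_n/2$ of it; as $\|P_l(x)\|=\|P_l(x)\|_l\le\|x\|_{(1)}\le\|x\|$, so that $\|P_l\|\le1$, the hypothesis $P_l(x_n)\to0$ is preserved, and I may assume $x_n\in c_{00}([\nn]^k)$.

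Next comes the sliding hump construction. Put $n_1=1$, $y_1=x_{n_1}$, and $N_1=\max\{\max s:s\in\text{supp}(y_1)\}$. Assume $n_1<\dots<n_j$ and $y_1,\dots,y_j$ have been chosen with $\text{supp}(y_i)\subseteq\{s\in[\nn]^k:N_{i-1}<\min s,\ \max s\le N_i\}$ and $\|x_{n_i}-y_i\|<\ee_i$. Using $P_l(x_n)\to0$ for each $l\in\{1,\dots,N_j\}$, pick $n_{j+1}>n_j$ with $\sum_{l=1}^{N_j}\|P_l(x_{n_{j+1}})\|<\ee_{j+1}$, and set $y_{j+1}=x_{n_{j+1}}-\sum_{l=1}^{N_j}P_l(x_{n_{j+1}})$. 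Then $\|x_{n_{j+1}}-y_{j+1}\|<\ee_{j+1}$ by the triangle inequality and $\|P_l\|\le1$; moreover $\text{supp}(y_{j+1})\subseteq\{s:\min s>N_j\}$, since $\sum_{l\le N_j}P_l$ is the coordinate projection onto $\{e_s:\min s\le N_j\}$; and $\max s\le N_{j+1}:=\max\{\max s:s\in\text{supp}(x_{n_{j+1}})\}$ because $x_{n_{j+1}}$ is finitely supported. By construction $(y_j)_{j\in\nn}$ is $[\nn]^k$-block: if $j_1<j_2$, $s_1\in\text{supp}(y_{j_1})$ and $s_2\in\text{supp}(y_{j_2})$, then $\max s_1\le N_{j_1}<\min s_2$.

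Finally I would conclude. For $j\ge j_0$ with $\ee_j<c/2$ one has $c/2\le\|y_j\|\le C+\ee_1$, so $(y_j)_{j\ge j_0}$ is a seminormalized $[\nn]^k$-block sequence and Lemma \ref{N^k block are l^p} yields constants $c',C'>0$, controlled a priori by $c$, $C$ and $p$, with $c'\bigl(\sum|a_j|^p\bigr)^{1/p}\le\|\sum a_jy_j\|\le C'\bigl(\sum|a_j|^p\bigr)^{1/p}$. Since $\sum_{j\ge j_0}\ee_j<\infty$ and the $\ee_j$ were chosen small relative to $c',C'$, the principle of small perturbations shows $(x_{n_j})_{j\ge j_0}$ is equivalent to $(y_j)_{j\ge j_0}$, hence to the usual basis of $\ell^p$; this subsequence of $(x_n)_{n\in\nn}$ is the required one. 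There is essentially no serious obstacle here; the one point demanding a little care is the order of the choices, namely that the perturbation bounds $\ee_j$ must be fixed before the construction, which is legitimate precisely because the equivalence constants furnished by Lemma \ref{N^k block are l^p} are bounded in terms of $c$, $C$ and $p$ alone.
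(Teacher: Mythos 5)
Your proposal is correct and follows exactly the route the paper intends: the paper dispatches this corollary with the one-line remark that it "follows by a sliding hump argument and Lemma \ref{N^k block are l^p}", and your construction (finite-support approximation, removal of the coordinates with $\min s\le N_j$ via the norm-one projections $P_l$, obtaining a seminormalized $[\nn]^k$-block sequence, then the principle of small perturbations with constants fixed in advance from $c$, $C$, $p$) is precisely that argument, spelled out in the same way as the detailed sliding hump in the proof of Proposition \ref{either l^p or something in X_l}. No gaps beyond routine bookkeeping.
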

\begin{prop}\label{either l^p or something in X_l}
  Every subspace $Z$ of $X$ contains a
  further subspace $W$ such that
  either there exists $l_0\in\nn$ such that $P_{l_0}|_W$ is an
  isomorphic embedding, or $W$ is an isomorphic copy of $\ell^p$.
\end{prop}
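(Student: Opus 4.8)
Looking at this statement, it's Proposition with the same flavor as Proposition \ref{X_xi either l^2 or something in X_l} in the earlier example space $X_\xi$. The proof should follow an essentially identical strategy, using the structural lemmas established just above in this section.

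The plan is to argue by a dichotomy on the operators $P_l|_Z$. First I would ask whether there exists some $l_0\in\nn$ such that the operator $P_{l_0}|_Z:Z\to X_{l_0}$ is \emph{not} strictly singular; if so, then by the very definition of strict singularity there is an infinite dimensional subspace $W$ of $Z$ on which $P_{l_0}|_W$ is an isomorphic embedding, and we are done with the first alternative. So the interesting case is when $P_l|_Z$ is strictly singular for \emph{every} $l\in\nn$.

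In that case I would build, by a standard sliding hump / diagonalization argument, a seminormalized sequence $(w_n)_{n\in\nn}$ in $S_Z$ which is arbitrarily well approximated by a $[\nn]^k$-block sequence $(\widetilde{w}_n)_{n\in\nn}$. The construction proceeds inductively: having chosen $w_1,\ldots,w_n$ and a finitely supported approximant $\widetilde{w}_n$, let $l_0=\max\{\max s:s\in\text{supp}(\widetilde{w}_n)\}$; since each of the finitely many operators $P_1|_Z,\ldots,P_{l_0}|_Z$ is strictly singular, we can pass to a subspace $W$ of $Z$ on which $\|P_l|_W\|$ is as small as we like for all $1\le l\le l_0$, pick $w_{n+1}\in S_W$, subtract off $\sum_{l=1}^{l_0}P_l(w_{n+1})$, and finitely approximate the remainder to get $\widetilde{w}_{n+1}$ supported on $[\nn]^k$-sets whose minimum coordinate exceeds $l_0$; choosing the error terms summable (with sum less than, say, $1/3$) guarantees that $(\widetilde{w}_n)_{n\in\nn}$ and $(w_n)_{n\in\nn}$ are equivalent and that $(\widetilde{w}_n)_{n\in\nn}$ is genuinely a seminormalized $[\nn]^k$-block sequence. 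By Lemma \ref{N^k block are l^p} the block sequence $(\widetilde{w}_n)_{n\in\nn}$ is equivalent to the usual basis of $\ell^p$, hence so is $(w_n)_{n\in\nn}$, and $W=\overline{<(w_n)_{n\in\nn}>}$ is the desired isomorphic copy of $\ell^p$.

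The main technical point to be careful about is the sliding hump step: one must verify that after subtracting the ``head'' $\sum_{l=1}^{l_0}P_l(w_{n+1})$, the resulting vector is still seminormalized — this is where the small-norm control on the $P_l|_W$ matters, since $\|w_{n+1}\|=1$ and the head has small norm, so the tail has norm close to $1$ — and that its support in $[\nn]^k$ consists only of sets with minimum coordinate $>l_0$, which is exactly what ensures the $[\nn]^k$-block property needed to invoke Lemma \ref{N^k block are l^p}. The rest is bookkeeping with summable perturbations, entirely analogous to the proof of Proposition \ref{X_xi either l^2 or something in X_l}, only with $\ell^2$ replaced throughout by $\ell^p$. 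I do not expect any genuine obstacle here; the argument is a routine adaptation once the preceding lemmas of this section are in place.
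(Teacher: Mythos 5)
Your proposal is correct and follows essentially the same route as the paper: the dichotomy on whether some $P_{l_0}|_Z$ fails to be strictly singular, and in the strictly singular case a sliding-hump construction of a seminormalized sequence in $S_Z$ with summable perturbations from a $[\nn]^k$-block sequence, concluding via Lemma \ref{N^k block are l^p}. No gaps to report.
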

\begin{proof}
  Either there exists an $l_0\in\nn$ such that the operator
  $P_{l_0}|_Z:Z\to X_{l_0}$ is not strictly singular or for every
  $l\in\nn$ the operator
  $P_{l_0}|_Z:Z\to X_{l_0}$ is strictly singular. In the first
  case it is immediate that there exists $W$ infinite dimensional
  subspace of $Z$ such that the operator $P_{l_0}|_W$ is an
  isomorphic embedding.

  Suppose that the second case occurs. Let $(\ee_n)_{n\in\nn}$ be
  a sequence of positive reals such that $\sum_{n=1}^\infty
  \ee_n<\frac{1}{3}$. By induction we construct an $[\nn]^k$-block
  sequence $(\widetilde{w}_n)_{n\in\nn}$ and a sequence
  $(w_n)_{n\in\nn}$ in $S_Z$ such that
  $\|w_n-\widetilde{w}_n\|<\ee_n$, for all $n\in\nn$.
  Let $w_1\in S_Z$ and $\widetilde{w}_1\in c_{00}([\nn]^k)$ such
  that $\|w_1-\widetilde{w}_1\|<\ee_1$. Suppose that
  $(w_i)_{i=1}^n$ and $(\widetilde{w}_i)_{i=1}^n$ have been
  chosen. Let $l_0=\max\{\max s:s\in
  \text{supp}(\widetilde{w}_n)\}$. Since, for every $1\leq l\leq
  l_0$, the operator $P_l|_Z$ is strictly singular,
  there exists a subspace $W$ of $Z$ such that
  $\|P_l|_W\|<\frac{\ee_{n+1}}{2l_0}$, for all $1\leq l\leq l_0$.
  Let $w_{n+1}\in S_W$ and
  $w_{n+1}'=w_{n+1}-\sum_{l=1}^{l_0}P_l(w_{n+1})$. Pick also
  $\widetilde{w}_{n+1}\in c_{00}([\nn]^k)$ such that
  $\text{supp}(\widetilde{w}_{n+1})\subseteq
  \text{supp}(w_{n+1}')$ and
  $\|w_{n+1}'-\widetilde{w}_{n+1}\|<\frac{\ee_{n+1}}{2}$. One can
  easily check that $\|w_{n+1}-\widetilde{w}_{n+1}\|<\ee_{n+1}$
  and $\max\{\max s:s\in \text{supp}(\widetilde{w}_n)\}<\min\{\min
  s:s\in\text{supp}(\widetilde{w}_{n+1})\}$.

  It is immediate that the sequence $(\widetilde{w}_n)_{n\in\nn}$
  is 1-unconditional and seminormalized. By the choice of the
  sequence $(\ee_n)_{n\in\nn}$ we have that the sequences
  $(\widetilde{w}_n)_{n\in\nn}$ and $(w_n)_{n\in\nn}$ are
  equivalent. By Lemma \ref{N^k block are l^p} the sequence
  $(\widetilde{w}_n)_{n\in\nn}$ is equivalent to the usual basis
  of $\ell^p$. Hence the subspace $\overline{<(w_n)_{n\in\nn}>}$
  consists an isomorphic copy of $\ell^p$.
\end{proof}
\begin{cor}\label{separation example condition for reflexivity}
  The following are satisfied:
  \begin{enumerate}
    \item[(i)] The space $X$ contains an isomorphic copy of $\ell^r$
    (resp. $c_0$), for every $r\neq p$, if and only if there exists $l\in\nn$ such that
    the space $X_l$ contains an isomorphic copy of $\ell^r$
    (resp. $c_0$).
    \item[(ii)] The space $X$ is reflexive if and only if, for each
    $l\in\nn$, the space $X_l$ is reflexive.
  \end{enumerate}
\end{cor}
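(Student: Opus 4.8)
The plan is to derive Corollary \ref{separation example condition for reflexivity} from Proposition \ref{either l^p or something in X_l} together with a handful of classical facts about $\ell^r$ and $c_0$. The starting observation is that for each $l\in\nn$ the space $X_l$ is isometrically a closed subspace of $X$ (since $\|x\|=\|x\|_l$ for $x\in c_{00}(C_l)$), and that $P_l$ extends to a norm‑one projection of $X$ onto $X_l$ (because $\|x\|\geq\|x\|_{(1)}\geq\|P_l(x)\|_l$). This already disposes of the ``easy'' implications: in (i), an isomorphic copy of $\ell^r$ (resp. $c_0$) contained in some $X_l$ is, via the inclusion $X_l\hookrightarrow X$, an isomorphic copy contained in $X$; in (ii), a closed subspace of a reflexive space is reflexive, so if $X$ is reflexive then so is every $X_l$.

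The content of (i) lies in the forward implication. Fix $r\neq p$ (the $c_0$ case is handled verbatim) and suppose $X$ contains an isomorphic copy $Z$ of $\ell^r$. Apply Proposition \ref{either l^p or something in X_l} to $Z$: it yields a subspace $W$ of $Z$ for which either $W$ is isomorphic to $\ell^p$, or $P_{l_0}|_W$ is an isomorphic embedding for some $l_0\in\nn$. The first alternative would exhibit $\ell^p$ as a subspace of $Z\cong\ell^r$ (resp. of $c_0$), contradicting the classical fact that $\ell^p$ does not embed into $\ell^r$ when $r\neq p$ (resp. into $c_0$). Hence $P_{l_0}|_W$ is an isomorphic embedding, so $P_{l_0}(W)\subseteq X_{l_0}$ is isomorphic to $W$. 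Since $W$ is an infinite dimensional closed subspace of $Z\cong\ell^r$, it contains an isomorphic copy of $\ell^r$ (resp., when $Z\cong c_0$, of $c_0$), because $\ell^r$ is hereditarily $\ell^r$ and $c_0$ is hereditarily $c_0$. Consequently $X_{l_0}$ contains an isomorphic copy of $\ell^r$ (resp. $c_0$), which is exactly what is claimed.

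For the backward direction of (ii), assume every $X_l$ is reflexive. Then no $X_l$ can contain an isomorphic copy of $\ell^1$ or of $c_0$, as neither of these is reflexive and reflexivity passes to closed subspaces. Applying part (i) with $r=1$ (legitimate since $1<q<p$, whence $1\neq p$) and with $c_0$, we conclude that $X$ contains no isomorphic copy of $\ell^1$ and no isomorphic copy of $c_0$. Since $(e_s)_{s\in[\nn]^k}$ is an unconditional basis of $X$, James' theorem (\cite{J}) then gives that $X$ is reflexive, completing the proof. I do not anticipate a real obstacle here, since all the work has been placed in Proposition \ref{either l^p or something in X_l}; the only points demanding care are the appeals to standard Banach space theory — the non‑embeddability relations among the $\ell^r$'s and $c_0$, the hereditary structure of $\ell^r$ and of $c_0$, and James' reflexivity criterion for spaces with an unconditional basis — all of which are classical.
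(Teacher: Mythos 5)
Your proposal is correct and follows essentially the same route as the paper: both derive the forward direction of (i) from Proposition \ref{either l^p or something in X_l}, ruling out the $\ell^p$ alternative via the classical mutual non-embeddability of $\ell^p$ with $\ell^r$ ($r\neq p$) and $c_0$, and using that infinite dimensional subspaces of $\ell^r$ (resp. $c_0$) contain copies of $\ell^r$ (resp. $c_0$); part (ii) is handled identically, with heredity of reflexivity one way and part (i) plus James' theorem for the unconditional basis the other way. No gaps.
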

\begin{proof}
  (i) Let
  $r\in[1,\infty)$, with $r\neq p$. Suppose that $X$ contains an
  isomorphic copy $Z$ of $\ell^r$. Then by
  Proposition \ref{either l^p or something in X_l} we conclude that there exists
  subspace $W$ of $Z$ such that either $W$ is an isomorphic copy of $\ell^r$ or for some $l\in\nn$, $P_l|_W$ is an isomorphic embedding of $W$ into
  $X_l$.
  Since $W$ is a subspace of $Z$, it contains a further subspace
  $W'$ isomorphic to $\ell^r$. Therefore the first alternative is
  impossible and we get that there exists an $l\in\nn$ such that
  $X_l$ contains an isomorphic copy of $\ell^r$. Conversely, since for every $l\in\nn$, $X_l$ is a subspace of $X$,
  if $X_l$ contains an isomorphic copy of $\ell^r$, then so does $X$. The arguments concerning
  $c_0$ are identical.

  (ii) If $X$ is reflexive then the same holds for every subspace
  of $X$. In particular $X_l$ is reflexive for all $l\in\nn$.
  Conversely suppose that $X_l$ is reflexive for every $l\in\nn$.
  By (i) we have that $X$ does not contain any isomorphic copy od
  $\ell^1$ or $c_0$. Since $X$ has an unconditional basis, by James'
  theorem (c.f. \cite{J}), we conclude that $X$ is reflexive.
\end{proof}
\begin{lem}\label{seperating example getting l^p}
  Assume that the space $X$ does not contain any isomorphic
  copy of $c_0$. Let $(x_n)_{n\in\nn}$ be a seminormalized Schauder basic
  sequence in $X$ such that for every $l\in\nn$  the sequence
  $(P_l(x_n))_{n\in\nn}$ is norm convergent. Then for every $l\in\nn$ the sequence
  $(P_l(x_n))_{n\in\nn}$ is a null sequence and $(x_n)_{n\in\nn}$
  contains a subsequence equivalent to the usual basis of
  $\ell^p$.
\end{lem}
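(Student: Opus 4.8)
The plan is to prove the statement in two stages: first that $y_l:=\lim_{n}P_l(x_n)$ (which exists in the closed subspace $X_l$ by hypothesis) is $0$ for every $l\in\nn$, and then to conclude by a direct appeal to Corollary~\ref{seperating example limits to zero}. All the real content is in the identity $y_l=0$, and this is where the assumptions that $X$ contains no isomorphic copy of $c_0$ and that $(x_n)_{n\in\nn}$ is Schauder basic enter.

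I would first isolate two classical facts. (a) A weakly convergent Schauder basic sequence is weakly null: if $(z_n)_{n\in\nn}$ is a Schauder basis of $Z=\overline{<(z_n)_{n\in\nn}>}$ and $z_n\to z$ weakly in $X$, then $z$ lies in the weak closure of the closed convex set $Z$, hence in $Z$; extending the coordinate functionals of $(z_n)_{n\in\nn}$ to $X^{*}$ and evaluating at $z$ gives $z_n^{*}(z)=\lim_{m}z_n^{*}(z_m)=0$ for all $n$, so $z=\sum_n z_n^{*}(z)z_n=0$. (b) A Banach space with an unconditional basis which contains no isomorphic copy of $c_0$ is weakly sequentially complete; since $(e_s)_{s\in[\nn]^{k}}$ is an unconditional basis of $X$ and, by hypothesis, $X$ contains no isomorphic copy of $c_0$, the space $X$ is weakly sequentially complete.

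Now I would apply Rosenthal's $\ell^{1}$ theorem to the bounded sequence $(x_n)_{n\in\nn}$: either some subsequence $(x_{n_j})_{j\in\nn}$ is equivalent to the usual basis of $\ell^{1}$, or some subsequence is weakly Cauchy. In the first case, put $u_j=x_{n_{2j}}-x_{n_{2j-1}}$; this is a seminormalized block sequence of $(x_{n_j})_{j\in\nn}$, hence still equivalent to the $\ell^{1}$ basis, while $P_l(u_j)=P_l(x_{n_{2j}})-P_l(x_{n_{2j-1}})\to y_l-y_l=0$ for every $l$, so Corollary~\ref{seperating example limits to zero} produces a further subsequence of $(u_j)_{j\in\nn}$ equivalent to the usual basis of $\ell^{p}$ — impossible, since $p>1$. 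Thus this alternative cannot occur. In the second case the weakly Cauchy subsequence $(x_{n_j})_{j\in\nn}$ is weakly convergent by (b); being a subsequence of the Schauder basic sequence $(x_n)_{n\in\nn}$ it is itself Schauder basic, so by (a) it is weakly null. Since each $P_l$ is a bounded projection on $X$, $P_l(x_{n_j})\to0$ weakly, and as this subsequence also converges in norm to $y_l$ we get $y_l=0$. Hence $y_l=0$ for all $l\in\nn$.

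Finally, $(x_n)_{n\in\nn}$ is seminormalized and satisfies $P_l(x_n)\to0$ for every $l$, so Corollary~\ref{seperating example limits to zero} yields a subsequence equivalent to the usual basis of $\ell^{p}$, which completes the proof. The main obstacle is the step $y_l=0$: one has to recognize that ``no copy of $c_0$'' should be used through weak sequential completeness and ``Schauder basic'' through the impossibility of a nonzero weak limit, and that the residual $\ell^{1}$ possibility in Rosenthal's dichotomy is exactly ruled out by Corollary~\ref{seperating example limits to zero} because $p\neq1$.
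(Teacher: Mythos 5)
Your proof is correct, but it follows a genuinely different route from the paper's. The paper never invokes Rosenthal's $\ell^1$ theorem or weak sequential completeness: it uses James' theorem only through the fact that the unconditional basis $(e_s)_{s\in[\nn]^k}$ of $X$ is boundedly complete (no copy of $c_0$), so the partial sums $\sum_{l\leq l_0}y_l$, being uniformly bounded by $\sup_n\|x_n\|$, converge to some $y\in X$ with $P_l(y)=y_l$; assuming $y\neq 0$ it sets $z_n=x_n-y$, applies Corollary \ref{seperating example limits to zero} to extract a subsequence of $(z_n)$ equivalent to the $\ell^p$ basis, hence Ces\`aro summable to zero, and then a direct computation with the Ces\`aro means of the corresponding $x_{k_n}$ contradicts the basis constant of $(x_n)$ — so Schauder basicity enters through a quantitative basis-constant estimate rather than through the qualitative fact that a basic sequence has no nonzero weak limit. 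You instead run Rosenthal's dichotomy on $(x_n)$: the $\ell^1$ alternative is ruled out by differencing ($u_j=x_{n_{2j}}-x_{n_{2j-1}}$ has $P_l(u_j)\to 0$, so Corollary \ref{seperating example limits to zero} would make an $\ell^1$ sequence equivalent to the $\ell^p$ basis), and in the weakly Cauchy alternative you get weak convergence from the classical theorem that an unconditional basis with no $c_0$ forces weak sequential completeness, then $y_l=0$ from the weak-to-weak continuity of $P_l$ and the fact that a Schauder basic sequence cannot converge weakly to a nonzero vector. What each approach buys: the paper's argument is more self-contained and computational, needing only bounded completeness plus the Ces\`aro observation already used elsewhere in the monograph; yours is conceptually cleaner (the limit $y_l$ is identified directly as a weak limit, with no summation of the $y_l$'s and no $\varepsilon$-juggling) at the price of the heavier classical inputs (Rosenthal, weak sequential completeness). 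Both arguments use Corollary \ref{seperating example limits to zero} twice, once inside the dichotomy/contradiction and once at the end, so the structural skeleton is similar even though the mechanism for proving $y_l=0$ is different.
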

\begin{proof}
  For every $l\in\nn$, let $y_l\in X_l$ be the norm limit of
  $(P_l(x_n))_{n\in\nn}$. By Corollary \ref{seperating example limits to
  zero} it suffices to show that $y_l=0$ for all $l\in\nn$. Since $(e_s)_{s\in[\nn]^k}$
  is unconditional and $X$ does not contain any isomorphic
  copy of $c_0$ we get that $(e_s)_{s\in[\nn]^k}$ is boundedly complete.

  For every $l_0\in\nn$, we have
  \[\sum_{l=1}^{l_0}P_l(x_n)\stackrel{n\to\infty}{\longrightarrow}\sum_{l=1}^{l_0}y_l\]
  Hence
  $(\|\sum_{l=1}^{l_0}y_l\|)_{l_0=1}^\infty$ is a  bounded sequence and therefore
    there exists $y\in X$
  such that
  $\sum_{l=1}^{l_0}y_l\stackrel{l_0\to\infty}{\longrightarrow}y$.
  Hence  for each $l\in\nn$, $P_l(y)=y_l$ and so we need   to show  that $y=0$.

  Suppose on the contrary that $y\neq 0$ and let $z_n=x_n-y$ for all
  $n\in\nn$. Since $(x_n)_{n\in\nn}$ is Schauder basic, we have
  that $(z_n)_{n\in\nn}$ is not a null sequence. Notice that for
  every $l\in\nn$, we have that
  $P_l(z_n)\stackrel{n\to\infty}{\longrightarrow}0$. By Corollary \ref{seperating example limits to
  zero} we can pass to a subsequence
  $(z_{k_n})_{n\in\nn}$  such
  that $(z_{k_n})_{n\in\nn}$ is equivalent to the usual basis of
  $\ell^p$. In particular we have that  the sequence $(z_{k_n})_{n\in\nn}$ is Ces\`aro
  summable to zero. Therefore there exists $n_0>0$ such
  that
  \[\Big\|\frac{1}{n_0}\sum_{n=1}^{n_0}z_{k_n}\Big\|<\frac{\|y\|}{3C}\;\;\text{and}\;\;\Big\|\frac{1}{n_0}\sum_{n=n_0+1}^{2n_0}z_{k_n}\Big\|<\frac{\|y\|}{3C}\]
  where $C$ is the basis constant of $(x_n)_n$.
  This yields that
  \[\frac{2\|y\|}{3}<\Big\|\frac{1}{n_0}\sum_{n=1}^{n_0}x_{k_n}\Big\|\leq C\Big\|\frac{1}{n_0}
  \sum_{n=1}^{n_0}x_{k_n}-\frac{1}{n_0}\sum_{n=n_0+1}^{2n_0}x_{k_n}\Big\|<\frac{2\|y\|}{3}\]
  which is a contradiction.
\end{proof}
\begin{prop}\label{the basis gen. lq}
  The space $X$ admits $\ell^q$ as a spreading model of order $k$.
  In particular, the natural basis $(e_s)_{s\in[\nn]^k}$ of $X$ generates
  $\ell^q$ as an $[\nn]^k$-spreading model.
\end{prop}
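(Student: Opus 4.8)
The plan is to prove the stronger assertion that the natural basis $(e_s)_{s\in[\nn]^k}$ generates isometrically the usual basis of $\ell^q$ as an $[\nn]^k$-spreading model; since $o([\nn]^k)=k$, this yields that $X$ admits $\ell^q$ as a spreading model of order $k$ (with $A=\{e_s:s\in[\nn]^k\}$ and $\ff=[\nn]^k$). By Definition \ref{Definition of spreading model} it suffices to show that for every $m\in\nn$, every plegma $m$-tuple $(s_j)_{j=1}^m$ in $[\nn]^k$ and every $a_1,\ldots,a_m\in\rr$ we have the exact identity
\[\Big\|\sum_{j=1}^m a_j e_{s_j}\Big\|=\Big(\sum_{j=1}^m|a_j|^q\Big)^{1/q},\]
because then the spreading model seminorm is literally this expression and one takes $\delta_n=0$ (equivalently, any null sequence works).

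First I would record the elementary observation that for a plegma $m$-tuple $(s_j)_{j=1}^m$ in $[\nn]^k$, condition (i) of Definition \ref{defn plegma} for the index $1$ gives $s_1(1)<s_2(1)<\cdots<s_m(1)$, so the first coordinates $s_j(1)=\min s_j$ are pairwise distinct. Set $x=\sum_{j=1}^m a_j e_{s_j}$. For the norm $\|\cdot\|_{(1)}$: since $\min s_j=s_j(1)$ are distinct, $P_{s_j(1)}(x)=a_j e_{s_j}$ and $P_l(x)=0$ for $l\notin\{s_1(1),\dots,s_m(1)\}$; as $\|e_{s_j}\|_{s_j(1)}=1$ we get $\|x\|_{(1)}=\big(\sum_{j=1}^m|a_j|^p\big)^{1/p}$.

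Next I would compute $\|x\|_{(2)}=\|x\|_{q,p}$, which is the step requiring the most care. In the supremum defining $\|\cdot\|_{q,p}$, only positions $s_j^i$ equal to some $s_m$ contribute; since each $s_m$ has a unique first coordinate and each of the admissible blocks $(s_j^i)_{j=1}^{m_i}$ is plegma (hence has strictly increasing, so distinct, first coordinates) while the blocks are required to have disjoint first-coordinate sets, each $s_m$ occurs in at most one position overall. Thus each admissible configuration corresponds to a partition $\{B_i\}$ of a subset of $\{1,\dots,m\}$, with value $\big(\sum_i u_i^{p/q}\big)^{1/p}$ where $u_i=\sum_{\ell\in B_i}|a_\ell|^q$. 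Since $p/q>1$ we have $\sum_i u_i^{p/q}\le\big(\sum_i u_i\big)^{p/q}\le\big(\sum_{\ell=1}^m|a_\ell|^q\big)^{p/q}$, so the supremum is $\le\big(\sum_j|a_j|^q\big)^{1/q}$; and taking $d=1$ with the single block $(s_j)_{j=1}^m$ (plegma by hypothesis) attains it. Hence $\|x\|_{(2)}=\big(\sum_{j=1}^m|a_j|^q\big)^{1/q}$.

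Finally, since $q<p$ the $\ell^r$-norms of the coefficient vector are decreasing in $r$, so $\|x\|=\max\{\|x\|_{(1)},\|x\|_{(2)}\}=\big(\sum_{j=1}^m|a_j|^q\big)^{1/q}$, which completes the computation. I expect no genuine obstacle here; the only place demanding attention is the bookkeeping in the $\|\cdot\|_{q,p}$ supremum (that no $s_m$ can be reused across blocks, and that the convexity inequality forces a single block to be optimal). I would conclude by invoking Definition \ref{Definition of spreading model} with $M=\nn$ to state that $(e_s)_{s\in[\nn]^k}$ generates the $\ell^q$-basis as an $[\nn]^k$-spreading model, and then the last sentence of the proposition follows since $o([\nn]^k)=k$.
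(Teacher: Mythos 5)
Your proof is correct and follows essentially the same route as the paper: compute $\|\cdot\|_{(1)}$ exactly as $(\sum|a_j|^p)^{1/p}$ using the distinctness of the first coordinates, bound $\|\cdot\|_{q,p}$ by $(\sum|a_j|^q)^{1/q}$ via the inequality $\sum_i u_i^{p/q}\le(\sum_i u_i)^{p/q}$ for $p/q>1$, observe the bound is attained by the single plegma block, and conclude with $q<p$. The only difference is that you spell out the bookkeeping (each $s_\ell$ occurs in at most one position across the admissible blocks) which the paper leaves implicit; the argument is the same.
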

\begin{proof}
  Let $n\in\nn$, $a_1,\ldots,a_n\in\rr$ and $(s_j)_{j=1}^n$
  be a plegma $n-$tuple in $[\nn]^k$ with $s_1(1)\geq n$. It is
  immediate that
  \[\Big\|\sum_{j=1}^na_je_{s_j}\Big\|_{(1)}=\Big(\sum_{j=1}^n|a_j|^p\Big)^\frac{1}{p}\]
  Let $F_1,\ldots,F_d$ disjoint subsets of $\{1,\ldots,n\}$. Since
  $1<q<p$, we have that
  \[\Big(\sum_{i=1}^d\Big(\sum_{j\in F_i}|a_j|^q\Big)^\frac{p}{q}\Big)^\frac{1}{p}\leq
  \Big(\sum_{i=1}^d\Big(\sum_{j\in F_i}|a_j|^q\Big)^\frac{q}{q}\Big)^\frac{1}{q}
  =\Big(\sum_{j=1}^n|a_j|^q\Big)^\frac{1}{q}\]
  the above imply that \[\Big\|\sum_{j=1}^na_je_{s_j}\Big\|=\Big\|\sum_{j=1}^na_je_{s_j}\Big\|_{(2)}=\Big(\sum_{j=1}^n|a_j|^q\Big)^\frac{1}{q}\]
\end{proof}
\section{The nonreflexive case}
The main result of this section is the following.
\begin{thm}\label{the space X_1,q,p}
  For every $1<q<p<\infty$ and $k>1$, there exists a Banach space $X_{1,p,q}^k$
  with an unconditional basis such that every seminormalized Schauder basic
  sequence $(x_n)_{n\in\nn}$ in $X_{1,p,q}^k$ contains a subsequence which
  is  equivalent either to the usual basis of $\ell^1$ or to the usual basis of $\ell^p$.
  Moreover the space $X_{1,p,q}^k$
  admits $\ell^q$ as a spreading model of order $k$.
\end{thm}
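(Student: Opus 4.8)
The plan is to obtain $X_{1,p,q}^k$ as a special case of the construction of Section~\ref{general constraxion for separete spr mod form strong}: fix $1<q<p<\infty$ and $k>1$ and take $\|\cdot\|_l$ to be the usual norm of $\ell^1$ on $c_{00}(\nn)$ for every $l\in\nn$. Since the unit vector basis of $\ell^1$ is $1$-unconditional and normalized, the standing assumptions on the sequence $(\|\cdot\|_l)_{l\in\nn}$ hold, and we set $X_{1,p,q}^k$ to be the resulting space $X$. Then $(e_s)_{s\in[\nn]^k}$ is a $1$-unconditional basis of $X_{1,p,q}^k$, each subspace $X_l$ is isometric to $\ell^1$, and Proposition~\ref{the basis gen. lq} already gives that the natural basis generates $\ell^q$ as an $[\nn]^k$-spreading model; this settles the last assertion of the theorem. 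Moreover, since $X_l\cong\ell^1$ contains no copy of $c_0$, Corollary~\ref{separation example condition for reflexivity}(i) shows that $X_{1,p,q}^k$ contains no isomorphic copy of $c_0$, which is exactly the hypothesis needed in order to apply Lemma~\ref{seperating example getting l^p}.

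Next I would prove the dichotomy. Let $(x_n)_{n\in\nn}$ be a seminormalized Schauder basic sequence in $X=X_{1,p,q}^k$, say $c\le\|x_n\|\le C$ for all $n$. I would first attempt to pass to one subsequence along which $(P_l(x_n))_n$ is norm convergent for \emph{every} $l\in\nn$. For this, construct inductively a decreasing chain $\nn=M_0\supseteq M_1\supseteq\cdots$ of infinite sets: having chosen $M_{l-1}$, ask whether the bounded sequence $(P_l(x_n))_{n\in M_{l-1}}$ in $X_l\cong\ell^1$ has a norm convergent subsequence; if so, let $M_l\in[M_{l-1}]^\infty$ witness it; if not, halt and put $l_0:=l$. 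If the construction never halts, a diagonal set $M$ has the property that $(P_l(x_n))_{n\in M}$ is norm convergent for all $l$, so by Lemma~\ref{seperating example getting l^p} (using that $X$ has no copy of $c_0$) we get that $P_l(x_n)\to0$ for every $l$ and that $(x_n)_{n\in M}$ has a further subsequence equivalent to the usual basis of $\ell^p$. This is the $\ell^p$ alternative.

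In the remaining case the bounded sequence $(P_{l_0}(x_n))_{n\in M_{l_0-1}}$ in $X_{l_0}\cong\ell^1$ has no norm convergent subsequence. Here I would use that $\ell^1$ is weakly sequentially complete and has the Schur property: any weakly Cauchy subsequence of $(P_{l_0}(x_n))_n$ would be weakly convergent, hence norm convergent, contradicting the case hypothesis. Thus $(P_{l_0}(x_n))_n$ has no weakly Cauchy subsequence, so by Rosenthal's $\ell^1$-theorem there is a subsequence $(x_{n_j})_j$ for which $(P_{l_0}(x_{n_j}))_j$ is equivalent to the usual basis of $\ell^1$, say $\|\sum_j a_j P_{l_0}(x_{n_j})\|\ge c'\sum_j|a_j|$ for some $c'>0$ and all scalars. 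Since $P_{l_0}$ is a norm-one projection and $(x_n)$ is seminormalized, for all $a_1,\dots,a_m$ we obtain $c'\sum_j|a_j|\le\|P_{l_0}(\sum_j a_j x_{n_j})\|\le\|\sum_j a_j x_{n_j}\|\le C\sum_j|a_j|$, so $(x_{n_j})_j$ is equivalent to the usual basis of $\ell^1$. This completes the dichotomy.

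The step I expect to require the most care is organizing the two alternatives so that Lemma~\ref{seperating example getting l^p} is genuinely applicable (that is, actually arranging \emph{simultaneous} norm convergence of all the $(P_l(x_n))_n$ by diagonalization, which is where the inductive ``halt'' mechanism is used) and verifying the transfer of the $\ell^1$-estimate from $X_{l_0}$ back to $X$ through $P_{l_0}$; both are routine, and everything substantive has already been isolated in the lemmas of Section~\ref{general constraxion for separete spr mod form strong}.
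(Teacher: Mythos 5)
Your proposal is correct and follows essentially the same route as the paper: the same specialization $\|\cdot\|_l=\|\cdot\|_{\ell^1}$ of the general construction, the same dichotomy on whether some $(P_{l_0}(x_n))_n$ fails to have a norm convergent subsequence (resolved via Rosenthal's $\ell^1$-theorem together with the Schur/weak sequential completeness properties of $\ell^1$ and the $1$-unconditionality of the basis), the same diagonalization feeding Lemma~\ref{seperating example getting l^p} in the complementary case, and Proposition~\ref{the basis gen. lq} for the $\ell^q$ spreading model. Your explicit inductive ``halt'' organization and the transfer of the lower $\ell^1$-estimate through the norm-one projection $P_{l_0}$ are just slightly more detailed renderings of steps the paper states briefly.
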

 \begin{proof} For $1<q<p$, let
$X_{1,q,p}$ be the space resulting from the construction of the
previous section by setting for every $l\in\nn$,
$\|\cdot\|_l=\|\cdot\|_{\ell^1}$. Let $(x_n)_{n\in\nn}$ be a
seminormalized Schauder basic sequence in $X$. Then one of the
following holds:
  \begin{enumerate}
    \item[(i)] There exist $l_0\in\nn$ and $M_0\in[\nn]^\infty$ such
    that the sequence $(P_{l_0}(x_n))_{n\in M_0}$ does not contain any
    norm convergent subsequence.
    \item[(ii)] For every $l\in\nn$ and $M\in[\nn]^\infty$, the
    sequence $(P_l(x_n))_{n\in M}$ contains a norm
    convergent subsequence.
  \end{enumerate}
  Suppose that (i) holds. Then $(P_{l_0}(x_n))_{n\in M_0}$ is a
  seminormalized sequence in $\ell^1$ with no norm Cauchy
  subsequence. By  the well known Rosenthal's $\ell^1$-
theorem and the Schur property of $\ell^1$, we conclude that there
exists $L\in [M_0]^\infty$  such that $(P_{l_0}(x_n))_{n\in L}$ is
equivalent to the usual basis of $\ell^1$. Since the basis
$(e_s)_{s\in[\nn]^k}$
  is unconditional, we get that the sequence $(x_n)_{n\in L}$
  is also equivalent to the usual basis of $\ell^1$.

  Suppose that (ii) holds. Then  we may
  pass to an $M\in[\nn]^\infty$ such that the sequence
  $(P_l(x_n))_{n\in M}$ converges. By Corollary \ref{separation example condition for
  reflexivity}, we have that $X_{1,p,q}^k$ does not contain any
  isomorphic copy of $c_0$ and by Lemma \ref{seperating example getting
  l^p}, we have that  there exists a further  subsequence of $(x_n)_{n\in M}$
   equivalent to the usual basis of
  $\ell^p$.

  Finally, by Proposition \ref{the basis gen. lq} we have that the
  basis $(e_s)_{s\in[\nn]^k}$ of the space $X_{1,p,q}^k$
  generates $\ell^q$ as a $k$-spreading model.
\end{proof}
\begin{cor}\label{X_1,q,p spr mods}
  The space $X_{1,q,p}^k$ does not admit $\ell^q$ as an iterated
  spreading model of any order. Precisely,
  every iterated spreading model of
  any order admitted by $X_{1,q,p}^k$ is equivalent either to the
  usual basis of $\ell^1$
  or to the usual basis of $\ell^p$.
\end{cor}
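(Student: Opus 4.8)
The plan is to prove, by induction on $l\in\nn$, the following claim: if $X_{1,q,p}^k\stackrel{l}{\to}E$, then $E$ is isomorphic to $\ell^1$ or to $\ell^p$, and moreover the distinguished basis of $E$ is equivalent to the usual basis of the corresponding $\ell^r$, $r\in\{1,p\}$. Granting this claim the corollary is immediate: the usual basis of $\ell^q$ is equivalent neither to the usual basis of $\ell^1$ (since $q>1$) nor to the usual basis of $\ell^p$ (since $q\neq p$), so no space $E=\overline{<(e_n)_{n\in\nn}>}$ with $(e_n)_{n\in\nn}$ equivalent to the $\ell^q$-basis can satisfy $X_{1,q,p}^k\stackrel{l}{\to}E$ for any $l$; equivalently, every strong spreading model of any order admitted by $X_{1,q,p}^k$ is equivalent to the usual basis of $\ell^1$ or of $\ell^p$.

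\textbf{Base case $l=1$.} Suppose $X_{1,q,p}^k\to E$, so that the basis $(e_n)_{n\in\nn}$ of $E$ is a spreading model of some seminormalized basic sequence $(x_n)_{n\in\nn}$ in $X_{1,q,p}^k$. By Theorem \ref{the space X_1,q,p} there are $L\in[\nn]^\infty$, $r\in\{1,p\}$ and $C\geq 1$ such that $(x_n)_{n\in L}$ is $C$-equivalent to the usual basis of $\ell^r$. Since passing to a subsequence of the generating sequence leaves the generated spreading model unchanged, $(x_n)_{n\in L}$ still generates $(e_n)_{n\in\nn}$; reading off the defining estimates of the spreading model on far-out tuples of $L$ and letting the first index tend to infinity (so that the error term tends to $0$) shows that $(e_n)_{n\in\nn}$ is $C$-equivalent to the usual basis of $\ell^r$ for coefficients in $[-1,1]$, hence, by homogeneity, for all coefficients. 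Thus $E$ is isomorphic to $\ell^r$ with basis equivalent to its usual basis.

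\textbf{Inductive step.} Assume the claim for $l-1$ and suppose $X_{1,q,p}^k\to E_1\stackrel{l-1}{\to}E$. By the base case, $E_1$ is isomorphic to $\ell^r$ ($r\in\{1,p\}$) via an isomorphism carrying the distinguished basis of $E_1$ to a basis of $\ell^r$ equivalent to the usual one; since an isomorphism preserves spreading models up to equivalence, the relation $\stackrel{l-1}{\to}$ issuing from $E_1$ coincides with the one issuing from $\ell^r$. Hence it suffices to prove: every $F$ with $\ell^r\stackrel{m}{\to}F$ ($m\geq 1$) is isomorphic to $\ell^r$ with distinguished basis equivalent to its usual basis. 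This follows by the same subsequence argument as in the base case, once we know that every seminormalized basic sequence in $\ell^r$ has a subsequence equivalent to the usual basis of $\ell^r$: for $r=1$ this is Rosenthal's $\ell^1$ theorem together with the Schur property of $\ell^1$, which rules out a weakly Cauchy (hence norm Cauchy, hence not seminormalized basic) subsequence; for $r=p$, $1<p<\infty$, one extracts by reflexivity a weakly convergent subsequence, observes that a basic sequence cannot converge weakly to a nonzero vector (the weak limit lies in the closed span and applying the biorthogonal functionals forces all its coordinates to vanish), and then applies the Bessaga-Pelczynski selection principle, so that the subsequence is equivalent to a normalized block basis of the unit vector basis of $\ell^p$, and therefore to the unit vector basis itself. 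Iterating $m$ times completes the induction on $m$, and hence on $l$.

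\textbf{Main obstacle.} The only delicate points are bookkeeping ones: verifying that a spreading model of a sequence possessing a subsequence $C$-equivalent to the $\ell^r$-basis is itself $C$-equivalent to the $\ell^r$-basis, and that the $\stackrel{l}{\to}$ relation transported through an $\ell^r$-isomorphism agrees with the corresponding relation on $\ell^r$ itself. Everything else is either Theorem \ref{the space X_1,q,p} or standard structure theory of $\ell^1$ and $\ell^p$.
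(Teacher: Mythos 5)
Your proposal is correct and follows essentially the same route as the paper: reduce to order-one strong spreading models via the stability of $\ell^1$ and $\ell^p$ under taking spreading models, and settle order one by the subsequence dichotomy of Theorem \ref{the space X_1,q,p}. The only difference is that you prove in detail (Rosenthal/Schur for $\ell^1$, weak nullity plus Bessaga--Pelczynski for $\ell^p$) the standard fact that every strong spreading model of $\ell^1$ or $\ell^p$ is equivalent to the respective usual basis, which the paper simply cites as known.
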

\begin{proof}
  Since for every $r\in[1,\infty)$ every iterated spreading model (of order one) of $\ell^1$ or
  $\ell^p$ is equivalent to the usual basis of $\ell^1$ or
  $\ell^p$ respectively, it suffices to show that every iterated spreading model of order one of $X$ is either $\ell^1$ or $\ell^p$.

  Indeed, let $(x_n)_{n\in\nn}$ be a seminormalized Schauder basic sequence in
  $X$ which generates a spreading model $(e_n)_{n\in\nn}$. By
  Theorem \ref{the space X_1,q,p} we have that $(x_n)_{n\in\nn}$
  contains a subsequence $(x_{k_n})_{n\in\nn}$ equivalent either to the
  usual basis of $\ell^1$
  or to the usual basis of $\ell^p$. Since $(x_{k_n})_{n\in\nn}$
  also generates $(e_n)_{n\in\nn}$, we have that $(e_n)_{n\in\nn}$
  is equivalent either to the
  usual basis of $\ell^1$
  or to the usual basis of $\ell^p$.
\end{proof}
\section{The reflexive case}
The main result of this section is the following.
\begin{thm}\label{the refspace X_1,q,p}
  For every $1<q<p<\infty$ and $k>1$, there exists a reflexive space $X_{T,p,q}^k$
  with an unconditional basis such that every seminormalized Schauder basic
  sequence $(x_n)_{n\in\nn}$ in $X_{T,p,q}^k$ contains a subsequence which
   either is equivalent to the usual basis of $\ell^p$ or generates a spreading model (of order one) equivalent to the usual basis of $\ell^1$.
  Moreover the space $X_{T,p,q}^k$
  admits $\ell^q$ as a spreading model of order $k$.
\end{thm}
\begin{proof}
  For $1<q<p$, let
$X_{T,q,p}^k$ be the space resulting from the general construction
by setting for every $l\in\nn$, $\|\cdot\|_l=\|\cdot\|_{T}$, where
$\|\cdot\|_T$ denotes the norm defined on Tsirelson's space. Since
Tsirelson's space is reflexive, by Corollary \ref{separation
example condition for reflexivity} we get that $X_{T,q,p}^k$ is
reflexive.
  Let $(x_n)_{n\in\nn}$ be a seminormalized Schauder basic sequence in $X_{T,q,p}^k$.
  Since $X_{T,q,p}^k$ is reflexive and $(x_n)_{n\in\nn}$ is
  Schauder basic, we get that $(x_n)_{n\in\nn}$ is weakly null. By
  the standard sliding hump argument and by passing to a subsequence
  of $(x_n)_{n\in\nn}$, we may suppose that
  the sequence $(x_n)_{n\in\nn}$ is finitely disjointly supported.
  Observe that one of the following holds:
  \begin{enumerate}
    \item[(i)] For every $l\in\nn$,
    $P_l(x_n)\stackrel{n\to\infty}{\longrightarrow}0$.
    \item[(ii)] There exist $l_0\in\nn$, $\theta>0$ and $M_0\in[\nn]^\infty$ such
    that $\|P_{l_0}(x_n)\|>\theta$ for all $n\in M_0$.
  \end{enumerate}

  Suppose that (i) holds.  By Corollary \ref{separation example condition for
  reflexivity}, we have that $X_{T,p,q}^k$ does not contain any
  isomorphic copy of $c_0$ and by Lemma \ref{seperating example getting
  l^p}, we have that  there exists a further  subsequence of $(x_n)_{n\in M_0}$
   equivalent to the usual basis of
  $\ell^p$.

  Suppose that (ii) holds. Then the sequence $(P_{l_0}(x_n))_{n\in M_0}$
  actually
  forms a seminormalized block sequence in Tsirelson's space.
  Since every spreading model generated by a
seminormalized Schauder basic sequence in Tsirelson's space is
equivalent to the usual basis of $\ell^1$, there exist
$L\in[M_0]^\infty$ such that the sequence $(P_{l_0}(x_n))_{n\in
L}$ generate $\ell^1$ as spreading model. Since the basis
$(e_s)_{s\in[\nn]^k}$ is
  unconditional, we easily get that the subsequence $(x_n)_{n\in
  L}$ also generates $\ell^1$ as spreading model.

  Moreover, by Proposition \ref{the basis gen. lq} we have that the
  basis $(e_s)_{s\in[\nn]^k}$ of the space $X_{T,p,q}^k$
  generates $\ell^q$ as a $k$-spreading model.
  \end{proof}
The proof of the following corollary is similar to the one of
Corollary \ref{X_1,q,p spr mods}.
\begin{cor}
  The space $X_{T,q,p}^k$ does not admit $\ell^q$ as an iterated
  spreading model of any order. Precisely,
  every iterated spreading model of
  any order of $X_{T,q,p}^k$ is equivalent either to the
  usual basis of $\ell^1$
  or to the usual basis of $\ell^p$.
\end{cor}

\chapter{Spreading models
do not occur everywhere as  plegma block generated}\label{space
without block} In this chapter we construct a reflexive space $X$
with an unconditional basis such that $X$ admits $\ell^1$ as a
spreading model (of order $\omega$) but not as a block generated
spreading model (of any order). The space $X$ does not satisfy the
property $\mathcal{P}$ (see Definition \ref{notation of P
property}) and therefore the condition concerning this property in
Theorem \ref{getting block generated ell^1 spreading model} is
necessary.
\section{The construction and the reflexivity of the space $X$}
We recall that the Schreier family
$\mathcal{S}=\{s\in[\nn]^{<\infty}:|s|=\min s\}$ is a regular thin
family of order $\omega$. Let
\[\begin{split}
  \mathcal{H}=\big\{(t_j)_{j=1}^k:k\in\nn, (t_j)_{j=1}^k\in\text{\emph{Plm}}_k(\widehat{\mathcal{S}})\;\;\text{and}\;\;k-1\leq|t_1|=\ldots=|t_k|\big\}
\end{split}\]
Notice that for every $t\in\widehat{\mathcal{S}}$, we have that
$(t)\in\mathcal{H}$. Moreover for every
$(t_j)_{j=1}^k\in\mathcal{H}$, we have that the elements of the
set $\{t_j:j=1,\ldots,k\}$ are pairwise
$\sqsubseteq$-incomparable. Let $d\in\nn$ and for every $1\leq
q\leq d$, let $k_d\in\nn$ and $(t_j^q)_{j=1}^{k_q}\in\mathcal{H}$.
We will say that $(t_j^1)_{j=1}^{k_1},\ldots,(t_j^d)_{j=1}^{k_d}$
are \emph{incomparable} if the elements of the set
$\cup_{q=1}^d\{t_j^q:1\leq j\leq k_q\}$ are pairwise
$\sqsubseteq$-incomparable.

We define the following norm on $c_{00}(\widehat{\mathcal{S}})$:
\[\begin{split}\|x\|=\sup\Bigg\{\Big(\sum_{q=1}^d\Big(\sum_{j=1}^{k_q}|x(t_j^q)|\Big)^2\Big)^\frac{1}{2}\Bigg\}\end{split}\] where the supremum is
taken over all $d\in\nn$ and incomparable
$(t_j^1)_{j=1}^{k_1},\ldots,(t_j^d)_{j=1}^{k_d}$ in $\mathcal{H}$.
We set $X=\overline{c_{00}(\widehat{\mathcal{S}},\|\cdot\|)}$. It
is immediate that the natural basis
$(e_t)_{t\in\widehat{\mathcal{S}}}$ is 1-unconditional.

We may enumerate the basis of $X$ as $(e_n)_{n\in\nn}$ as follows.
Let $\phi:\widehat{\mathcal{S}}\to\nn$ be an 1-1 and onto map such
that $\phi(\emptyset)=1$ and for every
$t_1,t_2\in\widehat{\mathcal{S}}$ with $\max t_1<\max t_2$,
$\phi(t_1)<\phi(t_2)$. We set $e_n=e_{\phi^{-1}(n)}$, for all
$n\in\nn$. It is easy to see that the space $X$ does not have the
property $\mathcal{P}$ (see Definition \ref{notation of P
property}). Indeed for every $k\in\nn$ let $t_j=\{i:k\leq
i<k+j\}$, for all $1\leq j\leq k$. Then $\|e_{t_j}\|=1$, for all
$1\leq j\leq k$, and $\|\sum_{j=1}^k e_{t_j}\|=1$.
\begin{prop}
  The space $X$ admits the usual basis of $\ell^1$ as an
  $\omega$-spreading model.
\end{prop}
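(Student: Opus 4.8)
The plan is to exhibit a single $\mathcal{S}$-sequence in $X$ that generates the standard $\ell^1$-basis as an $\mathcal{S}$-spreading model; since the Schreier family $\mathcal{S}$ is regular thin with $o(\mathcal{S})=\omega$, this yields $\ell^1\in\mathcal{SM}_\omega(X)$ by the definition of $\omega$-order spreading model. For each $s\in\mathcal{S}$ I would set
\[x_s=\sum_{i=1}^{|s|}e_{s|i}\in c_{00}(\widehat{\mathcal{S}})\subseteq X,\]
the sum of the basis vectors indexed by the nonempty initial segments of $s$ (each $s|i$ lies in $\widehat{\mathcal{S}}$). The core of the proof is the \emph{exact} identity
\[\Big\|\sum_{j=1}^{k}a_jx_{s_j}\Big\|=\sum_{j=1}^{k}|a_j|,\]
valid for every $1\le k\le l$, every plegma $k$-tuple $(s_1,\dots,s_k)$ in $\mathcal{S}$ with $s_1(1)\ge l$, and all scalars $a_1,\dots,a_k$. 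In particular (take $k=1$) $\|x_s\|=1$, so the sequence is normalized, and the identity says precisely that $(x_s)_{s\in\mathcal{S}}$ generates the $\ell^1$-basis $(e_n)_n$ as an $\mathcal{S}$-spreading model in the sense of Definition \ref{Definition of spreading model}, with $M=\nn$ and $\delta_l=1/l$ (indeed $\delta_l=0$).

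To prove the identity, write $y=\sum_{j}a_jx_{s_j}$ and note that $y(t)$, for $t\in\widehat{\mathcal{S}}$, equals $\sum\{a_j:\ t\sqsubseteq s_j,\ t\neq\emptyset\}$, a sum over \emph{at most one} index $j$, because the $s_j$ have pairwise distinct minima. For the lower bound, put $m=\min s_1=|s_1|$ and $t_j=s_j|m$. By Lemma \ref{lemma increasing length of plegma} the lengths in a plegma tuple are nondecreasing (here even strictly, since $|s_j|=\min s_j$), so $m\le|s_j|$ and each $t_j$ is a nonempty initial segment of $s_j$; one checks from Definition \ref{defn plegma} that $(t_1,\dots,t_k)$ is again plegma, of common length $m$, and $m=s_1(1)\ge l\ge k>k-1$, so $(t_j)_{j=1}^k\in\mathcal{H}$. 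Since $y(t_j)=a_j$, testing the norm of $y$ against this single $\mathcal{H}$-block gives $\|y\|\ge\sum_j|a_j|$. For the upper bound, let $(t^q_j)_{j=1}^{k_q}$, $1\le q\le d$, be incomparable $\mathcal{H}$-blocks; each $t^q_j$ with $y(t^q_j)\neq0$ is a nonempty initial segment of a unique $s_i$, and since the whole collection $\{t^q_j\}$ is a $\sqsubseteq$-antichain while distinct initial segments of a fixed $s_i$ are $\sqsubseteq$-comparable, the map $(q,j)\mapsto i$ is injective on this set. Therefore
\[\sum_{q=1}^{d}\Big(\sum_{j=1}^{k_q}|y(t^q_j)|\Big)^2\le\Big(\sum_{q=1}^{d}\sum_{j=1}^{k_q}|y(t^q_j)|\Big)^2\le\Big(\sum_{i=1}^{k}|a_i|\Big)^2,\]
using $\|v\|_2\le\|v\|_1$ and the injectivity; taking the supremum over all such families yields $\|y\|\le\sum_j|a_j|$.

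The delicate point is the upper estimate together with its bookkeeping: one must check that the $\sqsubseteq$-incomparability imposed in the definition of $\|\cdot\|$ prevents any single $s_i$ from being charged more than once, which is exactly what collapses the outer $\ell^2$-aggregation of the norm down to an $\ell^1$ bound, and one must keep track of the $\mathcal{H}$-constraint $|t_1|=\dots=|t_k|\ge k-1$, which is precisely where the hypothesis $s_1(1)\ge l$ (i.e.\ $\min s_1$ large) enters. The remaining points --- that $x_s\in X$, that the support chain $s|1\sqsubset\cdots\sqsubset s$ admits only trivial antichains, and the passage from the identity to membership in $\mathcal{SM}_\omega(X)$ --- are routine, and the proposition follows.
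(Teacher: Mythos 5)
Your proposal is correct and follows exactly the paper's route: the paper's proof uses the very same witnessing $\mathcal{S}$-sequence $x_s=\sum_{\emptyset\sqsubset t\sqsubseteq s}e_t$ and simply asserts that $\|x_s\|=1$ and that $(x_s)_{s\in\mathcal{S}}$ generates the $\ell^1$ basis as an $\mathcal{S}$-spreading model, leaving the verification as "easy to see." Your contribution is just to supply those details (the exact identity via the single plegma $\mathcal{H}$-block of the truncations $s_j|m$ for the lower bound, and the antichain/injectivity argument collapsing the $\ell^2$-aggregation for the upper bound), and these checks are sound.
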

\begin{proof}
  For every $s\in\mathcal{S}$ let \[x_s=\sum_{\emptyset\sqsubset t\sqsubseteq
  s}e_t\] By the definition of the norm it is easy to see that
  $\|x_s\|=1$, for all $s\in\mathcal{S}$, and that
  $(x_s)_{s\in\mathcal{S}}$ generates the usual basis of $\ell^1$
  as an $\mathcal{S}$-spreading model.
\end{proof}
Our next aim is to show the reflexivity of the space $X$. To this
end we will first show that the space $X$ is $\ell^2$-saturated,
that is every subspace of $X$ contains an isomorphic copy of
$\ell^2$.
\begin{lem}\label{X_nbl block with no hitable and all incomparable giving l^2}
  Let $(x_n)_{n\in\nn}$ be a seminormalized sequence in $X$ such
  that for every $n\neq m$ in $\nn$ the following are satisfied:
  \begin{enumerate}
    \item[(i)] For every $t_1\in\text{supp}x_n$ and
    $t_2\in\text{supp}x_m$, $t_1,t_2$ are incomparable.
    \item[(ii)] For every $t_1\in\text{supp}x_n$ and
    $t_2\in\text{supp}x_m$, neither the pair $(t_1,t_2)$ nor
    $(t_2,t_1)$ belongs to $\mathcal{H}$.
  \end{enumerate}
  Then the sequence $(x_n)_{n\in\nn}$ is equivalent to the usual
  basis of $\ell^2$.
\end{lem}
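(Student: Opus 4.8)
The plan is to derive matching upper and lower $\ell^2$-estimates for $(x_n)_{n\in\nn}$, with constants coming from $c_0=\inf_n\|x_n\|>0$ and $C_0=\sup_n\|x_n\|<\infty$, which exist since $(x_n)_{n\in\nn}$ is seminormalized. First note that hypothesis (i) in particular forces the supports of the $x_n$ to be pairwise disjoint (equal elements would be comparable), so that for a finitely supported combination $x=\sum_n a_nx_n$ and $t\in\widehat{\mathcal{S}}$ one has $x(t)=a_nx_n(t)$ when $t\in\text{supp}(x_n)$ and $x(t)=0$ otherwise.

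The crucial step is a localization lemma: under hypothesis (ii), each tuple occurring in an incomparable witnessing family of $\mathcal{H}$-tuples meets the support of at most one $x_n$. Indeed, suppose $(t_j)_{j=1}^k\in\mathcal{H}$ and there are $j_1<j_2$ with $t_{j_1}\in\text{supp}(x_n)$, $t_{j_2}\in\text{supp}(x_m)$, $n\neq m$. Since $(t_j)_{j=1}^k$ is plegma, the sub-pair $(t_{j_1},t_{j_2})$ is plegma (by the remark following Definition \ref{defn plegma}), and since membership in $\mathcal{H}$ forces $|t_{j_1}|=|t_{j_2}|\geq k-1\geq 1$, the pair $(t_{j_1},t_{j_2})$ lies in $\mathcal{H}$, contradicting (ii). Entries of a witnessing tuple that fall outside $\bigcup_n\text{supp}(x_n)$ simply contribute $0$ to the evaluation of $x$; moreover a sub-tuple of an $\mathcal{H}$-tuple is again in $\mathcal{H}$ (fewer entries, same common length $\geq k-1$), so one may always shrink a tuple to its entries inside a given $\text{supp}(x_n)$ without changing the relevant sum.

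For the upper estimate I would take an arbitrary witness for $\|\sum_n a_nx_n\|$, that is incomparable tuples $(t_j^q)_{j=1}^{k_q}\in\mathcal{H}$, $1\leq q\leq d$, and group them according to the unique index $n(q)$ whose support they meet (discarding the rest). For fixed $n$ the tuples with $n(q)=n$ form an incomparable family of $\mathcal{H}$-tuples, hence $\sum_{q:\,n(q)=n}\bigl(\sum_j|x_n(t_j^q)|\bigr)^2\leq\|x_n\|^2$; multiplying by $|a_n|^2$ and summing over $n$ gives $\sum_q\bigl(\sum_j|x(t_j^q)|\bigr)^2\leq\sum_n|a_n|^2\|x_n\|^2\leq C_0^2\sum_n|a_n|^2$. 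Taking the supremum over witnesses yields $\|\sum_n a_nx_n\|\leq C_0\bigl(\sum_n|a_n|^2\bigr)^{1/2}$.

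For the lower estimate, fix a finite set $F$ and scalars $(a_n)_{n\in F}$; for each $n\in F$ choose a finite incomparable family $\mathcal{W}_n$ of $\mathcal{H}$-tuples, all contained in $\text{supp}(x_n)$, with $\sum_{(t_j)\in\mathcal{W}_n}\bigl(\sum_j|x_n(t_j)|\bigr)^2\geq\tfrac12\|x_n\|^2$. By hypothesis (i) the combined family $\bigcup_{n\in F}\mathcal{W}_n$ is still incomparable (tuples within one support by the choice of $\mathcal{W}_n$, tuples from different supports by (i)), hence a legitimate witness, and evaluating it gives $\|\sum_{n\in F}a_nx_n\|^2\geq\sum_{n\in F}|a_n|^2\sum_{(t_j)\in\mathcal{W}_n}\bigl(\sum_j|x_n(t_j)|\bigr)^2\geq\tfrac12 c_0^2\sum_{n\in F}|a_n|^2$. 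The two estimates together show $(x_n)_{n\in\nn}$ is equivalent to the unit vector basis of $\ell^2$. The only genuinely delicate point is the localization step; once it is available, everything reduces to reading off the defining $\ell^2$-sum, and the disjointness provided by (i) is what kills all cross terms.
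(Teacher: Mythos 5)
Your proof is correct and takes essentially the same approach as the paper: the paper's one-line argument simply asserts that hypothesis (i) gives the lower and hypothesis (ii) the upper $\ell^2$-estimate directly from the definition of the norm, and your localization of each $\mathcal{H}$-tuple to a single support (via extracting a plegma pair, which lies in $\mathcal{H}$ since the common length is at least $1$) together with the merging of per-$x_n$ witnessing families under (i) is exactly the content behind those two assertions.
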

\begin{proof}
  By the definition of the norm we have that if (i) (resp. (ii))
  holds then $(x_n)_{n\in\nn}$ admits a lower (resp. upper)
  $\ell^2$ estimate.
\end{proof}
\begin{notation}
  A sequence $(x_n)_{n\in\nn}$ in $X$ is called
  $\widehat{\mathcal{S}}$-block if for every $n<m$ in $\nn$,
  $t_1\in\text{supp}x_n$ and $t_2\in\text{supp}x_m$ we have that
  $t_1,t_2\neq\emptyset$ and
  $\max t_1<\min t_2$.
\end{notation}
The following is immediate from the previous lemma.
\begin{cor}\label{X_nbl seminorm block give l^2}
  Every seminormalized $\widehat{\mathcal{S}}$-block sequence in
  $X$ is equivalent to the usual basis of $\ell^2$.
\end{cor}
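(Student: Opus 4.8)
The plan is to establish the two‑sided $\ell^2$ estimate directly from the definition of the norm of $X$, exploiting the common‑length constraint built into the family $\mathcal{H}$ together with the $\widehat{\mathcal{S}}$‑block hypothesis. Fix a seminormalized $\widehat{\mathcal{S}}$‑block sequence $(x_n)_{n\in\nn}$ with $0<c\le\|x_n\|\le C$. For each $n$ put $R_n=\bigcup\{t:t\in\text{supp}(x_n)\}\subseteq\nn$; the block hypothesis says precisely that $\max R_n<\min R_{n+1}$, so the $R_n$ are pairwise disjoint successive subsets of $\nn$. Consequently a fixed nonempty $t\in\widehat{\mathcal{S}}$ lies in $\text{supp}(x_n)$ for at most one $n$, and any two elements coming from $\text{supp}(x_n)$ and $\text{supp}(x_m)$ with $n\ne m$ are disjoint nonempty sets, hence $\sqsubseteq$‑incomparable; this already gives condition (i) of Lemma \ref{X_nbl block with no hitable and all incomparable giving l^2}. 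Condition (ii) of that lemma, however, genuinely fails here, since a pair of singletons $(\{a\},\{b\})$ with $a<b$ belongs to $\mathcal{H}$; so a small refinement of its proof is needed for the upper estimate, which is where the main work lies.

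The combinatorial heart of the argument is the observation that \emph{every single $\mathcal{H}$‑tuple $(t_1,\dots,t_k)$ meets at most two of the blocks $R_n$}, where the tuple meets $R_n$ if $t_j\in\text{supp}(x_n)$ for some $j$. Indeed, all $t_j$ share a common length $r$ with $r\ge k-1$. If $r\ge 2$ and $t_i\subseteq R_n$, $t_{i'}\subseteq R_m$ with $n\ne m$, then one of $\max t_i<\min t_{i'}$ or $\max t_{i'}<\min t_i$ holds, say the former; but for a plegma tuple of common length $r\ge 2$ this is impossible (if $i<i'$, clause (ii) of plegma gives $t_{i'}(1)<t_i(2)\le\max t_i<\min t_{i'}$; if $i>i'$, clause (i) gives $t_{i'}(1)<t_i(1)\le\max t_i<\min t_{i'}$), a contradiction, so the tuple meets only one block. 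If $r\le 1$ then $k\le 2$, so the tuple trivially meets at most two blocks (and if $r=0$ it meets none, since supports contain no $\emptyset$). This uniform bound ``$\le 2$'' will play the role of the failed condition (ii).

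For the upper estimate, fix scalars $a_1,\dots,a_N$, put $y=\sum_n a_n x_n$, and take an arbitrary norming configuration of mutually incomparable $\mathcal{H}$‑tuples $\mathbf t^{\,q}=(t^q_j)_j$, $q=1,\dots,d$. Each $\mathbf t^{\,q}$ meets a set $N_q$ of blocks with $|N_q|\le 2$; for $n\in N_q$ set $T_{q,n}=\{j:t^q_j\in\text{supp}(x_n)\}$ and $s_{q,n}=\sum_{j\in T_{q,n}}|x_n(t^q_j)|$. A subtuple of a plegma tuple is plegma, it keeps the common length $r$, and its size is $\le k_q\le r+1$, so $(t^q_j)_{j\in T_{q,n}}$ is again an $\mathcal{H}$‑tuple; moreover, for a fixed $n$ the subtuples $\{(t^q_j)_{j\in T_{q,n}}:q\text{ with }n\in N_q\}$ are mutually incomparable (being parts of the incomparable tuples $\mathbf t^{\,q}$), hence form a valid norming configuration for $x_n$, which yields $\sum_{q:\,n\in N_q}s_{q,n}^2\le\|x_n\|^2\le C^2$. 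Since $y(t^q_j)=a_n x_n(t^q_j)$ whenever $t^q_j\in\text{supp}(x_n)$ and $|N_q|\le 2$, the value of the configuration is
\[\sum_q\Big(\sum_j|y(t^q_j)|\Big)^2=\sum_q\Big(\sum_{n\in N_q}|a_n|\,s_{q,n}\Big)^2\le 2\sum_q\sum_{n\in N_q}a_n^2\,s_{q,n}^2=2\sum_n a_n^2\sum_{q:\,n\in N_q}s_{q,n}^2\le 2C^2\sum_n a_n^2,\]
so taking the supremum over configurations gives $\big\|\sum_n a_n x_n\big\|\le\sqrt 2\,C\,(\sum_n a_n^2)^{1/2}$.

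For the lower estimate, for each $n$ choose a norming functional $\varphi_n$ for $x_n$ with $\varphi_n(x_n)\ge c/2$ and supported only on elements contained in $R_n$ (truncating a norming functional to such elements keeps it admissible, by the subtuple remark above, and does not decrease its value on $x_n$). Write $\varphi_n=\sum_q\lambda^n_q f^n_q$ with $\sum_q(\lambda^n_q)^2\le 1$ and each $f^n_q$ built from an $\mathcal{H}$‑tuple. For fixed scalars $a_1,\dots,a_N$ not all zero put $\mu_n=|a_n|/(\sum_m a_m^2)^{1/2}$ and $\psi=\sum_{n}\mathrm{sign}(a_n)\,\mu_n\varphi_n$; then $\psi$ is again an admissible norming functional, since its squared coefficients sum to $\sum_{n,q}(\mu_n\lambda^n_q)^2\le\sum_n\mu_n^2=1$, and the underlying $\mathcal{H}$‑tuples are pairwise incomparable (within a fixed $n$ by the choice of $\varphi_n$, and across different $n$ because their elements lie in the disjoint sets $R_n$). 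As $\varphi_n(x_m)=0$ for $m\ne n$,
\[\Big\|\sum_n a_n x_n\Big\|\ge\psi\Big(\sum_m a_m x_m\Big)=\sum_n\mu_n|a_n|\,\varphi_n(x_n)\ge\frac c2\sum_n\mu_n|a_n|=\frac c2\Big(\sum_n a_n^2\Big)^{1/2}.\]
Combining the two estimates shows $(x_n)_{n\in\nn}$ is equivalent to the usual basis of $\ell^2$. The step I expect to be the real obstacle is recognizing that Lemma \ref{X_nbl block with no hitable and all incomparable giving l^2} does not literally apply and replacing it by the ``at most two blocks'' fact, together with the bookkeeping check that the block‑wise restrictions of the norming tuples are themselves legitimate $\mathcal{H}$‑configurations.
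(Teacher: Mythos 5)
Your proof is correct, and it does not follow the paper's route: the paper disposes of this corollary in one line, claiming it is immediate from Lemma \ref{X_nbl block with no hitable and all incomparable giving l^2}, i.e.\ that an $\widehat{\mathcal{S}}$-block sequence automatically satisfies both hypotheses (i) and (ii) of that lemma. You correctly observe that hypothesis (ii) can fail: for $t_1=\{a\}\in\text{supp}(x_n)$ and $t_2=\{b\}\in\text{supp}(x_m)$ with $a<b$, the pair $(\{a\},\{b\})$ is plegma of common length $1\geq k-1$, hence lies in $\mathcal{H}$ as that family is written. Your replacement — that any $\mathcal{H}$-tuple meets at most one block when the common length is $\geq 2$ (by the interleaving forced by the plegma conditions) and at most two blocks when the length is $1$ (since then $k\leq2$), followed by Cauchy--Schwarz and the restriction of each tuple to a block, which is again an admissible $\mathcal{H}$-configuration — yields the upper $\ell^2$ estimate with constant $\sqrt2\,C$, and your lower estimate by gluing truncated norming functionals into a single type II functional is the same mechanism as in the lemma's proof of the lower bound. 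The example $x_n=e_{\{n\}}$, for which $\|\sum_{n=1}^N x_n\|=\sqrt{2N}$, shows the factor $\sqrt2$ is genuinely needed, so your extra step is not cosmetic: the paper's deduction is only literally valid if one reads the length restriction in $\mathcal{H}$ as $k\leq|t_1|$ rather than $k-1\leq|t_1|$ (the parenthetical remark that $(t)\in\mathcal{H}$ for every $t\in\widehat{\mathcal{S}}$ supports the latter reading). In short, the paper buys brevity by quoting the lemma; your argument buys a proof that actually covers supports containing singletons, at the cost of redoing the two estimates with explicit constants $c/2$ and $\sqrt2\,C$.
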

We recall that for every $t\in\widehat{\mathcal{S}}$,
\[\widehat{\mathcal{S}}_{[t]}=\{t'\in\widehat{\mathcal{S}}:t\sqsubseteq
t'\}\]
\begin{prop}
  For every $n\geq2$ the subspace
  $\overline{c_{00}(\widehat{\mathcal{S}}_{[\{n\}]})}$ is isomorphic to
  $\ell^2$. More precisely, for every $n\geq2$ the basis
  $(e_t)_{t\in\widehat{\mathcal{S}}_{[\{n\}]}}$ is equivalent to
  the usual basis of $\ell^2$.
\end{prop}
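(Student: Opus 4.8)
The plan is to show that, after restricting the index set to the tree $\widehat{\mathcal{S}}_{[\{n\}]}$, the norm of $X$ degenerates to a weighted $\ell^2$-type norm, governed by antichains of a finite-height tree, and then to use that a tree of height $n$ is the union of its $n$ levels, each an antichain. First I would make the tree explicit: since every $s\in\mathcal{S}$ with $\min s=n$ has $|s|=n$ and $\mathcal{S}$ is spreading, an element $t\in\widehat{\mathcal{S}}$ satisfies $\{n\}\sqsubseteq t$ exactly when $t(1)=n$ and $1\le |t|\le n$. Thus $\widehat{\mathcal{S}}_{[\{n\}]}$ is an infinite, $\sqsubseteq$-rooted tree of height $n$, and its level sets $L_\ell=\{t\in\widehat{\mathcal{S}}_{[\{n\}]}:|t|=\ell\}$, for $1\le\ell\le n$, are pairwise disjoint $\sqsubseteq$-antichains whose union is $\widehat{\mathcal{S}}_{[\{n\}]}$.

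The central step is to compute $\|x\|$ for $x\in c_{00}(\widehat{\mathcal{S}}_{[\{n\}]})$, and I claim that
\[
\|x\|^2=\sup\Big\{\sum_{t\in A}|x(t)|^2:\ A\subseteq\widehat{\mathcal{S}}_{[\{n\}]}\ \text{a pairwise}\ \sqsubseteq\text{-incomparable set}\Big\}.
\]
For the inequality ``$\le$'', take any incomparable families $(t_j^1)_{j=1}^{k_1},\dots,(t_j^d)_{j=1}^{k_d}$ in $\mathcal{H}$ and discard all entries $t_j^q\notin\text{supp}(x)$; the surviving entries all have first coordinate $n$, whereas clause (i) of the plegma definition forces $t_1^q(1)<t_2^q(1)<\dots<t_{k_q}^q(1)$, so each surviving tuple keeps at most one entry $t_q$. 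Hence the value of the defining functional on $x$ equals $\sum_{q\in Q}|x(t_q)|^2$, where $\{t_q:q\in Q\}$ is (being a sub-collection of the original incomparable family) a $\sqsubseteq$-antichain in $\widehat{\mathcal{S}}_{[\{n\}]}$. For ``$\ge$'', any $\sqsubseteq$-antichain $A=\{t^1,\dots,t^d\}$ in $\widehat{\mathcal{S}}_{[\{n\}]}$ gives the singleton tuples $(t^1),\dots,(t^d)$; each $(t^i)$ lies in $\mathcal{H}$, since a singleton $(t)$ belongs to $\mathcal{H}$ precisely when $t\in\widehat{\mathcal{S}}$ is nonempty, and $n\in t^i$; these families are incomparable, so $\big(\sum_i|x(t^i)|^2\big)^{1/2}\le\|x\|$.

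Granting the formula, write $x=\sum_t a_t e_t$. On one hand every admissible $A$ is a subset of $\{t:a_t\neq0\}$, so $\|x\|\le\big(\sum_t|a_t|^2\big)^{1/2}$. On the other hand, using that $L_1,\dots,L_n$ are antichains covering the index set,
\[
\sum_t|a_t|^2=\sum_{\ell=1}^n\ \sum_{t\in L_\ell}|a_t|^2\le\sum_{\ell=1}^n\|x\|^2=n\,\|x\|^2,
\]
hence $\|x\|\ge n^{-1/2}\big(\sum_t|a_t|^2\big)^{1/2}$. Therefore $(e_t)_{t\in\widehat{\mathcal{S}}_{[\{n\}]}}$ is $\sqrt{n}$-equivalent to the usual basis of $\ell^2$, and since $\widehat{\mathcal{S}}_{[\{n\}]}$ is countably infinite, $\overline{c_{00}(\widehat{\mathcal{S}}_{[\{n\}]})}$ is isomorphic to $\ell^2$.

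The only delicate point, and the one I would write out most carefully, is the ``$\le$'' half of the norm computation: one must simultaneously verify that deleting out-of-support entries from an $\mathcal{H}$-competitor cannot increase the functional value and still produces (after discarding emptied tuples) a legitimate antichain sum, and that the strict monotonicity of first coordinates in plegma tuples genuinely reduces each tuple to a single coordinate once all its members share the first coordinate $n$. Everything else — the identification of $\widehat{\mathcal{S}}_{[\{n\}]}$ and the covering of it by the $n$ level antichains — is routine.
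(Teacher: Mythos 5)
Your proof is correct, but it takes a genuinely different route from the paper. The paper argues by induction on $n$: it first computes the case $n=2$ directly, then decomposes $\widehat{\mathcal{S}}_{[\{n+1\}]}$ into the root $\{n+1\}$ and the subtrees $\widehat{\mathcal{S}}_{[\{n+1,n+1+k\}]}$, each of whose bases is $1$-equivalent to $(e_t)_{t\in\widehat{\mathcal{S}}_{[\{n\}]}}$, and loses a factor $\sqrt{2}$ at each level, ending with equivalence constant $(\sqrt{2})^{\,n-1}$. You instead identify the restricted norm exactly: since every $t$ in the support has $t(1)=n$ while clause (i) of the plegma definition forces strictly increasing first coordinates within a tuple, each $\mathcal{H}$-tuple of a competitor retains at most one support entry, so for $x\in c_{00}(\widehat{\mathcal{S}}_{[\{n\}]})$ one gets $\|x\|=\sup\big\{(\sum_{t\in A}|x(t)|^2)^{1/2}:A\ \text{a}\ \sqsubseteq\text{-antichain}\big\}$ (note the small slip in your write-up: the value of the defining functional is the square root of $\sum_{q}|x(t_q)|^2$, not the sum itself -- harmless, since you compare squares). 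Covering the height-$n$ tree by its $n$ level antichains then gives $\sqrt{n}$-equivalence with the $\ell^2$ basis. Your argument is more direct, avoids the inductive bookkeeping (in particular the paper's implicit step of combining the upper estimate across the different subtrees), and yields the better constant $\sqrt{n}$ instead of $2^{(n-1)/2}$; the paper's induction, on the other hand, never needs the explicit antichain description of the restricted norm. Both suffice for the proposition, since $n$ is fixed for each subspace.
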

\begin{proof}
  We will prove it by induction on $n\geq2$. For $n=2$ we have the following. Let $k\in\nn$ and
  $a_0,\ldots,a_k\in\rr$. Then
  \[\Big\|a_0e_{\{2\}}+\sum_{j=1}^ka_je_{\{2,2+j\}}\Big\|=\max\Big(|a_0|,\Big(\sum_{j=1}^ka_j^2\Big)^\frac{1}{2}\Big)
  \leq\Big(\sum_{j=0}^ka_j^2\Big)^\frac{1}{2}\]
  Notice also that $\max(|a_0|^2,\sum_{j=1}^ka_j^2)\geq\frac{1}{2}\sum_{j=0}^ka_j^2$. Hence
  \[\frac{1}{\sqrt{2}}\Big(\sum_{j=0}^ka_j^2\Big)^\frac{1}{2}\leq\Big\|a_0e_{\{2\}}+\sum_{j=1}^ka_je_{\{2,2+j\}}\Big\|
  \leq\Big(\sum_{j=0}^ka_j^2\Big)^\frac{1}{2}\]
and the proof for $n=2$ is complete.
  Let $n\geq 2$ and suppose
  that for every $l\in\nn$, $a_1,\ldots,a_l\in\rr$ and
  $t_1,\ldots,t_l\in\widehat{\mathcal{S}}_{[\{n\}]}$ we have that
  \[(\sqrt{2})^{-(n-1)}\Big(\sum_{j=1}^la_j^2\Big)^\frac{1}{2}
  \leq\Big\|\sum_{j=1}^la_je_{t_j}\Big\|
  \leq\Big(\sum_{j=1}^la_j^2\Big)^\frac{1}{2}\]
  It is easy to see that for every $k\in\nn$ we have that the two
  1-unconditional sequences
  $(e_t)_{t\in\widehat{\mathcal{S}}_{[\{n\}]}}$ and
  $(e_t)_{t\in\widehat{\mathcal{S}}_{[\{n+1,n+1+k\}]}}$ are
  1-equivalent. Hence for every $k\in\nn$, $l\in\nn$, $a_1,\ldots,a_l\in\rr$ and
  $t_1,\ldots,t_l\in\widehat{\mathcal{S}}_{[\{n+1,n+1+k\}]}$ we have that
  \[(\sqrt{2})^{-(n-1)}\Big(\sum_{j=1}^la_j^2\Big)^\frac{1}{2}
  \leq\Big\|\sum_{j=1}^la_je_{t_j}\Big\|
  \leq\Big(\sum_{j=1}^la_j^2\Big)^\frac{1}{2}\]
  Let $k\in\nn$. Let $a_0\in\rr$ and for each $1\leq j\leq k$, let
  $l_j\in\nn$, $(a_q^j)_{q=1}^{l_j}$ in $\rr$ and
  $(t_q^j)_{q=1}^{l_j}$ in
  $\widehat{\mathcal{S}}_{[\{n+1,n+1+j\}]}$.
  Then we have that
  \[\Big\|a_0e_{\{n+1\}}+\sum_{j=1}^k\sum_{q=1}^{l_j}a^j_qe_{t^j_q}\Big\|
  =\max\Bigg\{|a_0|,\Big\|\sum_{j=1}^k\sum_{q=1}^{l_j}a^j_qe_{t^j_q}\Big\|\Bigg\}
  \leq \Big(a_0^2+\sum_{j=1}^k\sum_{q=1}^{l_j}(a^j_q)^2\Big)^\frac{1}{2}\]
  Notice that $\displaystyle\max\Big(a_0^2,\sum_{j=1}^k\sum_{q=1}^{l_j}(a^j_q)^2\Big)\geq\frac{1}{2}\Big(a_0^2+\sum_{j=1}^k\sum_{q=1}^{l_j}(a^j_q)^2\Big)$.
  Hence
  \[(\sqrt{2})^{-n}\Big(a_0^2+\sum_{j=1}^k\sum_{q=1}^{l_j}(a^j_q)^2\Big)^\frac{1}{2}
  \leq \Big\|a_0e_{\{n+1\}}+\sum_{j=1}^k\sum_{q=1}^{l_j}a^j_qe_{t^j_q}\Big\|
  \leq \Big(a_0^2+\sum_{j=1}^k\sum_{q=1}^{l_j}(a^j_q)^2\Big)^\frac{1}{2}\]
\end{proof}
\begin{notation}
  For every $l\in\nn$ we set
  $X_l=\overline{c_{00}(\widehat{\mathcal{S}}_{[\{l\}]})}$ and
  $P_l:X\to X_l$ such that
  $P_l(x)=\sum_{t\in\widehat{\mathcal{S}}_{[\{l\}]}}x(t)e_t$, for
  all $x\in X$. We also set $X_0=<e_\emptyset>$ and $P_0:X\to X_0$
  such that $P_0(x)=x(\emptyset)e_\emptyset$, for all $x\in X$.
  Clearly the spaces $X_0$ and $X_1$ are of dimension 1 and
  therefore the projections $P_0$ and $P_1$ are compact.
\end{notation}
\begin{prop}\label{X_nbl l^2 saturated}
  The space $X$ is $\ell^2$ saturated.
\end{prop}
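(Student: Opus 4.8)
The plan is to reuse, almost verbatim, the dichotomy argument of Propositions \ref{either l^p or something in X_l} and \ref{X_xi either l^2 or something in X_l}, adapted to the tree family $\widehat{\mathcal{S}}$ and the projections $(P_l)_{l\geq 0}$. Recall from the preceding proposition that for every $l\geq 2$ the subspace $X_l=\overline{c_{00}(\widehat{\mathcal{S}}_{[\{l\}]})}$ is isomorphic to $\ell^2$, while $X_0$ and $X_1$ are one dimensional, so $P_0,P_1$ are compact and in particular strictly singular; moreover, since $(e_t)_{t\in\widehat{\mathcal{S}}}$ is $1$-unconditional, every $P_l$ is a norm-one projection.

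First I would fix an arbitrary subspace $Z$ of $X$ and split into two cases. In the first case there exists $l_0\geq 2$ such that $P_{l_0}|_Z$ is not strictly singular; then by definition $Z$ has a subspace $W$ on which $P_{l_0}$ is an isomorphic embedding, so $W$ is isomorphic to a subspace of $X_{l_0}\cong\ell^2$, and since every infinite dimensional closed subspace of $\ell^2$ is isomorphic to $\ell^2$, it follows that $W$, hence $Z$, contains an isomorphic copy of $\ell^2$. In the complementary case $P_l|_Z$ is strictly singular for every $l\geq 0$ (this being automatic for $l=0,1$), and here I would run a sliding hump construction as in the proof of Proposition \ref{X_xi either l^2 or something in X_l}: choosing $(\varepsilon_n)_{n\in\nn}$ positive with $\sum_n\varepsilon_n$ sufficiently small, build inductively a normalized sequence $(w_n)_{n\in\nn}$ in $S_Z$ together with a sequence $(\widetilde{w}_n)_{n\in\nn}$ in $c_{00}(\widehat{\mathcal{S}})$ consisting of finitely supported vectors with nonempty support and with $\|w_n-\widetilde{w}_n\|<\varepsilon_n$. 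For the first step one picks a subspace of $Z$ on which $P_0$ has small norm, chooses $w_1$ in its sphere, and approximates $w_1-P_0(w_1)$ by a finitely supported $\widetilde{w}_1$ with $\emptyset\notin\operatorname{supp}(\widetilde{w}_1)$. At step $n+1$, having produced $\widetilde{w}_n$, set $l_0=\max\{\max t:\,t\in\operatorname{supp}(\widetilde{w}_n)\}$, use strict singularity of $P_0|_Z,\dots,P_{l_0}|_Z$ to find a subspace of $Z$ on which $\sum_{l=0}^{l_0}P_l$ has norm at most $\varepsilon_{n+1}/2$, pick $w_{n+1}$ in its sphere, and put $w_{n+1}'=w_{n+1}-\sum_{l=0}^{l_0}P_l(w_{n+1})$; then $\operatorname{supp}(w_{n+1}')$ consists of nonempty sets $t$ with $\min t>l_0$, and approximating $w_{n+1}'$ by a finitely supported $\widetilde{w}_{n+1}$ with support inside $\operatorname{supp}(w_{n+1}')$ gives $\|w_{n+1}-\widetilde{w}_{n+1}\|<\varepsilon_{n+1}$ together with $\max\{\max t:t\in\operatorname{supp}(\widetilde{w}_n)\}<\min\{\min t:t\in\operatorname{supp}(\widetilde{w}_{n+1})\}$.

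By construction $(\widetilde{w}_n)_{n\in\nn}$ is a seminormalized $\widehat{\mathcal{S}}$-block sequence, so by Corollary \ref{X_nbl seminorm block give l^2} it is equivalent to the usual basis of $\ell^2$; since $(e_t)_{t\in\widehat{\mathcal{S}}}$ is $1$-unconditional and $\sum_n\varepsilon_n$ was chosen small, the principle of small perturbations shows that $(w_n)_{n\in\nn}$ is equivalent to $(\widetilde{w}_n)_{n\in\nn}$, whence $\overline{\langle(w_n)_{n\in\nn}\rangle}$ is an isomorphic copy of $\ell^2$ inside $Z$. In either case $Z$ contains $\ell^2$, which proves that $X$ is $\ell^2$ saturated. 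The only point requiring care — and it is entirely routine given the earlier analogous propositions — is the simultaneous bookkeeping in the inductive step which makes $(\widetilde{w}_n)_{n\in\nn}$ a genuine $\widehat{\mathcal{S}}$-block sequence (the minimum of the later support strictly exceeding the maximum of the earlier one, and $\emptyset$ never appearing) while keeping the perturbations summable; no new conceptual difficulty arises, since the substantive content is already packaged in Corollary \ref{X_nbl seminorm block give l^2}.
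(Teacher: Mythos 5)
Your proposal is correct and follows essentially the same route as the paper: the same dichotomy on whether some $P_l|_Z$ ($l\geq 2$) fails to be strictly singular, with the first case handled via $X_l\cong\ell^2$ and the second via the same sliding hump construction of a seminormalized $\widehat{\mathcal{S}}$-block sequence approximating a normalized sequence in $Z$, concluded by Corollary \ref{X_nbl seminorm block give l^2} and the small perturbation principle. No gaps; the bookkeeping you flag is exactly what the paper's inductive step does.
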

\begin{proof}
  Let $Y$ be a subspace of $X$. Then either there exists $l\geq 2$
  such that $P_l|_Y$ is not strictly singular or for every
  $l\geq0$ the operator $P_l|_Y$ is strictly singular.

  In the first case the result is immediate. Suppose that the
  second case holds. Let $(\ee_n)_{n\in\nn}$ be a decreasing
  sequence of positive reals such that $\sum_{n=1}^\infty
  \ee_n<\frac{1}{3}$. Since $P_0$ is strictly singular, there exists $y_1\in S_Y$ such that $\|P_0(y_1)\|<\frac{\ee_1}{2}$. We pick $w_1\in X$ of
  finite support such that $\text{supp}w_1\subseteq
  \text{supp}(y_1-P_0(y_1))$ and $\|w_1-(y_1-P_0(y_1))\|<\frac{\ee_1}{2}$. Therefore $\|y_1-w_1\|<\ee_1$ and $\emptyset\not\in\text{supp} w_1$. Let $l_1=\max\{\max
  t:t\in\text{supp}w_1\}$. Since $P_l$ is strictly singular for
  all $0\leq l\leq l_1$, there exists a subspace $Y_1$ of $Y$ such that
  $\|P_l|_{Y_1}\|<\frac{\ee_2}{2(l_1+1)}$. Let $y_2\in S_{Y_2}$
  and $\widetilde{w}_2=y_2-\sum_{l=0}^{l_1}P_l(y_2)$. Then
  $\|\widetilde{w}_2-y_2\|<\frac{\ee_2}{2}$. Let $w_2\in X$ of
  finite support such that
  $\text{supp}w_2\subseteq\text{supp}\widetilde{w}_2$
  and $\|w_2-\widetilde{w}_2\|<\frac{\ee_2}{2}$. Hence
  $\|w_2-y_2\|<\ee_2$ and \[\max\{\max t:t\in\text{supp} w_1\}<\min\{\min t:t\in\text{supp}w_2\}\]

  Proceeding in the same way we may construct a normalized
  sequence $(y_n)_{n\in\nn}$ in $Y$ and an
  $\widehat{\mathcal{S}}$-block sequence $(w_n)_{n\in\nn}$ such
  that $\|w_n-y_n\|<\ee_n$, for all $n\in\nn$. The latter yields
  that the sequences $(y_n)_{n\in\nn}$ and $(w_n)_{n\in\nn}$ are
  equivalent. By Corollary \ref{X_nbl seminorm block give l^2}
  we have that $(w_n)_{n\in\nn}$ is equivalent to the usual basis
  of $\ell^2$. Hence $(y_n)_{n\in\nn}$ is equivalent to the usual basis
  of $\ell^2$ and the proof is complete.
\end{proof}
\begin{prop}
  The space $X$ is reflexive.
\end{prop}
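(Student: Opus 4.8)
The plan is to derive the reflexivity of $X$ from the $\ell^2$-saturation established in Proposition \ref{X_nbl l^2 saturated}, combined with the $1$-unconditionality of the basis $(e_t)_{t\in\widehat{\mathcal{S}}}$ and James' characterization of reflexive spaces possessing an unconditional basis. This follows the same scheme as the proof of Corollary \ref{reflexivity of X k+1}.

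First I would record the elementary structural facts that neither $c_0$ nor $\ell^1$ is $\ell^2$-saturated. Indeed, every infinite dimensional subspace of $c_0$ contains a further subspace isomorphic to $c_0$, hence is non-reflexive and in particular cannot be isomorphic to the reflexive space $\ell^2$; and $\ell^1$ has the Schur property, so every infinite dimensional subspace of $\ell^1$ is non-reflexive and therefore not isomorphic to $\ell^2$ either. Consequently, if $X$ contained an isomorphic copy of $c_0$ (resp. $\ell^1$), that copy would be a (closed, infinite dimensional) subspace of $X$ which, by Proposition \ref{X_nbl l^2 saturated}, would have to contain an isomorphic copy of $\ell^2$ --- a contradiction. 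Hence $X$ contains no subspace isomorphic to $c_0$ or to $\ell^1$.

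Then I would invoke James' theorem (c.f. \cite{J}): a Banach space with an unconditional basis is reflexive if and only if it contains no subspace isomorphic to $c_0$ or $\ell^1$. Since $(e_t)_{t\in\widehat{\mathcal{S}}}$ is $1$-unconditional and, by the previous paragraph, $X$ omits both $c_0$ and $\ell^1$, the space $X$ is reflexive.

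There is essentially no obstacle here; the substantive work has already been carried out in Lemma \ref{X_nbl block with no hitable and all incomparable giving l^2}, Corollary \ref{X_nbl seminorm block give l^2} and Proposition \ref{X_nbl l^2 saturated}. The only point requiring a little care is that the $\ell^2$-saturation must be applied to an arbitrary closed infinite dimensional subspace (which is exactly how Proposition \ref{X_nbl l^2 saturated} is phrased), and the classical facts quoted about $c_0$ and $\ell^1$ --- the self-reproduction of $c_0$ inside its subspaces and the Schur property of $\ell^1$ --- are standard and may simply be cited.
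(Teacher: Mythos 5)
Your proof is correct and follows essentially the same route as the paper: the paper also deduces from the $\ell^2$-saturation (Proposition \ref{X_nbl l^2 saturated}) that $X$ contains no isomorphic copy of $c_0$ or $\ell^1$ and then concludes by James' theorem for spaces with an unconditional basis. Your spelling out of why $c_0$ and $\ell^1$ cannot be $\ell^2$-saturated (self-reproduction of $c_0$, Schur property of $\ell^1$) is a harmless elaboration of a step the paper leaves implicit.
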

\begin{proof}
  First recall that the space $X$ has an unconditional basis.
  Since, by Proposition \ref{X_nbl l^2 saturated}, $X$ is $\ell^2$
  saturated, we have that $X$ does not contain any isomorphic copy
  of $c_0$ or $\ell^1$. Hence by James' theorem (c.f. \cite{J}) we
  have that $X$ is reflexive.
\end{proof}
\section{The space $X$ does not admit any plegma block generated $\ell^1$ spreading model}
\begin{notation}
  Let $\g\subseteq [\nn]^{<\infty}$, $k\geq2$ and
  $s_0,\ldots,s_k\in\g$. We will say that $(s_j)_{j=0}^k$ is a
  3-plegma path from $s_0$ to $s_k$ in $\g$ of length $k$, if
  $(s_{j},s_{j+1},s_{j+2})$ is plegma for all $0\leq j\leq k-2$.
\end{notation}
We will need a strengthening of Proposition \ref{accessing
everything with plegma path of length |s_0|}.
\begin{cor}\label{corollary for strong plegma paths}
  Let $\g$ be a regular thin family and $L\in[\nn]^\infty$ such
  that $\g$ is very large in $L$. Then for every
  $s_0,s\in\g\upharpoonright\upharpoonright L(2\nn)$, with
  $s_0<s$, there exists a 3-plegma path from $s_0$ to $s$
  in $\g\upharpoonright\upharpoonright L$ of length $2|s_0|$.
\end{cor}
\begin{proof}
  By Proposition \ref{accessing everything with plegma path of length
  |s_0|} there exists a plegma path $(s'_j)_{j=0}^{|s_0|}$
  from $s_0$ to $s$ in $\g\upharpoonright\upharpoonright L(2\nn)$
  of length $|s_0|$. For every $1\leq j\leq |s_0|$, if
  $s_j'=\{L(n_1^j)<\ldots<L(n_{|s_j'|}^j)\}$, we set
  $\widetilde{s}_j$ the unique element of $\g$ such that
  \[\widetilde{s}_j\sqsubseteq\{L(n_1^j-1),\ldots,L(n_{|s_j'|}^j-1)\}\]
  We set $s_{2j}=s_j'$, for all $0\leq j\leq |s_0|$, and
  $s_{2j-1}=\widetilde{s}_j$, for all $1\leq j\leq |s_0|$. It is
  easy to check that $(s_j)_{j=0}^{2|s_0|}$ is a 3-plegma
  path from $s_0$ to $s$ in $\g\upharpoonright\upharpoonright L$
  of length $2|s_0|$.
\end{proof}

Let $W\subseteq c_{00}(\widehat{\mathcal{S}})$ be the minimal set
satisfying the following.
\begin{enumerate}
  \item[(i)] For every $l\in\nn$, $(t_j)_{j=1}^l\in\mathcal{H}$ and $\ee_1,\ldots,\ee_l\in\{-1,1\}$
  the functional $f=\sum_{j=1}^l\ee_je_{t_j}^*$ belongs to $W$ and will be called of type I with weight $|t_1|$.
  \item[(ii)] For every $d\in\nn$, every collection $f_1,\ldots,f_d$ of functionals
   of type I, such that the set $\cup_{q=1}^d\text{supp}(f_q)$
     consists of pairwise $\sqsubseteq$-incomparable elements,
     and $a_1,\ldots,a_d\in\rr$ with $\sum_{q=1}^da_q^2\leq1$
     the functional $\varphi=\sum_{q=1}^d a_qf_q$ belongs to $W$ and will be called of type II.
\end{enumerate}
It is easy to check that the set $W$ is a norming set for the
space $X$. Moreover, for $d\in\nn$, the collection
$f_1,\ldots,f_d$ of functionals of type I is called incomparable
if the elements of the set $\cup_{q=1}^d\text{supp}(f_q)$ are
pairwise $\sqsubseteq$-incomparable.
\begin{lem}\label{X_nbl short in supp does not admit l^1}
  Let $\g$ be a regular thin family and $(x_v)_{v\in\g}$ a $\g$-sequence in $B_X$. Suppose that there exists $k_0\in\nn$ and $L\in[\nn]^\infty$
  such that for all $v\in\g\upharpoonright L$ and $t\in\text{supp}(x_v)$, $|t|\leq k_0$ and  for every plegma pair $(v_1,v_2)$
   in $\g\upharpoonright L$, we have that $\text{supp}(x_{v_1})\cap\text{supp}(x_{v_2})=\emptyset$. Then the $\g$-subsequence
   $(x_v)_{v\in\g\upharpoonright L}$ does not admit $\ell^1$ as a $\g$-spreading model.
\end{lem}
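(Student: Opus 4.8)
The plan is to establish a square-root type upper bound on the norms $\big\|\sum_{j=1}^n x_{s_j}\big\|$ for plegma $n$-tuples $(s_j)_{j=1}^n$ in $\g\upharpoonright L$, and then observe that this is incompatible with the lower $\ell^1$-estimate an $\ell^1$-spreading model would impose. First I would record the elementary consequences of the hypotheses: for any plegma $n$-tuple $(s_j)_{j=1}^n$ in $\g\upharpoonright L$ the supports $\text{supp}(x_{s_j})$ are pairwise disjoint (every sub-pair of a plegma tuple is a plegma pair) and each is contained in $\{t\in\widehat{\mathcal S}:|t|\le k_0\}$; moreover $(e_t)_{t\in\widehat{\mathcal S}}$ is $1$-unconditional, so $|x(t)|\le\|x\|$ for every $t$ and $x$, and $W$ is a norming set for $X$, with $-\varphi\in W$ whenever $\varphi\in W$, so that $\|y\|=\sup_{\varphi\in W}|\varphi(y)|$.

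The heart of the argument is the claim that $\big\|\sum_{j=1}^n x_{s_j}\big\|\le\sqrt{(k_0+1)n}$ for every plegma $n$-tuple $(s_j)_{j=1}^n$ in $\g\upharpoonright L$ with $n\ge k_0+1$. Fix such a tuple, put $y=\sum_{j=1}^n x_{s_j}$ and let $\varphi\in W$. If $\varphi$ is of type I of weight $m$, then it is supported on at most $m+1$ elements, all of length $m$; when $m>k_0$ these meet no $\text{supp}(x_{s_j})$, so $\varphi(y)=0$, and when $m\le k_0$ then $|\varphi(y)|\le (m+1)\max_t|y(t)|\le k_0+1$, because disjointness of the supports leaves at most one $j$ with $y(t)=x_{s_j}(t)$ and then $|y(t)|\le\|x_{s_j}\|\le1$. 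If $\varphi=\sum_{q=1}^d a_q f_q$ is of type II, with the $f_q$ of type I and $\sum_q a_q^2\le 1$, I would apply Cauchy--Schwarz to get $|\varphi(y)|^2\le\sum_{q=1}^d f_q(y)^2$; then for each $q$ write $f_q(y)=\sum_j f_q(x_{s_j})$, note that by disjointness of the supports and the weight bound (a type I functional of weight $>k_0$ annihilates $y$) at most $k_0+1$ of the summands are nonzero, and apply Cauchy--Schwarz again to obtain $f_q(y)^2\le(k_0+1)\sum_j f_q(x_{s_j})^2$. Summing over $q$ and interchanging the order of summation, $\sum_q f_q(y)^2\le(k_0+1)\sum_{j=1}^n\big(\sum_q f_q(x_{s_j})^2\big)$. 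The decisive point is that for each fixed $j$ one has $\sum_q f_q(x_{s_j})^2\le\|x_{s_j}\|^2\le 1$: normalising the scalar vector $(f_q(x_{s_j}))_q$ produces $(b_q)$ with $\sum_q b_q^2=1$, and $\psi:=\sum_q b_q f_q$ again belongs to $W$, since a sub-collection of the $f_q$ still has the property that the union of its supports is a set of pairwise $\sqsubseteq$-incomparable elements; evaluating $\psi$ at $x_{s_j}$ gives exactly $\big(\sum_q f_q(x_{s_j})^2\big)^{1/2}\le\|x_{s_j}\|$. Hence $|\varphi(y)|^2\le(k_0+1)n$, and combining this with the type I bound and using that $W$ norms $X$ yields $\|y\|\le\max\{k_0+1,\sqrt{(k_0+1)n}\}=\sqrt{(k_0+1)n}$ for $n\ge k_0+1$.

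To finish, suppose toward a contradiction that $(x_v)_{v\in\g\upharpoonright L}$ generates $\ell^1$ as a $\g$-spreading model, with generated seminorm $\|\cdot\|_*$ and lower $\ell^1$-constant $c>0$, so that $\|\sum_{i=1}^n e_i\|_*\ge cn$ for every $n$. By the definition of a $\g$-spreading model, $\|\sum_{i=1}^n e_i\|_*$ equals the limit of $\|\sum_{i=1}^n x_{s_i^m}\|$ along a sequence of plegma $n$-tuples $(s_i^m)_{i=1}^n$ in $\g\upharpoonright L$ with $\min s_1^m\to\infty$; such tuples exist (the case $o(\g)=0$, where $\g=\{\emptyset\}$, being vacuous, and for $o(\g)\ge1$ plegma $n$-tuples with prescribed large first element are produced exactly as in the proof of Proposition \ref{B-Sp}). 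By the preceding claim each term of this limit is $\le\sqrt{(k_0+1)n}$ once $n\ge k_0+1$, hence $cn\le\sqrt{(k_0+1)n}$ for all large $n$, which is impossible as soon as $n>(k_0+1)c^{-2}$.

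I expect the only genuinely delicate point to be the type II estimate: because the type II condition only forces the union of the supports of the blocks $f_q$ to be a $\sqsubseteq$-antichain, and not the $f_q$ themselves to have pairwise disjoint supports, a single basis functional may occur in several $f_q$, so one cannot reduce to a coordinatewise bound. Keeping $\varphi$ in its ``$\ell^2$-sum of $\ell^1$-blocks'' form throughout and exploiting that every sub-family of a legitimate block family is again legitimate is what makes the two nested applications of Cauchy--Schwarz go through; everything else (the weight bound $\le k_0$ and plegma-disjointness controlling how many $f_q$, respectively how many $x_{s_j}$, can interact) is routine bookkeeping.
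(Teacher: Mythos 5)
Your proof is correct and follows essentially the same route as the paper's: the decisive estimate is the same double Cauchy--Schwarz argument, using that functionals of weight greater than $k_0$ annihilate the vectors, that disjointness of the supports forces $|\{j:\text{supp}(f_q)\cap\text{supp}(x_{s_j})\neq\emptyset\}|\leq k_0+1$, and that the type II structure of $W$ gives $\sum_q f_q(x_{s_j})^2\leq\|x_{s_j}\|^2\leq 1$. The only difference is packaging: you state the conclusion as the uniform bound $\|\sum_{j=1}^n x_{s_j}\|\leq\sqrt{(k_0+1)\,n}$, whereas the paper phrases the identical computation as $\|\frac{1}{l}\sum_{j=1}^l x_{v_j}\|\leq\frac{\sqrt{k_0+1}}{l}<\varepsilon$, both of which contradict a lower $\ell^1$-estimate.
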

\begin{proof}
  Let $L'\in[L]^\infty$. For every $\ee>0$ there exists $l\in\nn$ such that $\frac{\sqrt{k_0+1}}{l}<\ee$. We pick
   a plegma $l$-tuple $(v_j)_{j=1}^l$ in $\g\upharpoonright L'$ with $v_1(1)\geq L'(l)$. We will show that
   $\|\frac{1}{l}\sum_{j=1}^lx_{v_j}\|<\ee$. Indeed, for every $\varphi\in W$ there exist $d\in\nn$,
   $f_1,\ldots,f_d$ of type I incomparable and $a_1,\ldots, a_d\in\rr$ with $\sum_{q=1}^d a_q^2\leq1$,
    such that $\varphi=\sum_{q=1}^da_qf_q$. We are interested to estimate the quantity $\varphi(\frac{1}{l}\sum_{j=1}^lx_{v_j})$.
    Since for every $t\in\text{supp}(\sum_{j=1}^lx_{v_j})$, $|t|\leq k_0$, we may assume that the weight of $f_q$ is at most $k_0$,
     for all $1\leq q\leq d$. For every $1\leq q\leq d$, we set
  \[E_q=\{j\in\{1,\ldots,l\}:\text{supp}(f_q)\cap\text{supp}(x_{v_j})\neq\emptyset\}\]
  Notice that $|E_q|\leq|\text{supp}(f_q)|\leq k_0+1$, for all
  $1\leq q\leq d$. We have the following
  \[\begin{split}
    \Big|\varphi\Big(\frac{1}{l}\sum_{j=1}^lx_{v_j}\Big)\Big|& \leq\sum_{q=1}^d\Big|\frac{1}{l}a_qf_q\Big(\sum_{j=1}^lx_{v_j}\Big)\Big|
    \leq\sum_{q=1}^d\sum_{j\in E_q}\big|\frac{1}{l}a_qf_q(x_{v_j})\big|\\
    &\leq\Big(\sum_{q=1}^d\sum_{j\in E_q} a_q^2\Big)^\frac{1}{2}\Big(\sum_{q=1}^d\sum_{j\in E_q} \big(\frac{1}{l}f_q(x_{v_j})\big)^2\Big)^\frac{1}{2}\\
    &\leq\sqrt{k_0+1}\Big(\frac{1}{l^2}\sum_{j=1}^l\sum_{q=1}^d\big(f_q(x_{v_j})\big)^2\Big)^\frac{1}{2}\leq
    \frac{\sqrt{k_0+1}}{l}<\ee
  \end{split}\]
\end{proof}
\begin{rem}\label{X_nbl blck give no hitable remark}
  Let $x_1,x_2\in X$ such that $x_1<x_2$ with respect to $(e_n)_{n\in\nn}$, where $e_n=e_{\phi^{-1}(n)}$ for all $n\in\nn$. Let $F\subseteq\text{supp}(x_2)$ such that for every $t\in F$ there exists $t'\in \text{supp}(x_2)$ with $t'\sqsubset t$. Then there is no $t_1\in \text{supp}(x_1)$ and $t_2\in F$ such that $(t_1,t_2)$ or $(t_2,t_1)$ belong to $\mathcal{H}$.

  Indeed, let $t_1\in \text{supp}(x_1)$ and $t_2\in F$ with $|t_1|=|t_2|$. The pair $(t_2,t_1)\not\in\mathcal{H}$. Indeed, since otherwise we would
  have that $\max t_2<\max t_1$ and therefore $\phi(t_2)<\phi(t_1)$, which contradicts that $x_1<x_2$.

  Suppose that the pair $(t_1,t_2)\in\mathcal{H}$. Thus $(t_1,t_2)$ is plegma. Since $t_2\in F$ there exists
  $t_2'\in \text{supp}(x_2)$ such that $t_2'\sqsubset t_2$. Hence $(t_1,t_2')$ is plegma and $|t_1|>|t_2'|$.
  The latter easily yields that $\max t_1>\max t_2'$. Hence $\phi(t_1)>\phi(t_2')$, which contradicts that $x_1<x_2$.
\end{rem}
\begin{cor}\label{X_nbl x_v^3 does not admit l^1}
  Let $\g$ be a regular thin family, $l\in\nn$ and $(x_v)_{v\in \g}$ a $\g$-sequence in $B_X$ such that for
  every plegma pair $(v_1,v_2)$ in $\g\upharpoonright L$, $ x_{v_1}<x_{v_2}$. Suppose that for every
  $v\in\g\upharpoonright L$ there exist $F_v^1,F_v^2\subseteq\text{supp}(x_v)$ such that for every $t\in F_v^2$
  there exists $t'\in F_v^1$ such that $t'\sqsubset t$. Let $x_v^2=x_v|_{F_v^2}$ for every $v\in \g\upharpoonright L$. Then $(x_v^2)_{v\in\g\upharpoonright L}$ does not admit $\ell^1$ as a $\g$-spreading model.
\end{cor}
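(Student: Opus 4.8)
The plan is to argue by contradiction, exploiting the combinatorial structure of the norming set $W$ of $X$ together with Remark \ref{X_nbl blck give no hitable remark}. So I would assume that $(x_v^2)_{v\in\g\upharpoonright L}$ admits $\ell^1$ as a $\g$-spreading model; then there are $M\in[L]^\infty$ and a constant $c>0$ such that the $\g$-subsequence $(x_v^2)_{v\in\g\upharpoonright M}$ generates a $\g$-spreading model equivalent to the usual basis of $\ell^1$ with lower constant $c$. The heart of the argument will be the following claim: \emph{for every plegma pair $(v_1,v_2)$ in $\g\upharpoonright L$ and every type I functional $f\in W$, one cannot have both $\text{supp}(f)\cap F^2_{v_1}\neq\emptyset$ and $\text{supp}(f)\cap F^2_{v_2}\neq\emptyset$.} Granting this, the corollary will follow from the same $\ell^2$-type averaging used in the proof of Lemma \ref{X_nbl short in supp does not admit l^1}, with ``at most one'' replacing ``at most $k_0+1$''.

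To prove the claim I would write $f=\sum_{i=1}^k(\pm1)e^*_{t_i}$ with $(t_i)_{i=1}^k\in\mathcal{H}$, and suppose $t_a\in F^2_{v_1}$ and $t_b\in F^2_{v_2}$. Since $(v_1,v_2)$ is a plegma pair we have $x_{v_1}<x_{v_2}$ with respect to the basis $(e_n)_{n\in\nn}$, hence $\text{supp}(x_{v_1})\cap\text{supp}(x_{v_2})=\emptyset$, so $t_a\neq t_b$ and $a\neq b$. The two-element subtuple of $(t_1,\dots,t_k)$ supported on $\{t_a,t_b\}$ is plegma, and its members have equal length (being members of an $\mathcal{H}$-tuple), so one of $(t_a,t_b)$, $(t_b,t_a)$ belongs to $\mathcal{H}$. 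On the other hand, every element of $F^2_{v_2}$ has a proper initial segment in $F^1_{v_2}\subseteq\text{supp}(x_{v_2})$, so Remark \ref{X_nbl blck give no hitable remark}, applied with $x_1=x_{v_1}$, $x_2=x_{v_2}$ and $F=F^2_{v_2}$, forbids precisely a pair $t_1\in\text{supp}(x_{v_1})$, $t_2\in F^2_{v_2}$ with $(t_1,t_2)$ or $(t_2,t_1)$ in $\mathcal{H}$; taking $t_1=t_a$, $t_2=t_b$ contradicts the previous sentence. This establishes the claim.

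Next I would fix $m\in\nn$ and a plegma $m$-tuple $(v_j)_{j=1}^m$ in $\g\upharpoonright M$ with $v_1(1)\geq M(m)$. Every two-element subtuple of $(v_j)_{j=1}^m$ is a plegma pair, so by the claim each type I functional $f\in W$ meets at most one of $x^2_{v_1},\dots,x^2_{v_m}$. Given $\varphi\in W$, write $\varphi=\sum_{q=1}^d a_qf_q$ with the $f_q$ of type I, the family $(f_q)_{q=1}^d$ incomparable and $\sum_{q=1}^d a_q^2\leq1$ (a type I functional being the case $d=1$), and set $I_j=\{q:f_q\text{ meets }x^2_{v_j}\}$; the sets $I_j$ are pairwise disjoint. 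Since for each $j$ the subfamily $(f_q)_{q\in I_j}$ is again incomparable, $\sum_{q\in I_j}b_qf_q\in W$ whenever $\sum_{q\in I_j}b_q^2\leq1$, and taking $b_q$ proportional to $f_q(x^2_{v_j})$ yields $\big(\sum_{q\in I_j}f_q(x^2_{v_j})^2\big)^{1/2}\leq\|x^2_{v_j}\|\leq\|x_{v_j}\|\leq1$. Hence, by Cauchy--Schwarz inside each $I_j$ and then over $j$ (using disjointness of the $I_j$),
\[
\Big|\varphi\Big(\frac1m\sum_{j=1}^m x^2_{v_j}\Big)\Big|\leq\frac1m\sum_{j=1}^m\Big(\sum_{q\in I_j}a_q^2\Big)^{1/2}\leq\frac{\sqrt m}{m}\Big(\sum_{q=1}^d a_q^2\Big)^{1/2}\leq\frac1{\sqrt m}.
\]
As $W$ is norming this gives $\big\|\frac1m\sum_{j=1}^m x^2_{v_j}\big\|\leq m^{-1/2}$. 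On the other hand, the $\ell^1$-spreading-model assumption forces $\big\|\frac1m\sum_{j=1}^m x^2_{v_j}\big\|\geq c/2$ for all sufficiently large such $m$; choosing $m$ with $m^{-1/2}<c/2$ produces the contradiction.

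The only genuinely delicate point is the combinatorial claim in the second paragraph --- in particular recognizing $F^2_{v_2}$ as an admissible choice of $F$ in Remark \ref{X_nbl blck give no hitable remark} and verifying that the relevant two-element subtuple of an $\mathcal{H}$-tuple again lies in $\mathcal{H}$, which is where the ``plegma block'' hypothesis (and not merely disjoint supports) is used. Everything after that is the routine $\ell^2$-averaging already appearing elsewhere in this chapter.
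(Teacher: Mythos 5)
Your proposal is correct and follows essentially the same route as the paper: the key step in both is that, by Remark \ref{X_nbl blck give no hitable remark}, a type I functional can meet at most one of the vectors $x^2_{v_j}$ along a plegma tuple, after which the $\ell^2$-type Cauchy--Schwarz averaging makes the means $\frac1m\sum_j x^2_{v_j}$ small, contradicting an $\ell^1$ lower estimate. The only difference is cosmetic (you spell out why two-element subtuples of an $\mathcal{H}$-tuple lie in $\mathcal{H}$, and your bound $m^{-1/2}$ is slightly weaker than the paper's $1/l_0$, but both tend to zero).
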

\begin{proof}
  Indeed, let $L'\in[L]^\infty$. For every $\ee>0$ we pick $l_0\in\nn$ such that $\frac{1}{l_0}<\ee$. Let $(v_j)_{j=1}^{l_0}$ be a plegma $l_0$-tuple in $\g\upharpoonright L'$. We will show that $\|\frac{1}{l_0}\sum_{j=1}^{l_0}x_{v_j}^2\|<\ee$. Indeed let $\varphi\in W$. Then there exist $d\in\nn$, $f_1,\ldots,f_d$ of type I incomparable and $a_1,\ldots,a_d\in\rr$ with $\sum_{q=1}^da_1^2\leq1$ such that $\varphi=\sum_{q=1}^d a_q f_q$. For every $1\leq q\leq d$, we set $E_q=\{j\in\{1,\ldots,l\}:\text{supp}(f_q)\cap\text{supp}(x_{v_j}^2)\neq\emptyset\}$. By Remark \ref{X_nbl blck give no hitable remark} we have that $|E_q|\leq1$ for all $1\leq q\leq d$. Hence
  \[\begin{split}
    \Big|\varphi\Big(\frac{1}{l_0}\sum_{j=1}^{l_0}x_{v_j}^2\Big)\Big|& \leq\sum_{q=1}^d\Big|\frac{1}{l_0}a_qf_q\Big(\sum_{j=1}^{l_0}x_{v_j}^2\Big)\Big|
    \leq\sum_{q=1}^d\sum_{j\in E_q}\big|\frac{1}{l_0}a_qf_q(x_{v_j}^2)\big|\\
    &\leq\Big(\sum_{q=1}^d\sum_{j\in E_q} a_q^2\Big)^\frac{1}{2}\Big(\sum_{q=1}^d\sum_{j\in E_q} \big(\frac{1}{l_0}f_q(x_{v_j}^2)\big)^2\Big)^\frac{1}{2}\\
    &\leq\Big(\frac{1}{l_0^2}\sum_{j=1}^{l_0}\sum_{q=1}^d\big(f_q(x_{v_j}^2)\big)^2\Big)^\frac{1}{2}\leq
    \frac{1}{l_0}<\ee
  \end{split}\]
\end{proof}
\begin{lem}\label{X_nbl lem contradiction for small x_v^3}
  Let $\g$ be a regular thin family, $L\in[\nn]^\infty$ and
  $(x_v)_{v\in\g}$ a $\g$-sequence in $B_X$. Let $k_0\in\nn$ and
  set for every $v\in\g\upharpoonright L$,
  \[\begin{split}&F_v^1=\{t\in\text{supp}(x_v):|t|\leq k_0\}\;\;\text{and}\\
  &F_v^3=\{t\in\text{supp}(x_v): \forall t'\in F_v^1,\;\; t,t'\;\text{are incomparable}\}\end{split}\]
  Suppose that there exists $\delta<1$ such that for every
  $v\in\g\upharpoonright L$, $\|x_v|_{F_v^3}\|\leq\delta$. Then
  $(x_v)_{v\in\g\upharpoonright L}$ does not admit the usual basis
  of $\ell^1$ as a plegma block generated $\g$-spreading model.
\end{lem}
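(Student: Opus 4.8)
The plan is to argue by contradiction. Assume that for some $M\in[L]^\infty$ the $\g$-subsequence $(x_v)_{v\in\g\upharpoonright M}$ is plegma block and generates the usual basis of $\ell^1$ as a $\g$-spreading model. For $v\in\g\upharpoonright M$ put $F_v^2=\{t\in\text{supp}(x_v):|t|>k_0\text{ and }t'\sqsubset t\text{ for some }t'\in F_v^1\}$. If $t\in\text{supp}(x_v)$ has $|t|>k_0$ and is comparable with some $t'\in F_v^1$, then $|t'|\le k_0<|t|$ forces $t'\sqsubset t$; hence $F_v^1,F_v^2,F_v^3$ form a partition of $\text{supp}(x_v)$. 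Set $x_v^i=x_v|_{F_v^i}$ for $i=1,2,3$, so $x_v=x_v^1+x_v^2+x_v^3$, and, since the basis $(e_t)_{t\in\widehat{\mathcal{S}}}$ of $X$ is $1$-unconditional, each $x_v^i\in B_X$. Passing to a further infinite subset of $M$, still denoted $M$, we may assume that each of the $\g$-subsequences $(x_v)$, $(x_v^1)$, $(x_v^2)$, $(x_v^3)$, $(x_v^2+x_v^3)$ over $\g\upharpoonright M$ generates a $\g$-spreading model; call these $(e_n)_n$, $(e_n^{(1)})_n$, $(e_n^{(2)})_n$, $(e_n^{(3)})_n$, $(e_n^{(2,3)})_n$, the first still being the usual basis of $\ell^1$.

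First I would eliminate the components $x_v^1$ and $x_v^2$. As $(x_v)_{v\in\g\upharpoonright M}$ is plegma block, $(x_v^1)_{v\in\g\upharpoonright M}$ is plegma disjointly supported and every element of $\bigcup_v\text{supp}(x_v^1)$ has cardinality $\le k_0$; so Lemma \ref{X_nbl short in supp does not admit l^1}, and more precisely the averaging estimate in its proof, yields $\big\|\tfrac1l\sum_{j=1}^l e_j^{(1)}\big\|_*\to 0$, hence $(e_n^{(1)})_n$ admits no lower $\ell^1$-estimate. Similarly, since $F_v^2$ consists exactly of the elements of $\text{supp}(x_v)$ possessing a proper initial segment in $F_v^1$, Corollary \ref{X_nbl x_v^3 does not admit l^1} applies to $(x_v^2)_{v\in\g\upharpoonright M}$, and from its proof $(e_n^{(2)})_n$ likewise admits no lower $\ell^1$-estimate.

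The core of the argument is then to transfer the $\ell^1$-structure onto the third component by two applications of Corollary \ref{ultrafilter property for ell^1 spreading models}. Since $(e_n)_n$ is the usual basis of $\ell^1$ it admits a lower $\ell^1$-estimate with constant $1$. Applying Corollary \ref{ultrafilter property for ell^1 spreading models} to the decomposition $x_v=x_v^1+(x_v^2+x_v^3)$, and using that $(e_n^{(1)})_n$ admits no lower $\ell^1$-estimate, we get that $(e_n^{(2,3)})_n$ admits a lower $\ell^1$-estimate with constant $1$; applying it once more to the sequence $(x_v^2+x_v^3)_v$ decomposed as $x_v^2+x_v^3$, and using that $(e_n^{(2)})_n$ admits no lower $\ell^1$-estimate, we conclude that $(e_n^{(3)})_n$ admits a lower $\ell^1$-estimate with constant $1$. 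In particular $\|e_1^{(3)}\|_*\ge 1$. But from the definition of a $\g$-spreading model $\|e_1^{(3)}\|_*=\lim\|x_v^3\|$, the limit being over $v\in\g\upharpoonright M$ with $\min v\to\infty$, and this is $\le\delta<1$ by hypothesis. This contradiction finishes the proof.

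The steps I expect to require the most care are, first, verifying that $F_v^1,F_v^2,F_v^3$ genuinely partition $\text{supp}(x_v)$ — the key point being that a long support element comparable with a short one must properly extend it — and, second, extracting from Lemma \ref{X_nbl short in supp does not admit l^1} and Corollary \ref{X_nbl x_v^3 does not admit l^1} not merely that $(x_v^1)$ and $(x_v^2)$ fail to generate $\ell^1$, but the sharper statement, visible in their proofs where the relevant averages are driven to zero, that the spreading models they generate admit \emph{no} lower $\ell^1$-estimate; this is exactly what is needed to feed Corollary \ref{ultrafilter property for ell^1 spreading models}.
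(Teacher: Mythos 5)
Your proof is correct and follows essentially the same route as the paper: introduce $F_v^2$, check that $(F_v^i)_{i=1}^3$ partition $\text{supp}(x_v)$, rule out $\ell^1$ for the first two components via Lemma \ref{X_nbl short in supp does not admit l^1} and Corollary \ref{X_nbl x_v^3 does not admit l^1}, and then transfer the lower $\ell^1$-estimate to $(x_v^3)$ by Corollary \ref{ultrafilter property for ell^1 spreading models}, contradicting $\|x_v^3\|\leq\delta<1$. Your explicit two-step application of the splitting corollary and the remark that "no lower $\ell^1$-estimate" is what is really needed only make explicit what the paper's proof uses implicitly.
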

\begin{proof}
  For every $v\in\g\upharpoonright L$ we define
  \[F_v^2=\{t\in\text{supp}(x_v)\setminus F_v^1: \exists t'\in\ F_v^1\;\;\text{such
  that}\;\;
  t'\sqsubset t\}\]
  It is immediate that for every $v\in\g\upharpoonright L$,
  $(F_v^i)_{i=1}^3$ is a partition of $\text{supp}(x_v)$ and for
  every $t\in F_v^2\cup F_v^3$ we have that  $|t|>k_0$. For every
  $v\in\g\upharpoonright L$ and $1\leq i\leq 3$ we set
  $x_v^i=x_v|_{F_v^i}$.

  Suppose on the contrary that there exists $L_1\in[L]^\infty$
  such that the $\g$-subsequence $(x_v)_{v\in\g\upharpoonright
  L_1}$ plegma block generates the usual basis
  of $\ell^1$ as a $\g$-spreading model. By Lemma \ref{X_nbl short in supp does not admit
  l^1} (resp. Corollary \ref{X_nbl x_v^3 does not admit l^1}) we
  have that $(x_v^1)_{v\in\g\upharpoonright L_1}$ (resp. $(x_v^2)_{v\in\g\upharpoonright
  L_1}$) does not admit $\ell^1$ as a $\g$-spreading model. Hence
  by Corollary \ref{ultrafilter property for ell^1 spreading
  models} $(x_v^3)_{v\in\g\upharpoonright L_1}$ admits the usual
  basis of $\ell^1$ as a $\g$-spreading model, which is impossible
  since $\|x_v^3\|\leq\delta<1$, for all $v\in \g\upharpoonright
  L_1$.
\end{proof}
\begin{notation}$\;$
 Let
  \[D=\Big\{(\ee,\delta)\in[0,\frac{1}{3}-\frac{\sqrt{3}}{6})\times[0,1):\;(\sqrt{3}-3\sqrt{3}\ee)^2+(1-3\ee-\delta)^2\geq3\Big\}\]
  and $h:D\to\rr$ be the
  function defined by
  \[h(\ee,\delta)=(1-(1-3\ee-(3-2(1-3\ee)^2-(1-3\ee-\delta)^2)^\frac{1}{2})^2)^\frac{1}{2}\]
  Let us note that the curve $\mathcal{E}=\{(\ee,\delta)\in\rr^2:\;(\sqrt{3}-3\sqrt{3}\ee)^2+(1-3\ee-\delta)^2=3\}$ is an ellipse, since its image
  through the linear transformation $T:\rr^2\to\rr^2$, defined by
\[T(\ee,\delta)= \left[ \begin{array}{cc}
3\sqrt{3} & 0  \\
3 & 1  \end{array} \right] \cdot \left[\begin{array}
{c}\ee\\\delta
\end{array}\right]\]
is a circle centered at $(\sqrt{3},1)$ and of radius $\sqrt{3}$. Moreover notice
that $(\frac{1}{3}-\frac{\sqrt{3}}{6},0)$ is the first
intersection point of the curve $\mathcal{E}$ and the $\ee$-axis.
Also the point $(0,1)$ belongs to $\mathcal{E}$ and the
$\delta$-axis is the tangent of $\mathcal{E}$ at $(0,1)$.
Therefore the set $D$ is a curved triangle with edges
$J_1,J_2,J_3$, where $J_1$ (respectively $J_2$) is the segment
with endpoints $(\frac{1}{3}-\frac{\sqrt{3}}{6},0)$ and $(0,0)$
(respectively $(0,0)$ and $(0,1)$) and $J_3$ is the arc of
$\mathcal{E}$ which joins the $(0,1)$ with
$(\frac{1}{3}-\frac{\sqrt{3}}{6},0)$.

 It is
easy to see that the function $h$ is well defined on $D$. Moreover
the function $h$ is strictly increasing on $D$ in the following
sense: for all $(\ee',\delta'),(\ee,\delta)\in D$, with either
$0\leq\ee'<\ee$ and $0\leq\delta'\leq\delta$ or $0\leq\ee'\leq\ee$
and $0\leq\delta'<\delta$, we have that
$h(\ee',\delta')<h(\ee,\delta)$. Finally $h[D]=[0,1]$,
$h^{-1}(\{0\})=\{(0,0)\}$ and $h^{-1}(\{1\})=J_3$.
\end{notation}
\begin{lem}\label{X_nbl continuing with 3}
  Let $x_1<x_2<x_3$ (with respect to $(e_n)_{n\in\nn}$, where
  $e_n=e_{\phi^{-1}(\{n\})}$ for all $n\in\nn$) in $B_X$ and
  $k_0\in\nn$. For every $j=1,2,3$ let
  \[\begin{split}
    &F^1_j=\{t\in\text{supp}(x_j):|t|\leq k_0\}\\
    &F^2_j=\{t\in\text{supp}(x_j)\setminus F_j^1:\exists t'\in F_j^1\;\;\text{such that}\;\;t'\sqsubset
    t\}\\
    &F_j^3=\{t\in\text{supp}(x_j):\forall t'\in
    F_j^1,\;\;t,t'\;\;\text{incomparable}\}\\
    &x_j^1=x_j|_{F_j^1},x_j^2=x_j|_{F_j^2}\;\;\text{and}\;\;x_j^3=x_j|_{F_j^3}
  \end{split}\]
  Let $(\ee,\delta)\in D$. Suppose that the following are
  satisfied:
  \begin{enumerate}
    \item[(i)] $\|x_1+x_2+x_3\|>3-3\ee$ and
    \item[(ii)] $\|x_2^3\|<\delta$.
  \end{enumerate}
  Then $\|x_3^3\|<h(\ee,\delta)$.
\end{lem}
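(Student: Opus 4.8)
The plan is to follow the pattern of the proof of Lemma \ref{X xi going forward}, adapted to the norming set $W$ of $X$ (type I / type II functionals built over $\mathcal{H}$) and to three consecutive vectors. First I would fix a witness $\varphi\in W$ with $\varphi(x_1+x_2+x_3)>3-3\varepsilon$. Since each $\|x_i\|\le 1$ and $\varphi$ is norming, this forces $\varphi(x_i)>1-3\varepsilon$ for $i=1,2,3$; and since $\varphi(x_2^3)\le\|x_2^3\|<\delta$, also $\varphi(x_2^1+x_2^2)>1-3\varepsilon-\delta$. Writing $\varphi=\sum_{q} a_q f_q$ in type II normal form (so $\sum_q a_q^2\le 1$ and the $f_q$ are incomparable type I functionals; if $\varphi$ is itself type I take $d=1$), I record the structural fact that every type I functional has all of its support nodes of a single common length, equal to its weight $|t_1|$; hence any $f_q$ meeting a set of nodes of length $>k_0$ (such as some $F_j^3$, or a long part of an $x_j$) has its whole support inside nodes of length $>k_0$.

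Next, for a set $N$ of nodes put $S_N=\sum\{a_q^2:\ \text{supp}(f_q)\cap N\neq\emptyset\}$; restricting each $f_q$ to $N$ again yields a type II functional, so by Cauchy--Schwarz $S_N\ge\bigl(\varphi|_N(y)\bigr)^2$ whenever $y$ is supported in $N$ with $\|y\|\le 1$. Applying this with $N=\text{supp}(x_1)$, $N=F_2^1\cup F_2^2$, $N=F_3^1\cup F_3^2$ and $N=F_3^3$ (four pairwise disjoint node sets, since $\text{supp}(x_1),\text{supp}(x_2),\text{supp}(x_3)$ are disjoint) turns the three evaluations above into lower bounds on the first three masses. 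The combinatorial heart of the argument is then a counting estimate bounding the multiplicity with which a single type I functional can charge these disjoint node sets: using that $x_1<x_2<x_3$ in the order $(e_n)_{n\in\nn}$ (so that in the plegma $\mathcal{H}$-tuple carrying $f_q$ a node lying in $\text{supp}(x_i)$ must precede one lying in $\text{supp}(x_j)$ whenever $i<j$), together with the plegma conditions defining $\mathcal{H}$ and the definitions of $F_j^1,F_j^2,F_j^3$, the goal is to conclude
\[\sum_{q\in C}a_q^2\ \le\ 3-2(1-3\varepsilon)^2-(1-3\varepsilon-\delta)^2\ =:\ \gamma^2,\qquad C=\{q:\ \text{supp}(f_q)\cap F_3^3\neq\emptyset\},\]
where the hypothesis $(\varepsilon,\delta)\in D$ is precisely what guarantees $\gamma\le 1-3\varepsilon$.

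Finally I would split $\varphi=\varphi'+\varphi_{33}$, with $\varphi_{33}=\sum_{q\in C}a_q\,(f_q|_{F_3^3})$ collecting the parts of $\varphi$ living on $F_3^3$ and $\varphi'=\varphi-\varphi_{33}$; both belong to $W$. Then $\varphi'$ has support disjoint from $F_3^3$, so $\varphi'(x_3)=\varphi'(x_3|_{F_3^1\cup F_3^2})\le\|x_3|_{F_3^1\cup F_3^2}\|$, whereas $\varphi'(x_3)=\varphi(x_3)-\varphi_{33}(x_3^3)>(1-3\varepsilon)-\gamma\|x_3^3\|\ge(1-3\varepsilon)-\gamma$, using $\varphi_{33}(x_3^3)\le\bigl(\sum_{q\in C}a_q^2\bigr)^{1/2}\|x_3^3\|\le\gamma$ and $\|x_3^3\|\le 1$; hence $\|x_3|_{F_3^1\cup F_3^2}\|>1-3\varepsilon-\gamma\ge 0$. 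Now $F_3^3$ is $\sqsubseteq$-incomparable with $F_3^1\cup F_3^2$ --- with $F_3^1$ by definition of $F_3^3$, and with $F_3^2$ because a node of $F_3^3$ comparable to a node $t\in F_3^2$ would be comparable to the initial segment of $t$ lying in $F_3^1$ --- so a norming collection for $x_3|_{F_3^1\cup F_3^2}$ together with one for $x_3^3$ is again an admissible (incomparable) collection, giving $\|x_3\|^2\ge\|x_3|_{F_3^1\cup F_3^2}\|^2+\|x_3^3\|^2$. Therefore $\|x_3^3\|^2\le 1-(1-3\varepsilon-\gamma)^2=h(\varepsilon,\delta)^2$. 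The step I expect to be the main obstacle is the displayed mass estimate: pinning down the exact constant $3-2(1-3\varepsilon)^2-(1-3\varepsilon-\delta)^2$ requires a careful analysis of how the plegma structure of $\mathcal{H}$-tuples, the block order of $x_1,x_2,x_3$, and the threefold $F^1/F^2/F^3$ splitting of each support jointly restrict which of the disjoint node sets a single type I functional can meet and with what total multiplicity.
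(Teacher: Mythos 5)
Your overall architecture is the right one and matches the paper's in spirit: fix a norming $\varphi=\sum_q a_qf_q$ of type II witnessing $\|x_1+x_2+x_3\|>3-3\ee$, extract $\varphi(x_i)>1-3\ee$ and $\varphi(x_2^1+x_2^2)>1-3\ee-\delta$, prove a quantitative bound on the $\ell^2$-mass of the coefficients that can act on the ``bad'' part of $x_3$, and finish with the Pythagorean inequality coming from $\sqsubseteq$-incomparability (your variant using $F_3^1\cup F_3^2$ versus $F_3^3$ instead of the paper's $F_3^1$ versus $F_3^3$ is fine, and your reduction of the target to $\gamma\le 1-3\ee$, which is exactly the defining inequality of $D$, is correct). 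However, the proof has a genuine gap at precisely the point you flag: the mass estimate $\sum_{q\in C}a_q^2\le 3-2(1-3\ee)^2-(1-3\ee-\delta)^2$ is the heart of the lemma and is only stated as a goal. Moreover, the mechanism you propose for it does not work as described: lower bounds on the masses $S_N$ of the four pairwise disjoint node sets $\text{supp}(x_1)$, $F_2^1\cup F_2^2$, $F_3^1\cup F_3^2$, $F_3^3$ do not force the last one to be small, because a single type I functional can charge several of these sets simultaneously (e.g.\ nothing in Remark \ref{X_nbl blck give no hitable remark} forbids an $\mathcal{H}$-pair consisting of a node of $F_2^2$ and a node of $F_3^3$), so the masses over disjoint node sets are not additive and cannot be played off against $\sum_q a_q^2\le1$.

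What the paper does instead, and what is missing from your sketch, is a partition of the index set $I$ according to \emph{which of the supports} $\text{supp}(x_1),\text{supp}(x_2),\text{supp}(x_3)$ each $f_q$ meets (the sets $I_F$, $F\subseteq\{1,2,3\}$), together with a further split of $I_{\{1,2,3\}}$ by weight $\le k_0$ versus $>k_0$. The decisive structural input is Remark \ref{X_nbl blck give no hitable remark}: since all support nodes of a type I functional have the same length and any two of them form an $\mathcal{H}$-pair, a functional that meets $\text{supp}(x_1)$ cannot also meet $F_2^2$; hence $\varphi_{\{1,2,3\}}(x_2)=\varphi_{\le k_0}(x_2^1)+\varphi_{>k_0}(x_2^3)$, and hypothesis (ii) then pushes the heavy-weight triple-hitting functionals into the small term. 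Combining the three Cauchy--Schwarz estimates (from $\varphi(x_1)$, $\varphi(x_3)$, and the corrected $\varphi(x_2)$) over this partition gives mass at most $3-2(1-3\ee)^2-(1-3\ee-\delta)^2$ for everything except $I_{\le k_0}$, whence $\|x_3^1\|\ge\varphi_{\le k_0}(x_3)>1-3\ee-\gamma$ and the conclusion follows. Your target inequality is in fact a consequence of these estimates, but without the $I_F$-decomposition and the use of the Remark there is no proof of it, so as written the argument is incomplete at its central step.
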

\begin{proof}
  Since $\|x_1+x_2+x_3\|>3-3\ee$, there exists $\varphi\in W$ such that $\varphi(x_1+x_2+x_3)>3-3\ee$. Then there exist $d\in\nn$, $f_1,\ldots,f_d$ of type I incomparable and $a_1,\ldots,a_d\in\rr$, with $\sum_{q=1}^d a_q^2\leq1$ such that $\varphi=\sum_{q=1}^da_qf_q$. Let $I=\{1,\ldots,d\}$. For every $F\subseteq\{1,2,3\}$ nonempty we set
  \[\begin{split}I_F=\{q\in I:\;&\text{supp}(f_q)\cap\text{supp}(x_i)\neq\emptyset,\;\;\forall i\in F,\\
  &\text{and}\;\;\text{supp}(f_q)\cap\text{supp}(x_i)=\emptyset,\;\;\forall i\not\in F \}\end{split}\]
  and $\varphi_F=\sum_{q\in I_F}a_qf_q$. Moreover we set
  \[\begin{split}
    &I_{\leq k_0}=\{q\in I_{\{1,2,3\}}:w(f_q)\leq k_0\},\;\;I_{> k_0}=\{q\in I_{\{1,2,3\}}:w(f_q)> k_0\},\\
    &\varphi_{\leq k_0}=\sum_{q\in I_{\leq k_0}}a_qf_q\;\;\text{and}\;\;\varphi_{> k_0}=\sum_{q\in I_{> k_0}}a_qf_q
  \end{split}\]

  Since $\varphi(x_1+x_2+x_3)>3-3\ee$ and $\varphi(x_i)\leq1$, for all $1\leq i\leq 3$, we have that $\varphi(x_i)>1-3\ee$, for all $1\leq i\leq 3$. Hence
  \[\begin{split}
    1-3\ee&<\varphi(x_1)=\sum_{1\in F\subseteq\{1,2,3\}}\varphi_F(x_1)=\sum_{1\in F\subseteq\{1,2,3\}}\sum_{q\in I_F}a_qf_q(x_1)\\
    &\leq\Big(\sum_{1\in F\subseteq\{1,2,3\}}\sum_{q\in I_F}a_q^2\Big)^\frac{1}{2}
  \end{split}\]
  Thus \[\Big(\sum_{q\in I_{\{2\}}\cup I_{\{3\}}\cup I_{\{2,3\}}}a_q^2\Big)^\frac{1}{2}<(1-(1-3\ee)^2)^\frac{1}{2}\]
  Similarly by $1-3\ee<\varphi(x_3)$ we get that
  \[\Big(\sum_{q\in I_{\{1\}}\cup I_{\{2\}}\cup I_{\{1,2\}}}a_q^2\Big)^\frac{1}{2}<(1-(1-3\ee)^2)^\frac{1}{2}\]
  By Remark \ref{X_nbl blck give no hitable remark} we have that $\text{supp}(\varphi_{\{1,2,3\}})\cap\text{supp}(x_2^2)=\emptyset$. Hence it is easy to see that
  \[\varphi_{\{1,2,3\}}(x_2)=\varphi_{\{1,2,3\}}(x_2^1+x_2^3)=\varphi_{\leq k_0}(x_2^1)+\varphi_{> k_0}(x_2^3)\]
  Thus
  \[1-3\ee<\varphi(x_2)=\sum_{\substack{2\in F\subseteq\{1,2,3\}\\F\neq\{1,2,3\}}}\sum_{q\in I_F}a_qf_q(x_2)+\varphi_{\leq k_0}(x_2^1)+\varphi_{>k_0}(x_2^3)\]
  Since $\|x_2^3\|<\delta$, we have that
  \[1-3\ee-\delta<\Big(\sum_{\substack{2\in F\subseteq\{1,2,3\}\\F\neq\{1,2,3\}}}\sum_{q\in I_F}a_q^2+\sum_{q\in I_{\leq k_0}}a_q^2\Big)^\frac{1}{2}\]
  Hence
  \[\Big(\sum_{q\in I_{\{1\}}\cup I_{\{3\}}\cup I_{\{1,3\}}\cup I_{>k_0}}a_q^2\Big)^\frac{1}{2}<(1-(1-3\ee-\delta)^2)^\frac{1}{2}\]
  By the above we have that
  \[\Big(\sum_{\substack{F\subseteq\{1,2,3\}\\F\neq\emptyset,\{1,2,3\}}}\sum_{q\in I_F}a_q^2+\sum_{q\in I_{>k_0}}a_q^2\Big)^\frac{1}{2}<(3-2(1-3\ee)^2-(1-3\ee-\delta)^2)^\frac{1}{2}\]
  The latter yields that
  \[\sum_{\substack{F\subseteq\{1,2,3\}\\F\neq\emptyset,\{1,2,3\}}}\varphi_F(x_3)+\varphi_{>k_0}(x_3)<(3-2(1-3\ee)^2-(1-3\ee-\delta)^2)^\frac{1}{2}\]
  Hence $\varphi_{\leq k_0}(x_3)>1-3\ee-(3-2(1-3\ee)^2-(1-3\ee-\delta)^2)^\frac{1}{2}$.
  Since $\varphi_{\leq k_0}(x_3)=\varphi_{\leq k_0}(x_3^1)$ we have that
  $\|x_3^1\|>1-3\ee-(3-2(1-3\ee)^2-(1-3\ee-\delta)^2)^\frac{1}{2}$.
  Since $1\geq\|x_3\|\geq\|x_3^1+x_3^3\|\geq(\|x_3^1\|^2+\|x_3^3\|^2)^\frac{1}{2}$, we have that
  \[\|x_3^3\|<(1-(1-3\ee-(3-2(1-3\ee)^2-(1-3\ee-\delta)^2)^\frac{1}{2})^2)^\frac{1}{2}=h(\ee,\delta)\]
\end{proof}
The proof of the following lemma is similar to the above and we omit it.
\begin{lem}\label{X_nbl lem starting with 3}
  Let $x_1<x_2<x_3$ in $B_X$ and $\ee>0$ such that $(\ee,0)\in D$ and $\|x_1+x_2+x_3\|>3-3\ee$. We set
  \[\begin{split}
    &k_0=\max\{|t|:t\in\text{supp}(x_1)\},\; F_3^1=\{t\in\text{supp}(x_3):|t|\leq k_0\},\\
    &F_3^3=\{t\in\text{supp}(x_3):\;\forall t'\in F_3^1,\;\;t,t'\;\;\text{incomparable}\}\;\;\text{and}\;\; x_3^3=x_3|_{F_3^3}
  \end{split}\]
  Then $\|x_3^3\|<h(\ee,0)$.
\end{lem}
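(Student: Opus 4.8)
The plan is to run the same argument as in the proof of Lemma~\ref{X_nbl continuing with 3}, which here is in fact lighter: the hypothesis $k_0=\max\{|t|:t\in\text{supp}(x_1)\}$ forces the ``long'' part of every norming functional that meets all three of $x_1,x_2,x_3$ to vanish, and this is precisely what collapses the parameter $\delta$ to $0$.

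First I would fix $\varphi\in W$ with $\varphi(x_1+x_2+x_3)>3-3\ee$ and write $\varphi=\sum_{q=1}^d a_qf_q$ with $f_1,\dots,f_d$ of type I, incomparable, and $\sum_{q=1}^d a_q^2\le1$. For each nonempty $F\subseteq\{1,2,3\}$ put $I_F=\{q:\ \text{supp}(f_q)\cap\text{supp}(x_i)\neq\emptyset\iff i\in F\}$ and $\varphi_F=\sum_{q\in I_F}a_qf_q$. The key observation is that $I_{\{1,2,3\}}$ contains only indices $q$ with $w(f_q)\le k_0$: if $q\in I_{\{1,2,3\}}$ then $\text{supp}(f_q)$ meets $\text{supp}(x_1)$ in some $t$, and since all members of $\text{supp}(f_q)$ have size $w(f_q)$ (they form a single $\mathcal H$-tuple) while $\text{supp}(x_1)$ contains only sets of size $\le k_0$, we get $w(f_q)=|t|\le k_0$; so, in the notation of Lemma~\ref{X_nbl continuing with 3}, $I_{>k_0}=\emptyset$ and $\varphi_{\{1,2,3\}}$ is ``short''.

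Since $\varphi(x_i)\le\|x_i\|\le1$ for each $i$, one has $\varphi(x_i)>1-3\ee$. Applying Cauchy--Schwarz to $\varphi(x_1)$, $\varphi(x_2)$, $\varphi(x_3)$ in turn, together with the standard fact (already used in Lemma~\ref{Lemma for 2 vectos of norm almost 2}) that a normalized combination $\sum_{q\in J}\frac{f_q(x_i)}{(\sum_{q'\in J}f_{q'}(x_i)^2)^{1/2}}f_q$ again lies in $W$, yields
\begin{gather*}
\sum_{q\in I_{\{2\}}\cup I_{\{3\}}\cup I_{\{2,3\}}}a_q^2<1-(1-3\ee)^2,\qquad
\sum_{q\in I_{\{1\}}\cup I_{\{3\}}\cup I_{\{1,3\}}}a_q^2<1-(1-3\ee)^2,\\
\sum_{q\in I_{\{1\}}\cup I_{\{2\}}\cup I_{\{1,2\}}}a_q^2<1-(1-3\ee)^2;
\end{gather*}
here no splitting of $\varphi_{\{1,2,3\}}$ is needed for the middle inequality, exactly because $I_{>k_0}=\emptyset$ allows $\varphi_{\{1,2,3\}}$ to be absorbed into the estimate coming from $\varphi(x_2)$. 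Adding the three bounds gives $\sum_{F\neq\emptyset,\{1,2,3\}}\sum_{q\in I_F}a_q^2<3-3(1-3\ee)^2$, whence $\varphi_{\{3\}}(x_3)+\varphi_{\{1,3\}}(x_3)+\varphi_{\{2,3\}}(x_3)<\big(3-3(1-3\ee)^2\big)^{1/2}$ and therefore $\varphi_{\{1,2,3\}}(x_3)=\varphi(x_3)-\big(\varphi_{\{3\}}(x_3)+\varphi_{\{1,3\}}(x_3)+\varphi_{\{2,3\}}(x_3)\big)>(1-3\ee)-\big(3-3(1-3\ee)^2\big)^{1/2}$. As $\varphi_{\{1,2,3\}}$ is supported on sets of size $\le k_0$, it sees $x_3$ only through $x_3^1=x_3|_{F_3^1}$, so $\|x_3^1\|\ge\varphi_{\{1,2,3\}}(x_3^1)>(1-3\ee)-\big(3-3(1-3\ee)^2\big)^{1/2}$, the hypothesis $(\ee,0)\in D$ guaranteeing this lower bound is nonnegative.

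Finally, every element of $F_3^3$ is $\sqsubseteq$-incomparable to every element of $F_3^1$; hence concatenating near-optimal witnessing families of $\mathcal H$-tuples for $\|x_3^1\|$ and for $\|x_3^3\|$ produces an admissible incomparable family for $x_3^1+x_3^3=x_3|_{F_3^1\cup F_3^3}$, so $1\ge\|x_3\|^2\ge\|x_3^1+x_3^3\|^2\ge\|x_3^1\|^2+\|x_3^3\|^2$. Combining with the lower bound on $\|x_3^1\|$ and the formula for $h$ gives $\|x_3^3\|^2<1-\big((1-3\ee)-(3-3(1-3\ee)^2)^{1/2}\big)^2=h(\ee,0)^2$, as claimed. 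I do not expect a genuine obstacle; the only care needed is the bookkeeping of the index sets $I_F$, making sure the three Cauchy--Schwarz estimates overlap and combine correctly, and verifying that the size restriction on $\text{supp}(x_1)$ really kills $I_{>k_0}$, so that what emerges is exactly the $\delta=0$ specialization of the function $h$ of Lemma~\ref{X_nbl continuing with 3}.
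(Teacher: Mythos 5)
Your proposal is correct and is exactly the argument the paper has in mind: the paper omits this proof as ``similar to'' Lemma \ref{X_nbl continuing with 3}, and you carry out precisely that adaptation, with the right key point that every type I functional meeting $\text{supp}(x_1)$ has weight (= common member size) at most $k_0$, so the set $I_{>k_0}$ of the previous lemma is empty, $\varphi_{\{1,2,3\}}$ acts on $x_3$ only through $x_3^1$, and the estimate specializes to $\delta=0$. The bookkeeping (the three Cauchy--Schwarz bounds, $\|x_3^1\|>1-3\ee-(3-3(1-3\ee)^2)^{1/2}$, and $\|x_3^1\|^2+\|x_3^3\|^2\le\|x_3\|^2\le1$ via restricting the witnessing $\mathcal H$-tuples to the supports) all checks out.
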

\begin{thm}\label{X_nbl the final thm}
  The space $X$ does not contain any plegma block generated $\ell^1$ spreading model.
\end{thm}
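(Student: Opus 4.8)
The plan is to argue by contradiction, putting together the forward--propagation Lemmas \ref{X_nbl lem starting with 3} and \ref{X_nbl continuing with 3}, the existence of long $3$-plegma paths (Proposition \ref{corollary for strong plegma paths}), and the packaging Lemma \ref{X_nbl lem contradiction for small x_v^3}. Suppose $X$ admits $\ell^1$ as a plegma block generated spreading model of some order $\xi<\omega_1$, witnessed by a regular thin family $\g$ with $o(\g)=\xi$, an infinite set, and a plegma block $\g$-sequence in $B_X$ generating $\ell^1$. By Proposition \ref{Prop on almost isometric l^1 spr mod} and Remark \ref{Rem on almost isometric l^1 spr mod} this witness may, for any prescribed $\ee>0$, be replaced by a plegma block $\g$-sequence $(x_v)_{v\in\g}$ in $B_X$ which over some infinite subset generates $\ell^1$ with lower $\ell^1$ constant $>1-\ee$; the precise value of $\ee$ is left open for now.

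First I would isolate a \emph{quantitative path estimate}, proved by induction on the length $m$ of a $3$-plegma path, in the spirit of Lemma \ref{final lemma for mathfrak-X-_k+1}: for every $\delta'\in(0,1)$ and every $m\in\nn$ there is $\ee_{m,\delta'}>0$ such that for all $0<\ee\le\ee_{m,\delta'}$, every chain $x_0<x_1<\dots<x_m$ in $B_X$ with $\|x_j+x_{j+1}+x_{j+2}\|>3-3\ee$ for $0\le j\le m-2$, and $k_0:=\max\{|t|:t\in\text{supp}(x_0)\}$, one has $\|x_m|_{F_{x_m}^3}\|<\delta'$, where the sets $F^3$ are formed with respect to this fixed $k_0$ exactly as in Lemma \ref{X_nbl continuing with 3}. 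The base case $m=2$ is Lemma \ref{X_nbl lem starting with 3}; the inductive step applies Lemma \ref{X_nbl continuing with 3} to the triple $(x_{j},x_{j+1},x_{j+2})$ and uses that the function $h$ of the previous section is continuous and strictly increasing on $D$ with $h(0,0)=0$: the $m$-fold iterate of $\delta\mapsto h(\ee,\delta)$ evaluated at $0$ tends to $0$ as $\ee\to0$ (for $\ee=0$ it is identically $0$), so for $\ee$ small enough it stays below $\delta'$ while the intermediate pairs $(\ee,\delta_j)$ remain inside $D$.

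Granting this, I would finish the proof as follows. Fix $\delta'=\tfrac12$. Apply the almost isometric reduction to obtain a plegma block $\g$-sequence, again denoted $(x_v)$, which over some $L\in[\nn]^\infty$ generates $\ell^1$ with lower constant $>1-\ee$, and assume $\g$ is very large in $L$. Let $s_0$ be the initial segment of $L(2\nn)$ lying in $\g$, after discarding finitely many elements of $L(2\nn)$ so that $s_0(1)\ge L(3)$; set $l_0=|s_0|$, $m:=2l_0$, and $k_0:=\max\{|t|:t\in\text{supp}(x_{s_0})\}$. Now choose $\ee\le\ee_{m,1/2}$. For each $s\in\g\upharpoonright\upharpoonright L(2\nn)$ with $s_0<s$, Proposition \ref{corollary for strong plegma paths} yields a $3$-plegma path $(s_0,s_1,\dots,s_{2l_0}=s)$ in $\g\upharpoonright\upharpoonright L$. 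Since $(x_v)$ is plegma block, each plegma triple $(s_j,s_{j+1},s_{j+2})$ gives $x_{s_j}<x_{s_{j+1}}<x_{s_{j+2}}$, and since the minima along the path are nondecreasing and $\ge s_0(1)\ge L(3)$, the $\ell^1$ lower estimate applies to this triple, so $\|x_{s_j}+x_{s_{j+1}}+x_{s_{j+2}}\|>3-3\ee$. The quantitative path estimate (with this $k_0$, which is the one attached to $x_{s_0}$, as required by Lemma \ref{X_nbl lem starting with 3}) then gives $\|x_s|_{F_s^3}\|<\tfrac12$. Passing to a sparse $L'\subseteq\{n\in L(2\nn):n>\max s_0\}$ with at least two elements of $L(2\nn)$ between consecutive elements of $L'$, every $v\in\g\upharpoonright L'$ satisfies $v\in\g\upharpoonright\upharpoonright L(2\nn)$ and $s_0<v$, hence $\|x_v|_{F_v^3}\|<\tfrac12$ for all $v\in\g\upharpoonright L'$. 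By Lemma \ref{X_nbl lem contradiction for small x_v^3} (with $\delta=\tfrac12$ and this $k_0$; recall it rests on Lemma \ref{X_nbl short in supp does not admit l^1}, Corollary \ref{X_nbl x_v^3 does not admit l^1} and Corollary \ref{ultrafilter property for ell^1 spreading models}), the $\g$-subsequence $(x_v)_{v\in\g\upharpoonright L'}$ does not admit $\ell^1$ as a plegma block generated $\g$-spreading model, contradicting the choice of $(x_v)$.

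The main obstacle is the interdependence of the three parameters $\ee$, $L$ and $l_0$: the required smallness of $\ee$ is governed by $m=2l_0$, while $l_0$ is the length of the initial segment of $\g$ in $L$, and the almost isometric reduction that makes the $\ell^1$ constant close to $1$ in general shrinks $L$ and can only enlarge $l_0$. Resolving this cleanly is the technical heart of the argument; it can be done either by committing to $L$ (hence to $l_0$ and $m$) before selecting $\ee=\ee_{m,1/2}$ and only then carrying out the final sparsification, or by observing that the $\ell^1$ spreading model may be taken isometric, so the internal block size in Proposition \ref{Prop on almost isometric l^1 spr mod} can be taken equal to $2$ and $l_0$ stays under control. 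Once this bookkeeping is arranged, the rest is a routine assembly of the cited lemmas.
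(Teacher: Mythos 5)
Your skeleton is the paper's: argue by contradiction, anchor at an initial element $v_0$, reach every later $v$ by a $3$-plegma path (Proposition \ref{corollary for strong plegma paths}), propagate the smallness of $x_v|_{F_v^3}$ through Lemmas \ref{X_nbl lem starting with 3} and \ref{X_nbl continuing with 3}, and conclude with Lemma \ref{X_nbl lem contradiction for small x_v^3}. The gap is exactly the point you defer: your quantitative path estimate keeps one fixed $\ee$ along the whole path, and for fixed $\ee>0$ the iterates $\delta_{j+1}=h(\ee,\delta_j)$, $\delta_0=0$, increase and exceed any fixed target $\delta'<1$ (indeed leave $D$) after finitely many steps, so $\ee_{m,\delta'}\to 0$ as $m\to\infty$ and $\ee$ must be chosen small in terms of $m=2|s_0|$. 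But $|s_0|$ is dictated by $\g$ and by the set $L$, and $L$ together with the vectors themselves is an \emph{output} of Proposition \ref{Prop on almost isometric l^1 spr mod} applied with \emph{input} $\ee$ (the block length and coefficients there are determined by where the infimum defining the lower constant is almost attained, not chosen by you); moreover passing to sparser sets only lengthens $s_0$ when $o(\g)$ is infinite. Hence neither of your fixes works: ``committing to $L$ before selecting $\ee$'' is not available, since changing $\ee$ changes the witness sequence and its set, and there is no reason a fixed point $\ee\le\ee_{2|s_0(\ee)|,1/2}$ exists; and the second fix misreads both results -- the isometric upgrade is Corollary \ref{one order more to get the isometric}, not a ``block size $2$'' instance of Proposition \ref{Prop on almost isometric l^1 spr mod}, and in any case $|s_0|$ is not controlled by the block size, so even an isometric model does not rescue a fixed-$\ee$ path estimate.

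The paper dissolves the circularity by two moves you do not make. First, it upgrades to the \emph{usual} (lower constant exactly $1$) basis of $\ell^1$, plegma block generated, via Corollary \ref{one order more to get the isometric}, at the harmless cost of raising the order by one; then Lemma \ref{X_nbl lem contradiction for small x_v^3} only needs \emph{some} $\delta<1$, not $\delta=\tfrac12$. Second, it fixes, before $M_1$, $v_0$ or any path appears, an infinite \emph{decreasing} sequence $(\ee_n)$ with $\delta_{n+1}=h(\ee_{n+1},\delta_n)$ and $(\ee_{n+1},\delta_n)\in D\setminus J_3$, so that $\delta_n<1$ for every $n$; the tolerance shrinks along the path instead of being constant, the triple estimates $\|x_{v_{j-1}}+x_{v_j}+x_{v_{j+1}}\|>3-3\ee_{j+1}$ come from generating the spreading model with respect to $(\ee_n/2)$, and the conclusion $\|x_v^3\|<\delta_{2|v_0|}<1$ holds whatever $|v_0|$ turns out to be. Note that this uniformity in path length genuinely requires the isometric constant: with a lower constant $1-\eta$ for a fixed $\eta>0$, every step of the path carries tolerance at least $\eta$, and the $h$-iterates then pass the threshold $1-\eta$ after finitely many steps depending on $\eta$, reinstating the same circularity. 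So if you keep the almost-isometric route of Proposition \ref{Prop on almost isometric l^1 spr mod}, the argument cannot be closed by bookkeeping alone; adopting the a priori decreasing $(\ee_n)$ together with the isometric upgrade is what closes it.
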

\begin{proof}
  Suppose on the contrary that $X$ admits $\ell^1$ as a plegma block generated $\xi$-spreading model, for some $\xi<\omega_1$.
   Then by Corollary \ref{one order more to get the isometric} the space $X$ admits the usual basis of $\ell^1$ as
    a plegma block generated $(\xi+1)$-spreading model. That is there exist a regular thin family $\g$ of order $\xi+1$,
    $M\in[\nn]^\infty$ and a $\g$-sequence $(x_v)_{v\in\g}$ such that the $\g$-subsequence $(x_v)_{v\in\g\upharpoonright M}$
    plegma block generates the usual basis of $\ell^1$ as a $\g$-spreading model. Clearly we may suppose that the $\g$-sequence
     $(x_v)_{v\in\g}$ is normalized.

  We inductively choose sequence $(\delta_n)_{n=0}^\infty$ and $(\ee_n)_{n\in\nn}$ as follows. We set $\delta_0=0$ and we pick
   $0<\ee_1<\frac{1}{3}-\frac{\sqrt{3}}{6}$. Then $(\ee_1,\delta_0)\in D\setminus J_3$ and therefore $0<h(\ee_1,\delta_0)<1$.
    We set $\delta_1=h(\ee_1,\delta_0)$. Suppose that $\ee_1>\ldots>\ee_n$ and $\delta_0,\ldots,\delta_{n}$
    have been chosen such that for every $1\leq k\leq n$
  \[0<\delta_{k}=h(\ee_k,\delta_{k-1})<1\]
  We pick $\ee_{n+1}<\ee_n$, such that $(\ee_{n+1},\delta_{n})\in D\setminus J_3$. Thus $0<h(\ee_{n+1},\delta_{n})<1$
   and we set \[\delta_{n+1}=h(\ee_{n+1},\delta_{n})\]
  It is clear that for every $n\in\nn$, $\ee_n>\ee_{n+1}$ and $0<\delta_n<1$.

  We pass to $M_1\in[M]^\infty$ such that $\g$ is very large in $M_1$ and the $\g$-subsequence $(x_v)_{v\in\g\upharpoonright M_1}$
   plegma block generates the usual basis of $\ell^1$  as a $\g$-spreading model with respect to $(\frac{\ee_n}{2})_{n\in\nn}$.
    Let $v_0$ be the unique element of $\g$ such that $v_0\sqsubset M_1(4\nn)$ and $k_0=\max\{|t|:t\in\text{supp}(x_{v_0})\}$.
     For every $v\in\g$ we set
  \[\begin{split}
    &F_v^1=\{t\in\text{supp}(x_v):|t|\leq k_0\},\\
    &F_v^2=\{t\in\text{supp}(x_v)\setminus F_v^1:\;\exists t'\in F_v^1\;\;\text{such that}\;\;t'\sqsubset t\},\\
    &F_v^3=\{t\in\text{supp}(x_v):\;\forall t'\in F_v^1,\;\; t',t\;\;\text{are incomparable}\},\\
    &x_v^1=x_v|_{F_v^1},\; x_v^2=x_v|_{F_v^2}\;\;\text{and}\;\;x_v^3=x_v|_{F_v^3}
  \end{split}\]
  Let also $M_2=\{n\in M_1(4\nn):n>\max v_0\}$.

  \textbf{Claim:} For every $v\in \g\upharpoonright M_2$ we have that $\|x_v^3\|<\delta_{2|v_0|}$.
  \begin{proof}
    [Proof of Claim] Let $v\in \g\upharpoonright M_2$. By Corollary \ref{corollary for strong plegma paths} there
     exists a 3-plegma path $(v_j)_{j=0}^{2|v_0|}$ from $v_0$ to $v$ in $\g\upharpoonright\upharpoonright M_1$
      of length $2|v_0|$. Since $(v_0,v_1,v_2)$ is plegma we have that $x_{v_0}<x_{v_1}<x_{v_2}$ and
      $\|x_{v_0}+x_{v_1}+x_{v_2}\|>3-3\ee_1$.  By Lemma \ref{X_nbl lem starting with 3} we have that
       $\|x_{v_1}^3\|<\delta_1$. Inductively for $j=1,\ldots,2|v_0|-1$ we have that
        $(v_{j-1},v_j,v_{j+1})$ is plegma. Consequentially we have that
         $x_{v_{j-1}}<x_{v_j}<x_{v_{j+1}}$. Notice also that $\|x_{v_{j-1}}+x_{v_j}+x_{v_{j+1}}\|>3-3\ee_{j+1}$.
         Having inductively that $\|x_{v_j}^3\|<\delta_j$ (notice that for $j=1$ it is true)
          by Lemma \ref{X_nbl continuing with 3} we get that $\|x_{v_{j+1}}^3\|<h(\ee_j,\delta_j)=\delta_{j+1}$.
           Hence $\|x_v^3\|=\|x_{v_{2|v_0|}}^3\|<\delta_{2|v_0|}$.
  \end{proof}
  The validity of the above claim and the observation that the $\g$-subsequence $(x_v)_{v\in\g\upharpoonright M_2}$ also
   plegma block generates the usual basis of $\ell^1$ as a $\g$-spreading model contradicts Lemma \ref{X_nbl lem contradiction for small x_v^3}.
\end{proof}
By the reflexivity of the space $X$, Theorem \ref{X_nbl the final
thm} and Proposition \ref{plegma block l^1 in dual} we have the
following.
\begin{cor}
  The dual space $X^*$ of $X$ does not admit $c_0$ as spreading model of any
  order.
\end{cor}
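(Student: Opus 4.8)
The plan is to obtain the statement as an immediate consequence of Theorem \ref{X_nbl the final thm} and Proposition \ref{plegma block l^1 in dual}, passing to the dual via the reflexivity of $X$. So the work is purely formal: everything substantive was already done in proving that $X$ carries no plegma block generated $\ell^1$ spreading model.

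First I would record the duality bookkeeping. Since $X$ is reflexive and $(e_n)_{n\in\nn}$ (the enumeration of the basis $(e_t)_{t\in\widehat{\mathcal S}}$) is a Schauder basis of $X$, this basis is shrinking; hence the biorthogonal functionals $(e_n^*)_{n\in\nn}$ form a Schauder basis of $X^*$, and $X^*$ is again reflexive. Moreover, under the canonical identification $X^{**}=X$ afforded by reflexivity, the biorthogonal sequence of $(e_n^*)_{n\in\nn}$ is exactly $(e_n)_{n\in\nn}$, so that $\overline{<((e_n^*)^*)_{n\in\nn}>}=\overline{<(e_n)_{n\in\nn}>}=X$. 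This is the identity that will let me transport the conclusion of Proposition \ref{plegma block l^1 in dual}, applied to $X^*$, back to the original space $X$.

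Now suppose, towards a contradiction, that $X^*$ admits $c_0$ as a spreading model of some order $\xi<\omega_1$, i.e. that $\mathcal{SM}_\xi(X^*)$ contains a sequence equivalent to the usual basis of $c_0$. Such a spreading model is generated by an $\ff$-subsequence in $X^*$ for some regular thin family $\ff$ with $o(\ff)=\xi$, and after restricting to a tail of the index set we may assume this $\ff$-subsequence is bounded (Remark \ref{remark on the definition of spreading model}); since $X^*$ is reflexive, its range is then relatively weakly compact, so in fact $\mathcal{SM}^{wrc}_\xi(X^*)$ contains a sequence equivalent to the usual basis of $c_0$. Applying Proposition \ref{plegma block l^1 in dual} to the Banach space $X^*$ with its Schauder basis $(e_n^*)_{n\in\nn}$ yields that $\overline{<((e_n^*)^*)_{n\in\nn}>}$ admits $\ell^1$ as a plegma block generated $\xi$-order spreading model; by the previous paragraph this space is $X$ itself. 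Thus $X$ admits $\ell^1$ as a plegma block generated spreading model of order $\xi$, contradicting Theorem \ref{X_nbl the final thm}. Hence no such $\xi$ exists, which is the assertion of the corollary.

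There is no real obstacle in this argument; the only points requiring a little care are the routine ones flagged above: that a basis of a reflexive space is shrinking (so that $(e_n^*)_{n\in\nn}$ is a basis of $X^*$ and Proposition \ref{plegma block l^1 in dual} is applicable to $X^*$), the canonical matching of $((e_n^*)^*)_{n\in\nn}$ with $(e_n)_{n\in\nn}$ under $X^{**}=X$, and the observation that for the reflexive space $X^*$ one may freely replace $\mathcal{SM}_\xi$ by $\mathcal{SM}^{wrc}_\xi$. Everything else is quoted verbatim from the two cited results.
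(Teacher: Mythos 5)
Your argument is correct and is exactly the paper's intended route: the paper derives the corollary in one line from the reflexivity of $X$, Theorem \ref{X_nbl the final thm}, and Proposition \ref{plegma block l^1 in dual}, which is precisely the deduction you carry out (your extra bookkeeping about the shrinking basis, the identification $((e_n^*)^*)_{n\in\nn}=(e_n)_{n\in\nn}$ under $X^{**}=X$, and passing from $\mathcal{SM}_\xi(X^*)$ to $\mathcal{SM}^{wrc}_\xi(X^*)$ is just the paper's implicit justification made explicit).
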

\chapter{A reflexive space not admitting $\ell^p$
 or $c_0$ as a spreading model}\label{space Odel_Schlum}
In this chapter we present an example of a reflexive space $X$
having the property that every spreading model, of any order, of
$X$ does not contain any isomorphic copy of  $c_0$ or $\ell^p$,
for every $p\in[1,\infty)$. This example answers in the
affirmative a related problem posed in \cite{O-S} and shows that
Krivine's theorem \cite{Kr} concerning $\ell^p$ or $c_0$ block
finite representability cannot be captured by the notion of
spreading models.

\section{The definition of the space $X$}
We start with the definition of the space. Its construction is
closely related to the corresponding one in \cite{O-S}.
 Let
$(n_j)_{j\in\nn}$ and $(m_j)_{j\in\nn}$ be two strictly increasing
sequences of natural numbers satisfying the following:
\begin{enumerate}
  \item[(i)] $\sum_{j=1}^\infty \frac{1}{m_j}\leq 0,1$.
  \item[(ii)] For every $a>0$, we have that
  $\frac{n_j^a}{m_j}\stackrel{j\to\infty}{\longrightarrow}\infty$.
  \item[(iii)] For every $j\in\nn$, we have that $\frac{n_j}{n_{j+1}}<\frac{1}{m_{j}}$.
\end{enumerate}
We consider the minimal subset $W\subset (c_{00}(\nn))^\#$
satisfying the following:
\begin{enumerate}
  \item[(i)] $\pm e_n^*\in W$, for all $n\in\nn$.
  \item[(ii)] Functionals of type I: For every $j\in\nn$, $d\leq
  n_j$ and $f_1<\ldots<f_d$ in $W$, the functional
  $\varphi=\frac{1}{m_j}\sum_{q=1}^d f_q$ belongs to $W$. The
  functional $\varphi$ is defined to be of type I and we associate
  to it its weight to be $w(\varphi)=m_j$.
  \item[(iii)]Functionals of type II: For every $d\in\nn$,
  $a_1,\dots,a_d\in\rr$ with $\sum_{k=1}^d a_k^2\leq 1$ and
  $f_1,\ldots,f_d$ in $W$ of type I with pairwise different
  weights, the functional $\varphi=\sum_{k=1}^d a_k f_k$ belongs
  to $W$ and is defined to be of type II.
\end{enumerate}
We define the norm $\|\cdot\|$ on $c_{00}(\nn)$, by setting for
every $x\in c_{00}(\nn)$
\[\|x\|=\sup\{\varphi(x):\varphi \in W\}\] Let $X$ be the
completion of $c_{00}(\nn)$ under the above norm.  It is easy to
see that the Hamel basis $(e_n)_{n\in\nn}$ of $c_{00}(\nn)$ is an
unconditional basis of the space  $X$.

Also for every $j\in\nn$ we define on $X$ the norm $\|\cdot\|_j$
by setting for every $x\in X$
\[\|x\|_j=\sup\{f(x):\; f\;\;\text{is of type I with}\;\;w(f)=m_j\}\]
Notice that for every $j\in\nn$ the norms $\|\cdot\|$ and
$\|\cdot\|_j$ are equivalent. Precisely it is easily shown that
for every $x\in X$, $\|x\|_j\leq \|x\|\leq m_j\|x\|_j$. It is also
easy to check that for every $x\in X$ we have that
\begin{equation}\label{eq17}\|x\|=\max \Big\{ \|x\|_\infty,\big( \sum_{j=1}^\infty \|x\|_j^2 \big)^\frac{1}{2}
\Big\}\end{equation}
where $\|\cdot\|_\infty$ denotes the supremum norm. Hence
for every $x\in X$ the sequence $w=(\|x\|_j)_{j\in\nn}$ belongs to
$\ell^2$ and  $\big( \sum_{j=1}^\infty \|x\|_j^2
\big)^\frac{1}{2}=\|w\|_{\ell^2}\leq \|x\|$.
\section{On the spreading models of the space  $X$}
In this section we will show that  every spreading model, of any
order, of $X$ does not contain any isomorphic copy of  $c_0$ or
$\ell^p$, for every $p\in[1,\infty)$.
\subsection{The space $X$ does not admit $\ell^1$ as a spreading model}
We first show that $X$ does not admit  $\ell^1$ as a  spreading
model of $X$ of any order. We start with the following lemmas.
\begin{lem}\label{small_estimation_on_means}
  Let $j<j_0$ in $\nn$ and $(x_q)_{q=1}^{n_{j_0}}$ be a block
  sequence in the unit ball $B_X$ of $X$. Then
  \[\Big\| \frac{x_1+\ldots+x_{n_{j_0}}}{n_{j_0}}
  \Big\|_j\leq\frac{2}{m_j}\]
\end{lem}
\begin{proof}
  It suffices to show that for every $f$ in $W$ of type I with $w(f)=m_j$, we have that
  \[\Big|f\Big(\frac{1}{n_{j_0}}\sum_{p=1}^{n_{j_0}}x_p\Big)\Big|\leq\frac{2}{m_j}\]
    Indeed let $f$ in $W$ of type I with $w(f)=m_j$.
  Then there exist $1\leq d\leq n_i$ and $f_1<\ldots< f_d$ in $W$, such that $f=\frac{1}{m_i}\sum_{q=1}^d f_q$. We set $I=\{1,\ldots,n_{j_0}\}$,
  \[\begin{split}
    A=\{p\in I:\;&\text{there exists at most one}\;\;q\in\{1,\ldots,n_i\}\;\;\text{such that}\\
    &\text{supp}f_q\cap\text{supp}x_p\neq\emptyset\}\;\;\text{and}
  \end{split}\]
  \[\begin{split}
    B=\{p\in I:\;&\text{there exists at least two}\;\;q\in\{1,\ldots,n_i\}\;\;\text{such that}\\
    &\text{supp}f_q\cap\text{supp}x_p\neq\emptyset\}
  \end{split}\]
  Clearly we have that $A\cup B=\emptyset$ and $A\cap B=\emptyset$. It is also easy to see that $|B|\leq d-1<n_i$ and $|f(x_p)|\leq \frac{1}{m_i}$, for all $p\in A$. Hence
  \[\begin{split}
    \Big|f\Big(\frac{1}{n_{j_0}}\sum_{p=1}^{n_{j_0}}x_p\Big)\Big|&\leq
    \frac{1}{n_{j_0}}\sum_{p=1}^{n_{j_0}}|f(x_p)|
    =\frac{1}{n_{j_0}}\sum_{p\in A}|f(x_p)|+\frac{1}{n_{j_0}}\sum_{p\in B}|f(x_p)|\\
    &\leq \frac{1}{m_i}+\frac{n_i}{n_{j_0}}\leq \frac{1}{m_i}+\frac{n_i}{n_{i+1}}\leq\frac{2}{m_i}=\frac{2}{w(f)}
  \end{split}\]
\end{proof}
\begin{lem}\label{small means}
  Let $d_0<j_0$ in $\nn$, and $(x_q)_{q=1}^{n_{j_0}}$ be a block sequence in $B_X$. We set $E=\{n\in\nn:n>d_0\}$ and $w_q=(\|x_q\|_j)_j$, for all $1\leq q\leq n_{j_0}$. Assume that for some $0<\ee<1$ there exists a disjointly supported finite sequence $(w'_q)_{q=1}^{n_{j_0}}$ in $\ell^2$ such that $\|E(w_q-w'_q)\|_{\ell^2}<\ee$, for all $1\leq q\leq n_{j_0}$. Then \[\Big\|\frac{x_1+\ldots+x_{n_{j_0}}}{n_{j_0}}\Big\|<0.2+\ee+2n_{j_0}^{-\frac12}\]
\end{lem}
\begin{proof}
   By Lemma
  \ref{small_estimation_on_means}, we have that
  \[\Bigg\|\Big(\Big\|\frac{\sum_{q=1}^{n_{j_0}}x_q}{n_{j_0}}\Big\|_j \Big)_{j=1}^{d_0}\Bigg\|_{\ell^2}
  \leq\sum_{j=1}^{d_0}\Big\|\frac{\sum_{q=1}^{n_{j_0}}x_q}{n_{j_0}}\Big\|_j\leq\sum_{j=1}^{d_0}\frac{2}{m_j}<0,2\]
   Using the above and the observation that $\|E(w'_q)\|_{\ell^2}\leq2$, for all $1\leq q\leq n_{j_0}$, we get the following.
  \[\begin{split}
    \Bigg\|\Big(\Big\|\frac{1}{n_{j_0}}\sum_{q=1}^{n_{j_0}}x_q \Big\|_j\Big)_j\Bigg\|_{\ell_2}&
    \leq0,2+\Bigg\|\Big(\Big\|\frac{1}{n_{j_0}}\sum_{q=1}^{n_{j_0}}x_q\Big\|_j\Big)_{j>d_0}\Bigg\|_{\ell^2}\\
    &\leq0,2+\Bigg\|\frac{1}{n_{j_0}}\sum_{q=1}^{n_{j_0}}\big(w_q(j)\big)_{j>d_0}\Bigg\|_{\ell^2}
    \leq0,2+\Big\|\sum_{q=1}^{n_{j_0}}\frac{E(w'_q)}{n_{j_0}}\Big\|_{\ell^2}+\ee\\
    &\leq0,2+\Big(\sum_{q=1}^{n_{j_0}}\Big(\frac{2}{n_{j_0}}\Big)^2\Big)^\frac{1}{2}+\ee= 0,2+\ee+2n_{j_0}^{-\frac12}
  \end{split}\]
  Moreover  $\|\frac{1}{n_{j_0}}\sum_{q=1}^{n_{j_0}}x_q\|_\infty\leq\frac{1}{n_{j_0}}<\frac{1}{m_1}<0,1$. Hence by (\ref{eq17}) the proof is completed.
\end{proof}

\begin{prop}\label{non containing l^1 block spreading
model}
  The space $X$ does not admit $\ell^1$ as plegma block generated
  spreading model of any order.
\end{prop}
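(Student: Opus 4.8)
The plan is a proof by contradiction via an Odell--Schlumprecht style norm estimate. Suppose that for some regular thin family $\ff$, some $M\in[\nn]^\infty$ and some $\ff$-sequence $(x_s)_{s\in\ff}$ in $X$, the $\ff$-subsequence $(x_s)_{s\in\ff\upharpoonright M}$ plegma block generates the usual basis of $\ell^1$ as an $\ff$-spreading model. By Proposition \ref{Prop on almost isometric l^1 spr mod} together with Remark \ref{Rem on almost isometric l^1 spr mod}, after passing to a further $\ff$-subsequence we may assume that $\|x_s\|\le 1$ for all $s$, that the generation is still plegma block, and that it is almost isometric with constant $1-\ee$, where we fix once and for all $\ee=1/4$: concretely, for every $L\in[M]^\infty$, every late plegma tuple $(s_1,\dots,s_k)$ in $\ff\upharpoonright L$ (i.e. with $s_1(1)\ge L(k)$) and all $a_1,\dots,a_k\in[0,1]$ one has $\|\sum_{i=1}^k a_i x_{s_i}\|\ge (1-\ee)\sum_{i=1}^k a_i-\delta$ with $\delta>0$ as small as we wish (Remark \ref{remark on the definition of spreading model}). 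Since for a plegma tuple the supports satisfy $\text{supp}(x_{s_1})<\cdots<\text{supp}(x_{s_k})$, every such tuple furnishes a genuine block sequence in $B_X$, which is exactly what lets us invoke the averaging Lemma stated just before this proposition.

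Fix a large $j_0$. Inside one sufficiently long and sufficiently late plegma tuple in $\ff\upharpoonright M$ I would greedily build a block sequence $(y_t)_{t=1}^{n_{j_0}}$ of normalized $\ell^1$-averages: having chosen $y_1,\dots,y_{t-1}$, pick $j_t$ with $j_t>\max\{j_{t-1},j_0\}$ and $m_{j_t}>\max\text{supp}(y_{t-1})$, then take the next $n_{j_t}$ plegma vectors $x_{s_l}$ (all beyond the supports already used, hence still forming a late tuple) and set $y_t=\frac{1}{n_{j_t}}\sum_l x_{s_l}$. The $\ell^1$ lower estimate gives $3/4\le\|y_t\|\le 1$, and the averaging Lemma gives $\|y_t\|_i<2/m_i$ for every $i<j_t$, so $(y_t)$ is a rapidly increasing sequence of $\ell_1^{n_{j_t}}$-averages. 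Put $w=\frac{1}{n_{j_0}}\sum_{t=1}^{n_{j_0}}y_t$. Writing $w=\sum_i a_i x_{s_i}$ as a positive combination of the plegma vectors involved, we have $\sum_i a_i=1$, and since these indices form a late plegma sub-tuple the $\ell^1$ lower estimate yields $\|w\|\ge 1-\ee-\delta\ge 7/10$.

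The heart of the argument --- and the main obstacle --- is the matching upper bound $\|w\|<7/10$, obtained by the ``basic inequality'' for this class of spaces: one estimates $\varphi(w)$ for each $\varphi\in W$ by a tree analysis of $\varphi$. A coordinate functional gives $\le\|w\|_\infty\le 1/n_{j_0}$. A type I functional $f$ of weight $m_i$ with $i<j_0$ gives $|f(w)|<2/m_i$ by the averaging Lemma applied to the block sequence $(y_t)$. A type I functional of weight $m_{j_0}$ with $d\le n_{j_0}$ parts has at most $d+n_{j_0}\le 2n_{j_0}$ overlaps with the $n_{j_0}$ blocks $y_t$, each evaluation $\le 1$, so $|f(w)|\le 2/m_{j_0}$. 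A type I functional of weight $m_i$ with $i>j_0$ is the delicate case: splitting over the $y_t$'s, one bounds $f(y_t)$ by the averaging Lemma when $j_t>i$, by $1$ for the few $t$ with $j_t\le i$, and by the support-restriction estimate $|f(w)|\le\max\text{supp}(w)/m_i$ for very large $i$; the rapid growth built into the $j_t$'s, together with conditions (iii) ($n_{j+1}>m_j n_j$) and (ii) ($n_j^a/m_j\to\infty$ for all $a>0$) on the sequences $(m_j),(n_j)$, is precisely what kills the intermediate ``bad'' range of weights. Finally a type II functional $\varphi=\sum_r a_r g_r$ (with $\sum_r a_r^2\le 1$ and the $g_r$ of pairwise distinct weights) is handled by Cauchy--Schwarz, $|\varphi(w)|\le(\sum_r g_r(w)^2)^{1/2}$: the squared low-weight contributions sum to at most $4\sum_j 1/m_j^2\le 4(\sum_j 1/m_j)^2\le 0.04$ by condition (i) ($\sum_j 1/m_j\le 0.1$), and the high-weight contributions are $o(1)$ as $j_0\to\infty$. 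Collecting all cases gives $\|w\|\le\sqrt{0.04}+o(1)=0.2+o(1)<7/10$ for $j_0$ large, the desired contradiction. I expect the type I, $i>j_0$ case of the tree analysis --- the ``bad range'' of intermediate weights, where one must exploit the rapid growth of the rapidly increasing sequence together with all of conditions (i)--(iii) --- to be the technically hardest point, exactly as in the original Odell--Schlumprecht argument.
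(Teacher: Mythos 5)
Your plan is a genuinely different route from the paper's (a rapidly increasing sequence of $\ell^1$-averages plus an Odell--Schlumprecht style basic inequality), but as it stands it has two real gaps. The first is structural: your lower bound $\|w\|\ge 1-\ee-\delta$ needs \emph{all} the vectors $x_{s_l}$ entering all the blocks $y_1,\dots,y_{n_{j_0}}$ to form a single, sufficiently late plegma tuple, while your weights $j_t$ are chosen only after seeing $\max\text{supp}(y_{t-1})$. But plegma tuples cannot be built adaptively in this way: by Definition \ref{defn plegma}(ii), every member of a plegma tuple must have its first element below $s_1(2)$, so a set lying entirely beyond the ones already used is never a legitimate continuation (indeed $s<t$ with $|s|\ge 2$ is never a plegma pair), and the total length $K=\sum_t n_{j_t}$ must be fixed before the tuple -- hence before the supports of the $y_t$'s -- is known, since lateness requires $s_1(1)\ge M(K)$. ``Taking the next $n_{j_t}$ plegma vectors beyond the supports already used'' is therefore not an available move, and without the condition $m_{j_t}>\max\text{supp}(y_{t-1})$ your intermediate-weight estimates collapse.

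The second gap is the upper bound $\|w\|<7/10$ itself, which you assert rather than prove. In this space type I functionals are built from arbitrary members of $W$ (so type II functionals occur inside type I and vice versa), so a genuine inductive tree analysis would be needed; and even at the top level your Cauchy--Schwarz bookkeeping for a type II functional $\varphi=\sum_r a_rg_r$ does not close: for weights $m_i$ with $i>j_0$ the best pointwise bounds are of the form $c/n_{j_0}+c'/m_i$ (a single $y_{t}$ can be fully captured by any weight lying strictly between $j_t$ and $j_{t+1}$, and the best generic bound is $g_i(y_t)\le\|y_t\|_i$), while the number of distinct weights acting nontrivially on $w$ is tied to $\text{supp}(w)$ and is not controlled by $n_{j_0}$; so $\sum_r g_r(w)^2$ is not $o(1)$ by the argument you indicate -- nothing in the RIS construction prevents the high-weight profiles $(\|x_s\|_j)_j$ of the chosen vectors from aligning on a common heavy weight. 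This alignment issue is exactly what the paper's proof is built to kill, by a completely different mechanism: it uses the identity $\|x\|=\max\{\|x\|_\infty,(\sum_j\|x\|_j^2)^{1/2}\}$ to send each $x_s$ to its weight profile $w_s=(\|x_s\|_j)_j\in B_{\ell^2}$, applies the generic assignment machinery (Remark \ref{rem ref on ell^2}) to the $\ell^2$-valued $\ff$-sequence so that beyond a stabilized level $d_0$ the profiles along a plegma tuple are essentially pairwise disjoint, and then a single average over one long plegma tuple suffices: weights up to $d_0$ contribute at most $2\sum_j 1/m_j\le 0{.}2$ by the averaging lemma, the high weights at most about $4\ee$ by Cauchy--Schwarz over the disjoint profile supports, against the $\ell^1$ lower bound $0{.}8$. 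To salvage your approach you would have to both repair the plegma-tuple construction and import a full basic-inequality analysis adapted to this norming set; the paper's profile/disjointness argument avoids both.
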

\begin{proof}
  Assume on the contrary that there exist $\ff$ regular thin,
  $M\in[\nn]^\infty$ and $(x_s)_{s\in\ff\upharpoonright M}$ which
  plegma block generates $\ell^1$ as an $\ff$-spreading model. We may
  also assume that $x_s\in B_X$ for all $s\in\ff\upharpoonright M$
  and $(x_s)_{s\in\ff\upharpoonright M}$ plegma block generates $\ell^1$
  as an $\ff$-spreading model with constant $1-\ee$,  where
  $\ee=0,1$.

  For every $s\in\ff\upharpoonright M$ we define
  $w_s=(\|x_s\|_j)_{j\in\nn}$ which clearly belongs to
  $B_{\ell^2}$. By the reflexivity of $\ell^2$, we have that the $\ff$-sequence $(w_s)_{s\in\ff}$ is weakly relatively compact. By Proposition \ref{cor for subordinating}, there exists $M'\in[M]^\infty$ such that the $\ff$-subsequence $(w_s)_{s\in\ff\upharpoonright M'}$ is subordinated with respect the weak topology on $\ell^2$ and let $\widehat{\varphi}:\widehat{\ff}\upharpoonright M'\to\ell^2$ be the continuous map witnessing this. Let $(\ee_n)_{n\in\nn}$ be a decreasing sequence of positive numbers such that $\ee_n<\frac{\ee}{2}$, for all $n\in\nn$. By Theorem \ref{canonical tree} there exist $L\in[M]^\infty$ and an
  $\ff$-subsequence $(\widetilde{w}_s)_{s\in\ff\upharpoonright L}$
  in $X$ satisfying the following.
  \begin{enumerate}
    \item[(i)] For every $s\in\ff\upharpoonright L$, $\|w_s-\widetilde{w}_s\|<\ee_n$, where $\min
  s=L(n)$.
\item[(ii)] The $\ff$-subsequence
$(\widetilde{w}_s)_{s\in\ff\upharpoonright
    L}$ is subordinated with respect to the weak topology of $\ell^2$. Moreover, if
    $\widetilde{\varphi}:\widehat{\ff}\upharpoonright L\to (\ell^2,w)$ is the continuous map witnessing
    this, then
    $\widetilde{\varphi}(\emptyset)=\widehat{\varphi}(\emptyset)$.
    \item[(iii)] The $\ff$-subsequence $(\widetilde{w}_s)_{s\in\ff\upharpoonright
    L}$ admits a canonical tree decomposition.
\end{enumerate}
Let $d_0\in\nn$ such that
$\|E(\widehat{\varphi}(\emptyset))\|_{\ell^2}<\frac{\ee}{2}$, where
$E=\{d_0+1,\ldots\}$. For every $s\in\ff\upharpoonright
    L$ we set
$w'_s=\widetilde{w}_s-\widehat{\varphi}(\emptyset)$.
It is easy to see that the $\ff$-subsequence $(w'_s)_{s\in\ff\upharpoonright
    L}$ is plegma disjointly
supported. Moreover, notice that $\|E(w_s-w'_s)\|_{\ell^2}<\ee$, for
all $s\in\ff\upharpoonright
    L$. We pick $j_0>d_0$ such that
$2n_{j_0}^{-\frac12}<\ee$. Since $(x_s)_{s\in\ff\upharpoonright
    L}$ generates
$\ell^1$ as an $\ff$-spreading model of constant $0,9$, we may choose
$(s_q)_{q=1}^{n_{j_0}}\in\textit{Plm}_{n_{j_0}}(\ff\upharpoonright
    L)$ such
that
\begin{equation}\label{eq18}
\Big\| \frac{1}{n_{j_0}}\sum_{q=1}^{n_{j_0}}x_{s_q} \Big\|\geq 0,8
\end{equation}
Observe that $d_0,j_0,\ee$, $(x_{s_q})_{q=1}^{n_{j_0}}$ and
$(w'_{s_q})_{q=1}^{n_{j_0}}$ satisfy the assumptions of Lemma
\ref{small means}. Hence \[\Big\|
\frac{1}{n_{j_0}}\sum_{q=1}^{n_{j_0}}x_{s_q}
\Big\|<0,2+\ee+2n_{j_0}^{-\frac12}<0,4\]
  which contradicts (\ref{eq18}) and the proof is complete.
\end{proof}


Using
$\frac{n_j}{m_j}\stackrel{j\to\infty}{\longrightarrow}\infty$ it
is easy to see that the space $X$ satisfies the property
$\mathcal{P}$ (see Definition \ref{notation of P property}). This
easily implies that the space $X$ does not contain any isomorphic
copy of $c_0$. Proposition \ref{non containing l^1 block spreading
model} implies that the space $X$ does not contain any isomorphic
copy of $\ell^1$. Since the basis of $X$ is unconditional we
conclude the following.
\begin{cor} The space  $X$ is reflexive.
\end{cor}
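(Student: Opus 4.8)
The plan is to deduce reflexivity from James' theorem, using that $(e_n)_{n\in\nn}$ is an unconditional basis of $X$: it suffices to show that $X$ contains no isomorphic copy of $c_0$ and no isomorphic copy of $\ell^1$. Both facts have essentially been set up by the preceding results, so the proof is short; the only genuine work is recording why property $\mathcal{P}$ holds and why the failure of $\ell^1$ as a \emph{plegma block generated} spreading model rules out an isomorphic copy of $\ell^1$.

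First I would verify that $X$ satisfies property $\mathcal{P}$ (Definition \ref{notation of P property}). Given $\delta>0$, use condition $\frac{n_j}{m_j}\to\infty$ to pick $j\in\nn$ with $\frac{\delta n_j}{m_j}>1$ and set $k=n_j$. For any block sequence $(x_i)_{i=1}^{k}$ in $X$ with $\|x_i\|\geq\delta$, choose for each $i$ a norming functional $f_i\in W$ (which may be taken with $f_1<\dots<f_{k}$ by the block hypothesis) with $f_i(x_i)\geq\delta$; then $\varphi=\frac{1}{m_j}\sum_{i=1}^{k}f_i$ is a legitimate type I functional (since $k\le n_j$), so $\|\sum_{i=1}^k x_i\|\geq\varphi(\sum_{i=1}^k x_i)\geq\frac{k\delta}{m_j}>1$. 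Hence $\mathcal{P}$ holds. In particular $c_0$ is not finitely block representable in $X$, so no block sequence in $X$ is equivalent to the usual basis of $c_0$; since a subspace isomorphic to $c_0$ would yield (by a sliding hump and unconditionality) such a block sequence, $X$ contains no isomorphic copy of $c_0$.

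Next I would rule out $\ell^1$. Suppose, toward a contradiction, that $X$ contains an isomorphic copy of $\ell^1$. By the James-type proposition stated just before Corollary \ref{one order more to get the isometric}, $X$ then admits the usual basis of $\ell^1$ as a block generated spreading model of order one; since for $o(\ff)=1$ (i.e.\ $\ff=[\nn]^1$) the plegma families are precisely the strictly increasing tuples of singletons, a block generated order-one spreading model is in particular a plegma block generated one. This contradicts Proposition \ref{non containing l^1 block spreading model}, which asserts that $X$ admits no plegma block generated $\ell^1$ spreading model of any order. Therefore $X$ contains no isomorphic copy of $\ell^1$.

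Combining the two conclusions with the unconditionality of the basis $(e_n)_{n\in\nn}$, James' theorem (c.f.\ \cite{J}) gives that $X$ is reflexive. The main point to be careful about is the reduction in the previous paragraph: one must make sure the order-one spreading model produced from an $\ell^1$-copy is genuinely in the ``plegma block'' class so that Proposition \ref{non containing l^1 block spreading model} applies; everything else (property $\mathcal{P}$, the $c_0$ exclusion, the James dichotomy) is routine.
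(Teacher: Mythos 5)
Your proof is correct and follows essentially the same route as the paper: verify property $\mathcal{P}$ from $\frac{n_j}{m_j}\to\infty$ (which excludes $c_0$), combine Proposition \ref{non containing l^1 block spreading model} with the fact that an isomorphic copy of $\ell^1$ would produce a block generated order-one (hence plegma block generated) $\ell^1$ spreading model to exclude $\ell^1$, and conclude via James' theorem using the unconditionality of $(e_n)_{n\in\nn}$. The only difference is that you spell out the steps the paper dismisses as easy, and these details (the type I functional $\frac{1}{m_j}\sum_{i=1}^{n_j}f_i$ witnessing $\mathcal{P}$, and the identification of block sequences with plegma block $[\nn]^1$-subsequences) are all accurate.
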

Moreover we have the following.
\begin{cor}\label{corell1}
  The space $X$ does not admit any $\ell^1$ spreading model of
  any order.
\end{cor}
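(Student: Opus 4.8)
The plan is to derive Corollary~\ref{corell1} by combining three facts already established for the space $X$: its reflexivity, the validity of property $\mathcal{P}$, and the absence of plegma block generated $\ell^1$ spreading models (Proposition~\ref{non containing l^1 block spreading model}). The argument is a short contradiction argument, so the bulk of the work has in fact been done in the preceding sections; here one only needs to assemble the ingredients correctly.

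First I would argue by contradiction: assume there is some countable ordinal $\xi<\omega_1$ such that $X$ admits $\ell^1$ as a $\xi$-order spreading model, i.e. there is a regular thin family $\ff$ of order $\xi$, an $M\in[\nn]^\infty$ and an $\ff$-sequence $(x_s)_{s\in\ff}$ in $X$ so that $(x_s)_{s\in\ff\upharpoonright M}$ generates an $\ff$-spreading model equivalent to the usual basis of $\ell^1$. Next I would invoke the structural facts about $X$: the basis $(e_n)_{n\in\nn}$ is an (unconditional) Schauder basis, $X$ is reflexive (by the Corollary immediately preceding the statement), and, as noted in the text right after Proposition~\ref{non containing l^1 block spreading model}, $X$ satisfies property $\mathcal{P}$ of Definition~\ref{notation of P property}, since $\frac{n_j}{m_j}\to\infty$. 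Then Corollary~\ref{getting block generated ell^1 spreading model corollary for reflexive} (the reflexive version of Theorem~\ref{getting block generated ell^1 spreading model}) applies and yields that $X$ admits $\ell^1$ as a plegma block generated $\xi$-order spreading model.

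Finally I would observe that this directly contradicts Proposition~\ref{non containing l^1 block spreading model}, which asserts that $X$ does not admit $\ell^1$ as a plegma block generated spreading model of \emph{any} order. Hence no such $\xi$ can exist, which is exactly the assertion of Corollary~\ref{corell1}.

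There is essentially no technical obstacle in this last step: everything reduces to checking that the hypotheses of Corollary~\ref{getting block generated ell^1 spreading model corollary for reflexive} are met, and the only mildly delicate point is the verification of property $\mathcal{P}$ for $X$. This follows because, given $\delta>0$, the growth condition $\frac{n_j}{m_j}\to\infty$ lets one choose $j$ with $\frac{n_j}{m_j}>\frac{1}{\delta}$; then for any block sequence $(x_i)_{i=1}^{n_j}$ with $\|x_i\|\ge\delta$ the single type~I functional $\frac{1}{m_j}\sum_{i=1}^{n_j}x_i^{*}$, where $x_i^{*}$ norms $x_i$, witnesses $\|\sum_{i=1}^{n_j}x_i\|\ge \frac{n_j\delta}{m_j}>1$. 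With $k=n_j$ this is precisely property $\mathcal{P}$, so the argument closes.
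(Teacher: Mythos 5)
Your proof is correct and follows essentially the same route as the paper: assume an $\ell^1$ spreading model of some order exists, use reflexivity plus property $\mathcal{P}$ to upgrade it (via Theorem~\ref{getting block generated ell^1 spreading model}, of which Corollary~\ref{getting block generated ell^1 spreading model corollary for reflexive} is just the reflexive restatement) to a plegma block generated $\ell^1$ spreading model, and contradict Proposition~\ref{non containing l^1 block spreading model}. Your explicit verification of property $\mathcal{P}$ via a single type~I functional of weight $m_j$ with $\frac{n_j}{m_j}>\frac{1}{\delta}$ is exactly the argument the paper leaves implicit.
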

\begin{proof}
  Suppose on the contrary that there exist a regular thin family
  $\ff$, $M\in[\nn]^\infty$ and a bounded $\ff$-sequence
  $(x_s)_{s\in\ff}$ such that $(x_s)_{s\in\ff\upharpoonright M}$
  generates $\ell^1$ as an $\ff$-spreading model. By the reflexivity of $X$
  and the boundness of $(x_s)_{s\in\ff}$, we have that the
  $\ff$-sequence $(x_s)_{s\in\ff}$ is weakly relatively compact.
  Since $X$ satisfies the property $\mathcal{P}$, by Theorem \ref{getting block generated ell^1 spreading
  model} the space $X$ admits $\ell^1$ as a plegma block generated
  $\ff$-spreading model, which contradicts Proposition \ref{non containing l^1 block spreading
  model}.
\end{proof}
\subsection{The space $X$ does not admit $\ell^p$, for $1<p$, or $c_0$ as a spreading model}
We proceed to show that $X$ does not admit any $\ell^p$, $p>1$ or
$c_0$ as a spreading model. First we state some preliminary
lemmas.
\begin{lem}\label{breaking upper l^p}
  For every $p>1$ and for every $\delta,c>0$ there exists
  $k_0\in\nn$ such that for every $k\geq k_0$ and $(x_j)_{j=1}^{n_k}$
  block sequence in $X$ with $\|x_j\|>\delta$ for all $1\leq j\leq
  n_k$, we have that
  \[\Big\|\sum_{j=1}^{n_k}x_j\Big\|>cn_k^\frac{1}{p}\]
\end{lem}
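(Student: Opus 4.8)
The plan is to estimate $\big\|\sum_{j=1}^{n_k}x_j\big\|$ from below by evaluating it on a single functional of type I with weight $m_k$. Such a functional is of the form $\frac{1}{m_k}\sum_{q=1}^{d}f_q$ with $d\le n_k$ and $f_1<\cdots<f_d$ in $W$; since we have exactly $n_k$ blocks, we can afford one summand per block. If, for each $j$, we can find $f_j\in W$ with $\mathrm{supp}(f_j)$ contained in the range of $x_j$ and $f_j(x_j)>\delta$, then because $(x_j)_{j=1}^{n_k}$ is a block sequence the $f_j$ are automatically successive and $f_j(x_i)=0$ for $i\ne j$, so
\[
\Big\|\sum_{j=1}^{n_k}x_j\Big\|\ \ge\ \frac{1}{m_k}\sum_{j=1}^{n_k}f_j(x_j)\ >\ \frac{n_k\delta}{m_k}\ =\ \delta\,\frac{n_k^{\,1-1/p}}{m_k}\cdot n_k^{1/p}.
\]
Since $p>1$ we have $1-\tfrac1p>0$, so property (ii) of the sequences $(n_j)_{j}$, $(m_j)_{j}$ gives $n_k^{\,1-1/p}/m_k\to\infty$; choosing $k_0$ so large that $\delta\,n_k^{\,1-1/p}/m_k>c$ for all $k\ge k_0$ then finishes the proof.

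So the only thing to arrange is the existence of the $f_j$. First I would pick, for each $1\le j\le n_k$, some $\varphi_j\in W$ with $\varphi_j(x_j)>\delta$, and then restrict it to the interval $I_j=[\min\mathrm{supp}(x_j),\max\mathrm{supp}(x_j)]$; since $x_j$ is supported in $I_j$ this restriction still norms $x_j$ above $\delta$, and its support now lies in $I_j$. The $I_j$ are successive intervals, which gives the successivity of the $f_j$ and the vanishing of the cross terms. What remains is to check that the restricted functional still belongs to $W$.

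The key step — and the main obstacle — is therefore a lemma that $W$ is closed under restriction to intervals: $P_I^*\varphi\in W$ for every $\varphi\in W$ and every interval $I\subseteq\nn$, where $P_I^*\varphi=\sum_{n\in I}\varphi(e_n)e_n^*$. I would prove this by induction on the construction of $\varphi$. The base case $\varphi=\pm e_n^*$ is immediate. For $\varphi$ of type I, discarding the summands whose support misses $I$ leaves a shorter (hence still admissible) sum of successive functionals, each in $W$ by the inductive hypothesis, so $P_I^*\varphi$ is again of type I with the same weight. For $\varphi$ of type II, the delicate point is that restricting each type I summand keeps it of type I \emph{with the same weight}, so the requirement that the weights be pairwise distinct is preserved, while $\sum a_k^2\le1$ is untouched; hence $P_I^*\varphi\in W$. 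Granting this lemma, the argument above goes through verbatim, and all remaining computations are routine.
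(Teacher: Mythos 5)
Your proof is correct and follows essentially the same route as the paper: build one type I functional of weight $m_k$ from $n_k$ successive norming functionals $f_j$ with $\mathrm{supp}(f_j)$ inside the block of $x_j$, and conclude from $n_k^{1-1/p}/m_k\to\infty$. The only difference is that you make explicit (via the interval-restriction closure of $W$) the step the paper takes for granted, namely that each $x_j$ can be normed by a functional in $W$ supported in $\mathrm{supp}(x_j)$; your sketched induction for that closure property is sound.
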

\begin{proof}
  Since
  $\frac{n_k^{1-\frac{1}{p}}}{m_k}\stackrel{k\to\infty}{\longrightarrow}\infty$,
  there exists $k_0\in\nn$ such that for every $k\geq k_0$ we have
  that $\frac{n_k^{1-\frac{1}{p}}}{m_k}>\frac{c}{\delta}$. Let $(x_j)_{j=1}^{n_k}$
  be a block sequence in $X$ such that $\|x_j\|>\delta$ for all $1\leq j\leq
  n_k$. Let $f_1,\ldots,f_{n_k}\in W$ such that $f_j(x_j)>\delta$
  and $\text{supp}(f_j)\subseteq\text{supp}(x_j)$ for all $1\leq j\leq
  n_k$. Then the functional $f=\frac{1}{m_k}\sum_{j=1}^k f_j$
  belongs to $W$. Hence
  \[\Big\|\sum_{j=1}^{n_k}x_j\Big\|\geq f\Big(\sum_{j=1}^{n_k}x_j\Big)=
  \frac{1}{m_k}\sum_{j=1}^{n_k}f_j(x_j)>\frac{n_k}{m_k}\delta>cn_k^\frac{1}{p}\]
\end{proof}
\begin{lem}\label{breaking l^p means}
  Let $p>1$ and $\xi<\omega_1$. For every regular thin family $\ff$ of order $\xi$, $M\in[\nn]^\infty$, $c,\delta>0$ and  $\ff$-sequence
  $(\widetilde{x}_s)_{s\in\ff}$,
  such that $\|\widetilde{x}_s\|>\delta$ for all
  $s\in\ff\upharpoonright M$ and $(\widetilde{x}_s)_{s\in\ff\upharpoonright
  M}$ admits a disjoint canonical tree decomposition, there exist
  $L\in[M]^\infty$ and $k_0\in\nn$ such that for every
  $k\geq k_0$
  \[\Big\|\sum_{j=1}^{n_k}\widetilde{x}_{s_j}\Big\|>cn_k^\frac{1}{p}\]
  for every
  plegma $n_k$-tuple $(s_j)_{j=1}^{n_k}$ in $\ff\upharpoonright L$.
\end{lem}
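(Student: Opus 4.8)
The plan is to reduce everything to producing, for a given plegma $n_k$-tuple $(s_j)_{j=1}^{n_k}$ in $\ff\upharpoonright L$, one functional of type I and weight $m_k$ that already sees a fixed proportion of the norm-mass of $\sum_{j=1}^{n_k}\widetilde x_{s_j}$. Concretely, if we can exhibit a block family consisting of $\gtrsim n_k$ pieces, each supported inside $\text{supp}(\sum_j\widetilde x_{s_j})$ and of norm $\gtrsim\delta$ (or, more flexibly, at most $n_k$ disjointly-and-block supported vectors whose norms sum to $\gtrsim n_k\delta$), then a single weight-$m_k$ functional $\tfrac1{m_k}\sum_q g_q$ built from norming functionals of those pieces gives $\|\sum_j\widetilde x_{s_j}\|\gtrsim n_k\delta/m_k$, and since $p>1$ the hypothesis $n_j^{1-1/p}/m_j\to\infty$ forces $n_k\delta/m_k> c\,n_k^{1/p}$ for all $k\ge k_0$. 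So the whole lemma becomes a combinatorial/structural statement about where the mass of a plegma tuple sits, and it is here that we are allowed to pass to $L$ and invoke Ramsey normalization via Proposition \ref{ramseyforplegma}.

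Next I would unwind the shape of the disjoint generic decomposition $(\widetilde y_t)_{t\in\widehat\ff\upharpoonright M}$ along a plegma tuple. By the defining clauses of a generic decomposition (Definition \ref{Def of plegma supported}, parts (ii)--(iii), with $\widetilde y_\emptyset=0$), the pieces $\widetilde y_{s_j|k}$ (for $1\le j\le n_k$ and $1\le k\le|s_j|$) are pairwise disjointly supported and, when listed lexicographically in the order $(k,j)$, appear in increasing coordinate order; in particular, for each fixed level $k$, the slice $v_k:=\sum_{j:\,|s_j|\ge k}\widetilde y_{s_j|k}$ is a block sum of at most $n_k$ vectors and $v_1<v_2<\cdots$, while $\sum_j\widetilde x_{s_j}=\sum_k v_k$ with disjoint supports. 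Each level $k$ therefore yields a legitimate weight-$m_k$ type I functional $h_k$ with $h_k(\sum_j\widetilde x_{s_j})=\tfrac1{m_k}S_k$, where $S_k:=\sum_{j:\,|s_j|\ge k}\|\widetilde y_{s_j|k}\|$, and $\sum_k S_k=\sum_j\sum_k\|\widetilde y_{s_j|k}\|\ge\sum_j\|\widetilde x_{s_j}\|>n_k\delta$. Combined with the lattice estimate $\|x\|\ge(\sum_k\|x\|_k^2)^{1/2}$ available in $X$, this already settles the situations where the mass of the tuple is carried by boundedly many levels, or by a single bounded level: some $S_{k_0}\gtrsim n_k\delta/(\text{bounded})$, and $\|\sum_j\widetilde x_{s_j}\|\ge S_{k_0}/m_{k_0}$ beats $c\,n_k^{1/p}$ for $k$ large.

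For the genuinely hard case — the mass sitting at arbitrarily high levels, where a single slice is too expensive (large $m_k$) — I would induct on $\xi=o(\ff)$. In the inductive step split $\widetilde x_s=\widetilde y_{\{\min s\}}+z_s$ with $z_s=\sum_{k\ge 2}\widetilde y_{s|k}$; along a plegma tuple the level-$1$ pieces $\widetilde y_{\{\min s_1\}}<\cdots<\widetilde y_{\{\min s_{n_k}\}}$ are block and precede all of $\sum_j z_{s_j}$. If $\sum_j\|\widetilde y_{\{\min s_j\}}\|>n_k\delta/2$, a single weight-$m_k$ type I functional built from these $n_k$ block pieces finishes as above. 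Otherwise $\sum_j\|z_{s_j}\|>n_k\delta/2$ and one wants the inductive hypothesis for the tails; the obstruction is that they live in the distinct fibres $\ff_{(\min s_j)}$. I would resolve this by the $\ff/_L$/splitting apparatus (Definition \ref{notation hunging ff by min L}, Lemma \ref{rem f/L}, Lemma \ref{block plegma}, together with Theorem \ref{embending a family F in G by a shifting where o(F)<o(G)} and Proposition \ref{corollary by Gasparis}): after removing from each $\widetilde x_s$ a controlled "initial chunk" the remainders form a plegma block $\g$-subsequence for some $\g$ of order $<\xi$, so that a plegma tuple is turned into an honest block sequence and one is reduced either to Lemma \ref{breaking upper l^p} applied directly to that block sequence, or — when the removed chunks absorb $\gtrsim n_k\delta$ of the mass — to the inductive hypothesis for $\g$. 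Iterating this splitting at most $\lceil 2C/\delta\rceil$ times (where $C$ bounds $\|\widetilde x_s\|$, which we may assume since the decomposition comes from a subordinated, hence bounded, $\ff$-sequence) forces us into one of the good alternatives.

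The main obstacle, as indicated, is making the last step uniform: one needs a single $L$ and $k_0$ working for \emph{every} plegma $n_k$-tuple simultaneously, which requires Ramsey-decdetermining, via Proposition \ref{ramseyforplegma}, the relevant norm-dichotomies ("is the initial chunk of a plegma pair large or small?") on $\ff\upharpoonright M$ — exactly the mechanism used in the proof of Theorem \ref{getting block generated ell^1 spreading model} — and then tracking the constants carefully so that the repeated loss of a fixed factor in the piece-count (passing from $n_k$ to $\lceil\alpha n_k\rceil$) is harmless, which it is, because in all estimates the gain is $\asymp n_k/m_k$ against a target $\asymp n_k^{1/p}$ and $n_j^{1-1/p}/m_j\to\infty$.
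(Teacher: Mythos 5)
Your plan is essentially the paper's proof: induction on $o(\ff)$, the Ramsey dichotomy (Proposition \ref{ramseyforplegma}) on whether the initial chunk $\sum_{t\sqsubseteq s_2/_{s_1}}\widetilde{y}_t$ of a plegma pair is small or large, the small case handled by stripping the chunks to get a plegma block sequence of norms $>\delta/2$ and applying Lemma \ref{breaking upper l^p}, the large case handled by applying the inductive hypothesis to the chunk sequence indexed by $\g=\ff/_L$ (of strictly smaller order), and $1$-unconditionality to pass back to the original sums. The only deviations are inessential: the level-slice discussion and the bounded iteration ``at most $\lceil 2C/\delta\rceil$ times'' are not needed (the two Ramsey alternatives are already exhaustive, the ordinal induction being the only recursion), and it is the removed chunks, not the block remainders, that are indexed by the lower-order family.
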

\begin{proof}
  We will prove the lemma using induction on $\xi$.
  For $\xi=1$ we have that the sequence $(\widetilde{x}_{\{m\}})_{m\in
  M}$ forms a block sequence in $X$. Hence by Lemma \ref{breaking upper
  l^p} the result follows.

  Let $\xi<\omega_1$ and assume  that for every $\zeta<\xi$ the
  lemma is true. We will show that it also  holds  for
  $\xi$. Indeed, let $c,\delta>0$, $\ff$ be a regular thin family of order $\xi$,
  $M\in[\nn]^\infty$ and $(\widetilde{x}_s)_{s\in\ff}$ an $\ff$-sequence
  such that the $\ff$-subsequence
  $(\widetilde{x}_s)_{s\in\ff\upharpoonright M}$ admits a disjoint canonical tree decomposition $(\widetilde{y}_t)_{t\in\widehat{\ff}\upharpoonright
  M}$ and $\|\widetilde{x}_s\|>\delta$, for all $s\in\ff\upharpoonright
  M$.
  Let $L_1\in[M]^\infty$ such that $\ff$ is very large in
  $L_1$. By Theorem \ref{ramseyforplegma} there exists
  $L_2\in[L_1]^\infty$ such that one of the following holds
  \begin{enumerate}
    \item[(i)] $\|\sum_{t\sqsubseteq s_2/_{s_1}}\widetilde{y}_t \|\leq\frac{\delta}{2}$ for every plegma pair $(s_1,s_2)$ in $\ff\upharpoonright
    L_2$,
    \item[(ii)] $\|\sum_{t\sqsubseteq s_2/_{s_1}}\widetilde{y}_t \|>\frac{\delta}{2}$ for every plegma pair $(s_1,s_2)$ in $\ff\upharpoonright
    L_2$.
  \end{enumerate}

  Suppose that (i) occurs. Then by Lemma \ref{breaking upper l^p}
  there exist $k_0$ such that  for every $k\geq k_0$ and $(x_j)_{j=1}^{n_k}$
  block sequence in $X$ with $\|x_j\|>\frac{\delta}{2}$ for all $1\leq j\leq
  n_k$, we have that
  \[\Big\|\sum_{j=1}^{n_k}x_j\Big\|>cn_j^\frac{1}{p}\]
  Let $k\geq k_0$ and $(s_j)_{j=1}^{n_k}$ be a plegma $n_k$-tuple
  in $\ff\upharpoonright L_2(2\nn)$.
  Let $L_2=\{l^2_1,l^2_2,\ldots\}$ and for every
  $1\leq j\leq n_k$, let
  $s_j=\{l^2_{2\rho_1},\ldots,l^2_{2\rho_{|s_j|}}\}$. For all $1\leq j\leq n_k$ we set
  $s_j^*$ to be the unique element in $\ff\upharpoonright L_2$
  with
  $s_j^*\sqsubseteq\{l^2_{2\rho_1-1},\ldots,l^2_{2\rho_{|s_j|}-1}\}$
  and
  \[z_{s_j}=\widetilde{x}_{s_j}-\sum_{t\sqsubseteq s_j/_{s_j^*}}\widetilde{y}_t \]
  Notice that the sequence $(z_{s_j})_{j=1}^{n_k}$
  forms a block sequence in $X$ such that
  $\|z_{s_j}\|>\frac{\delta}{2}$ for all $1\leq j\leq n_k$.
  Since $k\geq k_0$
   and
   the basis $(e_n)_{n\in\nn}$ is 1-unconditional, we get that
  \[\Big\| \sum_{j=1}^{n_k}\widetilde{x}_{s_j} \Big\|\geq\Big\| \sum_{j=1}^{n_k}z_{s_j} \Big\|>cn_j^\frac{1}{p}\]

  Suppose that (ii) occurs. Let $\g=\ff/_{L_2}$. For every
  $t\in\g$, we set
  $z_t=\widetilde{x}_s^{(1,\g)}$ (see Definition \ref{defn g
  splitting}) where
  $s\in\ff\upharpoonright L_2(2\nn)$ and $t\sqsubset s$. Since
  $o(\g)<\xi$, by the inductive hypothesis (for $\zeta=o(\g)$, $c,\frac{\delta}{2}$, $(z_t)_{t\in\g}$ and
  $L_2(2\nn)$),
  there exist $L\in[L_2(2\nn)]^\infty$ and  $k_0\in\nn$ such
  that for every $k\geq k_0$,
  \[\Big\| \sum_{j=1}^{n_k}z_{s_j} \Big\|>cn_j^\frac{1}{p},\]
  for every plegma $n_k$-tuple $(t_j)_{j=1}^{n_k}$ in $\g\upharpoonright
  L$.

  Let $(s_j)_{j=1}^{n_k}$ plegma $n_k$-tuple in $\ff\upharpoonright
  L$. For all $1\leq j\leq n_k$, let $t_j$ in $\g\upharpoonright L$
  such that $t_j\sqsubset s_j$. Then $(t_j)_{j=1}^{n_k}$ is an
  plegma $n_k$-tuple in $\g\upharpoonright L$ and therefore
  \[\Big\| \sum_{j=1}^{n_k}\widetilde{x}_{s_j} \Big\|\geq\Big\| \sum_{j=1}^{n_k}z_{s_j} \Big\|>cn_j^\frac{1}{p}\]
\end{proof}
The following is immediate by the above.
\begin{cor}
  Let $\ff$ be a regular thin family, $M\in[\nn]^\infty$ and
  $(x_s)_{s\in\ff}$ an $\ff$-sequence in $X$. If the
  $\ff$-subsequence $(x_s)_{s\in\ff\upharpoonright M}$ admits a
  disjoint canonical tree decomposition then $(x_s)_{s\in\ff\upharpoonright
  M}$ does not admits any $\ell^p$, for $1<p<\infty$, or $c_0$ as
  an $\ff$-spreading model.
\end{cor}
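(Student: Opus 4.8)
The plan is to obtain the corollary as a direct consequence of Lemma~\ref{breaking l^p means}. Suppose that the $\ff$-subsequence $(x_s)_{s\in\ff\upharpoonright M}$ admits a disjointly generic decomposition and, arguing by contradiction, that it generates as an $\ff$-spreading model a sequence $(e_n)_{n\in\nn}$ which is $(c_0,C_0)$-equivalent to the standard basis of $\ell^p$ for some $1<p<\infty$, that is $c_0\big(\sum_{j=1}^n|a_j|^p\big)^{1/p}\le\big\|\sum_{j=1}^na_je_j\big\|_*\le C_0\big(\sum_{j=1}^n|a_j|^p\big)^{1/p}$ for all $n$ and $a_1,\dots,a_n$. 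In particular $(e_n)_{n\in\nn}$ is nontrivial, so $\|e_1\|_*\ge c_0>0$. By Remark~\ref{remark on the definition of spreading model}(iii) I would first pass to $M'\in[M]^\infty$ on which $(x_s)_{s\in\ff\upharpoonright M'}$ generates $(e_n)_{n\in\nn}$ with respect to a fixed null sequence $(\delta_n)_{n\in\nn}$, and then to a tail $M''=\{M'(n):n\ge n_1\}$, with $n_1$ large enough that $\delta_{n_1}<\|e_1\|_*/2=:\delta$; the spreading model inequality with $k=1$ and coefficient $1$ then gives $\|x_s\|>\delta$ for every $s\in\ff\upharpoonright M''$. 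The disjointly generic decomposition of $(x_s)_{s\in\ff\upharpoonright M}$ restricts, directly by Definition~\ref{Def of plegma supported}, to a disjointly generic decomposition of $(x_s)_{s\in\ff\upharpoonright M''}$, since all of its defining conditions range over elements of, and plegma pairs in, $\ff\upharpoonright M''$ and the condition $\widetilde y_\emptyset=0$ is preserved.

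Next I would apply Lemma~\ref{breaking l^p means} to the family $\ff$, the set $M''$, the constants $c=C_0+1$ and $\delta$, and the $\ff$-sequence $(x_s)_{s\in\ff}$. This produces $L\in[M'']^\infty$ and $k_0\in\nn$ such that $\big\|\sum_{j=1}^{n_k}x_{s_j}\big\|>(C_0+1)\,n_k^{1/p}$ for every $k\ge k_0$ and every plegma $n_k$-tuple $(s_j)_{j=1}^{n_k}$ in $\ff\upharpoonright L$. Fix $k\ge k_0$ and choose such an $n_k$-tuple with $s_1(1)\ge L(n_k)$; plegma tuples of every length with arbitrarily large minimum exist in $\ff\upharpoonright L$, e.g.\ as consecutive entries of an infinite plegma path, after passing to a further subset where $\ff$ is very large. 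Since $(x_s)_{s\in\ff\upharpoonright L}$ still generates $(e_n)_{n\in\nn}$ with respect to $(\delta_n)_{n\in\nn}$ by Remark~\ref{remark on the definition of spreading model}(ii), the spreading model inequality of Definition~\ref{Definition of spreading model} with all coefficients equal to $1$ and $l=k=n_k$ gives $\big\|\sum_{j=1}^{n_k}x_{s_j}\big\|\le\big\|\sum_{j=1}^{n_k}e_j\big\|_*+\delta_{n_k}\le C_0\,n_k^{1/p}+\delta_{n_k}$. Combining the two estimates yields $n_k^{1/p}<\delta_{n_k}$, which is absurd for $k$ large, as $n_k\to\infty$ while $\delta_{n_k}\to 0$. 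Hence $(x_s)_{s\in\ff\upharpoonright M}$ admits no $\ell^p$ spreading model for $1<p<\infty$.

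The $c_0$ case is handled identically, applying Lemma~\ref{breaking l^p means} this time with $p=2$ and $c=1$: after the same reductions (now $\delta=\|e_1\|_*/2$, where $(e_n)_{n\in\nn}$ is $(c_0,C_0)$-equivalent to the $c_0$-basis) one obtains $\big\|\sum_{j=1}^{n_k}x_{s_j}\big\|>n_k^{1/2}$ for suitable plegma $n_k$-tuples, whereas the $c_0$-spreading model bound gives $\big\|\sum_{j=1}^{n_k}x_{s_j}\big\|\le\big\|\sum_{j=1}^{n_k}e_j\big\|_*+\delta_{n_k}\le C_0+\delta_{n_k}$, again contradictory for large $k$ since $n_k^{1/2}\to\infty$. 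The only points that require care are the passage of the disjointly generic decomposition and of the spreading-model generation down to the subset $M''$ on which the norms $\|x_s\|$ are bounded away from $0$, and aligning the index conventions ($l\ge k$, $s_1(1)\ge L(l)$) of Definition~\ref{Definition of spreading model} with the plegma tuples furnished by Lemma~\ref{breaking l^p means}; both are routine, and I expect no genuine obstacle, the statement being in essence a repackaging of Lemma~\ref{breaking l^p means}.
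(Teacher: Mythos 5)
Your proof is correct and takes essentially the same route as the paper, which obtains the corollary as an immediate consequence of Lemma \ref{breaking l^p means}: the lower bound $c\,n_k^{1/p}$ on plegma $n_k$-sums contradicts the upper $\ell^p$ (resp.\ $c_0$) estimate coming from the spreading model, and your preliminary reductions (the lower bound $\|x_s\|>\delta$ from nontriviality of the spreading model, the restriction of the disjointly generic decomposition to a tail, and the index bookkeeping in Definition \ref{Definition of spreading model}) are exactly the routine details the paper suppresses. One minor slip: consecutive entries of an infinite plegma path need not form a plegma $l$-tuple for $l>2$ (only consecutive pairs are plegma), but the fact you actually need --- plegma $n_k$-tuples in $\ff\upharpoonright L$ with $s_1(1)\geq L(n_k)$ --- follows directly by taking, in a set where $\ff$ is very large, the initial segments in $\ff$ of suitably interlaced infinite subsets, so the argument stands.
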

Moreover since $X$ is reflexive by Corollary \ref{thm disjointly
generic decomposition for wekly relatively compact ff-sequences}
we get the following.
\begin{cor}\label{adfs}
  The space $X$ does not admit any $\ell^p$, for $1<p<\infty$, or $c_0$ as spreading model of
  any order.
\end{cor}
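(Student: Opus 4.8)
The plan is to derive Corollary \ref{adfs} by contradiction, combining the reflexivity of $X$, Theorem \ref{thm disjointly generic decomposition for wekly relatively compact ff-sequences}, and the corollary immediately preceding it on disjointly generically decomposed $\ff$-sequences. Fix $1\le\xi<\omega_1$ and assume, towards a contradiction, that $X$ admits either $\ell^p$ for some $1<p<\infty$ or $c_0$ as a $\xi$-order spreading model (the case of $\ell^1$ being already excluded by Corollary \ref{corell1}). Then there are a regular thin family $\ff$ with $o(\ff)=\xi$, an $M\in[\nn]^\infty$, and a bounded $\ff$-sequence $(x_s)_{s\in\ff}$ in $X$ such that $(x_s)_{s\in\ff\upharpoonright M}$ generates a spreading model $(e_n)_{n\in\nn}$ equivalent to the usual basis of $\ell^p$ (resp. $c_0$).

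Next I would observe that since $X$ is reflexive, the bounded set $\{x_s:s\in\ff\}$ has weakly compact weak closure, so $(x_s)_{s\in\ff}$ is a weakly relatively compact $\ff$-sequence and hence $(e_n)_{n\in\nn}\in\mathcal{SM}^{wrc}_\xi(X)$. Moreover $(e_n)_{n\in\nn}$, being equivalent to the usual basis of $\ell^p$ with $1<p<\infty$ or of $c_0$, is Schauder basic (indeed unconditional) but not equivalent to the usual basis of $\ell^1$. Therefore Theorem \ref{thm disjointly generic decomposition for wekly relatively compact ff-sequences} applies to $X$, to $\xi$, to $(e_n)_{n\in\nn}$, and to the family $\ff$; since $(e_n)_{n\in\nn}$ is not equivalent to the usual basis of $\ell^1$, its alternative (i) holds, producing an $\ff$-sequence $(\widetilde{x}_s)_{s\in\ff}$ in $X$ and an $L\in[\nn]^\infty$ such that $(\widetilde{x}_s)_{s\in\ff\upharpoonright L}$ admits a disjointly generic decomposition and generates $(e_n)_{n\in\nn}$ as an $\ff$-spreading model.

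Finally I would invoke the corollary preceding Corollary \ref{adfs}: an $\ff$-sequence admitting a disjointly generic decomposition cannot admit $\ell^p$ for $1<p<\infty$, or $c_0$, as an $\ff$-spreading model. This directly contradicts the conclusion that $(\widetilde{x}_s)_{s\in\ff\upharpoonright L}$ generates $(e_n)_{n\in\nn}$, which is equivalent to the usual basis of $\ell^p$ or $c_0$. As $\xi<\omega_1$ was arbitrary, $X$ admits no $\ell^p$ ($1<p<\infty$) or $c_0$ spreading model of any order.

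The heavy analytic content — the growth relations $n_j^a/m_j\to\infty$ feeding into Lemma \ref{breaking l^p means} and then into the corollary just cited, together with the machinery behind Theorem \ref{thm disjointly generic decomposition for wekly relatively compact ff-sequences} — has already been established, so there is no genuine obstacle here. The only points requiring a little care are the routine verifications that the hypotheses of Theorem \ref{thm disjointly generic decomposition for wekly relatively compact ff-sequences} are met: reflexivity of $X$ giving weak relative compactness of an arbitrary bounded $\ff$-sequence, and the $\ell^p$/$c_0$ bases being Schauder basic and not equivalent to the $\ell^1$ basis. Since $X$ carries an unconditional basis, the ``Schauder basis'' hypothesis of that theorem is also satisfied, and the argument closes.
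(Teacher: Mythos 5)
Your proposal is correct and follows exactly the route the paper intends: the paper's one-line proof ("since $X$ is reflexive, by Theorem \ref{thm disjointly generic decomposition for wekly relatively compact ff-sequences} we get the following") is precisely your argument spelled out — reflexivity gives weak relative compactness of the bounded generating $\ff$-sequence, the theorem (alternative (i), since $\ell^p$, $1<p<\infty$, and $c_0$ bases are Schauder basic and not equivalent to the $\ell^1$ basis) produces a disjointly generically decomposed $\ff$-sequence generating the same spreading model, contradicting the preceding corollary. Nothing further is needed.
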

\subsection{The main result and some consequences}
We are now ready to state our main results.
\begin{thm}\label{Odel slumpr theorem}
  Every spreading model, of any order, of the space $X$ does not
  contain any isomorphic copy of $\ell^p$, for every $p\in[1,\infty)$, or $c_0$.
\end{thm}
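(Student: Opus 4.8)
The plan is to deduce the theorem by contradiction from the two negative results already established for $X$, namely that it admits no $\ell^1$ spreading model of any order (Corollary~\ref{corell1}) and no $\ell^p$, $1<p<\infty$, nor $c_0$ spreading model of any order (Corollary~\ref{adfs}), combined with the embedding/composition principle of Proposition~\ref{getting l^p spr mod by containing}. Recall that this proposition upgrades an isomorphic copy of $\ell^p$ (resp.\ $c_0$) sitting inside the closed span of a \emph{nontrivial weakly relatively compact} spreading model of order $\xi$ into an $\ell^p$ (resp.\ $c_0$) spreading model of $X$ of order $\xi+1$; so the entire argument is a matter of verifying that its hypotheses are met.

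Concretely, I would suppose towards a contradiction that for some countable ordinal $\xi$ there is $(e_n)_{n\in\nn}\in\mathcal{SM}_\xi(X)$ such that $E=\overline{<(e_n)_{n\in\nn}>}$ contains an isomorphic copy of $\ell^p$ for some $p\in[1,\infty)$, or of $c_0$. First I would record two easy facts. Since $X$ is reflexive (established above), every bounded subset of $X$ is weakly relatively compact; as $(e_n)_{n\in\nn}$ is generated by a \emph{bounded} $\ff$-sequence, that $\ff$-sequence has weakly relatively compact range, so $(e_n)_{n\in\nn}\in\mathcal{SM}_\xi^{wrc}(X)$. Second, $(e_n)_{n\in\nn}$ is nontrivial: by the formula defining a trivial sequence, a trivial $(e_n)_{n\in\nn}$ has $\overline{<(e_n)_{n\in\nn}>}$ of dimension at most one, which is incompatible with $E$ containing an infinite dimensional subspace.

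With these observations in place, Proposition~\ref{getting l^p spr mod by containing} applies and yields that $X$ admits an $\ell^p$ (resp.\ $c_0$) spreading model of order $\xi+1<\omega_1$. If $p=1$ this contradicts Corollary~\ref{corell1}; if $1<p<\infty$, or if the isomorphic copy in $E$ is of $c_0$, this contradicts Corollary~\ref{adfs}. Either way we reach a contradiction, which proves the theorem. I do not expect any genuine obstacle \emph{inside} this deduction: the difficult work is entirely carried by the previously proved Corollaries~\ref{corell1} and~\ref{adfs} (which in turn rest on Proposition~\ref{non containing l^1 block spreading model}, the property $\mathcal{P}$ of $X$, Theorem~\ref{getting block generated ell^1 spreading model}, and the disjointly generic decomposition machinery of Theorem~\ref{thm disjointly generic decomposition for wekly relatively compact ff-sequences}); the only point requiring a moment's care here is the reduction to the weakly relatively compact, nontrivial setting so that the embedding proposition is legitimately invoked.
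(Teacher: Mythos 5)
Your proposal is correct and follows essentially the same route as the paper: invoke Proposition \ref{getting l^p spr mod by containing} (using reflexivity of $X$ to get the weakly relatively compact setting) to upgrade a copy of $\ell^p$ or $c_0$ inside the span of a $\xi$-order spreading model to an $\ell^p$ or $c_0$ spreading model of order $\xi+1$, contradicting Corollaries \ref{corell1} and \ref{adfs}. Your explicit verification of the nontriviality and wrc hypotheses is exactly the care the paper leaves implicit in its one-line "Since $X$ is reflexive" remark.
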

\begin{proof}
  Assume that for some $\xi<\omega_1$ the space $X$ admits a
  spreading model of order $\xi$ containing an isomorphic copy of
  $\ell^p$ (or $c_0$). Since $X$ is reflexive by Proposition \ref{getting l^p spr mod by
  containing} we have that $X$ admits $\ell^p$ (resp. $c_0$) as a
  spreading model of order $\xi+1$, which contradicts Corollaries
  \ref{adfs} and \ref{corell1}.
\end{proof}
Moreover by Corollaries
  \ref{adfs}, \ref{corell1} and \ref{reflexive spaces have c_0 or l^1 or rflxv spr mod} we
obtain the following.
\begin{cor}\label{odel slump space every nontrivial sprmod is reflexive}
  For every nontrivial spreading model $(e_n)_{n\in\nn}$ admitted
  by $X$, we have that the space $E$ generated by
  $(e_n)_{n\in\nn}$ is reflexive.
\end{cor}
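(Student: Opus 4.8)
The plan is to derive the statement directly from the trichotomy established in Corollary \ref{reflexive spaces have c_0 or l^1 or rflxv spr mod}, once we have recorded that neither of its two $\ell^1$/$c_0$ alternatives can occur for the present space $X$. So the proof will be essentially a bookkeeping step that combines results already proved in this chapter.

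First I would invoke the reflexivity of $X$. This was obtained above from the fact that $X$ satisfies property $\mathcal{P}$ (which rules out an isomorphic copy of $c_0$), from Proposition \ref{non containing l^1 block spreading model} (which rules out an isomorphic copy of $\ell^1$), and from the unconditionality of the basis $(e_n)_{n\in\nn}$ together with James' theorem. With $X$ reflexive, Corollary \ref{reflexive spaces have c_0 or l^1 or rflxv spr mod} applies and gives that one of the following holds: (i) $X$ admits the usual basis of $\ell^1$ as a $\xi$-order spreading model for some $\xi<\omega_1$; (ii) $X$ admits the usual basis of $c_0$ as a $\xi$-order spreading model for some $\xi<\omega_1$; (iii) for every nontrivial spreading model $(e_n)_{n\in\nn}$ of any order admitted by $X$, the space $E=\overline{<(e_n)_{n\in\nn}>}$ is reflexive.

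Next I would eliminate (i) and (ii). Alternative (i) is impossible by Corollary \ref{corell1}, which states precisely that $X$ does not admit any $\ell^1$ spreading model of any order. Alternative (ii) is impossible by Corollary \ref{adfs}, which states that $X$ does not admit any $\ell^p$, $1<p<\infty$, or $c_0$ spreading model of any order; in particular the $c_0$ case is excluded. Hence alternative (iii) is the only surviving possibility, and (iii) is verbatim the assertion of the corollary we are proving, so we are done.

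Since all the genuine work has already been carried out in Corollaries \ref{adfs} and \ref{corell1} (and, behind them, in Propositions \ref{non containing l^1 block spreading model}, the Furstenberg–Katznelson-type arguments for $\ell^1$, and Theorem \ref{thm disjointly generic decomposition for wekly relatively compact ff-sequences} for the $\ell^p$/$c_0$ cases), there is no serious obstacle remaining. The only points requiring a little care are that the phrase ``nontrivial spreading model of any order admitted by $X$'' in Corollary \ref{reflexive spaces have c_0 or l^1 or rflxv spr mod}(iii) is exactly the hypothesis here, and that the single external input needed to trigger that trichotomy — the reflexivity of $X$ — has indeed been established earlier in the chapter.
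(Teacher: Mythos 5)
Your proposal is correct and follows exactly the paper's derivation: the paper obtains this corollary precisely by combining Corollary \ref{reflexive spaces have c_0 or l^1 or rflxv spr mod} (applicable since $X$ was shown to be reflexive) with Corollary \ref{corell1} to exclude the $\ell^1$ alternative and Corollary \ref{adfs} to exclude the $c_0$ alternative. Nothing further is needed.
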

\begin{lem}
  For every $k\in\nn$ and every Banach space $E$ such that
  $X\substack{k\\\longrightarrow\\\;}E$, we have that
  $X\substack{k\\\longrightarrow\\\text{bl}}E$.
\end{lem}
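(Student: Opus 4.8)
The plan is to upgrade, one arrow at a time, a chain of ordinary spreading-model steps $X\to E_1\to\cdots\to E_{k-1}\to E$ witnessing $X\stackrel{k}{\to}E$ into a block chain, using that all the intermediate spaces are reflexive. First I would isolate the following classical perturbation fact: \emph{if $F$ is a reflexive Banach space with a Schauder basis and $F\to G$, then $F\substack{\; \\\longrightarrow\\\text{bl}} G$}. Indeed, a seminormalized basic sequence $(x_n)_n$ in a reflexive space is weakly null, so after passing to a subsequence and running the usual sliding hump argument one produces a seminormalized block subsequence $(\widetilde{x}_n)_n$ of the basis of $F$ with $\sum_n\|x_n-\widetilde{x}_n\|$ arbitrarily small; then $(\widetilde{x}_n)_n$ is equivalent to (a subsequence of) $(x_n)_n$ and hence generates the same spreading model. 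Note that for $\ff=[\nn]^1$ the plegma pairs are exactly the ordered pairs of singletons, so a block subsequence of the basis is automatically a plegma block sequence; thus every spreading model of $F$ is a plegma block generated $1$-order spreading model of $F$.

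Next I would prove by induction on $j$ that, for our space $X$, any chain $X\to E_1\to\cdots\to E_j$ forces $E_j$ to be reflexive with its basis $(e^j_n)_n$ lying in $\mathcal{SM}_j(X)$. For $j=1$ the basis of $E_1$ is a spreading model of a bounded sequence in $X$, hence an element of $\mathcal{SM}_1(X)$; it is nontrivial (being a Schauder basis, and trivial sequences are not Schauder basic), so $E_1$ is reflexive by Corollary \ref{odel slump space every nontrivial sprmod is reflexive}. Here I would first record that this corollary does apply: by Corollaries \ref{corell1} and \ref{adfs} the space $X$ admits no $\ell^1$, $\ell^p$ $(1<p<\infty)$ or $c_0$ spreading model of any order, so alternative (iii) of Corollary \ref{reflexive spaces have c_0 or l^1 or rflxv spr mod} is the one in force. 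For the inductive step, given $E_j\to E_{j+1}$ with $E_j$ reflexive and $(e^j_n)_n\in\mathcal{SM}_j(X)$ Schauder basic, the perturbation fact gives $E_j\substack{\; \\\longrightarrow\\\text{bl}} E_{j+1}$, i.e. the basis $(e^{j+1}_n)_n$ of $E_{j+1}$ is a plegma block generated sequence in $\mathcal{SM}_1(E_j)$. Applying the composition Theorem \ref{composition thm} with $\xi=j$ and $k=1$ (its hypotheses are met since $(e^j_n)_n$ is Schauder basic in $\mathcal{SM}_j(X)$ and $E_j=\overline{<(e^j_n)_n>}$), I get $(e^{j+1}_n)_n\in\mathcal{SM}_{j+1}(X)$; it is again nontrivial, so $E_{j+1}$ is reflexive by Corollary \ref{odel slump space every nontrivial sprmod is reflexive}.

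With this in hand the Lemma follows quickly: given $X\stackrel{k}{\to}E$, fix a witnessing chain $X\to E_1\to\cdots\to E_{k-1}\to E$ and put $E_k=E$. The induction shows $E_1,\dots,E_{k-1}$ are reflexive with Schauder bases, so the perturbation fact applies to each arrow $E_i\to E_{i+1}$, $1\le i\le k-1$, turning it into $E_i\substack{\; \\\longrightarrow\\\text{bl}} E_{i+1}$; keeping the first arrow $X\to E_1$ unchanged, the resulting chain is exactly a witness for $X\substack{k\\\longrightarrow\\\text{bl}}E$. The hard part will be the bookkeeping in the induction — in particular checking that the block subsequence delivered by the sliding hump really satisfies the "plegma block generated" hypothesis of Theorem \ref{composition thm}, and that the theorem's conclusion (membership in $\mathcal{SM}_{j+1}(X)$) is precisely the form of hypothesis the next inductive step consumes; once that is nailed down, everything rests on the already-proved fact that every spreading model of $X$, of any order, generates a reflexive space.
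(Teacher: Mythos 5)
Your proposal is correct and follows essentially the same route as the paper: induct along the chain, use the composition theorem (the paper quotes its Corollary \ref{Proposition correlating the two kinds of bloch spreading model notions} rather than Theorem \ref{composition thm} directly) to place each intermediate basis in $\mathcal{SM}_j(X)$, invoke Corollary \ref{odel slump space every nontrivial sprmod is reflexive} to get reflexivity, and convert each arrow to a block arrow by the standard sliding hump. The only difference is bookkeeping: the paper inducts on the lemma's statement itself, while you induct on the auxiliary claim about the intermediate spaces, which amounts to the same argument.
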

\begin{proof}
   For $k=1$ the
  result is easily verified by the reflexivity of the space $X$
  and the standard sliding hump argument. Suppose that for some
  $k\in\nn$ the lemma is true. Let $E$ be a Banach space such that
  $X\substack{k+1\\\longrightarrow\\\;}E$. Then there exists a
  Banach space $E'$ with a Schauder basis $(e'_n)_{n\in\nn}$ such
  that $X\substack{k\\\longrightarrow\\\;}E'$ and $E'\to E$. By the
  inductive hypothesis we have that
  $X\substack{k\\\longrightarrow\\\text{bl}}E'$. By Corollary
  \ref{Proposition correlating the two kinds of bloch spreading model
  notions} we have that  $(e'_n)_{n\in\nn}\in \mathcal{SM}_k(X)$. Therefore
   by Corollary \ref{odel slump space every nontrivial sprmod is
  reflexive} we have that $E'$ is reflexive. Hence by the
  standard sliding hump argument,
  $E'\substack{\;\\\longrightarrow\\\text{bl}}E$. Thus
  $X\substack{k+1\\\longrightarrow\\\text{bl}}E$. By induction on
  $k\in\nn$ the proof is complete.
\end{proof}
By the above lemma, Corollary \ref{Proposition correlating the two
kinds of bloch spreading model
  notions} and Theorem \ref{Odel slumpr theorem} we have the
  following which answers the related question of \cite{O-S}.
  \begin{cor}
    For every $k\in\nn$ and Banach space $E$ such that
  $X\substack{k\\\longrightarrow\\\;}E$, we have that $E$ does not
  contain any isomorphic copy of $\ell^p$, for all $1\leq
  p\leq\infty$, or $c_0$.
  \end{cor}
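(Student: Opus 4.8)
The plan is to assemble the corollary from three facts already in place: the immediately preceding lemma, which upgrades $X\substack{k\\\longrightarrow\\\;}E$ to $X\substack{k\\\longrightarrow\\\text{bl}}E$; Corollary \ref{Proposition correlating the two kinds of bloch spreading model notions}, which identifies the basis of such an $E$ as a $k$-order spreading model of $X$; and Theorem \ref{Odel slumpr theorem}, which asserts that no spreading model of $X$ of any order contains an isomorphic copy of $c_0$ or $\ell^p$. So the proof is essentially a matter of chaining these together, with only some low-order bookkeeping to watch.

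First I would fix $k\in\nn$ and a Banach space $E$ with $X\substack{k\\\longrightarrow\\\;}E$. As recorded in the discussion of strong $k$-order spreading models, such an $E$ automatically carries a subsymmetric Schauder basis $(e_n)_{n\in\nn}$; in particular $E$ is separable, hence cannot contain an isomorphic copy of $\ell^\infty$, which disposes of the case $p=\infty$ immediately. Next, by the preceding lemma, $X\substack{k\\\longrightarrow\\\;}E$ implies $X\substack{k\\\longrightarrow\\\text{bl}}E$. For $k>1$ this is exactly the hypothesis of Corollary \ref{Proposition correlating the two kinds of bloch spreading model notions}, which gives $(e_n)_{n\in\nn}\in\mathcal{SM}_k(X)$. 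For $k=1$ the relation $X\substack{1\\\longrightarrow\\\text{bl}}E$ says by definition that $(e_n)_{n\in\nn}$ is a spreading model of some seminormalized block sequence of $X$, i.e. an $\ff$-spreading model of $X$ for $\ff=[\nn]^1$, so again $(e_n)_{n\in\nn}\in\mathcal{SM}_1(X)$.

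Finally, since $(e_n)_{n\in\nn}$ is a $\xi$-order spreading model of $X$ with $\xi=k$ and $E=\overline{<(e_n)_{n\in\nn}>}$, Theorem \ref{Odel slumpr theorem} applies verbatim and yields that $E$ contains no isomorphic copy of $\ell^p$ for any $1\le p<\infty$, nor of $c_0$; combined with the separability remark handling $p=\infty$, this is precisely the assertion. I do not anticipate any genuine obstacle here: the construction is a pure composition of earlier results. The only two points requiring care are that Corollary \ref{Proposition correlating the two kinds of bloch spreading model notions} is stated only for $k>1$, so the degenerate case $k=1$ must be argued directly as above, and that the separability of $E$ (coming from the existence of a Schauder basis) must be invoked explicitly to cover the endpoint $p=\infty$.
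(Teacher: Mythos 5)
Your proof is correct and follows the same route as the paper, which obtains the corollary by chaining the preceding lemma (upgrading the strong $k$-order relation to the block one), the corollary identifying block strong $k$-order spreading models as $k$-order spreading models, and the main theorem that no spreading model of $X$ of any order contains $\ell^p$ or $c_0$. Your extra remarks handling the case $k=1$ and the endpoint $p=\infty$ via separability are just details the paper leaves implicit, not a different argument.
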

  \begin{rem}
    It is immediate by the reflexivity of the space $X$, Corollary \ref{corell1} and
    Theorem \ref{duality c_0 l^1 thm} that the dual space
    $X^*$ of $X$ does not admit any $c_0$ spreading model.
  \end{rem}
It is worth pointing out that the answer to the Problem
\ref{problem2} in Chapter \ref{Chapter 2} is unknown for the space
$X$. Namely we do not know if there exists an ordinal $\xi$ such
that every spreading model of $X$ is equivalent to a
$\xi$-spreading model of $X$. It is also unknown if the spreading
models of $X$ include reflexive Banach spaces which are totally
incomparable to spaces with a saturated norm.


\begin{thebibliography}{1}
\bibitem{AK} F. Albiac, N. J. Kalton, \textit{Topics in Banach Space Theory}, Springer, 2006.
\bibitem{AA} D. E. Alspach, S. A. Argyros, \textit{Complexity of weakly null sequences},  Dissertationes Math., (1992).
\bibitem{AG} S. A. Argyros, I.  Gasparis, \textit{ Unconditional structures of
weakly null sequences}  Trans. Amer. Math. Soc. 353, (2001), no.5,
2019--2058 .
\bibitem{AGR}  S. A. Argyros, G. Godefroy, H. P.  Rosenthal, \textit{ Descriptive
 set theory and Banach spaces} Handbook of the geometry of Banach spaces,  North-Holland, Amsterdam, Vol. 2, (2003),
 1007--1069.
\bibitem{AT} S.A. Argyros, S. Todorcevic, \textit{Ramsey methods in analysis}, Birkhauser Verlag, Basel, 2005.
\bibitem{BL} B. Beauzamy, J.-T. Laprest\'e, \emph{Mod\`eles \'etal\'es des espaces de Banach}, Travaux en Cours, Hermann, Paris, 1984, iv+210 pp.
\bibitem{BM} B. Beauzamy, B. Maurey, \emph{Iteration of spreading models}, Ark. Mat. 17, (1979), no. 2, 193–-198.
\bibitem{BS} A. Brunel, L. Sucheston \textit{On $B$-convex Banach spaces},  Math. Systems Theory  7  (1974), no. 4, 294--299.
\bibitem{D} A. Dvoretzky, \textit{Some results on convex bodies and Banach spaces},  1961  Proc. Internat.
 Sympos. Linear Spaces (Jerusalem, 1960)  pp. 123--160 Jerusalem Academic Press, Jerusalem;
\bibitem{E} J. Elton, \textit{Weakly null normalized sequences in
Banach spaces}, Doctoral thesis, Yale Univercity (1978).
\bibitem{FK} H. Furstenberg, Y. Katznelson, \textit{An ergodic Szemeredi theorem for commuting transformations}
 J. Analyse Math.  34  (1978), 275--291.
\bibitem{G-P} F. Galvin, K. Prikry, \textit{ Borel sets and Ramsey's theorem},  J. Symbolic Logic, 38, (1973), 193--198.
\bibitem{GI} I. Gasparis, \textit{A dichotomy theorem for subsets of the power set of the natural numbers},
Proc. Amer. Math. Soc.,  129,  (2001),  no. 3, 759--764.
\bibitem{G2} W.T. Gowers, \textit{Hypergraph regularity and the multidimensional Szemeredi theorem},
Ann. of Math. (2)  166  (2007),  no. 3, 897--946.
\bibitem{G3} W.T Gowers, \textit{Ramsey Methods in Banach Spaces}, in Handbook of the
geometry of Banach Spaces, vol. 2, (2003), 1072-1097.
\bibitem{G1} W.T. Gowers, \textit{A Banach space not containing $c_0, l_1$ or a reflexive subspace},
 Trans. Amer. Math. Soc.  344  (1994),  no. 1, 407--420.
\bibitem{GM} W.T. Gowers, B. Maurey, \textit{The unconditional basic sequence problem},
 J. Amer. Math. Soc.  6  (1993),  no. 4, 851--874.
\bibitem{H-O} L. Halbeisen, E. Odell, \textit{On asymptotic models in Banach spaces},  Israel J. Math.  139  (2004), 253--291.
\bibitem{J2} R.C. James, \textit{Uniformly non-square Banach spaces}, Ann. of Math. (2) 80, (1964), 542--550.
\bibitem{J} R.C. James, \textit{Bases and reflexivity of Banach spaces},  Ann. of Math. (2)  52,  (1950), 518--527.
\bibitem{K} A.S. Kechris, \textit{Classical Descriptive Set Theory}, Springer-Verlag, 1995.
\bibitem{Kr} J. L. Krivine, \textit{Sous-espaces de dimension finie des espaces de Banach r\'eticul\'es},
  Ann. of Math (2)  104  (1976), no. 1, 1--29.
\bibitem{Lid-Tza} J. Lindenstrauss, L. Tzafriri, \textit{Classical Banach spaces. I. Sequence spaces.}, Springer-Verlag, Vol. 92, (1977).
\bibitem{LT} J. Lopez-Abad, S. Todorcevic, \textit{Pre-compact families
of finite sets of integers and weakly null sequences in Banach spaces}, Topology Appl. 156 (2009), no. 7, 1396--1411.
\bibitem{Maurey} B. Maurey, \textit{Type, cotype and $K$-convexity}, Handbook of the
geometry of Banach spaces, Vol. 2, (2003),1299-1332.
\bibitem{M} S. Mercourakis, \textit{On Ces\`aro summable sequences of continuous functions},  Mathematika  42  (1995),  no. 1, 87--104.
\bibitem{N-W} C.St.J.A. Nash-Williams, \textit{On well quasi-ordering transfinite sequences}, Proc. Cambr. Phil. Soc., 61, (1965), 33-39.
\bibitem{Od} E. Odell, \textit{On Schreier unconditional sequences}, Contemp. Math. 144, (1993), 197--201.
\bibitem{O-S} E. Odell, Th. Schlumprecht, \textit{On the richness of the set of $p$'s in Krivine's theorem},
 Geometric aspects of functional analysis
(Israel, 1992--1994),  177--198,
Oper. Theory Adv. Appl., 77, Birkha"user, Basel, 1995.
\bibitem{Pd-R} P. Pudlak, V. Rodl, \textit
{Partition theorems for systems of finite subsets of integers},
Discrete Math.  39,  (1982), no. 1, 67--73.
\bibitem{R} F. P. Ramsey, \textit{On a problem of formal logic}, Proc. London Math.
Soc. 30 (2), (1929), 264–286. (2001)
\bibitem{Ro2} H. Rosenthal, \textit{A characterization of Banach spaces containing $c_0$}, J. Amer. Math. Soc. 7, (1994), no. 3, 707--748.
\bibitem{Ro} H. Rosenthal, \textit{A characterization of Banach spaces containing $l^{1}$},  Proc. Nat. Acad. Sci. U.S.A.  71  (1974), 2411--2413.
\bibitem{Sp} E. Specker, \textit{Teilmengen von Mengen mit Relationen}, Comment Math.
Helv. 31 (1957), 302-314.
  \end{thebibliography}
\end{document}